\documentclass[10pt,reqno]{amsart}
\usepackage{amssymb,mathrsfs,color,mathtools, empheq}
\usepackage{pinlabel}

\mathtoolsset{showonlyrefs}

\usepackage{enumerate}

\usepackage{graphicx}
\usepackage{xcolor} 
\usepackage{tensor}
\usepackage{slashed}
\usepackage{cite}
\usepackage{cancel}
\usepackage{verbatim}
\usepackage{dsfont}
\usepackage{cite}
\usepackage[normalem]{ulem}

\usepackage{enumitem}

\definecolor{green}{rgb}{0,0.5,0} 


\newcommand{\nrm}[1]{\Vert#1\Vert}
\newcommand{\brk}[1]{\langle#1\rangle}
\newcommand{\Angles}[2]{\left\langle #1,#2\right\rangle}
\newcommand{\angles}[2]{\langle #1,#2\rangle}

\newcommand{\set}[1]{\{#1\}}

\renewcommand{\Re}{\mathrm{Re}}
\renewcommand{\Im}{\mathrm{Im}}

\newcommand{\aeq}{\simeq}
\newcommand{\aleq}{\lesssim}
\newcommand{\ageq}{\gtrsim}

\newcommand{\lap}{\Delta}

\newcommand{\ud}{\mathrm{d}}
\newcommand{\rd}{\partial}
\newcommand{\nb}{\nabla}

\newcommand{\imp}{\Rightarrow}

\newcommand{\alp}{\alpha}
\newcommand{\bt}{\beta}
\newcommand{\gmm}{\gamma}
\newcommand{\Gmm}{\Gamma}
\newcommand{\dlt}{\delta}
\newcommand{\Dlt}{\Delta}
\newcommand{\eps}{\epsilon}
\newcommand{\veps}{\varepsilon}
\newcommand{\kpp}{\kappa}

\newcommand{\sgm}{\sigma}

\newcommand{\tht}{\theta}

\newcommand{\omg}{\omega}


\newcommand{\bfd}{{\bf d}}

\newcommand{\bfh}{{\bf h}}

\newcommand{\bfD}{{\bf D}}


\newcommand{\bbC}{\mathbb C}

\newcommand{\bbH}{\mathbb H}

\newcommand{\bbN}{\mathbb N}

\newcommand{\bbR}{\mathbb R}
\newcommand{\bbS}{\mathbb S}

\newcommand{\bbZ}{\mathbb Z}

\newcommand{\calA}{\mathcal A}
\newcommand{\calB}{\mathcal B}

\newcommand{\calN}{\mathcal N}

\newcommand{\calP}{\mathcal P}

\newcommand{\calX}{\mathcal X}



\vfuzz2pt 
\hfuzz2pt 



\usepackage{wrapfig}
\usepackage{tikz}
\usetikzlibrary{arrows,calc,decorations.pathreplacing}
\definecolor{light-gray1}{gray}{0.90}
\definecolor{light-gray2}{gray}{0.80}


\definecolor{deepgreen}{cmyk}{1,0,1,0.5}








\newcommand{\Hp}{\mathbb{H}}
\newcommand{\N}{\mathbb{N}}
\newcommand{\R}{\mathbb{R}}
\newcommand{\Sp}{\mathbb{S}}
\newcommand{\Z}{\mathbb{Z}}


\newcommand{\h}{\mathbf{h}}
\newcommand{\m}{\mathbf{m}}


\newcommand{\al}{\alpha}
\newcommand{\be}{\beta}
\newcommand{\ga}{\gamma}

\newcommand{\fy}{\varphi}
\newcommand{\om}{\omega}
\newcommand{\la}{\lambda}

\newcommand{\s}{\sigma}

\newcommand{\ka}{\kappa}

\newcommand{\De}{\Delta}

\newcommand{\Ga}{\Gamma}

\newcommand{\p}{\partial}
\newcommand{\na}{\nabla}
\newcommand{\re}{\mathop{\mathrm{Re}}}

\newcommand{\supp}{\operatorname{supp}}

\newcommand{\loc}{\operatorname{loc}}
\newcommand{\ext}{\operatorname{ext}}

\makeatletter

\newcommand{\Rmnum}[1]{\expandafter\@slowromancap\romannumeral #1@}
\makeatother

\newcommand{\I}{\infty}

\newcommand{\ti}{\widetilde}

\newcommand{\ha}{\widehat}


\newcommand{\ang}[1]{\left\langle{#1}\right\rangle}
\newcommand{\abs}[1]{\left\lvert{#1}\right\rvert}
\newcommand{\ds}{\displaystyle}


\newcommand{\ant}[1]{\begin{align*}\begin{split} #1 \end{split}\end{align*}}
\newcommand{\EQ}[1]{\begin{equation}\begin{split} #1 \end{split}\end{equation}}

\setlength{\marginparwidth}{2cm}

\newcommand{\Del}[1]{}

\numberwithin{equation}{section}

\newtheorem{thm}{Theorem}[section]
\newtheorem{cor}[thm]{Corollary}
\newtheorem{lem}[thm]{Lemma}
\newtheorem{prop}[thm]{Proposition}
\newtheorem{defn}[thm]{Definition}
\newtheorem{defnprop}[thm]{Definition/Proposition}

\newtheorem{remark}[thm]{Remark}
\newtheorem{rem}[thm]{Remark}



\newcommand{\mand}{{\ \ \text{and} \ \  }}

\newcommand{\mif}{{\ \ \text{if} \ \ }}

\newcommand{\mas}{{\ \ \text{as} \ \ }}

\renewcommand\Re{\mathrm{Re}\,}
\renewcommand\Im{\mathrm{Im}\,}

\newcommand{\dsp}{\frac{\ud s^\prime}{s^\prime}}
\renewcommand{\ds}{\frac{\ud s}{s}}
\newcommand{\dspp}{\frac{\ud s^{\prime\prime}}{s^{\prime\prime}}}

\newcommand{\snabla}{{\slashed{\nabla}}}



\newcommand{\tils}{{\tilde{s}}}

\newcommand{\tilw}{{\tilde{w}}}

\newcommand{\tilB}{{\tilde{B}}}

\newcommand{\tilF}{{\tilde{F}}}

\newcommand{\tilP}{{\mathbf{P}}}

\newcommand{\tilV}{{\tilde{V}}}




\newcommand{\ringH}{{\mathring H}}


\newcommand{\prin}{\mathrm{prin}}
\newcommand{\pert}{\mathrm{pert}}
\newcommand{\stat}{\mathrm{stat}}
\newcommand{\lot}{\mathrm{l.o.t.}}
\newcommand{\sym}{\mathrm{sym}}
\newcommand{\antisym}{\mathrm{a.s.}}

\newcommand{\ube}{\bsbeta}



\newcommand{\RR}{\mathcal{R}}

\newcommand{\bsb}{\boldsymbol{b}}
\newcommand{\bsa}{\boldsymbol{a}}



\newcommand{\low}{{\mathrm{low}}}

\newcommand{\jap}[1]{\langle #1\rangle}
\renewcommand{\dh}{\mathrm{dvol}_{\bfh}}
\newcommand{\tilphi}{{\widetilde{\phi}}}

\newcommand{\bsxi}{{\boldsymbol{\xi}}}
\newcommand{\bsz}{{\boldsymbol{z}}}
\newcommand{\Pea}{P}
\newcommand{\bsh}{{\boldsymbol{h}}}
\newcommand{\bsm}{{\boldsymbol{m}}}
\newcommand{\bsR}{{\boldsymbol{R}}}
\newcommand{\bsK}{{\boldsymbol{K}}}
\newcommand{\bsf}{{\boldsymbol{f}}}
\newcommand{\bsbeta}{{\boldsymbol{\beta}}}
\newcommand{\ringa}{{\mathring{\boldsymbol{a}}}}
\renewcommand{\RR}{\bsR}
\renewcommand{\h}{\bsh}
\renewcommand{\m}{\bsm}
\newcommand{\tilLE}{{\mathcal{LE}}}
\newcommand{\LE}{{\mathbb{LE}}}
\newcommand{\dsnot}{\frac{\ud s_0}{s_0}}
\newcommand{\bsg}{{\boldsymbol{g}}}
\newcommand{\bsT}{{\boldsymbol{T}}}
\newcommand{\bsX}{{\boldsymbol{X}}}
\newcommand{\bsw}{{\boldsymbol{w}}}
\newcommand{\nsigma}{\sigma}
\newcommand{\pfstep}[1]{\vskip.5em \noindent{\it #1.}}


\begin{document}

\title[Local Smoothing on hyperbolic space ]{Local smoothing estimates for Schr\"odinger equations on hyperbolic space}
\author{A. Lawrie}
\author{J. L\"uhrmann}
\author{S.-J. Oh}
\author{S. Shahshahani}

\begin{abstract}
We establish global-in-time frequency localized local smoothing estimates for Schr\"odinger equations on hyperbolic space $\bbH^d$. In the presence of symmetric first and zeroth order potentials, which are possibly time-dependent, possibly large, and have sufficiently fast polynomial decay, these estimates are proved up to a localized lower order error. Then in the time-independent case, we show that a spectral condition (namely, absence of threshold resonances) implies the full local smoothing estimates (without any error), after projecting to the continuous spectrum. In the process, as a means to localize in frequency, we develop a general Littlewood-Paley machinery on $\bbH^d$ based on the heat flow. Our results and techniques are motivated by applications to the problem of stability of solitary waves to nonlinear Schr\"odinger-type equations on $\bbH^{d}$. Specifically, some of the estimates established in this paper play a crucial role in the authors' proof of the nonlinear asymptotic stability of harmonic maps under the Schr\"odinger maps evolution on the hyperbolic plane; see~\cite{LLOS2}. 

As a testament of the robustness of approach, which is based on the positive commutator method and a heat flow based Littlewood-Paley theory, we also show that the main results are stable under small time-dependent perturbations, including polynomially decaying second order ones, and small lower order nonsymmetric perturbations. 
\end{abstract}


\thanks{
A. Lawrie was supported by NSF grant DMS-1700127. S.-J. Oh was supported in part by Samsung Science and Technology Foundation under Project Number SSTF-BA1702-02. The authors thank the Forschungsinstitut Oberwolfach for its hospitality where part of this work was conducted. They are grateful to A. Nahmod for discussions on the history of heat flow based Littlewood-Paley theory.
}

\maketitle

\section{Introduction} \label{s:intro}
The primary goal of this work is to establish local smoothing, or local energy decay, estimates for the linear Schr\"odinger equation
\begin{align*}
\begin{split}
(-i\partial_t +H)u=F,\qquad u(0)=u_0,
\end{split}
\end{align*}
on hyperbolic space $(\bbH^d,\bsh)$. Here $\bsh$ denotes the standard Riemannian metric on $\bbH^d$ and $H$ is the elliptic operator
\begin{align*}
\begin{split}
Hu := -\nabla_\mu\bsa^{\mu\nu}\nabla_\nu u+\frac{1}{i}(\bsb^\mu\nabla_\mu u+\nabla_\mu(\bsb^\mu u))+Vu.
\end{split}
\end{align*}
The precise assumptions on the coefficients are specified in Section~\ref{s:statement} below, but without being quantitative $\bsa$ is a symmetric real-valued two tensor with $\bsa-\bsh^{-1}$ small and decaying at spatial infinity, and $\bsb$ and $V$ decaying at spatial infinity with globally small imaginary parts. In particular, we allow $\bsa$, $\bsb$, and $V$ to depend on time. Roughly speaking, by a local smoothing estimate we mean an estimate of the form
\begin{align*}
\begin{split}
\|\jap{r}^{-\frac{1}{2}-\delta}|\nabla|^{\frac{1}{2}}u\|_{L^2(\bbR\times \bbH^d)}\lesssim \|u_0\|_{L^2(\bbH^d)}+\|\jap{r}^{\frac{1}{2}+\delta}|\nabla|^{-\frac{1}{2}}F\|_{L^2(\bbR\times \bbH^d)},\quad \delta>0,
\end{split}
\end{align*}
where $r$ denotes the geodesic distance to a fixed origin in $\bbH^d$, and $\jap{r}:=\sqrt{1+r^2}$. 

However, with an eye towards nonlinear applications, we will prove a sharper frequency localized version of the smoothing estimate above. That is, for a frequency localized function, the spatial weights and localizations in the local smoothing norms for our final estimate will depend on the frequency at which the function is localized; our estimate will be optimal in small spatial scales and for high frequencies. The complete local smoothing norm is then constructed by resolving $u$ into its frequency components (see Section~\ref{s:statement} below).  In fact, estimates established in this paper have a specific nonlinear application in the authors' recent proof of the nonlinear asymptotic stability of harmonic maps under the Schr\"odinger maps evolution on the hyperbolic plane in~\cite{LLOS2}; see Remark~\ref{r:combined}. 
The proof of our smoothing  estimate is based on a multiplier, or positive commutator, argument and as such is quite flexible and for instance allows us to treat time-dependent and complex-valued coefficients. 

The issue of frequency localization brings us to the second goal of the current work, which is to develop a robust Littlewood-Paley theory based on the heat flow on hyperbolic space. The bulk of this paper is devoted to this task, and the proof of the local smoothing estimate is short in comparison. Some of the specific issues that we need to treat in this regard are the failure of sharp Bernstein inequalities, and a quantitative understanding of the uncertainty principle, that is, the relation between spatial and frequency localizations. The heat flow based Littlewood-Paley theory developed in this work is abstract and the estimates are derived using integration by parts and the Duhamel principle for the heat equation, and therefore does not rely on the geometry of the hyperbolic space in an essential way, except for general spectral properties of the Laplacian. In particular, we systematically avoid heat kernel estimates and the Helgason Fourier transform, and this part of the analysis also extends to more general manifolds. We believe that this aspect of our work is of independent interest and hope that the ideas here can be used more broadly in the study of dispersive equations on manifolds.


\subsection{Statements of the main results} \label{s:statement}

Consider the linear Schr\"odinger equation on hyperbolic space $\Hp^d$ 
\EQ{ \label{eq:S} 
(-i \partial_t  + H )u =  F
}
with initial data $u(0) = u_0 \in L^2(\Hp^d)$. Here $H$ is a second order operator
\begin{equation} \label{eq:H_def}
 Hu := -\nabla_\mu\bsa^{\mu\nu}\nabla_\nu u+\frac{1}{i}(\bsb^\mu\nabla_\mu u+\nabla_\mu(\bsb^\mu u))+Vu,
\end{equation}
where $V(t, \cdot)$ is a smooth, complex-valued function, $\bsb(t, \cdot)$ is a smooth complex-valued vectorfield on $\bbH^{d}$, and $\bsa(t, \cdot)$ is a smooth real-valued symmetric two-tensor on $\bbH^{d}$.
We say that $H$ is \emph{stationary} if the coefficients are independent of time, and \emph{symmetric} if they are real-valued. In general $H_{\prin}$ will denote the principal part of $H$, that is,
\begin{align}\label{eq:Hprindef}
\begin{split}
H_{\prin}u:=-\nabla_\mu\bsa^{\mu\nu}\nabla_\nu u,
\end{split}
\end{align}
and $H_\lot$ the lower order part, that is,
\begin{align}\label{eq:Hlotdef}
\begin{split}
H_\lot u:=\frac{1}{i}(\bsb^\mu\nabla_\mu u+\nabla_\mu(\bsb^\mu u))+Vu.
\end{split}
\end{align}

\subsubsection{Main results for large symmetric lower order perturbations of $-\lap$}

Our first class of results concern the case when $H_{\prin} = -\Dlt := - \Dlt_{\bbH^{d}}$ and $H_{\lot}$ is symmetric, suitably decaying yet possibly large. Such an operator naturally arises as the linearization of a nonlinear Schr\"odinger-type equation on $\bbH^{d}$ around a solitary wave.

In order to capture the full strength of the local smoothing effect, which is crucial for recovering a derivative loss in Schr\"odinger-type equations, we aim to establish \emph{frequency localized local smoothing estimates} with spatial weights that are sharp for high frequencies and small spatial scales. 
One way to achieve frequency localization would be to use the Helgason Fourier transform on $\bbH^{d}$, and apply smooth cutoffs in the frequency space. As is well-known, however, such operations do not behave well in $L^{p}(\bbH^{d})$ (see \cite{ClSt}), and thus are not suitable in nonlinear applications. 

Instead, we will use the linear heat flow associated with the shifted Laplacian $\Delta+\rho^2$ on $\bbH^d$ as a robust means to localize frequencies. Here, $\rho^{2}$ is the spectral gap for $- \De$, whose spectrum is entirely absolutely continuous:
\EQ{ \label{eq:rho}
\s(-\De) = \s_{ac}(-\De) = [ \rho^2, \infty), \quad \rho:= \frac{d-1}{2}.
}
For any heat time $0 < s < \infty$ we define the frequency projections
\begin{equation}\label{eq:tilPdef}
 \tilP_{\geq s} u := e^{s(\Delta+\rho^2)} u,\qquad \tilP_s u := -s\partial_s \tilP_{\geq s}u=- s (\Delta +\rho^2)e^{s(\Delta+\rho^2)} u.
\end{equation}
For any $0 < s_0 < \infty$, we have the continuous resolution
\[
 u =  \int_0^{s_0} \tilP_s u \, \ds + \tilP_{\geq s_0} u.
\]

\begin{rem}[Heat flow based Littlewood-Paley theory] \label{rem:LP}
Use of the heat flow for frequency localization is classical. See for instance the books \cite{St70Book, CoifmanWeiss1}, or the papers \cite{KR06, CoifmanWeiss2}, for their application in frequency localization on manifolds. For further applications in harmonic analysis on non-Euclidean spaces including spaces of homogeneous type see \cite{CoifmanWeiss1,Nahmod1,Nahmod2} and the references therein.

The operators $\tilP_{\geq s}$ and $\tilP_{s}$ constitute \emph{heat flow based Littlewood-Paley projections} in the following sense. First, $\tilP_{\geq s} u = e^{s (\Dlt + \rho^{2})} u$ may be interpreted as a projection of $u$ to frequencies at most $s^{-\frac{1}{2}}$; this interpretation can be justified on $\bbR^{d}$ by observing that the Fourier transform of the heat flow $e^{s \lap} u$ is simply that of $u$ multiplied by a rescaled Gaussian adapted to the ball $\set{\abs{\xi} \aleq s^{-\frac{1}{2}}}$. On the other hand, $\tilP_{s} u = - s (\lap + \rho^{2}) e^{s (\lap + \rho^{2})} u$ is the analogue of a Littlewood-Paley projection to frequencies comparable to $s^{-\frac{1}{2}}$, the idea being that $s (\lap + \rho^{2})$ damps the frequencies much lower than $s^{-\frac{1}{2}}$. 

The heat flow based Littlewood-Paley theory, as described above, has successfully served as a viable substitute for the (naive but problematic) Fourier transform based Littlewood-Paley theory in the study of nonlinear dispersive equations on hyperbolic space; see, for example, \cite{IPS, LOS2, LOS5}.
\end{rem}

For each $\ell \in \bbZ$, the dyadic spatial annulus $A_\ell$ is defined by
\begin{align}\label{eq:rAldef}
 A_\ell:=\{2^{\ell}\leq r\leq 2^{\ell+1}\},\qquad r(x):=d(0,x), \quad \forall x\in\bbH^{d},
\end{align} 
where $0$ is a fixed origin in $\bbH^d$. Similarly the ball $A_{\leq\ell}$ and $A_{\geq\ell}$ are defined by
\begin{align*}
\begin{split}
A_{\leq\ell} := \{r\leq 2^{\ell}\},\qquad A_{\geq \ell}:=\{r\geq 2^{\ell}\}.
\end{split}
\end{align*}
For any heat time $s > 0$ we use the notation $k_s$ to denote the corresponding dyadic frequency
\begin{align*}
\begin{split}
 k_s := \lfloor \log_2 ( s^{-\frac{1}{2}} ) \rfloor .
\end{split}
\end{align*}
We can now define our frequency localized local smoothing spaces.
\begin{defn} \label{d:LEs}
For any $s>0$ the frequency localized local smoothing norm $\|\cdot\|_{LE_s}$ and its dual $\|\cdot\|_{LE_s^\ast}$ are defined as
\begin{align*}
 \| v \|_{LE_s} &:= s^{-\frac{1}{4}} \| v \|_{L^2(\R \times A_{\leq-k_s})} + \sup_{\ell \geq-k_s} \| r^{-\frac{1}{2}} v \|_{L^2(\R \times A_\ell)}, \\
 \| G \|_{LE_s^\ast} &:= s^{\frac{1}{4}} \| G \|_{L^2(\R \times A_{\leq-k_s})} + \sum_{\ell \ge-k_s} \| r^{\frac{1}{2}} G \|_{L^2( \R \times A_\ell)}.
\end{align*}
Similarly, the low-frequency analogues $\|\cdot\|_{LE_{\low}}$ and $\|\cdot\|_{LE_{\low}^\ast}$ are
\begin{align*}
 \| v \|_{LE_{\low}} &:= \|v\|_{L^2(\R \times A_{\leq0})} + \sup_{\ell \geq 0} \, \| r^{-\frac{3}{2}} v \|_{L^2(\R \times A_\ell)}, \\
 \| G \|_{LE_{\low}^\ast} &:= \| G \|_{L^2(\R \times A_{\leq0})} + \sum_{\ell \geq 0} \, \| r^{\frac{3}{2}} G \|_{L^2(\R \times A_\ell)}.
\end{align*}
\end{defn}

We build our local smoothing spaces $LE$ and $LE^\ast$ from frequency localized pieces. 

\begin{defn} \label{d:LE}
For any function $u \in C^\infty_0(\bbR\times \bbH^d)$ we let
\begin{equation} \label{eq:LE}
 \| u \|_{LE}^2 := \int_\frac{1}{8}^4 \| \tilP_{\geq s} u \|_{LE_{\low}}^2 \,\ds + \int_0^{\frac{1}{2}} s^{-\frac{1}{2}} \| \tilP_s u \|_{LE_s}^2 \, \ds.
\end{equation}
The space $LE$ is defined as the completion of smooth functions with compact support with respect to the norm $\|\cdot\|_{LE}$. Similarly for any $F\in C^\infty_0(\bbR\times \bbH^d)$
\begin{equation}\label{eq:LE*}
 \| F \|_{LE^\ast}^2 := \int_{\frac{1}{8}}^4 \| \tilP_{\geq s} F \|_{LE_{\low}^\ast}^2\,\ds + \int_0^{\frac{1}{2}} s^{\frac{1}{2}} \| \tilP_s F \|_{LE_s^\ast}^2 \, \ds.
\end{equation}
The space $LE^\ast$ is defined as the completion of smooth functions with compact support with respect to the norm $\|\cdot\|_{LE^\ast}$.
\end{defn}

\begin{rem}\label{rem:LEduality}
It follows from the definitions above that for any $s>0$
\begin{align*}
\begin{split}
| \angles{\tilP_su}{\tilP_sF}_{t,x}|\leq \|\tilP_su\|_{LE_s}\|\tilP_sF\|_{LE_s^\ast} 
\end{split}
\end{align*}
and
\begin{align*}
\begin{split}
| \angles{\tilP_{\geq s}u}{\tilP_{\geq s}F}_{t,x}|\leq \|\tilP_{\geq s}u\|_{LE_\low}\|\tilP_{\geq s}F\|_{LE_\low^\ast}. 
\end{split}
\end{align*}
But because of the integrations in the definitions of $LE$ and $LE^\ast$ and the fact that the heat flow based frequency projections are not sharply localized, $LE$ and $LE^\ast$ are not dual norms. In fact $\|\cdot\|_{LE^\ast}$ is a stronger norm than the dual of $\|\cdot\|_{LE}$, and we will show in Corollary~\ref{cor:LEduality} below that
\begin{align*}
\begin{split}
| \angles{u}{F}_{t,x}|\leq C\|u\|_{LE}\|F\|_{LE^\ast} 
\end{split}
\end{align*}
for an absolute constant $C \geq 1$. We have chosen to use $LE^\ast$ instead of the actual dual of $LE$ in Theorem~\ref{t:LE1-sym} below because the former is a more convenient norm to work with. In Theorem~\ref{t:LE-H} we will pass to a modified local smoothing norm $\tilLE$ where we will also work with the actual dual $\tilLE^\ast$. 
\end{rem}


For the first set of results, we assume that 
\begin{equation} \label{eq:prin-lap}
H_{\prin} = - \Dlt,
\end{equation}
and that $H_{\lot}$ obeys the following decay assumptions:
\begin{align} \label{eq:decay_assumptions}
\begin{split}
 &\sum_{k=0}^4 \|\nabla^{(k)}\bsb\|_{L^\infty(\bbR\times \bbH^d)} +\sum_{k=0}^1\, \sum_{\ell\geq0}\|r^{3}\nabla^{(k)}\bsb\|_{L^\infty(\bbR\times A_\ell)}  < \infty, \\
 &\sum_{k=0}^1 \, \Bigl( \|\nabla^{(k)}V\|_{L^\infty(\bbR\times \bbH^d)} + \sum_{\ell\geq0}\|r^{3}\nabla^{(k)}V\|_{L^\infty(\bbR\times A_\ell)} \Bigr) < \infty.
 \end{split}
\end{align}
For the definitions of the covariant derivatives $\nabla^{(k)}$ and norms of tensors we refer the reader to Section~\ref{s:prelim} below. 

The following is the first main result of this paper, which holds without any spectral assumptions on the operator $H$. In particular note that the coefficients of $H$ are allowed to depend on time.


\begin{thm} \label{t:LE1-sym} 
 Let $d \geq 2$ and assume that \eqref{eq:prin-lap} (i.e., $H_{\prin} = -\lap$) holds, and that $H_{\lot}$ is symmetric (i.e., $\bsb, V$ are real-valued) and \eqref{eq:decay_assumptions} holds.  Then, there exists a sufficiently large constant $R = R(d, \bsb, V) \gg 1$ so that for any $u_{0} \in L^{2}(\bbH^d)$ and $F \in \tilLE^{\ast}$, the solution $u(t)$ to the linear Schr\"odinger equation~\eqref{eq:S} with the initial value $u(0) = u_0$ satisfies 
  \begin{equation} \label{eq:LE1} 
  \| u \|_{LE} \lesssim \| u_0 \|_{L^2} + \| F \|_{LE^\ast} + \| u \|_{L^2(\bbR \times \{ r \leq R \})}.
 \end{equation}
\end{thm}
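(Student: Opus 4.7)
The plan is to prove Theorem~\ref{t:LE1-sym} via a positive commutator (Morawetz) argument carried out at each frequency scale encoded in $\|\cdot\|_{LE}$, and then assembled via the heat-flow Littlewood--Paley decomposition of Definition~\ref{d:LE}. Concretely, for each $s \in (0,\tfrac{1}{2}]$ I aim for a frequency-localized bound of the schematic form
\begin{equation*}
 \|\tilP_s u\|_{LE_s}^{2} \lec \|\tilP_s u_0\|_{L^{2}}^{2} + \|\tilP_s F\|_{LE_s^{\ast}}^{2} + (\text{errors absorbable upon integration in } s),
\end{equation*}
together with an analogous low-frequency bound for $\tilP_{\geq s} u$ on $s \in [\tfrac{1}{8},4]$ in the $LE_{\low}$ norm, and then integrate against the measures $\ds$ and $s^{-1/2}\ds$ that define $\|u\|_{LE}^2$ to recover the left-hand side of~\eqref{eq:LE1}. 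A crucial simplification is that $H_{\prin} = -\Dlt$ commutes with both $\tilP_s$ and $\tilP_{\geq s}$, so the only frequency-localization commutator error that ever appears is $[\tilP_s, H_{\lot}]u$, which is governed by the decay~\eqref{eq:decay_assumptions} of $\bsb$ and $V$.

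For each high-frequency piece $u_s := \tilP_s u$ the engine is a symmetrized first-order Morawetz multiplier $\calA_s$ built from a radial vector field $\phi_s(r)\partial_r$ in geodesic polar coordinates, where the weight $\phi_s$ is chosen so that the principal commutator $i[-\Dlt,\calA_s]$ dominates pointwise the $LE_s$ density: concretely, $\phi_s'(r) \sim s^{-1/2}$ on $\{r \lec s^{1/2}\}$ and $\phi_s'(r) \sim r^{-1}$ on each dyadic annulus $A_{\ell}$ with $\ell \geq -k_s$, with $\phi_s$ monotone and bounded. The contribution of $[\sinh^{-2}(r)\Dlt_{\bbS^{d-1}},\calA_s]$ from the angular part of the hyperbolic Laplacian carries the correct sign precisely because $\phi_s$ is monotone. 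Integrating the standard commutator identity
\begin{equation*}
 \partial_t \angles{u_s}{\calA_s u_s} = \angles{u_s}{i[H,\calA_s]u_s} + 2\,\Im\,\angles{\tilP_s F + [\tilP_s, H_\lot]u}{\calA_s u_s}
\end{equation*}
in time, and controlling the boundary terms via $L^{2}$ conservation of $u$ (valid because $H$ is symmetric) and the boundedness of $\phi_s$, produces $\|u_s\|_{LE_s}^{2}$ on the left up to three error types on the right.

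These errors are: (i) the forcing pairing with $\tilP_s F$, absorbed by duality as $\|\tilP_s F\|_{LE_s^{\ast}} \|u_s\|_{LE_s}$ together with Cauchy--Schwarz; (ii) the multiplier commutator $[\calA_s, H_{\lot}]$, which by~\eqref{eq:decay_assumptions} pays an $r^{-3}$ factor on $\{r \geq 1\}$ and so beats the two $r^{-1/2}$ weights of $\|u_s\|_{LE_s}^2$ by a dyadically summable $r^{-2}$, making the exterior part absorbable; and (iii) the Littlewood--Paley commutator $[\tilP_s, H_{\lot}]u$, which is where the heat-flow Littlewood--Paley machinery developed earlier in the paper is essential. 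Using the Duhamel representation $\tilP_s = -s(\Dlt+\rho^2)e^{s(\Dlt+\rho^2)}$ together with the $r^{-3}$ decay of $\bsb$ and $V$, one converts this term into an $s'$-integral of $\|\tilP_{s'}u\|_{LE_{s'}}$, which by a Schur-type test is bounded, after integration against $s^{-1/2}\ds$, by $\|u\|_{LE}$ with a constant that is small outside a large compact set. The one contribution that cannot be absorbed for free is that of $H_{\lot}$ on the compact region $\{r \leq R\}$, where $\bsb$ and $V$ are permitted to be large; it is estimated only by $\|u\|_{L^{2}(\bbR\times \{r\leq R\})}$, producing the final error in~\eqref{eq:LE1}, and $R = R(d,\bsb,V)$ is chosen large enough (in terms of $\|\bsb\|_{L^\infty}, \|V\|_{L^\infty}$ and the decay rates in~\eqref{eq:decay_assumptions}) so that the exterior absorption closes. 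The low-frequency pieces $\tilP_{\geq s}u$ with $s \in [\tfrac{1}{8},4]$ are handled by the same scheme with a multiplier tuned to the stronger $r^{-3/2}$ exterior weight of $LE_{\low}$, now exploiting the spectral gap $\rho^2>0$ of $-\Dlt$. I expect the principal technical obstacle to be the uniform-in-$s$ treatment of $[\tilP_s, H_\lot]u$ and the demonstration that, outside a sufficiently large compact set, it is bounded from $LE$ into $LE^{\ast}$ with a small constant; this is precisely the juncture at which the heat-flow Littlewood--Paley theory of the preceding sections is indispensable.
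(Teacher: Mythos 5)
Your overall architecture is broadly in the spirit of the paper's proof: a positive commutator with a radial multiplier $\phi_s(r)\partial_r$ whose derivative profile matches the $LE_s$ weight, a weighted Hardy--Poincar\'e argument for the low-frequency piece that exploits the spectral gap $\rho^2$, and control of $[\tilP_s,H_{\lot}]u$ and of $H_{\lot}$ itself via the heat-flow Littlewood--Paley machinery and the decay \eqref{eq:decay_assumptions}. But there is a genuine gap at the heart of the high-frequency step: the fixed-$s$ estimate you announce does not hold, and the claim that the principal commutator $i[-\Dlt,\calA_s]$ ``dominates pointwise the $LE_s$ density'' is false on $\bbH^d$.

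The issue is the spectral-gap term. From the main multiplier identity \eqref{eq:Q1}, with $\beta=\phi_s$,
\begin{equation*}
 \Re\langle i\Dlt\, \tilP_s u,\ Q_s\,\tilP_s u\rangle
 = 2\angles{(\partial_r\phi_s)\,\partial_r\tilP_su}{\partial_r\tilP_su}
   - 2\rho^2\angles{(\partial_r\phi_s)\,\tilP_su}{\tilP_su}
   + (\text{angular, l.o.t.}),
\end{equation*}
and the $-2\rho^2$ term has the wrong sign. The Euclidean heuristic that this is compensated because ``$\nabla\tilP_s u$ is of size $s^{-1/2}\tilP_s u$'' is precisely what fails here: the heat-flow projection $\tilP_s = -s(\Dlt+\rho^2)e^{s(\Dlt+\rho^2)}$ does not sharply cut off frequencies below $s^{-1/2}$, and there is no Bernstein inequality $s^{-1/2}\|\tilP_s u\|_{L^2}\lec\|\nabla\tilP_s u\|_{L^2}$ (Remark~\ref{rem:no-bernstein}). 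Consequently no fixed-$s$ coercivity $\Re\langle i\Dlt\tilP_su,Q_s\tilP_su\rangle\gec s^{-1/2}\|\tilP_su\|_{LE_s}^2 - \|\tilP_su\|^2_{L^2(\{r\le R\})}$ is available, and the scheme cannot proceed scale by scale as you write it. The paper's resolution is to \emph{integrate in $s$ before taking the supremum over $\alpha$} and exploit the identity $\partial_s \tilP_{\ge s}u = -\frac{1}{s}\tilP_s u$: Lemma~\ref{lem:integrated_bernstein_substitute} performs an integration by parts in the heat time that effectively trades $\nabla\tilP_s$ for $s^{-1/2}\tilP_s$ on the $s$-averaged level, and Proposition~\ref{prop:high_frequency_local_smoothing} is an estimate on $\int_0^{s_2/4}s^{-1/2}\|\tilP_su\|_{LE_s}^2\,\ds$, \emph{not} a pointwise-in-$s$ estimate.

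This integration-by-parts in $s$ produces a residual boundary term at $s\simeq 1$ of the form
\begin{equation*}
 \sup_{\{\alpha_\ell\}\in\calA}\int_{s_2/8}^{s_2}\frac{1}{s}\,\angles{(\partial_r\beta_s^{(\alpha)})\tilP_su}{\tilP_su}_{t,x}\,\ds,
\end{equation*}
and here a second gap appears in your outline: this term carries the high-frequency weight $\partial_r\beta_s^{(\alpha)}\sim\brk{r}^{-1}$, while your low-frequency estimate only controls the $r^{-3/2}$-weighted $LE_{\low}$ norm. You cannot simply ``add the low- and high-frequency pieces''; you need the transitioning estimate (Lemma~\ref{lem:transitioning_estimate}), whose proof again integrates by parts in $s$ using $\tilP_s u = -s\partial_s\tilP_{\ge s}u$ and then recognizes the resulting term as the low-frequency commutator quantity \emph{before} the Hardy--Poincar\'e inequality has been applied (cf.~\eqref{equ:additional_bounds_positive_commutator_from_HP}). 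Without this bridge the high-frequency and low-frequency halves do not combine into $\|u\|_{LE}$. There is also a subtler technical point that your sketch skips: interchanging $\sup_{\alpha}$ with $\int\ds$ in the high-frequency argument requires a careful averaging/telescoping in $s$ (the $\chi_\sigma$-weights in the proof of Proposition~\ref{prop:high_frequency_local_smoothing}). Everything else in your outline---the choice of $\phi_s$, the sign of the angular commutator, the use of $L^2$ conservation for boundary terms, the gain of $r^{-3}$ from $H_{\lot}$ and its absorption outside a large ball, the reliance on the heat-flow Littlewood--Paley theory for $[\tilP_s,H_{\lot}]u$---is sound and matches the paper.
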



At the heart of the proof is the construction of a multiplier $Q$ of the form $\frac{1}{i} (\bt(r) \rd_{r} - \rd_{r}^{\ast} \bt(r))$ (in polar coordinates) such that the commutator $[i Q, \Delta]$ 
is positive (i.e., a \emph{positive commutator method}), which is moreover adapted to each heat flow based frequency localization $\tilP_{s} u$ or $\tilP_{\geq s} u$. A more detailed outline of the proof is given in Section~\ref{s:mult}, after the necessary preliminary material is set up.

\begin{rem} [Optimality of the space $LE$] \label{rem:LE-weight}
Our high-frequency component $\nrm{\cdot}_{LE_{s}}$ is optimal in terms of the $r$-weight, but the $r$-weight in our low-frequency component $\nrm{\cdot}_{LE_{\low}}$ is weaker in comparison to the free case \cite{Kaizuka1}. As discussed in Remark~\ref{rem:optimal-weight} below, this loss arises because we do not fully decompose the frequencies $\aleq 1$, and therefore our argument is nonoptimal for very low frequencies. Our current framework has the advantage that it only requires a local-in-time theory for the heat equation $\rd_{s} - \lap_{\bbH^{d}}$, which is very robust. As such, we believe that it can be readily extended to a broader class of manifolds; see Remark~\ref{rem:extension} below. On the other hand, it would be interesting to optimize the $r$-weight in the local smoothing estimate, by perhaps using the global-in-time theory for $\rd_{s} - \lap_{\bbH^{d}}$ to fully decompose the low frequencies.
\end{rem}

\begin{rem} [On the decay assumptions] \label{rem:decay}
The $r$-weight in \eqref{eq:decay_assumptions} (i.e., the decay assumption on $\bsb$ and $V$) is dictated by the difference between the weights in $LE$ and $LE^{\ast}$, or more precisely, by the weights $r^{-\frac{3}{2}}$ and $r^{\frac{3}{2}}$ in $\nrm{\cdot}_{LE_{\low}}$ and $\nrm{\cdot}_{LE^{\ast}_{\low}}$, respectively. In particular, if the $r$-weights in the $LE$ norm (and correspondingly in the $LE^{\ast}$ norm) improve, then these decay assumptions would improve as well.
\end{rem}

Under the additional assumption of stationarity (i.e., time independence), our next main theorem gives a spectral condition, under which the bounded region error $\| u \|_{L^2(\bbR \times \{ r \leq R \})}$ in the previous theorem may be removed. First we need a few more definitions. 

\begin{defn} \label{d:tilLE}
Fix $\nsigma>0$. For any $0<s\leq1$ define the modified local smoothing norms by
\begin{align*}
 \|v\|_{\tilLE_\low} &:= \|\jap{r}^{-\frac{3}{2}-\nsigma}v\|_{L^2(\bbR\times \bbH^d)},\\
 \|v\|_{\tilLE_s} &:= s^{-\frac{1}{4}}\|v\|_{L^2(\bbR\times A_{\leq-k_s})}+\sup_{-k_s\leq\ell < 0}\|r^{-\frac{1}{2}}v\|_{L^2(\bbR\times A_\ell)}+\|r^{-\frac{3}{2}-\nsigma}v\|_{L^2(\bbR\times A_{\geq0})},
\end{align*}
and the corresponding dual norms by
\begin{align*}
 \|G\|_{\tilLE_\low^\ast} &:= \|\jap{r}^{\frac{3}{2}+\nsigma}G\|_{L^2(\bbR\times \bbH^d)},\\
 \|G\|_{\tilLE_s^\ast} &:= s^{\frac{1}{4}}\|G\|_{L^2(\bbR\times A_{\leq-k_s})}+\sum_{-k_s\leq\ell < 0}\|r^{\frac{1}{2}}G\|_{L^2(\bbR\times A_\ell)}+\|r^{\frac{3}{2}+\nsigma}G\|_{L^2(\bbR\times A_{\geq0})}.
\end{align*}
The modified local smoothing norm and its dual are then given by
\begin{align*}
\begin{split}
 \|u\|_{\tilLE}^2 &:= \int_{\frac{1}{8}}^4\|\tilP_{\geq s}u\|_{\tilLE_\low}^2\,\ds+\int_0^{\frac{1}{2}}s^{-\frac{1}{2}}\|\tilP_s u\|_{\tilLE_s}^2\,\ds,\\
 \|F\|_{\tilLE^\ast}^2 &:= \int_{\frac{1}{8}}^4\|\tilP_{\geq s} F\|_{\tilLE_{\low}^\ast}^2\,\ds+\int_{0}^{\frac{1}{2}}s^{\frac{1}{2}}\|\tilP_s F\|_{\tilLE_s^\ast}^2\,\ds.
\end{split}
\end{align*}
\end{defn}

\begin{rem}
The norms $\tilLE$ and $\tilLE^\ast$ are dual norms as will be discussed in Remark~\ref{rem:completedualitytilLE} below. In the definitions above we could have replaced $\| r^{-\frac{3}{2}-\nsigma}v\|_{L^2(\bbR\times A_{\geq 0})}$ by 
\begin{align*}
\begin{split}
\sup_{\ell\geq0}\|r^{-\frac{3}{2}}v\|_{L^2(\bbR\times A_\ell)}.
\end{split}
\end{align*}
However, since the weight $r^{-\frac{3}{2}}$ is not optimal (see for instance Theorem~\ref{t:LE1-sym}) we have opted to work with the simpler definition above.
\end{rem}

To remove the bounded region error in Theorem~\ref{t:LE1-sym}, the key extra requirement is a spectral assumption on $H$, namely that $H$ has no \emph{threshold resonance}. 

\begin{defn} \label{def:th-res}
Let $H$ be stationary and symmetric. We say that $w$ is a \emph{threshold resonance} of $H$ if it is a nontrivial solution to the problem
\begin{equation*}
 H w = \rho^{2} w
\end{equation*}
satisfying\footnote{In Definition~\ref{def:LE0} below, we introduce the space $\tilLE_{0}$, with which \eqref{eq:w-in-LE0} simply reads $w \in \tilLE_{0}$.}
\begin{equation} \label{eq:w-in-LE0}
 1_{[0, 1]}(t) w(x) \in \tilLE,
\end{equation}
and the decay condition
\begin{align}
 \nrm{r^{-\frac{1}{2}} (\rd_{r} + \rho) w}_{L^{2} (A_{j})} \to 0. \label{eq:th-res}
\end{align}
\end{defn}

The precise statement of our next main result is as follows.
\begin{thm}  \label{t:LE-H} 
 Let $d \geq 2$ and assume that \eqref{eq:prin-lap} (i.e., $H_{\prin} = -\lap$) holds, and that $H_{\lot}$ is stationary and symmetric (i.e., $\bsb, V$ are time-independent and real-valued). Assume that \eqref{eq:decay_assumptions} holds and in addition
\begin{equation} \label{equ:LE-H_thm_additional_assumption}
 \sum_{k=0}^1\|\jap{r}^{3+2\nsigma}\nabla^{(k)}\bsb\|_{L^\infty}+\|\jap{r}^{3+2\nsigma}V\|_{L^\infty}<\infty.
\end{equation}
If $H$ has no resonance at the threshold as in Definition~\ref{def:th-res}, then for any $u_{0} \in L^{2}(\bbH^d)$ and $F \in \tilLE^{*}$, the solution $u(t)$ to the linear Schr\"odinger equation~\eqref{eq:S}
with the initial value $u(0) = u_0$ obeys the local smoothing estimate
\EQ{ \label{eq:le-H}
\| P_{c} u \|_{\tilLE} \lesssim   \| P_{c} u_0\|_{L^2} + \| P_{c} F \|_{\tilLE^\ast},
}
where $P_{c}$ is the spectral projection to $[\rho^{2}, \infty)$.
\end{thm}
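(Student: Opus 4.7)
The plan is to combine Theorem~\ref{t:LE1-sym} with a contradiction/compactness argument, using the absence of threshold resonances as the structural hypothesis that eliminates the bounded-region error term.

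\emph{Step 1 (Bounded-error $\tilLE$ estimate).} Since $H$ is stationary and self-adjoint, the spectral projection $P_{c}$ commutes with both $H$ and the Schr\"odinger flow, so $P_{c} u$ solves $(-i\partial_{t} + H)(P_{c} u) = P_{c} F$ with data $P_{c} u_{0}$. A direct comparison of weights on the dyadic annuli $A_{\ell}$ with $\ell \geq 0$ (using $r^{-\frac{3}{2} - \nsigma} \lesssim r^{-\frac{1}{2}}$ for $r \geq 1$ and a Cauchy--Schwarz rearrangement of the $\ell$-sums) yields $\|\cdot\|_{\tilLE} \lesssim \|\cdot\|_{LE}$ and $\|\cdot\|_{LE^{\ast}} \lesssim \|\cdot\|_{\tilLE^{\ast}}$, so Theorem~\ref{t:LE1-sym} applied to $P_{c} u$ produces
\begin{equation} \label{eq:LE-err-plan}
\|P_{c} u\|_{\tilLE} \leq C_{0} \bigl( \|P_{c} u_{0}\|_{L^{2}} + \|P_{c} F\|_{\tilLE^{\ast}} + \|P_{c} u\|_{L^{2}(\bbR \times \{r \leq R\})} \bigr).
\end{equation}

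\emph{Step 2 (Contradiction setup).} If \eqref{eq:le-H} fails, there exist sequences of data $(u_{0,n}, F_{n})$ and solutions $u_{n}$ of \eqref{eq:S} with $\|P_{c} u_{n}\|_{\tilLE} = 1$ and $\|P_{c} u_{0,n}\|_{L^{2}} + \|P_{c} F_{n}\|_{\tilLE^{\ast}} \to 0$, so \eqref{eq:LE-err-plan} forces $\|P_{c} u_{n}\|_{L^{2}(\bbR \times \{r \leq R\})} \gtrsim 1$. The $\tilLE$ bound supplies uniform $L^{2}_{t} H^{\frac{1}{2}}_{\loc,x}$ control (via the high-frequency heat-flow component $s^{-\frac{1}{4}} \|\tilP_{s} u\|_{L^{2}(A_{\leq -k_{s}})}$), while the equation itself controls $\partial_{t}(P_{c} u_{n})$ in $L^{2}_{t} H^{-2}_{\loc,x}$. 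An Aubin--Lions/Rellich--Kondrachov argument then extracts a subsequence converging strongly in $L^{2}_{\loc}(\bbR \times \bbH^{d})$ and weakly in $\tilLE$ to a nonzero limit $u_{\infty} \in \tilLE$ satisfying $P_{c} u_{\infty} = u_{\infty}$ and solving $(-i\partial_{t} + H) u_{\infty} = 0$.

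\emph{Step 3 (Spectral elimination of $u_{\infty}$).} Taking the time Fourier transform, $\widehat{u_{\infty}}(\tau,\cdot)$ is supported in $\sigma_{c}(H) = [\rho^{2}, \infty)$ and satisfies $(H-\tau)\widehat{u_{\infty}}(\tau,\cdot)=0$ with weighted spatial decay inherited from $u_{\infty} \in \tilLE$. On each compact subinterval of $(\rho^{2}, \infty)$, one rules out weighted-$L^{2}$ solutions to $(H-\tau)w=0$ by a Rellich/Agmon--Kato--Kuroda type argument: readapting the positive-commutator multiplier $Q$ from the proof of Theorem~\ref{t:LE1-sym} at the fixed energy $\tau > \rho^{2}$, the spectral gap $\tau - \rho^{2}$ produces a coercive commutator $[iQ, H - \tau]$ after the lower order terms are absorbed using the reinforced decay \eqref{equ:LE-H_thm_additional_assumption}. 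The remaining boundary case, where the spectral content of $u_{\infty}$ concentrates at $\tau = \rho^{2}$, is handled by extracting a nontrivial spatial profile $w$ with $Hw = \rho^{2} w$ whose decay---inherited from $\|u_{\infty}\|_{\tilLE} < \infty$ together with an integration by parts in the outgoing variable $(\partial_{r}+\rho)$ after a conjugation that removes the exponential volume factor of $\bbH^{d}$ at infinity---matches precisely \eqref{eq:w-in-LE0} and \eqref{eq:th-res}. By hypothesis no such $w$ exists, so $u_{\infty} = 0$, contradicting Step 2.

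\emph{Main obstacle.} The crux is Step 3: carrying out the positive-commutator uniqueness uniformly at each $\tau \geq \rho^{2}$, and in particular the boundary $\tau = \rho^{2}$, where the spectral gap degenerates and one must match the $\tilLE$-weights exactly to the asymptotic form of threshold generalized eigenfunctions so that the resonance condition \eqref{eq:th-res} becomes the only possible obstruction. This is the point at which the absence of threshold resonances enters in an essential way, as a weaker hypothesis (e.g., only absence of embedded eigenvalues) would leave room for $\hat{u}_{\infty}$ to survive at the threshold.
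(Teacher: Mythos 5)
There is a genuine gap at the heart of Step~2, and the gaps compound in Step~3.

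\textbf{The space-time compactness does not close.} You argue that since $\|P_c u_n\|_{L^2(\bbR \times \{r \leq R\})} \gtrsim 1$ and one has strong $L^2_{\loc}(\bbR \times \bbH^d)$ convergence, the limit $u_{\infty}$ is nonzero. But the domain $\bbR \times \{r \leq R\}$ is unbounded in time, so Aubin--Lions/Rellich only gives strong convergence on \emph{compact} time intervals. The mass $\|P_c u_n\|_{L^2(\bbR \times \{r \leq R\})}$ can escape to $|t| \to \infty$ or spread over the whole time axis, and nothing you have forbids $u_{\infty} = 0$. The time-translation invariance of the stationary operator does not fix this either — one would need a concentration-compactness/profile decomposition in $t$, which is not sketched. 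This is precisely why the paper does not run a direct space-time compactness argument. Instead, it (i) reduces to forward solutions, (ii) damps by $e^{-\eps t}$ so the solution is globally square-integrable in $t$ (Lemma~\ref{lem:LEdamp}), (iii) takes the Fourier transform in $t$ and applies Plancherel using the auxiliary Hilbert-space structure of the $\calX_{\alp}$-spaces (Lemma~\ref{l:albeLE}), thereby converting the problem to a family of fixed-$\tau$ resolvent estimates \eqref{eq:fixedtau1}--\eqref{eq:fixedtau2}. The compactness is then purely spatial at a fixed $\tau$, where $\tilLE_0 \hookrightarrow H^{1/2}_{\loc}$ (Lemma~\ref{l:H12loc}) plus the equation gives a classical Rellich argument producing a nontrivial spatial limit $v_{\infty}$.

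\textbf{The radiation condition and the elimination of embedded resonances are missing.} In Step~3 you propose to take the Fourier transform of the space-time limit and run a positive-commutator uniqueness at each $\tau > \rho^2$ by invoking the ``spectral gap.'' Two problems: first, even if you had a nontrivial $u_{\infty}$, the Fourier transform would produce $\widehat{u_{\infty}}(\tau,\cdot)$ as a function of $\tau$ that vanishes for a.e.~$\tau$ rather than a nontrivial spatial profile at a specific energy. Second, and more fundamentally, the key input for ruling out embedded resonances is not a coercivity at positive spectral gap but the \emph{outgoing radiation condition} \eqref{eq:orc}, which the paper extracts with significant care (Step~4 in the proof of Proposition~\ref{p:LE-res}, using the identity \eqref{eq:orc-key} and the rate-of-convergence bookkeeping \eqref{eq:orc-goal}). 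Without it, boundary terms in the multiplier argument do not vanish and no uniqueness follows. Moreover, the actual absence of embedded resonances (Proposition~\ref{p:no-emb-res}) is itself a three-step theorem: outgoing radiation $\Rightarrow$ faster-than-polynomial decay (Lemma~\ref{l:poly-decay-key}), then polynomial decay $\Rightarrow$ compact support (a Donnelly-type monotonicity of $r^4 L_m$ that crucially exploits the hyperbolic exponential volume growth to tame the first-order perturbation $\bsb$), then compact support $\Rightarrow$ zero by unique continuation. Your ``coercive commutator $[iQ, H-\tau]$'' phrase elides all of this, and indeed the paper's Remark following Proposition~\ref{p:no-emb-res} explains that for $\bsb \neq 0$ the Euclidean analogue is genuinely harder, so this is not a one-line argument.

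In short, the proposal has roughly the right high-level shape (bounded-error estimate $+$ compactness $+$ absence of resonances) but both the compactness extraction and the elimination of embedded resonances are incorrect or incomplete as stated; the paper's route through the limiting absorption principle and the explicit radiation-condition extraction is essential.
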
 

\begin{rem} \label{rem:spec}
From \eqref{eq:prin-lap} and \eqref{eq:decay_assumptions}, by standard spectral theory, $H$ is self-adjoint; thus the projection $P_{c}$ is well-defined. By Weyl's theorem, the essential spectrum of $H$ is $[\rho^{2}, \infty)$, which is the same as $H_{\prin} = -\lap$. The discrete spectrum $\sgm_{disc}(H)$ consists of discrete finite-multiplicity eigenvalues in $(-\infty, \rho^{2})$, whose only possible accumulation point is $\rho^{2}$. We note that under stronger decay assumptions on the potentials, it follows from Bouclet's work~\cite{Bou13} that $\rho^2$ cannot be an eigenvalue of $H$. 
\end{rem}

Our proof of Theorem~\ref{t:LE-H} exploits the well-known equivalence between a local smoothing estimate (e.g. \eqref{eq:le-H}) and the \emph{limiting absorption principle}, i.e.,  bounds for the resolvent $(\tau \pm i \eps - H)^{-1}$ in a suitable space which are uniform in $\eps$ and sharp in terms of $\tau$. One of the ingredients is the proof of absence of resonances embedded in $(\rho^{2}, \infty)$ for first and zeroth order perturbations, which extends the results of Donnelly \cite{Donnelly1} and Borthwick--Marzuola \cite{BorMar1}. We refer to Section~\ref{ss:LE-H-outline} for further discussion and remarks concerning the proof of Theorem~\ref{t:LE-H}.

While Definition~\ref{def:th-res} is concise, in practice it may not be easy to work with as the space $\tilLE$ is rather complicated. 
We observe, however, that the threshold resonance is a common obstruction to a wider class of estimates for $\rho^{2} - H$.
Exploiting this idea, we obtain the following more concrete characterization of the threshold (non)resonance:
\begin{prop} \label{p:no-th-res}
For $H$ obeying the assumptions of Theorem~\ref{t:LE-H}, the following statements are equivalent.
\begin{enumerate}
\item $H$ has no threshold resonance.
\item ($H^{1}_{thr}$-type bound for $\rho^{2} - H$) There exists $C_{0} > 0$ such that
\begin{equation} \label{eq:no-th-res-H1}
	\nrm{v}_{H^{1}_{thr}} \leq C_{0} \nrm{(\rho^{2} - H) v}_{H^{-1}_{thr}},
\end{equation}
where
\begin{equation} 
\nrm{v}_{H^{1}_{thr}} = \nrm{(\rd_{r} + \rho) v}_{L^{2}} + \nrm{\frac{1}{\sinh r} \slashed{\nabla} v}_{L^{2}} + \nrm{\frac{1}{\brk{r}} v}_{L^{2}},
\end{equation}
and $H^{-1}_{thr}$ is the dual of $H^{1}_{thr}$.
\item ($LE_{thr}^1$-type bound for $\rho^{2} - H$) There exists $C'_{0} > 0$ such that
\begin{equation} \label{eq:no-th-res-LE}
	\nrm{v}_{LE^{1}_{thr}} \leq C_{0}' \nrm{(\rho^{2} - H) v}_{LE^{\ast}_{thr}},
\end{equation}
where
\begin{align} 
\nrm{v}_{LE^{1}_{thr}} &= \nrm{(\rd_{r} + \rho) v}_{LE_{thr}} + \nrm{\frac{1}{\sinh r} \slashed{\nabla} v}_{LE_{thr}} + \nrm{\frac{1}{\brk{r}} v}_{LE_{thr}}, \\
\nrm{v}_{LE_{thr}} &= \nrm{v}_{L^{2}(A_{\leq 0})} + \sup_{\ell \geq 0} \nrm{r^{-\frac{1}{2}} v}_{L^{2}(A_{\ell})}, \\
\nrm{G}_{LE^{\ast}_{thr}} &= \nrm{G}_{L^{2}(A_{\leq 0})} + \sum_{\ell \geq 0} \nrm{r^{\frac{1}{2}} G}_{L^{2}(A_{\ell})}.
\end{align}
\end{enumerate}
\end{prop}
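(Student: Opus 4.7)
The strategy is to establish $(2) \Leftrightarrow (3)$ as a comparison of weighted norms and $(1) \Leftrightarrow (2)$ as the substantive Fredholm-type equivalence. For the easy contrapositives $(2) \Rightarrow (1)$ and $(3) \Rightarrow (1)$, I would first upgrade any threshold resonance $w$ from Definition~\ref{def:th-res} to an element of $H^{1}_{thr}$. Standard elliptic regularity for $(\rho^2 - H) w = 0$, combined with $w \in \tilLE$ (which in particular gives $\brk{r}^{-3/2-\nsigma} w \in L^2$) and an ODE analysis at spatial infinity using the decay of the coefficients of $H_{\lot}$, shows that $w$ has leading asymptotic profile of the form $a(\omega) e^{-\rho r} + b(\omega) r e^{-\rho r}$. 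The decay hypothesis \eqref{eq:th-res} forces $b(\omega) \equiv 0$, after which $w \in H^1_{thr} \cap LE^1_{thr}$ follows directly. Applying (2) (resp.\ (3)) to $w$ with $(\rho^2 - H) w = 0$ then gives $\|w\|_{H^1_{thr}} = 0$ (resp.\ $\|w\|_{LE^1_{thr}} = 0$), hence $w \equiv 0$, contradicting nontriviality.

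The equivalence $(2) \Leftrightarrow (3)$ is a norm comparison on the three building blocks $(\rd_r + \rho) v$, $(\sinh r)^{-1} \slashed{\nabla} v$, and $\brk{r}^{-1} v$. The elementary dyadic bound $\|\brk{r}^{-1} v\|_{L^2(A_\ell)}^2 \lesssim 2^{-\ell} \|r^{-1/2} v\|_{L^2(A_\ell)}^2$ for $\ell \geq 0$, summed in $\ell$, gives $\|v\|_{H^1_{thr}} \lesssim \|v\|_{LE^1_{thr}}$; and by the dual computation $\sum_{\ell \geq 0} \|r^{1/2} G\|_{L^2(A_\ell)} \lesssim \|\brk{r} G\|_{L^2}$, it also gives $\|G\|_{LE^{\ast}_{thr}} \lesssim \|G\|_{H^{-1}_{thr}}$. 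These two inequalities together immediately yield $(3) \Rightarrow (2)$. For the reverse implication I would invoke the frequency-localized local smoothing apparatus underlying Theorem~\ref{t:LE-H}, specifically the limiting absorption estimates for $(\tau \pm i\eps - H)^{-1}$ as $\tau \to \rho^2$, to upgrade an $H^{1}_{thr}$-type bound into the sharper $LE^1_{thr}$-type bound at the threshold frequency.

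The core content is $(1) \Rightarrow (2)$, which I would prove by contradiction. If (2) failed, there would exist $\{v_n\}$ with $\|v_n\|_{H^1_{thr}} = 1$ and $\|(\rho^2 - H) v_n\|_{H^{-1}_{thr}} \to 0$. Weak compactness in $H^1_{thr}$ together with Rellich's theorem on bounded regions (where the weight $\brk{r}^{-1}$ gives a small gain of decay) yields a weak limit $v_\infty$ solving $(\rho^2 - H) v_\infty = 0$. If $v_\infty \not\equiv 0$, then lower semicontinuity of the weighted norms and the uniform $L^2$-bound on $(\rd_r + \rho) v_n$ show that $v_\infty$ satisfies the conditions of Definition~\ref{def:th-res}, producing a threshold resonance and contradicting (1). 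The main obstacle is the case $v_\infty \equiv 0$, when the mass of $v_n$ escapes to spatial infinity. To rule this out I would invoke a positive commutator estimate localized to $r \geq R \gg 1$, exploiting both that $H$ is a small perturbation of $-\Dlt$ there by \eqref{equ:LE-H_thm_additional_assumption} and the absence of embedded resonances on $(\rho^2, \infty)$ for this class of operators (established elsewhere in the paper, extending the works of Donnelly and Borthwick--Marzuola). This converts the smallness of $(\rho^2 - H) v_n$ in $H^{-1}_{thr}$ into smallness of $v_n$ in $H^1_{thr}(\{r \geq R\})$, giving the desired contradiction with $\|v_n\|_{H^1_{thr}} = 1$.
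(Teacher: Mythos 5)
Your proposal has a genuine gap in the $(2)\Leftrightarrow(3)$ direction, and some of the other steps are sketchy or misattributed. Let me go through the issues.

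\textbf{The embedding is backwards.} You claim $\nrm{v}_{H^1_{thr}} \lesssim \nrm{v}_{LE^1_{thr}}$ from a dyadic bound on the $\brk{r}^{-1}v$ piece. That computation is fine for that single building block, but it does \emph{not} transfer to the other two: $\nrm{(\rd_r+\rho)v}_{L^2(A_{\geq 0})}^2 = \sum_\ell 2^{\ell}\,\nrm{r^{-\frac12}(\rd_r+\rho)v}_{L^2(A_\ell)}^2$, which is not controlled by the supremum in $LE_{thr}$. Since $LE_{thr}$ attaches a \emph{decaying} $r^{-\frac12}$ weight to a dyadic $\sup$, it is in fact the \emph{weaker} norm: the paper explicitly uses $H^1_{thr}\hookrightarrow LE^1_{thr}$. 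So the norm comparison gives you nothing toward $(3)\Rightarrow(2)$, and the dual estimate $\nrm{G}_{LE^\ast_{thr}}\lesssim\nrm{G}_{H^{-1}_{thr}}$ is also false in general ($\brk{r}G\in L^2$ is a proper subspace of $H^{-1}_{thr}$). The paper sidesteps this by proving the cycle $(1)\Rightarrow(2)\Rightarrow(3)\Rightarrow(1)$ rather than the chord $(2)\Leftrightarrow(3)$: for $(2)\Rightarrow(3)$ it first establishes both bounds for $-\lap$ (by the weighted Hardy--Poincar\'e identity of Lemma~\ref{l:thr-hardy} plus the low-frequency multiplier of Lemma~\ref{l:weighted_HP_applied_to_main_multiplier_identity}), extends to $H_{>R}=-\lap+\chi_{>R}H_{\lot}$ by smallness of the tail, and then treats $(\rho^2-H)v$ and $(\rho^2-H_{>R})v$ as differing by a term supported in $\{r\aleq R\}$ which is absorbed via the $H^1_{thr}$ bound $(2)$ and the embedding $H^1_{thr}\hookrightarrow LE^1_{thr}$.

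\textbf{$(1)\Rightarrow(2)$.} Your compactness skeleton is right, but the mechanism to rule out $v_\infty\equiv 0$ is misidentified. The absence of embedded resonances on $(\rho^2,\infty)$ is about spectral parameters strictly above the threshold; it does not bear on $\tau=\rho^2$. What actually forces mass to stay in a fixed ball is the a priori bound $\nrm{v}_{H^1_{thr}}\aleq\nrm{(\rho^2-H)v}_{H^{-1}_{thr}}+\nrm{v}_{L^2(\{r\leq 10R\})}$, which follows from the free $H^{-1}_{thr}\to H^1_{thr}$ estimate applied to $H_{>R}$ together with the fact that $(\rho^2-H_{>R})v-(\rho^2-H)v$ is a compactly supported first-order operator. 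On the nondegenerate sequence this forces $\liminf\nrm{v_n}_{L^2(\{r\leq 10R\})}\ageq 1$, and Rellich then yields a nonzero limit. Your positive commutator on $\{r\geq R\}$ would in effect reconstruct the same estimate, so this is more a matter of phrasing than of substance, but the invocation of embedded resonances should be dropped.

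\textbf{$(3)\Rightarrow(1)$.} You assert that a threshold resonance $w$ lies in $H^1_{thr}$ by ODE asymptotics at infinity, but the definition only gives $w\in\tilLE_0$ plus the radiation condition $\nrm{r^{-\frac12}(\rd_r+\rho)w}_{L^2(A_j)}\to 0$, which does not immediately yield $(\rd_r+\rho)w\in L^2$. (Indeed, in Step~5 of Proposition~\ref{p:LE-res} only one of the two extraction scenarios produces $(\rd_r+\rho)v_\infty\in L^2$; the other only gives the weaker decay.) The paper's route is to upgrade the decay to $\nrm{r^{-\frac32}w}_{L^2(A_j)}\to 0$ via Lemma~\ref{l:thr-hardy} with $\gmm=\chi_{>R}\,\alp_{(j)}(r)/r$, and then run a cutoff argument $\chi_{\leq R}w$ inside $(3)$. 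Your asymptotic ODE analysis could work in principle, but you would still need to control the commutator $[H-\rho^2,\chi_{\leq R}]w$ when cutting off, for which the weaker decay suffices via the Hardy--Poincar\'e estimate; without that, the claim $w\in H^1_{thr}$ is not justified.
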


Estimate \eqref{eq:no-th-res-H1} is easy to verify for the free Laplacian by a simple integration by parts; see Section~\ref{ss:th-res}. In fact, the same proof applies to perturbations of the Laplacian with a magnetic potential (i.e., symmetric first-order potential) and a nonnegative electric potential (i.e., real-valued zeroth-order potential). This case furnishes an important first class of examples for which the local smoothing estimate \eqref{eq:le-H} holds.
\begin{cor} \label{c:no-th-res}
For Hamiltonians of the form
\begin{equation*}
	H = - \lap_{\bsb} + V = - (\nb_{\mu} + i \bsb_{\mu}) \bsh^{\mu \nu} (\nb_{\nu} + i \bsb_{\nu}) + V
\end{equation*}
where $\bsb$ is real-valued and $V \geq 0$, the bound \eqref{eq:no-th-res-H1} holds. Thus, if \eqref{eq:decay_assumptions} and \eqref{equ:LE-H_thm_additional_assumption} are satisfied, then \eqref{eq:le-H} holds as well.
\end{cor}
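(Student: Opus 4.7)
The plan is to verify assumption (2) of Proposition~\ref{p:no-th-res}, namely the $H^1_{thr}$-bound \eqref{eq:no-th-res-H1}, and then to chain Proposition~\ref{p:no-th-res} with Theorem~\ref{t:LE-H}: the former converts \eqref{eq:no-th-res-H1} into the statement that $H$ has no threshold resonance, and the latter then produces the local smoothing estimate \eqref{eq:le-H} (the decay hypotheses \eqref{eq:decay_assumptions} and \eqref{equ:LE-H_thm_additional_assumption} already appear in the statement of the corollary, so no extra work is required to feed the theorem).

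For the verification of \eqref{eq:no-th-res-H1}, the idea is a simple integration-by-parts pairing, \emph{exactly as for the free Laplacian}, which is robust to the addition of a real magnetic potential and a nonnegative electric potential. Pairing $(\rho^2-H)v$ with $\bar v$ gives the real identity
\[
 -\Re\,\angles{(\rho^2-H)v}{v} = \|D_{\bsb}v\|_{L^2}^2 + \int V|v|^2\,\dvol - \rho^2\|v\|_{L^2}^2, \qquad D_{\bsb}:=\nb+i\bsb.
\]
Working in polar coordinates $(r,\omega)$ on $\bbH^d$ with volume form $\sinh^{d-1}r\,dr\,d\omega$ and integrating by parts in $r$, using $\rd_r\sinh^{d-1}r = 2\rho\coth r\,\sinh^{d-1}r$ and $\coth r-1 = e^{-r}/\sinh r$, the right-hand side factorizes as
\[
 \|(\rd_r+i\bsb_r+\rho)v\|_{L^2}^2 + \Big\|\tfrac{1}{\sinh r}\slashed{D}_{\bsb}v\Big\|_{L^2}^2 + 2\rho^2\int\frac{e^{-r}}{\sinh r}|v|^2\,\dvol + \int V|v|^2\,\dvol,
\]
where $\bsb_r=\bsb(\rd_r)$ and $\slashed{D}_{\bsb}$ denotes the spherical part of $D_{\bsb}$. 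Every term is nonnegative under the sign assumption $V\geq 0$.

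To extract the non-magnetic $H^1_{thr}$-norm of $v$, the pointwise diamagnetic inequalities $|(\rd_r+i\bsb_r+\rho)v|\geq|(\rd_r+\rho)|v||$ and $|\slashed{D}_{\bsb}v|\geq|\slashed{\nb}|v||$ reduce the first two terms to bounds on $|v|$; the weight $e^{-r}/\sinh r\sim 1/r$ near $r=0$ directly controls $\|v/\jap{r}\|_{L^2(r\le 1)}$; and a one-dimensional Hardy inequality on each radial ray converts the residual radial contribution into $\|v/\jap{r}\|_{L^2(r\ge 1)}$. Applying Cauchy--Schwarz to $|\angles{(\rho^2-H)v}{v}|\leq\|(\rho^2-H)v\|_{H^{-1}_{thr}}\|v\|_{H^1_{thr}}$ then yields \eqref{eq:no-th-res-H1}, modulo the passage from $|v|$ back to $v$, which is harmless for the conclusion of absence of threshold resonance, since $\||w|\|_{H^1_{thr}}=0$ already forces $w\equiv 0$ via the $L^2$-component of the norm.

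The main obstacle is the Hardy estimate for the tail $r\ge 1$: the IBP weight $e^{-r}/\sinh r$ is exponentially small in this range and therefore useless, so $\|v/\jap{r}\|_{L^2(r\ge 1)}^2$ must be extracted entirely from $\|(\rd_r+\rho)|v|\|_{L^2}^2$ by a direct radial computation. The key mechanism is that conjugation by $e^{\rho r}$ turns $\rd_r+\rho$ into $\rd_r$, while the hyperbolic volume element $\sinh^{d-1}r\sim e^{2\rho r}$ precisely eliminates the resonant mode $e^{-\rho r}$, after which a standard one-dimensional Hardy inequality applies (with any boundary contribution at $r=1$ absorbed by the small-$r$ terms). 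Once this Hardy estimate is in hand, the rest of the argument is a routine chaining of Proposition~\ref{p:no-th-res} and Theorem~\ref{t:LE-H}.
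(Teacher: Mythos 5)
Your multiplier identity and sign decomposition are correct. The factorization you write down is exactly the $\gmm=1$, $\kpp=0$ case of \eqref{eq:mult-far} — i.e.\ the analogue of \eqref{eq:mult-2d} — applied with $\bfD_{\mu}=\rd_{\mu}+i\bsb_{\mu}$ in place of $\nb_{\mu}$, and it has the pleasant feature of being non-negative term by term uniformly in $d$ (no $\rho(\rho-1)$ sign change to worry about). Your $e^{\rho r}$-conjugation plus 1D Hardy step for obtaining the $\brk{r}^{-2}$-weighted lower bound is also sound, though the paper's Lemma~\ref{l:thr-hardy} handles this more cleanly as an exact integration-by-parts identity with no $r\lessgtr 1$ splitting and no boundary-term bookkeeping.

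The gap is in the final passage. The diamagnetic inequality runs in the direction $|\nb|v||\le|\bfD v|$, so the chain you describe yields a lower bound
\begin{equation*}
\big\||v|\big\|_{H^{1}_{thr}}^{2}\lesssim \big|\Re\brk{(\rho^{2}-H)v,v}\big|,
\end{equation*}
whereas \eqref{eq:no-th-res-H1} asserts the same with $\|v\|_{H^{1}_{thr}}$ on the left. These are not interchangeable: the gradient components of $\|v\|_{H^{1}_{thr}}$ dominate those of $\||v|\|_{H^{1}_{thr}}$, not the other way around, so after Cauchy--Schwarz you obtain $\||v|\|_{H^{1}_{thr}}^{2}\lesssim\|(\rho^{2}-H)v\|_{H^{-1}_{thr}}\|v\|_{H^{1}_{thr}}$, which does not close. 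Your closing remark — that this is harmless because $\||w|\|_{H^{1}_{thr}}=0$ already forces $w\equiv 0$ — sidesteps the actual claim of the corollary (which asserts \eqref{eq:no-th-res-H1} itself), and even for the threshold-non-resonance application it would require reworking the cutoff/regularization step used in the chain $(2)\Rightarrow(3)\Rightarrow(1)$ of Proposition~\ref{p:no-th-res}, since that step is formulated with the full non-magnetic $H^{1}_{thr}$ and $LE^{1}_{thr}$ norms on the left.

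The paper resolves this by applying the diamagnetic inequality more sparingly: it is used \emph{only} to produce the Hardy--Poincar\'e lower bound $c_{0}\int\brk{r}^{-2}|v|^{2}\,\dh\le-\Re\brk{(\rho^{2}-H)v,v}$ (via the $|v|$-version of \eqref{eq:mult-hp}), while the gradient components of $\|v\|_{H^{1}_{thr}}$ are recovered \emph{directly} from the covariant squares $\|(\bfD_{r}+\rho)v\|_{L^{2}}^{2}+\|\sinh^{-1}r\,\slashed{\bfD}v\|_{L^{2}}^{2}$ by splitting $(\rd_{r},\slashed{\nb})v=(\bfD_{r},\slashed{\bfD})v-i(\bsb_{r},\slashed{\bsb})v$ and absorbing the magnetic contributions $\bsb\cdot v$ into the Hardy weight using $\nrm{\brk{r}\bsb}_{L^{\infty}}<\infty$. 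This is precisely the step that upgrades the estimate from $\||v|\|_{H^{1}_{thr}}$ to $\|v\|_{H^{1}_{thr}}$, and it is what is missing from your proposal.
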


\subsubsection{Main results for small nonstationary, nonsymmetric perturbations}
Our next class of results concern a fairly general class of small perturbations of the cases considered in Theorems~\ref{t:LE1-sym} and \ref{t:LE-H}. The perturbations may be nonstationary (time-dependent), nonsymmetric (complex-valued) and second order. These results demonstrate the robustness of our approach, which is essentially a combination of a multiplier (or a positive commutator) argument and a heat flow based Littlewood-Paley theory.

We start with a generalization of Theorem~\ref{t:LE1-sym}. Let $\varepsilon_0>0$ be a small constant to be fixed. Instead of \eqref{eq:prin-lap}, we assume that $H_{\prin} = - \nb_{\mu} \bsa^{\mu \nu} \nb_{\nu}$, where $\bsa$ satisfies the smallness and decay condition
\begin{equation} \label{eq:decay_assumptions-prin}
\begin{split}
 &\sum_{k=0}^4 \, \Bigl( \|\nabla^{(k)}(\bsa-\bsh^{-1})\|_{L^\infty(\bbR\times \bbH^d)} + \sum_{\ell\geq0}\|r^{2}\nabla^{(k)}(\bsa-\bsh^{-1})\|_{L^\infty(\bbR\times A_\ell)} \Bigr) \\
 &\qquad +\sum_{k=0}^1\, \sum_{\ell\geq0}\|r^{3}\nabla^{(k)}(\bsa-\bsh^{-1})\|_{L^\infty(\bbR\times A_\ell)}\leq \varepsilon_0
 \end{split}
\end{equation}
as well as the following vanishing condition at $r = 0$:
\begin{equation} \label{eq:vanishing_assumption-prin}
	\bsa(\ud r, \ud \tht^{a}) \vert_{r = 0} = 0.
\end{equation}
Here $\bsh^{-1}$ denotes the inverse of the hyperbolic metric $\bsh$ on $\bbH^d$, $\tht^{a}$ is any angular coordinate in the polar coordinates on $\bbH^{d}$ (see Section~\ref{ss:geometry}). Moreover, we assume that $H_{\lot}$ is possibly nonsymmetric (i.e., $\bsb, V$ are complex-valued), satisfies \eqref{eq:decay_assumptions}, and the nonsymmetric part obeys the smallness and decay condition 
\begin{equation} \label{eq:decay_assumptions-lot}
\begin{split}
 &\sum_{k=0}^4 \|\nabla^{(k)}\Im \bsb\|_{L^\infty(\bbR\times \bbH^d)} +\sum_{k=0}^1\, \sum_{\ell\geq0}\|r^{3}\nabla^{(k)}\Im \bsb\|_{L^\infty(\bbR\times A_\ell)}  \leq \veps_{0}, \\
 &\sum_{k=0}^1 \, \Bigl( \|\nabla^{(k)} \Im V\|_{L^\infty(\bbR\times \bbH^d)} + \sum_{\ell\geq0}\|r^{3}\nabla^{(k)} \Im V\|_{L^\infty(\bbR\times A_\ell)} \Bigr) \leq \veps_{0}.
 \end{split}
\end{equation}
Under these assumptions, the conclusion of Theorem~\ref{t:LE1-sym} remains true:
\begin{thm} \label{t:LE1}
 Let $d \geq 2$ and assume that \eqref{eq:decay_assumptions}, \eqref{eq:decay_assumptions-prin}--\eqref{eq:decay_assumptions-lot} 
 hold with a suitably small $\veps_{0} > 0$\footnote{Here, $\veps_{0}$ depends on the underlying symmetric operator $H_{\mathrm{sym}} = - \lap + \frac{1}{i} (\Re \bsb^{\mu} \nb_{\mu} + \nb_{\mu} \Re \bsb^{\mu}) + \Re V$.}.  Then, there exists a sufficiently large constant $R = R(d, \bsb, V) \gg 1$ so that for any $u_{0} \in L^{2}(\bbH^d)$ and $F \in \tilLE^{\ast}$, the solution $u(t)$ to the linear Schr\"odinger equation~\eqref{eq:S} with initial value $u(0) = u_0$ satisfies \eqref{eq:LE1}, i.e.,
  \begin{equation*} 
  \| u \|_{LE} \lesssim \| u_0 \|_{L^2} + \| F \|_{LE^\ast} + \| u \|_{L^2(\bbR \times \{ r \leq R \})}.
 \end{equation*}
\end{thm}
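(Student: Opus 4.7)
The plan is to view Theorem~\ref{t:LE1} as a perturbation of Theorem~\ref{t:LE1-sym} and close by smallness. Decompose
\[
H = H_{\mathrm{sym}} + H_{\mathrm{pert}},
\]
where the symmetric main part
\[
H_{\mathrm{sym}} u := -\lap u + \tfrac{1}{i}\bigl(\Re \bsb^{\mu} \nabla_{\mu} u + \nabla_{\mu}(\Re \bsb^{\mu} u)\bigr) + (\Re V) u
\]
has principal part $-\lap$ and symmetric lower order part satisfying \eqref{eq:decay_assumptions} (which is inherited from the full $\bsb, V$), while the perturbation collects all the terms controlled by $\veps_0$ in \eqref{eq:decay_assumptions-prin}--\eqref{eq:decay_assumptions-lot}:
\[
H_{\mathrm{pert}} u = -\nabla_{\mu}\bigl((\bsa - \bsh^{-1})^{\mu\nu} \nabla_{\nu} u\bigr) + \bigl(\Im \bsb^{\mu} \nabla_{\mu} u + \nabla_{\mu}(\Im \bsb^{\mu} u)\bigr) + i (\Im V) u.
\]
Since $H_{\mathrm{sym}}$ falls exactly under the hypotheses of Theorem~\ref{t:LE1-sym}, rewriting \eqref{eq:S} as $(-i\partial_t + H_{\mathrm{sym}}) u = F - H_{\mathrm{pert}} u$ and applying that theorem yields
\[
\| u \|_{LE} \lesssim \| u_0 \|_{L^2} + \| F \|_{LE^\ast} + \| H_{\mathrm{pert}} u \|_{LE^\ast} + \| u \|_{L^2(\bbR \times \{ r \leq R \})},
\]
with $R = R(d, \Re \bsb, \Re V) \leq R(d, \bsb, V)$. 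The proof reduces to the single perturbative estimate
\begin{equation} \label{eq:pert-plan}
 \| H_{\mathrm{pert}} u \|_{LE^\ast} \leq C \veps_0 \| u \|_{LE}.
\end{equation}

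The main work is \eqref{eq:pert-plan}. I would prove this frequency by frequency: for each heat time $s \in (0, \tfrac{1}{2})$, first commute $\tilP_s$ past the coefficients of $H_{\mathrm{pert}}$ using the heat-flow Littlewood-Paley machinery developed in the paper, so that one reduces to estimating each frequency piece with its coefficient frozen at the relevant scale. The zeroth and first order imaginary parts are straightforward: their $\veps_0$-smallness, combined with the decay $\jap{r}^{-3}$ at large radii in \eqref{eq:decay_assumptions-lot}, matches precisely the weight difference between $LE_{\mathrm{low}}$/$LE_s$ and $LE_{\mathrm{low}}^\ast$/$LE_s^\ast$, exactly as in Remark~\ref{rem:decay}, and produces a factor of $\veps_0$ at each frequency. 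The second-order piece $\nabla_{\mu}((\bsa-\bsh^{-1})^{\mu\nu}\nabla_{\nu} u)$ is the delicate one. At frequency $s^{-1/2}$ each derivative costs $s^{-1/2}$, so one loses $s^{-1}$; this is recovered because the ratio of measures $s^{1/2}/s^{-1/2} = s$ in the definitions \eqref{eq:LE} and \eqref{eq:LE*} of $LE$ and $LE^{\ast}$ provides exactly two derivatives of regularity gap, while the small-scale weight ratio $s^{1/4}/s^{-1/4} = s^{1/2}$ in $LE_s^\ast$ vs $LE_s$ is consistent with keeping one derivative inside the outer divergence. The large-$r$ weight gap is $r$ (versus $r^{-1}$), and the $r^{-2}$ decay of $\bsa - \bsh^{-1}$ in \eqref{eq:decay_assumptions-prin} (respectively $r^{-3}$ on its derivative) compensates this after distributing the derivatives, leaving an overall factor of $\veps_0$.

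With \eqref{eq:pert-plan} in hand, one chooses $\veps_0$ small (depending only on the implicit constants in Theorem~\ref{t:LE1-sym} and the commutator estimates from the Littlewood-Paley theory) to absorb $C\veps_0 \| u \|_{LE}$ into the left hand side, giving \eqref{eq:LE1}. A standard density argument based on truncating $u$ by a sharp time cutoff, together with an approximation in which $F, u_0$ are replaced by smooth compactly supported data, justifies that $\|u\|_{LE}$ is initially finite so that the absorption is legitimate.

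The main obstacle I anticipate is the second-order perturbation estimate in \eqref{eq:pert-plan}: carefully tracking the commutators $[\tilP_s, (\bsa - \bsh^{-1})^{\mu\nu}]$ and $[\tilP_s, \nabla_{\mu}]$ across all heat times, and verifying that the interplay between the small-scale weights $s^{\pm 1/4}$, the large-scale weights $r^{\pm 1/2}$, the two-derivative loss, and the decay/smallness of $\bsa - \bsh^{-1}$ balance to produce a clean $\veps_0$ bound uniformly in $s$. The vanishing condition \eqref{eq:vanishing_assumption-prin} is used only to ensure that $\bsa - \bsh^{-1}$ is smooth at the origin in Cartesian coordinates, so that the commutator estimates and integration by parts identities go through without boundary terms there; it is not itself used in the smallness bookkeeping.
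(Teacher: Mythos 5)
Your decomposition $H = H_{\mathrm{sym}} + H_{\mathrm{pert}}$ and the instinct to treat the $\veps_0$-small pieces as forcing is natural, and it does work for the zeroth- and first-order imaginary parts (that is precisely the content of Propositions~\ref{p:Hlotbound_for_V}, \ref{p:Hlotbound} and \ref{p:Hlot_antisym_Q} in the paper). But the plan hinges on the perturbative bound
\begin{equation*}
\| H_{\mathrm{pert}} u \|_{LE^\ast} \lesssim \veps_0 \| u \|_{LE},
\end{equation*}
and this is false as soon as $H_{\mathrm{pert}}$ contains the second-order term $-\nabla_\mu(\ringa^{\mu\nu}\nabla_\nu u)$. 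The pair $(LE, LE^\ast)$ only gains \emph{one} derivative: morally $LE \sim |\nabla|^{1/2}L^2$ and $LE^\ast \sim |\nabla|^{-1/2}L^2$, and this is visible in the paper's building blocks --- \eqref{eq:z_LEstar_LE}, Propositions~\ref{p:Hlotbound_for_V}--\ref{p:Hlotbound} are all \emph{first}-order smoothing statements (the map $u \mapsto \bsb\cdot\nabla u$ from $LE$ to $LE^\ast$ is the sharp endpoint). Your derivative count conflates the $s^{\pm 1/2}\,\ds$ measure ratio and the $s^{\pm 1/4}$ weight ratio into independent contributions of two and one derivatives; in fact they combine into a single derivative of total gap. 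Concretely: if $\tilP_s H_{\mathrm{pert}}u \approx s^{-1}\tilP_s u$ for a frequency piece supported near $r\sim 1$, then $s^{1/2}\|\tilP_s H_{\mathrm{pert}}u\|_{LE_s^\ast}^2 \approx s^{-3/2}\|\tilP_s u\|_{LE_s^\ast}^2$, and at $r\sim 1$ the $r^{\pm1/2}$ weight ratio between $LE_s^\ast$ and $LE_s$ contributes only an $O(1)$ factor, so you are left needing $s^{-3/2}\lesssim s^{-1/2}$, which fails as $s\to 0$. No amount of decay or smallness of $\ringa$ fixes this, because the loss is in frequency, not in radius.

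What the paper actually does is entirely different in structure: it never pushes the second-order perturbation to the forcing term, and Theorem~\ref{t:LE1-sym} is obtained as a special case of Theorem~\ref{t:LE1}, not the other way around. The key point (Sections~\ref{ss:LE1-outline}, \ref{ss:DeltaHdifference}, \ref{s:high}) is to keep $H_{\prin}+\Delta$ on the left of the Schr\"odinger pairing and exploit its symmetry together with the (anti-)self-adjointness of the multiplier $Q_s^{(\alp)}$ to rewrite
\begin{equation*}
\Re\langle i(H_\prin+\Delta)\tilP_s u,\, Q_s^{(\alp)}\tilP_s u\rangle = \tfrac12\langle i[Q_s^{(\alp)}, H_\prin+\Delta]\tilP_s u,\, \tilP_s u\rangle.
\end{equation*}
The commutator $[Q_s^{(\alp)}, H_\prin+\Delta]$ is still order two, but after integrating by parts this quadratic form becomes $\langle(\text{order }1)\tilP_s u, (\text{order }1)\tilP_s u\rangle$ with coefficients that have gained extra decay, so it is estimated by $\veps_0 s^{-1/2}\|\tilP_{s/4}u\|_{LE_{s/4}}^2$ (Proposition~\ref{p:DeltaHpQhigh}). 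This is the one-derivative gain your forcing-term approach cannot access. The commutator $[\tilP_s, H_\prin]$ you mention (Proposition~\ref{p:commP_sH_prin}) does gain one derivative, but it only replaces $\tilP_s H_\prin$ by $H_\prin\tilP_s$ up to acceptable errors; the residual $(H_\prin+\Delta)\tilP_s u$ is still order two, and it is the $Q_s$-commutator, not the $\tilP_s$-commutator, that tames it. Finally, the vanishing condition \eqref{eq:vanishing_assumption-prin} is not a mere regularity cosmetic: it is used quantitatively to control the $\bsa^{r\theta_a}$ contribution in the region $\{r\ll 1\}$, where $\coth r\cdot\beta_s^{(\alp)}$ becomes large (see the term $III_1$ in the proof of Proposition~\ref{p:DeltaHpQhigh} and Remark~\ref{rem:suboptimal-w}).
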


Next, we generalize Theorem~\ref{t:LE-H}. Consider a second order operator of the form
\begin{equation*}
	H = H_{\stat} + H_{\pert}
\end{equation*}
where $H_{\stat}$ is a symmetric, stationary second order operator satisfying the hypothesis of Theorem~\ref{t:LE-H}, and $H_{\pert}$ is a perturbation of the form
\begin{equation*}
	H_{\pert} u = - \nb_{\mu} (\bsa_{\pert}^{\mu \nu} \nb_{\nu} u) + \frac{1}{i} (\bsb_{\pert}^{\mu} \nb_{\mu} u + \nb_{\mu} (\bsb_{\pert}^{\mu} u)) + V_{\pert} u.
\end{equation*}
Here, $\bsa_{pert}$ is a possibly time-dependent real-valued symmetric $2$-tensor obeying the conditions
\begin{equation} \label{eq:cLE2conditions-prin}
 \sum_{k=0}^4 \|\jap{r}^{3+2\nsigma}\nabla^{(k)}\bsa_{\pert}\|_{L^\infty_{t,x}}  \leq \kpp, 
\end{equation}
\begin{equation} \label{eq:cLE2conditions-prin-vanish}
	\bsa_{\pert}(\ud r, \ud \tht^{a}) \vert_{r = 0} = 0,
\end{equation}
and $\bsb_{\pert}^{\mu}$, $V_{\pert}$ are possibly complex-valued, time-dependent coefficients satisfying
\EQ{\label{eq:cLE2conditions}
 &\sum_{k=0}^4 \|\jap{r}^{3+2\nsigma}\nabla^{(k)}\bsb_{\pert} \|_{L^\infty_{t,x}} \leq \kpp, \\
 &\sum_{k=0}^4 \|\jap{r}^{3+2\nsigma}\nabla^{(k)}V_{\pert}\|_{L^\infty_{t,x}} \leq \kpp,
}
with $\kpp > 0$ small.

\begin{thm} \label{t:LE2}
Let $d \geq 2$. Suppose $H_{\stat}$ satisfies the assumptions imposed on $H$ in Theorem~\ref{t:LE-H}, and that $H_{\pert}$ obeys \eqref{eq:cLE2conditions-prin}--\eqref{eq:cLE2conditions}. Assume furthermore that $H_\stat$ has neither a threshold resonance nor any eigenvalues in $(-\infty,\rho^2]$. If $\kpp > 0$ in \eqref{eq:cLE2conditions} is sufficiently small depending on $H_{\stat}$, then for any $u_{0} \in L^{2}(\bbH^d)$ and $F \in \tilLE^{\ast}$, the solution $u(t)$ to the linear Schr\"odinger equation
\begin{equation*}
	(- i \rd_{t}  + H) u = F
\end{equation*}
with the initial value $u(0) = u_{0}$, obeys the local smoothing estimate 
\begin{equation*}
	\nrm{u}_{\tilLE} \aleq \nrm{u_{0}}_{L^{2}} + \nrm{F}_{\tilLE^{\ast}}.
\end{equation*}
\end{thm}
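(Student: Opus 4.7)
My plan is to prove Theorem~\ref{t:LE2} by a perturbative argument based on Theorem~\ref{t:LE-H}. I would first rewrite the equation as
$$(-i\partial_t + H_{\stat})u = F - H_{\pert}u,$$
treating $H_{\pert}u$ as an additional source term. Since $H_{\stat}$ satisfies the hypotheses of Theorem~\ref{t:LE-H} and in addition has no threshold resonance and no eigenvalues in $(-\infty,\rho^2]$, the essential spectrum of $H_{\stat}$ is $[\rho^2,\infty)$ (cf. Remark~\ref{rem:spec}) and contains no embedded eigenvalues, so the continuous spectral projection $P_c$ associated to $H_{\stat}$ equals the identity on $L^2(\bbH^d)$. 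Applying Theorem~\ref{t:LE-H} to $H_{\stat}$ then gives
$$\|u\|_{\tilLE} \lesssim \|u_0\|_{L^2} + \|F\|_{\tilLE^*} + \|H_{\pert}u\|_{\tilLE^*}.$$
The proof thus reduces to establishing the perturbation estimate
$$\|H_{\pert}u\|_{\tilLE^*} \leq C\,\kappa\,\|u\|_{\tilLE},$$
with $C$ independent of $\kappa$; taking $\kappa$ sufficiently small allows the resulting term on the right to be absorbed into the left-hand side.

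The perturbation estimate would be proved frequency-by-frequency via the heat-flow Littlewood-Paley resolution. For each heat time $s \in (0, 1/2]$, I would first move the projections $\tilP_s$ inside $H_{\pert}$ by commutator estimates, using the pseudo-locality and Bernstein-type bounds established earlier in the paper; the polynomial decay imposed on $\bsa_{\pert}$, $\bsb_{\pert}$, $V_{\pert}$ by \eqref{eq:cLE2conditions-prin}--\eqref{eq:cLE2conditions} controls the commutator errors. The resulting principal contribution $\bsa_{\pert}^{\mu\nu}\nabla_\mu\nabla_\nu\tilP_s u$ is then estimated using two scaling matches: the ratio $s^{1/2}/s^{-1/2} = s$ between the heat-time weights in the definitions of $\tilLE^*$ and $\tilLE$ at high frequency exactly balances the $s^{-1}$ derivative loss from placing two gradients on the frequency-localized piece $\tilP_s u$, while the spatial weight ratio $\jap{r}^{3+2\nsigma}$ between the dual norms is matched by the coefficient decay $\jap{r}^{-3-2\nsigma}$ assumed on the perturbation. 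The vanishing condition \eqref{eq:cLE2conditions-prin-vanish} ensures no singular behavior near $r=0$ where the coordinate basis degenerates. The first-order and zeroth-order terms are treated similarly, with strictly more room in the scaling. The low-frequency piece at $s \in [1/8, 4]$, involving $\tilP_{\geq s}u$, is strictly easier since no derivative loss has to be balanced and only the spatial-weight matching is required.

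The main technical obstacle is the second-order principal part of $H_{\pert}$: commuting two derivatives through the frequency projection $\tilP_s$ and carefully matching both the heat-time and spatial weights requires a delicate application of the heat flow Littlewood-Paley calculus developed in the paper. Once the frequency-localized estimate is obtained with constant $C\kappa$, integrating in $s$ against the weights in the definitions of $\tilLE$ and $\tilLE^*$ yields the desired perturbation bound, and the proof is concluded by choosing $\kappa$ small enough to absorb this contribution into the main estimate.
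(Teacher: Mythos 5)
Your plan has a genuine gap in the very first step: the perturbation estimate $\|H_{\pert}u\|_{\tilLE^\ast} \lesssim \kappa\|u\|_{\tilLE}$ is false when $H_{\pert}$ contains a second-order term, and the scaling heuristic you offer to support it is incorrect. The heat-time weights in the squared norms are $s^{1/2}$ for $\tilLE^\ast$ and $s^{-1/2}$ for $\tilLE$; on the level of the \emph{unsquared} per-frequency pieces, this means $\tilLE^\ast$ scales like $s^{1/4}\|\tilP_s F\|_{\tilLE_s^\ast}$ while $\tilLE$ scales like $s^{-1/4}\|\tilP_s u\|_{\tilLE_s}$, so the duality gains exactly one factor of $s^{1/2}$, i.e.\ one derivative. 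A first-order operator costs $s^{-1/2}$ per frequency and is therefore exactly balanced --- this is the content of Proposition~\ref{p:Hlotbound}. But the second-order piece $\nabla_\mu(\bsa_\pert^{\mu\nu}\nabla_\nu \cdot)$ costs $s^{-1}$, so the net contribution is $\kappa s^{-1/2}\|u\|_{\tilLE}$ per frequency, which diverges as $s\to 0$. Your claim that the ratio $s^{1/2}/s^{-1/2}=s$ "exactly balances" the $s^{-1}$ derivative loss conflates the squared weight ratio with the unsquared operator cost; written out, $s^{1/2}\cdot s^{-1} = s^{-1/2}$ is a loss, not a balance.

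This is precisely why the paper does not apply Theorem~\ref{t:LE-H} directly to the full equation. Instead it introduces an intermediate heat time $s_0$ and commutes with the projection $\tilP_{\geq s_0}$, which caps the frequency at $\sim s_0^{-1/2}$ and converts the unbounded derivative loss into the bounded factor $s_0^{-1}$ appearing in \eqref{eq:smooth-pert}: $\|\tilP_{\geq s_0} H_{\pert} u\|_{\tilLE^\ast} \lesssim \kappa(1+s_0^{-1})\|u\|_{\tilLE}$. The price is a commutator term $[\tilP_{\geq s_0}, H_{\stat}]u$, which however gains a factor $s_0^{1/2}$ (estimate \eqref{eq:comm-stat}). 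The uncapped high-frequency part of $u$ is controlled separately: one applies Theorem~\ref{t:LE1} to $H = H_{\stat}+H_{\pert}$ to get $\|u\|_{\tilLE} \lesssim \|u_0\|_{L^2}+\|F\|_{\tilLE^\ast}+\|u\|_{L^2(\bbR\times\{r\leq R\})}$, and then bounds $\|(1-\tilP_{\geq s_0})u\|_{L^2(\bbR\times\{r\leq R\})} \lesssim s_0^{1/4}\|u\|_{\tilLE}$. Combining, one gets $\|u\|_{\tilLE} \lesssim \|u_0\|_{L^2}+\|F\|_{\tilLE^\ast} + (s_0^{1/4} + s_0^{-1}\kappa)\|u\|_{\tilLE}$, and the absorption works only if $s_0$ is chosen small \emph{first} and then $\kappa$ small relative to $s_0$. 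This two-parameter bootstrap, not a single smallness parameter, is the essential structure your argument is missing. If $H_{\pert}$ had no second-order part, your direct approach would work and would indeed be simpler.
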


We refer to Section~\ref{s:mult} for an outline of the proofs of Theorems~\ref{t:LE1} and \ref{t:LE2}.

\begin{rem}[On second order perturbations] \label{rem:extension}
The main reason why we require the vanishing conditions \eqref{eq:vanishing_assumption-prin} and \eqref{eq:cLE2conditions-prin-vanish} on second order perturbations is technical simplicity: These conditions arise if we wish to treat the contribution of $\bsa - \bsh^{-1}$ perturbatively in the region $s^{\frac{1}{2}} \aleq r \ll 1$. It should be possible to remove them by supplementing our multiplier with a correction in the local region $\set{r \ll 1}$ that is better adapted to the local geometry of $\bsa$ (i.e., constructed based on the $\bsa$-distance function from $r = 0$). Alternatively, these conditions can be dropped if we use suboptimal weights in the region $\set{s^{\frac{1}{2}} \aleq r \ll 1}$ in our function spaces; see Remark~\ref{rem:suboptimal-w}.

In fact, in view of the robustness of our approach, the natural setting to which Theorems~\ref{t:LE1} and \ref{t:LE2} generalize is where $\bsa$ is an \emph{asymptotically hyperbolic manifold with no trapped geodesics}. Here, our multiplier must be combined with a pseudodifferential multiplier of Doi \cite{Doi} in a compact region $\set{r \aleq 1}$ that exploits the nontrapping condition. We refer to \cite{MMT} for the asymptotically flat case.
We plan to return to these issues in a future investigation.
\end{rem}

\subsubsection{Corollaries of the main theorems} We now state a number of corollaries of our main theorems, starting with a variant of our local smoothing estimate where the projections
\begin{align*}
\begin{split}
\Pea_{\geq s}u:=e^{s\Delta}u,\qquad \Pea_su:=-s\Delta e^{s\Delta}u
\end{split}
\end{align*}
are used instead of $\tilP_{\geq s}u$ and $\tilP_{s}u$. Associated to these projections we define the modified frequency localized local smoothing norms
\begin{align*}
 \|u\|_{\LE}^2 &:= \int_{\frac{1}{8}}^4\|\Pea_{\geq s}u\|_{\tilLE_\low}^2\,\ds+\int_0^{\frac{1}{2}}s^{-\frac{1}{2}}\|\Pea_s u\|_{\tilLE_s}^2\,\ds,\\
 \|F\|_{\LE^\ast}^2 &:= \int_{\frac{1}{8}}^4\|\Pea_{\geq s} F\|_{\tilLE_{\low}^\ast}^2\,\ds+\int_{0}^{\frac{1}{2}}s^{\frac{1}{2}}\|\Pea_s F\|_{\tilLE_s^\ast}^2\,\ds.
\end{align*}
The following is the analogue of Theorems~\ref{t:LE-H} and \ref{t:LE2} for the modified local smoothing spaces. 

\begin{cor}\label{c:LE1}
Let $d \geq 2$ and let $H$ be either as in Theorem~\ref{t:LE-H} or as in Theorem~\ref{t:LE2}. Then for any $u_{0} \in L^{2}(\bbH^d)$ and $F \in \tilLE^{\ast}$, the solution $u(t)$ to the linear Schr\"odinger equation~\eqref{eq:S} with the initial value $u(0) = u_0$ satisfies
\begin{align*}
\begin{split}
\|P_{c} u\|_{\LE}\lesssim \|P_{c} u_0\|_{L^2}+\|P_{c} F\|_{\LE^\ast},
\end{split}
\end{align*}
where $P_{c} = P_{c}(H)$ is the spectral projection to $[\rho^{2}, \infty)$ when $H$ is as in Theorem~\ref{t:LE-H}, and $P_{c} = I$ when $H$ is as in Theorem~\ref{t:LE2}. 
\end{cor}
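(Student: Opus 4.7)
The plan is to deduce the corollary from Theorem~\ref{t:LE-H} (respectively Theorem~\ref{t:LE2}) by establishing the norm equivalences $\|v\|_{\LE} \simeq \|v\|_{\tilLE}$ and $\|G\|_{\LE^\ast} \simeq \|G\|_{\tilLE^\ast}$, which then immediately yield
\begin{equation*}
 \|P_c u\|_{\LE} \lesssim \|P_c u\|_{\tilLE} \lesssim \|P_c u_0\|_{L^2} + \|P_c F\|_{\tilLE^\ast} \lesssim \|P_c u_0\|_{L^2} + \|P_c F\|_{\LE^\ast}.
\end{equation*}
The underlying mechanism is the scalar identity $e^{s\Delta} = e^{-s\rho^2} e^{s(\Delta+\rho^2)}$, from which one derives the operator relations
\begin{equation*}
 \Pea_{\geq s} = e^{-s\rho^2}\tilP_{\geq s}, \qquad \Pea_s = e^{-s\rho^2}\tilP_s + s\rho^2 e^{-s\rho^2}\tilP_{\geq s},
\end{equation*}
together with their inverses obtained by exchanging $\Pea \leftrightarrow \tilP$ and flipping the sign of the exponent. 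On the low-frequency range $s \in [1/8, 4]$ the scalar $e^{\pm s\rho^2}$ is an absolute constant, so $\|\Pea_{\geq s} v\|_{\tilLE_\low} \simeq \|\tilP_{\geq s} v\|_{\tilLE_\low}$ uniformly, and the same holds for the dual norms.

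The substance of the argument lies in the high-frequency range $s \in (0, 1/2]$. The operator identity gives
\begin{equation*}
 \|\Pea_s v\|_{\tilLE_s} \lesssim \|\tilP_s v\|_{\tilLE_s} + s\|\tilP_{\geq s} v\|_{\tilLE_s},
\end{equation*}
and analogously with $\Pea$ and $\tilP$ swapped. After the $s^{-1/2}\,\ud s$ integration, the factor $s$ in the error turns into an $s^{3/2}\,\ud s$ gain, so it remains to control
\begin{equation*}
 \mathcal{E}(v) := \int_0^{1/2} s^{3/2}\|\tilP_{\geq s} v\|^2_{\tilLE_s}\, \ud s
\end{equation*}
by $\|v\|^2_{\tilLE}$. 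For this I would use the continuous resolution $\tilP_{\geq s} v = \tilP_{\geq 1/2} v + \int_s^{1/2} \tilP_{s'} v\, \dsp$ and split $\mathcal{E}(v)$ accordingly. The $\tilP_{\geq 1/2} v$ contribution is handled using that $\tilP_{\geq 1/2} v$ is frequency-localized at scale $\lesssim 1$, together with the semigroup contraction on $\tilLE_\low$. The tail contribution is controlled by applying Cauchy-Schwarz in $\dsp$, Fubini, and a Bernstein-type comparison of the form $\|\tilP_{s'} v\|_{\tilLE_s} \lesssim (s'/s)^{\theta}\|\tilP_{s'} v\|_{\tilLE_{s'}}$ valid for $s \leq s'$ and some $\theta \in (0, 1/4)$, so that the resulting double integral closes under a Schur test.

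The reverse inequality $\|v\|_{\tilLE} \lesssim \|v\|_{\LE}$ follows from the same argument applied to the dual identities (obtained by interchanging $\Pea$ and $\tilP$). Since $\Pea_s, \tilP_s, \Pea_{\geq s}, \tilP_{\geq s}$ are all self-adjoint on $L^2(\bbH^d)$, the dual equivalence $\|G\|_{\LE^\ast} \simeq \|G\|_{\tilLE^\ast}$ is proved by exactly the same reasoning. The hard part will be the Schur-type step bounding $\mathcal{E}(v)$, which requires extracting the Bernstein-type comparison between $\tilLE_s$-norms at distinct heat times on heat-flow-localized functions, with a quantitative loss that is integrable against the $s^{3/2}\,\ud s$ weight. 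Fortunately, this is precisely the kind of estimate furnished by the heat flow based Littlewood-Paley machinery developed in the bulk of the paper.
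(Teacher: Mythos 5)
Your proposal is correct and matches the paper's proof: the operator identity $\Pea_s = e^{-s\rho^2}\tilP_s + s\rho^2 e^{-s\rho^2}\tilP_{\geq s}$, the resulting error term $\int_0^{1/2} s^{3/2}\|\tilP_{\geq s}v\|^2_{\tilLE_s}\,\ds$, and the Cauchy--Schwarz/Schur-type control of it are exactly the paper's mechanism. Two small points worth noting: the paper proves (and the corollary needs) only the one-sided inequalities $\|u\|_{\LE}\lesssim\|u\|_{\tilLE}$ and, via duality of the norms rather than self-adjointness of the projections, $\|F\|_{\tilLE^\ast}\lesssim\|F\|_{\LE^\ast}$ — your displayed chain in fact uses only these one-sided versions; and the comparison $\|v\|_{\tilLE_s}\lesssim(s'/s)^{1/4}\|v\|_{\tilLE_{s'}}$ for $s\leq s'$ is an elementary consequence of the definitions (it holds for arbitrary $v$ with the sharp exponent $\theta=\tfrac14$, which does close against the $s^{3/2}\,\ds$ weight), so the heat-flow Littlewood--Paley machinery is not actually needed at that step.
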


Next, we note that our local smoothing estimate from Theorem~\ref{t:LE-H} can be used to prove non-endpoint Strichartz estimates for the operator $H$ when $H_\prin =-\Delta$. For this we also crucially rely on the existing Strichartz estimates for the unperturbed Schr\"odinger equation on $\bbH^d$, that is, when $H=-\Delta$. See for instance \cite{AP09, B07, IS}.

\begin{cor}\label{c:Strichartz}
Let $d \geq 2$ and let $H$ be either as in Theorem~\ref{t:LE-H} or as in Theorem~\ref{t:LE2} with $H_{\prin} = - \lap$. Let $u(t)$ be a solution to \eqref{eq:S}. Then for any two pairs of exponents $(p_1,q_1)$ and $(p_2,q_2)$ in
\begin{align} \label{eq:s-pairs} 
\begin{split}
\Big\{(\frac{1}{p},\frac{1}{q})\in(0,\frac{1}{2})\times(0,\frac{1}{2})\,\big\vert \,\frac{2}{p}\geq \frac{d}{2}-\frac{d}{q}\Big\}\cup\Big\{(\frac{1}{p},\frac{1}{q})=(0,\frac{1}{2})\Big\},
\end{split}
\end{align}
and any time interval $I\subseteq \bbR$, we have
\begin{align*}
\begin{split}
\|P_{c} u\|_{L^{p_1}(I;L^{q_1}(\bbH^d))}\lesssim \|P_{c} u_0\|_{L^2}+\|P_{c} F\|_{L^{p_2'}(I;L^{q_2'} (\bbH^d))},
\end{split}
\end{align*}
where $P_{c} = P_{c}(H)$ is the spectral projection to $[\rho^{2}, \infty)$ when $H$ is as in Theorem~\ref{t:LE-H}, and $P_{c} = I$ when $H$ is as in Theorem~\ref{t:LE2}. 
\end{cor}

Finally, we establish a combined Strichartz and local smoothing estimate that is useful in nonlinear applications. 
\begin{cor} \label{c:combined} 
Let $d \geq 2$ and let $H$ be either as in Theorem~\ref{t:LE-H} or as in Theorem~\ref{t:LE2} with $H_{\prin} = - \lap$. Let $u(t)$ be a solution to \eqref{eq:S}. Then for any two pairs of exponents $(p_1,q_1)$ and $(p_2,q_2)$ as in~\eqref{eq:s-pairs} and for any time interval $I\subseteq \bbR$, we have
\EQ{
\|P_{c} u\|_{(L^{p_1}_tL^{q_1}_x \cap \LE)(I \times \bbH^d))}\lesssim \|P_{c} u_0\|_{L^2}+\|P_{c} F\|_{(L^{p_2'}_tL^{q_2'}_x + \LE^*) (I \times \bbH^d))}.
}
\end{cor}

\begin{remark} \label{r:combined} 
Corollary~\ref{c:combined} is the main linear estimate used in~\cite{LLOS2}, where the operator $H$ arises by linearizing the Schr\"odinger maps evolution on $\Hp^2$ about a harmonic map in the caloric gauge; see~\cite[Eqn. (3.3) and Section 3.3]{LLOS2}. 
\end{remark} 

\subsection{History and related works} 

\subsubsection{Local smoothing estimates on (asymptotically) Euclidean space} 
Local smoothing estimates have a long and rich history, going back to Kato \cite{Kat1} in the context of the KdV equation, and (independently) to Constantin-Saut \cite{CoSa}, Sj\"olin \cite{Sjo} and Vega \cite{Veg} in the case of the free Sch\"odinger equation on $\bbR^{d}$. These estimates are basic tools for studying Schr\"odinger-type equations with derivative nonlinearities; see, for instance, the classical work \cite{KPV04} of Kenig-Ponce-Vega. Some recent works furthermore highlight the role of global-in-time local smoothing estimates as a ``core decay estimate'', which implies other useful decay estimates for nonlinear applications; see, for instance, the work of Rodnianski-Schlag \cite{RodS} or Tataru \cite{Tat08ajm}, which assert that global-in-time local smoothing estimates lead to global-in-time Strichartz estimates in various situations.

Our main results and approach were largely inspired by the work of Marzuola-Metcalfe-Tataru \cite{MMT} on global-in-time local smoothing estimates and Strichartz estimates for long-range perturbations of the Euclidean Laplacian. In particular, \cite{MMT} proves global-in-time local smoothing estimates with a lower order localized error (i.e., analogue of Theorem~\ref{t:LE1-sym}) in the symmetric and nontrapping case using a multiplier approach. Moreover, in the time-independent case with no threshold resonances or eigenvalues, this error is removed after projecting to the continuous spectrum (i.e., analogue of Theorem~\ref{t:LE-H}), by exploiting the equivalence between the local smoothing estimate and the limiting absorption principle. We emphasize that large metric perturbations of the Euclidean Laplacian are considered in~\cite{MMT}.

A related result is an earlier work by Rodnianski-Tao \cite{RoTa07} on global-in-time local smoothing estimates for time-independent, compactly supported, nontrapped yet possibly large metric perturbations of the Euclidean Laplacian, which relied on a quantitative version of Enss's method. 
There is also an extensive literature on the dispersive estimate (which is stronger than Strichartz estimates, and does not follow from a local smoothing estimate) for zeroth order perturbations of the Euclidean Laplacian; we refer to the article of Schlag~\cite{Schlag07_Survey} for an excellent survey. For a sample of some alternative approaches to global-in-time local smoothing estimates and Strichartz estimates for first and zeroth order perturbations of the Euclidean Laplacian, see \cite{EGS08, EGS09, DFVV}.

Finally, by the aforementioned equivalence, the subject of local smoothing estimates in the symmetric, time-independent case is intimately tied to the limiting absorption principle in spectral theory, i.e., resolvent estimates that are uniform near the real axis. We will refrain from attempting to cover the vast literature here, but instead refer to the recent work of Rodnianski-Tao \cite{RoTa15} for a nice review.

\subsubsection{Local smoothing and Strichartz estimates on (asymptotically) hyperbolic space} 
A global-in-time local smoothing estimate for the unperturbed hyperbolic Laplacian $H = -\lap$ is proved in Kaizuka \cite{Kaizuka1}, which heavily uses the Helgason Fourier transform. Dispersive estimates for $- i \rd_{t} + \lap$ were proved by Anker-Pierfelice \cite{AP09} and Ionescu-Staffilani \cite{IS}; by standard machinery, Strichartz estimates for $- i \rd_{t} + \lap$ then follow.

Concerning perturbations of the hyperbolic Laplacian, Borthwick-Marzuola \cite{BorMar1} recently proved the dispersive estimate for exponentially decaying, zeroth order perturbations of $-\lap$ (and also the matrix Hamiltonian, which arises when linearizing NLS around a standing wave). In the series of works \cite{ChenHassell1, ChenHassell3} Chen-Hassell established uniform $L^{p}$ bounds for resolvents of the Laplace-Beltrami operator on nontrapped, asymptotically hyperbolic (with exponential decay) manifolds, and in the work \cite{Chen18} Chen proved global-in-time Strichartz estimates in the same setting. 

Finally, we mention an interesting series of work by Li-Ma-Zhao~\cite{Li_Ma_Zhao} and Li \cite{Li1, Li2} in the closely related subject of perturbations of the wave equation on hyperbolic space. More precisely, in the context of their proof of stability of harmonic maps from $\bbH^{2}$ into $\bbH^{2}$ under the wave map evolution, they established a global-in-time local energy decay estimate (yet with exponentially decaying weights) and $L^{2}_{t} L^{p}_{x}$-Strichartz estimates for the first and zeroth order perturbations of the wave equation arising from linearization around such harmonic maps. The resulting potentials have the same form as those considered in Corollary~\ref{c:no-th-res}.


\subsection{Structure of the paper}
This paper is organized into two tiers. 
\begin{itemize}
\item The first tier consists of Section~\ref{s:prelim} and \ref{s:mult}, and is meant to serve as a guide to reading the paper. Section~\ref{s:prelim} contains preliminary material such as notation, definitions of the function spaces, the basic multiplier identity and the precise definition of the main multipliers used in the paper. Section~\ref{s:mult} contains an overview of the proofs of the main results.
\item The second tier consists of Sections~\ref{s:pr}--\ref{s:cors}. In Section~4, which takes up the bulk of the paper, we develop the heat flow based Littlewood-Paley theory. Sections~\ref{s:low}--\ref{s:cors} contain the detailed proofs of the main results. 
An outline of these sections can be found in Section~\ref{ss:outline}, following the overview of the proofs.
\end{itemize}


\section{Preliminaries, Function Spaces and Multipliers} \label{s:prelim} 

\subsection{Notation}\label{ss:notation} We will adhere to the following conventions for indices: We reserve greek lowercase indices $\ka, \la, \mu, \nu,$ \dots $ \in\{ 1, 2, \dots, d\}$ for coordinates on $(\Hp^d, \h)$. 
These indices are raised and lowered using the metric $\h$. We adopt the standard convention of summing over repeated upper and lower indices.  Tensors will be denoted by boldface letters.

We will use the notation $A_\ell$, $A_{\leq\ell}$, $k_s$, $\rho$, $P_s$, $P_{\geq s}$, $\tilP_{\geq s}$, and $\tilP_s$ introduced in Section~\ref{s:statement}. We  let
\begin{align*}
\begin{split}
\tilP_{s'\leq \cdot\leq s''}u:=\int_{s'}^{s''}\tilP_su\,\ds,\qquad \mathrm{and}\qquad \tilP_{\leq s'}=\int_0^{s'}\tilP_su\,\ds,
\end{split}
\end{align*}
and define $P_{s'\leq\cdot\leq s''}u$ and $P_{\leq s'}u$ similarly. Note that for any $0<s<s'$
\begin{align*}
\begin{split}
u=\tilP_{\leq s}u+\tilP_{s\leq\cdot\leq s'}u+\tilP_{\geq s'}u.
\end{split}
\end{align*}

For the operator $H$ we use the decomposition \eqref{eq:Hprindef}--\eqref{eq:Hlotdef} into principal and lower order parts.

Brackets $\angles{\cdot}{\cdot}$ denote the pairing in $L^2(\bbH^d)$ and $\angles{\cdot}{\cdot}_{t,x}$ the pairing in $L^2(\bbR\times \bbH^d)$. Similarly $L^p$ denotes $L^p(\bbH^d)$ and $L^p_{t,x}$ denotes $L^p(\bbR\times \bbH^d)$.

 \subsection{Geometry of the domain and polar coordinates} \label{ss:geometry} Let $\R^{d+1}$ denote the $(d+1)$-dimensional Minkowski space with rectilinear coordinates  $\{y^1,\dots, , y^d, y^0\}$ and metric $\m$ given in these coordinates by $\bsm = \textrm{diag}(1, \dots, 1,  -1)$.  
The $d$-dimensional hyperbolic space $\bbH^d$ is defined as 
\begin{align*}
\bbH^d := \{ y \in \R^{d+1} \colon (y^{0})^{2} - \sum_{a=1}^{d} (y^{a})^{2} = 1\, , \,y^0>0\}.
\end{align*}
The Riemannian metric $\bsh$ on $\bbH^d$ is obtained by pulling back the metric  $\m$ on $\R^{d+1}$ by the inclusion map $\iota: \bbH^d \hookrightarrow \R^{d+1}$, that is,
$
\bsh =  \iota^* \bsm. 
$
For any coordinate system $\{x^\mu\}$ on $\bbH^d$ denote by $\bsh_{\mu \nu}$ the components of $\bsh$ and by $\bsh^{\mu \nu}$ the components of the inverse matrix $\bsh^{-1}$, that is,  
\ant{
\bsh_{\mu \nu} :=  \bsh\left( \frac{\p}{\p x^\mu}, \frac{\p}{\p x^\nu} \right), \quad \bsh^{\mu \nu }:= \bsh^{-1}(dx^\mu, dx^\nu).
}
We denote the Christoffel symbols on $\bbH^d$ by 
\ant{
\Ga_{ \mu \nu}^{\ka}  = \frac{1}{2} \bsh^{ \ka \la}( \p_{\mu} \bsh_{\la \nu} + \p_\nu \bsh_{\mu \la} - \p_\la \bsh_{\mu \nu}).
} 
Given a vector field $X = X^\mu \frac{\p}{\p x^\mu}$ in $\Ga(T \bbH^d)$  or a $1$-form $\om = \om_\mu dx^\mu \in \Ga( T^*\bbH^d)$ the metric covariant derivative $\na$  is defined  in coordinates by 
\ant{
\na_{\p_\mu} X = (\p_{\mu} X^\nu + \Ga^{\nu}_{\mu \ka} X^\ka ) \p_\nu
, \quad \na_{\p_\mu} \om  = (\p_\mu \om_\nu  - \Gamma^\ka_{\mu \nu} \om_\ka ) dx^\nu.
}
From these definitions one can generalize the definition of   $\na$ to $(p,q)$-tensor fields of arbitrary rank by requiring that: 
$\nb_{\mu} f = \rd_{\mu} f$ for $(0, 0)$-tensors (i.e., functions), $\nb_{\mu}$ agrees with the preceding definition for $(1, 0)$- or $(0, 1)$-tensors (i.e., vector fields or 1-forms) and $\nb$ obeys the Leibniz rule with respect to tensor products.
We use the letter $\bsR$ for the Riemann curvature tensor on $\bbH^d$. Recall that for a vector field $\xi^\mu\frac{\p}{\p x^\mu}$ in $\Gamma(T\bbH^d)$ we have the formula
\EQ{ \label{eq:R} 
[ \na_\mu,  \na_\nu] \bsxi^\ka   = \bsR^\ka_{ \, \, \la \mu \nu} \bsxi^\la
}
as well as 
\EQ{ \label{eq:Ric} 
 [ \na_\mu,  \na_\nu] \bsxi^\mu   = \RR^\mu_{ \, \, \ka \mu \nu} \bsxi^\ka = \RR_{\ka \nu} \bsxi^\ka
}
where $\RR_{\ka \nu}$ denote the components of the Ricci curvature tensor. Since $\bbH^d$ has constant sectional curvature $ \bsK = -1$ we note that 
\EQ{\label{eq:Rich}
&\RR_{\mu \nu} = -(d-1) \bsh_{\mu \nu},\\
&\RR_{\mu\nu\kappa\lambda}=-(\bsh_{\mu\kappa}\bsh_{\nu\lambda}-\bsh_{\mu\lambda}\bsh_{\nu\kappa}).
}

We will often use geodesic polar coordinates $(r,\theta)$ on $\bbH^d$. In the identification of $\bbH^d$ with a Riemannian submanifold of $\bbR^{d+1}$ above, these are defined explicitly by the coordinate map
\begin{align*}
\begin{split}
(r,\theta)\in (0,\infty)\times \bbS^{d-1}\mapsto (\sinh r \,\theta,\cosh r)\in \bbR^{d+1}.
\end{split}
\end{align*}
Arbitrary local coordinates for $\theta\in \bbS^{d-1}$ will be denoted by $\theta^a:~a=1,\dots,d-1$. The hyperbolic metric in polar coordinates is given by
\begin{align*}
\begin{split}
dr\otimes dr+ \sinh^2r \slashed{\bsg}_{\theta_a\theta_b}d\theta^a\otimes d\theta^b,
\end{split}
\end{align*}
where $\slashed{\bsg}$ denotes the standard metric on $\bbS^{d-1}$. Note that the coordinate function $r$ coincides with the distance function $r$ defined above in \eqref{eq:rAldef} if we choose the origin to correspond to $\{r=0\}$. The Laplacian in polar coordinates takes the form
\EQ{
 \De = \De_{\Hp^d} = \partial_r^2 + (d-1)\coth r \partial_r + \frac{1}{\sinh^2 r} \De_{\Sp^{d-1}}
}
and we define $\snabla$ by 
\EQ{
 \abs{\na f}^2  := \abs{\na f}_{\h}^2 = \abs{\p_r f}^2 + \frac{1}{\sinh^2r}\abs{\snabla f}^2.
}
We will also use the notation
\begin{align*}
\begin{split}
\slashed{\bsh} = \sinh^2 r \, \slashed{\bsg}, \qquad \slashed{\bsh}^{-1}=\sinh^{-2} r \, \slashed{\bsg}^{-1},
\end{split}
\end{align*}
and for an arbitrary tensor field $\bsT$ write $|\bsT|_{\slashed{\bsh}}$ for the angular part of the norm computed using $\slashed{\bsh}$. For instance for a vector field $\bsX$ we have
\begin{align*}
\begin{split}
|\bsX|_{\slashed{\bsh}}^2:=\slashed{\bsh}_{\theta_{a}\theta_{b}}\bsX^{\theta_a}\bsX^{\theta_b}.
\end{split}
\end{align*}

The formal adjoint of $\partial_r$, denoted by $\partial_r^\ast$, has the explicit representation
\begin{align*}
\begin{split}
\partial_r^\ast:=-\partial_r-(d-1)\coth r\qquad \Rightarrow\qquad \angles{\partial_rf}{g}=\angles{f}{\partial_r^\ast g}.
\end{split}
\end{align*}
In terms of $\partial_r$ and $\partial_r^\ast$ the Laplacian satisfies
\begin{align*}
\begin{split}
-\Delta=\partial_r^\ast\partial_r - \frac{1}{\sinh^2r}\Delta_{\bbS^{d-1}}.
\end{split}
\end{align*}


\subsection{$L^p$ and Sobolev spaces on $\Hp^d$}\label{s:fs} 

With the Riemannian structure we can define the relevant function spaces on $\Hp^d$. Let $f: \Hp^d \to \R$ be a smooth function. The $L^p( \Hp^d)$ spaces are defined for $1 \le p < \infty$ by 
\begin{align*}
\|f \|_{L^p(\bbH^d)} :=  \left( \int_{\bbH^d} | f(x) |^p\, \dh \right)^{\frac{1}{p}}, \quad \| f \|_{L^{\infty}( \bbH^d)} :=  \sup_{x \in \Hp^d} \abs{f(x)}.
\end{align*} 
These definitions can be extended to tensors $\bsf$ in which case $|\bsf|$ is defined using the Riemannian metric $\bsh$ as
\begin{align*}
\begin{split}
|\bsf|^2=\bsh_{\mu_1\lambda_1}\dots\bsh_{\mu_p\lambda_p}\bsh^{\nu_1\kappa_1}\dots\bsh^{\nu_q\kappa_q}\bsf^{\mu_1\dots\mu_p}_{\nu_1\dots\nu_q}\bsf^{\lambda_1\dots\lambda_p}_{\kappa_1\dots\kappa_q}.
\end{split}
\end{align*}
The Sobolev norms $W^{k, p}( \Hp^d) $ are defined as follows, see for example~\cite{Heb}: 
\EQ{ \label{sobh4} 
\| f\|_{W^{k, p}(\Hp^d)}:=    \sum_{\ell=0}^k  \left( \int_{\Hp^d} \abs{ \na^{(\ell)} f}^p  \, \dh \right)^{\frac{1}{p}}
}
where $\na^{(\ell)} f$ is the $\ell$th covariant derivative of $f$ with the convention that $\na^{(0)} f = f$. One then defines the Sobolev space~$W^{k, p}(\Hp^d)$ to be the completion of all smooth compactly supported functions $f \in C^{\infty}_0( \Hp^d)$ under the norm~\eqref{sobh4}; see~\cite[Theorem $2.8$]{Heb}. We note that for $p=2$ we write $W^{k, 2}(\Hp^d) =: H^k(\Hp^d)$. The Sobolev spaces $W^{s,p}(\bbH^d)$ for $s \geq 0$ are then defined by interpolation.

Alternatively one can define Sobolev spaces on $\Hp^d$ using the spectral theory of the Laplacian $-\Delta$. Given $s \in {\mathbb R}$, the fractional Laplacian $(-\Delta)^{\frac{s}{2}}$ is a well-defined operator on, say $C^{\infty}_{0}(\Hp^{d})$ via the spectral theory of $-\Delta$.  Given a function $f \in C^{\I}_{0}(\Hp^d)$, we set
\begin{align*}
\|f \|_{\ti W^{s, p}(\Hp^d)} = \|(- \Delta)^{\frac{s}{2}} f\|_{L^p(\Hp^d)}
\end{align*}
and define $\ti W^{s, p}( \Hp^d)$ to be the completion of $C^{\infty}_0( \Hp^d)$ under the  norm above.  
The fractional Laplacian $(- \De)^{\frac{s}{2}}$ is bounded on $L^p(\Hp^d)$ for all $s \le 0$ and all $p \in (1, \infty)$. Using this fact one can show that $\ti W^{s, p} = W^{s, p}$ with equivalent norms, where the latter space is defined using the Riemannian structure as above; see for example~\cite{Tat01hyp} for a proof.
In particular
\EQ{
 \| \na (-\De)^{\ell} f\|_{L^2(\bbH^d)} \simeq  \|  \na^{(2 \ell +1)} f \|_{L^2(\bbH^d)},\qquad \forall\ell\in\bbN. 
}
We also have the following basic estimate which is a consequence of the well-known fact that $-\De$ on $\bbH^{d}$ has a spectral gap of $ \rho^2$ (see e.g.,~\cite{Bray}).

\begin{lem}[Poincar\'e inequality] \label{l:poincare} 
Let $v \in C^{\infty}_{0}(\bbH^{d})$. Then the following inequalities hold.
\begin{align}
	\|v\|_{L^{2}(\bbH^d)} &\leq \frac{1}{\rho} \|\nabla v\|_{L^{2}(\bbH^d)}, \\
	\|v \|_{L^{2}(\bbH^d)} &\leq \frac{1}{\rho^2} \|\De v\|_{L^{2}(\bbH^d)}.
\end{align}
\end{lem}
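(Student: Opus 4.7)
Both inequalities are direct consequences of the spectral theorem for the self-adjoint operator $-\Delta$ on $L^{2}(\bbH^d)$, together with the spectral gap stated in \eqref{eq:rho}, namely $\sigma(-\Delta)=[\rho^{2},\infty)$ with $\rho=(d-1)/2$. The plan is to reduce the two stated bounds to this spectral fact via a single integration by parts and a functional-calculus estimate.

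First I would prove the gradient inequality. By the spectral theorem applied to the nonnegative self-adjoint operator $-\Delta$, for any $v \in C^{\infty}_{0}(\bbH^d)$ one has the bound
\begin{equation*}
	\langle -\Delta v, v\rangle \;\geq\; \rho^{2}\,\|v\|_{L^{2}}^{2},
\end{equation*}
since $\text{spec}(-\Delta)\subset[\rho^{2},\infty)$ implies the quadratic form estimate $-\Delta \geq \rho^{2}\,\mathrm{Id}$ on its domain. On the other hand, integration by parts (which is justified because $v$ is smooth and compactly supported, so there are no boundary terms on $\bbH^d$) gives
\begin{equation*}
	\langle -\Delta v, v\rangle \;=\; \int_{\bbH^d} |\nabla v|^{2}\,\dh \;=\; \|\nabla v\|_{L^{2}}^{2}.
\end{equation*}
Combining the two and taking square roots yields $\rho\,\|v\|_{L^{2}} \leq \|\nabla v\|_{L^{2}}$, as claimed.

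For the second inequality I would apply the spectral theorem to $(-\Delta)^{2}$, which satisfies $\text{spec}((-\Delta)^{2})\subset[\rho^{4},\infty)$ by the spectral mapping theorem. Hence, for $v\in C^{\infty}_{0}(\bbH^d)\subset \mathrm{Dom}(\Delta)$,
\begin{equation*}
	\|\Delta v\|_{L^{2}}^{2} \;=\; \langle (-\Delta)^{2} v, v\rangle \;\geq\; \rho^{4}\,\|v\|_{L^{2}}^{2},
\end{equation*}
from which the stated estimate $\|v\|_{L^{2}}\leq \rho^{-2}\|\Delta v\|_{L^{2}}$ follows. Alternatively, one could iterate the first inequality, using $\|\nabla v\|_{L^{2}}^{2} = \langle -\Delta v, v\rangle \leq \|\Delta v\|_{L^{2}}\|v\|_{L^{2}}$ together with the first Poincaré inequality to obtain the same bound; both routes are elementary.

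There is no real obstacle here since the spectral gap $\rho^{2}$ is taken as a given fact (cited to \cite{Bray} in the excerpt); the only small point to be careful about is that $v\in C^{\infty}_{0}(\bbH^d)$ lies in the domain of $-\Delta$ (and of its powers) so that the spectral calculus applies and integration by parts produces no boundary contribution. Both are standard on a complete Riemannian manifold.
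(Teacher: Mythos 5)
Your argument is correct and matches the approach the paper intends: the paper states Lemma~\ref{l:poincare} without proof, remarking only that it is ``a consequence of the well-known fact that $-\De$ on $\bbH^{d}$ has a spectral gap of $\rho^{2}$,'' which is precisely the spectral-theorem/integration-by-parts reduction you carry out.
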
 

\subsection{Basic comparisons of $LE_s$ and $LE_\low$} Here we record a few basic relations between the spaces $LE_s$ and $LE_\low$ introduced in Section~\ref{s:statement}. We start by stating, without proof, some simple comparisons between $LE_s$, $LE_{2s}$, and $LE_\low$ which follow directly from the definitions.

\begin{lem}\label{lem:LE_comparison}
There exists a constant $C>0$ independent of $s$ such that for any function $v$,
\begin{align*}
\begin{split}
 C^{-1}\|v\|_{LE_{2s}}\leq \|v\|_{LE_s} \leq C\|v\|_{LE_{2s}}.
\end{split}
\end{align*}
Similarly for any function $G$,
\begin{align*}
\begin{split}
  C^{-1}\|G\|_{LE_{2s}^\ast}\leq \|G\|_{LE_s^\ast} \leq C\|G\|_{LE_{2s}^\ast}.
\end{split}
\end{align*}
Similar estimates hold with $LE_s$ and $LE_s^\ast$ replaced by $\tilLE_s$ and $\tilLE_s^\ast$.
\end{lem}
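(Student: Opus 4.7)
The plan is to exploit the elementary observation that $k_{s}$ and $k_{2s}$ differ by at most one unit, since $\log_{2}((2s)^{-1/2}) = \log_{2}(s^{-1/2}) - \frac{1}{2}$, so that $k_{2s} \in \{k_{s}-1, k_{s}\}$. In particular, the weight factor $s^{-1/4}$ differs from $(2s)^{-1/4}$ by a factor of $2^{1/4}$, and the indexing sets $\{\ell \ge -k_{s}\}$ and $\{\ell \ge -k_{2s}\}$ differ by at most one dyadic annulus, namely $A_{-k_{s}}$ (in the case $k_{2s}=k_{s}-1$). Thus the bulk of the estimate is trivially preserved when swapping $s$ for $2s$, and the only real issue is the boundary annulus.

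In the case $k_{2s} = k_{s}$ there is nothing to prove beyond the constant $2^{1/4}$. In the case $k_{2s} = k_{s}-1$, I would handle both directions separately. To bound $\|v\|_{LE_{s}}$ by $\|v\|_{LE_{2s}}$, the first term $s^{-\frac{1}{4}}\|v\|_{L^{2}(A_{\leq -k_{s}})}$ is controlled by $2^{\frac{1}{4}}(2s)^{-\frac{1}{4}}\|v\|_{L^{2}(A_{\leq -k_{2s}})}$ since $A_{\leq -k_{s}} \subset A_{\leq -k_{2s}}$. The supremum $\sup_{\ell \ge -k_{s}}\|r^{-\frac{1}{2}}v\|_{L^{2}(A_{\ell})}$ matches the corresponding term in $\|v\|_{LE_{2s}}$ for $\ell \ge -k_{2s} = -k_{s}+1$; the remaining index $\ell = -k_{s}$ is absorbed using $r \simeq 2^{-k_{s}} \simeq s^{\frac{1}{2}}$ on $A_{-k_{s}}$, which gives
\[
\|r^{-\frac{1}{2}} v\|_{L^{2}(A_{-k_{s}})} \lesssim s^{-\frac{1}{4}} \|v\|_{L^{2}(A_{-k_{s}})} \lesssim (2s)^{-\frac{1}{4}} \|v\|_{L^{2}(A_{\leq -k_{2s}})},
\]
since $A_{-k_{s}} \subset A_{\leq -k_{2s}}$. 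The reverse inequality is symmetric: for $\|v\|_{LE_{2s}} \lesssim \|v\|_{LE_{s}}$, split $\|v\|_{L^{2}(A_{\leq -k_{2s}})}^{2} = \|v\|_{L^{2}(A_{\leq -k_{s}})}^{2} + \|v\|_{L^{2}(A_{-k_{s}})}^{2}$ and convert the second piece back using $r \simeq s^{\frac{1}{2}}$ on $A_{-k_{s}}$, now moving in the opposite direction, to dominate it by $s^{\frac{1}{2}} \cdot s^{-\frac{1}{2}}\|v\|_{L^{2}(A_{-k_{s}})}^{2} \lesssim s^{\frac{1}{2}} (\sup_{\ell \ge -k_{s}} \|r^{-\frac{1}{2}}v\|_{L^{2}(A_{\ell})})^{2}$, which is exactly the bound needed after multiplying by $(2s)^{-\frac{1}{2}}$.

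The argument for $\|G\|_{LE_{s}^{\ast}}$ is entirely analogous, with the roles of the weight exponents reversed: one trades the single annulus contribution $\|r^{\frac{1}{2}}G\|_{L^{2}(A_{-k_{s}})}$ against $s^{\frac{1}{4}} \|G\|_{L^{2}(A_{-k_{s}})}$ using again $r \simeq s^{\frac{1}{2}}$ on $A_{-k_{s}}$. Finally, for the modified norms $\tilLE_{s}$ and $\tilLE_{s}^{\ast}$ of Definition~\ref{d:tilLE}, the only difference from $LE_{s}$, $LE_{s}^{\ast}$ is the additional summand $\|r^{-\frac{3}{2}-\nsigma}v\|_{L^{2}(\R \times A_{\geq 0})}$, respectively $\|r^{\frac{3}{2}+\nsigma}G\|_{L^{2}(\R \times A_{\geq 0})}$, which is independent of $s$ and so trivially preserved under the comparison; the intermediate supremum/sum range $-k_{s} \leq \ell < 0$ is handled exactly as above since for $s$ small enough $-k_{s} < 0$, and for $s$ near $\frac{1}{2}$ the intermediate range is either empty or consists of finitely many annuli, absorbed into the first $L^{2}$ summand. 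There is no real obstacle here; the only point requiring care is the bookkeeping on the single boundary annulus, which is why I would organize the proof around the two cases $k_{2s} = k_{s}$ and $k_{2s} = k_{s} - 1$.
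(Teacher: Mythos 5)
The paper states this lemma without proof ("We start by stating, without proof, some simple comparisons... which follow directly from the definitions"), so there is nothing to compare against; your argument correctly supplies the omitted details. The key observations you use—that $k_{2s}\in\{k_s,k_s-1\}$ and that $r\simeq s^{1/2}$ on the single boundary annulus $A_{-k_s}$, which lets you trade $\|r^{-1/2}v\|_{L^2(A_{-k_s})}$ against $s^{-1/4}\|v\|_{L^2(A_{-k_s})}$ (and dually for $LE_s^\ast$)—are precisely the content the authors had in mind, and the $\tilLE$ modifications are handled correctly.
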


\begin{lem}\label{lem:LEs_LElow_comparision}
Let $I\subset (0,\infty)$ be a compact interval not containing zero. Then for any function $v$ and any $s\in I$
\begin{align*}
\begin{split}
\|v\|_{LE_\low}\leq C(I)\|v\|_{LE_s},
\end{split}
\end{align*}
where the constant $C(I)$ depends only on the interval $I$.
\end{lem}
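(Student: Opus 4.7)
The essence of the lemma is that on a compact interval $I \Subset (0,\infty)$, the dyadic frequency $k_s = \lfloor \log_2(s^{-1/2}) \rfloor$ is bounded above and below by constants $k_\pm = k_\pm(I)$, and the prefactor $s^{-1/4}$ is comparable to $1$ uniformly. Consequently the only real difference between $\|\cdot\|_{LE_s}$ and $\|\cdot\|_{LE_{\mathrm{low}}}$ is (i) the cutoff between the $L^2$-ball and the $r^{-1/2}$ (resp.\ $r^{-3/2}$) weighted outer regime, which happens at radius $2^{-k_s}$ versus $2^0$, and (ii) the weight $r^{-1/2}$ versus $r^{-3/2}$. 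Both issues are handled by a short case split on the sign of $-k_s$.

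\textbf{Step 1: low-frequency $L^2$ piece.} I would bound $\|v\|_{L^2(\R\times A_{\leq 0})}$ by $\|v\|_{LE_s}$ as follows. If $-k_s \geq 0$, then $A_{\leq 0} \subset A_{\leq -k_s}$, so
\begin{equation*}
\|v\|_{L^2(\R\times A_{\leq 0})} \leq \|v\|_{L^2(\R\times A_{\leq -k_s})} = s^{1/4}\bigl(s^{-1/4}\|v\|_{L^2(\R\times A_{\leq -k_s})}\bigr) \leq C(I)\,\|v\|_{LE_s}.
\end{equation*}
If $-k_s < 0$, decompose $A_{\leq 0} = A_{\leq -k_s} \cup \bigcup_{-k_s \leq \ell < 0} A_\ell$. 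The first piece is controlled as above, and for each of the at most $|{-k_s}| \leq k_+$ annuli I use $r \simeq 2^\ell$ to bound $\|v\|_{L^2(A_\ell)} \lesssim 2^{\ell/2}\|r^{-1/2}v\|_{L^2(A_\ell)} \leq \|v\|_{LE_s}$ (since $2^{\ell/2}\leq 1$ for $\ell < 0$), and sum the finitely many terms with a constant depending on $I$.

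\textbf{Step 2: weighted outer piece.} For $\|r^{-3/2}v\|_{L^2(\R\times A_\ell)}$ with $\ell \geq 0$ I want to bound by $\|v\|_{LE_s}$ uniformly in $\ell$. If $\ell \geq \max(0,-k_s)$, then $r \geq 1$ on $A_\ell$ so $r^{-3/2}\leq r^{-1/2}$, and
\begin{equation*}
\|r^{-3/2}v\|_{L^2(\R\times A_\ell)} \leq \|r^{-1/2}v\|_{L^2(\R\times A_\ell)} \leq \|v\|_{LE_s}.
\end{equation*}
The only remaining range is $0 \leq \ell < -k_s$, which occurs only when $k_s<0$ and involves at most $k_-(I)$ many annuli, all contained in $A_{\leq -k_s}$. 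Here I use $r^{-3/2}\leq 1$ and
\begin{equation*}
\|r^{-3/2}v\|_{L^2(\R\times A_\ell)} \leq \|v\|_{L^2(\R\times A_{\leq -k_s})} \leq C(I)\,\|v\|_{LE_s},
\end{equation*}
exactly as in Step 1. Taking the supremum over $\ell\geq 0$ and combining with Step 1 yields the claim with $C(I)$ absorbing the finite numbers of annuli and the factors $s^{\pm 1/4}$.

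\textbf{Expected difficulty.} There is no real obstacle; the proof is a bookkeeping exercise on a compact range of $s$. The one point to watch is that $\|\cdot\|_{LE_\low}$ uses the cutoff radius $1$ while $\|\cdot\|_{LE_s}$ uses the $s$-dependent cutoff $2^{-k_s}$, so both the case $-k_s \geq 0$ and the case $-k_s < 0$ must be treated, but in each case only finitely many (depending on $I$) ``transition'' annuli appear, which is why the constant $C(I)$ is uniform on the compact interval.
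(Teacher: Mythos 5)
Your proof is correct and complete. The paper itself omits the proof, stating just before Lemma~\ref{lem:LE_comparison} that these comparisons "follow directly from the definitions," so there is no argument in the paper to compare against; your write-up supplies exactly the case split (on the sign of $-k_s$) and the two observations — that $|k_s|$ and $s^{\pm 1/4}$ are bounded on a compact $I\Subset(0,\infty)$, and that the weight change from $r^{-3/2}$ to $r^{-1/2}$ is favorable for $r\geq 1$ — that make the bound hold with a constant depending only on $I$.
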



\subsection{Time-independent and auxiliary function spaces} 
As a matter of technical convenience we introduce a subtle modification of the local smoothing spaces defined in Section~\ref{s:statement}.  First we introduce the notion of a slowly varying $\ell^1$ sequence. 

\begin{defn}  \label{d:A} 
We denote by $\calA$ the family of positive, slowly varying sequences $\{ \alpha_\ell \}_{\ell \geq 0}$ with the property that
\[
 \sum_{\ell \geq 0} \alpha_\ell = 1, \quad \alpha_{0} \simeq 1.
\]
Here a positive slowly varying sequence is any sequence $\{ \al_\ell\}_{\ell \ge 0} \in \ell^1(\bbN_0)$, $\al_\ell > 0$, such that 
\EQ{
 \abs{ \log_2( \al_{\ell}/\al_{\ell+1})} \le \eta , \quad \forall \,  \ell  \ge 0, \quad \frac{\al_{\ell}}{\al_k} \le 2^{\eta \abs{\ell - k}}\quad \forall \ell, k
}
for some absolute constant $0 < \eta \ll 1$ to be fixed in the course of the proof. We remark that any positive $\ell^1$ sequence can be element-wise dominated by a slowly-varying sequence of comparable $\ell^1$-norm. 
\end{defn}

The auxiliary spaces defined below will play a critical technical role in our analysis, where summation against the family of slowly varying sequences is used as a more flexible substitute for the supremum in the definition of the local smoothing norms. 

\begin{defn}\label{d:Xspaces}
For any $\{ \alpha_\ell \}_{\ell \geq 0} \in \calA$ we define the low-frequency auxiliary norms
\EQ{ \label{eq:Xallow2}
 \| v \|_{X_{\low, \alpha}}^2 &= \| v \|_{L^2(\R \times A_{\leq0})}^2 + \sum_{\ell \geq 0} \alpha_\ell \| r^{-\frac{3}{2}} v \|_{L^2(\R \times A_\ell)}^2, \\
 \| G \|_{X_{\low, \alpha}^\ast}^2 &= \| G \|_{L^2(\R \times A_{\leq0} )}^2 + \sum_{\ell \geq 0} \alpha_\ell^{-1} \| r^{\frac{3}{2}} G \|_{L^2(\R \times A_\ell)}^2.
}
Similarly for  high frequencies, for any $\{ \alpha_\ell \}_{\ell \geq 0} \in \calA$ we define 
\EQ{ \label{eq:Xalhigh} 
 \| v \|_{X_{s, \alpha}}^2 &= s^{-\frac{1}{2}} \| v \|_{L^2(\R \times A_{\leq-k_s})}^2 + \sum_{\ell \geq -k_s} \alpha_{\ell+k_s} \| r^{-\frac{1}{2}} v \|_{L^2(\bbR\times A_\ell)}^2, \\
 \| G \|_{X_{s, \alpha}^\ast}^2 &= s^{\frac{1}{2}} \| G \|_{L^2(\R \times A_{\leq-k_s})}^2 + \sum_{\ell \geq -k_s} \alpha_{\ell+k_s}^{-1} \| r^{\frac{1}{2}} G \|_{L^2(\bbR\times A_\ell)}^2.
}
Finally, corresponding to $\tilLE$, we define
\begin{align*}
\begin{split}
\|v\|_{\calX_{\low,\alpha}}:=\|\jap{r}^{-\frac{3}{2}-\nsigma}v\|_{L^2_{t,x}},\quad \|G\|_{\calX_{\low,\alpha}^\ast}=\|\jap{r}^{\frac{3}{2}+\nsigma}G\|_{L^2_{t,x}}
\end{split}
\end{align*}
and
\begin{align*}
\begin{split}
\|v\|_{\calX_{s,\alpha}}&= s^{-\frac{1}{2}} \| v \|_{L^2(\R \times A_{\leq-k_s})}^2 + \sum_{-k_s\leq \ell < 0} \alpha_{\ell+k_s} \| r^{-\frac{1}{2}} v \|_{L^2(\bbR\times A_\ell)}^2+ \| r^{-\frac{3}{2}-\nsigma} v \|_{L^2(\bbR\times A_{\geq0})}^2,\\
\|G\|_{\calX_{s,\alpha}^\ast}&= s^{-\frac{1}{2}} \| G \|_{L^2(\R \times A_{\leq-k_s})}^2 + \sum_{-k_s\leq \ell < 0} \alpha_{\ell+k_s}^{-1} \| r^{\frac{1}{2}} G \|_{L^2(\bbR\times A_\ell)}^2+ \| r^{\frac{3}{2}+\nsigma} G \|_{L^2(\bbR\times A_{\geq0})}^2.
\end{split}
\end{align*}
Here $\nsigma>0$ is a fixed positive number as in Definition~\ref{d:tilLE}.
\end{defn}

These norms satisfy
\EQ{
 \langle v, G \rangle_{t,x} &\leq \| v \|_{X_{\low, \alpha}} \| G \|_{X_{\low, \alpha}^\ast}, \\
 \langle v, G \rangle_{t,x} &\leq \| v \|_{X_{s, \alpha}} \| G \|_{X_{s, \alpha}^\ast} ,
}
and similarly for $\calX_{s,\alpha}$ and $\calX_{\low,\alpha}$. We will crucially rely on the following relations between these spaces and the local smoothing norms introduced earlier, whose straightforward proofs we omit.

\begin{lem} \label{l:Xal} 
The following relations hold 
\begin{align*}
 \| v \|_{LE_{\low}} &\simeq \sup_{\{ \alpha_\ell \} \in \calA} \, \| v \|_{X_{\low,\alpha}},  \quad \| G \|_{LE_{\low}^*} \simeq \inf_{\{ \alpha_\ell \} \in \calA} \, \| G \|_{X_{\low, \alpha}^\ast}, \\
 \| v \|_{LE_{s}} &\simeq \sup_{\{ \alpha_\ell \} \in \calA} \, \| v \|_{X_{s,\alpha}}, \qquad \, \, \| G \|_{LE_{s}^*} \simeq \inf_{\{ \alpha_\ell \} \in \calA} \, \| G \|_{X_{s, \alpha}^\ast},
\end{align*}
where the implicit constants in the last line are independent of $s>0$. Similar conclusions hold with the $LE$ spaces replaced by $\tilLE$ and the $X$ spaces by the $\calX$ spaces.
\end{lem}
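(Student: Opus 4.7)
The plan is to verify the four equivalences, focusing on the two low-frequency ones; the high-frequency versions are identical up to a shift of index by $k_{s}$ (which preserves all relevant structure, hence also the uniformity in $s$), and the $\calX$-space versions are actually simpler because only finitely many dyadic annuli $-k_{s}\leq \ell<0$ are decomposed via $\alpha$, with the remainder $\{r\geq 1\}$ controlled by a fixed polynomial weight.

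For $\|v\|_{LE_{\low}}\simeq\sup_{\alpha\in\calA}\|v\|_{X_{\low,\alpha}}$, one direction is immediate: since $\alpha_{\ell}\geq0$ and $\sum_{\ell}\alpha_{\ell}=1$,
\[
\|v\|_{X_{\low,\alpha}}^{2}\leq \|v\|_{L^{2}(\bbR\times A_{\leq 0})}^{2}+\sup_{\ell}\|r^{-\frac{3}{2}}v\|_{L^{2}(\bbR\times A_{\ell})}^{2}\lesssim \|v\|_{LE_{\low}}^{2}.
\]
For the opposite direction, set $a_{\ell}:=\|r^{-\frac{3}{2}}v\|_{L^{2}(\bbR\times A_{\ell})}^{2}$ and pick $\ell^{*}\geq 0$ with $a_{\ell^{*}}\geq \tfrac{1}{2}\sup_{\ell}a_{\ell}$. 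Define
\[
\alpha_{\ell}:=Z^{-1}\bigl(2^{-\eta|\ell-\ell^{*}|}+2^{-\eta\ell}\bigr),\qquad Z:=\sum_{m\geq 0}\bigl(2^{-\eta|m-\ell^{*}|}+2^{-\eta m}\bigr).
\]
Then $Z$ is bounded above and below by constants depending only on $\eta$; $\alpha$ is slow-varying (as a sum of two slow-varying sequences); $\sum_{\ell}\alpha_{\ell}=1$; the first summand forces $\alpha_{\ell^{*}}\gtrsim 1$ while the second forces $\alpha_{0}\simeq 1$, so $\alpha\in\calA$. Hence $\|v\|_{X_{\low,\alpha}}^{2}\geq \|v\|_{L^{2}(\bbR\times A_{\leq 0})}^{2}+\alpha_{\ell^{*}}a_{\ell^{*}}\gtrsim \|v\|_{LE_{\low}}^{2}$.

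For $\|G\|_{LE_{\low}^{*}}\simeq\inf_{\alpha\in\calA}\|G\|_{X_{\low,\alpha}^{*}}$, set $b_{\ell}:=\|r^{\frac{3}{2}}G\|_{L^{2}(\bbR\times A_{\ell})}$. The direction $\|G\|_{LE_{\low}^{*}}\lesssim \|G\|_{X_{\low,\alpha}^{*}}$ for every $\alpha\in\calA$ is Cauchy-Schwarz applied to $b_{\ell}=\alpha_{\ell}^{\frac{1}{2}}\cdot\alpha_{\ell}^{-\frac{1}{2}}b_{\ell}$, using $\sum_{\ell}\alpha_{\ell}=1$. For the reverse, assume $\{b_{\ell}\}\in\ell^{1}$ (otherwise there is nothing to prove), invoke the majorization remark from Definition~\ref{d:A} to produce a slow-varying $\beta_{\ell}\geq b_{\ell}$ with $\sum_{\ell}\beta_{\ell}\lesssim \sum_{\ell}b_{\ell}$, and set
\[
\tilde{\beta}_{\ell}:=\beta_{\ell}+2^{-\eta\ell}\Bigl(\textstyle\sum_{m}b_{m}\Bigr),\qquad \alpha_{\ell}:=\tilde{\beta}_{\ell}/Z,\qquad Z:=\sum_{m}\tilde{\beta}_{m}\lesssim \sum_{m}b_{m}.
\]
Again $\alpha\in\calA$: slow-variation is preserved under sums, $\sum_{\ell}\alpha_{\ell}=1$ by construction, and the baseline term $2^{-\eta\ell}(\sum b_{m})$ makes $\alpha_{0}\simeq 1$ regardless of the size of $\sum b_{m}$. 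Since $\alpha_{\ell}\geq b_{\ell}/Z$ pointwise, $\sum_{\ell}\alpha_{\ell}^{-1}b_{\ell}^{2}\leq Z\sum_{\ell}b_{\ell}^{2}/\beta_{\ell}\leq Z\sum_{\ell}b_{\ell}\lesssim (\sum_{\ell}b_{\ell})^{2}$, yielding $\|G\|_{X_{\low,\alpha}^{*}}^{2}\lesssim \|G\|_{LE_{\low}^{*}}^{2}$.

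The only delicate point, and where I expect the main friction to lie, is the simultaneous enforcement of all three defining features of $\calA$ (slow-variation, $\sum=1$, and $\alpha_{0}\simeq 1$) in the constructed sequences. Both templates above handle this by the same device: superimposing a fixed exponentially-decaying slow-varying baseline $2^{-\eta\ell}$ onto a target shape (a peak near $\ell^{*}$ in the first case, a slow-varying envelope of $\{b_{\ell}\}$ in the second), then normalizing by a total mass that is automatically $\simeq 1$ in the first case and proportional to the "natural" scale $\sum b_{m}$ in the second. The high-frequency statements follow verbatim after replacing $\ell$ by $\ell+k_{s}$ and the constant $r$-weights in the inner norm by the $s$-dependent ones, with constants uniform in $s$.
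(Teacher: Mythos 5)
Your proof is correct. The paper omits the proof, describing it as straightforward, so there is no argument to compare against; yours supplies the standard construction (near-maximizer bump for the $LE$ direction, slow-varying majorant of $\{b_\ell\}$ for the $LE^\ast$ direction, each augmented by a fixed exponential baseline to enforce $\alpha_0 \simeq 1$ and then normalized), and the index-shift argument correctly gives the uniformity in $s$ as well as the $\tilLE$/$\calX$ variants. One small slip: after constructing $\alpha_\ell = \tilde\beta_\ell/Z$, the chain of inequalities for $\sum_\ell \alpha_\ell^{-1} b_\ell^2$ actually uses the bound $\alpha_\ell \geq \beta_\ell/Z$ (together with $\beta_\ell \geq b_\ell$), not the weaker $\alpha_\ell \geq b_\ell/Z$ you state; the conclusion is unaffected.
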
 

In Section~\ref{s:error} we will need to work with time-independent as well as higher regularity versions of the $\tilLE$ and $\tilLE^\ast$ spaces defined above. Due to the presence of the supremum in the definition of $\tilLE$ it is not possible to directly write this space as $L^2_tY$ for another space $Y$. In fact the natural choice for $Y$ would be $\tilLE$ where all $L^2$ norms are taken only over $\bbH^d$ instead of $\bbR\times \bbH^d$. Below we will describe the procedure for passing from $\tilLE$ to these time-independent spaces using the auxiliary $\calX_{\low,\alpha}$ and $\calX_{s,\alpha}$ spaces defined above, but first we give the precise definition of our candidate space.

\begin{defn}\label{def:LE0}
Fix $\sigma > 0$. For any $0 < s \leq 1$ let
\begin{align*}
\begin{split}
 \|v\|_{\tilLE_{0,s}} &:=s^{-\frac{1}{4}}\|v\|_{L^2(A_{\leq-k_s})}+\sup_{-k_s\leq\ell < 0}\|r^{-\frac{1}{2}}v\|_{L^2(A_\ell)}+\|r^{-\frac{3}{2}-\nsigma}v\|_{L^2(A_{\geq0})},\\
 \|G\|_{\tilLE_{0,s}^\ast} &:=s^{\frac{1}{4}}\|G\|_{L^2(A\leq -k_s)}+\sum_{-k_s\leq\ell < 0}\|r^{\frac{1}{2}}G\|_{L^2(A_\ell)}+\|r^{\frac{3}{2}+\nsigma}G\|_{L^2(A_{\geq0})},
\end{split}
\end{align*}
and set
\begin{align*}
\begin{split}
 \|v\|_{\tilLE_{0,\low}} &:= \|\jap{r}^{-\frac{3}{2}-\nsigma}v\|_{L^2},\\
 \|G\|_{\tilLE_{0,\low}^\ast} &:= \|\jap{r}^{\frac{3}{2}+\nsigma}G\|_{L^2}.
\end{split}
\end{align*}
Then for any $\kappa\geq0$, we define
\begin{align*}
\begin{split}
 \|u\|_{\tilLE_0^\kappa}^2 &:=\int_{\frac{1}{8}}^4\|\tilP_{\geq s}u\|_{\tilLE_{0,\low}}^2\,\ds+\int_0^{\frac{1}{2}}s^{-\frac{1}{2}-\kappa}\|\tilP_{s}u\|_{\tilLE_{0,s}}^2\,\ds,\\
 \|F\|_{(\tilLE_0^\kappa)^\ast}^2 &:=\int_{\frac{1}{8}}^4\|\tilP_{\geq s}G\|_{\tilLE_{0,\low}^\ast}^2\,\ds+\int_{0}^{\frac{1}{2}}s^{\frac{1}{2}+\kappa}\|\tilP_sF\|_{\tilLE_{0,s}^\ast}^2\,\ds.
\end{split}
\end{align*}
When $\kappa=0$ we simply write $\tilLE_0$ and $\tilLE_0^\ast$ instead of $\tilLE_0^0$ and $(\tilLE_0^0)^\ast$. 
\end{defn}
With this definition, note that the condition \eqref{eq:w-in-LE0} simply becomes
\begin{equation*}\tag{\ref{eq:w-in-LE0}$'$}
w_{0} \in \tilLE_{0}.
\end{equation*}

Next we explain the relation between $\tilLE$ and $\tilLE_0$. First we need another definition.

\begin{defn}\label{def:X0spaces}
Given any function\footnote{To be precise, we have to impose mild conditions on $\alpha: (0,4]\to\calA$ such that the $s$ integrals in this definition are well-defined. For instance one can require that $\alpha$ is constant on dyadic $s$ intervals. For simplicity of exposition we have ignored this technical issue here and in the rest of the paper.} $\alpha \colon (0,4]\to\calA$ taking values in the family of slowly varying sequences $\calA$ defined in Definition~\ref{d:A} let
\begin{align*}
\begin{split}
\|u\|_{\calX_\alpha}^2:=\int_{\frac{1}{8}}^4\|\tilP_{\geq s}u\|_{\calX_{\low,\alpha(s)}}^2\,\ds+\int_0^{\frac{1}{2}}s^{-\frac{1}{2}}\|\tilP_s u\|_{\calX_{s,\alpha(s)}}^2\,\ds,\\
\|F\|_{\calX_\alpha^\ast}^2:=\int_{\frac{1}{8}}^4\|\tilP_{\geq s} F\|_{\calX_{\low,\alpha(s)}^\ast}^2\,\ds+\int_0^{\frac{1}{2}}s^{\frac{1}{2}}\|\tilP_s F\|_{\calX_{s,\alpha(s)}^\ast}^2\,\ds,
\end{split}
\end{align*}
where for fixed $s$ the spaces $\calX_{\low,\alpha(s)}$, $\calX_{s,\alpha(s)}$, $\calX_{\low,\alpha(s)}^\ast$, and $\calX_{s,\alpha(s)}^\ast$ are as in Definition~\ref{d:Xspaces}. Then define the spatial versions $\calX_\alpha^0$ and $(\calX_\alpha^0)^\ast$ by the requirement that
\begin{align*}
\begin{split}
\|u\|_{\calX_\alpha}=:\|u\|_{L_t^2 \calX_\alpha^0},\qquad \|F\|_{\calX_\alpha^\ast}=:\|F\|_{L_t^2(\calX_\alpha^0)^\ast}.
\end{split}
\end{align*}
\end{defn}

The relation between the various spaces defined above is summarized in the following lemma.

\begin{lem}  \label{l:albeLE} 
We have the equivalences
\begin{align*}
 \|u\|_{\tilLE}\simeq\sup_{\alpha : (0,4]\to\calA}\|u\|_{\calX_{\alpha{}}},\quad\| F \|_{\tilLE^*} \simeq \inf_{\al : (0,4]\to\calA} \|F\|_{  \calX_{\alpha{}}^\ast  },
\end{align*}
where the supremum and infimum are taken over all functions $\alpha \colon (0, 4] \to\calA$. Similar conclusions hold for $\tilLE_0$, $\tilLE_0^\ast$ and $\calX_\alpha$, $(\calX_\alpha^0)^\ast$.
\end{lem}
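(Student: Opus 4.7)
The plan is to reduce to the pointwise-in-$s$ equivalences from Lemma~\ref{l:Xal}, and then pass the sup/inf over $\alpha \in \calA$ through the $s$-integrals defining the $\tilLE$ and $\calX_\alpha$ norms. First I would verify the analogue of Lemma~\ref{l:Xal} adapted to the weights of Definition~\ref{d:Xspaces}, namely that for each fixed $s \in (0, 1/2]$ one has
\[
\|v\|_{\tilLE_s} \simeq \sup_{\alpha \in \calA} \|v\|_{\calX_{s,\alpha}}, \qquad \|G\|_{\tilLE_s^\ast} \simeq \inf_{\alpha \in \calA} \|G\|_{\calX_{s,\alpha}^\ast},
\]
with implicit constants independent of $s$, while $\|v\|_{\calX_{\low,\alpha}} = \|v\|_{\tilLE_\low}$ and $\|G\|_{\calX_{\low,\alpha}^\ast} = \|G\|_{\tilLE_\low^\ast}$ hold trivially since the low-frequency $\calX$-norm does not depend on $\alpha$. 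The proof of these pointwise equivalences is identical to that of Lemma~\ref{l:Xal}: only the middle sum over the finitely many annuli $A_\ell$ with $-k_s \leq \ell < 0$ carries $\alpha$-dependence, and it can be converted to a supremum over $\ell$ by testing against a slowly varying sequence peaked near the maximizing index, subject to the constraints $\alpha_0 \simeq 1$ and $\sum_{\ell \geq 0} \alpha_\ell = 1$.

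For the primal equivalence $\|u\|_{\tilLE}\simeq\sup_{\alpha}\|u\|_{\calX_\alpha}$, the upper bound $\|u\|_{\calX_\alpha} \lesssim \|u\|_{\tilLE}$ for every $\alpha \colon (0,4] \to \calA$ is immediate from integrating the pointwise bounds $\|\tilP_{\geq s} u\|_{\calX_{\low,\alpha(s)}} \leq \|\tilP_{\geq s} u\|_{\tilLE_\low}$ and $\|\tilP_s u\|_{\calX_{s,\alpha(s)}} \lesssim \|\tilP_s u\|_{\tilLE_s}$ over $s$, and taking the supremum over $\alpha$ on the left. Conversely, to prove $\|u\|_{\tilLE} \lesssim \sup_\alpha \|u\|_{\calX_\alpha}$, for a fixed $u$ I would choose at each $s$ a sequence $\alpha(s) \in \calA$ that nearly saturates the pointwise supremum $\sup_{\alpha \in \calA} \|\tilP_s u\|_{\calX_{s,\alpha}}^2$ from the previous paragraph; then $\|u\|_{\calX_\alpha}^2$ is, up to an absolute constant, bounded below by $\|u\|_{\tilLE}^2$. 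The dual equivalence is handled symmetrically: $\|F\|_{\tilLE^\ast} \lesssim \|F\|_{\calX_\alpha^\ast}$ holds for every $\alpha$, and the reverse bound $\inf_\alpha \|F\|_{\calX_\alpha^\ast} \lesssim \|F\|_{\tilLE^\ast}$ follows by choosing $\alpha(s)$ to nearly realize the pointwise infimum at each $s$. The corresponding statements for $\tilLE_0$ and $\tilLE_0^\ast$ follow by the same argument with $L^2_{t,x}$ replaced by $L^2_x$ throughout.

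The main technical obstacle is the measurability of the selected $s \mapsto \alpha(s)$, which is needed for the integrals defining $\|u\|_{\calX_\alpha}$ to make sense. As foreshadowed by the footnote to Definition~\ref{def:X0spaces}, it suffices to restrict the class of admissible $\alpha$ to those which are piecewise constant on the dyadic intervals $[2^{-k-1}, 2^{-k}]$; since the norm $\|\tilP_s u\|_{\calX_{s,\alpha}}$ depends on $\alpha$ only through the finitely many entries $\alpha_0,\ldots,\alpha_{k_s-1}$, one can make a near-optimal choice on each such interval and concatenate, losing only an absolute constant factor relative to the unrestricted pointwise sup/inf. This produces a measurable $\alpha(s)$ and completes the reduction.
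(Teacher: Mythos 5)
Your proposal is correct and follows essentially the same route as the paper's proof: reduce to the pointwise-in-$s$ equivalences of Lemma~\ref{l:Xal} (and its $\calX$-version), interchange the $s$-integral with the supremum/infimum over $\calA$ in the easy direction, and construct a near-optimal selection $s \mapsto \alpha(s)$ for the hard direction. The only notable deviations are cosmetic: you use multiplicative near-saturation of the pointwise supremum at each $s$, whereas the paper arranges an additive $\eps$-deficit of size $s^{3/2}\eps$ (resp.\ $\eps/(2\log 32)$) so that the $\ds$-integrals still converge after subtraction; both resolve the issue. You also correctly observe that $\|\cdot\|_{\calX_{\low,\alpha}}$ carries no $\alpha$-dependence at all, so the low-frequency piece is an equality rather than a two-sided comparison (the paper sloppily retains a vacuous supremum there). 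Finally, you address the measurability of $s \mapsto \alpha(s)$ via the piecewise-constant-on-dyadic-intervals restriction foreshadowed in the footnote to Definition~\ref{def:X0spaces}; the paper simply invokes that footnote without elaboration, and while your justification that the restriction loses only a constant (because $k_s$ and hence the relevant entries of $\alpha$ vary by at most one across a dyadic $s$-interval, and the $\tilLE_s$ norms for nearby $s$ are comparable by Lemma~\ref{lem:LE_comparison}) is somewhat terse, it is no less rigorous than what the paper offers.
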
 
\begin{proof}
We give the proof only for $\tilLE$. The other equivalences follow similarly using the time-independent analogue of Lemma~\ref{l:Xal}. By Lemma~\ref{l:Xal}, we have 
\begin{align*}
\begin{split}
\|u\|_{\tilLE}^2 &=\int_{\frac{1}{8}}^4 \|\tilP_{\geq s}u\|_{\tilLE_{low}}^2\,\ds+\int_0^{\frac{1}{2}} s^{-\frac{1}{2}}\|\tilP_su\|_{\tilLE_s}^2\,\ds\\
&\simeq\int_{\frac{1}{8}}^4\sup_{ \{ \alpha_\ell \} \in\calA}\|\tilP_{\geq s}u\|_{\calX_{low,\alpha}}^2\,\ds+\int_0^{\frac{1}{2}}\sup_{\{ \alpha_\ell \}\in\calA}s^{-\frac{1}{2}}\|\tilP_su\|_{\calX_{s,\alpha}}^2\,\ds .
\end{split}
\end{align*}
Now it is immediate that
\EQ{
 \sup_{\al:(0,4]\to \calA} \int_0^{\frac{1}{2}} s^{-\frac{1}{2}}\|\tilP_su\|_{\calX_{s,\alpha(s)}}^2\,\ds &\le \int_0^{{\frac{1}{2}}}\sup_{\{ \alpha_\ell \}\in\calA}s^{-\frac{1}{2}}\|\tilP_su\|_{\calX_{s,\alpha}}^2\,\ds,\\
 \sup_{\al:(0,4]\to \calA} \int_{\frac{1}{8}}^{4} s^{-\frac{1}{2}}\|\tilP_{\geq s}u\|_{\calX_{\low,\alpha(s)}}^2\,\ds &\le \int_{\frac{1}{8}}^{4}\sup_{\{ \alpha_\ell \}\in\calA}s^{-\frac{1}{2}}\|\tilP_{\geq s}u\|_{\calX_{\low,\alpha}}^2\,\ds,
}
and hence
\EQ{
\sup_{\alpha:(0,4]\to\calA}\|u\|_{\calX_{\alpha{}}} \le  C \|u\|_{\tilLE}.
}
For the opposite inequality, note that for each $\eps>0$  we can find a function $\al_\eps: (0,4] \to \calA$ so that 
\begin{align*}
 \|\tilP_su\|_{\calX_{s,\alpha_\eps(s)}}^2 &\ge \sup_{\{ \alpha_\ell \}\in\calA}\|\tilP_su\|_{\calX_{s,\alpha}}^2  - s^{\frac{3}{2}}\eps,\\
 \|\tilP_{\geq s}u\|_{\calX_{\low,\alpha_\eps(s)}}^2 &\ge \sup_{\{ \alpha_\ell \}\in\calA}\|\tilP_{\geq s}u\|_{\calX_{\low,\alpha}}^2  - \frac{\eps}{2\log(32)},
\end{align*}
and hence
\begin{align*}
 \int_0^{\frac{1}{2}} s^{-\frac{1}{2}}\|P_su\|_{\calX_{s,\alpha_\eps{(s)}}}^2\,\ds &\ge \int_0^{\frac{1}{2}}\sup_{\{ \alpha_\ell \}\in\calA}s^{-\frac{1}{2}}\|P_su\|_{\calX_{s,\alpha}}^2\,\ds - \frac{1}{2}\eps,\\
 \int_{\frac{1}{8}}^{4} \|\tilP_{\geq s}u\|_{\calX_{\low,\alpha_\eps{(s)}}}^2\,\ds &\ge \int_{\frac{1}{8}}^{4}\sup_{\{ \alpha_\ell \}\in\calA}s^{-\frac{1}{2}}\|\tilP_{\geq s}u\|_{\calX_{\low,\alpha}}^2\,\ds - \frac{1}{2}\eps.
\end{align*}
This means that we also have 
\EQ{
\|u\|_{\tilLE} \le C \sup_{\alpha:(0,4]\to\calA}\|u\|_{\calX_{\alpha{}}}.   
}
\end{proof} 

Finally we define a modified version of the local smoothing space $\tilLE_0$ which is relevant in the context of proving elliptic regularity estimates in the exterior of a ball $\{r\leq R\}\subseteq \bbH^d$ in Section~\ref{s:error}. The modified norm, which will be used for functions that are supported in this exterior region, agrees with $\tilLE_0$ outside a fixed ball but is less refined for small $r$. The precise definition is as follows.

\begin{defn}\label{def:LEext}
Let $\tilLE_{0,1}$ and $\tilLE_{0,\low}$ be as in Definition~\ref{def:LE0} with $s=1$. The $\tilLE_{0,\ext}^\kappa$ norm, with $\kappa\geq0$, is then defined as
\begin{align*}
\begin{split}
\|u\|_{\tilLE_{0,\ext}^\kappa}^2=\int_{\frac{1}{8}}^4\|\tilP_{\geq s} u\|_{\tilLE_{0,\low}}^2\,\ds+\int_{0}^{\frac{1}{2}}s^{-\frac{1}{2}-\kappa}\|\tilP_s u\|_{\tilLE_{0,1}}^2\,\ds.
\end{split}
\end{align*}
\end{defn}


The following is a simple corollary of the definitions.

\begin{lem}\label{lem:LE_LEext1}
For any $\kappa\geq0$
\begin{align*}
\begin{split}
\|u\|_{\tilLE_{0,\ext}^\kappa}\lesssim \|u\|_{\tilLE_0^\kappa}.
\end{split}
\end{align*}
\end{lem}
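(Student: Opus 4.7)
The plan is to compare the two norms piece by piece. Note first that the low-frequency parts of $\tilLE_{0,\ext}^\kappa$ and $\tilLE_0^\kappa$ are \emph{identical}:
\[
 \int_{\frac{1}{8}}^{4}\|\tilP_{\geq s}u\|_{\tilLE_{0,\low}}^{2}\,\ds
\]
appears in both definitions. Hence it suffices to compare the high-frequency pieces pointwise in $s\in(0,\frac{1}{2})$, i.e.\ to establish
\[
 \|\tilP_s u\|_{\tilLE_{0,1}} \lesssim \|\tilP_s u\|_{\tilLE_{0,s}}, \qquad s\in(0,\tfrac{1}{2}),
\]
with an implicit constant independent of $s$ and $u$; integrating against $s^{-\frac{1}{2}-\kappa}\,\ds$ then gives the lemma.

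Specializing Definition~\ref{def:LE0} to $s=1$ gives $k_{s}=0$, so the middle (supremum) term is vacuous and
\[
 \|v\|_{\tilLE_{0,1}} = \|v\|_{L^{2}(A_{\leq 0})} + \|r^{-\frac{3}{2}-\nsigma}v\|_{L^{2}(A_{\geq 0})}.
\]
The term on $A_{\geq 0}$ already appears in $\|v\|_{\tilLE_{0,s}}$, so the only thing to prove is
\begin{equation}\label{eq:plan-main}
 \|v\|_{L^{2}(A_{\leq 0})} \lesssim s^{-\frac{1}{4}}\|v\|_{L^{2}(A_{\leq-k_s})} + \sup_{-k_s\leq \ell <0}\|r^{-\frac{1}{2}}v\|_{L^{2}(A_\ell)}.
\end{equation}

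For the first term on the right in \eqref{eq:plan-main}, use the trivial bound $\|v\|_{L^{2}(A_{\leq -k_s})}\leq s^{-\frac{1}{4}}\|v\|_{L^{2}(A_{\leq -k_s})}$, valid since $s<1$. For the remaining annuli, decompose
\[
 \|v\|_{L^{2}(A_{\leq 0})}^{2} = \|v\|_{L^{2}(A_{\leq -k_s})}^{2} + \sum_{\ell=-k_s}^{-1}\|v\|_{L^{2}(A_\ell)}^{2},
\]
and observe that on $A_\ell$ one has $r\leq 2^{\ell+1}$, so
\[
 \|v\|_{L^{2}(A_\ell)}^{2} \leq 2^{\ell+1}\|r^{-\frac{1}{2}}v\|_{L^{2}(A_\ell)}^{2}.
\]
The key (and only) quantitative point is that the geometric sum telescopes:
\[
 \sum_{\ell=-k_s}^{-1} 2^{\ell+1} \leq 2,
\]
independently of $k_s$; this is what prevents a $\log s^{-1}$ loss and allows us to trade the sum for a supremum. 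Combining these observations yields \eqref{eq:plan-main}. The only substantive check is the geometric-series bound, which is elementary; no estimates on the heat-flow projections $\tilP_{s}$ are needed.
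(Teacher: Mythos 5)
Your proof is correct and takes essentially the same approach as the paper: reduce to the fixed-$s$ inequality $\|\tilP_s u\|_{\tilLE_{0,1}}\lesssim\|\tilP_s u\|_{\tilLE_{0,s}}$, decompose the ball $A_{\leq 0}$ into the inner ball $A_{\leq -k_s}$ plus the annuli $A_\ell$, trade the $L^2(A_\ell)$ norm for the $r^{-\frac{1}{2}}$-weighted one at the cost of a factor $2^{\frac{\ell}{2}}$, and close with the geometric sum. The paper decomposes via the triangle inequality and sums $\sum 2^{m/2}$, while you decompose at the squared-norm level and sum $\sum 2^{\ell+1}$ — a cosmetic, not substantive, difference.
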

\begin{proof}
It suffices to note that for $s\leq \frac{1}{2}$
\begin{align*}
\begin{split}
\|\tilP_s u\|_{\tilLE_{0,1}} &\lesssim \|\tilP_s u\|_{L^2(A_{\leq-k_s})}+\sum_{m=-k_s}^{0}\|\tilP_s u\|_{L^2(A_m)}+\|r^{-\frac{3}{2}-\nsigma}\tilP_s u\|_{L^2(A_{\geq 0})}\\
&\lesssim \|\tilP_s u\|_{\tilLE_{0,s}}+\|\tilP_s u\|_{\tilLE_{0,s}}\sum_{m=-k_s}^0 2^{\frac{m}{2}}\lesssim \|\tilP_s u\|_{\tilLE_{0,s}}.
\end{split}
\end{align*}
\end{proof}

The usefulness of the modified norm $\tilLE_{0,\ext}$ comes from the fact, which we will prove in Section~\ref{s:pr}, that  $\|\chi_{\geq R}v\|_{\tilLE_{0,\ext}^\kappa}\simeq \|\chi_{\geq R}v\|_{\tilLE_{0}^\kappa}$ for large $R$, where $\chi_{\geq R}$ denotes a cutoff to the region $\{r\geq R\}\subseteq \bbH^d$.

\subsection{The Main Multiplier Identity} \label{ss:multiplier}

Let $Q$ be the self-adjoint operator
\begin{align}\label{eq:Qdef1}
\begin{split}
 Q := \frac{1}{i} \bigl( \ube^\mu\nabla_\mu+\nabla_\mu\ube^\mu \bigr),
\end{split}
\end{align}
where, in polar coordinates, the vector field $\ube$ is given by $\ube=\beta \partial_r$ for a radial function $\beta$. The operator $Q$ can then also be written in the form
\begin{align}\label{eq:Qdef2}
\begin{split}
Q = \frac{1}{i} \bigl( \beta\partial_r-\partial_r^\ast\beta \bigr),
\end{split}
\end{align}
where $\partial_r^\ast:=-\partial_r-(d-1)\coth r$ is the formal adjoint of $\partial_r$. Recall that in terms of $\partial_r$ and $\partial_r^\ast$
\begin{align*}
\begin{split}
-\Delta=\partial_r^\ast\partial_r-\frac{1}{\sinh^2r}\Delta_{\bbS^{d-1}}.
\end{split}
\end{align*}
We state our main multiplier identity in terms of $Q$ in this general form. In specific applications we will choose the radial function $\beta$ differently depending on the context, so that $ \Re\langle i\Delta u , Qu\rangle$ exhibits some positivity (possibly up to lower order errors). The precise form of the multipliers will be given in Section~\ref{ss:mults}, and our choices will be motivated in Section~\ref{s:mult}.

\begin{lem}[Main multiplier identity]\label{lem:Q}
Let $Q$ be as in \eqref{eq:Qdef2} where $\beta$ is a smooth bounded radial function with bounded first two derivatives. Suppose $u$ satisfies 
\begin{align}\label{eq:multdecayassumption}
 \begin{split}
  \lim_{j\to\infty}\|r^{-\frac{1}{2}}u\|_{L^2(A_j)}=\lim_{j\to\infty}\|r^{-\frac{1}{2}}|\nabla u|\|_{L^2(A_j)}=0.
 \end{split}
\end{align}
If $u\in H^2(\bbH^d)$ then 
\begin{align}\label{eq:Q1}
 \begin{split}
  \Re\langle i\Delta u , Qu\rangle &= 2\angles{(\partial_r\beta)\partial_ru}{\partial_ru}-2\rho^2\angles{(\partial_r\beta)u}{u}+2\angles{\beta \coth r\sinh^{-2}r\snabla u}{\snabla u}\\
   &\quad \quad -\frac{1}{2}\angles{(\Delta\partial_r\beta)u}{u}-\rho\angles{(\Delta(\beta\coth r)-2\rho\partial_r\beta)u}{u}.
 \end{split}
\end{align}
If $u$ is only in $H^2_{\mathrm{loc}}(\bbH^d)$ then
\begin{align}\label{eq:Q1alt}
\begin{split}
   &\lim_{j\to\infty}\Re\langle i\chi_j\Delta u , Qu\rangle \\
   &\qquad = 2\lim_{j\to\infty}\angles{(\partial_r\beta)\chi_j\partial_ru}{\partial_ru}-2\rho^2\lim_{j\to\infty}\angles{(\partial_r\beta)\chi_ju}{u}\\
   &\qquad \qquad +2\lim_{j\to\infty}\angles{\beta \chi_j\coth r\sinh^{-2}r\snabla u}{\snabla u}-\frac{1}{2}\lim_{j\to\infty}\angles{(\Delta\partial_r\beta)\chi_ju}{u}\\
   &\qquad \qquad -\lim_{j\to\infty}\rho\angles{(\Delta(\beta\coth r)-2\rho\partial_r\beta)\chi_ju}{u},
\end{split}
\end{align}
where $\chi_j(x)=\chi(2^{-j}|x|)$ and $\chi$ is a radial positive function equal to one on $\{r\leq1\}$ and equal to zero on $\{r\geq 2\}$.
\end{lem}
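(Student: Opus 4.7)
The identity \eqref{eq:Q1} is at heart the positive-commutator identity
\[
\Re\langle i\Delta u, Qu\rangle = \tfrac{1}{2}\langle u, [iQ,\Delta]u\rangle,
\]
valid formally since $\Delta$ and $Q$ are both self-adjoint (for $Q$, this follows from \eqref{eq:Qdef2} together with $(\beta\partial_r)^{\ast} = \partial_r^{\ast}\beta$). Evaluating this commutator rigorously would require $u\in H^{3}$; since the hypothesis is only $u\in H^{2}$ with the decay \eqref{eq:multdecayassumption}, I would instead compute $\Re\langle i\Delta u, Qu\rangle$ directly by integration by parts on the concrete polar representation, using
\[
-\Delta = \partial_{r}^{\ast}\partial_{r} - \sinh^{-2}r\,\Delta_{\bbS^{d-1}}, \qquad iQu = 2\beta\partial_{r}u + (\partial_{r}\beta)u + (d-1)\coth r\,\beta u.
\]

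\emph{Case $u\in H^{2}$.} Writing $\Re\langle i\Delta u, Qu\rangle = -\Im\langle \Delta u, Qu\rangle$ and inserting these two expansions produces a finite sum of integrals, each a product of $u$ or a derivative of $u$ with its conjugate, weighted by $\beta$ or a derivative of $\beta$. Integrate by parts so that no second derivative of $u$ remains. The radial principal piece, from pairing $\partial_{r}^{\ast}\partial_{r}u$ in $-\Delta$ with the $2\beta\partial_{r}u$ part of $iQu$, produces $2\langle(\partial_{r}\beta)\partial_{r}u,\partial_{r}u\rangle$ after one integration by parts in $r$. The angular principal piece, from pairing $\sinh^{-2}r\,\Delta_{\bbS^{d-1}}u$ with $2\beta\partial_{r}u$, produces $2\langle\beta\coth r\,\sinh^{-2}r\,\snabla u,\snabla u\rangle$ after one integration by parts on $\bbS^{d-1}$ and one in $r$ (using $\partial_{r}\sinh^{-2}r = -2\coth r\,\sinh^{-2}r$). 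All remaining contributions are zeroth-order in $u$, and, after algebraically combining the lower-order remainders generated by the two integrations by parts with the pieces arising from the zeroth-order part $(\partial_{r}\beta)+(d-1)\coth r\,\beta$ of $iQ$, reorganize into $-\tfrac{1}{2}\langle(\Delta\partial_{r}\beta)u,u\rangle - \rho\langle(\Delta(\beta\coth r) - 2\rho\partial_{r}\beta)u,u\rangle - 2\rho^{2}\langle(\partial_{r}\beta)u,u\rangle$. (The two $\rho^{2}\langle(\partial_{r}\beta)u,u\rangle$ contributions formally cancel, but keeping them separated exhibits the combination $\Delta(\beta\coth r) - 2\rho\partial_{r}\beta$, which has improved smallness or decay in the intended applications.) The decay hypothesis \eqref{eq:multdecayassumption}, automatic for $u\in H^{2}$, is precisely what is needed to drop the boundary terms at $r\to\infty$ in each integration by parts.

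\emph{Case $u\in H^{2}_{\loc}$.} I would run the same computation with the cutoff $\chi_{j}$ inserted, starting from $\Re\langle i\chi_{j}\Delta u, Qu\rangle$. Each integration by parts now generates, in addition to the $j$-indexed terms appearing on the right-hand side of \eqref{eq:Q1alt}, commutator errors in which $\chi_{j}$ has been differentiated; these are supported in $A_{j}$ with $|\partial_{r}\chi_{j}|\aleq 2^{-j}$ and $|\partial_{r}^{2}\chi_{j}|\aleq 2^{-2j}$, and are controlled by products of $\|r^{-1/2}u\|_{L^{2}(A_{j})}$ and/or $\|r^{-1/2}|\nabla u|\|_{L^{2}(A_{j})}$ with $L^{\infty}$ norms of $\beta$ and its first two derivatives. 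By \eqref{eq:multdecayassumption} these errors vanish as $j\to\infty$, yielding \eqref{eq:Q1alt}.

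\textbf{Main obstacle.} The only real difficulty is the algebraic bookkeeping that identifies the zeroth-order remainder with the precise combination $-\tfrac{1}{2}\Delta\partial_{r}\beta - \rho\Delta(\beta\coth r) + 2\rho^{2}\partial_{r}\beta$; one must carefully track how the $(d-1)\coth r$ factors coming from $\partial_{r}^{\ast}$, from the multiplication part of $iQ$, and from the first-order part of the radial Laplacian combine. I would sanity-check the computation on the cases $\beta\equiv 1$ and $\beta = r$ before trusting the general formula.
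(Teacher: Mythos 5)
Your proposal is correct and proceeds by essentially the same route as the paper: a direct integration-by-parts expansion of $\Re\langle i\Delta u, Qu\rangle$ using the polar decomposition of $\Delta$ and the explicit form of $iQu$, with the decay assumption \eqref{eq:multdecayassumption} used precisely to kill the boundary terms, and a cutoff $\chi_j$ introduced to justify those integrations by parts rigorously and to handle the $H^2_{\loc}$ case. The only organizational difference is that you first prove \eqref{eq:Q1} for $u\in H^2$ and then redo the computation with cutoffs for $H^2_{\loc}$, whereas the paper derives \eqref{eq:Q1alt} once (replacing $\beta$ by $\chi_j\beta$) and then obtains \eqref{eq:Q1} as a direct consequence by letting $j\to\infty$; this is a cosmetic distinction.
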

 
Before the proof, a few remarks concerning the key identity \eqref{eq:Q1} are in order. First, in view of the self-adjointness of $\Delta$ and $Q$, note that
\begin{align*}
\begin{split}
\Re\langle i\Delta u, Qu\rangle =\frac{1}{2}\langle [i Q,\Delta]u,u\rangle.
\end{split}
\end{align*}
This means that the above integral identity is equivalent to computation of the commutator $[iQ,\Delta]$. For this reason this type of estimate is also known as a \emph{positive commutator estimate}. 

Our second remark concerns the positivity of the RHS of \eqref{eq:Q1}. In all our applications, the function $\bt$ will be chosen so that 
\begin{equation} \label{eq:mult-positivity}
\rd_{r} \bt \geq 0, \qquad \bt \coth r - \rd_{r} \bt \geq 0, 
\end{equation}
which imply the coercivity of the first order terms on the RHS of \eqref{eq:Q1}:
\begin{equation} \label{eq:mult-positivity-top}
	2\angles{(\partial_r\beta)\partial_ru}{\partial_ru}+2\angles{\beta \coth r\sinh^{-2}r\snabla u}{\snabla u}
	\geq  2\brk{ (\partial_{r} \bt) \nb u, \nb u}.
\end{equation}
Heuristically, this information is sufficient for ensuring positivity for the high-frequency part of $u$ (as the remaining lower order terms are expected to be smaller). On the other hand, the exact form of the lower order terms becomes important for the low-frequency part. Expanding out the zeroth order terms on the RHS of \eqref{eq:Q1} and keeping only the terms with worst decay as $r \to \infty$, we arrive at the expression
\begin{equation} \label{eq:mult-positivity-low}
	- 2 \rho^{2} \Re \brk{(\rd_{r} \bt) u, u} - 2 \rho \Re \brk{(\rd_{r}^{2} \bt) \coth r u, u} + \cdots,
\end{equation}
where the remainder is better in the sense that it is bounded from above by $(\abs{\rd_{r}^{3} \bt} + (\abs{\bt} + \abs{\rd_{r} \bt}) e^{-2r}) \nrm{u}_{L^{2}}^{2}$ in the region $\set{r \ageq 1}$. Although the first term occurs with the opposite sign (compared to the first order terms), its sum with the other main terms can be shown to be positive, through a weighted Hardy-Poincar\'e inequality. We refer to Section~\ref{ss:LE1-outline} below for more discussion.

\begin{proof}
We first give a formal proof assuming all integration by parts can be justified, and then explain why the decay assumptions on $u$ are sufficient to justify the vanishing of boundary terms in \eqref{eq:Q1alt}. Then, if $u\in H^2(\bbH^d)$, \eqref{eq:Q1} is a direct consequence of \eqref{eq:Q1alt}. We will use the simple product rule $\partial_r^\ast(fg)=(\partial_r^\ast f)g-f(\partial_rg)$ during the proof. First note that
\begin{align*}
\begin{split}
 \Re\angles{i\Delta u}{Qu} = \Re\angles{\partial_r^\ast\partial_ru}{2\beta\partial_ru-(\partial_r^\ast\beta)u}-\Re\Angles{\frac{\Delta_{\bbS^{d-1}} u}{\sinh^2r}}{2\beta\partial_ru-(\partial_r^\ast\beta)u}. 
\end{split}
\end{align*}
Now
\begin{align*}
\begin{split}
 \Re\angles{\partial_r^\ast\partial_ru}{2\beta\partial_ru-(\partial_r^\ast\beta)u} &= 2\angles{\beta_r\partial_ru}{\partial_ru} +\angles{\beta}{\partial_r|\partial_ru|^2}-\angles{\partial_ru}{(\partial_r^\ast\beta)\partial_ru} \\
 &\quad \quad -\frac{1}{2}\angles{\partial_r\partial_r^\ast\beta}{\partial_r|u|^2}\\
 &= 2\angles{\beta_r\partial_ru}{\partial_ru}+\frac{1}{2}\angles{\Delta(\partial_r^\ast\beta)u}{u}\\
 &= 2\angles{\beta_r\partial_ru}{\partial_ru}-2\rho^2\angles{\beta_ru}{u}-\frac{1}{2}\angles{(\Delta\beta_r)u}{u}\\
 &\quad \quad -\rho\angles{(\Delta(\beta\coth r)-2\rho\beta_r)u}{u}.
\end{split}
\end{align*}
This proves the lemma in the radial case. For the angular derivatives we have
\begin{align*}
\begin{split}
  &-\Re\angles{\sinh^{-2}r\Delta_{\bbS^{d-1}} u}{2\beta\partial_ru-(\partial_r^\ast\beta)u} \\
  &\qquad = \angles{\sinh^{-2}r\beta}{\partial_r|\snabla u|^2}-\angles{\sinh^{-2}r(\partial_r^\ast\beta)\snabla u}{\snabla u}\\
  &\qquad = \angles{\partial_r^\ast(\sinh^{-2}r\beta)-\sinh^{-2}r\partial_r^\ast\beta}{|\snabla u|^2}\\
  &\qquad = 2\angles{\beta \coth r\sinh^{-2}r\snabla u}{\snabla u}.
\end{split}
\end{align*}
To justify the integration by parts we replace $\beta(r)$ by $\chi(2^{-j}r)\beta(r)$, where $\chi$ is as in the statement of the lemma. Letting $j\to\infty$ then justifies the formal computations above. For instance, since $\chi'(2^{-j}\cdot)$ is supported in $[2^j,2^{j+1}]$, the boundary terms
\begin{align*}
\begin{split}
2^{-j}\langle \chi'(2^{-j}r)\beta w,w\rangle,
\end{split}
\end{align*}
with $w$ equal to $u$ or $\nabla u$, go to zero as $j\to\infty$ by \eqref{eq:multdecayassumption}.
\end{proof}


\subsection{Slowly varying functions $\al$ and the multiplier} \label{ss:mults}
Here, we state the specific multipliers that are used in the remainder of the paper. The motivation for our choices will be explained in Section~\ref{s:mult} below. We give these definitions here so that we are able to refer to them already in Section~\ref{s:pr} to prove various estimates, which are needed later in the multiplier argument of Sections~\ref{s:low}--\ref{s:trans}. 

We first introduce the notion of a \emph{slowly varying function} $\al$, which enters in the definition of our multipliers.
In practice, we will recover each slowly varying sequence $\{\al_\ell \}_{\ell \geq 0} \in \calA$ from such a slowly varying function $\al$.

\begin{defnprop}[Slowly varying functions $\al$] \label{d:alpha} 
Given a sequence \\ $\{ \alpha_\ell \}_{\ell \geq 0} \in \calA$, we extend it to a sequence $\{ \alpha_\ell \}_{\ell \in \bbZ}$ by setting $\alpha_{\ell} \simeq 1$ for $\ell < 0$. Then we can find a smooth, positive, slowly varying function $\alpha \colon [0,\infty) \to (0, \infty)$ satisfying
\begin{align*}
 \alpha(y) \simeq \alpha_\ell, \quad y \simeq 2^\ell,\quad\qquad  \alpha'(y) = 0, \quad  \forall \, 0 \leq y \leq 1,
\end{align*}
the  bounds 
\begin{align*}
 \bigl| \alpha^{(j)}(y) \bigr| &\leq C \frac{\alpha(y)}{\langle y \rangle^j}, \quad 0 \leq j \leq 3,\\
 |\alpha'(y)| &\leq C \eta \frac{\alpha(y)}{y}, \quad \forall \, y \ge 1, \\
 |\alpha''(y)| &\leq C \eta \frac{\alpha(y)}{y^2}, \quad \forall \, y \ge 1,\\
 \alpha(y) &\geq \frac{c}{\langle y \rangle^\eta}, \qquad \forall \, y \geq 0,
\end{align*}
for some absolute constants $C, c \geq 1$, and such that the function
\EQ{
 [0,\infty) \ni y \mapsto \frac{\alpha(y)}{\langle y \rangle} 
 }
 is non-increasing. Here $\eta$ is as in Definition~\ref{d:A}. We call a function $\alpha$ satisfying these properties a slowly varying function associated to $\{ \alpha_\ell \}_{\ell \geq 0} \in \calA$. 
\end{defnprop} 

The construction of such an $\alp$ is straightforward, and is left to the reader. We also refer to \cite[p.~1523 or p.~1525]{MMT} and \cite[p.~581]{Tat08ajm} for a similar construction.
 
We are now ready to state the two multipliers which are used in our low- and high-frequency multiplier arguments in Sections~\ref{s:low}--\ref{s:trans}. 
We start with the low-frequency definitions.
\begin{defn} \label{d:beta_low} 
To each slowly varying function $\al$ as in Definition~\ref{d:alpha} we associate a weight function $\beta^{(\alpha)}$ by
\begin{equation} \label{eq:beta_low}
\beta^{(\alpha)}(r) := \int_0^r \frac{\alpha(y)}{\langle y \rangle} \, dy,
\end{equation}
and the vector field $\bsbeta^{(\alpha)}$ by
\begin{align*}
 \begin{split}
  \bsbeta^{(\alpha)} = \bsbeta^{(\alpha),\mu}\partial_\mu,
 \end{split}
\end{align*}
where in polar coordinates $\bsbeta^{(\alpha),r}=\beta^{(\alpha)}$ and $\bsbeta^{(\alpha),\mu}=0$ for $\mu\neq r$. When there is no risk of confusion, we sometimes drop the superscript $(\alpha)$ from the notation of $\beta^{(\alpha)}$ and $\bsbeta^{(\alpha)}$. The self-adjoint low-frequency multiplier $Q^{(\alpha)}$ is then defined as 
\EQ{ \label{eq:Qlowdef1} 
Q^{(\al)} v &= \frac{1}{i} \Big(\bsbeta^{(\alpha), \mu} \na_\mu v + \na_\mu (\bsbeta^{(\alpha), \mu} v) \Big) \\
&=  \frac{1}{i} \Big( 2  \beta^{(\alpha)}\partial_rv + (\p_r \be^{(\alpha)}) v + (d-1) \coth r \be^{(\alpha)} v \Big).
} 
When there is no risk of confusion we write $Q$ instead of $Q^{(\alpha)}$.
\end{defn} 
We will often use two properties of $\be^{(\alpha)}$ as defined above, namely, 
\EQ{ \label{eq:betabounds1} 
 \bigl| \be^{(\alpha)}(r) \bigr| + \bigl| \coth r \be^{(\alpha)}(r) \bigr| \lesssim 1, \mand \bigl| \p_r \be^{(\alpha)} (r) \bigr| \lesssim 1 .
}
The second property above is evident from the definition. Note that for large $r$ the first property follows from the fact that $\{\al_\ell \}_{\ell \geq 0} \in \calA$. Indeed, if $r\geq1$
\EQ{
\abs{\be^{(\alpha)}(r)} +  \abs{\coth r \be^{(\alpha)}(r)}  \lesssim \int_0^\infty \frac{\al(y)}{\ang{y}} \, \ud y  \lesssim 1+  \sum_{\ell \ge 0} \al_\ell \lesssim1.
}

The following is the analogous definition for high frequencies.
\begin{defn}\label{d:beta_high} 
To each slowly varying function $\al$ as in Definition~\ref{d:alpha}, $\delta>0$, and $s>0$, we  associate a weight function $\beta_s^{(\alpha)}$ by
\begin{equation} \label{eq:beta_high}
 \beta_{s}^{(\alpha)}(r) := s^{\frac{1}{2}}\int_0^{\delta s^{-\frac{1}{2}}r} \frac{\alpha(y)}{\langle y \rangle} \, dy,
\end{equation}
and the vector field $\bsbeta_{s}^{(\alpha)}$ by
\begin{align*}
\begin{split}
\bsbeta_{s}^{(\alpha)}=\bsbeta_{s}^{(\alpha),\mu}\partial_\mu,
\end{split}
\end{align*}
where in polar coordinates $\bsbeta_{s}^{(\alpha),r}=\beta_{s}^{(\alpha)}$ and $\bsbeta_{s}^{(\alpha),\mu}=0$ for $\mu\neq r$. When there is no risk of confusion, we sometimes suppress the subscript $s$ or the superscript $(\alpha)$ from the notation in $\beta_{s}^{(\alpha)}$ and $\bsbeta_{s}^{(\alpha)}$. The self-adjoint high-frequency multiplier $Q_s^{(\alpha)}$ is then defined as 
\EQ{\label{eq:Qsdef1}
Q_s^{(\al)} v &= \frac{1}{i} \Big( \bsbeta_{s}^{(\alpha), \mu} \na_\mu v + \na_\mu ( \bsbeta_{s}^{(\alpha), \mu} v) \Big) \\
& = \frac{1}{i} \Big( 2 \beta_s^{(\alpha)} \partial_r v + (\p_r \be_s^{(\alpha)}) v + (d-1) \coth r \be_{s}^{(\alpha)} v \Big).
} 
When there is no risk of confusion, we sometimes suppress the subscript $s$ or the superscript $(\alpha)$ from the notation in $Q_s^{(\alpha)}$.
\end{defn} 

We will often use three properties of $\be_s^{(\alpha)}$ as defined above which hold uniformly in $\{\alpha_\ell\}_{\ell\geq0}\in\calA$, $\delta\in(0,1]$, and $s\in(0,s_0]$ where $s_0>0$ is fixed:
\EQ{ \label{eq:betabounds2} 
\bigl| \be_{s}^{(\alpha)}(r) \bigr| \lesssim s^{\frac{1}{2}} , \quad \bigl| \coth r \be_{s}^{(\alpha)}(r) \bigr| \lesssim_{s_0} 1, \mand \bigl| \p_r \be_{s}^{(\alpha)} (r) \bigr| \lesssim 1 .
}

The relation between the choices of $\beta^{(\alpha)}$ and $\beta_s^{(\alpha)}$ above and our local smoothing spaces is given in the following lemma.

\begin{lem}\label{lem:betaLErelation}
Let $\beta^{(\alpha)}$ be as in Definition~\ref{d:beta_low}, then 
\begin{align*}
\begin{split}
 & \langle \jap{r}^{-2}(\partial_r \beta^{(\alpha)}) v, v \rangle_{t,x}\simeq \| v \|_{X_{\low, \alpha}}^2,\qquad\sup_{\{\alpha_\ell \} \in\calA} \angles{\jap{r}^{-2}(\partial_r\beta^{(\alpha)})v}{v}_{t,x}\simeq \|v\|_{LE_\low}^2.
\end{split}
\end{align*}
Similarly, if $\beta_{s}^{(\alpha)}$ is as in Definition~\ref{d:beta_high}, then
\begin{align*}
\begin{split}
 &s^{-\frac{1}{2}}\langle (\partial_r \beta_s^{(\alpha)}) v, v \rangle_{t,x}\simeq\| v \|_{X_{s, \alpha}}^2  ,\qquad\sup_{\{ \alpha_\ell \}\in\calA}s^{-\frac{1}{2}}\angles{(\partial_r\beta_{ s}^{(\alpha)})v}{v}_{t,x}\simeq \|v\|_{LE_s}^2,
\end{split}
\end{align*}
where the implicit constants are independent of $s$ and uniform in $\delta\in(0,1]$.
\end{lem}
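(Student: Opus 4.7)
The plan is to evaluate the quadratic forms on the left-hand sides explicitly, decompose the resulting integrals dyadically, and recognize the pieces as the terms defining the $X$-norms; the $LE$-statements then follow from the preceding Lemma~\ref{l:Xal} by taking suprema over $\{\alpha_\ell\}\in\calA$.

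\textbf{Step 1: Low-frequency case.} First I would differentiate \eqref{eq:beta_low} to get the pointwise identity
\begin{equation*}
 \jap{r}^{-2}\partial_r\beta^{(\alpha)}(r)=\frac{\alpha(r)}{\jap{r}^{3}}.
\end{equation*}
Writing the pairing on the left-hand side as an integral in $t$ and $x$, and decomposing $\bbH^d$ dyadically as $\{r\leq 1\}\cup\bigcup_{\ell\geq 0}A_\ell$, I would use that $\alpha$ is constant on $[0,1]$ with $\alpha_0\simeq 1$ to see that the contribution from $\{r\leq 1\}=A_{\leq 0}$ is comparable to $\|v\|^2_{L^2(\bbR\times A_{\leq 0})}$. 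On each dyadic annulus $A_\ell$ for $\ell\geq 0$, the slow-variation properties of $\alpha$ from Definition/Proposition~\ref{d:alpha} give $\alpha(r)\simeq \alpha_\ell$ uniformly for $r\simeq 2^\ell$, while $\jap{r}^{3}\simeq r^{3}$, so the integrand is comparable to $\alpha_\ell r^{-3}|v|^2$. Summing in $\ell\geq 0$ reproduces exactly $\sum_{\ell\geq 0}\alpha_\ell\|r^{-3/2}v\|^2_{L^2(\bbR\times A_\ell)}$, which together with the $A_{\leq 0}$ term gives $\|v\|^2_{X_{\low,\alpha}}$ as in \eqref{eq:Xallow2}. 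Applying Lemma~\ref{l:Xal} then yields the claimed equivalence with $\|v\|^2_{LE_\low}$ upon taking the supremum over $\{\alpha_\ell\}\in\calA$.

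\textbf{Step 2: High-frequency case.} For $\beta_s^{(\alpha)}$ defined in \eqref{eq:beta_high}, differentiating gives
\begin{equation*}
 \partial_r\beta_s^{(\alpha)}(r)=\delta\,\frac{\alpha(\delta s^{-1/2}r)}{\jap{\delta s^{-1/2}r}}.
\end{equation*}
I would again decompose dyadically, but now adapted to the natural frequency scale $s^{-1/2}$, using the annuli $A_{\leq -k_s}\cup\bigcup_{\ell\geq -k_s}A_\ell$. On $A_{\leq -k_s}$ one has $r\lesssim s^{1/2}$, so $\delta s^{-1/2}r\lesssim \delta\leq 1$, and both $\alpha(\delta s^{-1/2}r)$ and $\jap{\delta s^{-1/2}r}$ are $\simeq 1$; this produces the term comparable to $s^{-1/2}\|v\|^2_{L^2(\bbR\times A_{\leq -k_s})}$ (with constants uniform in $\delta$ by the monotonicity $y\mapsto \alpha(y)/\jap{y}$). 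On an outer annulus $A_\ell$ with $\ell\geq -k_s$, the point $y=\delta s^{-1/2}r$ satisfies $y\simeq 2^{\ell+k_s}\delta$, so slow variation gives $\alpha(y)\simeq \alpha_{\ell+k_s}$, and $\jap{y}\simeq 2^{\ell+k_s}$ when $\ell+k_s\geq 0$; consequently
\begin{equation*}
 s^{-1/2}\partial_r\beta_s^{(\alpha)}(r)\simeq \alpha_{\ell+k_s}\,r^{-1},\qquad r\in A_\ell.
\end{equation*}
Summing in $\ell$ recovers $\|v\|^2_{X_{s,\alpha}}$ from \eqref{eq:Xalhigh}. As in Step~1, a second appeal to Lemma~\ref{l:Xal} after taking the supremum over $\{\alpha_\ell\}\in\calA$ delivers the equivalence with $\|v\|^2_{LE_s}$.

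\textbf{Main obstacle and remark on uniformity.} The only genuinely delicate point is tracking the uniformity of the implicit constants in $s>0$ and $\delta\in(0,1]$ in the high-frequency step: one must check that the boundary regimes $r\simeq s^{1/2}$ and $r\gg s^{1/2}$ match smoothly through the change of variables $y=\delta s^{-1/2}r$, and that the slow-variation bounds of Definition/Proposition~\ref{d:alpha} are invoked with constants independent of the scale. This is essentially bookkeeping, handled by using the uniform estimate $\alpha(y)\simeq \alpha_{\lfloor\log_2 y\rfloor}$ on dyadic intervals together with $\alpha_0\simeq 1$. No further ingredient beyond the explicit form of $\beta^{(\alpha)}$ and $\beta_s^{(\alpha)}$, the properties of $\alpha$, and Lemma~\ref{l:Xal} is needed.
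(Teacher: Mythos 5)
Your proof plan is sound and is essentially the argument the authors have in mind (the paper omits the proof as ``straightforward''): differentiate the explicit formulas for $\beta^{(\alpha)}$ and $\beta_s^{(\alpha)}$, localize dyadically, use the slow-variation property $\alpha(r)\simeq\alpha_\ell$ on $A_\ell$ (together with $\alpha\simeq 1$ on $[0,1]$), and finally invoke Lemma~\ref{l:Xal} for the $LE$-statements. The low-frequency Step~1 is complete.

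The one place you should be more careful is the $\delta$-uniformity in Step~2, which you dismiss as ``essentially bookkeeping.'' Writing $y=\delta s^{-1/2}r$, one has $\partial_r\beta_s^{(\alpha)}=\delta\,\alpha(y)/\langle y\rangle$. On $A_{\leq -k_s}$ one has $y\leq\delta\leq 1$, so $\alpha(y)\simeq 1$ and $\langle y\rangle\simeq 1$, hence $s^{-1/2}\partial_r\beta_s^{(\alpha)}\simeq\delta\, s^{-1/2}$, \emph{not} $\simeq s^{-1/2}$: the lower-bound constant scales linearly in $\delta$, and the parenthetical appeal to monotonicity of $y\mapsto\alpha(y)/\langle y\rangle$ does not remove this factor. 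Likewise, on $A_\ell$ with $\ell\geq -k_s$, the index $\lfloor\log_2 y\rfloor$ differs from $\ell+k_s$ by roughly $\log_2\delta$, and slow variation ($\alpha_\ell/\alpha_k\leq 2^{\eta|\ell-k|}$) only yields $\alpha(y)\simeq\delta^{\pm\eta}\alpha_{\ell+k_s}$. So the equivalence constants you produce genuinely depend on $\delta$; they are \emph{not} uniform in $\delta\in(0,1]$ in the strong sense your parenthetical suggests. In the applications of this lemma (Sections~\ref{s:high}--\ref{s:trans}) the parameter $\delta$ is eventually fixed once and for all, so the dyadic argument carries through with $\delta$-dependent constants and this does not compromise the downstream estimates, but you should either track the $\delta$-factors explicitly or state clearly that your constants depend on $\delta$ (and interpret the lemma's ``uniform in $\delta$'' clause as uniformity over $s>0$ and $\{\alpha_\ell\}\in\calA$ for a fixed $\delta$) rather than claim uniformity you have not established.
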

We omit the straightforward proof.


\section{Overview of the scheme and outline of the paper} \label{s:mult}


In this section we describe the general scheme of the proofs of the main theorems. After this, we provide an outline of the remainder of the paper.


\subsection{Outline of the proof of Theorems~\ref{t:LE1-sym} and \ref{t:LE1}} \label{ss:LE1-outline}


In what follows, we focus on Theorem~\ref{t:LE1}, from which Theorem~\ref{t:LE1-sym} follows.

\subsubsection*{Outline of the proof, take 1: Caricature of the proof}
Let $Q$ be a time-independent self-adjoint operator. Then, 
\EQ{ \label{eq:dtQu} 
 \frac{1}{2} \frac{\ud}{\ud t} \langle u, Q u \rangle &= \re \, \langle (\partial_t + i H) u, Q u \rangle  - \re \, \langle i H u, Q u \rangle, 
}
and
\EQ{
-  \re \, \langle i H u, Q u \rangle =   \re \, \langle  i \De u, Q u \rangle  -    \re \, \langle i (H_{\prin} + \De) u, Q u \rangle -   \re \, \langle i H_{\lot} u, Q u \rangle.
}
Inserting the last two lines into the right-hand side of~\eqref{eq:dtQu}, we arrive at the identity
\EQ{
\re \, \langle  i \De u, Q u \rangle  &=   \frac{1}{2} \frac{\ud}{\ud t} \langle u, Q u \rangle  - \langle (\partial_t + i H) u, Q u \rangle  \\
& \quad +  \re \, \langle i (H_{\prin} + \De) u, Q u \rangle  +  \re \, \langle i H_{\lot} u, Q u \rangle.
}
On the other hand, we have the differential mass conservation law
\begin{equation} \label{equ:approx_mass_conservation}
 \frac{\ud}{\ud t} \, \| u(t) \|_{L^2}^2 = 2 \re \, \langle (\partial_t + i H) u, u \rangle , 
\end{equation}
which implies the approximate conservation of mass
\begin{equation} \label{equ:approx_mass_conservationalt}
\|u(t)\|_{L^2}^2\leq \|u(0)\|_{L^2}^2+ 2 \abs{\re \, \langle (\partial_t + i H) u, u \rangle_{t,x}}.
\end{equation}
Now if we can choose $Q$ obeying the key coercive bound
\begin{equation} \label{eq:Qcondstemp0}
 \re \, \langle  i \De u, Q u \rangle \gtrsim \|u\|_{LE}^2- C\|u\|_{L^2(\bbR\times\{r\leq R\})}^2,
\end{equation}
and upper bounds
\begin{align}\label{eq:Qcondstemp1}
\begin{cases}
 &\|Qu\|_{L^2}\lesssim \|u\|_{L^2}, \qquad \|Qu\|_{LE}\lesssim \|u\|_{LE}\\
 &|\Re\langle i H_{\lot} u, Q u \rangle_{t,x}|\lesssim \varepsilon \|u\|_{LE}^2 + C \|u\|_{L^2(\bbR\times\{r\leq R\})}^2\\
 &|\re \, \langle i (H_{\prin} + \De) u, Q u \rangle_{t,x}| \lesssim \varepsilon \|u\|_{LE}^2 + C \|u\|_{L^2(\bbR\times\{r\leq R\})}^2
\end{cases},
\end{align}
then from \eqref{eq:dtQu} and \eqref{equ:approx_mass_conservationalt}
\begin{align*}
\begin{split}
 \|u\|_{LE}^2 &\lesssim \sup_{t\in\bbR} \, \|u(t)\|_{L^2}^2 + \|F\|_{LE^\ast}\|u\|_{LE} + \varepsilon\|u\|_{LE}^2+\|u\|_{L^2(\bbR\times\{r\leq R\})}^2\\
 &\lesssim \|u(0)\|_{L^2}^2 + \|F\|_{LE^\ast}^2+\|u\|_{L^2(\bbR\times\{r\leq R\})}^2 + \varepsilon \|u\|_{LE}^2,
\end{split}
\end{align*}
which yields \eqref{eq:LE1} if $\varepsilon>0$ is chosen sufficiently small. 

The above procedure is roughly how we will go about proving Theorem~\ref{t:LE1}. However, due to the frequency localized nature of the local smoothing space $LE$, the operator $Q$ needs to be chosen differently depending on the frequency localization of $u$. Our goal will then be to reformulate the requirements \eqref{eq:Qcondstemp1} at a frequency localized level, in particular in terms of the spaces $LE_s$ and $LE_s^\ast$, and choose $Q_s$ for which the latter are satisfied when $u$ is replaced by $\tilP_s u$ or $\tilP_{\geq s} u$. Unfortunately, as our heat flow frequency localizations are not sharply localized, this process will yield errors which make the entire scheme more complicated. Moreover, we have to carefully analyze the interplay between spatial localizations and frequency projections in a manner that is consistent with the uncertainty principle. This is especially challenging because our frequency projections are based on the heat flow. 

\subsubsection*{Outline of the proof, take 2: Frequency localized estimates}
More precisely, consider the frequency localized equations
\begin{align}\label{eq:proj-temp1}
\begin{split}
 (\partial_t - i\Delta) \tilP_{\geq s} u&=\tilP_{\geq s}F- \tilP_{\geq s}H_\lot u-i\tilP_{\geq s}(H_\prin +\Delta)u,\\
 (\partial_t - i\Delta) \tilP_s u&=\tilP_sF -\tilP_sH_\lot u- i\tilP_s(H_{\prin}+\Delta) u,
\end{split}
\end{align}
where $0 < s \leq s_{0}$ for some $s_{0} > 0$ to be fixed later. We use a family of multipliers of the form $\set{Q^{(\alp)}}_{\alp}$ as in Definition~\ref{d:beta_low} for $\tilP_{\geq s} u$ with $s \aeq 1$, and $\set{Q^{(\alp)}_{s}}_{\alp}$ as in Definition~\ref{d:beta_high} for $\tilP_{s} u$ with $s \aleq 1$, where $\alp_{\ell}$ varies over $\calA$ (see Proposition/Definition~\ref{d:alpha}).

\subsubsection*{Motivation for the frequency-localized multipliers}
Let us give a heuristic motivation for such multipliers in the high frequency case, the low frequency case being similar. A basic requirement for the frequency-dependent multiplier $Q_{s}$ for $\tilP_{s} u$ is that $Q_{s} \tilP_{s}$ is bounded on $L^{2}$ (cf. the first line in \eqref{eq:Qcondstemp1}). Due to the frequency projection, a derivative is ``at most of size $s^{-\frac{1}{2}}$'' on $\tilP_{s} u$. Thus, for a multiplier of the form $Q_{s} = \frac{1}{i}(\bt_{s}(r) \rd_{r} - \rd_{r}^{\ast} \bt_{s}(r))$, the $L^{2}$-boundedness requirement leads to the condition $\abs{\bt_{s}} \leq C s^{-\frac{1}{2}}$, or equivalently
\begin{equation} \label{eq:proj-high-motivate}
	\abs{\bt_{s}(0) + \int_{0}^{r} \rd_{r} \bt_{s}(y) \, \ud y} \leq C s^{\frac{1}{2}}.
\end{equation}
On the other hand, recall from \eqref{eq:mult-positivity} that for coercivity of $i[Q_{s}, \lap]$, we wish to take $\bt_{s}(0) = 0$ and $\rd_{r} \bt_{s}$ to be positive, nonincreasing and as large as possible. These considerations motivate using as $\rd_{r} \bt_{s}$ a family of positive, nonincreasing and (barely) integrable functions of the form 
\begin{equation*}
	\frac{\alp(s^{-\frac{1}{2}} r)}{\brk{s^{-\frac{1}{2}} r}}
\end{equation*}
where $\alp$ varies in the family of slowly varying functions (see Definition~\ref{d:alpha}). Note that the scaling factor $s^{-\frac{1}{2}}$ matches with both \eqref{eq:proj-high-motivate} and the smallest possible spatial localization scale for $\tilP_{s} u$ (uncertainty principle). Our actual choice (Definition~\ref{d:beta_high}) is a tiny modification of the above, where an extra small scaling factor of $\dlt$ is added to make higher derivatives of the multiplier even smaller.

\subsubsection*{Coercivity for $\Re \brk{i \lap \tilP_{s} u, Q^{(\alp)}_{s} \tilP_{s}u}$ (Section~\ref{s:high})}
We now describe how the analogue of \eqref{eq:Qcondstemp0} is proved in our scheme.
We start with the high frequency multipliers. Recall from \eqref{eq:mult-positivity-top} in Section~\ref{ss:multiplier} that
\begin{equation*}
	\Re \brk{i \lap \tilP_{s} u, Q_{s}^{(\alp)} \tilP_{s} u} 
	\geq 2 \brk{ (\rd_{r} \bt_{s}^{(\alp)}) \nb \tilP_{s} u, \nb \tilP_{s} u} + \cdots,
\end{equation*}
where all lower order terms are omitted. Heuristically, since $\tilP_{s} u$ localizes $u$ to frequencies $\aeq s^{-\frac{1}{2}}$, and since $\rd_{r} \bt^{(\alp)}_{s}$ is slowly varying at spatial scales $\ageq s^{\frac{1}{2}}$, the top order term on the RHS should be comparable to $s^{-1} \brk{(\rd_{r} \bt_{s}^{(\alp)}) \tilP_{s} u, \tilP_{s} u}$. This term would dominate the lower order terms for small $s$, hence the desired coercivity would follow. Indeed, $\rd_{r} \bt_{s}^{(\alp)}$ is precisely the weight we use for the space $X_{s, \alp}$, and thus for $LE_{s}$. 

In making this discussion rigorous, however, we are faced with the issue that our heat flow based Littlewood-Paley projection $\tilP_{s}$ does not completely cutoff the frequencies below $s^{-\frac{1}{2}}$; this feature, in turn, arose from the desire to have a favorable $L^{p}$ theory $(p \neq 2)$, as discussed in Remark~\ref{rem:LP}. As a result, $\nb \tilP_{s} u$ cannot be directly related to $s^{-\frac{1}{2}} \tilP_{s} u$ with the same $s$, and we need to incur some low frequency (i.e., $\tilP_{\sgm} u$ with $\sgm > s$) error. See Remark~\ref{rem:no-bernstein} for more discussion.

With the above lessons in mind, the basic idea is to integrate the frequency-localized multiplier identities in $s$ in the range $0 < s \aleq 1$. Then $\nb \tilP_{s}$ can be replaced by $s^{-\frac{1}{2}} \tilP_{s}$ through an integration by parts argument (see Lemma~\ref{lem:integrated_bernstein_substitute}), from which we get the key coercivity property. The actual bound, which is a bit technical to state here, can be found in Proposition~\ref{prop:high_frequency_local_smoothing}. 

Let us close our heuristic discussion with a few remarks. First, as expected, our argument incurs a low frequency error term of the form
\begin{equation} \label{eq:low-freq-err} 
	\sup_{\{ \alpha_\ell \} \in \calA} \int_{I} \frac{1}{s} \brk{(\rd_{r} \bt_{s}^{(\alp)}) \tilP_{s} u, \tilP_{s} u} \, \ds,
\end{equation}
where $I = [s_{2}/8, s_{2}]$ with $s_{2} \aeq 1$. Treatment of this term is an important issue that will be discussed below. Second, in the proof of Proposition~\ref{prop:high_frequency_local_smoothing}, a technical subtlety arises when changing the order of integration in $s$ and supremum in $\alp$. It is dealt with by working with a suitable average of $\frac{1}{s} \brk{(\rd_{r} \bt_{s}^{(\alp)}) \tilP_{s} u, \tilP_{s} u}$ in $s$.

\subsubsection*{Coercivity for $\Re \brk{i \lap \tilP_{\geq s} u, Q^{(\alp)} \tilP_{\geq s} u}$ (Section~\ref{s:low})}
Leaving aside the issue of treating the low-frequency error \eqref{eq:low-freq-err}, we now turn to the issue of coercivity for $\tilP_{\geq s} u$. This discussion will explain the discrepancy between the spatial weights in $LE_{\low}$ and in $LE_{s}$.

Unlike in the high frequency case, the lower order terms cannot be simply treated as errors, and their precise forms matter. Recall from Section~\ref{ss:multiplier} that
\EQ{\label{eq:low-freq-key}
	& \Re \brk{i \lap \tilP_{\geq s} u, Q^{(\alp)} \tilP_{\geq s} u} \\ 
	& \geq 2 \brk{ (\rd_{r} \bt^{(\alp)}) \nb \tilP_{\geq s} u, \nb \tilP_{\geq s} u} - 2 \rho^{2} \brk{ (\rd_{r} \bt^{(\alp)}) \tilP_{\geq s} u, \tilP_{\geq s} u} \\
	& \phantom{\geq}- 2 \rho \brk{ (\rd_{r}^{2} \bt^{(\alp)}) \coth r \tilP_{\geq s} u, \tilP_{\geq s} u} + \cdots,
}
where the zeroth order terms with better decaying weights are omitted. If $\rd_{r} \bt^{(\alp)} \equiv 1$, then the RHS would be nonnegative by Poincar\'e's inequality. Strengthening this simple observation, we prove a \emph{weighted Hardy--Poincar\'e inequality} (Lemma~\ref{lem:HP}), which allows us bound the main terms on the RHS from below by 
\begin{equation} \label{eq:low-freq-main}
\brk{r^{-2} (\rd_{r} \bt^{(\alp)}) \tilP_{\geq s} u, \tilP_{\geq s} u}.
\end{equation}
This inequality explains the extra factor of $r^{-1}$ in the definition of $X_{\low, \alpha}$, and thus in $LE_{\low}$. 

\subsubsection*{Transitioning estimate (Section~\ref{s:trans})}
To conclude the discussion on coercivity, it remains to treat the low frequency error \eqref{eq:low-freq-err}. For the purpose of this heuristic discussion, we take $\rd_{r} \bt^{(\alp)}_{s} = \rd_{r} \bt^{(\alp)}$ in \eqref{eq:low-freq-err}, since $s \aeq 1$. 

In order to use \eqref{eq:low-freq-main}, which controls $\tilP_{\geq s} u$, we write $\tilP_{s} u = - s(\lap + \rho^{2}) \tilP_{\geq s} u = - s\rd_{s} \tilP_{\geq s} u$ and integrate by parts. A-priori, one may be worried that the spatial weight $\rd_{r} \bt^{(\alp)}$ in \eqref{eq:low-freq-err} is too big compared to that in \eqref{eq:low-freq-main}. However, we may arrange so that
\begin{equation} \label{eq:transition-key}
\eqref{eq:low-freq-err} \aleq \int_{\tilde{I}} \abs{\brk{(\rd_{r} \bt^{(\alp)}) \tilP_{\geq s} u, (\lap + \rho^{2}) \tilP_{\geq s} u}}\, \ds + \cdots,
\end{equation}
where $\tilde{I}$ is a slight enlargement of $I$, and we have omitted easily treated terms with sufficiently decaying weights. Now observe that, after an extra spatial integration by parts, the bad term coincides with the main term in \eqref{eq:low-freq-key} before application of the Hardy-Poincar\'e inequality! Therefore, the whole RHS of \eqref{eq:transition-key} can be estimated using the low-frequency multiplier identity. For details, see Lemma~\ref{lem:transitioning_estimate}. We remark that in order to see the key structure in \eqref{eq:transition-key}, it is crucial to use the shifted Laplacian $\lap + \rho^{2}$ to define the frequency projections $\tilP_{s}$, $\tilP_{\geq s}$.

\begin{rem} \label{rem:optimal-weight}
As noted in Remark~\ref{rem:LE-weight}, the $r$-weight in $LE_{\low}$ is not optimal in comparison to the free case \cite{Kaizuka1}. The loss occurs due to our use of a single type of multipliers $Q^{(\alp)} = \frac{1}{i} (\bt^{(\alp)} \rd_{r} - \rd_{r}^{\ast} \bt^{(\alp)})$ for all low frequencies, which decay (and thus are nonoptimal) at very low frequencies. We expect that this loss may be remedied by fully decomposing the low frequencies and using frequency-dependent multipliers.
\end{rem}

\subsubsection*{Boundedness of the multiplier $Q$ (Section~\ref{ss:Q-bdd})}
In the remainder of this subsection, we describe how the analogues of the upper bounds \eqref{eq:Qcondstemp1} are proved in our scheme. In what follows, we focus on the high-frequency multipliers $Q_{s}^{(\alp)}$, as the story for $Q^{(\alp)}$ is similar but only simpler. 

We begin with the boundedness properties of our frequency-localized multipliers, i.e., the analogue of the first line in \eqref{eq:Qcondstemp1}. The goal is to establish\footnote{Note that $\tilP_{s}$ is replaced by $\tilP_{\frac{s}{2}}$ on the RHS. This replacement occurs because $\nb \tilP_{s} u$ cannot be directly related with $\tilP_{s} u$, which in turn is because our heat flow based Littlewood-Paley projection $\tilP_{s}$ does not completely cutoff frequencies above $s^{-\frac{1}{2}}$. Fortunately, unlike the reverse direction, which necessitated the delicate transitioning estimate as discussed above, this issue turns out to be only a minor nuisance.}
\begin{align*} 
	\nrm{Q^{(\alp)}_{s} \tilP_{s} u}_{L^{2}} &\aleq  \nrm{\tilP_{\frac{s}{2}} u}_{L^{2}}, \\
	\nrm{Q^{(\alp)}_{s} \tilP_{s} u}_{X_{s, \gmm}} &\aleq  \nrm{\tilP_{\frac{s}{2}} u}_{X_{s, \gmm}},
\end{align*}
where $\alp, \gmm \in \calA$ and $s \aleq 1$. 

As discussed above, the first bound ($L^{2}$-boundedness) is a simple consequence of the bound \eqref{eq:proj-high-motivate}; see Proposition~\ref{p:QsL2}. The proof of the second bound ($X_{s, \gmm}$-boundedness) is far more tedious due to the presence of spatial weights; see Proposition~\ref{p:QsX}. The core principle, however, is quite simple: To reduce the matter to application of what we call the \emph{mismatch estimates} (Proposition~\ref{p:lpregcore}), which are estimates for the heat semi-group $e^{s \lap}$ with the input and the output localized in dyadic spatial annuli. We note that these estimates are only effective for spatial scales $\ageq s^{\frac{1}{2}}$ (uncertainty principle), which is consistent with our choice of $\bt_{s}^{(\alp)}$.

\subsubsection*{Contribution of $H_{\lot}$ (Sections~\ref{ss:lot-LE*} and \ref{ss:lot-comm})}
Next, we turn to the analogue of the second line in \eqref{eq:Qcondstemp1} for the frequency-localized multipliers. The goal is to prove
\begin{equation} \label{eq:lot-goal}
	\int_{0}^{2} \sup_{\set{\alp_{\ell}} \in \calA} \abs{\Re \brk{i \tilP_{s} H_{\lot} u, Q_{s}^{(\alp)} \tilP_{s} u}} \, \ds
	\leq \veps \nrm{u}_{LE}^{2} + C_{\veps} \nrm{u}_{L^{2}(\bbR \times \set{r \leq R}}^{2}
\end{equation}
for any $\veps > 0$ and $R = R(\veps, H_{\lot}) \geq 1$.

Our first key ingredient is the following set of estimates, which exploit the gain of one derivative, at the expense of growing spatial weights, when passing from $LE$ to $LE^{\ast}$:
\begin{align}
	\nrm{\bsb^{\mu} \nb_{\mu} u}_{LE^{\ast}} + \nrm{\nb_{\mu} (\bsb^{\mu} u)}_{LE^{\ast}} &\aleq C(\bsb) \nrm{u}_{LE}, \label{eq:LEstar-LE-b}\\
	\nrm{V u}_{LE^{\ast}} &\aleq C(V) \nrm{u}_{LE}. \label{eq:LEstar-LE-V}
\end{align}
Here, $C(\bsb)$ and $C(V)$ are positive constants that depend sublinearly on $\bsb$ and $V$, respectively. For the full statement, see Propositions~\ref{p:Hlotbound_for_V} and \ref{p:Hlotbound}. Their proofs are essentially applications of the mismatch estimates (Proposition~\ref{p:lpregcore}).

The preceding bounds are sufficient to handle the contribution of $H_{\lot}$ when $\bsb, V$ are small, e.g., in the exterior region $\set{r \geq R}$ for sufficiently large $R$ (where $\bsb, V$ are small due to our decay assumptions), as well as the globally small imaginary parts of $\bsb, V$. Thus, to prove \eqref{eq:lot-goal}, it remains to handle
\begin{equation} \label{eq:lot-large}
	\int_{0}^{2} \sup_{\set{\alp_{\ell}} \in \calA} \abs{\Re \brk{i \tilP_{s} H^{\sym}_{\lot}  (\chi_{r \leq R} u), Q_{s}^{(\alp)} \tilP_{s} (\chi_{r \leq R} u)}} \, \ds
\end{equation}
where $H^{\sym}_{\lot} u = \frac{1}{i}(\Re \bsb^{\mu} \nb_{\mu} u + \nb_{\mu} (\Re \bsb^{\mu} u)) + \Re V u$. 

To treat \eqref{eq:lot-large}, we first commute $\tilP_{s}$ and $H^{\sym}_{\lot}$, which gains a derivative; see Lemma~\ref{lem:commutator_B_Ps}. Then we may exploit a commutator structure arising from symmetry of $H^{\sym}_{\lot}$ and $Q_{s}^{(\alp)}$:
\begin{align*}
\begin{split}
\Re \brk{i H^{\sym}_{\lot} v, Q_{s}^{(\alp)} v} =\frac{1}{2} \brk{i [Q_{s}^{(\alp)}, H^{\sym}_{\lot}] v, v},
\end{split}
\end{align*}
where $v = \tilP_{s} (\chi_{r \leq R} u)$. The commutator $[Q_{s}^{(\alp)}, H^{\sym}_{\lot}]$ entails a smoothing effect\footnote{If our multiplier were a classical order zero operator, then this commutator structure would have gained a full derivative, and \eqref{eq:lot-large} would be bounded by $\nrm{u}_{L^{2}(\bbR \times \set{r \leq R}})$. However, when the derivatives in the commutator fall on the coefficients of our frequency localized multiplier $Q_{s}^{(\alp)}$, it is possible that an extra factor of $s^{-\frac{1}{2}}$ is produced (in other words, our multiplier does not belong to the classical symbol class). Nevertheless, using the precise structure of $Q_{s}^{(\alp)}$, the resulting error is again bounded by a small multiple of $\|u\|_{LE}$, which is acceptable.}, which allows us to bound \eqref{eq:lot-large} by the RHS of \eqref{eq:lot-goal}.

\subsubsection*{Contribution of $H_{\prin} + \Dlt$ (Sections~\ref{ss:commute_with_H_prin} and \ref{ss:DeltaHdifference})}
Finally, we discuss the contribution of second order perturbations $H_{\prin} + \lap$, i.e., the analogue of the third line in \eqref{eq:Qcondstemp1}. The overall idea is to exploit a commutator structure, as in the case of $H^{\sym}_{\lot}$. The first step is to commute $\tilP_{s}$ and $H_{\prin}$ and prove an estimate of the form
\begin{equation} \label{eq:prin-comm}
\int_{0}^{2} \sup_{\set{\alp_{\ell}} \in \calA} \abs{\Re \brk{i [\tilP_{s}, H_{\prin}] u, Q_{s}^{(\alp)} \tilP_{s} u}} \ds \aleq C(\ringa) \nrm{u}_{LE}^{2},
\end{equation}
where $C(\ringa)$ is a positive constant that depends sublinearly on $\ringa := \bsa - \bsh^{-1}$. 
The idea is that the commutator $[\tilP_{s}, H_{\prin}]$ essentially gains a derivative (as discussed in relation to Lemma~\ref{lem:commutator_B_Ps}), so that \eqref{eq:prin-comm} is roughly at the same difficulty as \eqref{eq:LEstar-LE-b}; see Proposition~\ref{p:commP_sH_prin} in Section~\ref{ss:commute_with_H_prin} for more details.

By \eqref{eq:prin-comm} and the fact that $\tilP_{s}$ commutes with $\lap$, we are left to estimate
\begin{equation*}
\Re \brk{i (H_{\prin} + \lap) \tilP_{s} u, Q_{s}^{(\alp)} \tilP_{s} u}.
\end{equation*}
Using the symmetry of $H_{\prin} + \lap$ (where $\tilP_{s}$ clearly commutes with $\lap$), we may rewrite the preceding expression as
\begin{equation*}
\frac{1}{2}  \brk{i [Q_{s}^{(\alp)}, H_{\prin} + \lap] \tilP_{s} u, \tilP_{s} u}.
\end{equation*}
Here the point is that $H_\prin+\Delta$ is again a self-adjoint second order operator with small decaying coefficients. This allows us to calculate the error term $\langle i[Q,(H_\prin+\Delta)]u,u\rangle$ in a similar way to the main commutator $\langle i[Q,\Delta] u,u\rangle$, and absorb the corresponding error terms in this way. The detailed argument can be found in Proposition~\ref{p:DeltaHpQhigh} in Section~\ref{ss:DeltaHdifference}.

\begin{rem} \label{rem:suboptimal-w}
There is a technical point concerning our proof of Proposition~\ref{p:DeltaHpQhigh}, which ultimately results in the vanishing condition \eqref{eq:vanishing_assumption-prin} for $\bsa^{r \tht_{a}}$. It turns out that in the region $\set{r \ll 1}$, $\nrm{u}_{LE}^{2}$ is too weak to bound the contribution of $\ringa^{\tht_{a} \tht_{b}}$ and $\bsa^{r \tht_{a}}$. For errors of the first type, this issue is fixed by exploiting the extra positive angular derivative term in \eqref{eq:Q1}; see Proposition~\ref{p:DeltaHpQhigh} and the corresponding introduction of $\tilde{F}_{s}^{(\alp)}$ in Section~\ref{s:high}. For errors of the second type, however, an extra assumption such as \eqref{eq:vanishing_assumption-prin} is necessary.

We note that this problem is due to the aforementioned issue that our multiplier is not a classical symbol, or more precisely, that a derivative of the coefficient of $Q_{s}^{(\alp)}$ incurs a factor of size $s^{-\frac{1}{2}}$ in the small $r$-region. One way to avoid it, and hence to drop the extraneous assumption \eqref{eq:vanishing_assumption-prin}, is to use 
\begin{equation*}
	\bt_{s}^{(\alp)} = s^{\frac{1}{2}} \int_{0}^{\dlt r} \frac{\alp(y)}{\brk{y}} \, \ud y,
\end{equation*}
instead of \eqref{eq:beta_high} for the high-frequency multipliers. However, this choice would lead to a weaker local smoothing space with a nonoptimal, frequency-independent spatial weight in the region $\set{r \ll 1}$.
\end{rem}


\subsection{Outline of the proof of Theorems~\ref{t:LE-H} and \ref{t:LE2}} \label{ss:LE-H-outline}

\subsubsection*{Limiting absorption principle and resonances (Sections~\ref{ss:elliptic_reg} and \ref{ss:LE-res})}
A well-known philosophy is that when $H$ is a symmetric and stationary perturbation of the Laplacian, Theorem~\ref{t:LE-H} is equivalent to a \emph{limiting absorption principle}. More precisely, essentially via Fourier transform in time, one can show that the local smoothing estimate in Theorem~\ref{t:LE-H} is equivalent to
\begin{align*}
\begin{split}
 \| P_c v \|_{ \tilLE_0} \lesssim  \| (\tau \pm i\eps - H) P_c v \|_{ \tilLE_0^*},
\end{split}
\end{align*}
uniformly in $\epsilon > 0$ and $\tau \in \bbR$, where $\tilLE_0$ and $\tilLE_0^\ast$ denote the spatial local smoothing norms introduced in Section~\ref{s:prelim}. 
Similarly Theorem~\ref{t:LE1-sym} (or rather the weaker version resulting from replacing $LE$ and $LE^\ast$ by $\tilLE$ and $\tilLE^\ast$ respectively) is equivalent to
\begin{equation} \label{eq:LE-H-outline-LE1}
\begin{split}
 \| v \|_{ \tilLE_0} \lesssim  \|(\tau \pm i\eps - H) v \|_{ \tilLE_0^*} + \| v \|_{L^2(\{r \le R\})},
\end{split}
\end{equation}
uniformly in $\epsilon > 0$ and $\tau \in \bbR$. A detailed proof of these (standard) equivalences can be found in Section~\ref{ss:LE-res} below.

Assuming that Theorem~\ref{t:LE-H} fails, we use a compactness argument to extract a solution $v_\infty\in \tilLE_0$ to 
\begin{align*}
\begin{split}
Hv_\infty =\tau v_\infty,
\end{split}
\end{align*}
which we may arrange so that $P_{c} v_{\infty} = v_{\infty}$ and $\|v_{\infty}\|_{L^2(\{r\leq R\})}=1$ using \eqref{eq:LE-H-outline-LE1}. To derive a contradiction, the first key step is to show that $v_\infty$ is a \emph{resonance}, which roughly is equivalent to satisfying one of the decay conditions
\begin{align*}
\begin{split}
\lim_{j\to\infty}\|r^{-\frac{1}{2}}(\partial_r+\rho\mp i\sqrt{\tau^2-\rho^2})v_\infty\|_{L^2(A_j)}= 0,
\end{split}
\end{align*}
which are known as the \emph{outgoing} or \emph{incoming radiation condition}. We refer to Proposition~\ref{p:LE-res} for a precise formulation of this part of the argument. 

Proof of the above decay condition again relies on a positive commutator estimate of the form described above with a different choice of multiplier $Q$. Here, unlike in the proof of Theorem~\ref{t:LE1} we have opted to choose the multiplier $Q$ independently of the frequency localization, so we work with $u$ rather than $\tilP_su$ or $\tilP_{\geq s}u$ throughout. Such a choice leads to new error terms involving $\nb u$, which are treated by elliptic regularity estimates in the space $\tilLE_{0}$ established in Section~\ref{ss:elliptic_reg}.


We remark that while using a frequency-independent multiplier is appealing in that it makes the arguments less tedious, it has the disadvantage that it does not allow us to distinguish between the difference in spatial weights in $LE_\low$ and $LE_s$. Indeed this is the reason we use the more uniform spaces $\tilLE$ and $\tilLE^\ast$ for Theorem~\ref{t:LE-H}.


\subsubsection*{Absence of embedded resonances (Section~\ref{ss:no-emb-res})}
To complete the proof of Theorem~\ref{t:LE2}, it remains to show that there are no resonances such as $v_{\infty}$. The cases $\tau \in (-\infty, \rho^{2})$ and $\tau = \rho^{2}$ are ruled out by the condition $P_{c} v_{\infty} = v_{\infty}$ and the hypothesis (i.e., threshold nonresonance), respectively. Therefore, we are left with the task of establishing the absence of any resonances embedded in the continuous spectrum $(\rho^{2}, \infty)$. 

Our proof proceeds in three steps. First, using yet another multiplier argument, we show that $v_{\infty}$ must decay faster than any polynomial in $r$ as $r \to \infty$ (see Lemma~\ref{l:poly-decay-key}). The radiation condition is used to ensure that the boundary terms arising in the multiplier argument vanish. Second, we prove that any solution to $(H - \tau) v_{\infty} = 0$ for $\tau > \rho^{2}$ with such a rapid decay must, in fact, be compactly supported. On the one hand, this part resembles the previous proof of the absence of embedded resonances in the case $H_{\lot} = 0$ by Donnelly \cite{Donnelly1} (see also Borthwick--Marzuola \cite{BorMar1} for the case  $H_{\lot} = V$), and on the other hand, it may be thought of as an application of a Carleman-type inequality from infinity. In the third and final step, we apply a standard unique continuation result for elliptic PDEs to conclude that $v_{\infty}$ is identically zero.

\begin{rem} 
Amusingly, the proof of the absence of embedded resonances for self-adjoint, first and zeroth order perturbations of the Laplacian seems simplified by the hyperbolic geometry.  Our simple choices of the multipliers and weights rely crucially on the exponential volume growth of the spheres $\set{r = \hbox{const}}$, and fail in the case of Euclidean space. Indeed, to the best of our knowledge, the argument in the Euclidean case becomes much more involved in the presence of a first order perturbation; compare \cite[Thm.~14.7.2]{Hor2} (for only zeroth order perturbations) with \cite[Thm.~17.2.8]{Hor3} (which also includes first order perturbations).
\end{rem}   

\subsubsection*{Outline of the proof of Theorem~\ref{t:LE2} (Sections~\ref{ss:pert-reg} and \ref{ss:LE2-pf})}
Theorem~\ref{t:LE2} is a simple consequence of the local smoothing estimates and the Littlewood-Paley machinery developed in this paper. We begin by splitting $u$ into the high and low frequency parts, i.e., $u = (1-\tilP_{\geq s_{0}}) u + \tilP_{\geq s_{0}} u$ for an appropriately small $s_{0}$, and claim that
\begin{equation} \label{eq:LE2-pf-key}
	\nrm{u}_{\tilLE} \aleq \nrm{u_{0}}_{L^{2}} + \nrm{F}_{\tilLE^{\ast}} + (s_{0}^{\dlt_{0}} + s_{0}^{-1} \kpp) \nrm{u}_{\tilLE}
\end{equation}
for some positive $\dlt_{0} > 0$. Then the desired local smoothing estimate would follow by taking $s_{0}, \kpp$ sufficiently small, and then absorbing the last term in the LHS. 

To prove \eqref{eq:LE2-pf-key}, by Theorem~\ref{t:LE1}, it suffices to estimate $\nrm{u}_{L^{2}(\bbR \times \set{r \leq R})}$ by the RHS of \eqref{eq:LE2-pf-key}. The contribution of the high frequency part $(1-\tilP_{\geq s_{0}}) u$ is easily bounded by $s_{0}^{\frac{1}{4}} \nrm{u}_{\tilLE}$, which is acceptable. For the low frequency bulk $\tilP_{\geq s_{0}} u$, we apply Theorem~\ref{t:LE-H} to the equation
\begin{equation*}
	(-i \rd_{t} + H_{\stat}) \tilP_{\geq s_{0}} u = \tilP_{\geq s_{0}} F - [\tilP_{\geq s_{0}}, H_{\stat}] u - \tilP_{\geq s_{0}} H_{\pert} u.
\end{equation*}
The estimates needed for treating the $\tilLE^{\ast}$ norm of the RHS are proved in Section~\ref{ss:pert-reg}. For $\nrm{[\tilP_{\geq s_{0}}, H_{\stat}] u}_{\tilLE^{\ast}}$, the idea is to gain a positive power of $s_{0}$ from the commutator structure. On the other hand, $\nrm{\tilP_{\geq s_{0}} H_{\pert} u}_{\tilLE^{\ast}}$ is bounded by $s_{0}^{-1} \kpp \nrm{u}_{\tilLE}$ by letting the derivatives fall on the low frequency projection and using the bounds \eqref{eq:cLE2conditions-prin}--\eqref{eq:cLE2conditions} on the coefficients. 


\subsection{Outline of the remainder of the article} \label{ss:outline}
In Section~\ref{s:pr}, which is the longest section of the paper, we develop the heat flow based Littlewood-Paley theory and prove various general estimates in the local smoothing spaces, as discussed above. A recurring theme is the uncertainty principle, or more concretely, the analysis of the relation between frequency and spatial localizations. Our key tools are the \emph{localized parabolic regularity estimates} proved in Section~\ref{ss:lpreg}. 

Sections~\ref{s:low}--\ref{s:trans} contain the proof of Theorem~\ref{t:LE1}, and therefore also of Theorem~\ref{t:LE1-sym}. The low and high-frequency estimates are derived in Sections~\ref{s:low} and~\ref{s:high} respectively, but as mentioned above additional work is required to combine these into a uniform estimate. This is achieved in Section~\ref{s:trans}. 

In Section~\ref{s:error}, we prove Theorem~\ref{t:LE-H}, Proposition~\ref{p:no-th-res} and Theorem~\ref{t:LE2}. Finally, we establish Corollaries~\ref{c:LE1} and \ref{c:Strichartz} in Section~\ref{s:cors}.


\section{Regularity theory of the heat flow: Littlewood-Paley theory} \label{s:pr} 

In this section we develop the heat flow based Littlewood-Paley theory and establish various general estimates in the local smoothing spaces that will be needed in Sections~\ref{s:low}--\ref{s:cors}. A key ingredient for most of the proofs is a quantitative understanding of the uncertainty principle, that is, the relation between spatial and frequency localization, which is captured in the localized parabolic regularity estimates derived in Section~\ref{ss:lpreg}.

We begin by recalling the definitions of the heat flow frequency projections associated with the shifted Laplacian as well as the usual Laplacian on hyperbolic space. For any heat time $0 < s < \infty$ we use the notations 
\[
 \tilP_{\geq s} u := e^{s(\Delta+\rho^2)} u,\qquad \tilP_s u := -s\partial_s \tilP_{\geq s}u=- s (\Delta +\rho^2)e^{s(\Delta+\rho^2)} u
\]
and 
\[
 P_{\geq s} u := e^{s \Delta} u, \qquad P_s u := -s\partial_s P_{\geq s} u = -s \Delta e^{s \Delta} u.
\]
Then we have for any $0 < s_0 < \infty$ the continuous resolutions 
\begin{align*}
 u &= \int_0^{s_0} \tilP_s u \, \frac{ds}{s} + e^{s_0 (\Delta+\rho^2)} u = \int_0^{s_0} \tilP_s u \, \frac{ds}{s} + \tilP_{\geq s_0} u, \\
 u &= \int_0^{s_0} \Pea_s u \, \frac{ds}{s} + e^{s_0 \Delta} u = \int_0^{s_0} \Pea_s u \, \frac{ds}{s} + \Pea_{\geq s_0} u. 
\end{align*}
Moreover, the following truncated resolutions of the $L^2$ norm will be useful in several occasions. 

\begin{lem}\label{l:L2res} 
For any $s_0 > 0$ we have 
\EQ{
\| u \|_{L^2}^2 = 4 \int_0^{s_0} \| \Pea_s u \|_{L^2}^2 \, \frac{\ud s}{s}  + 2\|s_0^{\frac{1}{2}}\nabla e^{s_0\Delta}u\|_{L^2}^2+\| \Pea_{\ge s_0} u \|_{L^2}^2 ,
}
and
\EQ{
 \|u\|_{L^2}^2 &= 4 \int_0^{s_0} \| \tilP_s u \|_{L^2}^2 \, \frac{\ud s}{s} + 2 \bigl( \|s_0^{\frac{1}{2}}\nabla e^{s_0(\Delta+\rho^2)}u\|_{L^2}^2-\rho^2\|s_0^{\frac{1}{2}} e^{s_0(\Delta+\rho^2)}u\|_{L^2}^2 \bigr) + \| \tilP_{\ge s_0} u \|_{L^2}^2 \\
 \|u\|_{L^2}^2 &= 2 \int_0^{s_0} \bigl( \| s^{\frac{1}{2}} \nabla e^{s(\Delta+\rho^2)} u \|_{L^2}^2 - \rho^2 \| s^{\frac{1}{2}} e^{s(\Delta+\rho^2)} u \|_{L^2}^2 \bigr) \, \ds + \| \tilP_{\geq s_0} u \|_{L^2}^2.
}
\end{lem}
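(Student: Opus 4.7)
All three identities can be established by differentiation in $s$ combined with integration by parts. The starting observation is that, by a density argument described at the end, it is enough to prove the identities for $u \in C_c^{\infty}(\bbH^d)$, where all the norms below are smooth in $s \in (0, \infty)$ and all boundary terms at $s = 0$ are unambiguous. The plan is then to first derive the third identity by a single differentiation, derive the second from it by one further integration by parts in $s$, and obtain the first by repeating the same argument with $-\Delta$ in place of $-(\Delta+\rho^{2})$.

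\emph{Third and second identities.} Set $f(s) := \|\tilP_{\geq s} u\|_{L^{2}}^{2}$. Using $\partial_{s} \tilP_{\geq s} u = (\Delta+\rho^{2}) \tilP_{\geq s} u$ and integration by parts on $\bbH^{d}$,
\[
f'(s) = -2\bigl(\|\nabla \tilP_{\geq s} u\|_{L^{2}}^{2} - \rho^{2}\|\tilP_{\geq s} u\|_{L^{2}}^{2}\bigr) =: -2 G(s).
\]
Integrating on $[0, s_{0}]$ immediately gives $\|u\|_{L^{2}}^{2} = \|\tilP_{\geq s_{0}} u\|_{L^{2}}^{2} + 2\int_{0}^{s_{0}} G(s)\,ds$, which is the third identity after noting $\int_{0}^{s_{0}} G(s)\,ds = \int_{0}^{s_{0}} sG(s)\,\tfrac{ds}{s}$. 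Differentiating once more, and using that $(\Delta+\rho^{2})$ and $\nabla$ satisfy $\|\nabla w\|^{2} = -\langle \Delta w, w\rangle$, I compute
\[
G'(s) = -2\|(\Delta+\rho^{2}) \tilP_{\geq s} u\|_{L^{2}}^{2} = -\tfrac{2}{s^{2}}\|\tilP_{s} u\|_{L^{2}}^{2},
\]
since $\tilP_{s} u = -s(\Delta+\rho^{2}) \tilP_{\geq s} u$. Integration by parts on $[0, s_{0}]$ then yields
\[
\int_{0}^{s_{0}} G(s)\,ds = s_{0}\,G(s_{0}) - \lim_{s\to 0^{+}} sG(s) + 2\int_{0}^{s_{0}} \|\tilP_{s} u\|_{L^{2}}^{2}\,\tfrac{ds}{s}.
\]
For $u \in C_{c}^{\infty}(\bbH^{d})$, the boundary term at $s = 0$ vanishes: $\tilP_{\geq s}$ is an $L^{2}$-contraction by the spectral gap~\eqref{eq:rho}, so $s \rho^{2}\|\tilP_{\geq s} u\|_{L^{2}}^{2} \le s\rho^{2}\|u\|_{L^{2}}^{2} \to 0$, while $\|\nabla \tilP_{\geq s} u\|_{L^{2}} \le \|\nabla u\|_{L^{2}}$ gives $s\|\nabla \tilP_{\geq s} u\|_{L^{2}}^{2} \to 0$. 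Substituting back into the third identity and recognizing $s_{0} G(s_{0}) = \|s_{0}^{1/2}\nabla e^{s_{0}(\Delta+\rho^{2})}u\|_{L^{2}}^{2} - \rho^{2}\|s_{0}^{1/2} e^{s_{0}(\Delta+\rho^{2})} u\|_{L^{2}}^{2}$ produces the second identity.

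\emph{First identity and density.} The first identity is obtained by the same two-step argument with $-\Delta$ in place of $-(\Delta+\rho^{2})$: set $F(s) := \|\nabla P_{\geq s} u\|_{L^{2}}^{2}$ and use $\partial_{s} P_{\geq s}u = \Delta P_{\geq s}u$ to get $\tfrac{d}{ds}\|P_{\geq s}u\|_{L^{2}}^{2} = -2F(s)$ and $F'(s) = -\tfrac{2}{s^{2}}\|P_{s}u\|_{L^{2}}^{2}$, then integrate by parts as above. The boundary term at $s=0$ vanishes by the same reasoning, and no $\rho^{2}$ correction appears since there is no frequency shift. Finally, each term in all three identities is continuous in $u \in L^{2}$: $P_{\geq s}, \tilP_{\geq s}, P_{s}, \tilP_{s}$ are uniformly bounded on $L^{2}$, and the operators $s^{1/2}\nabla e^{s\Delta}$, $s^{1/2}\nabla e^{s(\Delta+\rho^{2})}$ are bounded on $L^{2}$ by spectral calculus for $-\Delta$. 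Approximating an arbitrary $u \in L^{2}$ by $C_{c}^{\infty}$ functions therefore extends the identities to all of $L^{2}$.

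The only slightly delicate step is the vanishing of the boundary term at $s = 0$ after the second integration by parts; this is immediate on the smooth dense subclass, and is the reason we prove the identities first on $C_{c}^{\infty}$ before invoking density.
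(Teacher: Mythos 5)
Your proof is correct and is, at its core, the same computation the paper performs: one integration by parts in $s$ applied to the heat semigroup, using $\tilP_s u = -s\,\partial_s\tilP_{\geq s}u$. The paper organizes this at the operator level, deriving the identity $\int_0^{s_0}s^2(\Delta+\rho^2)^2 e^{s(\Delta+\rho^2)}u\,\frac{\ud s}{s}=u-e^{s_0(\Delta+\rho^2)}u+s_0(\Delta+\rho^2)e^{s_0(\Delta+\rho^2)}u$, then pairing against $u$, using self-adjointness and the semigroup splitting $e^{s(\Delta+\rho^2)}=e^{\frac{s}{2}(\Delta+\rho^2)}e^{\frac{s}{2}(\Delta+\rho^2)}$ to convert the pairings into norms, and finally relabeling $s_0/2\mapsto s_0$. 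You instead differentiate the scalar function $s\mapsto\|\tilP_{\geq s}u\|_{L^2}^2$ twice in $s$ and integrate back; the first derivative gives the third identity directly, and one further integration by parts gives the second. The two presentations are equivalent, and yours has the small merit of being explicit about what the paper leaves implicit: the vanishing of the boundary term $s G(s)\to 0$ at $s=0$, and the density argument needed to extend the identities from $C_c^\infty$ to $L^2$. Both justifications you give are correct, and the bound $\|\nabla\tilP_{\geq s}u\|_{L^2}\le\|\nabla u\|_{L^2}$ follows from the spectral theorem for $-\Delta$ (the multiplier $\lambda\,e^{-2s(\lambda-\rho^2)}$ is bounded by $\lambda$ on the spectrum $[\rho^2,\infty)$), so there is no gap.
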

\begin{proof}
For any $s_0>0$ we may write
\begin{align*}
\begin{split}
 \int_0^{s_0}s^2\Delta^2 e^{s\Delta}u\,\ds=&\int_0^{s_0}s\frac{\ud}{\ud s}\Delta e^{s\Delta}u \,\ud s=-\int_0^{s_0}\Delta e^{s\Delta}u\,\ud s+s_0\Delta e^{s_0\Delta}u\\
  =&u-e^{s_0\Delta}u+s_0\Delta e^{s_0\Delta}u.
\end{split}
\end{align*}
Therefore
\begin{align*}
\begin{split}
 \| u\|_{L^2}^2&=\int_0^{s_0}\angles{u}{s^2\Delta^2 e^{s\Delta}u}\,\ds+\angles{u}{e^{s_0\Delta}u}-\angles{u}{s_0\Delta e^{s_0\Delta}u}\\
 &=4\int_0^{\frac{s_0}{2}}\|s\Delta e^{s\Delta}u\|_{L^2}^2\,\ds+\angles{e^{\frac{s_0}{2}\Delta}u}{e^{\frac{s_0}{2}\Delta}u}+2\angles{(\frac{s_0}{2})^{\frac{1}{2}}\nabla e^{\frac{s_0}{2}\Delta}u}{(\frac{s_0}{2})^{\frac{1}{2}}\nabla e^{\frac{s_0}{2}\Delta}u}.
\end{split}
\end{align*}
The first identity follows by replacing $\frac{s_0}{2}$ by $s_0$. Similarly, for the second identity we write 
\begin{align*}
\begin{split}
 \int_{0}^{s_0}s^2(\Delta+\rho^2)^2e^{s(\Delta+\rho^2)}u\,\ds&=\int_0^{s_0}s\frac{\ud}{\ud s}(\Delta+\rho^2)e^{s(\Delta+\rho^2)}u\,\ud s \\
 &=-\int_0^{s_0}(\Delta+\rho^2)e^{s(\Delta+\rho^2)}u\,\ud s+s_0(\Delta+\rho^2)e^{s_0(\Delta+\rho^2)}u\\
 &=u-e^{s_0(\Delta+\rho^2)}u+s_0(\Delta+\rho^2)e^{s_0(\Delta+\rho^2)}u.
\end{split}
\end{align*}
It follows that
\begin{align*}
\begin{split}
 \|u\|_{L^2}^2&=4\int_0^{\frac{s_0}{2}}\|s(\Delta+\rho^2)e^{s(\Delta+\rho^2)}u\|_{L^2}^2\,\ds+ \angles{e^{\frac{s_0}{2}(\Delta+\rho^2)}u}{e^{\frac{s_0}{2}(\Delta+\rho^2)}u}\\
 &\quad + 2 \Bigl( \| {\textstyle (\frac{s_0}{2})^{\frac{1}{2}}} \nabla e^{\frac{s_0}{2}(\Delta+\rho^2)}u\|_{L^2}^2-\rho^2\| {\textstyle (\frac{s_0}{2})^{\frac{1}{2}} } e^{\frac{s_0}{2}(\Delta+\rho^2)}u\|_{L^2}^2 \Bigr).
\end{split}
\end{align*}
Again the desired estimate follows by replacing $\frac{s_0}{2}$ by $s_0$. The proof of the last identity is analogous.
\end{proof}


\subsection{Localized Parabolic Regularity} \label{ss:lpreg}

The starting point is the $L^{p}$ boundedness and regularity theory of the heat semi-groups $e^{s \De}$ and $e^{s(\Delta+\rho^2)}$. 
\begin{lem}[$L^p$ regularity of the heat flow] \label{l:preg}
Let $1 < p < \infty$. For any real-valued function $f \in C^{\infty}_{0}(\Hp^{d})$, integer $k>0$, and $s > 0$, the following bounds hold
\begin{align}
	\|e^{s \De} f\|_{L^{p}} &\aleq  \|f\|_{L^p}, \label{eq:pest} \\
	\| s^{\frac{k}{2}} \nb^{(k)} e^{s \De} f\|_{L^{p}} &\aleq \|f\|_{L^p}. \label{eq:napest} 
\end{align}
If $s\leq s_0$, then the same estimates hold with $e^{s\Delta}$ replaced by $e^{s(\Delta+\rho^2)}$ with constants which may depend on $s_0$.
\end{lem}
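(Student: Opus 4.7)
My plan is to address the three ingredients separately: the $L^p$ contraction of the heat semigroup, the parabolic gain of derivatives, and the reduction from the shifted semigroup $e^{s(\Delta+\rho^2)}$ to $e^{s\Delta}$.

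\emph{Step 1 (proof of \eqref{eq:pest}).} I would exploit the fact that the heat kernel $p_s(x,y)$ on $\bbH^d$ is positive and that $\bbH^d$ is stochastically complete, so that $\int p_s(x,y)\,\dh(y)=1$. By symmetry of $p_s$, the same holds with the roles of $x$ and $y$ swapped, so $p_s$ is a Markov kernel in both variables. Jensen's inequality applied to $|e^{s\Delta}f(x)|^p\le\int p_s(x,y)|f(y)|^p\,\dh(y)$, followed by integration in $x$, yields the $L^p$ contraction for all $1\le p\le\infty$.

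\emph{Step 2 (proof of \eqref{eq:napest}).} I would first dispose of the case $p=2$ via spectral calculus: since $\sup_{\lambda\ge\rho^2}\lambda^{k/2}e^{-s\lambda}\lesssim s^{-k/2}$, one has $\|s^{k/2}(-\Delta)^{k/2}e^{s\Delta}f\|_{L^2}\lesssim\|f\|_{L^2}$. Because $\bbH^d$ has constant sectional curvature, an iterated Bochner--Weitzenb\"ock identity converts powers of the Laplacian into covariant derivatives modulo lower order curvature terms, yielding $\|\nabla^{(k)}g\|_{L^2}^2\lesssim\sum_{j=0}^{k}\|(-\Delta)^{j/2}g\|_{L^2}^2$, which closes the $p=2$ case upon applying the semigroup property $e^{s\Delta}=(e^{(s/k)\Delta})^k$ to share the derivative loss across the factors. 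The case of general $p\in(1,\infty)$ is the main obstacle. My plan is to combine the $L^p$ contraction from Step~1 with pointwise heat kernel derivative bounds of the form $|\nabla_x^{(k)}p_s(x,y)|\lesssim s^{-k/2}\tilde p_{cs}(x,y)$ for an auxiliary Markov-type kernel $\tilde p_{cs}$ (valid on $\bbH^d$ by standard small-time parabolic regularity), which reduces \eqref{eq:napest} to Young's inequality. If one wishes to avoid explicit kernel derivative bounds, an alternative route is to exploit the Bakry--Emery gradient inequality, available here thanks to the Ricci lower bound coming from $\mathrm{Ric}=-(d-1)\bsh$, and then interpolate semigroup-theoretically between the $L^2$ bound just derived and the $L^1$ normalization of the kernel, iterating on $k$ by factoring $e^{s\Delta}=(e^{(s/k)\Delta})^k$.

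\emph{Step 3 (shifted semigroup).} The identity $e^{s(\Delta+\rho^2)}=e^{s\rho^2}e^{s\Delta}$ transfers both \eqref{eq:pest} and \eqref{eq:napest} to the range $0<s\le s_0$ at the price of a multiplicative constant $e^{s_0\rho^2}$, which is absorbed into the implicit constant. The only genuine difficulty in the whole lemma is thus the gradient bound in $L^p$ for $p\ne 2$; the remaining statements are a packaging of standard heat semigroup facts on $\bbH^d$.
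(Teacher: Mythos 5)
Your proposal is mathematically sound, but it takes a route the authors deliberately avoid. The paper delegates the case $k\le 1$ (both \eqref{eq:pest} and the first-order version of \eqref{eq:napest}, together with $\|s\Delta e^{s\Delta}f\|_{L^p}\lesssim\|f\|_{L^p}$) to an earlier reference whose argument rests purely on integration by parts, then iterates in $k$ via the semigroup identity $s^k\Delta^k e^{s\Delta}f = s\Delta e^{(s/2)\Delta}\bigl(s^{k-1}\Delta^{k-1}e^{(s/2)\Delta}f\bigr)$, and finally converts powers of $(-\Delta)$ into covariant derivatives using the equivalence $\sum_{\ell\le k}\|\nabla^{(\ell)}g\|_{L^p}\simeq\|(-\Delta)^{k/2}g\|_{L^p}$ (i.e.\ boundedness of Riesz transforms on $\bbH^d$). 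Your Step~1 (positivity/stochastic completeness plus Jensen) is a perfectly good alternative proof of the $L^p$ contraction; the paper does not use it, but it is standard.

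The genuine divergence is in your Step~2. You propose reducing the $L^p$ gradient bound either to pointwise heat-kernel derivative estimates $|\nabla^{(k)}_x p_s(x,y)|\lesssim s^{-k/2}\tilde p_{cs}(x,y)$ plus Young's inequality, or to Bakry--Emery gradient estimates interpolated against the Markov property. Both routes would work on $\bbH^d$, but you should be aware that the remark immediately following the lemma in the paper explicitly says the authors chose \emph{not} to use kernel bounds precisely so that all their estimates (this one included) remain independent of any heat-kernel asymptotics and thus portable to more general manifolds. So you have not reproduced the paper's approach; you have produced the kernel-based proof the paper deliberately sidesteps. Two technical cautions on your Step~2 as written: (i) the pointwise bound $|\nabla^{(k)}_x p_s|\lesssim s^{-k/2}\tilde p_{cs}$ with $\tilde p_{cs}$ a Markov kernel is a substantive estimate on $\bbH^d$ (it follows from the explicit heat-kernel formula or from Li--Yau type gradient estimates) and should be cited rather than attributed vaguely to ``standard small-time parabolic regularity''; (ii) the Bakry--Emery route gives $|\nabla e^{s\Delta}f|\le e^{Cs}e^{s\Delta}|\nabla f|$, which bounds the gradient of the flow in terms of $\nabla f$, not $f$, so it does not close by itself---to get $\|s^{1/2}\nabla e^{s\Delta}f\|_{L^p}\lesssim\|f\|_{L^p}$ you would additionally need the $L^1\to L^1$ (or $L^\infty\to L^\infty$) bound for $s^{1/2}\nabla e^{s\Delta}$, which again drops you back to a kernel derivative estimate. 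In short, your argument is correct in outline but fills the central gap with the very input the paper is structured to avoid; the paper's route via Riesz-transform/Sobolev-norm equivalence is both cleaner and more in line with its stated philosophy.
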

\begin{proof} 
 The statement for $e^{s(\Delta+\rho^2)}$ follows from the estimate for $e^{s\Delta}$ by observing that $e^{s\rho^2}$ is bounded for $s\leq s_0$. For $e^{s\Delta}$ the case $k\leq 1$ was proved, for instance, in~\cite[Lemma 2.11]{LOS5} using a general argument relying only on integration by parts.  There it was also shown that
\begin{align*}
\begin{split}
 \|s\Delta e^{s\Delta}f\|_{L^p}\lesssim \|f\|_{L^p}. 
\end{split}
\end{align*}
Since
\begin{align*}
\begin{split}
 &s^k\Delta^ke^{s\Delta}f=s\Delta e^{\frac{s}{2}\Delta}s^{k-1}\Delta^{k-1}e^{\frac{s}{2}\Delta}f, \\
 &s^{k+\frac{1}{2}}(-\Delta)^{k+\frac{1}{2}}e^{s\Delta}f=s(-\Delta) e^{\frac{s}{2}\Delta}s^{k-\frac{1}{2}}(-\Delta)^{k-\frac{1}{2}}e^{\frac{s}{2}\Delta}f,
\end{split}
\end{align*}
it follows by induction, and the equivalence of norms $\|\nabla g\|_{L^p}\simeq\|(-\Delta)^{-\frac{1}{2}}g\|_{L^p}$, that
\begin{align*}
\begin{split}
  \|s^k\Delta e^{s\Delta}f\|_{L^p}+\|s^{k+\frac{1}{2}}\nabla\Delta^ke^{s\Delta}f\|_{L^p}\lesssim \|f\|_{L^p},\qquad \forall k\in\bbN.
\end{split}
\end{align*}
The desired estimate now follows from the equivalence of norms (see Section~\ref{s:fs})
\begin{align*}
\begin{split}
\sum_{\ell=0}^k\|\nabla^{(\ell)}f\|_{L^p}\qquad \mathrm{and}\qquad \|(-\Delta)^{\frac{k}{2}}f\|_{L^p}.
\end{split}
\end{align*}
\end{proof} 

\begin{rem}
The previous lemma can also be proved for $p =1$ and $p=\infty$ using explicit $L^1$ bounds for the heat kernel $p_s$ and its derivatives $s \p_s p_s$ on $\Hp^d$ along with Young's inequality. This would  then allow generalization of many of the other $L^p$ estimates in this subsection to $p=1$ and $p=\infty$ as well. However, we have chosen not to include these estimates to keep all of our estimates independent of kernel bounds. 
\end{rem}

On occasion we will also need the following $L^2$-based fractional regularity theory of the heat semi-groups $e^{s\Delta}$ and $e^{s(\Delta+\rho^2)}$, see~\cite[Lemma 2.8]{LOS5} for a proof.
\begin{lem} \label{l:frac_preg}
 Let $f \in L^2$ and $\alpha \geq 0$. Then it holds that
 \begin{equation}
  \| s^\alpha (-\Delta)^\alpha e^{s\Delta} f \|_{L^2} \lesssim_\alpha \|f\|_{L^2}.
 \end{equation}
 If $s \leq s_0$, then the same estimate holds with $e^{s\Delta}$ replaced by $e^{s(\Delta+\rho^2)}$ with constants which may depend on $s_0$.
\end{lem}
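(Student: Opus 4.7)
The plan is to rely directly on the spectral calculus for the self-adjoint operator $-\Delta$ on $L^{2}(\bbH^{d})$. Recall from \eqref{eq:rho} that the spectrum is $\sigma(-\Delta) = [\rho^{2},\infty)$, so we have a spectral resolution $-\Delta = \int_{\rho^{2}}^{\infty} \lambda\, dE_{\lambda}$, and for any Borel function $\varphi : [\rho^{2},\infty)\to\mathbb{C}$ the operator $\varphi(-\Delta)$ is bounded on $L^{2}$ with operator norm $\sup_{\lambda\in[\rho^{2},\infty)}|\varphi(\lambda)|$. This is the natural framework in which $(-\Delta)^{\alpha}$ is defined for non-integer $\alpha\geq 0$.

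With this setup, first I would apply the spectral calculus to the bounded Borel function $\varphi_{s,\alpha}(\lambda) := (s\lambda)^{\alpha} e^{-s\lambda}$ on $[\rho^{2},\infty)$. A change of variable $u = s\lambda$ shows
\[
\sup_{\lambda\in[\rho^{2},\infty)} |\varphi_{s,\alpha}(\lambda)| \;\leq\; \sup_{u>0} u^{\alpha} e^{-u} \;=\; C_{\alpha},
\]
a finite constant depending only on $\alpha$ (equal to $(\alpha/e)^{\alpha}$ for $\alpha>0$ and to $1$ for $\alpha=0$). Hence, since $s^{\alpha}(-\Delta)^{\alpha} e^{s\Delta} = \varphi_{s,\alpha}(-\Delta)$, the spectral theorem yields
\[
\|s^{\alpha}(-\Delta)^{\alpha} e^{s\Delta} f\|_{L^{2}} \;\leq\; C_{\alpha}\,\|f\|_{L^{2}},
\]
which is the desired estimate for $e^{s\Delta}$.

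For the shifted semigroup, the second step is simply to note that $-\Delta$ and $\Delta+\rho^{2}$ commute and are simultaneously diagonalized by the spectral resolution of $-\Delta$, so $e^{s(\Delta+\rho^{2})} = e^{s\rho^{2}}\,e^{s\Delta}$. Therefore, for $s \leq s_{0}$,
\[
\|s^{\alpha}(-\Delta)^{\alpha} e^{s(\Delta+\rho^{2})} f\|_{L^{2}} \;=\; e^{s\rho^{2}}\,\|s^{\alpha}(-\Delta)^{\alpha} e^{s\Delta} f\|_{L^{2}} \;\leq\; e^{s_{0}\rho^{2}} C_{\alpha}\,\|f\|_{L^{2}},
\]
as claimed. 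There is essentially no obstacle here: the entire argument is a one-line spectral multiplier estimate, and the only mild point to be careful about is the definition of the fractional power $(-\Delta)^{\alpha}$, which is taken (consistently with its use elsewhere in the paper) via the functional calculus associated to the spectral resolution of $-\Delta$.
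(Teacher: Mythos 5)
Your proof is correct, and the spectral-theorem argument you give — bounding the multiplier $(s\lambda)^{\alpha}e^{-s\lambda}$ uniformly on $[\rho^{2},\infty)$ by $\sup_{u>0}u^{\alpha}e^{-u}$, then using $e^{s(\Delta+\rho^{2})}=e^{s\rho^{2}}e^{s\Delta}$ — is the standard route for this $L^{2}$ fractional heat bound, and is essentially what the cited reference does. The paper itself does not reproduce a proof (it refers to \cite[Lemma 2.8]{LOS5}), so there is nothing further to compare.
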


We will also need a quick corollary of Lemma~\ref{l:preg} above. 
\begin{cor} \label{c:pregdiv} 
Let $1 < p < \infty$ and let $\bsxi$ be an arbitrary $(r,q)$ tensor field on $\bbH^d$. With $k:=r+q$ we define
\begin{align*}
\begin{split}
\nabla^{(k)}\cdot \bsxi:=\nabla_{\mu_{\sigma(1)}}\dots\nabla_{\mu_{\sigma(r)}}\nabla^{\mu_{\tau(r+1)}}\dots\nabla^{\mu_{\tau(r+q)}}\bsxi_{\mu_{r+1}\dots\mu_{r+q}}^{\mu_1\dots\mu_r},
\end{split}
\end{align*} 
where $\sigma$ and $\tau$ are arbitrary permutations of the indices $1,\dots,r$ and $r+1,\dots,r+q$, respectively. Then, 
\EQ{
\| s^{\frac{k}{2}}  e^{s \De} (\na^{(k)}\cdot\bsxi) \|_{L^{p}} \aleq \|\bsxi \|_{L^p}.
}
If $s\leq s_0$, then the same estimates hold with $e^{s\Delta}$ replaced by $e^{s(\Delta+\rho^2)}$ with constants which may depend on $s_0$.
\end{cor}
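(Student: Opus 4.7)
The plan is to prove Corollary~\ref{c:pregdiv} by duality. For $1 < p < \infty$, I would begin by writing
\[
	\|s^{k/2} e^{s\Delta}(\nabla^{(k)}\cdot\bsxi)\|_{L^p} = \sup_{\|g\|_{L^{p'}} \le 1} \bigl|\angles{s^{k/2} e^{s\Delta}(\nabla^{(k)}\cdot\bsxi)}{g} \bigr|,
\]
and then exploit the self-adjointness of the heat semigroup $e^{s\Delta}$ to move it onto the test function $g$. Performing $k$ successive integrations by parts (one covariant derivative at a time, via the divergence theorem on compactly supported tensors; no reorderings occur, only transfers) shifts all the derivatives from $\bsxi$ onto $e^{s\Delta}g$, producing
\[
	\angles{e^{s\Delta}(\nabla^{(k)}\cdot\bsxi)}{g} = (-1)^k \angles{\bsxi}{\widetilde\nabla^{(k)}(e^{s\Delta}g)},
\]
where $\widetilde\nabla^{(k)}$ denotes $k$ covariant derivatives applied to the scalar $e^{s\Delta}g$ in a specific order (determined by $\sigma$ and $\tau$), with indices contracted appropriately against those of $\bsxi$.

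Next, by Cauchy-Schwarz at the tensor level together with H\"older's inequality,
\[
	\bigl|\angles{\bsxi}{\widetilde\nabla^{(k)}(e^{s\Delta}g)}\bigr| \le \|\bsxi\|_{L^p}\,\bigl\| |\widetilde\nabla^{(k)}(e^{s\Delta}g)|\bigr\|_{L^{p'}}.
\]
The pointwise norm $|\widetilde\nabla^{(k)}(e^{s\Delta}g)|$ differs from the standard $|\nabla^{(k)}(e^{s\Delta}g)|$ only through commutators of covariant derivatives. Since $\bbH^d$ has constant sectional curvature ($|\bsR|$ bounded and $\nabla \bsR = 0$), iterated use of \eqref{eq:R} yields
\[
	|\widetilde\nabla^{(k)}(e^{s\Delta}g)| \lesssim |\nabla^{(k)}(e^{s\Delta}g)| + \sum_{j=0}^{k-2}|\nabla^{(j)}(e^{s\Delta}g)|.
\]
The top-order contribution, after multiplication by $s^{k/2}$, is bounded by $\|g\|_{L^{p'}}$ directly from Lemma~\ref{l:preg}. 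For each lower-order term (relevant only when $k \ge 3$), I would split $e^{s\Delta} = e^{s\Delta/2} e^{s\Delta/2}$ and apply Lemma~\ref{l:preg} to the first factor so that $s^{j/2} \|\nabla^{(j)} e^{s\Delta/2}(e^{s\Delta/2}g)\|_{L^{p'}} \lesssim \|e^{s\Delta/2}g\|_{L^{p'}}$, and then absorb the residual $s^{(k-j)/2}$ against the $L^{p'}$-smoothing of $e^{s\Delta/2}$ provided by the spectral gap of $-\Delta$ on $\bbH^d$. Taking the supremum over $g$ finishes the argument.

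The main obstacle, as indicated, is controlling the lower-order curvature corrections \emph{uniformly in} $s$: a crude application of Lemma~\ref{l:preg} at order $j < k$ leaves an unbounded factor $s^{(k-j)/2}$ for large $s$. This is where one must draw on additional input beyond a single invocation of Lemma~\ref{l:preg}, namely the $L^{p'}$-smoothing of the heat semigroup coming from the spectral gap, to close the estimate. Once the unshifted case is established, the shifted version for $e^{s(\Delta+\rho^2)}$ with $s \le s_0$ is immediate from $e^{s(\Delta+\rho^2)} = e^{s\rho^2} e^{s\Delta}$ and the bound $e^{s\rho^2}\lesssim_{s_0} 1$.
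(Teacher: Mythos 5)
Your outline follows the same route as the paper --- duality, self-adjointness of $e^{s\Delta}$, and transferring the $k$ derivatives onto $e^{s\Delta}g$ by integration by parts --- but it then introduces a spurious complication and proposes to patch it with a tool the paper deliberately does not supply. After the integrations by parts, the tensor $\widetilde\nabla^{(k)}(e^{s\Delta}g)$ that pairs with $\bsxi$ is exactly $\nabla^{(k)}(e^{s\Delta}g)$ with its index slots relabeled (and some indices raised or lowered by the metric): the derivatives land on the same scalar, only in a different order, which is nothing more than a permutation of tensor slots. The pointwise tensor norm $|\cdot|$ from Section~\ref{s:fs} is a full contraction over all slots against $\bsh^{\pm 1}$, hence invariant under slot permutations and under raising/lowering, so $|\widetilde\nabla^{(k)}(e^{s\Delta}g)| = |\nabla^{(k)}(e^{s\Delta}g)|$ holds \emph{pointwise}, with no curvature correction whatsoever. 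You never need to reorder the derivatives into a canonical order --- only to compare tensor norms --- so the iterated invocation of \eqref{eq:R} and the lower-order sum $\sum_{j\le k-2}|\nabla^{(j)}(e^{s\Delta}g)|$ it generates are phantoms.

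Worse, the fix you propose for this phantom does not close within the paper's toolkit. You want to absorb the excess $s^{(k-j)/2}$ against ``the $L^{p'}$-smoothing of $e^{s\Delta/2}$ provided by the spectral gap,'' but Lemma~\ref{l:poincare} gives the spectral gap only in $L^2$. Exponential decay of $\|e^{s\Delta}\|_{L^{p'}\to L^{p'}}$ for $p'\neq 2$ on $\bbH^d$ is a genuine theorem, but its known proofs run through heat kernel bounds or the $L^p$ spectral theory of the Laplacian, and the paper explicitly avoids kernel estimates (see the remark immediately following Lemma~\ref{l:preg}). So even if the curvature terms were present you would be stuck. Once the slot-permutation observation is made, H\"older and a single application of Lemma~\ref{l:preg} close the argument --- that is the paper's proof.
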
 
\begin{proof} 
This is proved using~\eqref{eq:napest} and a duality argument. Indeed, 
\EQ{
\| s^{\frac{k}{2}}  e^{s \De} (\na^{(k)}\cdot\bsxi) \|_{L^{p}}  &= \sup_{\|f \|_{L^{p'}} = 1}\abs{ \ang{ s^{\frac{k}{2}}  e^{s \De} (\na^{(k)}\cdot\bsxi), \, f }} \\ 
 &\leq\sup_{ \|f \|_{L^{p'}} = 1} \abs{\ang{ |\bsxi| , \,|s^{\frac{k}{2}} \na^{(k)} e^{s \De} f|}} \\
 & \le \sup_{ \|f \|_{L^{p'}} = 1}  \| \bsxi \|_{L^p} \| s^{\frac{k}{2}} \na^{(k)} e^{s \De} f \|_{L^{p'}}  \\ 
 & \lesssim \sup_{ \|f \|_{L^{p'}} = 1}  \| \bsxi \|_{L^p} \| f \|_{L^{p'}}  \lesssim \| \bsxi \|_{L^p}, 
}
which completes the proof for $e^{s\Delta}$. The proof for $e^{s(\Delta+\rho^2)}$ follows from the boundedness of $e^{s\rho^2}$ when $s \leq s_0$. 
\end{proof} 

Let $\phi \in C^\infty_0$ be a smooth bump function such that $\phi(r) = 1$ if $\frac{1}{2} \le r \le 2$ and $\supp \phi (r) \subset [1/4, 4]$.   For each integer $\ell \in\Z$ define 
\EQ{ \label{eq:phidef} 
\phi_\ell(r)  := \phi( r/ 2^\ell).
}
The main result of this subsection are the localized parabolic regularity estimates established in the following proposition. 

\begin{prop}[Localized Parabolic Regularity] \label{p:lpregcore} 
Let $s_0 >0$ be fixed. For each~$s$ with $0 < s < s_0$, each $\ell, m \in \Z$ with $|\ell - m| \ge 10$ and $\max\{2^\ell, 2^{m}\} \ge s^{\frac{1}{2}}$, each $1 < p < \infty$, and each $N, k \in \N$ we have 
\begin{align} 
\|\phi_\ell e^{s\De} ( \phi_mv) \|_{L^p} &\lesssim_{s_0, N}  \left(s^{\frac{1}{2}} 2^{- \max\{\ell, m\}}\right)^N \|  \phi_m v \|_{L^p}, \label{eq:lpregcore0} \\
\| \phi_\ell  s^{k+\frac{1}{2}} \na (\Delta)^ke^{s \De} ( \phi_m v ) \|_{L^p} &\lesssim_{s_0, N, k} \left(s^{\frac{1}{2}} 2^{- \max\{\ell, m\}}\right)^N \|  \phi_m v \|_{L^p}, \label{eq:lpregcore1} \\
\| \phi_\ell s^k (\De)^k e^{s \De} ( \phi_m v ) \|_{L^p} &\lesssim_{s_0, N, k} \left(s^{\frac{1}{2}} 2^{- \max\{\ell, m\}}\right)^N \|  \phi_m v \|_{L^p} \label{eq:lpregcore2} .
\end{align}
The same estimates hold with $e^{s\Delta}$ replaced by $e^{s(\Delta+\rho^2)}$, $s\leq s_0$, with constants which may depend on $s_0$.
\end{prop}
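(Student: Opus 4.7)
My plan is to establish all three off-diagonal estimates through an iterated Duhamel/commutator argument, reducing them to the $L^{p}$ heat-flow bounds in Lemma~\ref{l:preg} and Corollary~\ref{c:pregdiv}. The shifted case follows at once from the unshifted one because $e^{s(\Delta+\rho^{2})} = e^{s\rho^{2}} e^{s\Delta}$ with $e^{s\rho^{2}} \leq e^{s_{0}\rho^{2}}$ for $s \leq s_{0}$, so I restrict attention to $e^{s\Delta}$. Set $R := 2^{\max\{\ell, m\}}$; the hypothesis $|\ell - m| \geq 10$ forces $\supp \phi_{\ell}$ and $\supp \phi_{m}$ to be separated by a distance comparable to $R$, while the hypothesis $R \geq s^{1/2}$ makes the claimed factor $(s^{1/2}/R)^{N}$ at most one. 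The geometric setup is a family of nested cutoffs $\chi_{0}, \chi_{1}, \ldots, \chi_{2N}$ built inside the gap, with $\chi_{0} \equiv \phi_{\ell}$, $\chi_{j+1} \equiv 1$ on $\supp \chi_{j}$, $\chi_{j} \cdot \phi_{m} \equiv 0$, and derivative bounds $|\nabla^{(k)} \chi_{j}| \lesssim_{N} R^{-k}$ for $k \leq 2$, which is possible because each widening step uses only $\sim R/N$ of room.

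For the base estimate \eqref{eq:lpregcore0}, the key identity is
\begin{equation*}
\chi_{0} e^{s\Delta}(\phi_{m} v) = \chi_{0} \chi_{1} e^{s\Delta}(\phi_{m} v) = \chi_{0} [\chi_{1}, e^{s\Delta}](\phi_{m} v) = -\int_{0}^{s} \chi_{0} e^{(s-\sigma)\Delta} [\Delta, \chi_{1}] e^{\sigma\Delta}(\phi_{m} v) \, d\sigma,
\end{equation*}
where the subtracted term $\chi_{0} e^{s\Delta}(\chi_{1}\phi_{m} v)$ vanishes since $\chi_{1}\phi_{m} \equiv 0$. Taking the $L^{p}$ norm and invoking Lemma~\ref{l:preg} for the outer heat flow and for $\|\nabla e^{\sigma\Delta}(\phi_m v)\|_{L^p} \lesssim \sigma^{-1/2} \|\phi_m v\|_{L^p}$, together with $\|\Delta \chi_{1}\|_{L^{\infty}} \lesssim R^{-2}$ and $\|\nabla\chi_{1}\|_{L^{\infty}} \lesssim R^{-1}$, produces a factor $\int_{0}^{s}(R^{-2} + R^{-1}\sigma^{-1/2}) \, d\sigma \lesssim s^{1/2}/R$, which is the claim for $N = 1$. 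To iterate, I exploit that $\chi_{2} \equiv 1$ on $\supp \nabla\chi_{1}$ and $\nabla\chi_{2} \cdot \nabla\chi_{1} \equiv 0$ in order to rewrite $[\Delta, \chi_{1}] e^{\sigma\Delta}(\phi_{m} v) = [\Delta, \chi_{1}](\chi_{2} e^{\sigma\Delta}(\phi_{m} v))$, and then apply the same commutator trick to $\chi_{2} e^{\sigma\Delta}(\phi_{m}v)$ with $\chi_{3}$ in place of $\chi_{1}$. Iterating $N$ times yields an $N$-fold Duhamel integral in which each of the $N$ layers contributes a factor of size $s^{1/2}/R$, and the constants depend polynomially on $N$ thanks to the $\sim R/N$ spacing of the cutoffs.

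The derivative versions \eqref{eq:lpregcore1} and \eqref{eq:lpregcore2} reduce to \eqref{eq:lpregcore0} via the semigroup splitting $e^{s\Delta} = e^{(s/2)\Delta} \circ e^{(s/2)\Delta}$ combined with a dyadic partition of unity $\{\phi_{j}\}$ inserted between the two halves. For each index $j$ either $|j - \ell|$ or $|j - m|$ is large: in the former case the outer factor $\chi_{0} s^{k+1/2}\nabla\Delta^{k} e^{(s/2)\Delta}$ (respectively $\chi_{0} s^{k}\Delta^{k} e^{(s/2)\Delta}$) is bounded on $L^{p}$ by Lemma~\ref{l:preg} while the inner factor $\phi_{j} e^{(s/2)\Delta}(\phi_{m}v)$ is directly controlled by \eqref{eq:lpregcore0}; in the latter case the off-diagonal gain must be extracted from the outer factor, for which the same commutator iteration is rerun with $s^{k+1/2}\nabla\Delta^{k}$ kept on the outside and absorbed into the last short-time heat flow using the commutativity of $\Delta$ with the semigroup together with Corollary~\ref{c:pregdiv}. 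Summation in $j$ converges by the rapid decay $(s^{1/2}/2^{\max\{j,m\}})^{N'}$ in the far regime. I expect the main obstacle to be this derivative case, specifically verifying that the off-diagonal gain can be distributed across the two halves of the semigroup without loss; this amounts to a careful combinatorial accounting of which factors contribute the $N$ powers of $s^{1/2}/R$, and is where the polynomial-in-$N$ dependence of the constants is most delicate.
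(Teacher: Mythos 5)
Your approach is genuinely different from the paper's in its iteration mechanism, and I believe it works, though there are a few imprecisions worth flagging. The paper first establishes a single-factor-gain lemma (Lemma~\ref{l:lpreg1}, which has the asymmetric form $\ell \geq m + 10$) and a duality argument for the other direction, and then iterates by re-solving a Duhamel equation for $\phi_\ell\Delta e^{s\Delta}(\phi_m v)$ with the single-gain estimate applied to the forcing. You instead perform the entire $N$-fold gain in one go by threading a chain of $\sim 2N$ nested cutoffs through the gap and obtaining an $N$-fold Duhamel integral; this is spatially symmetric in $\ell$ and $m$, so you avoid the duality step entirely. The trade-off is that your constants inherit factors of $N$ from the shrinking gap widths (acceptable, since the constant is allowed to depend on $N$), and you must manage the iterated singular integrals $\int_{0 < \sigma_N < \cdots < \sigma_1 < s} \prod (\sigma_{j-1}-\sigma_j)^{-1/2}\,d\sigma_j$ — I checked the $N=2$ computation and it does close, giving the worst term $R^{-2}\int_0^s\int_0^\sigma(\sigma-\sigma')^{-1/2}(\sigma')^{-1/2}\,d\sigma'\,d\sigma \simeq R^{-2}s = (s^{1/2}/R)^2$.

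One point you should correct: the claimed bound $|\nabla^{(2)}\chi_j| \lesssim R^{-2}$ is \emph{false} on $\bbH^d$ when $R \gtrsim 1$. For a radial cutoff transitioning at scale $R$, the Laplacian is $\Delta\chi_j = \rd_r^2\chi_j + (d-1)\coth r\, \rd_r\chi_j$, and on $\{r \simeq R\}$ with $R \geq 1$ the second term is of size $R^{-1}$, not $R^{-2}$. So the correct bound is $|\Delta\chi_j|\lesssim \max(R^{-1},R^{-2})$ (this is exactly the $\max(2^{-\ell},2^{-2\ell})$ appearing in the paper's Lemma~\ref{l:lpreg1}). Your final estimate survives because $R^{-1}\int_0^s d\sigma = R^{-1}s \lesssim_{s_0} R^{-1}s^{1/2}$, but the constant must be allowed to depend on $s_0$ precisely to absorb this weaker power. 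Also, your description of the derivative case has the "former/latter" clauses swapped (when $|j-m|$ is large, it is the \emph{inner} factor $\phi_j e^{(s/2)\Delta}(\phi_m v)$ that supplies the off-diagonal gain via \eqref{eq:lpregcore0}), and the phrase "commutativity of $\Delta$ with the semigroup" does not help with the gradient $\nabla$, which does \emph{not} commute with $e^{s\Delta}$ on a curved manifold; the paper handles this via the duality pairing in Corollary~\ref{c:pregdiv}, recasting the estimate in terms of the divergence-form operator applied to a tensor. You should make that step explicit rather than appeal to commutativity.
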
 

\begin{rem} \label{r:lpregcutoff}
Our proof can be applied to give a stronger result: Let $\psi_{R_1}(\cdot):=\psi(R_1^{-1}\cdot)$ and $\chi_{R_2}(\cdot):=\chi(R_2^{-1}\cdot)$ for two arbitrary smooth and bounded functions $\psi$ and $\chi$, such that the supports of $\psi_{R_1}$ and $\chi_{R_2}$ are disjoint. Then if $s^{\frac{1}{2}}\leq C_0 \max\{R_1,R_2\}$ for some absolute constant $C_0 > 0$, estimates \eqref{eq:lpregcore0}--\eqref{eq:lpregcore2} hold with $\phi_\ell$ replaced by $\psi_{R_1}$, $\phi_m$ replaced by $\chi_{R_2}$, $2^{-\max\{m,\ell\}}$ replaced by $\min\{R_1^{-1},R_2^{-1}\}$ and the implicit constant is allowed to depend on $C_0$. The same holds for Corollaries~\ref{c:lpregcov} and~\ref{c:lpregcorediv} and Lemma~\ref{l:lpreg1} below.
\end{rem} 

We will also need the following two corollaries of Proposition~\ref{p:lpregcore}.

\begin{cor}[Localized Parabolic Regularity for Covariant Derivatives]\label{c:lpregcov}
Under the assumptions of Proposition~\ref{p:lpregcore}, it holds that
\begin{equation}\label{eq:lpregcore3}
\| \phi_\ell s^{\frac{k}{2}} \nabla^{(k)} e^{s \De} ( \phi_m v ) \|_{L^2} \lesssim_{s_0, N, k} \left(s^{\frac{1}{2}} 2^{- \max\{\ell, m\}}\right)^N \|  \phi_m v \|_{L^2}.
\end{equation}
The same estimate holds with $e^{s\Delta}$ replaced by $e^{s(\Delta+\rho^2)}$, $s\leq s_0$, with constants which may depend on $s_0$.
\end{cor}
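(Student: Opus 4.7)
The plan is to deduce the estimate from Proposition~\ref{p:lpregcore} by invoking the Bochner-type $L^{2}$ norm equivalence
\begin{equation*}
\|\nabla^{(k)} g\|_{L^{2}(\bbH^d)}^{2} = \|(-\Delta)^{k/2} g\|_{L^{2}(\bbH^d)}^{2} + \sum_{0\le j<k} c_{j}\,\|(-\Delta)^{j/2} g\|_{L^{2}(\bbH^d)}^{2},
\end{equation*}
which holds on $\bbH^{d}$ with explicit constants $c_{j}$: after iterated integration by parts, all commutators of covariant derivatives on $\bbH^{d}$ reduce (via constancy of curvature) to controlled lower-order pieces. Since this identity is \emph{global}, the main task is to localize it against the cutoff $\phi_{\ell}$.

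To do this, I would factor $e^{s\Delta}(\phi_{m}v) = e^{(s/2)\Delta} f_{1}$ with $f_{1}:=e^{(s/2)\Delta}(\phi_{m}v)$, and introduce an auxiliary smooth cutoff $\chi$ with $\chi\equiv 1$ on $\supp \phi_{\ell}$, supported in a slight thickening of $A_{\ell}$ that is still disjoint from $\supp\phi_{m}$ (which is possible because $|\ell-m|\ge 10$). Split $f_{1} = \chi f_{1} + (1-\chi) f_{1}$. For the near piece $\|\phi_{\ell} s^{k/2}\nabla^{(k)} e^{(s/2)\Delta}(\chi f_{1})\|_{L^{2}}$, I would discard $\phi_{\ell}$ on the outside, apply the global norm equivalence above together with Lemma~\ref{l:frac_preg} to reduce the bound to $\|\chi f_{1}\|_{L^{2}}$, and then invoke \eqref{eq:lpregcore0} (with Remark~\ref{r:lpregcutoff}) on $\chi\, e^{(s/2)\Delta}(\phi_{m}v)$ to extract the required decay factor.

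For the far piece I would use duality. Pairing $\phi_{\ell} s^{k/2}\nabla^{(k)} e^{(s/2)\Delta}[(1-\chi) f_{1}]$ against a $k$-tensor $\bsxi$ with $\|\bsxi\|_{L^{2}}\le 1$, I would integrate the $k$ covariant derivatives by parts onto $\phi_{\ell}\bsxi$ (absorbing the constant-curvature commutator terms into lower-order contributions) and then use self-adjointness of the heat semi-group to rewrite the pairing as a sum of terms of the schematic form
\begin{equation*}
\big\langle s^{k/2} e^{(s/2)\Delta}\!\big[(\nabla^{(j)}\phi_{\ell})\otimes(\nabla^{(k-j)}\bsxi)\big],\ (1-\chi) f_{1}\big\rangle,\qquad 0\le j\le k.
\end{equation*}
Each such term I would estimate by a localized analogue of Corollary~\ref{c:pregdiv}, derived from \eqref{eq:lpregcore1}--\eqref{eq:lpregcore2} by exactly the same duality argument used to prove Corollary~\ref{c:pregdiv}. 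This yields the decay factor $(s^{1/2}2^{-\max\{\ell,m\}})^{N}$ against $\|\bsxi\|_{L^{2}}\,\|\phi_{m}v\|_{L^{2}}$, using $\|(1-\chi)f_{1}\|_{L^{2}}\lesssim \|\phi_{m}v\|_{L^{2}}$ (from Lemma~\ref{l:preg}) and the standard bounds $|\nabla^{(j)}\phi_{\ell}|\lesssim 2^{-j\ell}$ on $\supp \nabla^{(j)}\phi_{\ell}\subset A_{\ell}$. The extension from $e^{s\Delta}$ to $e^{s(\Delta+\rho^{2})}$ for $s\le s_{0}$ is automatic from the boundedness of $e^{s\rho^{2}}$. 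The principal technical obstacle I anticipate is the bookkeeping in the far piece: systematically tracking the curvature commutators produced by integrating $k$ covariant derivatives by parts and the Leibniz expansion of $(\nabla^{(k)})^{\ast}(\phi_{\ell}\bsxi)$, and matching the resulting pieces to the operators controlled by Proposition~\ref{p:lpregcore}. Constancy of the curvature on $\bbH^{d}$ is crucial here, as it guarantees that all such terms collapse into strictly lower-order contributions that can be closed inductively on $k$.
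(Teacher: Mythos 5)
Your plan is a genuinely different route from the paper's. The paper never factors $e^{s\Delta}$ through an intermediate time nor introduces an auxiliary near/far cutoff: it expands $\|\phi_\ell s^{k/2}\nabla^{(k)}e^{s\Delta}(\phi_m v)\|_{L^2}^2$ as a pairing of two identical factors and integrates covariant derivatives by parts one at a time, arranging the Leibniz terms so that pairs $\nabla_\mu\nabla^\mu$ collapse into $\Delta$ (up to a constant-curvature commutator) and each derivative that lands on $\phi_\ell$ produces a rescaled bump against which $\eqref{eq:lpregcore0}$--$\eqref{eq:lpregcore2}$ and Remark~\ref{r:lpregcutoff} apply, so the decay $(s^{1/2}2^{-\max\{\ell,m\}})^N$ always enters directly between the two original cutoffs $\phi_\ell$ and $\phi_m$. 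Your near piece works essentially as you describe (the norm equivalence $\|\nabla^{(k)}g\|_{L^2}\simeq\sum_{j\le k}\|(-\Delta)^{j/2}g\|_{L^2}$ plus Lemma~\ref{l:frac_preg} gives $\lesssim\|\chi f_1\|_{L^2}$, and then $\eqref{eq:lpregcore0}$ with Remark~\ref{r:lpregcutoff} gives the decay).

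The far piece, however, has a genuine gap. After moving the $k$ covariant derivatives onto $\phi_\ell\bsxi$ and using self-adjointness of $e^{(s/2)\Delta}$, the Leibniz term with $j=0$ is $\langle s^{k/2}e^{(s/2)\Delta}[\phi_\ell\,\nabla^{(k)}\!\cdot\bsxi],\,(1-\chi)f_1\rangle$. The ``localized analogue of Corollary~\ref{c:pregdiv}'' you propose to estimate this by, when unwound through the same duality argument that proves Corollary~\ref{c:pregdiv}, requires exactly the bound $\|\tilde\phi\, s^{k/2}\nabla^{(k)}e^{(s/2)\Delta}(\phi_\ell f)\|_{L^2}\lesssim(\cdot)\|\phi_\ell f\|_{L^2}$ for disjoint $\tilde\phi,\phi_\ell$ --- that is, precisely the statement of Corollary~\ref{c:lpregcov} for $k$ derivatives, with the roles of the two cutoffs swapped. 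Proposition~\ref{p:lpregcore} controls only $\nabla\Delta^j$ and $\Delta^j$, not arbitrary $\nabla^{(k)}$, so this localized divergence bound cannot be derived directly from $\eqref{eq:lpregcore1}$--$\eqref{eq:lpregcore2}$ as you claim; in the paper it is Corollary~\ref{c:lpregcorediv}, which is logically \emph{downstream} of Corollary~\ref{c:lpregcov}. Your ``closed inductively on $k$'' remark covers the $j\ge 1$ Leibniz terms (where $k-j<k$) but not $j=0$, which has the full $k$ derivatives on $\bsxi$. A second, more repairable issue: if $\chi$ is only a thickening of $A_\ell$ at scale $\simeq 2^\ell$, then in the case $m>\ell+10$ with $2^\ell<s^{1/2}$ the supports of $\phi_\ell$ and $1-\chi$ are separated only at scale $2^\ell<s^{1/2}$, so no mismatch decay is available for the far pairing; you need to take $\chi$ at scale $\simeq 2^{\max\{\ell,m\}}$ (which the gap $|\ell-m|\ge 10$ permits) for the far piece to see the decay $(s^{1/2}2^{-m})^N$.
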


\begin{cor}[Localized Parabolic Regularity for Tensor Fields]\label{c:lpregcorediv}
Let $\bsxi$ be an arbitrary $(r,q)$ tensor field on $\bbH^d$ and with $k:=r+q$, let $\nabla^{(k)} \cdot \bsxi$ be as defined in Corollary~\ref{c:pregdiv}. Let $s_0 >0$ be fixed. For each $s$ with $0 < s \leq s_0$, each $\ell, m \in \Z$ with $|\ell - m| \ge 10$ and $ \max \{ 2^\ell, 2^{m} \} \geq s^{\frac{1}{2}}$, and each $N,k \in \N$ we have 
\EQ{ \label{eq:lpregcorediv} 
 \|   \phi_{\ell} s^{\frac{k}{2}} e^{s\De} \na^{(k)} \cdot (\phi_m \bsxi) \|_{L^2} &\lesssim_{s_0, N,  k} \left(s^{\frac{1}{2}} 2^{- \max\{\ell, m\}}\right)^N  \| \phi_m \bsxi \|_{L^2}.
 }
 The same statement holds with $e^{s\Delta}$ replaced by $e^{s(\Delta+\rho^2)}$, $s\leq s_0$, with constants which may depend on $s_0$. When $k=1$, $L^2$ can be replaced by $L^p$ for any $1<p<\infty$.
\end{cor}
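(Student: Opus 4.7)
\medskip
\noindent\textbf{Proof proposal for Corollary~\ref{c:lpregcorediv}.} The plan is to combine the duality argument already employed in the proof of Corollary~\ref{c:pregdiv} with the localized regularity statement of Corollary~\ref{c:lpregcov}, which is the $L^2$, covariant-derivative version of Proposition~\ref{p:lpregcore}. More precisely, I would write
\[
 \| \phi_\ell s^{\frac{k}{2}} e^{s\De} \na^{(k)} \cdot (\phi_m \bsxi) \|_{L^2} = \sup_{\|g\|_{L^2}=1} \langle s^{\frac{k}{2}} e^{s\De} \na^{(k)} \cdot (\phi_m \bsxi), \, \phi_\ell g \rangle,
\]
use self-adjointness of $e^{s\De}$ to move the heat operator onto $\phi_\ell g$, and then integrate by parts to move all $k$ covariant derivatives off of $\phi_m \bsxi$ and onto $s^{\frac{k}{2}} e^{s\De}(\phi_\ell g)$, picking up appropriate sign factors and a finite number of curvature correction terms produced by commuting the $\na_\mu$'s into the order dictated by~$\na^{(k)}$.

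Once the integration by parts is performed, I would insert a fattened bump $\widetilde{\phi}_m$ supported in a slight enlargement of $A_m$ and identically one on $\supp\phi_m$, and apply Cauchy--Schwarz:
\[
 \abs{\langle \phi_m \bsxi, \, \na^{(k)}[s^{\frac{k}{2}} e^{s\De}(\phi_\ell g)]\rangle} \leq \|\phi_m \bsxi\|_{L^2} \cdot \| \widetilde{\phi}_m s^{\frac{k}{2}} \na^{(k)} e^{s\De}(\phi_\ell g)\|_{L^2}.
\]
The second factor is estimated by Corollary~\ref{c:lpregcov} applied with the roles of $\ell$ and $m$ interchanged (which is allowed since both the hypotheses $|\ell-m|\geq 10$ and $\max\{2^\ell,2^m\}\geq s^{\frac{1}{2}}$ are symmetric); this yields the gain $(s^{\frac{1}{2}} 2^{-\max\{\ell,m\}})^{N}$ times $\|\phi_\ell g\|_{L^2} = 1$, as desired. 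The curvature terms generated during the integration by parts are handled using that the Riemann tensor of $\bbH^d$ is parallel and bounded (see \eqref{eq:Rich}), so each commutator $[\na_\mu,\na_\nu]$ simply contracts the tensor with bounded coefficients; they therefore produce expressions of the same form but with $k$ replaced by $k'<k$, to which the already-proved lower-order cases of Corollary~\ref{c:lpregcov} apply, yielding additional harmless factors of $s^{(k-k')/2} \leq s_0^{(k-k')/2}$.

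The $L^p$ version for $k=1$ is handled by exactly the same duality, replacing $\|\cdot\|_{L^2}$ with $\|\cdot\|_{L^p}$ and testing against $g\in L^{p'}$, after which the localized regularity estimate \eqref{eq:lpregcore1} of Proposition~\ref{p:lpregcore} (which holds for all $1<p<\infty$) gives the corresponding bound on $\|\widetilde{\phi}_m s^{\frac{1}{2}}\na e^{s\De}(\phi_\ell g)\|_{L^{p'}}$. The statement for $e^{s(\De+\rho^2)}$ follows from that for $e^{s\De}$ by the boundedness of $e^{s\rho^2}$ on the range $s\leq s_0$, exactly as in the preceding lemmas. The only substantive obstacle I foresee is the bookkeeping of the curvature terms and the induction on $k$ they require; the rest is routine once Corollary~\ref{c:lpregcov} is in hand.
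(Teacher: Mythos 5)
Your proposal is correct and follows essentially the same route as the paper: the paper's proof simply cites the duality argument from Corollary~\ref{c:pregdiv} together with Corollary~\ref{c:lpregcov} for general $k$ in $L^{2}$, and \eqref{eq:lpregcore1} with $k=0$ for the $k=1$, $L^{p}$ case, which is exactly your scheme. Your extra care in tracking the curvature commutator terms and the fattened bump $\widetilde{\phi}_m$ is a useful clarification of details the paper's terse proof leaves implicit rather than a genuine departure.
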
 
\begin{proof}
This follows from Corollary~\ref{c:lpregcov} by a duality pairing argument as in the proof of Corollary~\ref{c:pregdiv}. The statement for $k=1$ follows by the same duality argument from \eqref{eq:lpregcore1} with $k=0$.
\end{proof} 

\begin{proof}[Proof of Corollary~\ref{c:lpregcov}]
Since $e^{s\rho^2}$ is bounded when $s \leq s_0$, the estimate for $e^{s(\Delta+\rho^2)}$ follows from the estimate for $e^{s\Delta}$. For $k=0,1$ the latter follows from \eqref{eq:lpregcore0} and \eqref{eq:lpregcore1}, respectively. Now to prove \eqref{eq:lpregcore3} for $k=2$ we write
\begin{align*}
\begin{split}
\| \phi_\ell s \nabla^{(2)} e^{s \De} ( \phi_m v ) \|_{L^2}^2&=\angles{\phi_\ell s\nabla^{\mu_1}\nabla^{\mu_2}e^{s\Delta}(\phi_m v)}{\phi_\ell s\nabla_{\mu_1}\nabla_{\mu_2}e^{s\Delta}(\phi_m v)}\\
&=-2\angles{\phi_\ell s \nabla^{\mu_2}\nabla^{\mu_1}e^{s\Delta}(\phi_m v)}{(s^{\frac{1}{2}}\nabla_{\mu_1}\phi_\ell)s^{\frac{1}{2}}\nabla_{\mu_2}e^{s\Delta}(\phi_m v)}\\
&\quad -\angles{\phi_\ell s^{\frac{3}{2}}\nabla_{\mu_1}\nabla^{\mu_1}\nabla^{\mu_2}e^{s\Delta}(\phi_m v)}{\phi_\ell s^{\frac{1}{2}}\nabla_{\mu_2}e^{s\Delta}(\phi_m v)}\\
&=2\angles{(s^{\frac{1}{2}}\nabla^{\mu_2}\phi_\ell) s^{\frac{1}{2}} \nabla^{\mu_1}e^{s\Delta}(\phi_m v)}{(s^{\frac{1}{2}}\nabla_{\mu_1}\phi_\ell)s^{\frac{1}{2}}\nabla_{\mu_2}e^{s\Delta}(\phi_m v)}\\
&\quad+2\angles{\phi_\ell s^{\frac{1}{2}} \nabla^{\mu_1}e^{s\Delta}(\phi_m v)}{(s\nabla^{\mu_2}\nabla_{\mu_1}\phi_\ell)s^{\frac{1}{2}}\nabla_{\mu_2}e^{s\Delta}(\phi_m v)}\\
&\quad+2\angles{\phi_\ell s^{\frac{1}{2}} \nabla^{\mu_1}e^{s\Delta}(\phi_m v)}{(s^{\frac{1}{2}}\nabla_{\mu_1}\phi_\ell)s\Delta e^{s\Delta}(\phi_m v)}\\
&\quad-\angles{\phi_\ell s^{\frac{3}{2}}\nabla^{\mu_2}\Delta e^{s\Delta}(\phi_m v)}{\phi_\ell s^{\frac{1}{2}}\nabla_{\mu_2}e^{s\Delta}(\phi_m v)}\\
&\quad-\angles{\phi_\ell s^{\frac{3}{2}}[\nabla_{\mu_1}\nabla^{\mu_1},\nabla^{\mu_2}] e^{s\Delta}(\phi_m v)}{\phi_\ell s^{\frac{1}{2}}\nabla_{\mu_2}e^{s\Delta}(\phi_m v)}.
\end{split}
\end{align*}
In view of Remark~\ref{r:lpregcutoff}, the first four lines on the right-hand side above are now of the form that is controlled by \eqref{eq:lpregcore0}--\eqref{eq:lpregcore2}. That the last line is also of this form can be seen by the curvature formula \eqref{eq:R}--\eqref{eq:Rich}. We can now continue inductively in the same way to prove \eqref{eq:lpregcore3} for higher values of $k$.
\end{proof}

For technical reasons, we introduce a second type of bump function that we will denote by $\fy$ -- in practice this will be given by various derivatives of $\phi$. But, to  keep notation at a minimum, we let $\fy\in C^\infty_0$ be any smooth bump function with uniformly bounded derivatives such that $\supp \fy \in [1/4, 4]$. As usual we let 
\EQ{
\fy_\ell (r) := \fy( r/ 2^\ell)\quad \forall \ell \in \Z.
}
The main step in the proof of Proposition~\ref{p:lpregcore} is the following lemma. 
\begin{lem} \label{l:lpreg1}  
For each $s$ with $0<s < s_0$, each $\ell, m$ with  $2^\ell \ge 2^{m+10}$ and $2^{\ell} \ge s_0^{\frac{1}{2}}$, and  each $1 < p < \infty$ we have 
\begin{align} 
\| \fy_\ell e^{s \De} ( \phi_m v ) \|_{L^p} &\lesssim_{s_0} s^{\frac{1}{2}} 2^{-\ell} \| \phi_m v \|_{L^p}\label{eq:lpreg0}  \\
\| \fy_m s^{k+\frac{1}{2}} \na(\Delta)^k  e^{s \De} ( \phi_\ell v ) \|_{L^p} &\lesssim_{s_0} s^{\frac{1}{2}} 2^{-\ell} \| \phi_\ell v \|_{L^p}\label{eq:lpreg1} \\
\| \fy_\ell s^k (\De)^k e^{s \De} ( \phi_m v ) \|_{L^p} &\lesssim_{s_0} s^{\frac{1}{2}} 2^{-\ell} \| \phi_m v \|_{L^p}\label{eq:lpreg2} ,
\end{align} 
and we make a special note of the reversed roles of $m, \ell$ in~\eqref{eq:lpreg1}. 
\end{lem}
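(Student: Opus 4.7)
The three estimates will all be derived from a single Duhamel trick that exploits the disjoint supports of the cutoffs to absorb a factor of $2^{-\ell}$ out of each commutator with the Laplacian. The starting point is the observation that, setting $u_s := e^{s\Delta}(\phi_m v)$, the cutoff function $w_s := \fy_\ell u_s$ vanishes at $s = 0$ (because $\supp\fy_\ell \cap \supp\phi_m = \emptyset$ under the hypothesis $2^\ell \geq 2^{m+10}$) and satisfies the inhomogeneous heat equation
\begin{equation*}
(\partial_s - \Delta) w_s = -2\nabla\fy_\ell \cdot \nabla u_s - (\Delta \fy_\ell) u_s,
\end{equation*}
whose forcing is supported inside $\supp\fy_\ell$ and of $L^\infty$-size $\lesssim 2^{-\ell}$. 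Here the bound $\|\Delta\fy_\ell\|_{L^\infty} \lesssim 2^{-\ell}$ uses the hypothesis $2^\ell \gtrsim s_0^{1/2}$ to absorb the subleading $\partial_r^2\fy_\ell$ and $\coth(r)\partial_r\fy_\ell$ contributions on $\bbH^d$ into an $s_0$-dependent constant.

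For \eqref{eq:lpreg0}, I would simply apply Duhamel's formula to get
\begin{equation*}
w_s = -\int_0^s e^{(s-s')\Delta} \bigl[ 2\nabla\fy_\ell \cdot \nabla u_{s'} + (\Delta\fy_\ell)u_{s'} \bigr] \, ds',
\end{equation*}
and combine the $L^p$-boundedness of $e^{(s-s')\Delta}$ (Lemma~\ref{l:preg}) with the gradient bound $\|\nabla u_{s'}\|_{L^p} \lesssim (s')^{-1/2} \|\phi_m v\|_{L^p}$ (also Lemma~\ref{l:preg}). Integration in $s'$ then produces a factor of $2 s^{1/2}$ from the $(s')^{-1/2}$ singularity and yields the claimed bound $\|w_s\|_{L^p} \lesssim_{s_0} s^{1/2} 2^{-\ell} \|\phi_m v\|_{L^p}$.

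For \eqref{eq:lpreg2} I would deduce the result from \eqref{eq:lpreg0} by using the semi-group property $e^{s\Delta} = e^{(s/2)\Delta} \circ e^{(s/2)\Delta}$ together with an intermediate cutoff. Choosing a slight enlargement $\tilde\fy_\ell$ of $\fy_\ell$ whose support remains disjoint from $\supp\phi_m$, decompose
\begin{equation*}
e^{(s/2)\Delta}(\phi_m v) = \tilde\fy_\ell \cdot e^{(s/2)\Delta}(\phi_m v) + (1 - \tilde\fy_\ell) \cdot e^{(s/2)\Delta}(\phi_m v).
\end{equation*}
The near piece is controlled in $L^p$ by $s^{1/2} 2^{-\ell} \|\phi_m v\|_{L^p}$ via \eqref{eq:lpreg0} (applied with $\tilde\fy_\ell$ in place of $\fy_\ell$), and then the $L^p \to L^p$ boundedness of $(s/2)^k \Delta^k e^{(s/2)\Delta}$, which follows by iterating Lemma~\ref{l:preg}, preserves this bound after multiplication by $\fy_\ell$. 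For the far piece, $(1 - \tilde\fy_\ell) e^{(s/2)\Delta}(\phi_m v)$ is supported away from $\supp\fy_\ell$, and a dyadic decomposition of $(1 - \tilde\fy_\ell)$ into pieces $\phi_{m'}$ reduces the matter to iterated applications of the same type of estimate. Finally, \eqref{eq:lpreg1} follows by an analogous argument: either directly, by applying the Duhamel strategy to the gradient operator using $\|s^{k+1/2}\nabla\Delta^k e^{s\Delta}\|_{L^p \to L^p} \lesssim 1$, or equivalently via duality, integrating by parts to trade $s^{k+1/2}\nabla \Delta^k$ acting on the input for $s^{k+1/2}\Delta^k \nabla \cdot$ acting on a vector field supported near $\supp\fy_m$, and then applying the divergence-form parabolic regularity (Corollary~\ref{c:pregdiv}) together with the Duhamel localization to move the $2^{-\ell}$ gain to the $\phi_\ell$ side.

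The main obstacle will be handling the far piece in the argument for \eqref{eq:lpreg2} (and \eqref{eq:lpreg1}), where $(1 - \tilde\fy_\ell) e^{(s/2)\Delta}(\phi_m v)$ must be dyadically decomposed and summed across all admissible scales in a way that remains uniform in the parameters. Care is also needed with the curvature/commutator terms that arise when passing between $\Delta^k \nabla \cdot$ and $\nabla \cdot \Delta^k$ on $\bbH^d$, which produce lower-order contributions absorbable into $s_0$-dependent constants under the hypothesis $2^\ell \geq s_0^{1/2}$.
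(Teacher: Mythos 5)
Your argument for \eqref{eq:lpreg0} matches the paper's: define $w_s := \fy_\ell e^{s\Delta}(\phi_m v)$, observe that it vanishes at $s=0$ because the supports are disjoint, write the inhomogeneous heat equation it satisfies, and close via Duhamel, Lemma~\ref{l:preg}, and the hypothesis $2^{\ell} \geq s_0^{1/2}$ (to control $\Delta\fy_\ell$ and to dominate the $s\cdot 2^{-\ell}$ contribution by $s^{1/2}2^{-\ell}$). For \eqref{eq:lpreg2} and \eqref{eq:lpreg1}, however, your route diverges from the paper's and the divergence hides a genuine gap.

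For \eqref{eq:lpreg2} you propose the semigroup split $e^{s\Delta} = e^{(s/2)\Delta}\circ e^{(s/2)\Delta}$ together with a spatial near/far decomposition of the intermediate output. The near piece is fine. But the far piece, $\fy_\ell\,s^k\Delta^k e^{(s/2)\Delta}\bigl((1-\tilde\fy_\ell)\,e^{(s/2)\Delta}(\phi_m v)\bigr)$, has the same structure as the quantity you are trying to bound: a cutoff $\fy_\ell$ against $s^k\Delta^k e^{(\cdot)\Delta}$ applied to a spatially disjoint input. Dyadically decomposing $1-\tilde\fy_\ell$ does not break this circularity, because each dyadic piece produces exactly an instance of \eqref{eq:lpreg2} (for the same $k$, a different $m'$); you cannot iterate your own estimate into existence, and iterating the semigroup split forever fails because the outer heat time $s/2^n$ degenerates and $\|s\Delta e^{\eps\Delta}\|_{L^p\to L^p} \simeq s/\eps$ blows up. The paper avoids this entirely: it runs a \emph{single} Duhamel argument directly for $w_2(s) = \fy_\ell\Delta^k e^{s\Delta}(\phi_m v)$, and the only genuine difficulty is the non-integrable $(s')^{-k}$ singularity at $s'=0$ in the forcing. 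That is handled by splitting the \emph{time} integral into $[0,s/2]$ and $[s/2,s]$: on $[s/2,s]$ the powers of $s'$ are harmless, while on $[0,s/2]$ the forcing is rewritten (by repeated product rule) as a linear combination of $\nabla^{(2k+2-j)}\bigl((\nabla^{(j)}\fy_\ell)\,e^{s'\Delta}(\phi_m v)\bigr)$, so that Corollary~\ref{c:pregdiv} lets the outer propagator $e^{(s-s')\Delta}$ absorb the extra derivatives at the cost of the integrable weight $(s-s')^{-(k+1-j/2)}$. This time-split plus the divergence-form rewrite of the forcing is the essential idea your proposal is missing. The same structure is needed for \eqref{eq:lpreg1}; your ``via duality'' option is indeed the paper's first move (and is necessary, because the ``directly'' option would produce derivatives of $\fy_m$, gaining only $2^{-m}$ rather than the required $2^{-\ell}$), but after dualizing one still faces the same $s'\to 0$ singularity, to be resolved by the same time-split and divergence-form rewriting, now keeping one derivative as $\nabla_\mu(\fy_m\bsxi^\mu)$ to preserve the scalar structure.
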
 
\begin{proof} 
The basic ingredients in the proof are Lemma~\ref{l:preg} and Corollary~\ref{c:pregdiv}. 

First, we prove~\eqref{eq:lpreg0}. Define
\EQ{
 w_0 (s) := \fy_\ell e^{s \De} (\phi_m v ) .
}
Note that because the supports of $\fy_\ell $ and $\phi_m$ are disjoint, we have $ w_0(0) = 0$. Moreover, $w_0(s)$ solves the heat equation 
\EQ{
 (\p_s - \De) w_0(s) &= -(\De \fy_\ell)  e^{s \De}( \phi_m v) - 2 \na^\mu \fy_\ell  \na_\mu  e^{s \De} (\phi_m v)) =: \calN_0(s).
}
Using the Duhamel formula we have 
\EQ{\label{eq:swduh0} 
w_0(s) =  \int_0^{s}  s' e^{(s - s') \De} \calN_0(s') \, \frac{\ud s'}{s'}.  
}
Thus, using Lemma~\ref{l:preg}, 
\EQ{
\| w_0(s) \|_{L^p}  &\lesssim \Big\| \int_{0}^s  s' e^{(s - s') \De} \calN_0(s') \, \frac{\ud s'}{s'} \Big\|_{L^p} \lesssim \int_{0}^s s'  \|e^{(s-s')\De} \calN_0(s')\|_{L^p}  \frac{\ud s' }{s'}  \\
& \lesssim \int_{0}^s s'  \| (\De \fy_\ell)  e^{s' \De}( \phi_m v) \|_{L^p} \frac{\ud s'}{s'} + \int_0^s s' \| \na^\mu \fy_\ell  \na_\mu  e^{s' \De} (\phi_m v)) \|_{L^p} \, \frac{\ud s'}{s'} \\
& \lesssim \max(2^{-\ell}, 2^{-2\ell}) \int_{0}^s s'  \| e^{s' \De}( \phi_m v) \|_{L^p} \frac{\ud s'}{s'} \\
&\quad+ 2^{-\ell} \int_0^s (s')^{\frac{1}{2}} \| (s')^{\frac{1}{2}} \na  e^{s' \De} (\phi_m v)) \|_{L^p} \, \frac{\ud s'}{s'} \\
& \lesssim s\max(2^{-\ell}, 2^{-2\ell}) \| \phi_m v \|_{L^p} + s^{\frac{1}{2}} 2^{-\ell}  \| \phi_m v \|_{L^p}  \\
& \lesssim_{s_0} s^{\frac{1}{2}} 2^{-\ell}  \| \phi_m v \|_{L^p} ,
}
as desired. 

Next, we prove~\eqref{eq:lpreg2} and postpone the proof of~\eqref{eq:lpreg1} which requires a slightly different argument  until the end. We proceed as before, now defining,  
\EQ{
w_2 (s) := \fy_\ell  \De^k e^{s \De} (\phi_m v ) .
}
Then,  the goal is to estimate 
\EQ{
s^k w_2(s) = \fy_\ell s^k \De^k e^{s \De} ( \phi_m v ) ,
}
in $L^p$. 
Note again that because the supports of $\fy_\ell $ and $\phi_m$ are disjoint, we have $ w_2(0) = 0$. Moreover, $w_2(s)$ solves the following heat equation, 
\EQ{
(\p_s - \De) w_2(s) &= -(\De \fy_\ell) \De^k e^{s \De}( \phi_m v) - 2 \na^\mu \fy_\ell  \na_\mu (\De^k e^{s \De} (\phi_m v)) =: \calN_2(s),  \\
w_2(0) &= 0.
}
Using the Duhamel formula we have 
\EQ{\label{eq:swduh} 
s^k w_2(s) =  \int_0^{s/2}  s^k s' e^{(s - s') \De} \calN_2(s') \, \frac{\ud s'}{s'}  +  \int_{s/2}^s  s^k s' e^{(s - s') \De} \calN_2(s') \, \frac{\ud s'}{s'} .
}
Note that 
\EQ{\label{eq:dphiell} 
&\abs{\na^{(j)} \fy_\ell(r)}_{\h}  \lesssim \begin{cases} 2^{-\ell} \mif \ell \ge 0 \\ 2^{-j\ell} \mif \ell \le 0 \end{cases}  \quad \forall j \in \N.
}
We estimate the second integral above in $L^p$ as follows, 
\EQ{
 \Big\| \int_{s/2}^s  s^k s' e^{(s - s') \De} \calN_2(s') \, \frac{\ud s'}{s'} \Big\|_{L^p} &\lesssim \int_{\frac{s}{2}}^s s^k s'  \|e^{(s-s')\De}\calN_2(s')\|_{L^p}  \frac{\ud s' }{s'}  \\
 & \lesssim \int_{\frac{s}{2}}^s s^k s'  \| (\De \fy_\ell) \De^k e^{s' \De}( \phi_m v)\|_{L^p}  \frac{\ud s' }{s'} \\
 &\quad + \int_{\frac{s}{2}}^s s^k s'  \| \na^\mu \fy_\ell  \na_\mu \De^k e^{s' \De} (\phi_m v)\|_{L^p}  \frac{\ud s' }{s'} \\
 & \lesssim    \max(2^{-\ell}, 2^{-2\ell} ) \int_{\frac{s}{2}}^s s   \| (s')^k \De^k e^{s' \De}( \phi_m v)\|_{L^p}  \frac{\ud s' }{s'} \\
 &\quad + 2^{-\ell} \int_{\frac{s}{2}}^s s^{\frac{1}{2}} \|  (s')^{k+\frac{1}{2}}  \na \De^k e^{s' \De} (\phi_m v)\|_{L^p}  \frac{\ud s' }{s'} \\
 &\lesssim 2^{-\ell} s^{\frac{1}{2}} \| (\phi_m v)\|_{L^p}.
 }
 To handle the first integral in~\eqref{eq:swduh} we need to rewrite $\calN_2(s)$. Recall that  
 \EQ{
 \calN_2(s') &= -(\De \fy_\ell) \De^k e^{s' \De}( \phi_m v) - 2 \na^\mu \fy_\ell  \na_\mu \De^k e^{s' \De} (\phi_m v).
 }
Now by repeated applications of the product rule, we can write this as a linear combination of \emph{scalar} functions of the form
\begin{align*}
\begin{split}
  \nabla^{(2k+2-j)}((\nabla^{(j)}\fy_\ell)e^{s\Delta}(\phi_m v)),\qquad 1\leq j\leq 2k+2.
\end{split}
\end{align*}
 Then by Corollary~\ref{c:pregdiv}
\begin{align*}
\begin{split}
  \Big\| \int_{0}^{\frac{s}{2}}  s^k s' e^{(s - s') \De} \nabla^{(2k+2-j)} & \Big( (\nabla^{(j)} \fy_\ell)  e^{s' \De}( \phi_m v) \Big)  \, \frac{\ud s'}{s'} \Big\|_{L^p} \\
  &\lesssim\int_0^{\frac{s}{2}}\frac{s^ks'}{(s-s')^{k+1-\frac{j}{2}}}\|(\nabla^{(j)} \fy_\ell)  e^{s' \De}( \phi_m v)\|_{L^p}\,\dsp \\ 
  &\lesssim s^{\frac{j}{2}-1}\max(2^{-\ell},2^{-j\ell})\int_0^{\frac{s}{2}}s'\|\phi_mv\|_{L^p}\,\dsp\\
  &\lesssim 2^{-\ell} s^{\frac{1}{2}} \| (\phi_m v)\|_{L^p}.
\end{split}
\end{align*}
 
 Lastly, we prove~\eqref{eq:lpreg1}. Define 
 \EQ{
 W_1(s):= \fy_m\na \Delta^ke^{s\De} \phi_\ell v ,
 }
 and note that our goal is to show, 
 \EQ{
 \| s^{k+\frac{1}{2}} W_1(s) \|_{L^p} \lesssim s^{\frac{1}{2}} 2^{-\ell} \| \phi_\ell v \|_{L^p}.
 }
The difference from the previous cases is that $W_1(s)$ is a $(0, 1)$ tensor. But, we can compute its $L^p$ norm as 
 \EQ{
  \| s^{k+\frac{1}{2}} W_1(s) \|_{L^p} = \sup_{ \| \bsxi   \|_{L^{p'}} =1} \abs{\ang{ \fy_m s^{k+\frac{1}{2}} \na_\mu \Delta^ke^{s\De} \phi_\ell v , \, \bsxi^\mu}},
 }
 where the supremum is taken over all $(1, 0)$-tensors $\bsxi$. Note that the term inside the supremum above can be written as 
 \EQ{
 \ang{    \phi_\ell v , \,  \ti  \phi_{\ell} s^{k+\frac{1}{2}} \Delta^ke^{s\De} \na_\mu (\fy_m \bsxi^\mu)},
 }
 where $\ti \phi \in C^\infty_0$ is a fattened version of $\phi$, i.e, $\phi \ti \phi = \phi$ and $\supp \ti \phi \in [1/8, 8]$. Hence it suffices to show that 
 \EQ{ \label{eq:divw} 
 \|  \ti  \phi_{\ell} s^{k+\frac{1}{2}} \Delta^ke^{s\De} \na_\mu (\fy_m \bsxi^\mu) \|_{L^q} \lesssim s^{\frac{1}{2}} 2^{-\ell} \| \fy_m \bsxi \|_{L^q} ,\quad \forall 1 < q < \infty.
 }
 The advantage is that we can rephrase the estimate in terms of the scalar 
 \EQ{
 w_1(s):= \ti  \phi_{\ell} \Delta^ke^{s\De} \na_\mu (\fy_m \bsxi^\mu) .
 }
 Note that $w_1(0)= 0$ and $w_1(s)$  satisfies the following heat equation, 
 \EQ{
 (\p_s - \De) w_1 &=  - \De \ti \phi_\ell \Delta^ke^{s\De} \na_\mu (\fy_m \bsxi^\mu)  - 2 \na^\nu  \phi_\ell \na_\nu \Delta^ke^{s\De}  \na_\mu (\fy_m \bsxi^\mu) \\
 &=:\calN_1.
 }
 Using the Duhamel formula, 
\begin{align}\label{eq:sw1duhtemp1}
\begin{split}
 \|s^{k+\frac{1}{2}}w_1(s)\|_{L^p} \lesssim& \int_0^{\frac{s}{2}}s^{k+\frac{1}{2}}s'\|e^{(s-s')\Delta}\calN_1(s')\|_{L^p}\,\ds'\\
 &+\int_{\frac{s}{2}}^{s}s^{k+\frac{1}{2}}s'\|e^{(s-s')\Delta}\calN_1(s')\|_{L^p}\,\ds'.
\end{split}
\end{align}
Using Lemma~\ref{l:preg} and Corollary~\ref{c:pregdiv} we can estimate the second integral by 
 \EQ{
 &\int_\frac{s}{2}^s s^{k+\frac{1}{2}} s' \| e^{(s-s') \De} (\De \ti \phi_\ell) \Delta^ke^{s'\De} \na_\mu (\fy_m \bsxi^\mu) \|_{L^p} \, \frac{\ud s'}{s'}  \\
 & \quad + \int_{\frac{s}{2}}^s s^{k+\frac{1}{2}} s' \| e^{(s-s') \De}  (\na^\nu  \phi_\ell)  \nabla_\nu\Delta^ke^{s'\De}  \na_\mu (\fy_m \bsxi^\mu)  \|_{L^p} \, \frac{\ud s'}{s'} \\
 & \lesssim  \int_{\frac{s}{2}}^s s^{k+\frac{1}{2}} s' \| (\De \ti \phi_\ell) \Delta^ke^{s'\De} \na_\mu (\fy_m \bsxi^\mu) \|_{L^p} \, \frac{\ud s'}{s'} \\
 & \quad + \int_{\frac{s}{2}}^s s^{k+\frac{1}{2}} s' \|  (\na^\nu  \phi_\ell)\nabla_\nu \Delta^k e^{s'\De}  \na_\mu (\fy_m \bsxi^\mu) \Big) \|_{L^p} \, \frac{\ud s'}{s'} \\
 & \lesssim \max(2^{-\ell}, 2^{-2\ell}) \int_{\frac{s}{2}}^s s \| (s')^{k+\frac{1}{2}}e^{s'\De} \Delta^k\na_\mu (\fy_m \bsxi^\mu) \|_{L^p} \, \frac{\ud s'}{s'} \\
 &\quad + 2^{-\ell}  \int_{\frac{s}{2}}^s s^{\frac{1}{2}}  \| (s')^{\frac{1}{2}}\nabla e^{\frac{s'}{2}\De} (s')^{k+\frac{1}{2}}e^{\frac{s'}{2}\Delta}\Delta^k \na_\mu (\fy_m \bsxi^\mu)  \|_{L^p} \, \frac{\ud s'}{s'} \\
 &\lesssim_{s_0} s^{\frac{1}{2}} 2^{-\ell}\| \fy_m \bsxi \|_{L^p}.
 }
For the first integral in \eqref{eq:sw1duhtemp1} we have to argue differently again. By repeated applications of the product rule we can rewrite $\calN_1$ as a linear combination of terms of the form
\begin{align*}
\begin{split}
 \nabla^{(2k+2-j)}((\nabla^{(j)}\tilphi_\ell)e^{s\Delta}(\nabla_\mu(\fy_m\bsxi^\mu))),\qquad 1\leq j\leq 2k+2.
\end{split}
\end{align*}
Note that the difference with $\calN_2$ above is that we have kept one derivative on $\bsxi$ to preserve the scalar structure of the term to which $e^{s\Delta}$ is applied. Therefore to estimate the first integral in \eqref{eq:sw1duhtemp1} it suffices to note that
\begin{align*}
\begin{split}
  &\int_0^\frac{s}{2} s^{k+\frac{1}{2}} s' \| e^{(s-s') \De}  \nabla^{(2k+2-j)}((\nabla^{(j)}\tilphi_\ell)e^{s'\Delta}(\nabla_\mu(\fy_m\bsxi^\mu))) \|_{L^p} \, \frac{\ud s'}{s'}\\
 &\lesssim \int_0^\frac{s}{2} \frac{s^{k+\frac{1}{2}} s'}{(s-s')^{k+1-\frac{j}{2}}} \| (\nabla^{(j)}\tilphi_\ell)e^{s'\Delta}(\nabla_\mu(\fy_m\bsxi^\mu)) \|_{L^p} \, \frac{\ud s'}{s'}\\
 &\lesssim s^{\frac{j-1}{2}}\max(2^{-\ell}, 2^{-j\ell})\int_0^\frac{s}{2}(s')^{\frac{1}{2}}\|(s')^{\frac{1}{2}}e^{s'\Delta}\nabla_\mu(\fy_m\bsxi^\mu)\|_{L^p}\,\dsp\\
 &\lesssim_{s_0} s^{\frac{1}{2}} 2^{-\ell}\| \fy_m \bsxi \|_{L^p}.
\end{split}
\end{align*}
\end{proof} 

The proof of Proposition~\ref{p:lpregcore} will now follow by iterating Lemma~\ref{l:lpreg1} and a duality argument. We will also make use of Corollary~\ref{c:pregdiv}. 
\begin{proof}[Proof of Proposition~\ref{p:lpregcore}]
We will prove~\eqref{eq:lpregcore2} in detail and just remark that the proofs of~\eqref{eq:lpregcore0} and~\eqref{eq:lpregcore1} are similar. 

First we assume that $\ell \ge m + 10$. We also assume that $k=1$ and will later treat higher values of $k$ by induction. Note that setting $\fy_\ell =  \phi_\ell$, \eqref{eq:lpreg2} establishes the estimate~\eqref{eq:lpregcore2} with $N = 1$. We show how to iterate this estimate to obtain ~\eqref{eq:lpregcore2}  with  $N =2$. As in the proof of Lemma~\ref{l:lpreg1} define 
\EQ{
w(s) := \phi_\ell  \De e^{s \De} (\phi_m v ) .
}
Then, as before we have 
\EQ{ 
(\p_s - \De) w(s) &= -(\De \phi_\ell) \De e^{s \De}( \phi_m v) - 2 \na^\mu \phi_\ell  \na_\mu (\De e^{s \De} (\phi_m v))   \\
& = \De \phi_\ell  \De e^{s \De} (\phi_m v) - 2 \na_\mu \Big( \na^\mu \phi_\ell  \De e^{s \De} (\phi_m v) \Big),
}
where we have rewritten the right-hand side above to prepare for an application of Lemma~\ref{l:lpreg1}. Since $w(0) = 0$ we have 
\EQ{\label{eq:sws1} 
sw(s)  &= \int_0^s s s' e^{(s-s')\De} \De \phi_\ell  \De e^{s' \De} (\phi_m v) \frac{\ud s'}{s'}  \\
&\quad -2  \int_0^s s s' e^{(s-s')\De}\na_\mu \Big( \na^\mu \phi_\ell  \De e^{s' \De} (\phi_m v)\Big) \frac{\ud s'}{s'}.
}
We estimate the first term above as follows, 
\EQ{
\Big\| \int_0^s s s' e^{(s-s')\De} \De \phi_\ell  \De e^{s' \De} (\phi_m v) \frac{\ud s'}{s'} \Big\|_{L^p} & \lesssim  \int_0^s s s' \| e^{(s-s')\De} \De \phi_\ell  \De e^{s' \De} (\phi_m v) \|_{L^p} \frac{\ud s'}{s'}  \\
& \lesssim \int_0^s s  \|  (\De \phi_\ell)  s'\De e^{s' \De} (\phi_m v) \|_{L^p} \frac{\ud s'}{s'}.
}
Now, observe that 
\EQ{
\De( \phi_\ell(r))  = 2^{-2 \ell} \phi''(r/ 2^{\ell}) + 2^{-\ell} (d-1) \coth r \phi'(r/ 2^{\ell}),
}
so that, 
\begin{align}\label{eq:lapphi1}
\begin{split}
  \int_0^s s  \|  (\De \phi_\ell)  s'\De e^{s' \De} &(\phi_m v) \|_{L^p} \frac{\ud s'}{s'} \lesssim  2^{-2\ell} \int_0^s s  \|  \phi''(r/ 2^\ell) s' \De e^{s' \De} (\phi_m v) \|_{L^p} \frac{\ud s'}{s'} \\ 
 & \quad + 2^{-\ell}  \int_0^s s  \|   \coth r \phi'(r/ 2^{\ell}) s'\De e^{s' \De} (\phi_m v) \|_{L^p} \frac{\ud s'}{s'}.
\end{split}
\end{align}
To estimate the first term on the right above, we use Lemma~\ref{l:lpreg1} with $\fy_\ell(r)  := \phi''(r/ 2^{\ell})$, i.e., 
\EQ{ \label{eq:phi''1} 
\|  \phi''(r/ 2^\ell) s' \De e^{s' \De} (\phi_m v) \|_{L^p} \lesssim (s')^{\frac{1}{2}} 2^{-\ell} \| \phi_m v \|_{L^p} .
} 
For the second term we distiguish between the cases $\ell \ge 0$ and $\ell \le 0$. If $\ell \ge 0$ an 
application of  Lemma~\ref{l:lpreg1} with 
\EQ{
\fy_\ell(r) =  \coth (r/2^\ell) \phi'(r/ 2^{\ell}),
}
yields, 
\EQ{\label{eq:phi'1}
 \|   \coth r \phi'(r/ 2^{\ell}) s'\De e^{s' \De} (\phi_m v) \|_{L^p}& \leq \|   \coth (r/2^\ell) \phi'(r/ 2^{\ell}) s'\De e^{s' \De} (\phi_m v) \|_{L^p} \\\
& \lesssim (s')^{\frac{1}{2}} 2^{-\ell} \| \phi_m v \|_{L^p}.
}
If $\ell \le 0$, we first note that since
\begin{align*}
\begin{split}
 x\coth r\leq \coth (r/x),\qquad 0<x\leq1, 
\end{split}
\end{align*}
we have
\begin{align*}
\begin{split}
 \|   \coth r \phi'(r/ 2^{\ell}) s'\De e^{s' \De} (\phi_m v) \|_{L^p}\leq 2^{-\ell}\|   \coth (r/2^\ell) \phi'(r/ 2^{\ell}) s'\De e^{s' \De} (\phi_m v) \|_{L^p}. 
\end{split}
\end{align*}
Applying Lemma~\ref{l:lpreg1} with 
\EQ{
\fy_\ell(r) =   \coth (r/2^\ell) \phi'(r/ 2^{\ell}) 
}
as above then gives
\EQ{\label{eq:phi'2} 
\|   \coth r \phi'(r/ 2^{\ell}) s'\De e^{s' \De} (\phi_m v) \|_{L^p} \lesssim (s')^{\frac{1}{2}} 2^{-2\ell} \| \phi_m v \|_{L^p}.
}
Thus, for $\ell \ge 0$ we plug the estimates~\eqref{eq:phi''1} and~\eqref{eq:phi'1} into~\eqref{eq:lapphi1} to obtain, 
\EQ{\label{eq:lapphi2} 
\int_0^s s  \|  (\De \phi_\ell) s' \De e^{s' \De} (\phi_m v) \|_{L^p} \frac{\ud s'}{s'} &\lesssim   2^{-3\ell} s  \int_0^s (s')^{-\frac{1}{2}} \, \ud s'  \| \phi_m v \|_{L^p}  \\
& \quad +s  2^{-2\ell}   \int_0^s (s')^{-\frac{1}{2}} \, \ud s' \| \phi_m v \|_{L^p}  \\
& \lesssim s^{\frac{3}{2}}( 2^{-3\ell} + 2^{-2\ell})  \| \phi_m v \|_{L^p} \\
&\lesssim_{s_0} s 2^{-2\ell}  \| \phi_m v \|_{L^p} \mif \ell \ge 0.
}
Similarly, for $\ell \le 0$ we use~\eqref{eq:phi'2} to estimate the term with $\coth r$ in~\eqref{eq:lapphi1} yielding, 
\EQ{
\int_0^s s  \|  (\De \phi_\ell) s' \De e^{s' \De} (\phi_m v) \|_{L^p} \frac{\ud s'}{s'} &\lesssim 2^{-3\ell} s  \int_0^s (s')^{-\frac{1}{2}} \, \ud s'  \| \phi_m v \|_{L^p}  \\
& \lesssim s^{\frac{3}{2}} 2^{-3\ell} \| \phi_m v \|_{L^p}  \lesssim s 2^{-2\ell}  \| \phi_m v \|_{L^p}\mif \ell  \le 0.
}
Next we estimate the second term on the right-hand side of~\eqref{eq:sws1}. We begin by applying Corollary~\ref{c:pregdiv} to obtain 
\EQ{
\Big\|    \int_0^s s s' e^{(s-s')\De}\na_\mu &\Big( \na^\mu \phi_\ell  \De e^{s' \De} (\phi_m v)\Big) \frac{\ud s'}{s'} \Big\|_{L^p}  \\
&\lesssim \int_0^s \frac{s s' }{(s-s')^{\frac{1}{2}}} \| (s-s')^{\frac{1}{2}} e^{(s-s')\De}\na_\mu \Big( \na^\mu \phi_\ell  \De e^{s' \De} (\phi_m v)\Big) \|_{L^p} \frac{\ud s'}{s'} \\
&\lesssim \int_0^s \frac{s }{(s-s')^{\frac{1}{2}}} \|  \na \phi_\ell s' \De e^{s' \De} (\phi_m v) \|_{L^p} \frac{\ud s'}{s'}  \\
& = 2^{-\ell}  \int_0^s \frac{s  }{(s-s')^{\frac{1}{2}}} \|   \phi'(r/ 2^\ell)  s'\De e^{s' \De} (\phi_m v) \|_{L^p} \frac{\ud s'}{s'}  \\
&\lesssim s2^{-2\ell}  \| \phi_m v \|_{L^p} \int_0^s  \frac{ 1 }{(s')^{\frac{1}{2}}(s-s')^{\frac{1}{2}}}  \, \ud s'  \\
& \lesssim s 2^{-2 \ell} \| \phi_m v \|_{L^p} , 
}
where we applied Lemma~\ref{l:lpreg1} with $\fy_\ell (r) = \phi'(r/ 2^\ell)$ in the second to last line and then used that 
\EQ{
\int_0^s  \frac{  1}{(s')^{\frac{1}{2}}(s-s')^{\frac{1}{2}}}  \, \ud s'  &=  \int_0^{\frac{s}{2}} \frac{ 1 }{(s')^{\frac{1}{2}}(s-s')^{\frac{1}{2}}}  \, \ud s' + \int_{\frac{s}{2}}^s \frac{ 1}{(s')^{\frac{1}{2}} (s-s')^{\frac{1}{2}}}  \, \ud s' \lesssim 1.
}
We have thus shown that Lemma~\ref{l:lpreg1} implies 
\EQ{
 \| s w(s) \|_{L^p} \lesssim_{s_1} (s^{\frac{1}{2}} 2^{-\ell} )^2 \| \phi_m v \|_{L^p}.
}
Iterating the argument above gives, 
\EQ{\label{eq:ell>m} 
 \| \phi_\ell s \De e^{s\De} (\phi_m v) \|_{L^p} \lesssim_{s_1} (s^{\frac{1}{2}} 2^{-\ell} )^N \| \phi_m v \|_{L^p},
}
for any $N \in \N$, which proves \eqref{eq:lpregcore2} when $\ell > m+10$ and $k=1$. Now to get the same statement when $k\geq1$ we assume by induction that we have proved it for $k$ and prove it for $k+1$. Note that
\begin{align*}
\begin{split}
 \phi_\ell s^{k+1}\Delta^{k+1}e^{s\Delta}(\phi_mv)= \phi_\ell s\Delta e^{\frac{s}{2}\Delta}((\fy_{\leq\ell-5}+\fy_{\geq\ell-5})s^k\Delta e^{\frac{s}{2}\Delta}(\phi_mv)),
\end{split}
\end{align*}
where $\fy_{\leq \ell-5}$ is supported in $\{r\leq 2^{\ell-5}\}$ and $\fy_{\geq\ell-5}=1-\fy_{\leq\ell-5}$. Now since the supports of $\phi_\ell$ and $\fy_{\ell-5}$ are disjoint, using our induction hypothesis we get
\begin{align*}
\begin{split}
  \|\phi_\ell s\Delta e^{\frac{s}{2}\Delta}(\fy_{\leq\ell-5}s^k\Delta e^{\frac{s}{2}\Delta}(\phi_mv))\|_{L^p}&\lesssim (s^{\frac{1}{2}}2^{-\ell})^N\|\fy_{\leq\ell-5}s^k\Delta e^{\frac{s}{2}\Delta}(\phi_mv)\|_{L^p}\\
  &\lesssim (s^{\frac{1}{2}}2^{-\ell})^N\|\phi_mv\|_{L^p}.
\end{split}
\end{align*}
Similarly, since the support of $\fy_{\geq\ell-5}$ and $\phi_m$ are disjoint
\begin{align*}
\begin{split}
   \|\phi_\ell s\Delta e^{\frac{s}{2}\Delta}(\fy_{\geq\ell-5}s^k\Delta e^{\frac{s}{2}\Delta}(\phi_mv))\|_{L^p}&\lesssim\|\fy_{\geq\ell-5}s^k\Delta e^{\frac{s}{2}\Delta}(\phi_mv)\|_{L^p}\\
   &\lesssim (s^{\frac{1}{2}}2^{-\ell})^N\|\phi_mv\|_{L^p},
\end{split}
\end{align*}
completing the proof of \eqref{eq:lpregcore2} when $\ell > m+10$.

Next we use a duality argument to prove the estimate when $\ell + 10< m$. Our goal is to show that in this case, 
\EQ{ \label{eq:m>ell}
\| \phi_\ell s^k \De^k e^{s \De}( \phi_m v) \|_{L^p} \lesssim (s^{\frac{1}{2}} 2^{-m})^N \|  \phi_m v \|_{L^p} .
}
We let $\ti \phi \in C^\infty_0$ be a fattened version of $\phi$ so that $\phi (r)\ti \phi(r) = \phi(r)$ for all $r$, $\supp \ti \phi \subset[ 1/8, 8]$, and $\ti \phi_m(r):= \ti \phi(r/ 2^m)$. We note that the proof of~\eqref{eq:ell>m} can be easily modified to also establish the estimate, 
\EQ{ \label{eq:ell>m1}
 \|  \ti\phi_m s^k \De^k e^{s\De} ( \phi_\ell u) \|_{L^{q}} \lesssim (s^{\frac{1}{2}} 2^{-m})^N \| \phi_\ell u \|_{L^{q}},\, \, \forall\, m > \ell+10, \,  \,   \forall \, 1 < q < \infty. \quad
}
Using that $\phi_m(r) = \ti \phi_m(r) \phi_m(r)$ we have 
\EQ{
\| \phi_\ell s^k \De^k e^{s \De}( \phi_m v) \|_{L^p}  &= \sup_{ \|u \|_{L^{p'}} = 1} \abs{\ang{ \phi_\ell s \De e^{s \De}( \phi_m v), \, u }} \\
& =  \sup_{ \|u \|_{L^{p'}} = 1} \abs{\ang{   \phi_m v , \, \ti \phi_m s^k \De^k e^{s\De} ( \phi_\ell u)}} \\
& \le  \sup_{ \|u \|_{L^{p'}} = 1} \|  \phi_m v \|_{L^p}  \|  \ti\phi_m s^k \De^k e^{s\De} ( \phi_\ell u) \|_{L^{p'}}\\ 
& \lesssim \sup_{ \|u \|_{L^{p'}} = 1}  \|  \phi_m v \|_{L^p} (s^{\frac{1}{2}} 2^{-m})^N \| \phi_\ell u \|_{L^{p'}} \\
& \lesssim(s^{\frac{1}{2}} 2^{-m})^N \|  \phi_m v \|_{L^p},
}
where we used the estimate~\eqref{eq:ell>m1} in the second-to-last line above. 
This completes the proof of~\eqref{eq:m>ell}.

We remark that the only substantive difference in the proof of~\eqref{eq:lpregcore1} from that of~\eqref{eq:lpregcore2} is that the estimate~\eqref{eq:divw} is iterated instead of~\eqref{eq:lpreg1}. 
This finishes the proof of the proposition. 
\end{proof}

\subsection{Estimating lower order terms in $LE^*$}  \label{ss:lot-LE*}

Our main goal in this subsection is to prove Proposition~\ref{p:Hlotbound_for_V} and Proposition~\ref{p:Hlotbound} below which imply that 
\[
 \|Bu\|_{LE^\ast}\lesssim \|u\|_{LE} 
\]
for an operator $B$ of order one with sufficiently decaying coefficients. This is a manifestation of gain of regularity in our local smoothing estimate without frequency localizations. Along the way we will prove several general lemmas which are used in many other occasions in the rest of this work. To simplify notation, we will write $L^2$ for $L^2_{t,x}=L^2(\bbR\times \bbH^d)$ in this subsection. The detailed proofs below are worked out for the spaces $LE$ and $LE^\ast$ (or their frequency localized versions), but with minor modifications the same proofs yield the corresponding statements at the level of $\tilLE$, $\tilLE_0$, $\LE$ and their duals.

\begin{lem}\label{lem:PsLEs1}
Let $0 < s'\leq s$. Then we have for any scalar function $v$ that
\begin{align*}
\begin{split}
\|(s')^{\frac{k}{2}}\nabla^{(k)}e^{s'\Delta}v\|_{LE_s^\ast} &\lesssim \|v\|_{LE_s^\ast},\\
\|s^{\frac{k}{2}}\nabla^{(k)}e^{s\Delta}v\|_{LE_\low^\ast} &\lesssim \|v\|_{LE_\low^\ast},\\
\|(s')^{\frac{k}{2}}\nabla^{(k)}e^{s'\Delta}v\|_{LE_s} &\lesssim \|v\|_{LE_s},\\
\|s^{\frac{k}{2}}\nabla^{(k)}e^{s\Delta}v\|_{LE_\low} &\lesssim \|v\|_{LE_\low}.
\end{split}
\end{align*}
Moreover, for any $(r,q)$ tensor field $\boldsymbol{v}$, let $\nabla^{(k)} \cdot \boldsymbol{v}$ with $k := r + q$ be defined as in Corollary~\ref{c:pregdiv}.
Then it holds that
\begin{align*}
\begin{split}
\|e^{s'\Delta}(s')^{\frac{k}{2}}\nabla^{(k)}\cdot \boldsymbol{v}\|_{LE_s^\ast} &\lesssim \|\boldsymbol{v}\|_{LE_s^\ast},\\
\|e^{s\Delta}s^{\frac{k}{2}}\nabla^{(k)}\cdot \boldsymbol{v}\|_{LE_\low^\ast} &\lesssim \|\boldsymbol{v}\|_{LE_\low^\ast},\\
\|e^{s'\Delta}(s')^{\frac{k}{2}}\nabla^{(k)}\cdot \boldsymbol{v}\|_{LE_s} &\lesssim \|\boldsymbol{v}\|_{LE_s},\\
\|e^{s\Delta}s^{\frac{k}{2}}\nabla^{(k)}\cdot \boldsymbol{v}\|_{LE_\low} &\lesssim \|\boldsymbol{v}\|_{LE_\low},
\end{split}
\end{align*}
If $0 < s \leq s_0$, the same estimates hold with $e^{s\Delta}$ replaced by $e^{s(\Delta+\rho^2)}$ with constants depending on $s_0$. Finally, the analogous estimates hold at the level of $\tilLE$, $\tilLE_0$ and their duals.
\end{lem}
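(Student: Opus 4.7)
\medskip

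\noindent\emph{Proof plan for Lemma~\ref{lem:PsLEs1}.} All eight estimates follow from the same template: a dyadic spatial decomposition coupled with the localized parabolic regularity of Section~\ref{ss:lpreg}. Since the reduction of $e^{s(\Delta+\rho^{2})}$ to $e^{s\Delta}$ is immediate from the boundedness of $e^{s\rho^{2}}$ for $s \leq s_{0}$, and the tensor-valued (divergence) bounds follow from the scalar ones by the duality pairing $|\langle v,G\rangle_{t,x}| \leq \|v\|_{LE_{s}}\|G\|_{LE_{s}^{\ast}}$ (cf.\ the proof of Corollary~\ref{c:pregdiv}), I will focus on a single representative scalar estimate, namely
\begin{equation*}
\|(s')^{\frac{k}{2}}\nabla^{(k)}e^{s'\Delta}v\|_{LE_{s}^{\ast}} \lesssim \|v\|_{LE_{s}^{\ast}}, \qquad 0<s' \leq s.
\end{equation*}
The other cases (norms $LE_{s}$, $LE_{\low}$, $LE_{\low}^{\ast}$, and the tilded $\tilLE$, $\tilLE_{0}$ versions) follow by the same scheme, since each of these norms is a weighted $\ell^{1}$ or $\ell^{\infty}$ combination of $L^{2}$ norms over dyadic annuli, with the low-radius portion collected into a single ball.

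\smallskip

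The plan is to introduce a smooth partition of unity $\{\phi_{m}\}_{m \in \bbZ}$ subordinate to the annuli $A_{m}$ (with the tail $m \leq -k_{s}$ replaced by a single cutoff $\phi_{\leq -k_{s}}$ supported in $\{r \lesssim 2^{-k_{s}}\}$, matching the structure of $\|\cdot\|_{LE_{s}^{\ast}}$), and then to estimate, for each output annulus $A_{\ell}$ with $\ell \geq -k_{s}$,
\begin{equation*}
\phi_{\ell}\, (s')^{\frac{k}{2}} \nabla^{(k)} e^{s'\Delta} v \;=\; \sum_{m \geq -k_{s}} \phi_{\ell}\, (s')^{\frac{k}{2}} \nabla^{(k)} e^{s'\Delta}\bigl(\phi_{m} v\bigr) \;+\; \phi_{\ell}\, (s')^{\frac{k}{2}} \nabla^{(k)} e^{s'\Delta}\bigl(\phi_{\leq -k_{s}} v\bigr),
\end{equation*}
and analogously for the output ball $A_{\leq -k_{s}}$. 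The summation is split into a near-diagonal piece $|\ell-m|\leq 10$ and an off-diagonal piece $|\ell-m|>10$. On the near-diagonal piece I use the global $L^{2}$ boundedness of $(s')^{\frac{k}{2}}\nabla^{(k)}e^{s'\Delta}$ from Lemma~\ref{l:preg}, together with the fact that the weights $r^{\frac{1}{2}}$ (or the low-frequency weight $(s')^{\frac{1}{4}} \lesssim s^{\frac{1}{4}}$, using $s' \leq s$) are comparable on $A_{\ell}$ and $A_{m}$. On the off-diagonal piece, Corollary~\ref{c:lpregcov} applies since $\max\{2^{\ell},2^{m}\} \geq 2^{-k_{s}} \simeq s^{\frac{1}{2}} \geq (s')^{\frac{1}{2}}$; choosing $N$ large enough (say $N=10$), it gives
\begin{equation*}
\|\phi_{\ell}\, (s')^{\frac{k}{2}} \nabla^{(k)} e^{s'\Delta}(\phi_{m}v)\|_{L^{2}} \lesssim_{N} \bigl((s')^{\frac{1}{2}} 2^{-\max\{\ell,m\}}\bigr)^{N} \|\phi_{m} v\|_{L^{2}},
\end{equation*}
and analogously with $\phi_{\ell}$ replaced by the ball cutoff (via the variant in Remark~\ref{r:lpregcutoff}). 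Multiplying by the appropriate output weight, redistributing via the trivial bound $2^{\frac{\ell}{2}} \lesssim 2^{\frac{m}{2}} \cdot 2^{\frac{|\ell-m|}{2}}$, and absorbing the resulting $2^{-(N-1)|\ell-m|/2}$ into a geometrically convergent sum, I recover the input norm $\|v\|_{LE_{s}^{\ast}}$ up to a constant. Summing the near- and off-diagonal contributions over $\ell \geq -k_{s}$ and handling the low-radius output piece separately yields the desired bound.

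\smallskip

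The main obstacle is bookkeeping: one must verify at each step that the threshold $\max\{2^{\ell},2^{m}\} \geq (s')^{\frac{1}{2}}$ required by Proposition~\ref{p:lpregcore} is satisfied — this is where the hypothesis $s' \leq s$ enters decisively, since all annuli in the $LE_{s}^{\ast}$ decomposition have radius $\gtrsim 2^{-k_{s}} \simeq s^{\frac{1}{2}}$, while the ``inner'' ball $A_{\leq -k_{s}}$ is handled via the cutoff-function variant of the localized parabolic regularity (Remark~\ref{r:lpregcutoff}) applied with $R_{1}\simeq 2^{-k_{s}}$ and $R_{2} = 2^{m}$, together with the crude inequality $(s')^{\frac{1}{4}} \leq s^{\frac{1}{4}}$ to match the low-radius weight prescribed by $\|\cdot\|_{LE_{s}^{\ast}}$. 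Once the scales are aligned, the rest is a Schur-type summation that is uniform in $s$ and $s'$, and the same framework handles the $LE_{s}$, $LE_{\low}$, $\tilLE$, $\tilLE_{0}$ variants by simply replacing weights and exchanging $\ell^{1}$ with $\ell^{\infty}$ where appropriate.
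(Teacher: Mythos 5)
Your proposal is correct and follows essentially the same strategy as the paper: a dyadic annular decomposition of the input, global $L^2$-boundedness of $(s')^{k/2}\nabla^{(k)}e^{s'\Delta}$ on the near-diagonal, the mismatch estimates (Proposition~\ref{p:lpregcore}, Corollary~\ref{c:lpregcov} and Remark~\ref{r:lpregcutoff}) on the off-diagonal, and the crucial observation that $s' \leq s$ guarantees $\max\{2^{\ell},2^{m}\} \gtrsim 2^{-k_s} \simeq s^{1/2} \geq (s')^{1/2}$ so the threshold for localized parabolic regularity is always met.

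The one genuine difference is organizational. The paper proves the scalar and tensor $LE_s^\ast$-bounds directly (noting the tensor case is ``an almost identical argument''), and then obtains the scalar $LE_s$-bound by a duality pairing against $LE_s^\ast$, which specifically invokes the \emph{tensor} $LE_s^\ast$-estimate since $\langle\calP_{s'}v,\bsw\rangle = \langle v, e^{s'\Delta}(s')^{k/2}\nabla^{(k)}\cdot\bsw\rangle$. You instead prove all four scalar estimates directly (including the $\ell^\infty$-type $LE_s$ norm, which the off-diagonal decay handles without trouble) and then obtain all four tensor estimates by duality. Both orderings are valid; your route has the slight advantage of avoiding any worry about circularity between the scalar $LE_s$ and tensor $LE_s^\ast$ steps, at the cost of running the dyadic argument once more by hand for the sup-type norms. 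The paper's route saves some repetition by using duality earlier but must then be careful to prove the tensor $LE_s^\ast$-bound before invoking it. Either way the work and the key ingredients are the same.
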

\begin{proof}
We present the proof in the scalar case and note that the tensor case follows by an almost identical argument. Also, as usual, the estimates for $e^{s(\Delta+\rho^2)}$ follow from the corresponding ones for $e^{s\Delta}$ since $e^{s\rho^2}$ is bounded when $s \leq s_0$. To simplify the notation we write $\calP_{s'}:=(s')^{\frac{k}{2}}\nabla^{(k)}e^{s'\Delta}$. Starting with the estimate in $LE_s^\ast$, first we show that
\begin{align*}
\begin{split}
\sum_{\ell\geq-k_s}2^{\frac{\ell}{2}}\|\phi_\ell\calP_{s'}v\|_{L^2}\lesssim \|v\|_{LE_s^\ast}.
\end{split}
\end{align*}
Here $\phi_\ell$ is a bump function as in \eqref{eq:phidef}. We also let $\phi_{\leq\ell-10}$ be a similar bump function supported in $\{r\leq2^{\ell-10}\}$, $\phi_{\geq\ell+10}=1-\phi_{\leq\ell+10}$, and $\tilphi_\ell=1-\phi_{\leq\ell-10}-\phi_{\geq\ell+10}$. Note that since $-k_{s'}\leq -k_s$ our localized parabolic regularity estimates can be applied to $\calP_{s'}$ when $\ell\geq -k_s$. It follows that the left-hand side above is bounded by
\begin{align*}
\begin{split}
&\sum_{\ell\geq-k_s}2^{\frac{\ell}{2}}\|\phi_\ell\calP_{s'}\phi_{\leq\ell-10}v \|_{L^2}+\sum_{\ell\geq-k_s}2^{\frac{\ell}{2}}\|\phi_\ell\calP_{s'}\tilphi_{\ell}v \|_{L^2}+\sum_{\ell\geq-k_s}2^{\frac{\ell}{2}}\|\phi_\ell\calP_{s'}\phi_{\geq\ell+10}v \|_{L^2}\\
&=:I+II+III.
\end{split}
\end{align*}
By localized parabolic regularity
\begin{align*}
\begin{split}
I &\lesssim \sum_{\ell\geq-k_{s}}2^{-N\ell+\frac{\ell}{2}}(s')^{\frac{N}{2}}\|v\|_{L^2(\bbR\times A_{\leq-k_s})}+\sum_{\ell\geq-k_s}\sum_{m=-k_s}^{\ell-10}2^{-N\ell+\frac{\ell}{2}}(s')^{\frac{N}{2}}\|\phi_mv\|_{L^2}\\
&\lesssim s^{\frac{1}{4}}\|v\|_{L^2(\bbR\times A_{\leq-k_s})}+\sum_{m\geq-k_s}\|\phi_mv\|_{L^2}\sum_{\ell\geq m+10}2^{-N\ell+\frac{\ell}{2}}(s')^{\frac{N}{2}}\\
&\lesssim s^{\frac{1}{4}}\|v\|_{L^2(\bbR\times A_{\leq-k_s})}+(s')^{\frac{N}{2}}s^{-\frac{N}{2}}\sum_{m\geq-k_s}2^{\frac{m}{2}}\|\phi_mv\|_{L^2} \\
&\lesssim \|v\|_{LE_s^\ast},
\end{split}
\end{align*}
where we have used the fact that $s's^{-1}\leq 1$. For $II$ we simply drop $\phi_\ell \calP_{s'}$, as this is a bounded operator on $L^2$, to get
\begin{align*}
\begin{split}
II\lesssim \sum_{\ell\geq -k_s}\|\tilphi_\ell v\|_{L^2}\lesssim \|v\|_{LE_s^\ast}.
\end{split}
\end{align*}
Finally, for $III$, using localized parabolic regularity (or simply by dropping $\phi_\ell\calP_{s'}$) we have
\begin{align*}
\begin{split}
III\lesssim& \sum_{\ell\geq-k_s}\sum_{m\geq \ell+10}2^{-mN}(s')^{\frac{N}{2}}2^{\frac{\ell}{2}}\|\phi_mv\|_{L^2}\\
=& \sum_{m\geq -k_s+10}\sum_{\ell=-k_s}^{m-10}2^{-mN}(s')^{\frac{N}{2}}2^{\frac{\ell}{2}}\|\phi_mv\|_{L^2}\\
\lesssim&\sum_{m\geq-k_s+10}2^{\frac{m}{2}}\|\phi_mv\|_{L^2}\lesssim \|v\|_{LE_s^\ast}.
\end{split}
\end{align*}
To complete the proof we still need to show that
\begin{align*}
\begin{split}
s^{\frac{1}{4}}\|\calP_{s'}v\|_{L^2(\bbR\times A_{\leq-k_s})}\lesssim \|v\|_{LE_s^\ast}.
\end{split}
\end{align*}
For this we estimate the left-hand side by
\begin{align*}
\begin{split}
s^{\frac{1}{4}}\|\calP_{s'}v\|_{L^2(\bbR\times A_{\leq-k_s})} \lesssim s^{\frac{1}{4}}\|\phi_{\leq-k_s}\calP_{s'}\phi_{\leq-k_s+10}v\|_{L^2}+s^{\frac{1}{4}}\|\phi_{\leq-k_s}\calP_{s'}\phi_{\geq-k_s+10}v\|_{L^2}.
\end{split}
\end{align*}
The first term on the right above is bounded by
\begin{align*}
\begin{split}
s^{\frac{1}{4}}\|v\|_{L^2(\bbR\times A_{\leq-k_s+10})}\lesssim \|v\|_{LE_s^\ast}.
\end{split}
\end{align*}
Using the boundedness of $\phi_{\leq-k_s}\calP_{s'}$ in $L^2$ the second term is bounded by
\begin{align*}
\begin{split}
s^{\frac{1}{4}}\sum_{m\geq-k_s+10}\|\phi_mv\|_{L^2}\lesssim \sum_{m\geq-k_s+10}2^{\frac{m}{2}}\|\phi_mv\|_{L^2}\lesssim \|v\|_{LE^\ast_s}.
\end{split}
\end{align*}
This completes the estimate in $LE_s^\ast$ and the estimate in $LE_s$ follows by duality:
\begin{align*}
\begin{split}
\|\calP_{s'}v\|_{LE_s}&\lesssim\sup_{\|\bsw\|_{LE_s^\ast}=1}|\angles{\calP_{s'}v}{\bsw}_{t,x}|=\sup_{\|\bsw\|_{LE_s^\ast}=1}|\angles{v}{e^{s'\Delta}(s')^{\frac{k}{2}}\nabla^{(k)}\cdot\bsw}_{t,x}|\\
&\leq \sup_{\|\bsw\|_{LE_s^\ast}=1}\|v\|_{LE_s}\|e^{s'\Delta}(s')^{\frac{k}{2}}\nabla^{(k)}\cdot\bsw\|_{LE_s^\ast}\lesssim \|v\|_{LE_s}.
\end{split}
\end{align*} 
For the estimate in  $LE_\low^\ast$ we first show that
\begin{align*}
\begin{split}
\sum_{\ell\geq 0}2^{\frac{3\ell}{2}}\|\phi_\ell\calP_sv\|_{L^2}\lesssim \|v\|_{LE_\low^\ast},
\end{split}
\end{align*}
by writing $v=\phi_{\leq\ell-10}v+\tilphi_\ell v+\phi_{\geq \ell+10}v$ as above. First
\begin{align*}
\begin{split}
\sum_{\ell\geq0}2^{\frac{3\ell}{2}}\|\phi_\ell\calP_s\phi_{\leq\ell-10}v\|_{L^2} &\leq \sum_{\ell\geq0}2^{\frac{3\ell}{2}-N\ell}s^{\frac{N}{2}}\|v\|_{L^2(\bbR\times A_{\leq0})}\\
&\quad + \sum_{\ell\geq10}\sum_{m=0}^{\ell-10}2^{\frac{3\ell}{2}-N\ell}s^{\frac{N}{2}}\|\phi_mv\|_{L^2}\\
&\lesssim \|v\|_{L^2(\bbR\times A_{\leq0})}+\sum_{m\geq0}2^{\frac{3m}{2}}\|\phi_mv\|_{L^2}\lesssim \|v\|_{LE_\low^\ast}.
\end{split}
\end{align*}
The contribution of $\phi_\ell \calP_s\tilphi_\ell v$ can be bounded as in the case of $LE_s^\ast$ by simply dropping $\phi_\ell\calP_s$ which is bounded in $L^2$. Next,
\begin{align*}
\begin{split}
\sum_{\ell\geq0}2^{\frac{3\ell}{2}}\|\phi_\ell\calP_s\phi_{\geq \ell+10}v\|_{L^2} &\lesssim \sum_{\ell\geq0}\sum_{m\geq\ell+10}2^{\frac{3\ell}{2}}2^{-mN}s^{\frac{N}{2}}\|\phi_mv\|_{L^2} \\
&\lesssim \sum_{m\geq10}2^{-Nm}s^{\frac{N}{2}}\|\phi_mv\|_{L^2}\sum_{\ell=0}^{m-10}2^{\frac{3\ell}{2}}\lesssim\|v\|_{LE_\low^\ast}.
\end{split}
\end{align*}
Finally to bound $\|\calP_sv\|_{L^2(\bbR\times A_{\leq0})}$ by $\|v\|_{LE_\low^\ast}$ we note that
\begin{align*}
\begin{split}
\|\calP_sv\|_{L^2(\bbR\times A_{\leq0})} &\lesssim \|\phi_{\leq0}\calP_s\phi_{\leq0}v\|_{L^2}+\|\phi_{\leq0}\calP_s\phi_{\geq0}v\|_{L^2}\\
&\lesssim \|\phi_{\leq0}v\|_{L^2}+\sum_{m\geq0}2^{-\frac{3m}{2}}2^{\frac{3m}{2}}\|\phi_mv\|_{L^2} \lesssim \|v\|_{LE_\low^\ast}.
\end{split}
\end{align*}
This completes the estimate for $LE_\low^\ast$ and the estimate for $LE_\low$ follows by duality as above. The proofs for the other local smoothing spaces are analogous.
\end{proof}

The next lemma is used in passing from $LE^\ast$ to $LE$ by exploiting the decay of the coefficients of the Schr\"odinger operator. 
\begin{lem}\label{lem:multLEs1}
Let $z$ be a scalar or tensor field, which is possibly time-dependent and complex-valued. For any $0 < s', s \lesssim 1$, we have 
\begin{align}
 \| z v\|_{LE_s^\ast} &\lesssim \Bigl( \|z\|_{L^\infty_{t,x}}+ \sum_{\ell\geq0}\|rz\|_{L^\infty(\bbR\times A_\ell)} \Bigr) \|v\|_{LE_{s'}}, \label{eq:z_LEstar_LE} \\
 \|zv\|_{LE_s^\ast} &\lesssim \Bigl( \|z\|_{L^\infty_{t,x}}+\sum_{\ell\geq0}\|r^2z\|_{L^\infty(\bbR\times A_\ell)} \Bigr) \|v\|_{LE_\low}. \label{eq:z_LEstar_LElow}
\end{align}
Moreover, for any $0 < s \lesssim 1$ it holds that
\begin{align}
\|z v\|_{LE_\low^\ast} &\lesssim \Bigl( \|z\|_{L^\infty_{t,x}}+ \sum_{\ell\geq0}\|r^2z\|_{L^\infty(\bbR\times A_\ell)} \Bigr) \|v\|_{LE_{s}}, \label{eq:z_LEstarlow_LE} \\
\|z v\|_{LE_\low^\ast} &\lesssim \Bigl( \|z\|_{L^\infty_{t,x}}+\sum_{\ell\geq0}\|r^3z\|_{L^\infty(\bbR\times A_\ell)} \Bigr) \|v\|_{LE_\low}. \label{eq:z_LEstarlow_LElow}
\end{align}
Analogously, for any $0 < s', s \lesssim 1$ we have 
\begin{align}
\| z v\|_{\tilLE_s^\ast} &\lesssim \|\jap{r}^{3+2\nsigma}z\|_{L^\infty_{t,x}}\|v\|_{\tilLE_{s'}},\label{eq:z_tilLEstar_LE}\\
\|zv\|_{\tilLE_s^\ast} &\lesssim  \|\jap{r}^{3+2\nsigma}z\|_{L^\infty_{t,x}}\|v\|_{\tilLE_\low},\label{eq:z_tilLEstar_LElow}\\
\| z v\|_{\tilLE_\low^\ast} &\lesssim \|\jap{r}^{3+2\nsigma}z\|_{L^\infty_{t,x}}\|v\|_{\tilLE_{s}},\label{eq:z_tilLEstarlow_LE}\\
\|zv\|_{\tilLE_\low^\ast} &\lesssim  \|\jap{r}^{3+2\nsigma}z\|_{L^\infty_{t,x}}\|v\|_{\tilLE_\low}.\label{eq:z_tilLEstarlow_LElow}
\end{align}
At the level of the spaces $\tilLE_{0, s}^\ast$ and $\tilLE_{0, \low}^\ast$ analogous estimates hold. 
\end{lem}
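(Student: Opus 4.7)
The proof is a direct computation based on dyadic spatial decomposition and H\"older's inequality. The unifying principle is that each of the estimates \eqref{eq:z_LEstar_LE}--\eqref{eq:z_tilLEstarlow_LElow} quantifies a weight mismatch: the $LE^{\ast}$-norm on the left-hand side carries positive powers of $r$ on the outer annuli, while the $LE$-norm on the right-hand side carries negative powers of $r$; the decay of $z$ bridges this gap. Roughly, one factor of $r$-decay converts $r^{\frac{1}{2}}$ into $r^{-\frac{1}{2}}$, a second factor converts $r^{\frac{3}{2}}$ into $r^{-\frac{1}{2}}$ (and likewise $r^{\frac{1}{2}}$ into $r^{-\frac{3}{2}}$), and a third factor handles the $r^{\frac{3}{2}}\leftrightarrow r^{-\frac{3}{2}}$ exchange; in addition, on the exterior the $LE^{\ast}$-norm involves a summation over annuli, so one further factor of $r$ is needed for $\ell^{1}$-summability against the dyadic supremum of the $LE$-norm.

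For the prototype estimate \eqref{eq:z_LEstar_LE}, I would split $\|zv\|_{LE_s^{\ast}}$ according to the three regions in the definition of $LE_s^{\ast}$:
\begin{enumerate}[(i)]
\item the inner ball $A_{\leq -k_s}$, where I use $\|z\|_{L^{\infty}_{t,x}}$ and exchange the $s^{\frac{1}{4}}$ weight for an $s^{-\frac{1}{4}}$ weight against $\|v\|_{LE_{s'}}$;
\item the transitional annuli $-k_s\leq \ell<0$, where I still use $\|z\|_{L^{\infty}_{t,x}}$ and exploit $r\leq 1$ to dominate $r^{\frac{1}{2}}$ by $2^{\ell}\cdot r^{-\frac{1}{2}}$, so that the sum over $\ell<0$ is controlled geometrically by $\|v\|_{LE_{s'}}$;
\item the exterior annuli $\ell\geq 0$, where I write $r^{\frac{1}{2}}|zv|\leq (r|z|)\cdot r^{-\frac{1}{2}}|v|$ pointwise and apply $\|rz\|_{L^{\infty}(\bbR\times A_{\ell})}$ on each annulus, summing against $\sup_{\ell}\|r^{-\frac{1}{2}}v\|_{L^{2}(\bbR\times A_{\ell})}\leq\|v\|_{LE_{s'}}$.
\end{enumerate}

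The one minor technical point is that the thresholds $-k_s$ and $-k_{s'}$ need not coincide. When $s'<s$ one encounters annuli $A_{\ell}$ with $-k_s\leq\ell<-k_{s'}$ that lie inside the inner ball of $LE_{s'}$. On such annuli I bound $\|v\|_{L^{2}(A_{\ell})}\leq (s')^{\frac{1}{4}}\|v\|_{LE_{s'}}$, noting that $s^{\frac{1}{4}}(s')^{\frac{1}{4}}\lesssim 1$ since $s,s'\lesssim 1$, and that the sum $\sum_{\ell=-k_{s}}^{-k_{s'}-1}2^{\ell/2}\lesssim (s')^{-\frac{1}{4}}$ absorbs the $(s')^{\frac{1}{4}}$ factor. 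A symmetric argument handles $s'\geq s$. This is the only place where uniformity in $s,s'$ needs to be checked.

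The remaining estimates \eqref{eq:z_LEstar_LElow}--\eqref{eq:z_LEstarlow_LElow} follow the same three-region scheme; the only change is the bookkeeping of $r$-powers. In \eqref{eq:z_LEstar_LElow} (resp.\ \eqref{eq:z_LEstarlow_LE}) the LHS (resp.\ RHS) carries the stronger exterior weight $r^{\frac{3}{2}}$ (resp.\ $r^{-\frac{3}{2}}$), so one extra factor of $r$-decay on $z$ is needed, matching the hypothesis $\sum_{\ell\geq 0}\|r^{2}z\|_{L^{\infty}(\bbR\times A_{\ell})}$. In \eqref{eq:z_LEstarlow_LElow} both sides are at low-frequency weight, so three powers of $r$-decay are required, again matching the hypothesis. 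The $\tilLE^{\ast}$ versions \eqref{eq:z_tilLEstar_LE}--\eqref{eq:z_tilLEstarlow_LElow} are in fact easier: on the exterior $\{r\geq 1\}$ the norms collapse to weighted $L^{2}$ norms $\|\jap{r}^{\pm(\frac{3}{2}+\nsigma)}\cdot\|_{L^{2}}$, so the single pointwise bound $\|\jap{r}^{3+2\nsigma}z\|_{L^{\infty}_{t,x}}$ simultaneously supplies the $r^{3}$ decay bridging the two weights and an extra $r^{-\nsigma}$ factor that is not needed. The time-independent variants for $\tilLE_{0,s}^{\ast}$ and $\tilLE_{0,\low}^{\ast}$ are identical since all arguments are pointwise in $t$. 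The only \emph{obstacle} in all of this is the bookkeeping: isolating the transitional annuli, tracking the interplay of the two frequency scales $s,s'$, and verifying the uniformity of implicit constants.
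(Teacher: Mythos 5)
Your proposal is correct and follows essentially the same strategy as the paper's proof: a dyadic spatial decomposition, pointwise use of the decay of $z$ to bridge the weight gap annulus by annulus, and a case analysis to handle the mismatch between the thresholds $k_s$ and $k_{s'}$. One small slip to flag in the ``minor technical point'': you write ``when $s'<s$'' but the scenario you then describe --- annuli $A_\ell$ with $-k_s\leq\ell<-k_{s'}$ lying inside the inner ball $A_{\leq-k_{s'}}$ of $LE_{s'}$ --- occurs when $s<s'$ (so that $k_s>k_{s'}$, hence $-k_s<-k_{s'}$), and the intermediate bound $\sum_{\ell=-k_s}^{-k_{s'}-1}2^{\ell/2}\lesssim(s')^{-1/4}$ should read $\lesssim(s')^{1/4}$ since the sum is dominated by its largest term; both corrected factors still combine to give a bound $\lesssim 1$ uniformly in $s,s'\lesssim 1$, so the conclusion is unaffected.
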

\begin{proof}
We begin with the proof of~\eqref{eq:z_LEstar_LE}. If $s'\leq s\lesssim 1$, we have 
\begin{align*}
\begin{split}
&s^{\frac{1}{4}}\|z v\|_{L^2(\bbR\times A_{\leq-k_s})}\\
&\lesssim (ss')^{\frac{1}{4}}s'^{-\frac{1}{4}}\|z\|_{L^\infty_{t,x}}\|v\|_{L^2(\bbR\times A_{\leq-k_{s'}})}+s^{\frac{1}{4}}\sum_{\ell=-k_{s'}}^{-k_s}2^{\frac{\ell}{2}}\|z\|_{L^\infty_{t,x}}\|r^{-\frac{1}{2}}v\|_{L^2(\bbR\times A_\ell)}\\
&\lesssim \|z\|_{L^\infty_{t,x}}(s')^{-\frac{1}{4}}\|v\|_{L^2(\bbR\times A_{\leq-k_{s'}})}+s^{\frac{1}{2}}\|z\|_{L^\infty_{t,x}}\sup_{\ell\geq-k_{s'}}\|r^{-\frac{1}{2}}v\|_{L^2(\bbR\times A_\ell)}\\
&\lesssim \|z\|_{L^\infty_{t,x}}\|v\|_{LE_{s'}}.
\end{split}
\end{align*}
Similarly, since $-k_{s}\geq -k_{s'}$
\begin{align*}
\begin{split}
\sum_{\ell \geq -k_s}2^{\frac{\ell}{2}}\|zv\|_{L^2(\bbR\times A_\ell)} &\lesssim \Bigl( \sum_{\ell\geq-{k_s}}\|rz\|_{L^\infty(\bbR\times A_\ell)} \Big) \sup_{\ell\geq-k_{s'}}\|r^{-\frac{1}{2}}v\|_{L^2(\bbR\times A_\ell)}\\
&\lesssim \Bigl( \|z\|_{L^\infty_{t,x}}+\sum_{\ell\geq0}\|rz\|_{L^\infty(\bbR\times A_\ell)} \Bigr) \|v\|_{LE_{s'}}.
\end{split}
\end{align*}
If $s\leq s'\lesssim1$ then 
\begin{align*}
\begin{split}
\|z v\|_{LE_s^\ast} &\lesssim s^{\frac{1}{4}}\|z v\|_{L^2(\bbR\times A_{\leq-k_s})}+\sum_{\ell=-k_s}^{-k_{s'}}2^{\frac{\ell}{2}}\|z v\|_{L^2(\bbR\times A_\ell)}+\sum_{\ell\geq -k_{s'}}2^{\frac{\ell}{2}}\|z v\|_{L^2(\bbR\times A_\ell)}\\
&\lesssim \|z\|_{L^\infty_{t,x}}\|v\|_{L^2(\bbR\times A_{\leq-k_{s'}})} \Big( s^{\frac{1}{4}}+\sum_{\ell=-k_s}^{-k_{s'}}2^{\frac{\ell}{2}} \Big)\\
&\quad + \Bigl( \sum_{\ell\geq-k_{s'}}\|rz\|_{L^\infty(\bbR\times A_\ell)} \Bigr) \sup_{\ell\geq-k_{s'}}\|r^{-\frac{1}{2}}v\|_{L^2(\bbR\times A_\ell)}\\
&\lesssim \Bigl( \|z\|_{L^\infty_{t,x}}+\sum_{\ell\geq0}\|rz\|_{L^\infty(\bbR\times A_\ell)} \Bigr)\|v\|_{LE_{s'}}.
\end{split}
\end{align*}
This completes the proof of \eqref{eq:z_LEstar_LE}. For \eqref{eq:z_LEstar_LElow} note that
\begin{align*}
\begin{split}
s^{\frac{1}{4}}\|z v\|_{L^2(\bbR\times A_{\leq-k_s})}+\sum_{\ell=-k_s}^{0}2^{\frac{\ell}{2}}\|z v\|_{L^2(\bbR\times A_\ell)} &\lesssim \|z\|_{L^\infty_{t,x}}\|v\|_{L^2(\bbR\times A_{\leq \max\{0,-k_s\}})} \\
&\leq \|z\|_{L^\infty_{t,x}} \|v\|_{LE_\low},
\end{split}
\end{align*}
and
\begin{align*}
\begin{split}
\sum_{\ell\geq0}2^{\frac{\ell}{2}}\|z v\|_{L^2(\bbR\times A_\ell)} &\lesssim \Bigl( \sum_{\ell\geq0}\|r^2z\|_{L^\infty(\bbR\times A_\ell)} \Big) \sup_{\ell\geq0} \|r^{-\frac{3}{2}}v\|_{L^2(\bbR\times A_\ell)}\\
&\lesssim \Big( \sum_{\ell\geq0}\|r^2z\|_{L^\infty(\bbR\times A_\ell)} \Big) \|v\|_{LE_\low}.
\end{split}
\end{align*}
For \eqref{eq:z_LEstarlow_LE} note that
\begin{align*}
\begin{split}
\|zv\|_{L^2(\bbR\times A_{\leq0})} &\lesssim s^{\frac{1}{4}}\|z\|_{L^\infty_{t,x}}s^{-\frac{1}{4}}\|v\|_{L^2(\bbR\times A_{\leq-k_s})} \\
&\quad +\|z\|_{L^\infty_{t,x}}\sup_{\ell\geq-k_s}\|r^{-\frac{1}{2}}v\|_{L^2(\bbR\times A_\ell)}\sum_{\ell=\min\{-k_s,0\}}^0 2^{\frac{\ell}{2}} \\
&\lesssim \|z\|_{L^\infty_{t,x}}\|v\|_{LE_s},
\end{split}
\end{align*}
and
\begin{align*}
\begin{split}
\sum_{\ell\geq0}2^{\frac{3\ell}{2}}\|z v\|_{L^2(\bbR\times A_\ell)}&\lesssim \sup_{\ell\geq 0}\|r^{-\frac{1}{2}}v\|_{L^2(\bbR\times A_\ell)}\sum_{\ell\geq0}\|r^2z\|_{L^\infty(\bbR\times A_\ell)}\\
&\lesssim \|v\|_{LE_s}\sum_{\ell\geq0}\|r^2z\|_{L^\infty(\bbR\times A_\ell)}.
\end{split}
\end{align*}
Finally for \eqref{eq:z_LEstarlow_LElow} observe that
\begin{align*}
\begin{split}
\|z v\|_{L^2(\bbR\times A_{\leq0})}\leq \|z\|_{L^\infty_{t,x}}\|v\|_{L^2(\bbR\times A_{\leq0})}\leq \|z\|_{L^\infty_{t,x}}\|v\|_{LE_\low},
\end{split}
\end{align*}
and
\begin{align*}
\begin{split}
\sum_{\ell\geq0}2^{\frac{3\ell}{2}}\|z v\|_{L^2(\bbR\times A_\ell)}&\lesssim \sup_{\ell\geq0}\|r^{-\frac{3}{2}}v\|_{L^2(\bbR\times A_\ell)}\sum_{\ell\geq0}\|r^3z\|_{L^\infty(\bbR\times A_\ell)}\\
&\lesssim \|v\|_{LE_\low}\sum_{\ell\geq0}\|r^3z\|_{L^\infty(\bbR\times A_\ell)}.
\end{split}
\end{align*}
The proofs are analogous in the case of the other smoothing spaces and are left to the reader.
\end{proof}

The following is a useful corollary of Lemmas~\ref{lem:PsLEs1} and~\ref{lem:multLEs1}.

\begin{lem} \label{lem:boundedness_LE_spatial_cutoff}
 Let $\chi_{\{ r > R\}}$ denote a smooth cut-off function to the spatial region $\{ r > R \}$. Then we have uniformly for all $R \geq 1$ that 
 \begin{equation}
  \| \chi_{\{ r > R \}} u \|_{LE} \lesssim \|u\|_{LE}.
 \end{equation}
 Analogous estimates hold at the level of $\tilLE$, $\tilLE_0$, $\LE$ and their dual spaces.
\end{lem}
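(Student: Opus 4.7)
The strategy is to commute the spatial cutoff $\chi = \chi_{\{r>R\}}$ through the heat flow frequency projections. Writing
\begin{equation*}
\tilP_{\geq s}(\chi u) = \chi \tilP_{\geq s} u + [\tilP_{\geq s}, \chi] u, \qquad \tilP_s(\chi u) = \chi \tilP_s u + [\tilP_s, \chi] u,
\end{equation*}
the pointwise parts are trivially controlled: since $|\chi| \leq 1$ and both $\|\cdot\|_{LE_\low}$ and $\|\cdot\|_{LE_s}$ are built from weighted $L^2$ norms (sup/sum over dyadic annuli) of the function itself, one has the pointwise domination $\|\chi v\|_{LE_\low} \leq \|v\|_{LE_\low}$ and $\|\chi v\|_{LE_s} \leq \|v\|_{LE_s}$. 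Substituting $v = \tilP_{\geq s} u$ or $v = \tilP_s u$ and integrating against the $s$-weights in the definition of $\|\cdot\|_{LE}$, this first piece contributes at most $\|u\|_{LE}$ to $\|\chi u\|_{LE}$.

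For the commutators I would use the Duhamel formula
\begin{equation*}
[\tilP_{\geq s}, \chi] u = \int_0^s e^{(s-s')(\Delta+\rho^2)} [\Delta, \chi] \tilP_{\geq s'} u \, \ud s',
\end{equation*}
and rewrite $[\Delta,\chi] w = 2 \nabla \cdot (w \nabla \chi) - (\Delta \chi) w$ so as to place the commutator in divergence form, making Corollaries~\ref{c:pregdiv} and~\ref{c:lpregcorediv} directly applicable. The commutator $[\tilP_s, \chi] u$ is obtained by differentiating the above identity in $s$ (using $\tilP_s = -s\partial_s \tilP_{\geq s}$). The decisive structural feature is that $\nabla \chi$ and $\Delta \chi$ are supported in the dyadic annulus $\{R/2 \leq r \leq 2R\}$ with $|\nabla \chi| \lesssim R^{-1}$ and $|\Delta \chi| \lesssim R^{-1}$ uniformly in $R \geq 1$ (using $\coth r \lesssim 1$ for $r \geq 1$). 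Thus $[\Delta,\chi]$ is a first-order operator with bounded coefficients supported in a fixed dyadic annulus, and enjoys the decay profile needed to apply the mismatch estimates of Proposition~\ref{p:lpregcore}.

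To bound the resulting commutator contributions uniformly in $R \geq 1$, I would decompose $\tilP_{\geq s'} u$ into its dyadic spatial pieces $\phi_m \tilP_{\geq s'} u$. For $m$ far from $\log_2 R$, the mismatch estimates give fast off-diagonal decay for the composition of $e^{(s-s')(\Delta+\rho^2)}$ with the localized divergence-form operator $[\Delta,\chi]$; summing the geometric series in $|m - \log_2 R|$ is harmless. For $m \simeq \log_2 R$ (the on-diagonal contribution), one uses Lemma~\ref{lem:PsLEs1} to move the heat semigroup past the weighted norms, and Lemma~\ref{lem:multLEs1} with $z$ being the coefficients $\nabla\chi$ or $\Delta\chi$ — both supported in a single dyadic annulus and bounded pointwise — to absorb them into the $LE^*$-side and recover a piece of $\|u\|_{LE}$.

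The main technical obstacle will be bookkeeping: carefully organizing the dyadic spatial decomposition of $u$ together with the $s'$-integration from Duhamel, and matching the weights against the precise definitions of $\|\cdot\|_{LE_\low}$ and $\|\cdot\|_{LE_s}$, so that the dyadic and $s'$ sums close with a constant independent of $R \geq 1$. The analogous statements for $\tilLE$, $\tilLE_0$, $\LE$ and their duals follow by the same scheme, replacing Lemmas~\ref{lem:PsLEs1} and~\ref{lem:multLEs1} by their counterparts already recorded in those two lemmas.
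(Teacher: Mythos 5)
Your approach is correct in outline but is genuinely different from the one in the paper, so a comparison is in order. The paper avoids commutators entirely: it expands $u = \tilP_{\leq s}u + \tilP_{s\leq\cdot\leq s_0}u + \tilP_{\geq s_0}u$ and estimates $\tilP_s(\chi_{\{r>R\}}\cdot)$ on each piece directly. The key structural observation that makes this work — stated explicitly in the paper's proof — is that for \emph{any} $0 < s, s' \lesssim 1$ one has $\|\chi_{\{r>R\}}v\|_{LE_s} = \|\chi_{\{r>R\}}v\|_{LE_{s'}}$, since the $s$-dependent part of the $LE_s$ norm (the $s^{-\frac14}\|\cdot\|_{L^2(A_{\leq-k_s})}$ term and the frequency-dependent lower endpoint of the supremum) lives entirely in $\{r \leq 1\}$ and is killed by the cutoff when $R\geq 1$. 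This lets the paper run Schur's test in $s'$ immediately on the $\tilP_{\leq s}$ piece and only invoke the expansion $\tilP_s = -s(\Delta+\rho^2)e^{s(\Delta+\rho^2)}$ with the Leibniz rule on the remaining two pieces, with no Duhamel integral at all.

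Your decomposition $\tilP_s(\chi u) = \chi\tilP_s u + [\tilP_s,\chi]u$ buys you a completely trivial main term (pointwise domination $|\chi|\leq 1$ takes care of $\chi\tilP_s u$ and $\chi\tilP_{\geq s}u$, which neither requires the above observation nor any frequency decomposition of $u$), but shifts all the difficulty into the commutator, for which you must then control a Duhamel integral $\int_0^s e^{(s-s')(\Delta+\rho^2)}[\Delta,\chi]\tilP_{\geq s'}u\,ds'$ and its $s$-derivative. Two things you should make explicit if you pursue this route. First, $[\tilP_s,\chi]u = -s[\Delta,\chi]\tilP_{\geq s}u - s(\Delta+\rho^2)[\tilP_{\geq s},\chi]u$, so besides the Duhamel term there is a direct term $-s[\Delta,\chi]\tilP_{\geq s}u$ that must be estimated separately; both require expanding $\tilP_{\geq s'}u$ (or $\tilP_{\geq s}u$) into its frequency components anyway, so the Duhamel route does not actually avoid the trichotomy. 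Second, the observation $\|\chi_{\{r>R\}}v\|_{LE_s}=\|\chi_{\{r>R\}}v\|_{LE_{s'}}$ is still implicitly needed in your scheme, in the form that $[\Delta,\chi]w$ is supported in $A_{\lfloor\log_2 R\rfloor}\subset\{r\geq 1\}$ so that its $LE_s$ norm is $s$-independent; without this the factors $(s')^{-\frac14}$ generated when passing from $\|\tilP_{s'}u\|_{LE_{s/2}}$ to $\|\tilP_{s'}u\|_{LE_{s'}}$ in the Duhamel integral would not close. Your reference to Lemma~\ref{lem:multLEs1} is slightly off-target: that lemma passes from $LE$ to $LE^{\ast}$ using decaying weights, whereas here the commutator coefficients are merely bounded and spatially localized, and the right tool is the support localization plus the boundedness Lemma~\ref{lem:PsLEs1}. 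With those points made precise your route does close, at roughly the same level of bookkeeping as the proof of Lemma~\ref{lem:commutator_B_Ps}, which uses the identical commutator-plus-Duhamel strategy.
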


\begin{proof}
We begin by recalling that 
\begin{align*}
 \| \chi_{\{r > R\}} u \|_{LE}^2 = \int_{\frac{1}{8}}^4 \bigl\| \tilP_{\geq s} \bigl(  \chi_{\{r > R\}} u \bigr) \bigr\|_{LE_\low}^2 \, \ds + \int_0^{\frac{1}{2}} s^{-\frac{1}{2}} \bigl\| \tilP_s \bigl( \chi_{\{r > R\}} u \bigr) \bigr\|_{LE_s}^2 \, \ds.
\end{align*}
Here we only describe how to estimate the second term on the right-hand side since the low-frequency integral in the first term is easier to treat. For given $0 < s \leq \frac{1}{2}$ and arbitrary $\frac{1}{2} \leq s_0 \leq 1$, we expand 
\[
 u = \int_0^s \tilP_{s'} u \, \frac{\ud s'}{s'} + \int_s^{s_0} \tilP_{s'} u \, \frac{\ud s'}{s'} + \tilP_{\geq s_0} u = \tilP_{\leq s} u + \tilP_{s \leq \cdot \leq s_0} u + \tilP_{\geq s_0} u
\]
and correspondingly have to bound 
\begin{equation} \label{equ:boundedness_cutoff_LE_high_freq_splitting}
 \begin{aligned}
  \int_0^{\frac{1}{2}} s^{-\frac{1}{2}} \bigl\| \tilP_s \bigl( \chi_{\{r > R\}} u \bigr) \bigr\|_{LE_s}^2 \, \ds &\lesssim \int_0^{\frac{1}{2}} s^{-\frac{1}{2}} \bigl\| \tilP_s \bigl( \chi_{\{r > R\}} \tilP_{\leq s} u \bigr) \bigr\|_{LE_s}^2 \, \ds \\
  &\quad + \int_0^{\frac{1}{2}} s^{-\frac{1}{2}} \bigl\| \tilP_s \bigl( \chi_{\{r > R\}} \tilP_{s \leq \cdot \leq s_0} u \bigr) \bigr\|_{LE_s}^2 \, \ds \\
  &\quad + \int_0^{\frac{1}{2}} s^{-\frac{1}{2}} \bigl\| \tilP_s \bigl( \chi_{\{r > R\}} \tilP_{\geq s_0} u \bigr) \bigr\|_{LE_s}^2 \, \ds.
 \end{aligned}
\end{equation}
In order to estimate the first term on the right-hand side of~\eqref{equ:boundedness_cutoff_LE_high_freq_splitting}, we use that $\tilP_s$ is bounded in $LE_s$ by Lemma~\ref{lem:PsLEs1} and that for any $0 < s, s' \lesssim 1$ we have (since $R \geq 1$)
\begin{equation} \label{equ:boundedness_cutoff_LE_observation}
 \| \chi_{\{r > R\}} v \|_{LE_s} = \| \chi_{\{ r > R\}} v \|_{LE_{s'}}.
\end{equation}
Then we find that
\begin{align*}
 \int_0^{\frac{1}{2}} s^{-\frac{1}{2}} \bigl\| \tilP_s \bigl( \chi_{\{r > R\}} \tilP_{\leq s} u \bigr) \bigr\|_{LE_s}^2 \, \ds &\lesssim \int_0^{\frac{1}{2}} s^{-\frac{1}{2}} \biggl( \int_0^s \| \chi_{\{ r > R\}} \tilP_{s'} u \|_{LE_{s}} \, \frac{\ud s'}{s'} \biggr)^2 \, \ds \\
 &\lesssim \int_0^{\frac{1}{2}} s^{-\frac{1}{2}} \biggl( \int_0^s \| \chi_{\{ r > R\}} \tilP_{s'} u \|_{LE_{s'}} \, \frac{\ud s'}{s'} \biggr)^2 \, \ds \\
 &\lesssim \int_0^{\frac{1}{2}} s^{-\frac{1}{2}} \biggl( \int_0^s \| \tilP_{s'} u \|_{LE_{s'}} \, \frac{\ud s'}{s'} \biggr)^2 \, \ds \\
 &\simeq \int_0^{\frac{1}{2}} \biggl( \int_0^s \Bigl( \frac{s'}{s} \Bigr)^{\frac{1}{4}} (s')^{-\frac{1}{4}} \| \tilP_{s'} u \|_{LE_{s'}} \, \frac{\ud s'}{s'} \biggr)^2 \, \ds \\
 &\lesssim \int_0^{\frac{1}{2}} (s')^{-\frac{1}{2}} \| \tilP_{s'} u \|_{LE_{s'}}^2 \, \frac{\ud s'}{s'},
\end{align*}
where in the last step we used Schur's test with the kernel $\chi_{\{ 0 \leq s' \leq s \leq \frac{1}{2}\}} \bigl( \frac{s'}{s} \bigr)^{\frac{1}{4}}$.

For the second term on the right-hand side of~\eqref{equ:boundedness_cutoff_LE_high_freq_splitting}, we insert the definition of the projection $\tilP_s = - s (\Delta + \rho^2) e^{s(\Delta + \rho^2)}$ to deal with the singularity of the $s^{-\frac{1}{2}}$ factor and obtain by Lemma~\ref{lem:PsLEs1} that
\begin{align*}
 &\int_0^{\frac{1}{2}} s^{-\frac{1}{2}} \bigl\| \tilP_s \bigl( \chi_{\{r > R\}} \tilP_{s \leq \cdot \leq s_0} u \bigr) \bigr\|_{LE_s}^2 \, \ds \\
 &\quad \lesssim \int_0^{\frac{1}{2}} s^{-\frac{1}{2}} \biggl( \int_s^{s_0} \bigl\| \tilP_s \bigl( \chi_{\{r > R\}} \tilP_{s'} u \bigr) \bigr\|_{LE_{s}} \, \frac{\ud s'}{s'} \biggr)^2 \, \ds \\
 &\quad \lesssim \int_0^{\frac{1}{2}} \biggl( \int_s^{s_0} s^{\frac{3}{4}} \bigl\| e^{s(\Delta+\rho^2)} (\Delta + \rho^2) \bigl( \chi_{\{r > R\}} \tilP_{s'} u \bigr) \bigr\|_{LE_{s}} \, \frac{\ud s'}{s'} \biggr)^2 \, \ds \\
 &\quad \lesssim \int_0^{\frac{1}{2}} \biggl( \int_s^{s_0} s^{\frac{3}{4}} \bigl\| (\Delta + \rho^2) \bigl( \chi_{\{r > R\}} \tilP_{s'} u \bigr) \bigr\|_{LE_{s}} \, \frac{\ud s'}{s'} \biggr)^2 \, \ds.
\end{align*}
We use the product rule to expand $(\Delta + \rho^2) \bigl( \chi_{\{r > R\}} \tilP_{s'} u \bigr)$. Then we can bound all resulting terms uniformly for $\frac{1}{2} \leq s_0 \leq 1$ in terms of
\[
 \int_0^1 (s')^{-\frac{1}{2}} \| \tilP_{s'} u \|_{LE_{s'}}^2 \, \frac{\ud s'}{s'} \lesssim \int_0^{1}(s')^{-\frac{1}{2}}\|\tilP_{\frac{s'}{2}}u\|_{LE_{\frac{s'}{2}}}^2\,\dsp\lesssim \|u\|_{LE}^2,
\]
by making use of the observation~\eqref{equ:boundedness_cutoff_LE_observation}, Lemmas~\ref{lem:LE_comparison} and~\ref{lem:PsLEs1}, Schur's test, and the fact that derivatives of $\chi_{\{r > R \}}$ are bounded in $L^\infty$ uniformly for all $R \geq 1$.

Finally, we turn to the third term on the right-hand side of~\eqref{equ:boundedness_cutoff_LE_high_freq_splitting}. Inserting the definition of the projection $\tilP_s = - s (\Delta + \rho^2) e^{s(\Delta + \rho^2)}$ and then using the product rule and Lemma~\ref{lem:PsLEs1}, we find that 
\begin{align*}
 \int_0^{\frac{1}{2}} s^{-\frac{1}{2}} \bigl\| \tilP_s \bigl( \chi_{\{r > R\}} \tilP_{\geq s_0} u \bigr) \bigr\|_{LE_s}^2 \, \ds &\lesssim \int_0^{\frac{1}{2}} s^{\frac{3}{2}} \bigl\| \bigl( \Delta \chi_{\{r > R\}} \bigr) \tilP_{\geq s_0} u \bigr\|_{LE_s}^2 \, \ds \\
 &\quad + \int_0^{\frac{1}{2}} s^{\frac{3}{2}} \bigl\| \bigl( \nabla^\mu \chi_{\{r > R\}} \bigr) \nabla_\mu \tilP_{\geq s_0} u \bigr\|_{LE_s}^2 \, \ds \\
 &\quad + \int_0^{\frac{1}{2}} s^{\frac{3}{2}} \bigl\| \chi_{\{r > R\}} (\Delta + \rho^2) \tilP_{\geq s_0} u \bigr\|_{LE_s}^2 \, \ds \\
 &=: I + II + III.
\end{align*}
To estimate term $I$ we note that by radial symmetry 
\begin{align*}
 \Delta \chi_{\{r > R\}} &= R^{-2} \psi''(r/R) + (d-1) \coth(r) R^{-1} \psi'(x/R)
\end{align*}
for some smooth cut-off function $\psi \in C^\infty(\bbR)$ with $\psi(r) = 1 $ for $r \gtrsim 1$. Since $\psi'(r/R)$ and $\psi''(r/R)$ localize to the region $\{ r \simeq R \}$ and since $R \geq 1$, we have in view of the definitions of $LE_s$ and $LE_\low$ uniformly for all $R \geq 1$ that 
\[
 \bigl\| \bigl( \Delta \chi_{\{r > R\}} \bigr) \tilP_{\geq s_0} u \bigr\|_{LE_s} \lesssim \bigl\| \tilP_{\geq s_0} u \bigr\|_{LE_\low}.
\]
Then we can easily carry out the integration in $s$ for the term $I$ to obtain that
\[
 I \lesssim \bigl\| \tilP_{\geq s_0} u \bigr\|_{LE_\low}^2,
\]
which upon averaging over $\frac{1}{2} \leq s_0 \leq 1$ yields the desired bound. The term $II$ can be estimated analogously, using in addition Lemma~\ref{lem:PsLEs1}. Finally, for term~$III$ we obtain 
\begin{align*}
 \int_0^{\frac{1}{2}} s^{\frac{3}{2}} \bigl\| \chi_{\{r > R\}} (\Delta + \rho^2) \tilP_{\geq s_0} u \bigr\|_{LE_s}^2 \, \ds &= \int_0^{\frac{1}{2}} s^{\frac{3}{2}} s_0^{-2} \bigl\| \chi_{\{r > R\}} \tilP_{s_0} u \bigr\|_{LE_s}^2 \, \ds \\
 &\lesssim \int_0^{\frac{1}{2}} s^{\frac{3}{2}} s_0^{-2} \| \tilP_{s_0} u\|_{LE_{s_0}}^2 \, \ds \\
 &\lesssim s_0^{-\frac{1}{2}} \| \tilP_{s_0} u \|_{LE_{s_0}}^2,
\end{align*}
where we could easily carry out the integration in $s$ and used that $s_0 \simeq 1$. Then averaging over $\frac{1}{2} \leq s_0 \leq 1$ leads to the desired bound. The proofs of the estimates for the other smoothing spaces are similar.
\end{proof}

The following two propositions are the main results of this subsection. The key ingredients for their proofs are Lemma~\ref{lem:PsLEs1} and Lemma~\ref{lem:multLEs1}.

\begin{prop} \label{p:Hlotbound_for_V} 
Let $V$ be an arbitrary potential function which is possibly time-dependent and complex-valued. Then it holds that
\begin{equation}
 \| Vu \|_{LE^*} \lesssim \Bigl( \|V\|_{L^\infty_{t,x}} + \sum_{\ell \geq 0} \|r^3 V\|_{L^\infty(\bbR\times A_\ell)} \Bigr) \|u\|_{LE} 
\end{equation}
and 
\begin{equation}
 \| Vu \|_{\tilLE^\ast} \lesssim \| \langle r \rangle^{3+2\nsigma} V \|_{L^\infty_{t,x}} \|u\|_{\tilLE}.
\end{equation}
Analogous estimates hold at the level of the spaces $\tilLE_0$, $\LE$ and their duals.
\end{prop}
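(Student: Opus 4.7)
The plan is to decompose $\|Vu\|_{LE^{*}}^{2}$ according to the low- and high-frequency parts of Definition~\ref{d:LE}, then resolve $u$ in turn into its heat-flow frequency pieces, and finally combine the boundedness of the heat semi-group on the local smoothing spaces from Lemma~\ref{lem:PsLEs1} with the multiplier estimates of Lemma~\ref{lem:multLEs1} to trade multiplication by $V$ for a factor of $C(V) := \|V\|_{L^\infty_{t,x}} + \sum_{\ell \geq 0} \|r^{3} V\|_{L^{\infty}(\bbR \times A_{\ell})}$. Concretely, I would start from
\[
\|Vu\|_{LE^{*}}^{2} = \int_{1/8}^{4} \|\tilP_{\geq s}(Vu)\|_{LE_{\low}^{*}}^{2} \, \ud s + \int_{0}^{1/2} s^{1/2} \|\tilP_{s}(Vu)\|_{LE_{s}^{*}}^{2} \, \ud s,
\]
and for each heat time $s$ insert a continuous resolution $u = \int_{0}^{s_{0}} \tilP_{s'} u \, \frac{\ud s'}{s'} + \tilP_{\geq s_{0}} u$ with $s_{0} \simeq 1$, so the task reduces to bounding $\|\tilP_{s}(V \tilP_{s'} u)\|_{LE_{s}^{*}}$ and $\|\tilP_{s}(V \tilP_{\geq s_{0}} u)\|_{LE_{s}^{*}}$ and their $\tilP_{\geq s}$ analogues.

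For the low-frequency summand $s \in [1/8, 4]$, the idea is to use Lemma~\ref{lem:PsLEs1} to absorb the outer $\tilP_{\geq s}$ on the $LE_{\low}^{*}$-side, then invoke \eqref{eq:z_LEstarlow_LE} and \eqref{eq:z_LEstarlow_LElow} of Lemma~\ref{lem:multLEs1} to estimate $\|V\tilP_{s'} u\|_{LE_{\low}^{*}}$ and $\|V \tilP_{\geq s_{0}} u\|_{LE_{\low}^{*}}$ by $C(V) \|\tilP_{s'} u\|_{LE_{s'}}$ and $C(V) \|\tilP_{\geq s_{0}} u\|_{LE_{\low}}$ respectively. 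The $s'$-integration is then closed by a Cauchy--Schwarz argument against the weight $(s')^{-1/2} \ud s'$ appearing in $\|u\|_{LE}^{2}$, using that $\int_{0}^{s_{0}} (s')^{1/2} \, \frac{\ud s'}{s'}$ is finite; averaging $s_{0}$ over $[1/2, 1]$ returns the low-frequency part of $\|u\|_{LE}^{2}$. The high-frequency summand is handled by the same skeleton, with \eqref{eq:z_LEstar_LE} and \eqref{eq:z_LEstar_LElow} in place of their low-frequency counterparts.

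The hard part will be closing the resulting double integral in the high-frequency estimate. The naive bound $\|\tilP_{s}(V \tilP_{s'} u)\|_{LE_{s}^{*}} \lesssim C(V) \|\tilP_{s'} u\|_{LE_{s'}}$ is independent of $s$, which is just barely insufficient: the measure $(s')^{-1/2} \, \ud s'$ hidden in $\|u\|_{LE}^{2}$ is critical for a Cauchy--Schwarz closure in $s'$. To obtain the small extra gain needed to apply Schur's test, I would split the kernel according to whether $s' \leq s$ or $s' > s$. In the regime $s' \leq s$ the $LE_{s}$ and $LE_{s'}$ norms are essentially ordered in the right direction, which I plan to quantify from the definitions of $LE_{s}$ and $LE_{s'}$ directly. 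In the off-diagonal regime $s' > s$, I would commute $\tilP_{s}$ through $V$ via the spatial decomposition $V \tilP_{s'} u = \sum_{m \in \bbZ} V \phi_{m} \tilP_{s'} u$ and invoke the mismatch estimates of Proposition~\ref{p:lpregcore}, which were the key mechanism already used to prove Lemmas~\ref{lem:PsLEs1}--\ref{lem:multLEs1}. This yields polynomial decay in $\sqrt{s/s'}$, which, combined with the spatial decay of $V$, produces a kernel satisfying Schur's hypotheses with respect to the weights $s^{1/2} \ud s$ and $(s')^{-1/2} \ud s'$.

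The argument for $\|Vu\|_{\tilLE^{*}}$ is structurally identical but simpler, because the exterior spatial weight $\jap{r}^{3/2 + \nsigma}$ defining $\tilLE^{*}$ is independent of the frequency scale; the relevant multiplier bounds are \eqref{eq:z_tilLEstar_LE}--\eqref{eq:z_tilLEstarlow_LElow} of Lemma~\ref{lem:multLEs1}, and the boundedness of $\tilP_{s}$ and $\tilP_{\geq s}$ on the corresponding dual spaces is again supplied by Lemma~\ref{lem:PsLEs1}. The $\tilLE_{0}$ and $\LE$ variants (with $P_{s}$ and $P_{\geq s}$ in place of $\tilP_{s}$ and $\tilP_{\geq s}$) go through verbatim once one uses that $P_{s}$ obeys the same $L^{p}$-regularity theory as $\tilP_{s}$ on compact intervals of $s$.
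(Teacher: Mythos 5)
Your general plan mirrors the paper's intended argument: the paper simply remarks that the proof is easier than Proposition~\ref{p:Hlotbound} because $V$ carries no derivative, and omits the details; the expected execution is indeed to decompose $u$ via $\tilP_{s'}$ and $\tilP_{\geq s_0}$, invoke Lemmas~\ref{lem:PsLEs1} and~\ref{lem:multLEs1}, and close with Schur. So the overall skeleton is sound.

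However, the ``hard part'' you identify does not exist, and the remedy you propose (mismatch estimates of Proposition~\ref{p:lpregcore} for the off-diagonal regime $s' > s$) is unnecessary. After using $\|\tilP_s(V\tilP_{s'}u)\|_{LE_s^\ast} \lesssim C(V)\,\|\tilP_{s'}u\|_{LE_{s'}}$ and inserting the measures from the $LE^\ast$ and $LE$ norms, the effective kernel against $\tfrac{\ud s}{s}\,\tfrac{\ud s'}{s'}$ is not the marginal $(s'/s)^{1/4}$ of the $B$-case but rather $s^{1/4}(s')^{1/4}$: the factor $s^{1/4}$ comes from the favorable weight $s^{1/2}$ in $\|\cdot\|_{LE^\ast}^2$, and $(s')^{1/4}$ comes from absorbing the unfavorable weight $(s')^{-1/2}$ in $\|u\|_{LE}^2$ into $g(s') := (s')^{-1/4}\|\tilP_{s'}u\|_{LE_{s'}}$. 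Since $\int_0^1 s^{1/4}\,\tfrac{\ud s}{s} < \infty$ and $\int_0^1 (s')^{1/4}\,\tfrac{\ud s'}{s'} < \infty$, this kernel is manifestly Schur-summable on the full square $\{s, s' \lesssim 1\}$, with no need to split into $s' \le s$ and $s' > s$ or to invoke localized parabolic regularity. Contrast this with the $B$-case in Proposition~\ref{p:Hlotbound}, where one derivative must be peeled off and placed on either the incoming or outgoing heat kernel to convert $(s')^{-1/2}$ into a decaying factor $(s'/s)^{1/4}$ or $(s/s')^{1/4}$, and these only work one direction at a time, which is exactly why the decomposition $\tilP_{\leq s} + \tilP_{s\leq\cdot\leq s_0} + \tilP_{\geq s_0}$ and the dual forms of $B$ (divergence vs. non-divergence) are needed there. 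The absence of this obstruction for $V$ is precisely what the paper means by ``easier.'' Your outline would produce a correct proof, but your estimate of the difficulty inverts the actual structure, and the extra machinery you propose would make the proof longer and conceptually murkier than necessary.
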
 

\begin{prop} \label{p:Hlotbound} 
Let $\bsb \in \Gamma(T\Hp^d)$ be a complex-valued and possibly time-dependent vector field. Then we have
\begin{equation} \label{equ:Hlotbound_LE_level}
 \| \bsb \cdot \na u  \|_{LE^*} \lesssim \sum_{k=0}^1 \Bigl( \|\nabla^{(k)}\bsb\|_{L^\infty_{t,x}}+\sum_{\ell\geq0}\|r^3\nabla^{(k)}\bsb\|_{L^\infty(\bbR\times A_\ell)} \Bigr) \|u\|_{LE}
\end{equation}
and 
\begin{equation}
 \| \bsb \cdot \na u  \|_{\tilLE^*} \lesssim \sum_{k=0}^1 \| \langle r \rangle^{3+2\nsigma} \nabla^{(k)} \bsb \|_{L^\infty_{t,x}} \|u\|_{\tilLE}.
\end{equation}
In particular, combined with Proposition~\ref{p:Hlotbound_for_V} it follows that
\begin{align*}
 \| \bsb \cdot \na u + \na \cdot (\bsb u) \|_{LE^\ast} &\lesssim \sum_{k=0}^1 \Bigl( \|\nabla^{(k)}\bsb\|_{L^\infty_{t,x}}+\sum_{\ell\geq0}\|r^3\nabla^{(k)}\bsb\|_{L^\infty(\bbR\times A_\ell)} \Bigr) \|u\|_{LE}, \\
 \| \bsb \cdot \na u + \na \cdot (\bsb u) \|_{\tilLE^\ast} &\lesssim \sum_{k=0}^1 \| \langle r \rangle^{3+2\nsigma} \nabla^{(k)} \bsb \|_{L^\infty_{t,x}}  \|u\|_{\tilLE}.
\end{align*}
Analogous estimates hold at the level of the spaces $\tilLE_0$, $\LE$ and their duals.
\end{prop}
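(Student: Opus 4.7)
The strategy is to reduce to Proposition~\ref{p:Hlotbound_for_V} via the identity
\begin{equation*}
\bsb^\mu \nabla_\mu u = \nabla_\mu(\bsb^\mu u) - (\nabla_\mu \bsb^\mu) u,
\end{equation*}
which splits $\bsb\cdot\nabla u$ into the divergence of a vector field plus a scalar multiplication by $\nabla_\mu \bsb^\mu$. Applying Proposition~\ref{p:Hlotbound_for_V} directly to the second piece with $V=\nabla_\mu\bsb^\mu$ produces precisely the $k=1$ contribution on the right-hand side of \eqref{equ:Hlotbound_LE_level}, so it remains only to bound $\|\nabla_\mu(\bsb^\mu u)\|_{LE^*}$ by the $k=0$ part of the same expression.

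For this divergence term, I would unravel the $LE^*$ norm via Definition~\ref{d:LE} and estimate separately
\begin{equation*}
\int_{1/8}^{4}\|e^{s(\Delta+\rho^2)}\nabla_\mu(\bsb^\mu u)\|_{LE_{\low}^\ast}^2\,\ud s \quad \text{and} \quad \int_0^{1/2} s^{1/2}\|\tilP_s \nabla_\mu(\bsb^\mu u)\|_{LE_s^\ast}^2 \,\ud s.
\end{equation*}
The key point is that the tensor version of Lemma~\ref{lem:PsLEs1} allows us to pull the divergence through the heat semigroup at a gain of $s^{-1/2}$, i.e.\ $\|e^{s'(\Delta+\rho^2)}\nabla_\mu \boldsymbol{F}^\mu\|_{LE_s^\ast}\lesssim (s')^{-1/2}\|\boldsymbol{F}\|_{LE_s^\ast}$. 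After writing $\tilP_s = -s(\Delta+\rho^2)e^{(s/2)(\Delta+\rho^2)}\cdot e^{(s/2)(\Delta+\rho^2)}$ we apply this gain to the innermost factor, leaving a residual operator $s(\Delta+\rho^2)e^{(s/2)(\Delta+\rho^2)}=-2\tilP_{s/2}$ whose $LE_s^\ast\to LE_s^\ast$ boundedness again follows from Lemma~\ref{lem:PsLEs1}. We then expand the remaining $\bsb^\mu u$ via the continuous Littlewood-Paley resolution
\begin{equation*}
u=\int_0^{s_0}\tilP_{s'}u\,\tfrac{\ud s'}{s'}+\tilP_{\geq s_0}u,
\end{equation*}
and transition between multiplication by $\bsb$ and the local smoothing spaces via the spatial-weighted estimates of Lemma~\ref{lem:multLEs1}, which is where the $r^3$-weighted contribution of $\bsb$ itself enters. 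Finally, gathering the resulting double integrals in $(s,s')$, we close the bound by a Schur test of the same type used in the proof of Lemma~\ref{lem:boundedness_LE_spatial_cutoff}.

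The estimates for $\tilLE^*$, for the spatial spaces $\tilLE_0^*$ and $(\LE)^*$, and for their low-frequency counterparts all proceed by exactly the same template, since Lemmas~\ref{lem:PsLEs1}--\ref{lem:multLEs1} are stated in each of those settings and Proposition~\ref{p:Hlotbound_for_V} is available for the scalar potential piece in each case. I expect the main technical hurdle to be purely bookkeeping: carefully balancing the $s^{-1/2}$ loss incurred by moving the derivative through $\tilP_s$ against the $s^{1/2}$ weight in the $LE^*$ integral and the $s^{-1/2}$ weight in $LE$, while keeping track of the spatial weights $r^3$ required to pass from $LE$ to $LE^*$ in Lemma~\ref{lem:multLEs1}. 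All the coefficient norms that arise do so with at most one covariant derivative on $\bsb$, which matches the stated bound in~\eqref{equ:Hlotbound_LE_level}; the extra combined estimate with $\nabla\cdot(\bsb u)$ then follows simply by adding Proposition~\ref{p:Hlotbound_for_V} once more, this time applied to $(\nabla_\mu\bsb^\mu)u$, to cancel the scalar correction term.
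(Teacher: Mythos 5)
Your reduction $\bsb^\mu\nabla_\mu u = \nabla_\mu(\bsb^\mu u) - (\nabla_\mu\bsb^\mu)u$ and the treatment of the scalar correction via Proposition~\ref{p:Hlotbound_for_V} are both fine, and match the paper's handling of that piece. The gap is in the divergence term, and it is a genuine one: you propose to pull the divergence through the heat semigroup \emph{before} decomposing $u$ in frequency, obtaining
\begin{equation*}
\int_0^{1/2} s^{1/2}\|\tilP_s\nabla_\mu(\bsb^\mu u)\|_{LE_s^\ast}^2\,\ds \lesssim \int_0^{1/2} s^{-1/2}\|\bsb\,u\|_{LE_s^\ast}^2\,\ds,
\end{equation*}
and only \emph{then} expanding $u=\int_0^{s_0}\tilP_{s'}u\,\dsp + \tilP_{\geq s_0}u$. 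At that point Lemma~\ref{lem:multLEs1} gives you $\|\bsb\,\tilP_{s'}u\|_{LE_s^\ast}\lesssim C_0(\bsb)\|\tilP_{s'}u\|_{LE_{s'}}$ with \emph{no residual $s$-dependence}, so after redistributing the $s^{-1/4}$ you are left with the kernel $k(s,s')=(s'/s)^{1/4}\,\chi_{\{0<s<1/2,\,0<s'<s_0\}}$. This is not a Schur kernel: $\int_0^{s_0} k(s,s')\,\dsp \simeq (s_0/s)^{1/4}$, which blows up as $s\to 0$. The divergence in $s$ is not an artifact of sloppy bookkeeping; it reflects the fact that when $s'>s$ the loss $s^{-1/2}$ from moving the derivative off $\tilP_{s'}u$ is genuinely worse than the available gain $(s')^{-1/2}$.

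The paper's proof avoids this by deciding \emph{which form of the operator to use based on the relative sizes of $s$ and $s'$}, and it does so by splitting $u$ \emph{before} integrating by parts. Concretely, it writes $u=\tilP_{\leq s}u + \tilP_{s\leq\cdot\leq s_0}u + \tilP_{\geq s_0}u$ and uses $\nabla_\mu(\bsb^\mu\,\cdot) - (\nabla_\mu\bsb^\mu)\,\cdot$ only on the high-frequency tail $\tilP_{\leq s}u$ (so that the inner $s'$-integral is over $[0,s]$, producing the genuinely Schur-summable kernel $(s'/s)^{1/4}\chi_{\{s'\leq s\}}$), and it keeps the direct form $\bsb^\mu\nabla_\mu\tilP_{s'}u$ on the two low-frequency pieces (so that the derivative costs $(s')^{-1/2}$ with $s'\geq s$, producing the kernel $(s/s')^{1/4}\chi_{\{s\leq s'\}}$). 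You would need to import this dichotomy into your argument; without it the high-frequency integral does not converge.

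A smaller point: the factorization $\tilP_s = (-s(\Delta+\rho^2)e^{(s/2)(\Delta+\rho^2)})\circ e^{(s/2)(\Delta+\rho^2)}$ and the claim that the first factor is $-2\tilP_{s/2}$ and bounded on $LE_s^\ast$ are correct, but this step is only useful once the derivative is routed in the right direction, as above. Your appeal to the Schur test ``of the same type as Lemma~\ref{lem:boundedness_LE_spatial_cutoff}'' is appropriate in spirit, but note that in that lemma, too, the decomposition of $u$ is performed \emph{before} any integration by parts, and the kernel produced is $(\frac{s'}{s})^{1/4}\chi_{\{s'\leq s\}}$, not the two-sided variant your scheme requires.
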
 
\begin{proof} 
In what follows we provide the details of the proof of the estimate~\eqref{equ:Hlotbound_LE_level} which is formulated at the level of the spaces $LE$ and $LE^\ast$. The proofs of the other claims proceed analogously and are left to the reader. For ease of notation we introduce the constants
\begin{align}\label{eq:Cjdef}
\begin{split}
&C_0(\bsz):=\|\bsz\|_{L^\infty_{t,x}}+\sum_{\ell\geq0}\|r\bsz\|_{L^\infty(\bbR\times A_\ell)},\\
&C_1(\bsz):=\|\bsz\|_{L^\infty_{t,x}}+\sum_{\ell\geq0}\|r^2\bsz\|_{L^\infty(\bbR\times A_\ell)},\\
&C_2(\bsz):=\|\bsz\|_{L^\infty_{t,x}}+\sum_{\ell\geq0}\|r^3\bsz\|_{L^\infty(\bbR\times A_\ell)},
\end{split}
\end{align} 
for an arbitrary tensor field $\bsz$ and note that $C_0(\bsz)\leq C_1(\bsz)\leq C_2(\bsz)$. We also let
\begin{align*}
\begin{split}
B:=\bsb\cdot\nabla.
\end{split}
\end{align*}
With this notation our task is to prove the following two estimates:
\begin{align}
&\int_{0}^{\frac{1}{2}}s^{\frac{1}{2}}\|\tilP_{s}Bu\|_{LE_s^\ast}^2\,\ds\lesssim \sum_{k=0}^1C_2^2(\nabla^{(k)}\bsb)\|u\|_{LE}^2,\label{eq:Hlowbound_high}\\
&\int_{\frac{1}{8}}^{4}\|\tilP_{\geq s}Bu\|_{LE_\low^\ast}^2\,\ds\lesssim \sum_{k=0}^1C_2^2(\nabla^{(k)}\bsb)\|u\|_{LE}^2.\label{eq:Hlowbound_low}
\end{align}
Starting with \eqref{eq:Hlowbound_high} we decompose $u$ as 
\begin{align*}
\begin{split}
u=\tilP_{\leq s} u+\tilP_{\geq s} u=\tilP_{\leq s}u+\tilP_{s\leq\cdot\leq s_0}u+\tilP_{\geq s_0}u,
\end{split}
\end{align*}
with $s_0\in[\frac{3}{4},1]$ arbitrary. Then
\begin{align*}
\begin{split}
\int_{0}^{\frac{1}{2}}s^{\frac{1}{2}}\|\tilP_{s}Bu\|_{LE_s^\ast}^2\,\ds &\lesssim \int_{0}^{\frac{1}{2}}s^{\frac{1}{2}}\|\tilP_{s}B\tilP_{\leq s}u\|_{LE_s^\ast}^2\,\ds+\int_{0}^{\frac{1}{2}}s^{\frac{1}{2}}\|\tilP_{s}BP_{\geq s}u\|_{LE_s^\ast}^2\,\ds\\
&\lesssim \int_{0}^{\frac{1}{2}}s^{\frac{1}{2}}\|\tilP_{s}B\tilP_{\leq s}u\|_{LE_s^\ast}^2\,\ds\\
&\quad +\int_{\frac{3}{4}}^1\int_{0}^{\frac{1}{2}}s^{\frac{1}{2}}\|\tilP_{s}B(\tilP_{s\leq\cdot\leq s_0}u+\tilP_{\geq s_0}u)\|_{LE_s^\ast}^2\,\ds\,\frac{\ud s_0}{s_0}\\
&\lesssim \int_{0}^{\frac{1}{2}}s^{\frac{1}{2}}\|\tilP_{s}B\tilP_{\leq s}u\|_{LE_s^\ast}^2\,\ds\\
&\quad + \int_{\frac{3}{4}}^1\int_{0}^{\frac{1}{2}}s^{\frac{1}{2}}\|\tilP_{s}B \tilP_{s\leq\cdot\leq s_0}u\|_{LE_s^\ast}^2\,\ds\,\frac{\ud s_0}{s_0}\\
&\quad + \int_{\frac{3}{4}}^1\int_{0}^{\frac{1}{2}}s^{\frac{1}{2}}\|\tilP_{s}B\tilP_{\geq s_0} u\|_{LE_s^\ast}^2\,\ds\,\frac{\ud s_0}{s_0}\\
&=:I+II+III.
\end{split}
\end{align*}
To estimate $I$ let us write
\begin{align*}
\begin{split}
 B \tilP_{\leq s} u = \nabla \cdot (\bsb \tilP_{\leq s}u)-(\nabla\cdot \bsb)\tilP_{\leq s}u.
\end{split}
\end{align*}
The contributions of these terms are bounded in a similar way and here we only consider the first term in detail. By Lemmas~\ref{lem:PsLEs1} and~\ref{lem:multLEs1}
\begin{align*}
\begin{split}
\int_0^{\frac{1}{2}}s^{\frac{1}{2}}\|\tilP_s\nabla\cdot(\bsb \tilP_{\leq s}u)\|_{LE_s^\ast}^2\,\ds &\lesssim \int_0^{\frac{1}{2}}s^{-\frac{1}{2}}\|\bsb \tilP_{\leq s}u\|_{LE_s^\ast}^2\,\ds \\
&\lesssim \int_0^{\frac{1}{2}}s^{-\frac{1}{2}}\Big(\int_0^s\|\bsb \tilP_{ s'}u\|_{LE_{s}^\ast}\,\dsp\Big)^2\,\ds \\
&\leq C_0^2(\bsb)\int_0^{\frac{1}{2}}\Big(\int_0^s(\frac{s'}{s})^{\frac{1}{4}}(s')^{-\frac{1}{4}}\|\tilP_{s'}u\|_{LE_{s'}}\,\dsp\Big)^2\,\ds\\
&\lesssim C_0^2(\bsb)\|u\|_{LE}^2,
\end{split}
\end{align*}
where in the last step we used the fact that $(\frac{s'}{s})^{\frac{1}{4}}\chi_{\{s'\leq s\}}$ is a Schur kernel. Applying the same argument to the second term in the decomposition of $BP_{\leq s}u$ above we conclude that
\begin{align*}
\begin{split}
I \lesssim \bigl( C_0^2(\bsb)+C_0^2(\nabla\cdot\bsb) \bigr) \|u\|_{LE}^2.
\end{split}
\end{align*}
For $II$ we keep the original form of $B\tilP_{s\leq \cdot\leq s_0}u$, that is,
\begin{align*}
\begin{split}
B\tilP_{s\leq \cdot\leq s_0}u=\bsb\cdot\nabla \tilP_{s\leq \cdot\leq s_0}u.
\end{split}
\end{align*}
Without the $s_0$ integration in the definition of $II$, arguing as above
\begin{align*}
\begin{split}
&\int_0^{\frac{1}{2}}s^{\frac{1}{2}}\|\tilP_s(\bsb\cdot\nabla \tilP_{s\leq\cdot\leq s_0}u)\|_{LE_s^\ast}^2\,\ds\\
&\lesssim\int_0^{\frac{1}{2}}s^{\frac{1}{2}}\Big(\int_s^{s_0}\|\bsb\cdot \nabla \tilP_{s'}u\|_{LE_{s}^\ast}\,\dsp\Big)^2\,\ds\\
&\lesssim C_0^2(\bsb)\int_0^{\frac{1}{2}}\Big(\int_s^{s_0}(\frac{s}{s'})^{\frac{1}{4}}(s')^{-\frac{1}{4}}\|s'^{\frac{1}{2}}\nabla \tilP_{s'}u\|_{LE_{s'}}\,\dsp\Big)^2\,\ds\\
&\lesssim C_0^2(\bsb)\int_0^{\frac{1}{2}}\Big(\int_s^{s_0}(\frac{s}{s'})^{\frac{1}{4}}(s')^{-\frac{1}{4}}\|\tilP_{\frac{s'}{2}}u\|_{LE_{\frac{s'}{2}}}\,\dsp\Big)^2\,\ds\\
&\lesssim C_0^2(\bsb)\|u\|_{LE}^2,
\end{split}
\end{align*}
where we used Schur's test as well as the equivalence of $LE_{s'}$ and $LE_{\frac{s'}{2}}$ from Lemma~\ref{lem:LE_comparison}. Since the implicit constants are uniform in $s_0\in[\frac{3}{4},1]$, integrating this estimate in $s_0$ shows that
\begin{align*}
\begin{split}
II\lesssim C_0^2(\bsb)\|u\|_{LE}^2.
\end{split}
\end{align*}
Similarly, for $III$ 
\begin{align*}
\begin{split}
\int_0^{\frac{1}{2}} s^{\frac{1}{2}}\|\tilP_s(\bsb\cdot\nabla \tilP_{\geq s_0}u)\|_{LE_s^\ast}^2\,\ds &\lesssim \int_0^{\frac{1}{2}}s^{\frac{1}{2}}\|\bsb\cdot\nabla \tilP_{\geq s_0}u\|_{LE_s^\ast}^2\,\ds \\
&\lesssim C_1^2(\bsb) \int_0^{\frac{1}{2}}s^{\frac{1}{2}}\|\nabla \tilP_{\geq s_0}u\|_{LE_\low}^2\,\ds \\
&\lesssim C_1^2(\bsb) \int_0^{\frac{1}{2}}s^{\frac{1}{2}}\|\tilP_{\geq \frac{s_0}{2}}u\|_{LE_\low}^2\,\ds \\
&\lesssim C_1^2(\bsb) \|\tilP_{\geq{\frac{s_0}{2}}}u\|_{LE_\low}^2.
\end{split}
\end{align*}
Integration of this in $s_0$ shows
\begin{align*}
\begin{split}
III \lesssim C_1^2(\bsb)\|u\|_{LE}^2,
\end{split}
\end{align*}
completing the proof of \eqref{eq:Hlowbound_high}.

To prove \eqref{eq:Hlowbound_low}, we decompose $u$ as
\begin{align*}
\begin{split}
u=\tilP_{\leq s}u+\tilP_{\geq s}u,
\end{split}
\end{align*}
and write
\begin{align}\label{eq:lowlowdecomptemp1}
\begin{split}
\tilP_{\geq s}Bu=\tilP_{\geq s}\nabla\cdot(\bsb (\tilP_{\leq s}u +\tilP_{\geq s}u))-\tilP_{\geq s}((\nabla\cdot \bsb)(\tilP_{\leq s}u+\tilP_{\geq s}u)).
\end{split}
\end{align}
We treat only the contribution of the first term on the right-hand side above in detail. Using Lemmas~\ref{lem:PsLEs1} and~\ref{lem:multLEs1}
\begin{align*}
\begin{split}
\int_{\frac{1}{8}}^4\|\tilP_{\geq s}\nabla\cdot(\bsb \tilP_{\leq s}u)\|_{LE_\low^\ast}^2\,\ds &\lesssim \int_{\frac{1}{8}}^4\|\bsb \tilP_{\geq s}u\|_{LE_{\low}^\ast}^2\,\ds\\
&\lesssim C_2^2(\bsb)\int_{\frac{1}{8}}^4\|\tilP_{\geq s}u\|_{LE_\low}^2\,\ds\lesssim C_2^2(\bsb)\|u\|_{LE}^2.
\end{split}
\end{align*}
Similarly, we find that 
\begin{align*}
\begin{split}
\int_{\frac{1}{8}}^4\|\tilP_{\geq s}\nabla\cdot(\bsb \tilP_{\leq s}u)\|_{LE_\low^\ast}^2\,\ds&\lesssim\int_{\frac{1}{8}}^4\Big(\int_0^s\|\bsb \tilP_{\geq \frac{7s'}{8}}\tilP_{\frac{s'}{8}}u\|_{LE_\low^\ast}\,\dsp\Big)^2\,\ds\\
&\lesssim C_1^2(\bsb)\int_{\frac{1}{8}}^4\Big(\int_0^s(s')^{\frac{1}{4}}(s')^{-\frac{1}{4}}\|\tilP_{\frac{s'}{8}}u\|_{LE_{\frac{s'}{8}}}\,\dsp\Big)^2\,\ds\\
&\lesssim C_1^2(\bsb)\int_0^{\frac{1}{2}}(s')^{-\frac{1}{2}}\|\tilP_{s'}u\|_{LE_{s'}}^2\,\dsp\lesssim C_1^2(\bsb)\|u\|_{LE}^2,
\end{split}
\end{align*}
where we used Cauchy-Schwarz and the change of variables $\frac{s'}{8}\mapsto s'$. Using a similar argument for the second term on the right-hand side of \eqref{eq:lowlowdecomptemp1} completes the proof of \eqref{eq:Hlowbound_low}, and hence of the proposition.
\end{proof}


\begin{proof}[Proof of Proposition~\ref{p:Hlotbound_for_V}]
 The proof is easier than the previous proof of Proposition~\ref{p:Hlotbound} because the operator $u \mapsto Vu$ does not involve derivatives. We therefore omit the details. 
\end{proof}

We end this subsection by noting that Lemma~\ref{lem:PsLEs1} allows us to prove the duality estimate in Remark~\ref{rem:LEduality}.

\begin{cor}\label{cor:LEduality}
There exists an absolute constant $C \geq 1$ such that we have for any $u$ and~$F$ that
\begin{align*}
\begin{split}
 |\angles{u}{F}_{t,x}| \leq C\|u\|_{LE}\|F\|_{LE^\ast}.
\end{split}
\end{align*}
Analogous estimates hold at the level of the spaces $\tilLE$, $\tilLE_0$, and $\LE$.
\end{cor}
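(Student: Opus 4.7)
The plan is to use the heat-flow resolution of the identity in Lemma~\ref{l:L2res} to express $\langle u, F\rangle_{t,x}$ in a way that matches the high- and low-frequency structure of the norms $\|\cdot\|_{LE}$ and $\|\cdot\|_{LE^{\ast}}$, then estimate each resulting term by pointwise frequency-localized duality (Remark~\ref{rem:LEduality}) and Cauchy-Schwarz. By density, we may and will assume $u, F \in C^{\infty}_{0}(\bbR\times\bbH^{d})$.

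Polarizing the second identity in Lemma~\ref{l:L2res} (applied to $u \pm F$, $u \pm iF$) yields the bilinear formula, valid for any $s_{0} > 0$,
\begin{align*}
\langle u, F\rangle_{t,x} &= 4\int_{0}^{s_{0}}\langle\tilP_{s}u,\tilP_{s}F\rangle_{t,x}\,\ds + \langle\tilP_{\geq s_{0}}u,\tilP_{\geq s_{0}}F\rangle_{t,x} \\
&\quad + 2\langle s_{0}^{\frac{1}{2}}\nabla e^{s_{0}(\Delta+\rho^{2})}u, s_{0}^{\frac{1}{2}}\nabla e^{s_{0}(\Delta+\rho^{2})}F\rangle_{t,x} - 2\rho^{2}\langle s_{0}^{\frac{1}{2}}e^{s_{0}(\Delta+\rho^{2})}u, s_{0}^{\frac{1}{2}}e^{s_{0}(\Delta+\rho^{2})}F\rangle_{t,x}.
\end{align*}
Since the left-hand side is independent of $s_{0}$, I will average this identity in $s_{0}$ against $\frac{\ud s_{0}}{s_{0}}$ over $[\tfrac{1}{4},\tfrac{1}{2}]$ (normalized by $\log 2$). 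For the bulk integral, pointwise duality $|\langle\tilP_{s}u,\tilP_{s}F\rangle_{t,x}|\leq \|\tilP_{s}u\|_{LE_{s}}\|\tilP_{s}F\|_{LE_{s}^{\ast}}$ combined with the factorization $\|\tilP_{s}u\|_{LE_{s}}\|\tilP_{s}F\|_{LE_{s}^{\ast}} = (s^{-\frac{1}{4}}\|\tilP_{s}u\|_{LE_{s}})(s^{\frac{1}{4}}\|\tilP_{s}F\|_{LE_{s}^{\ast}})$ and Cauchy-Schwarz with measure $\ds$ give, using $s_{0} \leq \tfrac{1}{2}$,
\begin{equation*}
\left|4\int_{0}^{s_{0}}\langle\tilP_{s}u,\tilP_{s}F\rangle_{t,x}\,\ds\right| \leq 4\|u\|_{LE}\|F\|_{LE^{\ast}}.
\end{equation*}
The matching weights $s^{-\frac{1}{2}}$ in $\|\cdot\|_{LE}$ and $s^{\frac{1}{2}}$ in $\|\cdot\|_{LE^{\ast}}$, whose product equals $1$, are exactly what makes this step work.

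For the three boundary terms, the strategy is to use Lemma~\ref{lem:PsLEs1} together with the semigroup identity $e^{s_{0}(\Delta+\rho^{2})} = e^{\frac{s_{0}}{2}(\Delta+\rho^{2})}\tilP_{\geq s_{0}/2}$ to obtain
\begin{equation*}
\|s_{0}^{\frac{1}{2}}\nabla e^{s_{0}(\Delta+\rho^{2})}u\|_{LE_{\low}} \lesssim \|\tilP_{\geq s_{0}/2}u\|_{LE_{\low}}, \qquad \|s_{0}^{\frac{1}{2}}\nabla e^{s_{0}(\Delta+\rho^{2})}F\|_{LE_{\low}^{\ast}} \lesssim \|\tilP_{\geq s_{0}/2}F\|_{LE_{\low}^{\ast}},
\end{equation*}
and similarly for the zeroth-order boundary piece (without the $\nabla$). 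Pointwise duality $|\langle v,G\rangle_{t,x}|\leq \|v\|_{LE_{\low}}\|G\|_{LE_{\low}^{\ast}}$ then bounds each boundary term by either $\|\tilP_{\geq s_{0}/2}u\|_{LE_{\low}}\|\tilP_{\geq s_{0}/2}F\|_{LE_{\low}^{\ast}}$ or $\|\tilP_{\geq s_{0}}u\|_{LE_{\low}}\|\tilP_{\geq s_{0}}F\|_{LE_{\low}^{\ast}}$. Averaging over $s_{0} \in [\tfrac{1}{4},\tfrac{1}{2}]$ against $\frac{\ud s_{0}}{s_{0}}$—and for the $\tilP_{\geq s_{0}/2}$ pieces changing variables $s_{0} \mapsto 2s_{0}$ so the integration range becomes $[\tfrac{1}{8},\tfrac{1}{4}]\subset [\tfrac{1}{8},4]$—a final Cauchy-Schwarz bounds the result by the low-frequency parts of $\|u\|_{LE}$ and $\|F\|_{LE^{\ast}}$.

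The main, rather modest, obstacle is that each boundary term involves a \emph{single} value of $s_{0}$ and thus cannot be absorbed into the integral norm $\|u\|_{LE}$ without averaging; the averaging window $[\tfrac{1}{4},\tfrac{1}{2}]$ is chosen precisely so that both $s_{0}$ and $\tfrac{s_{0}}{2}$ lie in the low-frequency range $[\tfrac{1}{8},4]$ appearing in the definition of $\|\cdot\|_{LE}$. The analogous statements for $\tilLE$, $\tilLE_{0}$, and $\LE$ follow from the identical scheme using the corresponding frequency resolutions and the analogues of Lemma~\ref{lem:PsLEs1} stated there.
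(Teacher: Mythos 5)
Your proposal is correct and follows essentially the same route as the paper: both derive a bilinear heat-flow resolution of $\langle u,F\rangle_{t,x}$ with a bulk integral $\int_0^{s_0}\langle\tilP_s u,\tilP_s F\rangle\,\ds$ plus boundary terms at $s_0$, average in $s_0$ over $[\tfrac14,\tfrac12]$, and finish with pointwise duality plus Cauchy--Schwarz and Lemma~\ref{lem:PsLEs1}. The only cosmetic difference is that you polarize the second identity of Lemma~\ref{l:L2res} (leaving the boundary term in the form $\|\nabla\cdot\|^2-\rho^2\|\cdot\|^2$), while the paper derives the same bilinear identity directly, keeping $(\Delta+\rho^2)$ on one factor of the boundary term; these agree after a single integration by parts.
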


\begin{proof}
We only prove the claim for the spaces $LE$ and $LE^\ast$ and observe that the other cases can be treated with analogous arguments. From the relation
\begin{align*}
\begin{split}
 s(\Delta+\rho^2)^2e^{2s(\Delta+\rho^2)}=\frac{1}{2}\frac{\ud}{\ud s} (s(\Delta+\rho^2) e^{2s(\Delta+\rho^2)})-\frac{1}{4}\frac{\ud}{\ud s}e^{2s(\Delta+\rho^2)}
\end{split}
\end{align*} 
we obtain that
\begin{align*}
\begin{split}
 u=4\int_0^{\infty}s^2(\Delta+\rho^2)e^{2s(\Delta+\rho^2)}u\,\ds 
\end{split}
\end{align*}
and that for any $s_0 > 0$,
\begin{align*}
\begin{split}
 e^{2s_0(\Delta+\rho^2)}u-2s_0(\Delta+\rho^2)e^{2s_0(\Delta+\rho^2)}u=4\int_{s_0}^\infty  s^2(\Delta+\rho^2)e^{2s(\Delta+\rho^2)}u\,\ds.
\end{split}
\end{align*}
It follows that for any $s_0>0$
\begin{align*}
\begin{split}
 \angles{u}{F}_{t,x} &= 4\int_0^{s_0}\angles{\tilP_su}{\tilP_sF}_{t,x}\,\ds \\
 &\quad + \angles{\tilP_{\geq s_0}u}{\tilP_{\geq s_0}F}_{t,x}-2\angles{s_0(\Delta+\rho^2)\tilP_{\geq s_0}u}{\tilP_{\geq s_0}F}_{t,x}.
\end{split}
\end{align*}
Integrating in $s_0$ we get
\begin{align*}
\begin{split}
  |\angles{u}{F}_{t,x}|&\lesssim \int_\frac{1}{4}^{\frac{1}{2}}\int_0^{s_0}|\angles{\tilP_su}{\tilP_sF}_{t,x}|\,\ds\,\frac{\ud s_0}{s_0}\\
  &\quad +\int_{\frac{1}{4}}^{\frac{1}{2}}|\angles{\tilP_{\geq s_0}u}{\tilP_{\geq s_0}F}_{t,x}|\,\frac{\ud s_0}{s_0}\\
  &\quad+\int_{\frac{1}{4}}^{\frac{1}{2}}|\angles{s_0(\Delta+\rho^2)\tilP_{\geq s_0}u}{\tilP_{\geq s_0}F}_{t,x}|\,\frac{\ud s_0}{s_0}\\
  &\lesssim \|u\|_{LE}\|F\|_{LE^\ast}\\
  &\quad+\Big(\int_{\frac{1}{4}}^{\frac{1}{2}}\|s_0(\Delta+\rho^2)\tilP_{\geq s_0}u\|_{LE_\low}^2\,\frac{\ud s_0}{s_0}\Big)^{\frac{1}{2}}\Big(\int_{\frac{1}{4}}^{\frac{1}{2}}\|\tilP_{\geq s_0}F\|_{LE_\low}^2\,\frac{\ud s_0}{s_0}\Big)^{\frac{1}{2}}\\
  &\lesssim \|u\|_{LE}\|F\|_{LE^\ast}
\end{split}
\end{align*}
where in the last step we used Lemma~\ref{lem:PsLEs1}.
\end{proof}


\begin{rem}\label{rem:completedualitytilLE}
Using Lemma~\ref{lem:PsLEs1} and \ref{lem:multLEs1} one can prove that $\tilLE$ and $\tilLE^\ast$ are in fact dual norms in the sense that
\begin{align*}
\begin{split}
\|u\|_{\tilLE}\simeq \sup_{\|F\|_{\tilLE^\ast}=1} |\angles{u}{F}_{t,x}|,\qquad \|F\|_{\tilLE^\ast}\simeq\sup_{\|u\|_{\tilLE}=1}|\angles{u}{F}_{t,x}|.
\end{split}
\end{align*}
The analogous statements hold for $\tilLE_0$, $\tilLE_0^\ast$ and $\LE$, $\LE^\ast$. The proofs of these statements use similar techniques as the ones already used in this subsection and are left to the reader.
\end{rem}



\subsection{Embeddings of the local smoothing spaces} \label{ss:le-embed}

In this subsection we collect several simple embedding properties of the local smoothing spaces defined in Section~\ref{s:prelim}.

\begin{lem} \label{l:wL2LE} 
The following estimates hold: 
\begin{equation} \label{eq:wL2LE}
 \begin{aligned}
  \|u\|_{L^2(\bbR\times A_{\leq0})}+\sup_{\ell\geq0}\| r^{-\frac{3}{2}} u \|_{L^2(\bbR\times A_\ell)} &\lesssim \| u \|_{LE}, \\
  \| \langle r \rangle^{-\frac{3}{2}-\nsigma} u \|_{L^2_{t,x}} &\lesssim \| u \|_{\tilLE}, \\
  \| \langle r \rangle^{-\frac{3}{2}-\nsigma} u \|_{L^2} &\lesssim \| u \|_{\tilLE_0}, \\
  \| \langle r \rangle^{-\frac{3}{2}-\nsigma} \nabla u \|_{L^2} &\lesssim \| u \|_{\tilLE_0^1}.
 \end{aligned}
\end{equation}
\end{lem}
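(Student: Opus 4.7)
My plan is to derive all four inequalities by the same scheme: insert the continuous heat-flow resolution $u = \int_{0}^{\frac{1}{2}} \tilP_{s} u \, \ds + \tilP_{\geq \frac{1}{2}} u$ into the left-hand side and estimate the low-frequency bulk $\tilP_{\geq \frac{1}{2}} u$ and the high-frequency integral $\int_{0}^{\frac{1}{2}} \tilP_{s} u \, \ds$ separately. The point is that each LHS norm in \eqref{eq:wL2LE} is exactly the ``low-frequency'' component of the matching space on the right: $\|u\|_{LE_{\low}}$, $\|u\|_{\tilLE_{\low}}$, $\|u\|_{\tilLE_{0,\low}}$, and $\|\nabla u\|_{\tilLE_{0,\low}}$, so it suffices to dominate these low-frequency norms by the corresponding full $LE$-type norms.

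For the low-frequency bulk I exploit the semigroup identity $\tilP_{\geq \frac{1}{2}} u = e^{(\frac{1}{2} - s_{1})(\Delta + \rho^{2})} \tilP_{\geq s_{1}} u$, valid for every $s_{1} \in [\frac{1}{8}, \frac{1}{2}]$. The $k=0$ case of Lemma~\ref{lem:PsLEs1} (in its $LE_{\low}$, $\tilLE_{\low}$ and $\tilLE_{0,\low}$ versions) yields uniform boundedness of $e^{s'(\Delta+\rho^{2})}$ on the relevant low-frequency space for $s' \in [0, \frac{1}{2}]$, so averaging over $s_{1}$ followed by Cauchy-Schwarz reduces the low-frequency contribution to an $L^{2}(\ud s_{1})$ average of $\tilP_{\geq s_{1}} u$ in the appropriate low-frequency norm, which is controlled by the matching $LE$-type norm by definition. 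For the fourth estimate I additionally use the $k=1$ case of the same lemma applied to $\nabla e^{\frac{1}{4}(\Delta+\rho^{2})} \tilP_{\geq \frac{1}{4}} u$, which brings the gradient through the semigroup at the cost of a harmless constant.

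For the high-frequency integral the key input is an elementary uniform comparison $\|v\|_{LE_{\low}} \aleq \|v\|_{LE_{s}}$ (and its $\tilLE$, $\tilLE_{0}$ analogues) for $s \leq 1$. Granting this, Cauchy-Schwarz in $s$ with respect to the measure $\ds = \frac{\ud s}{s}$ produces
\[
\Bigl\| \int_{0}^{\frac{1}{2}} \tilP_{s} u \, \ds \Bigr\|_{LE_{\low}} \leq \int_{0}^{\frac{1}{2}} \|\tilP_{s} u\|_{LE_{s}} \, \ds \leq \Bigl( \int_{0}^{\frac{1}{2}} s^{-\frac{1}{2}} \|\tilP_{s} u\|_{LE_{s}}^{2} \, \ds \Bigr)^{\frac{1}{2}} \Bigl( \int_{0}^{\frac{1}{2}} s^{\frac{1}{2}} \, \ds \Bigr)^{\frac{1}{2}} \aleq \|u\|_{LE},
\]
and the same computation handles the second and third bounds. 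For the fourth bound I write $\tilP_{s} u = 2 e^{\frac{s}{2}(\Delta+\rho^{2})} \tilP_{\frac{s}{2}} u$ and invoke the spatial $k=1$ case of Lemma~\ref{lem:PsLEs1} to obtain $\|\nabla \tilP_{s} u\|_{\tilLE_{0,\low}} \aleq s^{-\frac{1}{2}} \|\tilP_{\frac{s}{2}} u\|_{\tilLE_{0, \frac{s}{2}}}$; the extra $s^{-\frac{1}{2}}$ is then exactly absorbed by Cauchy-Schwarz against the heavier weight $s^{-\frac{3}{2}}$ that is built into $\|u\|_{\tilLE_{0}^{1}}$ (the $\kappa = 1$ case of Definition~\ref{def:LE0}).

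The only step that requires genuine care is the comparison $\|v\|_{LE_{\low}} \aleq \|v\|_{LE_{s}}$ uniformly in $s \leq 1$: at large $r$ the weight $r^{-\frac{3}{2}}$ in $LE_{\low}$ beats $r^{-\frac{1}{2}}$ in $LE_{s}$ by a factor $2^{-\ell}$, so the $\ell$-sum defining $LE_{\low}$ converges against the $\ell$-supremum in $LE_{s}$; at small $r$ one splits $A_{\leq 0} = A_{\leq -k_{s}} \cup \bigcup_{\ell = -k_{s}}^{-1} A_{\ell}$ and uses the $s^{-\frac{1}{4}}$ and $r^{-\frac{1}{2}}$ weights in $LE_{s}$ to produce a bound that is uniform in $s$. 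Aside from this bookkeeping everything reduces to Cauchy-Schwarz and heat-flow boundedness from Lemma~\ref{lem:PsLEs1}, so I do not anticipate any further obstacle.
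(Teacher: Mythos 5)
Your proposal is correct and follows essentially the same route as the paper's own argument: decompose $u$ (or $\nabla u$) by the heat-flow resolution into a high-frequency integral and a low-frequency bulk, treat the high-frequency integral via the uniform comparison $\|v\|_{LE_{\low}} \lesssim \|v\|_{LE_s}$ (and its $\tilLE$, $\tilLE_0$ analogues) followed by Cauchy--Schwarz in the heat-time variable, and treat the low-frequency bulk via the heat-flow boundedness of Lemma~\ref{lem:PsLEs1} combined with an averaging-in-$s$ trick so that a pointwise-in-$s$ quantity can be dominated by the $s$-integrated $LE$-type norm. The only cosmetic difference is where the averaging is performed: you fix the decomposition point at $s_0 = \frac{1}{2}$ and average the semigroup identity over the parameter $s_1$, whereas the paper instead averages over the decomposition point $s_0 \in [\frac{1}{2},1]$; both give the same result by essentially the same computation.
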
 


\begin{lem} \label{l:H12loc}
Fix an $R\ge 1$ and let $v_R \in C^{\infty}_0(\{r< R\})$. Then it holds that 
\begin{equation} \label{equ:H12loc}
 \|(-\De)^{\frac{1}{4}} v_R \|_{L^2} \lesssim_R \| v_R \|_{\tilLE_0},
\end{equation}
that is,
$ \tilLE_0  \subset H^{\frac{1}{2}}_{\loc}.$
\end{lem}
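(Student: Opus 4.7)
The plan is to combine the heat‑flow characterization of fractional Laplacian powers with the compact support of $v_R$. Using the spectral theorem and the elementary identity
\[
\mu^{1/2} = c\int_0^\infty s^{-3/2}\bigl(s\mu\bigr)^2 e^{-2s\mu}\,\ud s,\qquad \mu\geq 0,
\]
together with $\lambda^{1/2}\simeq (\lambda-\rho^2)^{1/2}+\rho$ on $\sigma(-\Delta)\subset [\rho^2,\infty)$, I would establish
\[
\|(-\Delta)^{1/4}v_R\|_{L^2}^2 \;\simeq\; \|v_R\|_{L^2}^2 \;+\; \int_0^{1/2} s^{-3/2}\,\|\tilP_s v_R\|_{L^2}^2\,\ud s,
\]
where the large‑$s$ tail $\int_{1/2}^\infty$ is absorbed into $\|v_R\|_{L^2}^2$ via the spectral gap and the uniform bound $\|\tilP_s v_R\|_{L^2}\lesssim\|v_R\|_{L^2}$. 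The $L^2$ piece is then handled by the compact support: since $\langle r\rangle^{-3/2-\nsigma}\gtrsim_R 1$ on $\{r<R\}$, the embedding $\|\langle r\rangle^{-3/2-\nsigma}u\|_{L^2}\lesssim \|u\|_{\tilLE_0}$ from Lemma~\ref{l:wL2LE} gives $\|v_R\|_{L^2}\lesssim_R \|v_R\|_{\tilLE_0}$.

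Next I would treat the high‑frequency $s$‑integral via a spatial splitting. Let $\chi$ be a smooth cutoff equal to $1$ on $\{r\leq R+1\}$ and vanishing on $\{r\geq R+2\}$, and decompose $\tilP_s v_R=\chi\tilP_s v_R+(1-\chi)\tilP_s v_R$. The exterior piece is supported in $\{r\geq R+1\}$, separated from $\supp v_R\subset\{r\leq R\}$, so the localized parabolic regularity estimates of Proposition~\ref{p:lpregcore} (in the cut‑off form of Remark~\ref{r:lpregcutoff}) yield, for any $N\in\N$,
\[
\|(1-\chi)\tilP_s v_R\|_{L^2} \;\lesssim_{R,N}\; \bigl(s^{1/2}R^{-1}\bigr)^N\|v_R\|_{L^2}.
\]
Taking $N\geq 2$ makes $\int_0^{1/2}s^{-3/2}\|(1-\chi)\tilP_s v_R\|_{L^2}^2\,\ud s\lesssim_R \|v_R\|_{L^2}^2$ absolutely convergent and absorbed by the $L^2$ term already handled.

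For the interior contribution $\chi\tilP_s v_R$, supported in $\{r\leq R+2\}$, I would split the integration region as $\{r\leq R+2\} = A_{\leq -k_s}\cup\bigcup_{-k_s\leq \ell<0}A_\ell\cup\{1\leq r\leq R+2\}$ and match each piece to the corresponding term of $\|\tilP_s v_R\|_{\tilLE_{0,s}}$. The inner piece obeys $\|\tilP_s v_R\|_{L^2(A_{\leq -k_s})}^2\leq s^{1/2}\|\tilP_s v_R\|_{\tilLE_{0,s}}^2$; the middle sum telescopes as $\sum_{\ell=-k_s}^{-1}\|\tilP_s v_R\|_{L^2(A_\ell)}^2 \leq (\sum 2^\ell)\|\tilP_s v_R\|_{\tilLE_{0,s}}^2\lesssim \|\tilP_s v_R\|_{\tilLE_{0,s}}^2$; and the bounded annulus yields $\|\tilP_s v_R\|_{L^2(\{1\leq r\leq R+2\})}^2\leq \langle R\rangle^{3+2\nsigma}\|\tilP_s v_R\|_{\tilLE_{0,s}}^2$.

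The main obstacle, and where the compact support must really pay off, is reconciling the $s^{-3/2}\ud s$ measure naturally arising from the $(-\Delta)^{1/4}$ heat‑flow identity with the $s^{-1/2}\ud s$ measure in the definition of $\|v_R\|_{\tilLE_0}^2$. To close this gap I would integrate by parts in $s$ using $\tilP_s=-s\partial_s\tilP_{\geq s}$,
\[
\int_0^{1/2}s^{-3/2}\|\tilP_s v_R\|_{L^2}^2\,\ud s \;=\; -\int_0^{1/2} s^{-1/2}\,\big\langle\partial_s\tilP_{\geq s}v_R,\,\tilP_s v_R\big\rangle\,\ud s \;+\;\text{boundary},
\]
and then exploit the heat equation together with the structural identities of Lemma~\ref{l:L2res} to convert the surviving $s$‑derivative into the action of $\Delta+\rho^2$ on $\tilP_{\geq s}v_R$. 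Combined with the compact‑support and localized parabolic regularity inputs above, this should redistribute the weight correctly and produce an estimate by $\int_0^{1/2}s^{-1/2}\|\tilP_s v_R\|_{\tilLE_{0,s}}^2\,\ud s\leq \|v_R\|_{\tilLE_0}^2$, completing the proof.
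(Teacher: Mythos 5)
Your paragraphs 1--3 constitute a correct and essentially complete argument, close in spirit to the paper's own proof, though you arrive at the key quantity $\int_0^{1/2} s^{-3/2}\|\tilP_s v_R\|_{L^2}^2\,\ud s$ via a direct spectral-calculus identity rather than (as the paper does) applying Lemma~\ref{l:L2res} to $(-\De)^{1/4}v_R$ and then peeling off the fractional power with Lemma~\ref{l:frac_preg}. Both routes are valid and comparable in effort.

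Your fourth paragraph, however, chases a problem that does not exist. You claim there is a ``mismatch'' between the measure $s^{-3/2}\,\ud s$ from the heat-flow identity and a purported $s^{-1/2}\,\ud s$ measure in $\|v_R\|_{\tilLE_0}^2$. But Definition~\ref{def:LE0} uses the notation $\ds$, which throughout the paper means $\frac{\ud s}{s}$; the high-frequency piece of $\|v_R\|_{\tilLE_0}^2$ is
\[
\int_0^{1/2} s^{-1/2}\,\|\tilP_s v_R\|_{\tilLE_{0,s}}^2\,\frac{\ud s}{s}
\;=\;
\int_0^{1/2} s^{-3/2}\,\|\tilP_s v_R\|_{\tilLE_{0,s}}^2\,\ud s,
\]
so the measures agree. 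Once you have the pointwise-in-$s$ bound $\|\tilP_s v_R\|_{L^2}^2 \lesssim_R \|\tilP_s v_R\|_{\tilLE_{0,s}}^2 + (\text{exterior piece from Proposition~\ref{p:lpregcore}})$, which your paragraph 3 establishes, you simply integrate it against $s^{-3/2}\,\ud s$ and you are done. The proposed integration by parts in $s$ using $\tilP_s = -s\partial_s\tilP_{\geq s}$ and a ``conversion'' via Lemma~\ref{l:L2res} is not needed and is more likely to introduce boundary terms and complications than to help. Delete that paragraph and conclude directly from paragraph 3.
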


\begin{lem} \label{l:HminusLEstar}
We have for any $\nu \geq 1$ and any $v \in \tilLE^\ast_0$ that
\begin{align*}
\begin{split}
 \| (-\Delta)^{-\frac{\nu}{2}} v \|_{L^2} \lesssim \|v\|_{\tilLE_0^\ast},
\end{split}
\end{align*} 
that is, $\tilLE_0^\ast\subset H^{-\nu}$. 
\end{lem}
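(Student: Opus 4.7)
The plan is to reduce to the critical case $\nu = 1$ and then exploit the heat semi-group representation of the resolvent $(-\Delta)^{-1}$. First, for $\nu \geq 1$ I would write $(-\Delta)^{-\nu/2} = (-\Delta)^{-(\nu-1)/2}(-\Delta)^{-1/2}$ and use the spectral gap $-\Delta \geq \rho^2$ (see~\eqref{eq:rho}) to get $\|(-\Delta)^{-(\nu-1)/2}\|_{L^2\to L^2}\leq \rho^{-(\nu-1)}$, reducing matters to $\nu = 1$. For this critical case, I start from the spectral-calculus identity $(-\Delta)^{-1} = \int_0^\infty e^{s\Delta}\,ds$, the operator identity $e^{s\Delta} = e^{-s\rho^2}\tilP_{\geq s}$, and self-adjointness of $e^{s\Delta}$, which together yield
\[
\|(-\Delta)^{-1/2}v\|_{L^2}^2 = \angles{(-\Delta)^{-1}v}{v} = 2\int_0^\infty e^{-2s\rho^2}\|\tilP_{\geq s}v\|_{L^2}^2\,ds.
\]
It therefore suffices to estimate the right-hand side by $\|v\|_{\tilLE_0^\ast}^2$.

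I would split the $s$-integral into $[1/2,\infty)$, $[1/8,1/2]$, and $(0,1/8]$. On the tail, the exponential factor is integrable and the contractivity $\|\tilP_{\geq s}v\|_{L^2}\leq \|\tilP_{\geq s'}v\|_{L^2}$ for $s\geq s'$ (which follows from $\Delta+\rho^2\leq 0$) reduces the contribution to a constant multiple of $\|\tilP_{\geq 1/2}v\|_{L^2}^2$. On the middle region, the pointwise inequality $\|f\|_{L^2}\leq \|\jap{r}^{3/2+\nsigma}f\|_{L^2}=\|f\|_{\tilLE_{0,\low}^\ast}$ together with integration against $\frac{ds}{s}$ gives the bound $\lesssim \|v\|_{\tilLE_0^\ast}^2$ directly from the definition; the same embedding combined with monotonicity and averaging over $s\in[1/8,1/2]$ also takes care of $\|\tilP_{\geq 1/2}v\|_{L^2}^2$.

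The main difficulty, and the most delicate step, is the singular region $(0,1/8]$. Here I would use the frequency-decomposition identity $\tilP_{\geq s}v = \tilP_{\geq 1/8}v + \int_s^{1/8}\tilP_{s'}v\,\frac{ds'}{s'}$. The contribution of $\tilP_{\geq 1/8}v$ is acceptable by the previous step, and the remaining piece reduces to estimating $\int_0^{1/8}G(s)^2\,ds$, where $G(s):=\int_s^{1/8}\|\tilP_{s'}v\|_{L^2}\,\frac{ds'}{s'}$. A standard Hardy-type integration by parts yields
\[
\int_0^{1/8}G(s)^2\,ds \leq 4\int_0^{1/8}\|\tilP_{s}v\|_{L^2}^2\,ds,
\]
with the vanishing of the boundary term $sG(s)^2$ at $s=0$ justified by a density argument (first proving the estimate for $v\in C_0^\infty$, then extending to $\tilLE_0^\ast$). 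Finally, by dyadic summation over the annuli $A_{\leq -k_s}$, $A_\ell$ with $-k_s\leq \ell < 0$, and $A_{\geq 0}$ in the definition of $\tilLE_{0,s}^\ast$, one obtains the elementary pointwise bound
\[
\|\tilP_s v\|_{L^2}^2 \lesssim s^{-1/2}\|\tilP_s v\|_{\tilLE_{0,s}^\ast}^2
\]
(using $2^{k_s}\sim s^{-1/2}$ to sum the inner annular contributions), which converts the preceding integral into $\int_0^{1/8}s^{1/2}\|\tilP_s v\|_{\tilLE_{0,s}^\ast}^2\,\frac{ds}{s}\leq \|v\|_{\tilLE_0^\ast}^2$. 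The borderline nature of $\nu = 1$ is reflected in the fact that the naive approach of dualizing via $\tilLE_0$ produces a logarithmic divergence at exactly this endpoint, and the Hardy-type integration by parts is precisely what closes the estimate.
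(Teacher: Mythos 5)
Your proof is correct, but it takes a genuinely different route from the paper's. You first reduce to the endpoint $\nu = 1$ by boundedness of $(-\Delta)^{-(\nu-1)/2}$ on $L^2$ (via the spectral gap), then use the explicit resolvent representation $(-\Delta)^{-1}=\int_0^\infty e^{s\Delta}\,\ud s$ together with $e^{s\Delta}=e^{-s\rho^2}\tilP_{\geq s}$, reducing matters to bounding $\int_0^\infty e^{-2s\rho^2}\|\tilP_{\geq s}v\|_{L^2}^2\,\ud s$; the singular $s\to 0$ region is then handled by a Hardy inequality applied to $\int_s^{1/8}\|\tilP_{s'}v\|_{L^2}\,\dsp$, which converts the logarithmic divergence of the naive estimate into the finite quantity $\int_0^{1/8}\|\tilP_s v\|_{L^2}^2\,\ud s$. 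The paper instead works with general $\nu\geq 1$ directly: it uses a high-order iterated heat-flow decomposition of $f=(-\Delta)^{-\nu/2}v$ with $k>\nu/2+1$ terms (a generalization of Lemma~\ref{l:L2res}), so that the factor $s^k(\Delta+\rho^2)^k e^{s(\Delta+\rho^2)}$ produces an explicit $s^\nu$ gain directly via Lemmas~\ref{l:preg} and~\ref{l:frac_preg} and the bound $\|(-\Delta)^{-\nu/2}(\Delta+\rho^2)^{\nu/2}\|_{L^2\to L^2}\leq 1$; no Hardy-type inequality is needed since $s^{\nu-1/2}\leq s^{1/2}$ on $(0,1]$ for $\nu\geq 1$. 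Both arguments ultimately rely on the same pointwise input $\|\tilP_s v\|_{L^2}^2\lesssim s^{-1/2}\|\tilP_s v\|_{\tilLE_{0,s}^\ast}^2$. Your approach makes transparent why $\nu=1$ is the critical exponent (it is literally an $L^2$ Hardy inequality), while the paper's is more mechanical and immediately uniform in $\nu$. One small simplification: you do not need the density argument to handle the boundary term in Hardy — in the truncated integration by parts on $[\epsilon,1/8]$ the boundary term $-\epsilon G(\epsilon)^2$ is nonpositive and may simply be dropped before passing $\epsilon\to 0$, since $G(1/8)=0$.
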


\begin{proof}[Proof of Lemma~\ref{l:wL2LE}]
The proofs of the asserted estimates are all similar. We therefore only provide the details for the last estimate involving the space $\tilLE_0^1$ and leave the other cases to the reader. 

To this end we write for any heat time $\frac{1}{2} \leq s_0 \leq 1$, 
\begin{equation}
 \nabla u = \int_0^{s_0} \nabla \tilP_s u \, \frac{\ud s}{s} + \nabla \tilP_{\geq s_0} u,
\end{equation}
from which we infer that 
\begin{equation} \label{equ:wL2LE_est1}
 \bigl\| \langle r \rangle^{-\frac{3}{2}-\nsigma} \nabla u \bigr\|_{L^2} \lesssim \int_0^{s_0} \bigl\| \langle r \rangle^{-\frac{3}{2}-\nsigma} \nabla \tilP_s u \bigr\|_{L^2} \, \ds + \bigl\| \langle r \rangle^{-\frac{3}{2}-\nsigma} \nabla \tilP_{\geq s_0} u \bigr\|_{L^2}.
\end{equation}
Using Lemma~\ref{lem:PsLEs1}, the fact that $\| \langle r \rangle^{-\frac{3}{2}-\nsigma} v \|_{L^2} \lesssim \| v \|_{\tilLE_{0,s}}$ for any $0 < s \lesssim 1$, and Cauchy-Schwarz, we obtain for the first term on the right-hand side of~\eqref{equ:wL2LE_est1} uniformly for all $\frac{1}{2} \leq s_0 \leq 1$ that 
\begin{align*}
 \int_0^{s_0} \bigl\| \langle r \rangle^{-\frac{3}{2}-\nsigma} \nabla \tilP_s u \bigr\|_{L^2} \, \ds &\lesssim \int_0^{s_0} s^{-\frac{1}{2}} \bigl\| s^{\frac{1}{2}} \nabla \tilP_s u \bigr\|_{\tilLE_{0,s}} \, \ds \\
 &\lesssim \int_0^{s_0} s^{-\frac{1}{2}} \| \tilP_{\frac{s}{2}} u \bigr\|_{\tilLE_{0,\frac{s}{2}}} \, \ds \\
 &\lesssim \biggl( \int_0^{1} s^{-\frac{3}{2}} \| \tilP_{\frac{s}{2}} u \bigr\|_{\tilLE_{0,\frac{s}{2}}}^2 \, \ds \biggr)^{\frac{1}{2}} \biggl( \int_0^1 s^{\frac{1}{2}} \, \ds \biggr)^{\frac{1}{2}} \\
 &\lesssim \|u\|_{\tilLE_0^1}.
\end{align*}
For the second term on the right-hand side of~\eqref{equ:wL2LE_est1} we find by Lemma~\ref{lem:PsLEs1} that 
\begin{align*}
 \bigl\| \langle r \rangle^{-\frac{3}{2}-\nsigma} \nabla \tilP_{\geq s_0} u \bigr\|_{L^2} &\lesssim s_0^{-\frac{1}{2}} \bigl\| s_0^{\frac{1}{2}} \nabla \tilP_{\geq s_0} u \bigr\|_{\tilLE_{0,\low}} \lesssim \| \tilP_{\geq \frac{s_0}{2}} u \|_{\tilLE_{0,\low}}.
\end{align*}
Hence, upon inserting the previous two bounds into the right-hand side of~\eqref{equ:wL2LE_est1} and averaging in $\frac{\ud s_0}{s_0}$ over $\frac{1}{2} \leq s_0 \leq 1$ we may conclude that $\| \langle r \rangle^{-\frac{3}{2}-\nsigma} \nabla u \|_{L^2} \lesssim \| u \|_{\tilLE_0^1}$, as desired. This finishes the proof of the lemma.
\end{proof} 

\begin{proof}[Proof of Lemma~\ref{l:H12loc}] 
By Lemma~\ref{l:L2res} we have for any fixed heat time $\frac{1}{2} \leq s_0 \leq 1$ that
\begin{equation} \label{equ:L2res_est1}
 \begin{aligned}
  \| (-\De)^{\frac{1}{4}} v_R \|_{L^2}^2  &\lesssim \int_0^{s_0} \| \tilP_s (-\De)^{\frac{1}{4}} v_R \|_{L^2}^2  \, \frac{\ud s}{s} + \|s_0^{\frac{1}{2}}\nabla \tilP_{\geq s_0}(-\Delta)^{\frac{1}{4}}v_R\|_{L^2}^2 \\
  &\quad \quad + \| \tilP_{\ge s_0} (-\De)^{\frac{1}{4}} v_R \|_{L^2}^2.
 \end{aligned}
\end{equation}
We begin by estimating the last term on the right-hand side above. Using Lemma~\ref{l:frac_preg} and Lemma~\ref{l:wL2LE} as well as keeping in mind the spatial support of $v_R$, we find that
\begin{align*}
 \| \tilP_{\ge s_0} (-\De)^{\frac{1}{4}} v_R \|_{L^2} &= s_0^{-\frac{1}{4}}  \| s_0^{\frac{1}{4}} (-\De)^{\frac{1}{4}} e^{s_0 (\De+\rho^2)} v_R \|_{L^2}  \\
 &\lesssim_{s_0} s_0^{-\frac{1}{4}} \| v_R \|_{L^2} \\
 &\lesssim_{s_0, R} \| \langle r \rangle^{-\frac{3}{2}-\nsigma} v_R \|_{L^2} \\
 &\lesssim_{s_0, R} \|v_R\|_{\tilLE_0}.
\end{align*}
The second term on the right-hand side of~\eqref{equ:L2res_est1} can be estimated analogously. Then for the first term on the right-hand side of~\eqref{equ:L2res_est1} we have by Lemma~\ref{l:frac_preg} that
\begin{align*}
 \int_0^{s_0} \| \tilP_s (-\De)^{\frac{1}{4}} v_R \|_{L^2}^2  \, \frac{\ud s}{s} &\lesssim \int_0^{s_0} s^{-\frac{1}{2}} \| s^{\frac{1}{4}} (-\De)^{\frac{1}{4}} e^{\frac{s}{2}(\Delta+\rho^2)} \tilP_{\frac{s}{2}} v_R \|_{L^2}^2 \, \frac{\ud s}{s} \\
 &\lesssim \int_0^{1} s^{-\frac{1}{2}} \| \tilP_{\frac{s}{2}} v_R \|_{L^2}^2\, \frac{\ud s}{s}.
\end{align*}
Below we will prove that 
\begin{equation} \label{equ:L2res_est2}
 \int_0^{1} s^{-\frac{1}{2}} \| \tilP_{\frac{s}{2}} v_R \|_{L^2}^2\, \frac{\ud s}{s} \lesssim \|v_R\|_{\tilLE_0}^2.
\end{equation}
Inserting the previous bounds into~\eqref{equ:L2res_est1} and then averaging in $\frac{\ud s_0}{s_0}$ over $\frac{1}{2} \leq s_0 \leq 1$ yields the desired estimate~\eqref{equ:H12loc}. It remains to verify~\eqref{equ:L2res_est2}. To this end we write 
\begin{equation} \label{equ:L2res_est3}
 \begin{aligned}
  \| \tilP_{\frac{s}{2}} v_R \|_{L^2} &\lesssim \| \tilP_{\frac{s}{2}} v_R \|_{L^2(A_{\leq -k_{\frac{s}{2}}})} + \sum_{-k_{\frac{s}{2}} \leq \ell \leq 0} \| \phi_\ell \tilP_{\frac{s}{2}} v_R \|_{L^2} + \| \tilP_{\frac{s}{2}} v_R \|_{L^2(A_{\geq 0})},
 \end{aligned}
\end{equation}
where $\phi_{\ell}$ is the bump function defined in~\eqref{eq:phidef}. We now control each of the terms on the right-hand side above. For the first two terms we obtain easily that
\begin{align*}
 &\| \tilP_{\frac{s}{2}} v_R \|_{L^2(A_{\leq -k_{\frac{s}{2}}})} + \sum_{-k_{\frac{s}{2}} \leq \ell \leq 0} \| \phi_\ell \tilP_{\frac{s}{2}} v_R \|_{L^2} \\
 &\lesssim s^{-\frac{1}{4}} \| \tilP_{\frac{s}{2}} v_R \|_{L^2(A_{\leq -k_{\frac{s}{2}}})} + \biggl( \sum_{-k_{\frac{s}{2}} \leq \ell \leq 0} 2^{\frac{1}{2} \ell} \biggr) \sup_{-k_{\frac{s}{2}} \leq \ell \leq 0} \, \| r^{-\frac{1}{2}} \tilP_{\frac{s}{2}} v_R \|_{L^2(A_\ell)} \\
 &\lesssim \| \tilP_{\frac{s}{2}} v_R \|_{\tilLE_{0, \frac{s}{2}}},
\end{align*}
which is of the desired form. To deal with the third term on the right-hand side of~\eqref{equ:L2res_est3} we exploit the spatial support of $v_R$ in $\{ r < R\}$ and the localized parabolic regularity estimates from Proposition~\ref{p:lpregcore} and Remark~\ref{r:lpregcutoff} (with $N=1$) as well as Lemma~\ref{l:wL2LE} to conclude that
\begin{align*}
 &\int_0^1 s^{-\frac{1}{2}} \| \tilP_{\frac{s}{2}} v_R \|_{L^2(A_{\geq 0})}^2 \, \ds \\
 &\lesssim \int_0^1 s^{-\frac{1}{2}} \| \chi_{\{1 \leq r \leq 2^{10} R\}} \tilP_{\frac{s}{2}} v_R \|_{L^2}^2 \, \ds + \int_0^1 s^{-\frac{1}{2}} \| \chi_{\{ r > 2^{10} R\}} \tilP_{\frac{s}{2}} v_R \|_{L^2}^2 \, \ds \\
 &\lesssim_{R} \int_0^1 s^{-\frac{1}{2}} \| r^{-\frac{3}{2}-\nsigma} \tilP_{\frac{s}{2}} v_R \|_{L^2(A_{\geq 0})}^2 \, \ds + \int_0^1 s^{-\frac{1}{2}} (s^{\frac{1}{2}} R^{-1})^2 \|v_R\|_{L^2}^2 \, \ds \\
 &\lesssim_{R} \int_0^1 s^{-\frac{1}{2}} \| \tilP_{\frac{s}{2}} v_R \|_{\tilLE_{0, \frac{s}{2}}}^2 \, \ds + \| \langle r \rangle^{-\frac{3}{2}-\nsigma} v_R \|_{L^2}^2 \\
 &\lesssim_{R} \|v_R\|_{\tilLE_0}^2.
\end{align*}
This finishes the proof of the lemma.
\end{proof} 

\begin{proof}[Proof of Lemma~\ref{l:HminusLEstar}]
By arguing as at the beginning of the proof of Lemma~\ref{l:L2res}, given any integer $k > 0$ and any fixed heat time $s_0 > 0$ we may write any function~$f$ as a linear combination
\begin{align*}
 f &= c_{2k} \int_0^{2s_0} s^{2k} (\Delta+\rho^2)^{2k}e^{s(\Delta+\rho^2)} f \,\ds \\
 &\quad \quad + c_{2k-1} s_0^{2k-1} (\Delta+\rho^2)^{2k-1} e^{2s_0(\Delta+\rho^2)}f + \ldots + c_0 e^{2s_0(\Delta+\rho^2)} f
\end{align*}
for appropriate constants $c_0, \ldots, c_{2k}$. Taking the inner product with $f$, integrating by parts and invoking Lemma~\ref{l:preg} gives
\begin{align*}
\|f\|_{L^2}^2&\lesssim \int_0^{s_0}\|s^k(\Delta+\rho^2)^ke^{s(\Delta+\rho^2)}f\|_{L^2}^2\,\ds+\|e^{\frac{s_0}{2}(\Delta+\rho^2)}f\|_{L^2}^2.
\end{align*}
Applying this estimate to $f = (-\Delta)^{-\frac{\nu}{2}}v$ with $k > \frac{\nu}{2} + 1$ and averaging in $\frac{\ud s_0}{s_0}$ over $\frac{1}{2} \leq s_0 \leq 1$ we get
\begin{equation} \label{equ:HminusLEstar_est1}
 \begin{aligned}
  \|(-\Delta)^{-\frac{\nu}{2}}v\|_{L^2}^2 &\lesssim \int_0^{1}\|(-\Delta)^{-\frac{\nu}{2}}s^{k}(\Delta+\rho^2)^{k}e^{s(\Delta+\rho^2)}v\|_{L^2}^2\,\ds \\
  &\quad \quad + \int_{\frac{1}{2}}^1 \|(-\Delta)^{-\frac{\nu}{2}}\tilP_{\geq \frac{s_0}{2}} v \|_{L^2}^2\,\dsnot.
 \end{aligned}
\end{equation}
Using the Poincar\'e inequality (that is, the fact that $\mathrm{Spec}{(-\Delta)} = [\rho^2,\infty)$, which implies that $(-\Delta)^{-\frac{\nu}{2}}$ is bounded in $L^2$), we have 
\begin{align*}
 \|(-\Delta)^{-\frac{\nu}{2}}\tilP_{\geq \frac{s_0}{2}}v\|_{L^2} \lesssim \|\tilP_{\geq \frac{s_0}{2}} v\|_{L^2} \lesssim \|\tilP_{\geq \frac{s_0}{2}} v \|_{\tilLE_{0,\low}^\ast}.
\end{align*}
Hence, the contribution of the second term on the right-hand side of~\eqref{equ:HminusLEstar_est1} is bounded by $\|v\|_{\tilLE_0^\ast}^2$, as desired.
For the first term on the right-hand side of~\eqref{equ:HminusLEstar_est1} we use the Poincar\'e inequality, Lemma~\ref{l:preg}, Lemma~\ref{l:frac_preg} as well as the fact that $\| v \|_{L^2} \lesssim s^{-\frac{1}{4}} \| v \|_{\tilLE_{0,s}^\ast}$ to conclude that
\begin{align*}
 &\int_0^{1}\|(-\Delta)^{-\frac{\nu}{2}}s^{k}(\Delta+\rho^2)^{k}e^{s(\Delta+\rho^2)}v\|_{L^2}^2\,\ds \\
 &\lesssim \int_{0}^{1} s^{\nu} \|s^{k-1-\frac{\nu}{2}}(-\Delta)^{-\frac{\nu}{2}}(\Delta+\rho^2)^{k-1}\tilP_sv\|_{L^2}^2\,\ds \\
 &\lesssim \int_{0}^{1} s^{\nu} \| \tilP_{\frac{s}{2}} v \|_{L^2}^2 \, \ds \\
 &\lesssim \int_0^{1} s^{\nu-\frac{1}{2}} \|\tilP_{\frac{s}{2}} v\|_{\tilLE_{0,\frac{s}{2}}^\ast}^2 \, \ds \\
 &\lesssim \|v\|_{\tilLE_0^\ast}^2.
\end{align*}
This finishes the proof of the lemma.
\end{proof}

\subsection{Estimates controlling frequency localized error in $L^2(\bbR\times \{r\leq R\})$ }  \label{ss:local-err}
In our main positive commutator estimate we will generate frequency localized $L^2$-errors in a bounded spatial region. To put these in the form stated in Theorem~\ref{t:LE1-sym} and Theorem~\ref{t:LE1}, we need to remove the frequency localizations, which is what the following lemma accomplishes. 

\begin{lem} \label{l:L2R}
Let $s_0>0$ and $N \in \N$ be fixed. Then for any $R \ge 1$  we have 
\begin{align}
 \| \tilP_{\ge s_0}u  \|_{L^2(\bbR \times \{r\leq R\})}^2 &\lesssim_{s_0, N} \| u \|_{L^2(\bbR \times \{r\leq 2^{10}R\})}^2 + R^{-N} \| \ang{r}^{-2} u \|_{L^2_{t, x}}^2, \quad \quad \label{eq:L2Rlow} \\
 \int_0^{s_0} \| \tilP_s u \|_{L^2(\bbR \times \{r\leq R\})}^2 \, \frac{\ud s}{s} &\lesssim_{s_0, N} \| u \|_{L^2(\bbR \times \{r\leq 2^{10}R\})}^2 + R^{-N} \| \ang{r}^{-2} u \|_{L^2_{t, x}}^2. \quad \quad \label{eq:L2Rhigh} 
\end{align}
\end{lem}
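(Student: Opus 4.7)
The plan is to decompose $u$ spatially relative to the scale $R$ and to treat each piece with a different tool. Let $\chi \in C^\infty_0$ be a smooth cutoff with $\chi \equiv 1$ on $\{r \leq 2^9\}$ and $\supp \chi \subset \{r \leq 2^{10}\}$, and set $\chi_R(x) := \chi(x/R)$. Writing $u = \chi_R u + (1-\chi_R)u$, by linearity it suffices to control the contributions of these two pieces separately. For the near piece $\chi_R u$, which is supported in $\{r \leq 2^{10} R\}$, the uniform $L^2$-boundedness of $e^{s(\Delta+\rho^2)}$ for $0 < s \leq s_0$ (Lemma~\ref{l:preg}) immediately yields
\[
 \| \tilP_{\geq s_0}(\chi_R u) \|_{L^2(\bbR \times \{r \leq R\})} \lesssim_{s_0} \|u\|_{L^2(\bbR \times \{r \leq 2^{10} R\})},
\]
while for \eqref{eq:L2Rhigh}, the second identity in Lemma~\ref{l:L2res} applied to $\chi_R u$ (with the boundary terms at $s = s_0$ treated via Lemma~\ref{l:preg} again) gives
\[
 \int_0^{s_0} \|\tilP_s(\chi_R u)\|_{L^2}^2 \, \ds \lesssim_{s_0} \|u\|_{L^2(\bbR \times \{r \leq 2^{10} R\})}^2.
\]

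The main content is in the far piece $(1-\chi_R) u$, which I decompose dyadically as $(1-\chi_R) u = \sum_{m \geq m_R} \phi_m u$, where $\phi_m$ is the bump from \eqref{eq:phidef} (possibly enlarged to form a partition of unity on $\supp(1-\chi_R)$) and $m_R$ is the least integer with $2^{m_R} \simeq R$. Let $\wtil{\chi}_R$ be a smooth cutoff to $\{r \leq R\}$. For each $m \geq m_R$ the supports of $\wtil{\chi}_R$ and $\phi_m$ are separated on a scale $\simeq 2^m \gtrsim R \geq 1$, and since $s \leq s_0$ the hypothesis $s^{\frac{1}{2}} \lesssim \max\{R_1, R_2\}$ of Remark~\ref{r:lpregcutoff} is satisfied (with the implicit constant depending on $s_0$). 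Applied to both the semi-group $e^{s_0(\Delta+\rho^2)}$ and the projection $\tilP_s = -s(\Delta+\rho^2)e^{s(\Delta+\rho^2)}$ (the latter via \eqref{eq:lpregcore2} with $k=1$), this yields for any $N' \in \bbN$
\[
 \bigl\| \wtil{\chi}_R \, \tilP_{\geq s_0}(\phi_m u) \bigr\|_{L^2} \lesssim_{s_0, N'} (s_0^{\frac{1}{2}} 2^{-m})^{N'} \|\phi_m u\|_{L^2}, \qquad \bigl\| \wtil{\chi}_R \, \tilP_{s}(\phi_m u) \bigr\|_{L^2} \lesssim_{s_0, N'} (s^{\frac{1}{2}} 2^{-m})^{N'} \|\phi_m u\|_{L^2}
\]
uniformly for $0 < s \leq s_0$ (the finer factor $s^{\frac{1}{2}}$ on the right is crucial for the integrated estimate).

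To finish, I convert the unweighted norm of $\phi_m u$ to the weighted one by noting that $r \simeq 2^m \geq 1$ on $\supp \phi_m$, so $\|\phi_m u\|_{L^2} \lesssim 2^{2m} \|\ang{r}^{-2} \phi_m u\|_{L^2}$. For \eqref{eq:L2Rlow}, summing over $m \geq m_R$ with $N' \geq N + 3$ and using Cauchy--Schwarz on the geometric series produces
\[
 \sum_{m \geq m_R} (s_0^{\frac{1}{2}} 2^{-m})^{N'} 2^{2m} \|\ang{r}^{-2} \phi_m u\|_{L^2} \lesssim_{s_0} 2^{-m_R N} \|\ang{r}^{-2} u\|_{L^2} \lesssim R^{-N} \|\ang{r}^{-2} u\|_{L^2},
\]
as desired. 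For \eqref{eq:L2Rhigh}, squaring the pointwise-in-$s$ bound, performing the dyadic sum with the same weight conversion, and then integrating in $s$ against $\ds$ introduces the harmless factor $\int_0^{s_0} s^{N'-1}\,\ud s < \infty$ for $N' \geq 1$, yielding the same bound after readjusting $N'$. The argument is essentially bookkeeping: the only substantive input is the localized parabolic regularity of Proposition~\ref{p:lpregcore}/Remark~\ref{r:lpregcutoff}, and the only care required is in choosing $N'$ large enough to absorb the $2^{2m}$ coming from the weight conversion; no real obstacle is anticipated.
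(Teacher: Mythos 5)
Your proof is correct and follows essentially the same approach as the paper's: split $u$ at scale $R$, use $L^2$-boundedness of the heat flow (resp.\ the $L^2$ resolution identity) for the near piece, and localized parabolic regularity (Proposition~\ref{p:lpregcore}) combined with the weight conversion $\|\phi_m u\|_{L^2}\lesssim 2^{2m}\|\ang{r}^{-2}\phi_m u\|_{L^2}$ for the far piece. The paper's write-up is organized slightly differently — it inserts the cutoffs directly inside $\tilP_{\geq s_0}(\cdot)$ without naming the dyadic pieces — but the underlying argument is identical.
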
 

\begin{proof}
We first prove~\eqref{eq:L2Rlow}. Note that with $\chi_{\leq R}$ a cutoff to the region $\{r\leq 2R\}$
\EQ{
 \| \tilP_{\ge s_0}u  \|_{L^2 (\bbR\times  \{r\leq R\})} \lesssim  \| \chi_{\le R} \tilP_{\ge s_0}( \chi_{\le 2^{10} R} u)  \|_{L^2} +  \| \chi_{\le R} \tilP_{\ge s_0} (\chi_{\ge 2^{10}R} u)  \|_{L^2 }.
}
For the first term on the right we have 
\EQ{
 \| \chi_{\le R} \tilP_{\ge s_0}( \chi_{\le 2^{10} R} u)  \|_{L^2} \lesssim \| e^{s_0 (\De+\rho^2)} ( \chi_{\le 2^{10} R} u)\|_{L^2} \lesssim \| ( \chi_{\le 2^{10} R} u)\|_{L^2}.
}
For the second term, by Proposition~\ref{p:lpregcore}, with $\phi_\ell$ as in \eqref{eq:phidef},
\EQ{
\| \chi_{\le R} \tilP_{\ge s_0} (\chi_{\ge 2^{10}R} u)  \|_{L^2 } &\lesssim  \sum_{\ell \ge \lfloor  \log_2 R \rfloor + 10} \| \chi_{\le R} e^{s_0 (\De+\rho^2)}  \phi_{\ell} u \|_{L^2} \\
& \lesssim_N \sum_{\ell \ge \lfloor  \log_2 R \rfloor + 10} (s_0^{\frac{1}{2}} 2^{-\ell})^N \|   \phi_{\ell} u \|_{L^2} \\
& \lesssim_N s_0^{\frac{N}{2}} \| \ang{r}^{-2} u \|_{L^2} \sum_{\ell \ge \lfloor  \log_2 R \rfloor + 10} (2^{-\ell})^{N-2} \\
&\lesssim_{s_0, N} R^{-N +2} \| \ang{r}^{-2} u \|_{L^2}, 
}
which finishes the proof of~\eqref{eq:L2Rlow}. We argue similarly to prove~\eqref{eq:L2Rhigh}. First, for each $s_0>0$ 
\EQ{
\int_0^{s_0} \| P_s u \|_{L^2_{t, x}(\bbR\times  \{r\leq R\})}^2 \, \frac{\ud s}{s}  & \le\int_0^{s_0} \| \chi_{\le R} P_{ s}( \chi_{\le 2^{10} R} u)  \|_{L^2}^2 \,  \frac{\ud s}{s}\\
&\quad+  \int_0^{s_0}\| \chi_{\le R} P_{s} (\chi_{\ge 2^{10}R} u)  \|_{L^2 }^2 \, \frac{\ud s}{s}.
}
For the first term, we note that,
\EQ{
 \int_0^{s_0}\| \chi_{\le R} \tilP_{ s}( \chi_{\le 2^{10} R} u)  \|_{L^2}^2\, \frac{\ud s}{s} &\lesssim\int_0^{s_0} \| \tilP_s ( \chi_{\le 2^{10} R} u)  \|_{L^2}^2\, \frac{\ud s}{s}  \\
 &\lesssim \| u  \|_{L^2(\bbR\times  \{r\leq 2^{10}R\})}^2.
}
For the second term we again use Proposition~\ref{p:lpregcore}, 
\EQ{
 &\int_0^{s_0}\| \chi_{\le R} \tilP_{s} (\chi_{\ge 2^{10}R} u)  \|_{L^2 }^2 \, \frac{\ud s}{s} \\
 &\quad \lesssim \int_0^{s_0} \Big( \sum_{\ell \ge \lfloor  \log_2 R \rfloor + 10} \| \chi_{\le R} s (\De+\rho^2) e^{s (\De+\rho^2)}  \phi_{\ell} u \|_{L^2} \Big)^2 \, \frac{\ud s}{s} \\
 &\quad \lesssim_N  \int_0^{s_0} \Big(\sum_{\ell \ge \lfloor  \log_2 R \rfloor + 10} (s^{\frac{1}{2}} 2^{-\ell})^N \|   \phi_{\ell} u \|_{L^2}\Big)^2 \, \frac{\ud s}{s} \\
 &\quad \lesssim_N  \int_0^{s_0} \Big(s^{\frac{N}{2}} \| \ang{r}^{-2} u \|_{L^2} \sum_{\ell \ge \lfloor  \log_2 R \rfloor + 10} (2^{-\ell})^{N-2}\Big)^2 \, \frac{\ud s}{s} \\
 &\quad \lesssim_ NR^{-2(N -2)} \| \ang{r}^{-2} u \|_{L^2}^2 \,  \int_0^{s_0}  s^{N}\, \frac{\ud s}{s} \\
 &\quad \lesssim_{s_0, N} R^{-2(N -2)} \| \ang{r}^{-2} u \|_{L^2}^2 , 
}
which completes the proof. 
\end{proof} 


\subsection{Boundedness of the multiplier $Q$}  \label{ss:Q-bdd}

In this subsection we establish that the low-frequency multiplier $Q^{(\al)}\tilP_{\ge s}$ used in Section~\ref{s:low} is bounded in $L^{2}_{t, x}$ and in $X_{\low, \gamma}$. Similarly, we prove that the high-frequency multiplier $Q_{s}^{(\al)}\tilP_s$ used in Section~\ref{s:high} is bounded in $L^2_{t, x}$ and in $X_{s, \gamma}$.  

\subsubsection{Bounding the low-frequency multiplier}

Here we fix $s_0 \simeq 1$. For any given slowly varying sequence $\{\al_\ell \} \in \calA$, let $Q^{(\alpha)}$ be as defined in Definition~\ref{d:beta_low}. Our goal is to prove the following two propositions. 
\begin{prop} \label{p:QlowL2} 
Fix $0 < s_0 \lesssim 1$. Then for any $s_0 \leq s \lesssim 1$ and any slowly varying sequence $\{\alpha_\ell\}\in\calA$,
\EQ{ \label{eq:QlowL2} 
 \| Q^{(\al)} \tilP_{\ge s} u \|_{L^2} &\lesssim_{s_0} \| \tilP_{\ge \frac{s}{2}} u\|_{L^2},\\
 \| Q^{(\al)} \tilP_{\ge s} u \|_{L^2_{t, x}} &\lesssim_{s_0} \| \tilP_{\ge \frac{s}{2}} u\|_{L^2_{t, x}},
}
where the implicit constant is independent of $\{\alpha_\ell\}\in\calA$.
\end{prop}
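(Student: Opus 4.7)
The plan is to decompose $Q^{(\al)}$ into its three constituent pieces as in \eqref{eq:Qlowdef1} and to observe that, in the regime $s \geq s_{0}$, every factor of $s^{-\frac{1}{2}}$ generated by a derivative falling on the heat semigroup is harmlessly absorbed into the constant depending on $s_{0}$. Since the two statements differ only by an extra $L^{2}_{t}$ integration that commutes with every operation below, I will prove only the spatial bound.

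First I would write
\[
 Q^{(\al)} \tilP_{\geq s} u = \frac{1}{i}\Bigl( 2 \be^{(\al)} \partial_{r} \tilP_{\geq s} u + (\partial_{r} \be^{(\al)}) \tilP_{\geq s} u + (d-1) \coth r \, \be^{(\al)} \tilP_{\geq s} u \Bigr),
\]
and recall from \eqref{eq:betabounds1} the uniform (in $\{\al_{\ell}\} \in \calA$) pointwise bounds
\[
 |\be^{(\al)}(r)| + |\coth r \, \be^{(\al)}(r)| + |\partial_{r} \be^{(\al)}(r)| \lesssim 1.
\]
Therefore the two zeroth-order pieces of $Q^{(\al)} \tilP_{\geq s} u$ are immediately controlled by $\|\tilP_{\geq s} u\|_{L^{2}}$, and since $\tilP_{\geq s} = e^{\frac{s}{2}(\Delta + \rho^{2})} \tilP_{\geq s/2}$ and $e^{\frac{s}{2}(\Delta + \rho^{2})}$ is an $L^{2}$ contraction (the spectrum of $-\Delta$ lies in $[\rho^{2},\infty)$), this is further bounded by $\|\tilP_{\geq s/2} u\|_{L^{2}}$, uniformly in $s$.

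The only piece requiring a derivative bound is the first-order term. Using $|\be^{(\al)}| \lesssim 1$ and writing again $\tilP_{\geq s} = e^{\frac{s}{2}(\Delta + \rho^{2})} \tilP_{\geq s/2}$, one has
\[
 \| 2 \be^{(\al)} \partial_{r} \tilP_{\geq s} u \|_{L^{2}} \lesssim \| \nabla e^{\frac{s}{2}(\Delta + \rho^{2})} \tilP_{\geq s/2} u \|_{L^{2}} \lesssim s^{-\frac{1}{2}} \| \tilP_{\geq s/2} u \|_{L^{2}},
\]
where the final step invokes the $L^{p}$ regularity of the shifted heat semigroup from Lemma~\ref{l:preg} (applied with $p = 2$, $k = 1$, and $s_{0}$ replaced by any fixed upper bound for $s$, which is legitimate since $s \lesssim 1$). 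Because $s \geq s_{0}$, the factor $s^{-\frac{1}{2}}$ is bounded by $s_{0}^{-\frac{1}{2}}$, yielding the desired estimate with an implicit constant depending only on $s_{0}$. No subtle obstacle is anticipated; the only point to be careful about is that the bound on $\be^{(\al)}$ in \eqref{eq:betabounds1} is genuinely uniform in $\{\al_{\ell}\} \in \calA$, which is why the final constant depends on $s_{0}$ but not on $\al$.
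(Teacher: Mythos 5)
Your proposal is correct and follows essentially the same route as the paper: decompose $Q^{(\al)}$ via \eqref{eq:Qlowdef1}, use the uniform pointwise bounds \eqref{eq:betabounds1}, factor $\tilP_{\geq s} = e^{\frac{s}{2}(\Delta+\rho^2)}\tilP_{\geq s/2}$, and control the single first-order piece by Lemma~\ref{l:preg}, absorbing the $s^{-\frac{1}{2}}$ factor using $s \geq s_0$. The only cosmetic difference is that the paper writes the derivative bound as $\|s^{\frac{1}{2}} \nabla e^{\frac{s}{2}(\Delta+\rho^2)} \tilP_{\geq s/2} u\|_{L^2} \lesssim \|\tilP_{\geq s/2} u\|_{L^2}$ directly rather than stating the $s^{-\frac{1}{2}}$ loss and then invoking $s \geq s_0$, but the content is identical.
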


\begin{prop} \label{p:QlowX} 
Fix $0 < s_0 \lesssim 1$ and let $\{\gamma_k\} \in \calA$ be any slowly varying sequence. Then we have for any $s_0 \leq s \lesssim 1$ and any slowly varying sequence $\{\alpha_\ell\}\in\calA$
\EQ{
\| Q^{(\al)} \tilP_{\ge s} u \|_{X_{\low, \gamma}} \lesssim_{s_0} \| \tilP_{\ge \frac{s}{2}} u\|_{X_{\low, \gamma}},
}
where the implicit constant is independent of $\{\alpha_\ell\}\in\calA$.
\end{prop}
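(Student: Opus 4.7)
The plan is to first expand the multiplier as
\[
Q^{(\alpha)} \tilP_{\geq s} u = \frac{2}{i} \beta^{(\alpha)} \partial_r \tilP_{\geq s} u + \frac{1}{i}\bigl((\partial_r \beta^{(\alpha)}) + (d-1)\coth r\, \beta^{(\alpha)}\bigr) \tilP_{\geq s} u,
\]
and to exploit the uniform $L^\infty$ bounds from \eqref{eq:betabounds1} on $\beta^{(\alpha)}$, $\partial_r \beta^{(\alpha)}$ and $\coth r\, \beta^{(\alpha)}$ (independent of $\{\alpha_\ell\}\in\calA$). Since multiplication by a bounded function $m \in L^\infty$ clearly satisfies $\|mv\|_{X_{\low,\gamma}} \leq \|m\|_{L^\infty}\|v\|_{X_{\low,\gamma}}$, the zeroth-order piece of $Q^{(\alpha)}$ is absorbed into $\|\tilP_{\geq s} u\|_{X_{\low,\gamma}}$. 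Using the semigroup identity $\tilP_{\geq s} = e^{(s/2)(\Delta+\rho^2)} \tilP_{\geq s/2}$ with $s \simeq 1$, this reduces the proposition to showing
\[
\|e^{(s/2)(\Delta+\rho^2)} v\|_{X_{\low,\gamma}} + \|s^{\frac{1}{2}}\nabla e^{(s/2)(\Delta+\rho^2)} v\|_{X_{\low,\gamma}} \lesssim_{s_0} \|v\|_{X_{\low,\gamma}}
\]
uniformly for $\{\gamma_\ell\}\in\calA$, $s_0 \leq s \lesssim 1$, and $v = \tilP_{\geq s/2} u$.

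With the bump functions $\phi_m$ of \eqref{eq:phidef} and a cutoff $\phi_{\leq 0}$ to $A_{\leq 0}$, I decompose $v = \phi_{\leq 0} v + \sum_{m \geq 0} \phi_m v$ and denote by $T$ either of the two operators above. Using the equivalent expression
\[
\|w\|_{X_{\low,\gamma}}^2 \simeq \|\phi_{\leq 0} w\|_{L^2}^2 + \sum_{\ell \geq 0} \gamma_\ell\, 2^{-3\ell} \|\phi_\ell w\|_{L^2}^2
\]
for the output norm, the proof is driven by estimating the matrix elements $\|\phi_\ell T \phi_m v\|_{L^2}$, together with analogous boundary terms in which either $\phi_\ell$ or $\phi_m$ is replaced by $\phi_{\leq 0}$. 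For nearby indices $|m - \ell| < 10$, Lemma~\ref{l:preg} yields the on-diagonal bound $\|\phi_\ell T \phi_m v\|_{L^2} \lesssim \|\phi_m v\|_{L^2}$, uniformly in all parameters. For well-separated indices $|m - \ell| \geq 10$ (which forces $\max(\ell, m) \geq 10 \geq s^{1/2}$, so the hypothesis of Proposition~\ref{p:lpregcore} holds), the mismatch estimates of Proposition~\ref{p:lpregcore} and Corollary~\ref{c:lpregcov} deliver
\[
\|\phi_\ell T \phi_m v\|_{L^2} \lesssim_N 2^{-N \max(\ell, m)} \|\phi_m v\|_{L^2}.
\]
The boundary terms involving $\phi_{\leq 0}$ are handled analogously via Remark~\ref{r:lpregcutoff}, with the nearby pieces (small indices) bounded by $L^2$-boundedness and the distant pieces (indices $\geq 10$) by the mismatch estimates.

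The final step is a weighted summation: after multiplying the matrix bounds by $\gamma_\ell^{1/2} 2^{-3\ell/2}$ and dividing by $\gamma_m^{1/2} 2^{-3m/2}$, the effective Schur kernel takes the form
\[
K(\ell, m) \lesssim (\gamma_\ell/\gamma_m)^{1/2}\, 2^{3(m-\ell)/2}\, \cdot\, \bigl[\text{on-diagonal factor or } 2^{-N\max(\ell, m)}\bigr].
\]
The near-diagonal block is summable by Schur's test since $\gamma_\ell \simeq \gamma_m$ by slow variation and $2^{3(m - \ell)/2} \simeq 1$. For the off-diagonal block, the slow variation bound $\gamma_\ell/\gamma_m \leq 2^{\eta|m - \ell|}$ from Definition~\ref{d:A} combines with the spatial loss $2^{3|m - \ell|/2}$ to give a total loss of at most $2^{(3 + \eta)|m - \ell|/2}$, while the mismatch factor provides decay $2^{-N\max(\ell, m)} \leq 2^{-N|m - \ell|}$ (using that both $\ell, m \geq 0$); choosing $N > (3 + \eta)/2$ makes the net kernel summable, and Schur's test then closes the estimate. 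The main obstacle is essentially the bookkeeping required to track three weight factors simultaneously -- the spatial weight $2^{-3\ell/2}$, the slow-variation weight $\gamma_\ell^{1/2}$, and the mismatch decay -- and to verify that the mismatch decay, which is naturally measured in $\max(\ell, m)$ rather than in $|m - \ell|$, in fact dominates the combined weight loss on the quadrant $\ell, m \geq 0$ where the analysis takes place.
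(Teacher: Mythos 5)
Your proposal is correct and follows essentially the same route as the paper: absorb the zeroth-order part of $Q^{(\alpha)}$ via \eqref{eq:betabounds1}, write $\tilP_{\geq s} = e^{\frac{s}{2}(\Delta+\rho^2)}\tilP_{\geq s/2}$, then decompose input and output into dyadic annuli and control the matrix entries by $L^2$-boundedness (Lemma~\ref{l:preg}) on the near-diagonal and by the mismatch estimates (Proposition~\ref{p:lpregcore} and Remark~\ref{r:lpregcutoff}) off-diagonal, closing with a weighted Schur-type summation using the slow-variation bound $\gamma_\ell/\gamma_m \leq 2^{\eta|\ell-m|}$. The one genuine difference is organizational: the paper treats the interior $L^2(A_{\leq 0})$ output and the exterior dyadic outputs as separate cases, and within the exterior case distinguishes four regimes for the input (below $0$, $m \leq \ell - 10$, $|m - \ell| \leq 10$, $m \geq \ell + 10$), using ordinary parabolic regularity rather than the mismatch estimate in the regime $0 \leq m \leq \ell - 10$ where the favorable spatial weight already supplies decay; your unified matrix/Schur framework applies the mismatch estimate uniformly off the diagonal and verifies that the combined loss $2^{(3+\eta)|\ell-m|/2}$ from the spatial and slow-variation weights is dominated by $2^{-N\max(\ell,m)} \leq 2^{-N|\ell-m|}$ on the quadrant $\ell, m \geq 0$ for $N$ large. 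Both versions are correct; yours is slightly more uniform in presentation, at the expense of a somewhat heavier use of the mismatch estimate where the paper makes do with less. One minor slip: you wrote ``$\max(\ell,m) \geq 10 \geq s^{1/2}$,'' comparing an integer index with a scale; the hypothesis of Proposition~\ref{p:lpregcore} is $\max\{2^\ell, 2^m\} \geq s^{1/2}$, which indeed holds when $\max(\ell,m) \geq 10$ and $s \lesssim 1$.
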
 

\begin{cor}\label{cor:QLElow}
Fix $0 < s_0 \lesssim 1$. Then for any $s_0 \leq s \lesssim 1$ and $\{\alpha_\ell\}\in\calA$,
\begin{align*}
\begin{split}
 \|Q^{(\alpha)}\tilP_{\geq s}u \|_{LE_\low} \lesssim_{s_0} \|\tilP_{\geq \frac{s}{2}}u\|_{LE_\low},
\end{split}
\end{align*}
where the implicit constant is independent of $\{\alpha_\ell\}\in\calA$.
\end{cor}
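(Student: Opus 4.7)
The plan is to deduce Corollary~\ref{cor:QLElow} directly from Proposition~\ref{p:QlowX} by using the characterization of the $LE_{\low}$-norm as a supremum over the auxiliary $X_{\low, \gamma}$-norms provided by Lemma~\ref{l:Xal}. Since Proposition~\ref{p:QlowX} yields a bound in $X_{\low, \gamma}$ with a constant that is uniform in the choice of slowly varying sequence $\{\gamma_{k}\} \in \calA$ (and also uniform in $\{\alpha_{\ell}\} \in \calA$), the supremum can be pulled through the inequality without loss.

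More precisely, fix $s_{0} \leq s \lesssim 1$ and a slowly varying sequence $\{\alpha_{\ell}\} \in \calA$. By the first equivalence in Lemma~\ref{l:Xal},
\begin{equation*}
 \|Q^{(\alpha)} \tilP_{\geq s} u\|_{LE_{\low}} \simeq \sup_{\{\gamma_{k}\} \in \calA} \, \|Q^{(\alpha)} \tilP_{\geq s} u\|_{X_{\low, \gamma}}.
\end{equation*}
For each fixed $\{\gamma_{k}\} \in \calA$, Proposition~\ref{p:QlowX} gives
\begin{equation*}
 \|Q^{(\alpha)} \tilP_{\geq s} u\|_{X_{\low, \gamma}} \lesssim_{s_{0}} \|\tilP_{\geq \frac{s}{2}} u\|_{X_{\low, \gamma}},
\end{equation*}
with an implicit constant that is independent of both $\{\gamma_{k}\}$ and $\{\alpha_{\ell}\}$. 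Taking the supremum over $\{\gamma_{k}\} \in \calA$ on both sides and invoking Lemma~\ref{l:Xal} once more, this time in the form
\begin{equation*}
 \sup_{\{\gamma_{k}\} \in \calA} \, \|\tilP_{\geq \frac{s}{2}} u\|_{X_{\low, \gamma}} \simeq \|\tilP_{\geq \frac{s}{2}} u\|_{LE_{\low}},
\end{equation*}
yields the asserted bound.

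This argument is essentially routine once Proposition~\ref{p:QlowX} is in hand, so no genuine obstacle is expected; the only point requiring care is to verify that the implicit constant in Proposition~\ref{p:QlowX} is indeed uniform over all $\{\gamma_{k}\} \in \calA$ (so that the supremum may be exchanged with the inequality), and that the same uniformity applies with respect to $\{\alpha_{\ell}\}$, which is already part of the statement of Proposition~\ref{p:QlowX}.
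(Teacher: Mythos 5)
Your proof is correct and matches the paper's argument exactly: both pass from $LE_{\low}$ to the auxiliary $X_{\low,\gamma}$-norms via Lemma~\ref{l:Xal}, apply Proposition~\ref{p:QlowX} uniformly in $\{\gamma_k\}$ and $\{\alpha_\ell\}$, and take the supremum back.
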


First we prove Proposition~\ref{p:QlowL2}. 
\begin{proof}[Proof of Proposition~\ref{p:QlowL2}]
The second estimate is a consequence of the first. The proof of the first estimate is an easy application of Lemma~\ref{l:preg}. Note that, 
\EQ{
 \tilP_{\ge s}u = e^{s (\De+\rho^2)} u = e^{\frac{s}{2 }(\De+\rho^2)} e^{\frac{s}{2} (\De+\rho^2)} u. 
}
Therefore, since $s_0 \leq s \lesssim 1$, using~\eqref{eq:betabounds1},
\EQ{
\| Q^{(\al)} \tilP_{\ge s} u \|_{L^2} &\lesssim \| \bsbeta^{(\alpha), \mu} \nabla_\mu \tilP_{\ge s} u \|_{L^2} + \| \p_r \be \tilP_{\ge s} u \|_{L^2} + \| \coth(r) \be \tilP_{\ge s} u \|_{L^2} \\
& \lesssim_{s_0} \| s^{\frac{1}{2}} \na \tilP_{\ge s} u \|_{L^2}  + \| \tilP_{\ge s} u \|_{L^2} \\
& \simeq \| s^{\frac{1}{2}} \na e^{\frac{s}{2} (\De+\rho^2)} e^{\frac{s}{2} (\De+\rho^2)} u \|_{L^2}  + \| e^{\frac{s}{2} (\De+\rho^2)} e^{\frac{s}{2} (\De+\rho^2)} u \|_{L^2} \\
& \lesssim \| \tilP_{\ge \frac{s}{2}} u \|_{L^2},
}
where Lemma~\ref{l:preg} was applied to obtain the last inequality above. 
\end{proof} 
\begin{proof}[Proof of Proposition~\ref{p:QlowX}]
In this proof we use the notation $\chi_{\leq A}$ to denote a radial cutoff to the region $\{r\leq A\}$ for any positive number $A$.  We will also write $L^2$ instead of $L^2_{t,x}$ to simplify notation. As in the proof of Proposition~\ref{p:QlowL2}, we use~\eqref{eq:betabounds1} and the fact that $s \ge s_0$ to deduce that 
\EQ{ \label{eq:Qlow1} 
\| Q^{(\al)} \tilP_{\ge s} u \|_{X_{\low, \gamma}}^2 &:=  \| Q^{(\al)} \tilP_{\ge s} u \|_{L^2(\bbR \times A_{\leq0})}^2 + \sum_{\ell \ge 0} \gamma_\ell \| r^{-\frac{3}{2}} Q^{(\al)} \tilP_{\ge s} u \|_{L^2(\bbR \times A_\ell)}^2 \\
& \lesssim_{s_0}  \| s^{\frac{1}{2}} \na \tilP_{\ge s} u \|_{L^2(\bbR \times A_{\leq0})}^2 +  \| \tilP_{\ge s} u \|_{L^2(\bbR \times A_{\leq0})}^2  \\
& \quad  + \sum_{\ell \ge 0} \gamma_\ell 2^{-3 \ell} \|  s^{\frac{1}{2}} \na  \tilP_{\ge s} u \|_{L^2(\bbR \times A_\ell)}^2 +  \sum_{\ell \ge 0} \gamma_\ell 2^{-3 \ell} \|  \tilP_{\ge s} u \|_{L^2(\bbR\times A_\ell)}^2. 
}
The first two terms on the right-hand side above are estimated as follows. For the first term,  using Lemma~\ref{l:preg} and Proposition~\ref{p:lpregcore}, with $\phi_\ell$ as in Proposition~\ref{p:lpregcore}, we have 
\EQ{
\| s^{\frac{1}{2}} \na \tilP_{\ge s} u \|_{L^2(\bbR \times A_{\leq0})}  & \lesssim \| \chi_{\le 1} s^{\frac{1}{2}} \na e^{ \frac{s}{2} (\De+\rho^2)} ( \chi_{\le 2^{10}} e^{\frac{s}{2}(\De+\rho^2)} u) \|_{L^2}  \\
&\quad + \sum_{\ell \ge 10} \| \chi_{\le 1} s^{\frac{1}{2}} \na e^{ \frac{s}{2} (\De+\rho^2)} (  \phi_\ell e^{\frac{s}{2}(\De+\rho^2)} u) \|_{L^2}  \\
& \lesssim \|  \chi_{\le 2^{10}} e^{\frac{s}{2}(\De+\rho^2)} u \|_{L^2}   + \sum_{\ell \ge 10} (s^{\frac{1}{2}} 2^{-\ell})^{\frac{N}{2}} \|  \phi_\ell e^{\frac{s}{2}(\De+\rho^2)} u) \|_{L^2}   \\
& \lesssim_{s_0} \|  \tilP_{\geq\frac{s}{2}}  u \|_{L^2(\bbR \times A_{\leq10})}  \\
&\quad + \Big( \sum_{\ell \ge 10} \gamma_\ell 2^{-3\ell}\|  \tilP_{\geq\frac{s}{2}} u \|_{L^2(\bbR \times A_\ell)}^2\Big)^{\frac{1}{2}} \Big( \sum_{\ell \ge 10} \gamma_\ell^{-1} 2^{(-N+3) \ell} \Big)^{\frac{1}{2}} \\
&\lesssim \| \tilP_{\geq\frac{s}{2}} u \|_{X_{\low, \ga}}, 
}
where in the last line $N$ is fixed large enough so that the sum, 
\EQ{
\sum_{\ell \ge 10} \gamma_\ell^{-1} 2^{(-N+3) \ell} \le C
}
uniformly in $\{\gamma_k\} \in \calA$, which is possible by the definition of~$\calA$.  The second term on the right-hand side of~\eqref{eq:Qlow1} is estimated in an identical fashion. 

For the last two terms on the right-hand side of~\eqref{eq:Qlow1} we argue as follows. For the third term, we have 
\EQ{
 &\sum_{\ell \ge 0} \gamma_\ell 2^{-3 \ell} \|  s^{\frac{1}{2}} \na  \tilP_{\ge s} u \|_{L^2(\bbR\times A_\ell)}^2\\
  & \lesssim  \sum_{\ell \ge 0} \gamma_\ell 2^{-3 \ell} \| \phi_\ell   s^{\frac{1}{2}} \na  e^{\frac{s}{2} (\De+\rho^2)} ( \chi_{\le 1} e^{\frac{s}{2}(\De+\rho^2)} u) \|_{L^2}^2   \\
 & \quad + \sum_{\ell \ge 0}\gamma_\ell 2^{-3 \ell} \Big( \sum_{0 \le m \le \ell - 10}  \| \phi_\ell   s^{\frac{1}{2}} \na  e^{\frac{s}{2} (\De+\rho^2)} ( \phi_m e^{\frac{s}{2}(\De+\rho^2)} u) \|_{L^2}\Big)^2 \\
 & \quad +  \sum_{\ell \ge 0}\gamma_\ell 2^{-3 \ell} \Big( \sum_{\abs{\ell - m} \le  10}  \| \phi_\ell   s^{\frac{1}{2}} \na  e^{\frac{s}{2} (\De+\rho^2)} ( \phi_m e^{\frac{s}{2}(\De+\rho^2)} u) \|_{L^2}\Big)^2 \\
 & \quad + \sum_{\ell \ge 0} \gamma_\ell 2^{-3 \ell}  \Big( \sum_{ m \ge \ell + 10}\| \phi_\ell   s^{\frac{1}{2}} \na  e^{\frac{s}{2} (\De+\rho^2)} ( \phi_m e^{\frac{s}{2}(\De+\rho^2)} u) \|_{L^2}\Big)^2.
}
We bound the four terms on the right above as follows. For the first term, we use Lemma~\ref{l:preg} to deduce that 
\EQ{
 \sum_{\ell \ge 0} \gamma_\ell 2^{-3 \ell} \| \phi_\ell   s^{\frac{1}{2}} \na  e^{\frac{s}{2} (\De+\rho^2)} ( \chi_{\le 1} e^{\frac{s}{2}(\De+\rho^2)} u) \|_{L^2}^2 &\lesssim  \sum_{\ell \ge 0} \gamma_\ell 2^{-3 \ell} \|  \chi_{\le 1} e^{\frac{s}{2}(\De+\rho^2)} u \|_{L^2}^2 \\
 & \lesssim \| \tilP_{\geq\frac{s}{2}} u \|_{L^2(\bbR\times A_{\leq0})}^2 , 
}
where in the last line we used the fact that 
\EQ{
\sum_{\ell \ge 0} \gamma_\ell 2^{-3 \ell} \le C, 
}
uniformly in $\{ \gamma_{\ell} \} \in \calA$. For the second term, using Lemma~\ref{l:preg} 
\EQ{ 
\sum_{\ell \ge 0}\gamma_\ell 2^{-3 \ell} &\Big( \sum_{0 \le m \le \ell - 10}  \| \phi_\ell   s^{\frac{1}{2}} \na  e^{\frac{s}{2} (\De+\rho^2)} ( \phi_m e^{\frac{s}{2}(\De+\rho^2)} u) \|_{L^2}\Big)^2  \\
& \lesssim \sum_{\ell \ge 0} \Big( \sum_{0 \le m \le \ell - 10}\Big( \frac{\gamma_\ell}{\gamma_m}\Big)^{\frac{1}{2}} \left(\frac{2^{m}}{2^{\ell}}\right)^{\frac{3}{2}}  \gamma_m^{\frac{1}{2}}\|  r^{-\frac{3}{2}} e^{\frac{s}{2}(\De+\rho^2)} u \|_{L^2(\bbR\times A_m)}\Big)^2  \\
& \lesssim \sum_{m \ge 0} \gamma_m \|  r^{-\frac{3}{2}} \tilP_{\geq\frac{s}{2}} u \|_{L^2(\bbR\times A_m)}^2 ,
}
where the last line follows by Schur's test with the kernel 
\EQ{
k(m, \ell) := \chi_{m \le \ell -10} \left( \gamma_\ell/\gamma_m\right)^{\frac{1}{2}} \left(2^{m}/2^{\ell}\right)^{\frac{3}{2}} . 
}
Next, for the third term, we again use Lemma~\ref{l:preg} to deduce that 
\EQ{
 \sum_{\ell \ge 0}\gamma_\ell 2^{-3 \ell} &\Big( \sum_{\abs{\ell - m} \le  10}  \| \phi_\ell   s_0^{\frac{1}{2}} \na  e^{\frac{s}{2} (\De+\rho^2)} ( \phi_m e^{\frac{s}{2}(\De+\rho^2)} u) \|_{L^2}\Big)^2 \\
 & \lesssim \sum_{\ell \ge 0} \gamma_\ell 2^{-3\ell} \|  \ti\phi_\ell e^{\frac{s}{2}(\De+\rho^2)} u\|_{L^2}^2 \\
 & \lesssim \sum_{\ell \ge 0} \gamma_\ell \| r^{-\frac{3}{2}} \tilP_{\geq\frac{s}{2}} u  \|_{L^2(\bbR\times\ti A_\ell)}^2  \\
 & \lesssim \| \tilP_{\geq\frac{s}{2}} u \|_{X_{\low, \ga}}^2
}
where $\ti \phi_\ell$ and $\ti A_\ell$ are fattened versions of $\phi_\ell$ and $A_\ell$. Finally, for the last term we gain summability in $m$ from an application of Proposition~\ref{p:lpregcore}: 
\EQ{
\sum_{\ell \ge 0} \gamma_\ell 2^{-3 \ell} & \Big( \sum_{ m \ge \ell + 10}\| \phi_\ell   s^{\frac{1}{2}} \na  e^{\frac{s}{2} (\De+\rho^2)} ( \phi_m e^{\frac{s}{2}(\De+\rho^2)} u) \|_{L^2}\Big)^2 \\
& \lesssim \sum_{\ell \ge 0} \gamma_\ell 2^{-3 \ell}  \Big( \sum_{ m \ge \ell + 10} (s^{\frac{1}{2}} 2^{-m})^N\|  \phi_m e^{\frac{s}{2}(\De+\rho^2)} u \|_{L^2}\Big)^2  \\
& \lesssim \sum_{\ell \ge 0} \gamma_\ell 2^{-3 \ell} \sum_{ m \ge \ell + 10} \gamma_m \| r^{-\frac{3}{2}} e^{\frac{s}{2}(\De+\rho^2)} u \|_{L^2}^2 \sum_{m \ge \ell + 10} 2^{-m (2N-3)} \gamma_m^{-1}  \\
& \lesssim \sum_{ m \ge 10} \gamma_m \| r^{-\frac{3}{2}} \tilP_{\geq\frac{s}{2}} u \|_{L^2}^2\sum_{\ell = 0}^{m-10} \gamma_\ell 2^{-3 \ell}\lesssim \sum_{ m \ge 0} \gamma_m \| r^{-\frac{3}{2}} \tilP_{\geq\frac{s}{2}} u \|_{L^2}^2,
}
where again we used that 
\EQ{
\sum_{m \ge \ell + 10} 2^{-m (2N-3)} \gamma_m^{-1}   \le C\qquad \mathrm{and}\qquad\sum_{\ell \ge 0} \gamma_\ell 2^{-3 \ell}\leq C,
}
uniformly in $\{\gamma_\ell\} \in \calA$. The fourth and final term on the right-hand side of~\eqref{eq:Qlow1} is handled in an identical fashion. This completes the proof. 
\end{proof} 

\begin{proof}[Proof of Corollary~\ref{cor:QLElow}]
By Lemma~\ref{l:Xal} and Proposition~\ref{p:QlowX}
\begin{align*}
\begin{split}
 \|Q^{(\alpha)}\tilP_{\geq s}u\|_{LE_\low}&=\sup_{\{\gamma_\ell\}\in\calA}\|Q^{(\alpha)}\tilP_{\geq s}u\|_{X_{\low,\gamma}}\lesssim \sup_{\{\gamma_\ell\}\in\calA} \|\tilP_{\geq\frac{s}{2}}u\|_{X_{\low,\gamma}}\\
 &\lesssim \|\tilP_{\geq\frac{s}{2}}u\|_{LE_\low}.
\end{split}
\end{align*} 
\end{proof}

\subsubsection{Bounding the high-frequency multiplier}\label{s:Bounding_high_frequency_multiplier} 

Let $s_0>0$ be fixed. Given any slowly varying sequence $\{ \al_\ell \} \in \calA$ and any $0 < s \leq s_0$, we let $Q_s^{(\alpha)}$ be as defined in Definition~\ref{d:beta_high}. In the rest of this section the constants in our estimates are allowed to depend on $s_0$. 
\begin{prop}\label{p:QsL2} For any $0 < s \le s_0$ and $\{\alpha_\ell\}\in\calA$, we have 
\EQ{
\| Q_s^{(\al)} \tilP_{s} u  \|_{L^2} &\lesssim_{s_0} \| \tilP_{\frac{s}{2}} u\|_{L^2},\\
\| Q_s^{(\al)} \tilP_{s} u  \|_{L^2_{t, x}} &\lesssim_{s_0} \| \tilP_{\frac{s}{2}} u\|_{L^2_{t, x}},
} 
where the implicit constant is independent of $\{\alpha_\ell\} \in\calA$.
\end{prop}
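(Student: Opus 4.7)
\smallskip

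\noindent\emph{Proof proposal.} The strategy is a direct analogue of the proof of Proposition~\ref{p:QlowL2}, exploiting the fact that the multiplier $Q_s^{(\alpha)}$ is essentially an order-one operator with coefficients of size $s^{\frac{1}{2}}$, so that it is controlled by the frequency-localized derivative $s^{\frac{1}{2}}\nabla$ applied to $\tilP_s u$, which in turn is bounded in $L^2$ by the heat flow regularity of Lemma~\ref{l:preg}. The $L^2_{t,x}$ estimate follows from the $L^2$ estimate by integrating in time (since all operations are time-independent), so I will focus on the pointwise-in-time bound.

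The first step is to expand
\begin{equation*}
Q_s^{(\alpha)} \tilP_s u = \frac{1}{i}\Bigl(2\beta_s^{(\alpha)}\partial_r \tilP_s u + (\partial_r \beta_s^{(\alpha)})\tilP_s u + (d-1)\coth r\,\beta_s^{(\alpha)}\tilP_s u\Bigr),
\end{equation*}
and apply the pointwise bounds \eqref{eq:betabounds2} on $\beta_s^{(\alpha)}$, which hold uniformly in $\{\alpha_\ell\}\in\calA$ and $s\in(0,s_0]$. This yields
\begin{equation*}
\|Q_s^{(\alpha)}\tilP_s u\|_{L^2} \lesssim_{s_0} \|s^{\frac{1}{2}}\nabla \tilP_s u\|_{L^2} + \|\tilP_s u\|_{L^2},
\end{equation*}
where the top order term is controlled using $|\beta_s^{(\alpha)}|\lesssim s^{\frac{1}{2}}$, while the two zeroth order terms use $|\partial_r\beta_s^{(\alpha)}|\lesssim 1$ and $|\coth r\,\beta_s^{(\alpha)}|\lesssim_{s_0} 1$.

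The second step is to use the semigroup identity
\begin{equation*}
\tilP_s u = 2\, e^{\frac{s}{2}(\Delta+\rho^2)} \tilP_{\frac{s}{2}} u,
\end{equation*}
which follows directly from $\tilP_s u = -s(\Delta+\rho^2)e^{s(\Delta+\rho^2)}u$. Applying the $L^2$-boundedness of $e^{\frac{s}{2}(\Delta+\rho^2)}$ from Lemma~\ref{l:preg} (valid for $s\leq s_0$ with constant depending on $s_0$) controls the zeroth order contribution by $\|\tilP_{\frac{s}{2}} u\|_{L^2}$. For the top order term, writing $s^{\frac{1}{2}}\nabla \tilP_s u = 2 s^{\frac{1}{2}}\nabla e^{\frac{s}{2}(\Delta+\rho^2)}\tilP_{\frac{s}{2}} u$ and invoking the gradient bound \eqref{eq:napest} of Lemma~\ref{l:preg} with $k=1$ produces the same estimate $\|s^{\frac{1}{2}}\nabla \tilP_s u\|_{L^2}\lesssim_{s_0}\|\tilP_{\frac{s}{2}}u\|_{L^2}$. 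Combining these yields the desired bound.

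There is no genuine obstacle here: the uniformity in $\{\alpha_\ell\}\in\calA$ is already built into the pointwise bounds \eqref{eq:betabounds2}, and the $s_0$-dependence arises solely from the $\coth r\,\beta_s^{(\alpha)}$ bound and the $e^{s\rho^2}$ factor in Lemma~\ref{l:preg}. The only small subtlety is that one must pass through $\tilP_{\frac{s}{2}}u$ rather than $\tilP_s u$ itself, because our Littlewood--Paley projection is not sharply frequency-localized (cf.~Remark~\ref{rem:LP}); this loss is absorbed by the semigroup identity above and is harmless. The $L^2_{t,x}$ statement follows by squaring the $L^2$ estimate and integrating in $t$.
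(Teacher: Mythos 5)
Your proof is correct and follows essentially the same route as the paper: reduce $Q_s^{(\alpha)}\tilP_s u$ to $s^{\frac12}\nabla\tilP_s u$ and $\tilP_s u$ via \eqref{eq:betabounds2}, then peel off half the heat time with the semigroup property and apply Lemma~\ref{l:preg}. Your identity $\tilP_s u = 2\,e^{\frac{s}{2}(\Delta+\rho^2)}\tilP_{\frac{s}{2}}u$ is the same factorization the paper writes as $\tilP_s u = -e^{\frac{s}{2}(\Delta+\rho^2)}s(\Delta+\rho^2)e^{\frac{s}{2}(\Delta+\rho^2)}u$, just expressed more succinctly.
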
 

\begin{prop} \label{p:QsX} 
Let $\{\gamma_k\} \in \calA$ be any slowly varying sequence. Then for any $0 < s \le s_0$ and $\{\alpha_\ell\}\in\calA$, we have 
\EQ{
\| Q_s^{(\al)} \tilP_{s} u \|_{X_{s, \gamma}} \lesssim_{s_0} \| \tilP_{\frac{s}{2}} u\|_{X_{s, \gamma}},
}
where the implicit constant is independent of $\{\alpha_\ell\}, \{\gamma_k\} \in \calA$.
\end{prop}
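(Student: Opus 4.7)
The plan is to mirror the argument used to prove Proposition~\ref{p:QlowX}, with modifications to handle the extra scale $s^{\frac{1}{2}}$ introduced by the high-frequency localization. First I would use the pointwise bounds \eqref{eq:betabounds2} together with the explicit form \eqref{eq:Qsdef1} of $Q_s^{(\alpha)}$ to obtain
\[
	|Q_s^{(\alpha)} v| \lesssim s^{\frac{1}{2}} |\nabla v| + |v|,
\]
uniformly in $\{\alpha_\ell\}\in\calA$ and $0 < s \leq s_0$. Applied with $v = \tilP_s u$, this reduces the proposition to showing the two bounds
\[
	\|s^{\frac{1}{2}} \nabla \tilP_s u\|_{X_{s,\gamma}} + \|\tilP_s u\|_{X_{s,\gamma}} \lesssim_{s_0} \|\tilP_{s/2} u\|_{X_{s,\gamma}}.
\]

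Next I would use the semigroup identity $\tilP_s u = 2 e^{(s/2)(\Delta+\rho^2)}\tilP_{s/2} u$, so that $s^{\frac{1}{2}}\nabla \tilP_s u = 2 s^{\frac{1}{2}} \nabla e^{(s/2)(\Delta+\rho^2)} \tilP_{s/2} u$. It then suffices to verify that both $e^{(s/2)(\Delta+\rho^2)}$ and $s^{\frac{1}{2}} \nabla e^{(s/2)(\Delta+\rho^2)}$ map $X_{s,\gamma} \to X_{s,\gamma}$ with constants independent of $\{\gamma_k\}\in\calA$. Expanding the $X_{s,\gamma}$-norm of the output into its $L^2(A_{\leq -k_s})$ component and its weighted dyadic annular pieces $\gamma_{\ell+k_s}^{1/2} \|r^{-1/2}(\cdot)\|_{L^2(A_\ell)}$ for $\ell \geq -k_s$, I would decompose the input $\tilP_{s/2} u$ using the partition
\[
 \tilP_{s/2} u = \chi_{\leq -k_s}\tilP_{s/2} u + \sum_{m \geq -k_s} \phi_m \tilP_{s/2} u
\]
and analyze each output annulus contribution by splitting the input sum into the close ($|\ell - m| \leq 10$), low ($m \leq \ell - 10$) and high ($m \geq \ell + 10$) pieces, exactly as in the proof of Proposition~\ref{p:QlowX}. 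The close pieces are treated by Lemma~\ref{l:preg} (for the gradient) and by the boundedness of $e^{s(\Delta+\rho^2)}$ on $L^2$, while the low and high pieces are handled by the localized parabolic regularity estimates (Proposition~\ref{p:lpregcore} and Corollary~\ref{c:lpregcov}). The hypothesis $\max\{2^\ell, 2^m\} \geq s^{\frac{1}{2}}$ of Proposition~\ref{p:lpregcore} is satisfied throughout because $\ell, m \geq -k_s$ forces $\max\{2^\ell,2^m\} \gtrsim s^{\frac{1}{2}}$. Finally, Schur's test with the kernel $k(\ell,m) = (\gamma_{\ell+k_s}/\gamma_{m+k_s})^{\frac{1}{2}} 2^{-|\ell-m|/2}$ (together with the large decay $(s^{\frac{1}{2}} 2^{-\max\{\ell,m\}})^N$ available for the far pieces, which beats any polynomial growth of $\gamma$) absorbs the slowly varying ratios.

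The main technical obstacle I anticipate is the consistent treatment of the innermost region $A_{\leq -k_s}$, which replaces the simpler role played by $A_{\leq 0}$ in the low-frequency case. Unlike the low frequencies, where $A_{\leq 0}$ carried a weight comparable to $1$, here the inner ball carries the weight $s^{-\frac{1}{4}}$, and it is the \emph{only} place where the localization scale matches the frequency scale $s^{\frac{1}{2}}$. When estimating cross terms such as $\|\chi_{\leq -k_s} s^{\frac{1}{2}}\nabla e^{(s/2)(\Delta+\rho^2)}(\phi_m \tilP_{s/2} u)\|_{L^2}$ with $m \geq -k_s + 10$, I would invoke the sharper version of Proposition~\ref{p:lpregcore} recorded in Remark~\ref{r:lpregcutoff}, which permits the output cutoff $\chi_{\leq -k_s}$ of characteristic scale $\simeq s^{\frac{1}{2}}$; the hypothesis $s^{\frac{1}{2}} \leq C_0 \max\{s^{\frac{1}{2}}, 2^m\}$ is automatic. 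Choosing the decay exponent $N$ in that estimate large enough depending only on $\eta$ (from Definition~\ref{d:A}) ensures that all weight ratios involving $\{\alpha_\ell\}$ and $\{\gamma_k\}$ remain uniformly summable, giving the claimed bound with a constant independent of $\{\alpha_\ell\},\{\gamma_k\}\in\calA$.
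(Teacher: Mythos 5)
Your proposal is correct and follows essentially the same route as the paper's own proof: the pointwise bound on the multiplier from \eqref{eq:betabounds2}, the semigroup splitting $\tilP_s u = 2e^{(s/2)(\Delta+\rho^2)}\tilP_{s/2}u$, the near/far/close dyadic decomposition, and the combination of Lemma~\ref{l:preg}, Proposition~\ref{p:lpregcore}, Remark~\ref{r:lpregcutoff}, and Schur's test are exactly what the paper does. Your observation that the inner ball $A_{\leq -k_s}$ is the new feature relative to Proposition~\ref{p:QlowX} and requires the cut-off version of the mismatch estimate is the right point of emphasis.
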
 

\begin{cor}\label{cor:QLEs}
For any $0<s\leq s_0$ and $\{\alpha_\ell\}\in\calA$, it holds that
\begin{align*}
\begin{split}
 \|Q^{(\alpha)}\tilP_{s}u \|_{LE_{s}} \lesssim_{s_0} \|\tilP_{ \frac{s}{2}}u\|_{LE_{s}},
\end{split}
\end{align*}
where the implicit constant is independent of  $\{\alpha_\ell\}\in\calA$.
\end{cor}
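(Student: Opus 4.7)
The plan is to deduce this corollary directly from Proposition~\ref{p:QsX} via the characterization of the $LE_s$ norm provided by Lemma~\ref{l:Xal}, mirroring exactly the proof of Corollary~\ref{cor:QLElow}. This is a short, purely formal argument: all of the substantive work (the dyadic annulus decomposition, the application of the localized parabolic regularity estimates from Proposition~\ref{p:lpregcore}, and the Schur-test manipulations) has already been carried out in the proof of Proposition~\ref{p:QsX}.

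Concretely, I would argue as follows. By Lemma~\ref{l:Xal},
\[
\|v\|_{LE_s}\simeq\sup_{\{\gamma_\ell\}\in\calA}\|v\|_{X_{s,\gamma}},
\]
with implicit constants independent of $s>0$. Applying this equivalence to $v=Q^{(\alpha)}\tilP_s u$, and then using Proposition~\ref{p:QsX} for each fixed slowly varying sequence $\{\gamma_\ell\}\in\calA$, we obtain
\[
\|Q^{(\alpha)}\tilP_s u\|_{X_{s,\gamma}}\lesssim_{s_0}\|\tilP_{\frac{s}{2}}u\|_{X_{s,\gamma}},
\]
where the implicit constant is independent of both $\{\alpha_\ell\}$ and $\{\gamma_\ell\}$ in $\calA$. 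Taking the supremum over $\{\gamma_\ell\}\in\calA$ on both sides, and then applying Lemma~\ref{l:Xal} once more (in the reverse direction on the right), yields
\[
\|Q^{(\alpha)}\tilP_s u\|_{LE_s}\lesssim_{s_0}\sup_{\{\gamma_\ell\}\in\calA}\|\tilP_{\frac{s}{2}}u\|_{X_{s,\gamma}}\simeq\|\tilP_{\frac{s}{2}}u\|_{LE_s},
\]
which is the desired estimate.

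There is no real obstacle here, since the uniformity in $\{\gamma_\ell\}\in\calA$ needed to pass the supremum through is built directly into the statement of Proposition~\ref{p:QsX}. The only minor thing to verify is that the implicit constant from Lemma~\ref{l:Xal} is genuinely $s$-independent (stated explicitly there), so that absorbing it into the $\lesssim_{s_0}$ in the final step is legitimate.
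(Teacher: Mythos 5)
Your proof is correct and is precisely the argument the paper intends: the paper's proof of Corollary~\ref{cor:QLEs} simply says it is similar to that of Corollary~\ref{cor:QLElow}, which is exactly the Lemma~\ref{l:Xal} plus Proposition~\ref{p:QsX} argument you give. No issues.
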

\begin{proof}[Proof of Proposition~\ref{p:QsL2}]
The second inequality is a consequence of the first. As in the previous section, the proof of the first inequality will be an easy application of Lemma~\ref{l:preg} along with the bounds~\eqref{eq:betabounds2}. Note that 
\EQ{
\tilP_s  u  = - s (\De+\rho^2) e^{s (\De+\rho^2)} u = - e^{\frac{s}{2} (\De+\rho^2)}s(\De+\rho^2)  e^{\frac{s}{2} (\De+\rho^2)} u .
}
Using the estimates~\eqref{eq:betabounds2} we have, 
\EQ{
\| Q_s^{(\al)} \tilP_{s} u  \|_{L^2} & \lesssim \| \bsbeta_{s}^{(\alpha), \mu} \na_\mu \tilP_{s} u \|_{L^2} + \| \p_r \be_{s}^{(\alpha)}  \tilP_{s} u \|_{L^2} + \| \coth(r) \be_{s}^{(\alpha)} \tilP_{s} u \|_{L^2} \\
& \lesssim_{s_0}  \| s^{\frac{1}{2}} \na \tilP_{s} u \|_{L^2} +  \| \tilP_{s} u \|_{L^2} \\
& \simeq \| s^{\frac{1}{2}} \na  e^{\frac{s}{2} (\De+\rho^2)} ( s (\De+\rho^2) e^{\frac{s}{2}(\De+\rho^2)} u)\|_{L^2} \\
&\quad+  \| e^{\frac{s}{2}(\De+\rho^2)} s (\De+\rho^2) e^{\frac{s}{2} (\De+\rho^2)} u \|_{L^2} \\
& \lesssim_{s_0}  \| s (\De+\rho^2) e^{\frac{s}{2}(\De+\rho^2)} u \|_{L^2} \lesssim \| \tilP_{\frac{s}{2}} u \|_{L^2} ,
}
which completes the proof.
\end{proof} 

\begin{proof}[Proof of Proposition~\ref{p:QsX}]
We proceed similarly to the proof of Proposition~\ref{p:QlowX} and use the same notation for the cutoffs $\chi_{\leq A}$ and $\phi_\ell$ and $L^2$ for $L^2_{t,x}$. Using the bounds~\eqref{eq:betabounds2} we find that
\EQ{ \label{eq:QsX_first_step}
 \| Q_s^{(\al)} \tilP_s u \|_{X_{s,\gamma}}^2 &:= s^{-\frac{1}{2}} \| Q_s^{(\al)} \tilP_s u \|_{L^2(\bbR \times A_{\leq-k_s})}^2 + \sum_{\ell \geq -k_s} \gamma_{\ell + k_s} \| r^{-\frac{1}{2}} \tilP_s u \|_{L^2(\bbR \times A_\ell)}^2 \\
 &\lesssim s^{-\frac{1}{2}} \| s^{\frac{1}{2}} \nabla \tilP_s u \|_{L^2(\bbR \times A_{\leq-k_s})}^2 + s^{-\frac{1}{2}} \| \tilP_s u \|_{L^2(\bbR \times A_{\leq-k_s})}^2 \\
 &\quad + \sum_{\ell \geq -k_s} \gamma_{\ell+k_s} \| r^{-\frac{1}{2}} s^{\frac{1}{2}} \nabla \tilP_s u \|_{L^2(\bbR \times A_\ell)}^2 \\
 &\quad + \sum_{\ell \geq -k_s} \gamma_{\ell + k_s} \| r^{-\frac{1}{2}} \tilP_s u \|_{L^2(\bbR \times A_\ell)}^2, 
}
where as usual the constants can depend on $s_0$. To estimate the first term on the right-hand side above we use Lemma~\ref{l:preg} and Proposition~\ref{p:lpregcore} to obtain that
\EQ{
 &s^{-\frac{1}{2}} \| s^{\frac{1}{2}} \nabla \tilP_s u \|_{L^2(\bbR \times A_{\leq-k_s})}^2 \\
 &\lesssim s^{-\frac{1}{2}} \| \chi_{\leq s^{\frac{1}{2}}} s^{\frac{1}{2}} \nabla e^{\frac{s}{2} (\Delta+\rho^2)} \bigl( \chi_{ \leq 2^{10} s^{\frac{1}{2}}} \tilP_{\frac{s}{2}} u \bigr) \|_{L^2}^2 \\
 &\quad + \Bigl( \sum_{\ell \geq -k_s + 10} s^{-\frac{1}{4}} \| \chi_{ \leq s^{\frac{1}{2}}} s^{\frac{1}{2}} \nabla e^{\frac{s}{2} (\Delta+\rho^2)} \bigl( \phi_\ell \tilP_{\frac{s}{2}} u \bigr) \|_{L^2} \Bigr)^2 \\
 &\lesssim s^{-\frac{1}{2}} \| \chi_{ \leq 2^{10} s^{\frac{1}{2}}} \tilP_{\frac{s}{2}} u \|_{L^2}^2 + \Bigl( \sum_{\ell \geq -k_s + 10} s^{-\frac{1}{4}} \bigl( s^{\frac{1}{2}} 2^{-\ell} \bigr)^{\frac{N}{2}} \| \phi_\ell \tilP_{\frac{s}{2}} u \|_{L^2} \Bigr)^2 \\
 &\lesssim s^{-\frac{1}{2}} \| \chi_{ \leq 2^{10} s^{\frac{1}{2}}} \tilP_{\frac{s}{2}} u \|_{L^2}^2 \\
 &\quad + \Bigl( \sum_{\ell \geq -k_s + 10} \gamma_{\ell + k_s} 2^{-\ell} \| \phi_\ell \tilP_{\frac{s}{2}} u \|_{L^2}^2 \Bigr) \Bigl( \sum_{\ell \geq -k_s + 10} \gamma_{\ell + k_s}^{-1} s^{\frac{1}{2}(N-1)} 2^{(-N+1) \ell} \Bigr) \\
 &\lesssim \| \tilP_{\frac{s}{2}} u \|_{X_{s,\gamma}}^2,
}
where in the last line we fixed $N$ sufficiently large so that
\EQ{
 \sum_{\ell \geq -k_s + 10} \gamma_{\ell + k_s}^{-1} s^{\frac{1}{2}(N-1)} 2^{(-N+1) \ell} \leq C
}
uniformly for all $0 < s \leq s_0$ and for all $\{ \gamma_k \} \in \calA$, which is possible by the definition of $\calA$. The second term on the right-hand side of~\eqref{eq:QsX_first_step} can be bounded in the same manner.

We now turn to estimating the third term on the right-hand side of~\eqref{eq:QsX_first_step}.
\EQ{ \label{eq:QsX_first_step_third_term_splitting}
 \sum_{\ell \geq -k_s} & \gamma_{\ell + k_s} \| r^{-\frac{1}{2}} s^{\frac{1}{2}} \nabla \tilP_s u \|_{L^2(\bbR \times A_\ell)}^2 \\
 &\lesssim \sum_{\ell \geq -k_s} \gamma_{\ell + k_s} 2^{-\ell} \| \phi_\ell s^{\frac{1}{2}} \nabla e^{\frac{s}{2}(\Delta+\rho^2)} \bigl( \chi_{\leq s^{\frac{1}{2}}} \tilP_{\frac{s}{2}} u \bigr) \|_{L^2}^2 \\
 &\quad + \sum_{\ell \geq -k_s} \gamma_{\ell + k_s} 2^{-\ell} \Bigl( \sum_{-k_s \leq m \leq \ell-10} \| \phi_\ell s^{\frac{1}{2}} \nabla e^{\frac{s}{2} (\Delta+\rho^2)} \bigl( \phi_m \tilP_{\frac{s}{2}} u \bigr) \|_{L^2} \Bigr)^2 \\
 &\quad + \sum_{\ell \geq -k_s} \gamma_{\ell + k_s} 2^{-\ell} \Bigl( \sum_{|m-\ell| < 10} \| \phi_\ell s^{\frac{1}{2}} \nabla e^{\frac{s}{2} (\Delta+\rho^2)} \bigl( \phi_m \tilP_{\frac{s}{2}} u \bigr) \|_{L^2} \Bigr)^2 \\ 
 &\quad + \sum_{\ell \geq -k_s} \gamma_{\ell + k_s} 2^{-\ell} \Bigl( \sum_{m \geq \ell + 10} \| \phi_\ell s^{\frac{1}{2}} \nabla e^{\frac{s}{2} (\Delta+\rho^2)} \bigl( \phi_m \tilP_{\frac{s}{2}} u \bigr) \|_{L^2} \Bigr)^2 .
}
For the first term on the right-hand side of~\eqref{eq:QsX_first_step_third_term_splitting} we use Lemma~\ref{l:preg} to infer that
\EQ{ 
 \sum_{\ell \geq -k_s} \gamma_{\ell + k_s} 2^{-\ell} \| \phi_\ell s^{\frac{1}{2}} \nabla e^{\frac{s}{2}(\Delta+\rho^2)} \bigl( \chi_{\leq s^{\frac{1}{2}}} \tilP_{\frac{s}{2}} u \bigr) \|_{L^2}^2 &\lesssim \sum_{\ell \geq -k_s} \gamma_{\ell+k_s} 2^{-\ell} \| \chi_{\leq s^{\frac{1}{2}}} \tilP_{\frac{s}{2}} u \|_{L^2}^2 \\
 &\lesssim s^{-\frac{1}{2}} \| \tilP_{\frac{s}{2}} u \|_{L^2(\bbR \times A_{\leq-k_s})}^2,
}
where we used that
\EQ{ 
 \sum_{\ell \geq -k_s} \gamma_{\ell + k_s} 2^{-\ell} \lesssim \sum_{\ell \geq -k_s} 2^{-\ell} \lesssim 2^{k_s} \simeq s^{-\frac{1}{2}}
}
uniformly for all $\{ \gamma_k \} \in \calA$. For the second term on the right-hand side of~\eqref{eq:QsX_first_step_third_term_splitting} we invoke Proposition~\ref{p:lpregcore} and Schur's test to conclude that 
\EQ{ 
 \sum_{\ell \geq -k_s} & \gamma_{\ell + k_s} 2^{-\ell} \Bigl( \sum_{-k_s \leq m \leq \ell-10} \| \phi_\ell s^{\frac{1}{2}} \nabla e^{\frac{s}{2} (\Delta+\rho^2)} \bigl( \phi_m \tilP_{\frac{s}{2}} u \bigr) \|_{L^2} \Bigr)^2 \\
 &\lesssim \sum_{\ell \geq -k_s} \gamma_{\ell + k_s} 2^{-\ell} \Bigl( \sum_{-k_s \leq m \leq \ell-10} \bigl( s^{\frac{1}{2}} 2^{-\ell} \bigr) \| \phi_m \tilP_{\frac{s}{2}} u \|_{L^2} \Bigr)^2 \\
 &\lesssim \sum_{\ell \geq -k_s} \biggl( \sum_{-k_s \leq m \leq \ell-10} \Bigl( \frac{\gamma_{\ell+k_s}}{\gamma_{m+k_s}} \Bigr)^{\frac{1}{2}} \Bigl( \frac{2^m}{2^\ell} \Bigr)^{\frac{1}{2}} \bigl( s^{\frac{1}{2}} 2^{-\ell} \bigr) \, \gamma_{m+k_s}^{\frac{1}{2}} \| r^{-\frac{1}{2}} \tilP_{\frac{s}{2}} u \|_{L^2(\bbR \times A_m)} \biggr)^2 \\
 &\lesssim \sum_{m \geq -k_s} \gamma_{m+k_s} \| r^{-\frac{1}{2}} \tilP_{\frac{s}{2}} u \|_{L^2(\bbR \times A_m)}^2.
}
Then for the third term on the right-hand side of~\eqref{eq:QsX_first_step_third_term_splitting} we use Lemma~\ref{l:preg} and crudely bound it by
\EQ{ 
 \sum_{\ell \geq -k_s} & \gamma_{\ell + k_s} 2^{-\ell} \Bigl( \sum_{|m-\ell| < 10} \| \phi_\ell s^{\frac{1}{2}} \nabla e^{\frac{s}{2} (\Delta+\rho^2)} \bigl( \phi_m \tilP_{\frac{s}{2}} u \bigr) \|_{L^2} \Bigr)^2 \\
 &\lesssim \sum_{\ell \geq -k_s} \gamma_{\ell + k_s} 2^{-\ell} \Bigl( \sum_{|m-\ell| < 10} \| \phi_m \tilP_{\frac{s}{2}} u \|_{L^2} \Bigr)^2 \\
 &\lesssim \sum_{m \geq -k_s} \gamma_{m+k_s} \| r^{-\frac{1}{2}} \tilP_{\frac{s}{2}} u \|_{L^2(\bbR \times A_m)}^2. 
}
Finally, for the fourth term on the right-hand side of~\eqref{eq:QsX_first_step_third_term_splitting} we use Proposition~\ref{p:lpregcore} to obtain that
\EQ{ 
 \sum_{\ell \geq -k_s} & \gamma_{\ell + k_s} 2^{-\ell} \Bigl( \sum_{m \geq \ell + 10} \| \phi_\ell s^{\frac{1}{2}} \nabla e^{\frac{s}{2} (\Delta+\rho^2)} \bigl( \phi_m \tilP_{\frac{s}{2}} u \bigr) \|_{L^2} \Bigr)^2 \\
 &\lesssim \sum_{\ell \geq -k_s} 2^{-\ell}\gamma_{\ell+k_s} \Bigl( \sum_{m \geq \ell + 10} \bigl( s^{\frac{1}{2}} 2^{-m} \bigr)^\frac{N}{2} \| \phi_m \tilP_{\frac{s}{2}} u \|_{L^2} \Bigr)^2 \\
 &\lesssim \sum_{\ell\geq-k_s}2^{-\ell}\gamma_{\ell+k_s}\sum_{m\geq\ell+10}\gamma_{m+k_s}\|r^{-\frac{1}{2}}\phi_m\tilP_\frac{s}{2}u\|_{L^2}^2\sum_{m\geq\ell+10}s^{\frac{N}{2}}2^{-m(N-1)}\gamma_{m+k_s}^{-1}\\
 &\lesssim\sum_{m\geq-k_s+10}\gamma_{m+k_s}\|r^{-\frac{1}{2}}\phi_m\tilP_\frac{s}{2}u\|_{L^2}^2\sum_{\ell\geq-k_s}\sum_{m\geq\ell+10}s^{\frac{N}{2}}2^{-\ell}2^{-m(N-1)}\big(\frac{\gamma_{\ell+k_s}}{\gamma_{m+k_s}}\big)\\
 &\lesssim\sum_{m\geq-k_s+10}\gamma_{m+k_s}\|r^{-\frac{1}{2}}\phi_m\tilP_\frac{s}{2}u\|_{L^2}^2.
}
Here to pass to the last line we have used the following estimate, where $\eta$ is as in Definition~\ref{d:A}:
\begin{align*}
\begin{split}
&\sum_{\ell\geq-k_s}\sum_{m\geq\ell+10}s^{\frac{N}{2}}2^{-\ell}2^{-m(N-1)}\big(\frac{\gamma_{\ell+k_s}}{\gamma_{m+k_s}}\big)\\
&\lesssim s^{\frac{N}{2}}\sum_{\ell\geq-k_s}\sum_{m\geq \ell+10}2^{-m(N-1)}2^{-\ell}2^{(m-\ell)\eta}\\
&\lesssim s^{\frac{N}{2}}\sum_{m\geq-k_s+10}2^{-m(N-1-\eta)}\sum_{-k_s\leq\ell\leq m-10}2^{-\ell(1+\eta)}\\
&\lesssim s^{\frac{N-1-\eta}{2}}\sum_{m\geq-k_s+10}2^{-m(N-1-\eta)}\lesssim 1.
\end{split}
\end{align*}
The last term on the right-hand side of~\eqref{eq:QsX_first_step} can be treated in an identical fashion, which completes the proof.
\end{proof} 

\begin{proof}[Proof of Corollary~\ref{cor:QLEs}]
This is similar to the proof of Corollary~\ref{cor:QLEs}. We omit the details.
\end{proof}

\subsection{Estimates needed to handle $F$} 
In this subsection we prove technical estimates concerning the pairings $\angles{\tilP_{\geq s }F}{Q^{(\alpha)}\tilP_{\geq s}u}_{t,x}$ and $\angles{\tilP_{s} F}{Q^{(\alpha)}_s \tilP_s u}_{t,x}$ that we need in Sections~\ref{s:low}--\ref{s:trans}. Here, $Q^{(\alpha)}$ and~$Q_s^{(\alpha)}$ are as in Definition~\ref{d:beta_low} and Definition~\ref{d:beta_high}, respectively. 

We start with the low frequencies.
\begin{prop} \label{p:FQlowu}
For any $\{\alpha_\ell \}\in\calA$ let $Q^{(\alpha)}$ be defined as in Definition~\ref{d:beta_low}. Then we have for any $\varepsilon>0$ that
\begin{align*}
 \begin{split}
  \int_{\frac{1}{8}}^4 \sup_{\{ \alpha_\ell \} \in \calA} \bigl| \angles{\tilP_{\geq s} F}{Q^{(\alpha)}\tilP_{\geq s}u}_{t,x} \bigr| \, \ds \leq \varepsilon \|u\|_{LE}^2 + C_\varepsilon \|F\|_{LE^\ast}^2.
 \end{split}
\end{align*}
Similarly
\begin{align*}
 \begin{split}
  \int_{\frac{1}{8}}^4 \bigl| \angles{\tilP_{\geq s}F}{\tilP_{\geq s}u}_{t,x} \bigr| \, \ds \leq \varepsilon \|u\|_{LE}^2 + C_\varepsilon \|F\|_{LE^\ast}^2.
 \end{split}
\end{align*}
\end{prop}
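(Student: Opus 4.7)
The plan is to establish both estimates by dualizing the pairing via the pre-dual structure of the low frequency local smoothing spaces, and then exploiting boundedness of $Q^{(\alpha)}$ on $LE_{\low}$ uniformly in $\alpha$.

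For the second (simpler) inequality, I would apply the pairing bound $|\angles{v}{G}_{t,x}| \leq \|v\|_{LE_\low}\|G\|_{LE_\low^*}$, which follows directly from Cauchy--Schwarz applied to the definitions of $LE_\low$ and $LE_\low^*$ (sup against $\ell^1$-sum). This yields
\begin{equation*}
 |\angles{\tilP_{\geq s}F}{\tilP_{\geq s}u}_{t,x}| \leq \|\tilP_{\geq s} F\|_{LE_\low^*} \|\tilP_{\geq s} u\|_{LE_\low}.
\end{equation*}
A Young's inequality with parameter $\varepsilon$ followed by integration over $s \in [\frac{1}{8}, 4]$ gives the result immediately, since both integrals are already present in the definitions of $\|F\|_{LE^*}^2$ and $\|u\|_{LE}^2$.

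For the first inequality, the same pairing bound gives
\begin{equation*}
 |\angles{\tilP_{\geq s}F}{Q^{(\alpha)}\tilP_{\geq s}u}_{t,x}| \leq \|\tilP_{\geq s} F\|_{LE_\low^*} \|Q^{(\alpha)}\tilP_{\geq s} u\|_{LE_\low}.
\end{equation*}
Corollary~\ref{cor:QLElow} then provides the key bound $\|Q^{(\alpha)}\tilP_{\geq s}u\|_{LE_\low} \lesssim \|\tilP_{\geq s/2}u\|_{LE_\low}$, with an implicit constant independent of $\alpha \in \calA$, so taking the supremum in $\alpha$ and applying Young's inequality leads, after integration in $s$, to
\begin{equation*}
 \int_{\frac{1}{8}}^{4} \sup_{\alpha} |\angles{\tilP_{\geq s}F}{Q^{(\alpha)}\tilP_{\geq s}u}_{t,x}| \, ds \lesssim \varepsilon \int_{\frac{1}{8}}^{4} \|\tilP_{\geq s/2} u\|_{LE_\low}^2 \, ds + C_\varepsilon \|F\|_{LE^*}^2.
\end{equation*}

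The main obstacle is the remaining step of verifying that $\int_{\frac{1}{8}}^{4} \|\tilP_{\geq s/2}u\|_{LE_\low}^2 \, ds \lesssim \|u\|_{LE}^2$. After the change of variable $\sigma = s/2$, this becomes $\int_{\frac{1}{16}}^{2} \|\tilP_{\geq \sigma}u\|_{LE_\low}^2 \, d\sigma \lesssim \|u\|_{LE}^2$. The range $\sigma \in [\frac{1}{8}, 2]$ is immediately controlled by the definition of $\|u\|_{LE}^2$, but the range $\sigma \in [\frac{1}{16}, \frac{1}{8}]$ lies outside the defining integration range and requires more care. To handle this, I would use the heat flow resolution
\begin{equation*}
\tilP_{\geq \sigma} u = \tilP_{\geq 2} u + \int_\sigma^2 \tilP_{s'} u \, \frac{ds'}{s'}, \qquad \sigma \in \bigl[\tfrac{1}{16}, \tfrac{1}{8}\bigr],
\end{equation*}
and bound the two pieces separately. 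The term $\|\tilP_{\geq 2} u\|_{LE_\low}^2$ is controlled by averaging over $s \in [1, 2]$ together with the heat semigroup bound $\|\tilP_{\geq 2}u\|_{LE_\low} \lesssim \|\tilP_{\geq s}u\|_{LE_\low}$ from Lemma~\ref{lem:PsLEs1}. For the integral term, I would split the $s'$-range into $[\sigma, \frac{1}{2}]$ and $[\frac{1}{2}, 2]$: on the first, Lemma~\ref{lem:LEs_LElow_comparision} trades $LE_\low$ for $LE_{s'}$ on the compact interval $[\frac{1}{16}, \frac{1}{2}]$ bounded away from zero, after which Cauchy--Schwarz against the measure $s^{1/2}\frac{ds}{s}$ recovers $\|u\|_{LE}$; on the second, rewriting $\tilP_{s'} u = s'(\Delta+\rho^2) e^{(s'/2)(\Delta+\rho^2)} \tilP_{\geq s'/2}u$ and invoking Lemma~\ref{lem:PsLEs1} reduces matters to $\|\tilP_{\geq s'/2}u\|_{LE_\low}$ with $s'/2 \in [\frac{1}{4}, 1]$, and Cauchy--Schwarz again yields the $\|u\|_{LE}$-bound.
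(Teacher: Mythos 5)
Your proof is correct and follows essentially the same strategy as the paper: dualize the pairing via $LE_\low$/$LE_\low^\ast$, apply Young's inequality, invoke Corollary~\ref{cor:QlowX}/\ref{cor:QLElow} to bound $\|Q^{(\alpha)}\tilP_{\geq s}u\|_{LE_\low}$ by $\|\tilP_{\geq s/2}u\|_{LE_\low}$ uniformly in $\alpha$, and then handle the resulting shift of the $s$-range below $\frac18$. The only difference is your treatment of the range overflow on $\sigma\in[\frac{1}{16},\frac18]$: the paper uses the local resolution $\tilP_{\geq s}u = \tilP_{\geq 2s}u + \int_s^{2s}\tilP_{s'}u\,\frac{\ud s'}{s'}$, which keeps $s'$ inside a single compact interval and so needs only Lemma~\ref{lem:LEs_LElow_comparision}; you instead anchor at $\tilP_{\geq 2}u$ and integrate $s'$ all the way up to $2$, which then forces the extra split at $s'=\frac12$ and the rewriting $\tilP_{s'}=s'(\Delta+\rho^2)e^{(s'/2)(\Delta+\rho^2)}\tilP_{\geq s'/2}$, but this works and yields the same conclusion.
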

\begin{proof}
The second estimate follows from the bound $$|\angles{\tilP_{\geq}sF}{\tilP_{\geq s}u}|\leq \varepsilon\|\tilP_{\geq s}u\|_{LE_\low}^2 + C_\varepsilon\|\tilP_{\geq s}F\|_{LE^\ast_\low}^2$$ and the definitions of $LE$ and $LE^\ast$. Writing $Q$ for $Q^{(\alpha)}$, the first estimate follows by a similar argument if we can show that
\begin{align*}
\begin{split}
\int_{\frac{1}{8}}^{4}\sup_{\alpha\in\calA}\|Q\tilP_{\geq s}u\|_{LE_\low}^2\,\ds\lesssim \|u\|_{LE}^2.
\end{split}
\end{align*}
By Lemma~\ref{l:Xal} and Proposition~\ref{p:QlowX},  $\|Q\tilP_{\geq s}u\|_{LE_\low}\lesssim \|\tilP_{\geq\frac{s}{2}}u\|_{LE_\low}$, so
\begin{align*}
\begin{split}
\int_{\frac{1}{8}}^{4}\sup_{\alpha\in\calA}\|Q\tilP_{\geq s}u\|_{LE_\low}^2\,\ds\lesssim \int_{\frac{1}{8}}^2\|\tilP_{\geq s}u\|_{LE_\low}^2\,\ds+\int_{\frac{1}{16}}^\frac{1}{8}\|\tilP_{\geq s}u\|_{LE_\low}^2\,\ds.
\end{split}
\end{align*}
The first term above is bounded by $\|u\|_{LE}^2$. For the second term note that by the fundamental theorem of calculus
\begin{align*}
\begin{split}
\tilP_{\geq s}u=\tilP_{\geq 2s}u+\int_s^{2s}\tilP_{s'}u\,\dsp.
\end{split}
\end{align*}
Using the change of variables $2s\mapsto s$ we see that
\begin{align*}
\begin{split}
\int_{\frac{1}{16}}^\frac{1}{8}\|\tilP_{\geq 2s}u\|_{LE_\low}^2\,\ds\leq\|u\|_{LE}^2.
\end{split}
\end{align*}
On the other hand, by Lemma~\ref{lem:LEs_LElow_comparision}
\begin{align*}
\begin{split}
\|\tilP_{s'}u\|_{LE_\low}\lesssim \|\tilP_{s'}u\|_{LE_{s'}}
\end{split}
\end{align*}
uniformly in $s'\in[s,2s]\subseteq [\frac{1}{16},\frac{1}{4}]$, so using Cauchy-Schwarz
\begin{align*}
\begin{split}
\int_{\frac{1}{16}}^{\frac{1}{8}}\Big(\int_s^{2s}\|\tilP_{s'}u\|_{LE_\low}\,\dsp\Big)^2\,\ds\lesssim \int_{\frac{1}{16}}^{\frac{1}{4}}(s')^{-\frac{1}{2}}\|\tilP_{s'}u\|_{LE_{s'}}^2\,\dsp\lesssim \|u\|_{LE}^2,
\end{split}
\end{align*}
completing the proof.
\end{proof}

\begin{rem} \label{rem:FQlowu}
A similar argument shows that for any $\varepsilon>0$,
\begin{align*}
\begin{split}
\int_{\frac{1}{8}}^4 \sup_{\{\alpha_\ell\} \in\calA} \, \bigl| \angles{\tilP_{\geq s}F}{Q_{s_3}^{(\alpha)} \tilP_{\geq s}u}_{t,x} \bigr| \, \ds \leq \varepsilon \|u\|_{LE}^2 + C_{\varepsilon} \|F\|_{LE^\ast}^2,
\end{split}
\end{align*}
where for some fixed $s_3 \simeq 1$ the multiplier $Q_{s_3}^{(\alpha)}$ is as in Definition~\ref{d:beta_high}.
\end{rem}
Similarly, for high frequencies we will need the following result.

\begin{prop}\label{p:FQsu}
For any $\delta\in(0,1]$, $s>0$, and $\{\alpha_\ell\}\in\calA$, let $Q_s^{(\alpha)}$ be defined as in Definition~\ref{d:beta_high}. Then for any $\varepsilon>0$ it holds that
\begin{align*}
 \int_{0}^2 \sup_{\{ \alpha_\ell \} \in\calA} \, \bigl| \angles{\tilP_{s}F}{Q_s^{(\alpha)}\tilP_{s}u}_{t,x} \bigr| \, \ds \leq \varepsilon \|u\|_{LE}^2 + C_\varepsilon \|F\|_{LE^\ast}^2.
\end{align*}
Similarly
\begin{align*}
 \int_{0}^2 \bigl| \angles{\tilP_{s}F}{\tilP_{s}u}_{t,x} \bigr| \, \ds \leq \varepsilon \|u\|_{LE}^2 + C_\varepsilon \|F\|_{LE^\ast}^2.
\end{align*}
\end{prop}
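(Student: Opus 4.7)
The plan is to mimic the strategy of Proposition~\ref{p:FQlowu} in the high-frequency setting. Starting from the Cauchy--Schwarz-type duality
\[
|\langle \tilP_s F, Q_s^{(\alpha)} \tilP_s u\rangle_{t,x}| \leq \|Q_s^{(\alpha)} \tilP_s u\|_{LE_s} \|\tilP_s F\|_{LE_s^*},
\]
which holds dyadic annulus by dyadic annulus (compare Remark~\ref{rem:LEduality}), and invoking Corollary~\ref{cor:QLEs} to control $\|Q_s^{(\alpha)} \tilP_s u\|_{LE_s} \lesssim \|\tilP_{s/2} u\|_{LE_s}$ uniformly in $\{\alpha_\ell\} \in \calA$, the first claim reduces to
\[
\int_0^2 \|\tilP_{s/2} u\|_{LE_s} \|\tilP_s F\|_{LE_s^*}\, \ds \leq \veps \|u\|_{LE}^2 + C_\veps \|F\|_{LE^*}^2.
\]
A weighted Cauchy--Schwarz inequality in $s$ (with balancing weight $s^{\pm 1/2}$), followed by $ab \leq \veps a^2 + C_\veps b^2$, further reduces the matter to proving the two bounds
\[
\int_0^2 s^{-1/2} \|\tilP_{s/2} u\|_{LE_s}^2 \, \ds \lesssim \|u\|_{LE}^2, \qquad \int_0^2 s^{1/2} \|\tilP_s F\|_{LE_s^*}^2 \, \ds \lesssim \|F\|_{LE^*}^2.
\]

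For the first bound, changing variables $s \mapsto 2s$ and using Lemma~\ref{lem:LE_comparison} (which identifies $\|\tilP_s u\|_{LE_{2s}}$ with $\|\tilP_s u\|_{LE_s}$) converts it into $\int_0^1 s^{-1/2} \|\tilP_s u\|_{LE_s}^2 \, \ds \lesssim \|u\|_{LE}^2$. The portion $s \in (0, 1/2]$ is precisely the high-frequency component of $\|u\|_{LE}^2$. To handle the extra range $s \in [1/2, 1]$, I would use the semigroup identity
\[
\tilP_s u = 2s\, e^{(s-s_0)(\Delta + \rho^2)} \tilP_{s_0} u, \qquad s \geq s_0,
\]
together with the $LE_s$-boundedness of the shifted heat flow (Lemma~\ref{lem:PsLEs1} with $k=0$, applied for the bounded heat time $s - s_0 \leq 3/4$) and the equivalence $\|\cdot\|_{LE_s} \simeq \|\cdot\|_{LE_{s_0}}$ for $s, s_0 \in [1/4, 1]$ (iterated Lemma~\ref{lem:LE_comparison}), to obtain $\|\tilP_s u\|_{LE_s} \lesssim \|\tilP_{s_0} u\|_{LE_{s_0}}$ uniformly in $s_0 \in [1/4, 1/2]$. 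Averaging in $s_0$ then controls the contribution from $s \in [1/2, 1]$ by $\int_{1/4}^{1/2} \|\tilP_{s_0} u\|_{LE_{s_0}}^2 \, \dsnot$, which in turn is absorbed into the high-frequency part of $\|u\|_{LE}^2$ since $s_0^{-1/2} \simeq 1$ on this interval.

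The second inequality for $F$ is handled analogously: the portion $s \in (0, 1/2]$ is directly controlled by the high-frequency component of $\|F\|_{LE^*}^2$, while the extra range $s \in [1/2, 2]$ is reduced to $\int_{1/4}^{1} \|\tilP_{s_0} F\|_{LE_{s_0}^*}^2 \, \dsnot$ via the same semigroup identity and the dual versions of Lemmas~\ref{lem:PsLEs1} and~\ref{lem:LE_comparison}. The second estimate in the proposition, involving $\langle \tilP_s F, \tilP_s u\rangle_{t,x}$ without the multiplier, is proved by exactly the same argument, omitting only the step that invokes Corollary~\ref{cor:QLEs}.

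The main technical obstacle is the handling of the range $s \in [1/2, 2]$, which lies outside the natural range $(0, 1/2]$ of the high-frequency components in the definitions of $\|u\|_{LE}^2$ and $\|F\|_{LE^*}^2$. This is resolved by the semigroup representation $\tilP_s u = 2s\, e^{(s-s_0)(\Delta+\rho^2)} \tilP_{s_0} u$, which plays a role entirely analogous to the fundamental theorem of calculus argument used in Proposition~\ref{p:FQlowu} to cover the range $s \in [1/16, 1/8]$ there. Since only a local-in-$s$ extension of the natural integration interval is required, the $LE_s$-boundedness of the shifted heat flow over a bounded range of heat times is all that is needed.
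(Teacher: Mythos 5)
Your proposal is correct and in essence tracks the paper's own argument: Cauchy--Schwarz at the level of the $LE_s$/$LE_s^\ast$ pairing, the multiplier bound from Proposition~\ref{p:QsX} (via Corollary~\ref{cor:QLEs}), the heat-flow boundedness of Lemma~\ref{lem:PsLEs1}, and the comparison Lemma~\ref{lem:LE_comparison}, followed by a change of variables in $s$. The only structural difference is in the bookkeeping of the extra range: the paper pushes the projection one dyadic step further, bounding
\begin{equation*}
 \sup_{\{\alpha_\ell\}\in\calA}\|Q_s^{(\alpha)}\tilP_su\|_{LE_s}\lesssim\|\tilP_{\frac{s}{4}}u\|_{LE_{\frac{s}{4}}},\qquad \|\tilP_sF\|_{LE_s^\ast}\lesssim\|\tilP_{\frac{s}{4}}F\|_{LE_{\frac{s}{4}}^\ast},
\end{equation*}
so that the change of variables $\frac{s}{4}\mapsto s$ maps $[0,2]$ precisely onto $[0,\frac{1}{2}]$, the range appearing in the definitions of $LE$ and $LE^\ast$. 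You instead stop at $\tilP_{s/2}$ and then treat the overshoot $s\in[\frac{1}{2},1]$ (respectively $[\frac{1}{2},2]$) by an averaged semigroup identity. That works, but is extra labor the paper avoids. Two small inaccuracies in your write-up: the semigroup identity should read $\tilP_s u=\frac{s}{s_0}e^{(s-s_0)(\Delta+\rho^2)}\tilP_{s_0}u$ for $s\geq s_0$ (the factor $2s$ only occurs for $s_0=\frac{1}{2}$, which is inconsistent with the claimed averaging); and the stated output $\int_{1/4}^1\|\tilP_{s_0}F\|_{LE_{s_0}^\ast}^2\,\dsnot$ overshoots the natural range $[0,\frac{1}{2}]$ --- you should average $s_0$ over $[\frac{1}{4},\frac{1}{2}]$ only (which also suffices, with a bounded heat time $s-s_0\leq\frac{7}{4}$).
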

\begin{proof}
By Lemmas~\ref{l:Xal},~\ref{lem:LE_comparison},~\ref{lem:PsLEs1}, and~\ref{lem:multLEs1} and Proposition~\ref{p:QsX}, we have 
\begin{align*}
 \sup_{\{ \alpha_\ell \} \in\calA} \, \|Q_s^{(\alpha)}\tilP_su\|_{LE_s} &\lesssim \|\tilP_{\frac{s}{4}}u\|_{LE_{\frac{s}{4}}},\\
 \|\tilP_su\|_{LE_s} &\lesssim \|\tilP_{\frac{s}{4}}u\|_{LE_{\frac{s}{4}}},\\
 \|\tilP_{s}F\|_{LE_{s}^\ast} &\lesssim \|\tilP_{\frac{s}{4}}F\|_{LE_{\frac{s}{4}}^\ast}.
\end{align*}
Then the desired estimates follow from bounding
\begin{align*}
 &\bigl| \angles{\tilP_s F}{Q_s^{(\alpha)}\tilP_su}_{t,x} \bigr| + \bigl| \angles{\tilP_s F}{\tilP_su}_{t,x} \bigr| \\
 &\qquad \qquad \lesssim \varepsilon \bigl( \|\tilP_su\|_{LE_s}^2+\|Q_s^{(\alpha)}\tilP_su\|_{LE_s}^2) + C_\varepsilon \|\tilP_sF\|_{LE_s^\ast}^2,
\end{align*}
and using the change of variables $\frac{s}{4}\mapsto s$.
\end{proof}

\subsection{Estimates needed to handle $H_{\lot}$} \label{ss:lot-comm}

One of the main advantages of the local smoothing space $LE$ and the inhomogeneous local smoothing space $LE^*$ is that they allow us to treat the lower order terms $H_\lot$ in $H$ as perturbations (up to a bounded region error) in the positive commutator argument presented in Sections~\ref{s:low}--\ref{s:trans}. In this section we establish the estimates required to do this. 

In general, we decompose $H_{\lot}$ into its symmetric and anti-symmetric parts as follows:
\begin{align*}
	H^{\antisym}_{\lot} u &= \Im \bsb^{\mu} \nb_{\mu} u + \nb_{\mu} (\Im \bsb^{\mu} u) + i \Im V u, \\
	H^{\sym}_{\lot} u &= \frac{1}{i} (\Re \bsb^{\mu} \nb_{\mu} u + \nb_{\mu} (\Re \bsb^{\mu} u)) + \Re V u.
\end{align*}
We start with the anti-symmetric part, which is small by hypothesis \eqref{eq:decay_assumptions-lot}. 
\begin{prop} \label{p:Hlot_antisym_Q}
For any $\dlt \in (0, 1]$, $s > 0$ and $\set{\alp_{\ell}} \in \calA$, let $Q^{(\alp)}$ and $Q^{(\alp)}_{s}$ be as in Definitions~\ref{d:beta_low} and \ref{d:beta_high}, respectively. Let $s_{3} \aeq 1$. Then the following bounds hold:
\begin{align*}
  \int_{\frac{1}{8}}^4 \sup_{\{\alpha_\ell\}\in\calA} \, \bigl| \Re\angles{i\tilP_{\geq s} H^{\antisym}_\lot u}{Q^{(\alpha)}\tilP_{\geq s}u}_{t,x} \bigr| \, \ds &\aleq  \varepsilon_{0} \|u\|_{LE}^2, \\
 \int_{\frac{1}{8}}^4 \sup_{\{\alpha_\ell\}\in\calA} \, \bigl| \Re\angles{i\tilP_{\geq s} H^{\antisym}_\lot u}{Q_{s_3}^{(\alpha)} \tilP_{\geq s}u}_{t,x} \bigr| \, \ds &\aleq \varepsilon_{0} \|u\|_{LE}^2, \\
 \int_{0}^2 \sup_{\{\alpha_\ell\} \in \calA} \, \bigl| \Re\angles{i\tilP_{s} H^{\antisym}_\lot u}{Q_s^{(\alpha)}\tilP_{s}u}_{t,x} \bigr| \, \ds &\aleq \varepsilon_{0} \|u\|_{LE}^2.
\end{align*}
\end{prop}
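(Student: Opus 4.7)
The proof of Proposition~\ref{p:Hlot_antisym_Q} rests on two observations. First, since $\Im \bsb$ and $\Im V$ satisfy the smallness and decay hypothesis \eqref{eq:decay_assumptions-lot}, Propositions~\ref{p:Hlotbound} and~\ref{p:Hlotbound_for_V} applied to the coefficients $\Im \bsb$ and $\Im V$ yield the $LE \to LE^{\ast}$ bound
\[
 \nrm{H^{\antisym}_{\lot} u}_{LE^{\ast}} \aleq \veps_{0} \nrm{u}_{LE},
\]
since the coefficient-dependent constants appearing on the right-hand sides of those two propositions are controlled by $\veps_{0}$ thanks to \eqref{eq:decay_assumptions-lot}. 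Second, I claim that the boundedness properties of the multipliers $Q^{(\alp)}$, $Q_{s_{3}}^{(\alp)}$ and $Q_{s}^{(\alp)}$ (Corollaries~\ref{cor:QLElow}, \ref{cor:QLEs}, and the analog of Corollary~\ref{cor:QLElow} for $Q_{s_{3}}^{(\alp)}$ noted in Remark~\ref{rem:FQlowu}), combined with a direct Cauchy--Schwarz pairing, yield the ``bilinear'' bounds
\begin{equation} \label{eq:plan-bilinear}
\begin{aligned}
 \int_{\frac{1}{8}}^{4} \sup_{\{\alp_{\ell}\} \in \calA} \bigl| \brk{\tilP_{\geq s} F, Q^{(\alp)} \tilP_{\geq s} u}_{t,x} \bigr| \, \ds &\aleq \nrm{F}_{LE^{\ast}} \nrm{u}_{LE}, \\
 \int_{\frac{1}{8}}^{4} \sup_{\{\alp_{\ell}\} \in \calA} \bigl| \brk{\tilP_{\geq s} F, Q_{s_{3}}^{(\alp)} \tilP_{\geq s} u}_{t,x} \bigr| \, \ds &\aleq \nrm{F}_{LE^{\ast}} \nrm{u}_{LE}, \\
 \int_{0}^{2} \sup_{\{\alp_{\ell}\} \in \calA} \bigl| \brk{\tilP_{s} F, Q_{s}^{(\alp)} \tilP_{s} u}_{t,x} \bigr| \, \ds &\aleq \nrm{F}_{LE^{\ast}} \nrm{u}_{LE}.
\end{aligned}
\end{equation}
Given \eqref{eq:plan-bilinear}, setting $F = H^{\antisym}_{\lot} u$ and applying the first bound then directly yields the three asserted estimates.

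For the first bound in \eqref{eq:plan-bilinear}, I will pair by Cauchy--Schwarz at the spatial level,
\[
 \bigl| \brk{\tilP_{\geq s} F, Q^{(\alp)} \tilP_{\geq s} u}_{t,x} \bigr| \leq \nrm{\tilP_{\geq s} F}_{LE^{\ast}_{\low}} \nrm{Q^{(\alp)} \tilP_{\geq s} u}_{LE_{\low}},
\]
then apply Corollary~\ref{cor:QLElow} to obtain $\sup_{\alp} \nrm{Q^{(\alp)} \tilP_{\geq s} u}_{LE_{\low}} \aleq \nrm{\tilP_{\geq s/2} u}_{LE_{\low}}$, and finally invoke Cauchy--Schwarz in $s$. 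The only technical nuisance is that the natural range of the resulting integral in $s$ is $[\tfrac{1}{16}, 2]$ rather than $[\tfrac{1}{8}, 4]$; this minor mismatch at the boundary of the low-frequency interval is handled exactly as in the proof of Proposition~\ref{p:FQlowu}, by writing $\tilP_{\geq s} u = \tilP_{\geq 2s} u + \int_{s}^{2s} \tilP_{s'} u \, \dsp$ on the missing piece and comparing $\nrm{\tilP_{s'} u}_{LE_{\low}}$ with $\nrm{\tilP_{s'} u}_{LE_{s'}}$ via Lemma~\ref{lem:LEs_LElow_comparision}. The second bound in \eqref{eq:plan-bilinear} follows in identical fashion, using the variant of Corollary~\ref{cor:QLElow} for $Q_{s_{3}}^{(\alp)}$ (which holds by the same proof since $s_{3} \simeq 1$).

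For the third (high-frequency) bound in \eqref{eq:plan-bilinear}, I will proceed analogously, pairing by Cauchy--Schwarz against $\nrm{\cdot}_{LE_{s}^{\ast}} \nrm{\cdot}_{LE_{s}}$, applying Corollary~\ref{cor:QLEs} to estimate $\sup_{\alp} \nrm{Q_{s}^{(\alp)} \tilP_{s} u}_{LE_{s}} \aleq \nrm{\tilP_{s/2} u}_{LE_{s}}$, then invoking Lemma~\ref{lem:LE_comparison} to replace $\nrm{\tilP_{s/2} u}_{LE_{s}}$ by $\nrm{\tilP_{s/2} u}_{LE_{s/2}}$, and finally Cauchy--Schwarz in $s$ with weight $s^{1/4} \cdot s^{-1/4} = 1$ to split the factors according to the $s$-weights in the definitions of $LE^{\ast}$ and $LE$. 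The range issue near $s = 2$ is handled just as in the proof of Proposition~\ref{p:FQsu}. No step presents a genuine obstacle --- the essential point is that $\Im \bsb, \Im V$ are globally small in the sense of \eqref{eq:decay_assumptions-lot}, so $H^{\antisym}_{\lot}$ is dominated perturbatively by the $LE \to LE^{\ast}$ theory developed in Section~\ref{ss:lot-LE*}, with the multipliers absorbed using the boundedness theory of Section~\ref{ss:Q-bdd}.
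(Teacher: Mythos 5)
Your proof is correct and coincides with the paper's: the paper also reduces the proposition to the smallness bound $\nrm{H^{\antisym}_{\lot} u}_{LE^{\ast}} \aleq \veps_{0} \nrm{u}_{LE}$ (via Propositions~\ref{p:Hlotbound_for_V} and~\ref{p:Hlotbound} applied to $\Im\bsb, \Im V$) combined with the pairing estimates of Propositions~\ref{p:FQlowu}, \ref{p:FQsu} and Remark~\ref{rem:FQlowu}. Your explicit bilinear bounds \eqref{eq:plan-bilinear} are just the content of those pairing propositions unpacked via Cauchy--Schwarz, and your treatment of the $[\tfrac{1}{16},\tfrac{1}{8}]$ boundary range mirrors the argument in the proof of Proposition~\ref{p:FQlowu}.
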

Recall that $\veps_{0}$ is the smallness parameter in the assumption \eqref{eq:decay_assumptions-lot} for $\Im \bsb$ and $\Im V$. Proposition~\ref{p:Hlot_antisym_Q} is thus a straightforward consequence of Propositions~\ref{p:Hlotbound_for_V}, \ref{p:Hlotbound}, \ref{p:FQlowu} and \ref{p:FQsu}, as well as Remark~\ref{rem:FQlowu}.

The following are the necessary estimates to handle the contribution of $H^{\sym}_\lot$ for low frequencies in Sections~\ref{s:low}--\ref{s:trans}.

\begin{prop} \label{p:commutator_Hlot_Q_low}
For any $\{ \alpha_\ell \} \in \calA$, let $Q^{(\alpha)}$ be as in Definition~\ref{d:beta_low}. Given any $\varepsilon > 0$, there exists a sufficiently large $R \equiv R(\varepsilon, \bsb, V) \geq 1$ such that
\begin{align*}
 \begin{split}
  \int_{\frac{1}{8}}^4 \sup_{\{\alpha_\ell\}\in\calA} \, \bigl| \Re\angles{i\tilP_{\geq s} H^{\sym}_\lot u}{Q^{(\alpha)}\tilP_{\geq s}u}_{t,x} \bigr| \, \ds \leq \varepsilon \|u\|_{LE}^2 + C_\varepsilon \|u\|^2_{L^2(\bbR\times\{r\leq R\})}.
 \end{split}
\end{align*}
\end{prop}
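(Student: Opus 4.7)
The plan is to decompose $u = \chi_{\leq R} u + \chi_{>R} u$, where $\chi_{>R}$ is a smooth cutoff to $\{r > R\}$ and $\chi_{\leq R} = 1 - \chi_{>R}$, with $R = R(\varepsilon, \bsb, V)$ to be chosen large at the end. The exterior piece $\chi_{>R} u$ is treated perturbatively by exploiting the spatial decay~\eqref{eq:decay_assumptions} of the coefficients, while the interior piece $\chi_{\leq R} u$ is handled by a commutator argument that crucially uses the symmetry of both $H^{\sym}_{\lot}$ and $Q^{(\alpha)}$.

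For the exterior piece, Cauchy--Schwarz in the $LE_{\low}/LE_{\low}^{\ast}$ duality bounds the integrand by $\|\tilP_{\geq s}H^{\sym}_{\lot}(\chi_{>R}u)\|_{LE^{\ast}_{\low}}\|Q^{(\alpha)}\tilP_{\geq s}u\|_{LE_{\low}}$. Lemma~\ref{lem:PsLEs1} moves $\tilP_{\geq s}$ past $\|\cdot\|_{LE^{\ast}_{\low}}$, and Propositions~\ref{p:Hlotbound_for_V}--\ref{p:Hlotbound} estimate the resulting norm by $\delta(R)\|\chi_{>R}u\|_{LE}$, where $\delta(R)\to 0$ as $R\to\infty$ thanks to the decay of $\bsb, V$; Lemma~\ref{lem:boundedness_LE_spatial_cutoff} then gives $\|\chi_{>R}u\|_{LE}\lesssim\|u\|_{LE}$. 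The second factor is bounded after integration in $s\in[\tfrac{1}{8},4]$ by $\|u\|_{LE}$ via Corollary~\ref{cor:QLElow} (as in the proof of Proposition~\ref{p:FQlowu}). Choosing $R$ sufficiently large then absorbs this contribution into $\varepsilon\|u\|_{LE}^{2}$.

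For the interior piece, I would first commute $\tilP_{\geq s}$ past $H^{\sym}_{\lot}$, writing
\begin{equation*}
\tilP_{\geq s}H^{\sym}_{\lot}(\chi_{\leq R}u) = H^{\sym}_{\lot}\tilP_{\geq s}(\chi_{\leq R}u) + [\tilP_{\geq s}, H^{\sym}_{\lot}](\chi_{\leq R}u).
\end{equation*}
For the principal term, setting $v = \tilP_{\geq s}(\chi_{\leq R}u)$ and using the symmetry of the two operators yields the commutator identity
\begin{equation*}
\Re\langle iH^{\sym}_{\lot}v, Q^{(\alpha)}v\rangle = \tfrac{1}{2}\langle i[Q^{(\alpha)}, H^{\sym}_{\lot}]v, v\rangle.
\end{equation*}
The commutator of the two first-order operators is itself first order, with coefficients controlled by one derivative of $\beta^{(\alpha)}$ (uniformly bounded in $\alpha$ by~\eqref{eq:betabounds1}) and by one derivative of $\bsb, V$. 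A standard pairing together with Lemma~\ref{l:preg} (with $s\simeq 1$) then gives $|\langle i[Q^{(\alpha)}, H^{\sym}_{\lot}]v, v\rangle|\aleq_{\bsb,V}\|\chi_{\leq R}u\|_{L^{2}_{x}}^{2}$, which after integration in $t$ and $s$ produces an error of the form $C_{\bsb,V,R}\|u\|_{L^{2}(\bbR\times\{r\leq 2R\})}^{2}$, of the required shape (up to renaming $2R$ as $R$).

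The remaining commutator $[\tilP_{\geq s}, H^{\sym}_{\lot}](\chi_{\leq R}u)$ is controlled via Lemma~\ref{lem:commutator_B_Ps} (referenced in Section~\ref{ss:lot-comm}), whose proof rests on the identity
\begin{equation*}
[e^{s(\Delta+\rho^{2})},H^{\sym}_{\lot}] = \int_{0}^{s}e^{(s-s')(\Delta+\rho^{2})}[\Delta+\rho^{2},H^{\sym}_{\lot}]e^{s'(\Delta+\rho^{2})}\,\ud s';
\end{equation*}
although $[\Delta,H^{\sym}_{\lot}]$ is second order, the parabolic smoothing on either side absorbs one derivative, yielding a net gain that feeds into the $L^{2}(\bbR\times\{r\lesssim R\})$ error. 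The main obstacle I anticipate is the careful bookkeeping of $[Q^{(\alpha)},H^{\sym}_{\lot}]$ uniformly in $\alpha\in\calA$ together with the treatment of the cutoff errors from $\chi_{\leq R}$, whose derivatives are supported on $\{r\sim R\}$; handling the latter cleanly will likely require invoking Proposition~\ref{p:lpregcore} to show that $\tilP_{\geq s}(\chi_{\leq R}u)$ is essentially concentrated in $\{r\lesssim R\}$ up to rapidly decaying tails, which after summation can be absorbed into the $\varepsilon\|u\|_{LE}^{2}$ term with a tiny prefactor.
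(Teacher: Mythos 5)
Your overall strategy---splitting into interior and exterior regions and exploiting symmetry via a commutator identity---is the right one and is essentially what the paper does for the analogous high-frequency estimate (Proposition~\ref{p:commutator_Hlot_Q_high}), whose proof the paper says to adapt. However, there is a genuine structural gap in how you set up the interior term.

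Your commutator identity $\Re\langle iH^{\sym}_{\lot}v, Q^{(\alpha)}v\rangle = \tfrac{1}{2}\langle i[Q^{(\alpha)}, H^{\sym}_{\lot}]v, v\rangle$ requires the \emph{same} function $v$ in both slots, but you decompose $u = \chi_{\leq R}u + \chi_{>R}u$ only in the first slot. The interior integrand actually reads $\Re\langle i\tilP_{\geq s}H^{\sym}_{\lot}(\chi_{\leq R}u),\, Q^{(\alpha)}\tilP_{\geq s}u\rangle$ with the uncut $\tilP_{\geq s}u$ in the second slot; setting $v=\tilP_{\geq s}(\chi_{\leq R}u)$ and invoking the identity as written silently discards the cross term
\begin{equation*}
\Re\bigl\langle i\,\tilP_{\geq s}H^{\sym}_{\lot}(\chi_{\leq R}u),\; Q^{(\alpha)}\tilP_{\geq s}(\chi_{>R}u)\bigr\rangle_{t,x}.
\end{equation*}
This is precisely the ``near-far'' contribution that the paper's high-frequency proof isolates in~\eqref{equ:commutator_BQ_near_far}, and it is the most delicate piece: it cannot be handled by the commutator identity (the two slots genuinely differ) nor by naive Cauchy--Schwarz (it does not see the decay of $\bsb,V$ since the coefficients act on $\chi_{\leq R}u$). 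Its treatment in the paper requires the two mismatch estimates~\eqref{equ:commutator_BQ_near_far_mismatch1}--\eqref{equ:commutator_BQ_near_far_mismatch2}, built on the localized parabolic regularity of Proposition~\ref{p:lpregcore}, to exploit that $\tilP_{\geq s}$ does not significantly spread spatial supports. Your closing remark about $\tilP_{\geq s}(\chi_{\leq R}u)$ being ``essentially concentrated in $\{r\lesssim R\}$'' gestures at this phenomenon but is framed as a side concern about cutoff derivatives rather than as a missing cross term that must be explicitly identified, estimated, and absorbed into $\varepsilon\|u\|_{LE}^2$ by taking $R$ large. You should decompose \emph{both} slots, produce the full four-way split (as in the paper), and run the mismatch argument for the two mixed terms. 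Two smaller points: (i) the paper's clean route is to commute $B$ past the whole symmetric sandwich $\tilP_{\geq s}Q^{(\alpha)}\tilP_{\geq s}$ and expand $[B,\tilP_{\geq s}Q^{(\alpha)}\tilP_{\geq s}]$, which avoids introducing a separate $H^{\sym}_{\lot}\tilP_{\geq s}$ term; your variant (commute $\tilP_{\geq s}$ past $B$ first) works but is slightly more cumbersome. (ii) Lemma~\ref{lem:commutator_B_Ps} is stated for $[B,\tilP_s]$, not $[B,\tilP_{\geq s}]$; with $s\in[\tfrac18,4]$ bounded away from zero the corresponding bound for $[B,\tilP_{\geq s}]$ is easy, but it needs to be stated.
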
 

\begin{rem} \label{rem:commutator_Hlot_Q_low}
The argument for the proof of Proposition~\ref{p:commutator_Hlot_Q_low} can also be used to establish the following closely related estimate: For any $\delta \in (0, 1]$, $s_3 \simeq 1$, and $\{ \alpha_\ell \}\in\calA$, let $Q_{s_3}^{(\alpha)}$ be as in Definition~\ref{d:beta_high}. Given any $\varepsilon > 0$, there exists a sufficiently large $R \equiv R(\varepsilon, \bsb, V) \geq 1$ such that
\begin{equation}
 \int_{\frac{1}{8}}^4 \sup_{\{\alpha_\ell\}\in\calA} \, \bigl| \Re\angles{i\tilP_{\geq s} H^{\sym}_\lot u}{Q_{s_3}^{(\alpha)} \tilP_{\geq s}u}_{t,x} \bigr| \, \ds \leq \varepsilon \|u\|_{LE}^2 + C_\varepsilon \|u\|^2_{L^2(\bbR\times\{r\leq R\})}.
\end{equation}
\end{rem}

In order to handle the contribution of $H^{\sym}_\lot$ for high frequencies in Sections~\ref{s:low}--\ref{s:trans}, we will need the following estimates.

\begin{prop} \label{p:commutator_Hlot_Q_high}  
 For any $\delta\in(0,1]$, $s>0$, and $\{ \alpha_\ell \}\in\calA$, let $Q_s^{(\alpha)}$ be as in Definition~\ref{d:beta_high}. Given any $\varepsilon > 0$, there exists a sufficiently large $R \equiv R(\varepsilon, \bsb, V) \geq 1$ such that 
 \begin{equation} \label{equ:commutator_Hlot_Q_high}
 \int_{0}^2 \sup_{\{\alpha_\ell\} \in \calA} \, \bigl| \Re\angles{i\tilP_{s} H^{\sym}_\lot u}{Q_s^{(\alpha)}\tilP_{s}u}_{t,x} \bigr| \, \ds \leq \varepsilon \|u\|_{LE}^2 + C_\varepsilon \|u\|^2_{L^2(\bbR\times\{r\leq R\})}.
 \end{equation}
\end{prop}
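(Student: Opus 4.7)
The plan is to decompose $\tilP_s H_\lot^{\sym} u = H_\lot^{\sym}\tilP_s u + [\tilP_s, H_\lot^{\sym}]u$ and treat the two resulting pairings with $Q_s^{(\alpha)}\tilP_s u$ separately. For the commutator piece, I would prove an estimate of the shape
\begin{equation*}
	\int_0^2 s^{\frac{1}{2}}\bigl\|[\tilP_s, H_\lot^{\sym}]u\bigr\|_{LE_s^\ast}^2\,\ds \lesssim \|u\|_{LE}^2
\end{equation*}
by expanding the commutator via Duhamel's formula for the heat equation, as indicated by the unnumbered Lemma referenced in Section~\ref{ss:lot-comm} of the overview, and then running an argument parallel to that of Proposition~\ref{p:Hlotbound} (letting the extra heat smoothing absorb one derivative). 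Paired with the $LE_s$-boundedness $\|Q_s^{(\alpha)}\tilP_s u\|_{LE_s}\lesssim \|\tilP_{s/2}u\|_{LE_{s/2}}$ from Corollary~\ref{cor:QLEs} and the dyadic duality of Lemma~\ref{l:Xal}, the commutator piece contributes at most $\tfrac{\varepsilon}{3}\|u\|_{LE}^2$ once the extra heat smoothing is exploited to produce a small $s^{\delta_0}$ gain, as was done already for $\bsb\cdot\nabla u$ via the Schur-kernel trick in the proof of Proposition~\ref{p:Hlotbound}.

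For the main piece, I exploit that both $H_\lot^{\sym}$ and $Q_s^{(\alpha)}$ are self-adjoint to rewrite the real pairing as a pure commutator,
\begin{equation*}
	\Re\bigl\langle i H_\lot^{\sym}\tilP_s u,\, Q_s^{(\alpha)}\tilP_s u\bigr\rangle_{t,x} \;=\; \tfrac{1}{2}\bigl\langle i[Q_s^{(\alpha)}, H_\lot^{\sym}]\tilP_s u,\, \tilP_s u\bigr\rangle_{t,x}.
\end{equation*}
A direct calculation, using the explicit form $Q_s^{(\alpha)} = \tfrac{1}{i}(2\bsbeta_s^{(\alpha),\mu}\nabla_\mu + (\rd_r\beta_s^{(\alpha)}) + (d{-}1)\coth r\,\beta_s^{(\alpha)})$, shows that $[Q_s^{(\alpha)}, H_\lot^{\sym}]$ is a first-order differential operator whose coefficients are polynomials in $\bsb, V, \nabla\bsb, \nabla V$ multiplied by $\beta_s^{(\alpha)}, \rd_r\beta_s^{(\alpha)}$, plus scalar terms involving $\rd_r^2\beta_s^{(\alpha)}$ (which carries a factor of $s^{-1/2}$, see below) times $\bsb, V$. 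I now split the spatial domain via a smooth cutoff $\chi = \chi_{\leq R}$. On $\{r > R\}$ the decay assumption~\eqref{eq:decay_assumptions} forces the constants $C_j(\cdot)$ of the form \eqref{eq:Cjdef} associated to $(1-\chi)\bsb, (1-\chi)V$ to be $o_R(1)$, so the exterior contribution is bounded by $o_R(1)\|u\|_{LE}^2\le\tfrac{\varepsilon}{3}\|u\|_{LE}^2$ for $R$ large via Propositions~\ref{p:Hlotbound_for_V}--\ref{p:Hlotbound}, Proposition~\ref{p:QsX}, and Lemma~\ref{l:Xal}. On $\{r\le R\}$ the coefficients of $\chi[Q_s^{(\alpha)},H_\lot^{\sym}]\chi$ are smooth and bounded (uniformly in $\{\alpha_\ell\}\in\calA$ and $\delta\in(0,1]$ by \eqref{eq:betabounds2}), hence the interior contribution is dominated by
\begin{equation*}
	\int_0^2 \Bigl(\|\nabla\tilP_s u\|_{L^2(\bbR\times\{r\le 2R\})} + s^{-\frac{1}{2}}\|\tilP_s u\|_{L^2(\bbR\times\{r\le 2R\})}\Bigr)\|\tilP_s u\|_{L^2(\bbR\times\{r\le 2R\})}\,\ds.
\end{equation*}
Using $\|s^{1/2}\nabla\tilP_s u\|_{L^2}\lesssim \|\tilP_{s/2}u\|_{L^2}$ from Lemma~\ref{lem:PsLEs1}, $\int_0^2 s^{-1/2}\,\ds < \infty$, and Lemma~\ref{l:L2R} with a large $N$, this reduces to $C_R\|u\|_{L^2(\bbR\times\{r\le 2^{10}R\})}^2 + R^{-N}\|\langle r\rangle^{-2}u\|_{L^2_{t,x}}^2$, and the second term is bounded by $\tfrac{\varepsilon}{3}\|u\|_{LE}^2$ via Lemma~\ref{l:wL2LE} once $R$ is large enough.

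The main obstacle, as flagged in the footnote of Section~\ref{ss:lot-comm}, is that $Q_s^{(\alpha)}$ is not a classical symbol of order zero: each derivative beyond the first that lands on $\beta_s^{(\alpha)}$ costs an extra $s^{-1/2}$ because $\beta_s^{(\alpha)}$ varies on scale $s^{1/2}$ in the near region. Concretely $\rd_r^2\beta_s^{(\alpha)}$ appears in $[Q_s^{(\alpha)}, H_\lot^{\sym}]$ with size $s^{-1/2}$ rather than $O(1)$, and so the commutator gains only ``half'' a derivative in a scaling sense. The redeeming feature is integrability: $\int_0^2 s^{-1/2}\,\ds$ is finite, so after integration in $s$ the surplus $s^{-1/2}$ is absorbed harmlessly into the bounded-region $L^2$ error permitted on the right-hand side of~\eqref{equ:commutator_Hlot_Q_high}. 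Ensuring this absorption uniformly in $\{\alpha_\ell\}\in\calA$ and $\delta\in(0,1]$ — via the pointwise bounds \eqref{eq:betabounds2} on $\beta_s^{(\alpha)}$, $\rd_r\beta_s^{(\alpha)}$ (and the scale of $\rd_r^2\beta_s^{(\alpha)}$) — together with the commutator/Duhamel bookkeeping in the first step, constitutes the principal technical point of the proof.
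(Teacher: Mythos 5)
Your proposal has two genuine gaps, both in how smallness is produced.

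The first and more basic gap is in the ordering of your two decompositions. You split $\tilP_s H_\lot^{\sym}u = H_\lot^{\sym}\tilP_s u + [\tilP_s, H_\lot^{\sym}]u$ first, and only introduce the spatial cutoff $\chi_{\leq R}$ for the main piece. For the commutator piece you claim a bound $\lesssim\tfrac{\varepsilon}{3}\|u\|_{LE}^2$ ``once the extra heat smoothing is exploited to produce a small $s^{\delta_0}$ gain.'' But there is no such gain: the commutator $[\tilP_s, H_\lot^{\sym}]$ trades one derivative for a power of $s$, which only tames the \emph{order}, not the \emph{size}, of the coefficients. The implicit constant in any $LE\to LE^\ast$ bound for $[\tilP_s,H_\lot^{\sym}]$ is proportional to $C(\bsb,V)$, which by hypothesis can be large, so $\bigl|\int\sup_{\alpha}\langle i[\tilP_s, H_\lot^{\sym}]u, Q_s^{(\alpha)}\tilP_s u\rangle\,\ds\bigr|\lesssim C(\bsb,V)\|u\|_{LE}^2$ --- this is not allowed on the right side of~\eqref{equ:commutator_Hlot_Q_high}. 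If you instead try the $L^2$ route (Lemma~\ref{lem:commutator_B_Ps} plus $\varepsilon$-Cauchy--Schwarz), you end up with a multiple of $\|u\|_{L^2_{t,x}}^2$, which is also not controlled by $\varepsilon\|u\|_{LE}^2 + C_\varepsilon\|u\|_{L^2(\bbR\times\{r\leq R\})}^2$. The paper avoids this by splitting spatially \emph{first}: the far pieces gain $\varepsilon$ from the decay of $\bsb, V$ via Proposition~\ref{p:Hlotbound} applied to $\chi_{>R/2}\bsb$, while the commutator structure is only ever exploited for $\chi_{\leq R}u$, so Lemma~\ref{lem:commutator_B_Ps} produces an error $\lesssim\|u\|^2_{L^2(\bbR\times\{r\leq R\})}$ rather than a global $L^2$ norm. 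The cross terms (near-far, far-near) then require genuinely new mismatch estimates, which your proposal does not account for.

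The second gap is in your treatment of the $\rd_r^2\beta_s^{(\alpha)}$ contribution in the interior. You write that the integral $\int_0^2 s^{-1/2}\|\tilP_s u\|_{L^2(\bbR\times\{r\leq 2R\})}^2\,\ds$ is harmless because ``$\int_0^2 s^{-1/2}\,\ds<\infty$.'' In the paper's convention $\ds = \frac{\ud s}{s}$, so $\int_0^2 s^{-1/2}\,\ds = \int_0^2 s^{-3/2}\,\ud s = \infty$, and even if it converged you cannot pull $\sup_s\|\tilP_s u\|^2$ out of an $s$-integral. Lemma~\ref{l:L2R} controls $\int\|\tilP_s u\|_{L^2(\{r\leq R\})}^2\,\ds$ with no extra $s^{-1/2}$ weight, so it does not apply here. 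What saves the paper is the precise shape of the bound $|\rd_r^2\beta_s^{(\alpha)}(r)|\lesssim s^{-1/2}\frac{1}{1+s^{-1}r^2}$: the $s^{-1/2}$ size is confined to $\{r\lesssim s^{1/2}\}$, exactly where the $LE_s$ norm carries the weight $s^{-1/4}$ rather than $r^{-1/2}$, so $s^{-1/2}\|\tilP_s u\|_{L^2(A_{\leq -k_s})}\lesssim s^{-1/4}\|\tilP_s u\|_{LE_s}$ and $\int s^{-1/2}\|\tilP_s u\|_{LE_s}^2\,\ds\lesssim\|u\|_{LE}^2$; on $\{r\gtrsim s^{1/2}\}$ the bound improves to $s^{1/2}r^{-2}$ and the ordinary $r^{-1/2}$ weight suffices. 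Lumping the entire interior ball $\{r\leq 2R\}$ together --- as you do --- loses exactly the frequency-dependent spatial structure of $LE_s$ that was designed to absorb this term.

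Both gaps stem from the same underestimation of the role the bounded-region $L^2$ error is allowed to play (and hence of the necessity of a genuine two-by-two spatial decomposition) and of the fine interplay between the $s^{-1/2}$ scale of $\rd_r^2\beta_s^{(\alpha)}$ and the $s$-adapted weights in $LE_s$. The high-level idea of using self-adjointness and $[Q_s^{(\alpha)},H_\lot^{\sym}]$ is correct and matches the paper, as is the recognition that the $s^{-1/2}$ loss is the technical crux, but the mechanisms you propose for closing the estimate do not.
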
 

\begin{rem}
 The argument for the proof of Proposition~\ref{p:commutator_Hlot_Q_high} can also be used to prove that given any $\varepsilon > 0$, there exists a sufficiently large $R \equiv R(\varepsilon, \bsb, V) \geq 1$ such that
 \begin{equation} \label{eq:extraPsHlotuPsuestimate}
 \int_{0}^2 \bigl| \Re\angles{i\tilP_{s} H^{\sym}_\lot u}{\tilP_{s}u}_{t,x} \bigr| \, \ds \leq \varepsilon \|u\|_{LE}^2 + C_\varepsilon \|u\|^2_{L^2(\bbR\times\{r\leq R\})}.
 \end{equation}
\end{rem}

In what follows, we drop the superscript $\sym$ for simplicity, and assume that $\bsb, V$ are real-valued. Moreover, we only present the proof of the more intricate high-frequency estimate of Proposition~\ref{p:commutator_Hlot_Q_high} and leave the easier, analogous proof of the low-frequency estimate of Proposition~\ref{p:commutator_Hlot_Q_low} to the reader. The proof of Proposition~\ref{p:commutator_Hlot_Q_high} requires the following auxiliary commutator estimate, which we establish first below.

\begin{lem} \label{lem:commutator_B_Ps}
For a smooth vector field $\bsb \in \Gamma(T \bbH^d)$ satisfying 
\[
 \sum_{k=0}^4 \| \nabla^{(k)} \bsb \|_{L^\infty_{t,x}} \lesssim 1,
\]
define the operator
\[
 B := \frac{1}{i} \bigl( \bsb \cdot \nabla + \nabla \cdot \bsb \bigr) = \frac{1}{i} \bigl( 2 \bsb \cdot \nabla + \nabla_\mu \bsb^\mu \bigr).
\]
Given $s_0 > 0$, there exists a constant $C \equiv C(s_0,\bsb)$ such that 
\begin{equation} \label{eq:BcomPsint}
 \int_0^{s_0} \bigl\| [B, \tilP_s]u \bigr\|_{L^2}^2 \, \ds \leq C \|u\|_{L^2}^2.
\end{equation}
\end{lem}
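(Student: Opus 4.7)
\textbf{Proof plan for Lemma~\ref{lem:commutator_B_Ps}.}
The strategy is to expand $[B, \tilP_s]u$ using the identity $\tilP_s = -s \partial_s S(s)$ (with $S(s) = e^{s(\Delta+\rho^2)}$) together with the Duhamel formula $[B, S(s)]u = \int_0^s S(s-\sigma)[B, \Delta] S(\sigma) u \, d\sigma$, which yields
\begin{equation*}
 [B, \tilP_s]u = -s[B, \Delta] S(s)u - s(\Delta+\rho^2)[B, S(s)]u.
\end{equation*}
The goal is then to estimate the $L^2(\bbR \times \bbH^d; ds/s)$-norm of each of the two terms on the right-hand side. The key structural input is that $[B, \Delta]$ is a second order differential operator whose principal part equals $-(4/i)(\nabla_\nu \bsb^\mu) \nabla^\nu \nabla_\mu$, so that it admits a divergence decomposition $[B, \Delta] = \nabla_\mu F^\mu + G$ where $F^\mu, G$ are first order operators with coefficients depending on $\nabla^{\leq 3} \bsb$ and bounded by the hypothesis.

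For the first term $-s[B, \Delta] S(s) u$, the plan is to combine $s \Delta S(s) u = -\tilP_s u - s \rho^2 S(s) u$ with the Bochner--Weitzenb\"ock identity on $\bbH^d$, $\|\nabla^2 v\|_{L^2}^2 = \|\Delta v\|_{L^2}^2 + (d-1)\|\nabla v\|_{L^2}^2$, and the parabolic regularity estimates of Lemma~\ref{l:preg}, to bound
\begin{equation*}
 \|s [B, \Delta] S(s) u\|_{L^2} \lesssim \|\tilP_s u\|_{L^2} + s^{1/2} \|u\|_{L^2}.
\end{equation*}
Squaring and integrating against $ds/s$, the contribution of this term is then controlled by the heat flow square function bound $\int_0^{s_0} \|\tilP_s u\|_{L^2}^2 \, ds/s \lesssim_{s_0} \|u\|_{L^2}^2$ (obtained by rearranging Lemma~\ref{l:L2res}) together with the trivial bound $\int_0^{s_0} s \, ds/s = s_0$.

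For the second term, the plan is to use $(\Delta+\rho^2) S(s-\sigma) = -\tilP_{s-\sigma}/(s-\sigma)$ to rewrite
\begin{equation*}
 -s(\Delta+\rho^2)[B, S(s)]u = \int_0^s \frac{s}{s-\sigma} \tilP_{s-\sigma} [B, \Delta] S(\sigma) u \, d\sigma,
\end{equation*}
split the $\sigma$-integration at $s/2$, and apply the divergence decomposition of $[B, \Delta]$. In the regime $\sigma \in (0, s/2)$, one transfers one derivative onto $\tilP_{s-\sigma}$ via the one-sided estimate $\|\tilP_{s-\sigma}(\nabla \cdot g)\|_{L^2} \lesssim (s-\sigma)^{-1/2} \|g\|_{L^2}$ (obtained by writing $\tilP_{s-\sigma} = -(s-\sigma)(\Delta+\rho^2) e^{\frac{s-\sigma}{2}(\Delta+\rho^2)} \cdot e^{\frac{s-\sigma}{2}(\Delta+\rho^2)}$ and applying Corollary~\ref{c:pregdiv} to the second factor) and combines this with $\|\nabla S(\sigma) u\|_{L^2} \lesssim \sigma^{-1/2}\|u\|_{L^2}$; the Beta-function type integral $\int_0^{s/2} (s-\sigma)^{-1/2} \sigma^{-1/2} d\sigma \lesssim 1$ renders the contribution harmless.

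The main obstacle is the near-diagonal regime $\sigma \to s$ in the Duhamel integral, where the singular kernel $s/(s-\sigma)$ conspires with the parabolic smoothing bounds to produce marginally non-integrable expressions. The resolution relies on the exact spectral identity
\begin{equation*}
\tilP_{s-\sigma} S(\sigma) = \tfrac{s-\sigma}{s} \tilP_s,
\end{equation*}
which follows from $e^{(s-\sigma)(\Delta+\rho^2)}e^{\sigma(\Delta+\rho^2)} = e^{s(\Delta+\rho^2)}$ and extracts precisely the factor of $(s-\sigma)$ needed to cancel the singular Duhamel kernel. After commuting $[B, \Delta]$ past $S(\sigma)$ modulo the double commutator $[[B, \Delta], S(\sigma)]$ (which is itself treated by an iterated Duhamel argument and the fact that triple derivatives gain a further factor of $s^{1/2}$ from parabolic regularity), one reduces to integrals that are controlled uniformly in $s$ and square-integrable against $ds/s$, completing the estimate.
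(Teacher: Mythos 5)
Your skeleton (expand $\tilP_s = -s\partial_s S(s)$, reduce to $[B,S(s)]$ via Duhamel, split the $\sigma$-integration at $s/2$) coincides with the paper's, and your treatment of the first term $-s[B,\Delta]S(s)u$ (Bochner--Weitzenb\"ock plus $s\Delta S(s)u = -\tilP_s u - s\rho^2 S(s)u$ plus the $L^2$ resolution of Lemma~\ref{l:L2res}) is essentially what the paper does. Your handling of $\sigma \in (0,s/2)$ (divergence form, one-sided $\nabla\cdot$--absorption via Corollary~\ref{c:pregdiv}, beta-function/Schur) is also in the paper's spirit. The departure is the near-diagonal regime $\sigma\to s$, and this is where you have a genuine gap.

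After commuting $[B,\Delta]$ past $S(\sigma)$ and applying $\tilP_{s-\sigma}S(\sigma) = \frac{s-\sigma}{s}\tilP_s$, the kernel $\frac{s}{s-\sigma}$ is indeed cancelled, but the surviving main term is $\int_{s/2}^{s}\tilP_s[B,\Delta]u\,d\sigma = \frac{s}{2}\tilP_s[B,\Delta]u$, and the phrase ``controlled uniformly in $s$ and square-integrable against $\ds$'' elides the key difficulty here. The uniform bound $\bigl\|\frac{s}{2}\tilP_s[B,\Delta]u\bigr\|_{L^2}\lesssim\|u\|_{L^2}$ is \emph{not} square-integrable against $\ds$ on $(0,s_0]$ (that integral diverges logarithmically at $s=0$). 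To close the estimate you would need a genuine square-function gain $\int_0^{s_0}\bigl\|s\tilP_s\nabla_\mu\nabla_\nu(c^{\mu\nu}u)\bigr\|_{L^2}^2\ds\lesssim\|u\|_{L^2}^2$, and for that the \emph{single}-divergence decomposition $[B,\Delta]=\nabla_\mu F^\mu + G$ that you write is not enough — $F^\mu u$ is first order in $u$, hence not in $L^2$ with a usable bound — you need the double-divergence form $[B,\Delta]=\nabla_\mu\nabla_\nu c^{\mu\nu}\cdot+\mathrm{l.o.t.}$ together with a Bochner-type argument on the \emph{adjoint} side of the heat flow (derivatives inside $\tilP_s$ rather than outside), which does not follow from the operator-norm parabolic regularity estimates alone since $\nabla^{(2)}$ and $\Delta$ do not commute. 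None of this appears in your sketch. By contrast the paper sidesteps the issue entirely: on $\sigma\in[s/2,s]$ it distributes all derivatives onto $\bsb$ and $e^{\sigma(\Delta+\rho^2)}u$, exploiting $\sigma\simeq s$ so that each extra derivative absorbed by the inner heat flow costs only $s^{-1/2}$; the worst contribution is then $s\cdot s'\cdot(s')^{-3/2}\simeq s^{1/2}$, giving $\int_0^{s_0}s\,\ds = s_0$ with no square-function lemma for $\tilP_s\nabla^{(2)}$ needed. Finally, note that the iterated Duhamel you propose for the double commutator cannot be iterated freely: the triple commutator $[[[B,\Delta],\Delta],\Delta]$ involves $\nabla^{(5)}\bsb$, which your hypothesis does not control; a single Duhamel step (combined once more with the semigroup identity) does suffice here, but this must be said explicitly.
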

\begin{proof}
 We start by observing that the estimate~\eqref{eq:BcomPsint} for the zeroth order component $\nabla_\mu \bsb^\mu$ of the operator $B$ does not require us to exploit the commutator structure and follows easily from the boundedness of $\nabla_\mu \bsb^\mu$ and the resolution of the $L^2$ norm in terms of the frequency projections $\tilP_s$, see Lemma~\ref{l:L2res}. We therefore focus on the estimate for the first-order component $\bsb \cdot \nabla$ and note that
 \begin{equation} \label{equ:commutator_b_nabla_Ps}
  \begin{aligned}
   [ \bsb \cdot \nabla, \tilP_s ] u &= - s [ \bsb \cdot \nabla, \Delta ] e^{s(\Delta + \rho^2)} u - s \rho^2 [\bsb \cdot \nabla, e^{s(\Delta+\rho^2)}] u \\
   &\quad \, -s \Delta [\bsb \cdot \nabla, e^{s(\Delta+\rho^2)}] u.
  \end{aligned}
 \end{equation}
 Since $[ \bsb \cdot \nabla, \Delta ]$ is a second order differential operator with bounded coefficients, the estimate~\eqref{eq:BcomPsint} follows easily for the first term on the right-hand side of~\eqref{equ:commutator_b_nabla_Ps} using the parabolic regularity estimates from Lemma~\ref{l:preg}, elliptic regularity and the resolution of the $L^2$ norm in terms of the frequency projections $\tilP_s$. The second term on the right-hand side of~\eqref{equ:commutator_b_nabla_Ps} can also be handled in a straightforward manner. In order to estimate the contribution to~\eqref{eq:BcomPsint} of the last and main term on the right-hand side of~\eqref{equ:commutator_b_nabla_Ps}, we define
 \[
  w(s) := [ \bsb \cdot \nabla, e^{s(\Delta + \rho^2)} ] u.
 \]
 Then $w(s)$ satisfies the inhomogeneous heat equation 
 \begin{align*}
  \bigl( \partial_s - (\Delta + \rho^2) \bigr) w(s) &= - (\nabla_\mu \nabla^\mu \bsb^\nu) \nabla_\nu e^{s(\Delta+\rho^2)} u - 2 (\nabla_\mu \bsb^\nu) \nabla^\mu \nabla_\nu e^{s(\Delta+\rho^2)} u \\
  &\qquad-2\rho\, \bsb^\nu\nabla_\nu e^{s(\Delta+\rho^2)}u\\
  &= \nabla_\mu \nabla^\mu \bsb^\nu \nabla_\nu e^{s(\Delta+\rho^2)} u - 2 \nabla^\mu \bigl( (\nabla_\mu \bsb^\nu) \nabla_\nu e^{s(\Delta+\rho^2)} u \bigr) \\
  &\qquad-2\rho \,\bsb^\nu\nabla_\nu e^{s(\Delta+\rho^2)}u\\
  &=: W_1(s) + W_2(s)+W_3(s),
 \end{align*}
 or in Duhamel form
 \begin{equation} \label{equ:commutator_b_nabla_Ps_duhamel_w}
  w(s) = \int_0^s e^{(s-s')(\Delta+\rho^2)} s' (W_1(s') + W_2(s')+W_3(s')) \, \frac{\ud s'}{s'},
 \end{equation}
 and we need to show that 
 \[
  \int_0^{s_0} \| s \Delta w(s) \|_{L^2}^2 \, \ds \leq C \|u\|_{L^2}^2.
 \]
 We begin to estimate the contribution of $W_1(s')$. Here we split the heat time integration interval $[0,s]$ in~\eqref{equ:commutator_b_nabla_Ps_duhamel_w} into the consecutive intervals $[0, \frac{s}{2}]$ and $[\frac{s}{2}, s]$, which have to be dealt with in different manners,
 \begin{equation}
  \begin{aligned}
   &\int_0^{s_0} \biggl( \int_0^s \bigl\| s \Delta e^{(s-s')(\Delta+\rho^2)} s' W_1(s') \bigr\|_{L^2} \, \frac{\ud s'}{s'} \biggr)^2 \, \frac{\ud s}{s} \\
   &\quad \lesssim \int_0^{s_0} \biggl( \int_0^{\frac{s}{2}} \bigl\| s \Delta e^{(s-s')(\Delta+\rho^2)} s' W_1(s') \bigr\|_{L^2} \, \frac{\ud s'}{s'} \biggr)^2 \, \frac{\ud s}{s} \\
   &\quad \quad + \int_0^{s_0} \biggl( \int_{\frac{s}{2}}^s \bigl\| s \Delta e^{(s-s')(\Delta+\rho^2)} s' W_1(s') \bigr\|_{L^2} \, \frac{\ud s'}{s'} \biggr)^2 \, \frac{\ud s}{s} \\
   &\quad =: I + II.
  \end{aligned}
 \end{equation}
 For term $I$ we obtain by the parabolic regularity estimates from Lemma~\ref{l:preg} that $I$ is bounded by
 \begin{align*}
  &\int_0^{s_0} \biggl( \int_0^{\frac{s}{2}} \frac{s (s')^{\frac{1}{2}}}{s-s'} \bigl\| (s-s') \Delta e^{(s-s')(\Delta+\rho^2)} (\nabla^\mu \nabla_\mu \bsb^\nu) (s')^{\frac{1}{2}} \nabla_\nu e^{s'(\Delta+\rho^2)} u \bigr\|_{L^2} \, \frac{\ud s'}{s'} \biggr)^2 \ds \\
  &\lesssim \| \nabla^{(2)} \bsb \|_{L^\infty_{t,x}}^2 \|u\|_{L^2}^2 \int_0^{s_0} \biggl( \int_0^{\frac{s}{2}} \frac{s (s')^{\frac{1}{2}}}{s-s'} \, \frac{\ud s'}{s'} \biggr)^2 \ds \\
  &\lesssim_{\bsb, s_0} \|u\|_{L^2}^2.
 \end{align*}
 To bound the second term $II$, we distribute all derivatives of $\Delta$ onto $\bsb$ and $u$. Then we obtain by the parabolic regularity estimates from Lemma~\ref{l:preg} that 
 \begin{align*}
  II &\lesssim \int_0^{s_0} \biggl( \int_{\frac{s}{2}}^s s \bigl\| e^{(s-s') (\Delta+\rho^2)} s' \Delta ((\nabla^\mu \nabla_\mu \bsb^\nu) \nabla_\nu e^{s'(\Delta+\rho^2)} u \bigr) \bigr\|_{L^2} \, \frac{\ud s'}{s'} \biggr)^2 \, \ds \\
  &\lesssim \int_0^{s_0} \biggl( \sum_{k=0}^2 \| \nabla^{(2+k)} \bsb \|_{L^\infty_{t,x}} \|u\|_{L^2} \int_{\frac{s}{2}}^s s (s')^{\frac{k-1}{2}} \, \frac{\ud s'}{s'} \biggr)^2 \, \ds \\
  &\lesssim_{\bsb, s_0} \|u\|_{L^2}^2.
 \end{align*}
 The contribution of $W_2(s')$ has to be estimated more carefully using Schur's test. Again we split the heat time integration interval $[0,s]$ in~\eqref{equ:commutator_b_nabla_Ps_duhamel_w} into the consecutive intervals $[0, \frac{s}{2}]$ and $[\frac{s}{2}, s]$, and treat their contributions separately,
 \begin{equation}
  \begin{aligned}
   &\int_0^{s_0} \biggl( \int_0^s \bigl\| s \Delta e^{(s-s')(\Delta+\rho^2)} s' W_2(s') \bigr\|_{L^2} \, \frac{\ud s'}{s'} \biggr)^2 \, \frac{\ud s}{s} \\
   &\quad \lesssim \int_0^{s_0} \biggl( \int_0^{\frac{s}{2}} \bigl\| s \Delta e^{(s-s')(\Delta+\rho^2)} s' W_2(s') \bigr\|_{L^2} \, \frac{\ud s'}{s'} \biggr)^2 \, \frac{\ud s}{s} \\
   &\quad \quad + \int_0^{s_0} \biggl( \int_{\frac{s}{2}}^s \bigl\| s \Delta e^{(s-s')(\Delta+\rho^2)} s' W_2(s') \bigr\|_{L^2} \, \frac{\ud s'}{s'} \biggr)^2 \, \frac{\ud s}{s} \\
   &\quad =: III + IV.
  \end{aligned}
 \end{equation}
 For term $III$ we have by Corollary~\ref{c:pregdiv}, Schur's test, and Lemma~\ref{l:L2res} that this term is bounded by
 \begin{align*}
&\int_0^{s_0} \biggl( \int_0^{\frac{s}{2}} s (s')^{\frac{1}{2}} \bigl\|  e^{(s-s')(\Delta+\rho^2)} \Delta \nabla^\mu \bigl( (\nabla_\mu \bsb^\nu) (s')^{\frac{1}{2}} \nabla_\nu e^{s'(\Delta+\rho^2)} u \bigr) \bigr\|_{L^2} \, \frac{\ud s'}{s'} \biggr)^2 \, \ds \\
  &\lesssim \| \nabla \bsb \|_{L^\infty_{t,x}}^2 \int_0^{s_0} \biggl( \int_0^{\frac{s}{2}} \frac{s (s')^{\frac{1}{2}}}{(s-s')^{\frac{3}{2}}} \bigl\| (s')^{\frac{1}{2}} \nabla e^{s'(\Delta + \rho^2)} u \bigr\|_{L^2}  \, \frac{\ud s'}{s'} \biggr)^2 \, \ds \\
  &\lesssim_{\bsb, s_0} \int_0^{s_0} \bigl\| (s')^{\frac{1}{2}} \nabla e^{s'(\Delta + \rho^2)} u \bigr\|_{L^2}^2  \, \frac{\ud s'}{s'} \\
  &\lesssim_{\bsb, s_0} \|u\|_{L^2}^2,
 \end{align*}
 where we used that $\chi_{\{s' \leq \frac{s}{2}\}} \frac{s (s')^{\frac{1}{2}}}{(s-s')^{\frac{3}{2}}}$ is a Schur kernel. For term $IV$ we again first distribute all derivatives of $\Delta$ and $\nabla^\mu$ onto $\bsb$ and $u$, and then use parabolic regularity estimates and Schur's test to conclude that
 \begin{align*}
  IV &\lesssim \int_0^{s_0} \biggl( \int_{\frac{s}{2}}^s \bigl\| s e^{(s-s')(\Delta+\rho^2)} s' \Delta \nabla^\mu \bigl( (\nabla_\mu \bsb^\nu) \nabla_\nu e^{s'(\Delta+\rho^2)} u \bigr) \bigr\|_{L^2} \, \frac{\ud s'}{s'} \biggr)^2 \, \ds \\
  &\lesssim_{\bsb} \int_0^{s_0} \biggl( \int_{\frac{s}{2}}^s \sum_{k=1}^4 s (s')^{\frac{2-k}{2}} \bigl\| (s')^{\frac{k}{2}} \nabla^{(k)} e^{s'(\Delta+\rho^2)} u \bigr\|_{L^2} \, \frac{\ud s'}{s'} \biggr)^2 \, \ds \\
  &\lesssim_{\bsb, s_0} \|u\|_{L^2}^2.
 \end{align*}
The contribution of $W_3$ is similar to the previous terms but simpler. This finishes the proof of Lemma~\ref{lem:commutator_B_Ps}.
\end{proof}

We are now in a position to present the proof of Proposition~\ref{p:commutator_Hlot_Q_high}.

\begin{proof}[Proof of Proposition~\ref{p:commutator_Hlot_Q_high}]
 We recall that $H_\lot u = B u + V u$, where 
 \[
  B u = \frac{1}{i} \bigl( \bsb \cdot \nabla + \nabla \cdot \bsb \bigr) u.
 \]
 We only verify the estimate~\eqref{equ:commutator_Hlot_Q_high} for the first-order component $B$ of $H_\lot$ and leave the easier treatment of the potential part $V$ to the reader. For any $R \geq 1$ we denote by $\chi_{\{ r \leq R \}}$ a smooth cut-off function supported in the region $\{ r \geq R\}$ and equal to one on $\{ r \leq \frac{R}{2} \}$. Moreover, we set $\chi_{\{ r > R \}} := 1 - \chi_{\{r \leq R\}}$. Then we may expand
 \begin{align*}
  \Re \langle i \tilP_s B u, Q_s^{(\alpha)} \tilP_s u \rangle_{t,x} &= \Re \langle i \tilP_s B \bigl( \chi_{\{r > R\}} u\bigr), Q_s^{(\alpha)} \tilP_s \bigl( \chi_{\{r > R\}} u \bigr) \rangle_{t,x} \\
  &\quad + \Re \langle i \tilP_s B \bigl( \chi_{\{r > R\}} u\bigr), Q_s^{(\alpha)} \tilP_s \bigl( \chi_{\{r \leq R\}} u \bigr) \rangle_{t,x} \\
  &\quad + \Re \langle i \tilP_s B \bigl( \chi_{\{r \leq R\}} u\bigr), Q_s^{(\alpha)} \tilP_s \bigl( \chi_{\{r \leq R\}} u \bigr) \rangle_{t,x} \\
  &\quad + \Re \langle i \tilP_s B \bigl( \chi_{\{r \leq R\}} u\bigr), Q_s^{(\alpha)} \tilP_s \bigl( \chi_{\{r > R\}} u \bigr) \rangle_{t,x}.
 \end{align*}
 Our task therefore reduces to establishing for any $\varepsilon > 0$ that there exists a sufficiently large $R \equiv R(\varepsilon, \bsb, V) \geq 1$ such that the following four integrals 
 \begin{align}
  &\int_0^2 \sup_{\{\alpha_\ell\} \in \calA} \, \bigl| \Re \langle i \tilP_s B \bigl( \chi_{\{r > R\}} u\bigr), Q_s^{(\alpha)} \tilP_s \bigl( \chi_{\{r > R\}} u \bigr) \rangle_{t,x} \bigr| \, \ds,  \label{equ:commutator_BQ_far_far} \\
  &\int_0^2 \sup_{\{\alpha_\ell\} \in \calA} \, \bigl| \Re \langle i \tilP_s B \bigl( \chi_{\{r > R\}} u\bigr), Q_s^{(\alpha)} \tilP_s \bigl( \chi_{\{r \leq R\}} u \bigr) \rangle_{t,x} \bigr| \, \ds,  \label{equ:commutator_BQ_far_near} \\
  &\int_0^2 \sup_{\{\alpha_\ell\} \in \calA} \, \bigl| \Re \langle i \tilP_s B \bigl( \chi_{\{r \leq R\}} u\bigr), Q_s^{(\alpha)} \tilP_s \bigl( \chi_{\{r \leq R\}} u \bigr) \rangle_{t,x} \bigr| \, \ds,  \label{equ:commutator_BQ_near_near} \\
  &\int_0^2 \sup_{\{\alpha_\ell\} \in \calA} \, \bigl| \Re \langle i \tilP_s B \bigl( \chi_{\{r \leq R\}} u\bigr), Q_s^{(\alpha)} \tilP_s \bigl( \chi_{\{r > R\}} u \bigr) \rangle_{t,x} \bigr| \, \ds \label{equ:commutator_BQ_near_far} 
 \end{align}
 are each bounded by 
 \begin{equation}
  \varepsilon \|u\|_{LE}^2 + C_\varepsilon \|u\|_{L^2(\bbR \times \{r \leq R\})}^2.
 \end{equation}

 Let us begin with~\eqref{equ:commutator_BQ_far_far}. By Cauchy-Schwarz, Corollary~\ref{cor:QLEs}, Lemma~\ref{lem:PsLEs1}, Lemma~\ref{lem:LE_comparison}, and a simple change of variables we obtain that
 \begin{align*}
  &\int_0^2 \sup_{\{\alpha_\ell\} \in \calA} \, \bigl| \Re \langle i \tilP_s B \bigl( \chi_{\{r > R\}} u\bigr), Q_s^{(\alpha)} \tilP_s \bigl( \chi_{\{r > R\}} u \bigr) \rangle_{t,x} \bigr| \, \ds \\
  &\leq \int_0^2 \sup_{\{\alpha_\ell\} \in \calA} \, \bigl\| \tilP_s B \bigl( \chi_{\{r > R\}} u\bigr) \bigr\|_{LE_s^\ast} \bigl\| Q_s^{(\alpha)} \tilP_s \bigl( \chi_{\{r > R\}} u \bigr) \bigr\|_{LE_s} \, \ds \\
  &\lesssim \int_0^2 \bigl\| \tilP_{\frac{s}{4}} B \bigl( \chi_{\{r > R\}} u\bigr) \bigr\|_{LE_{\frac{s}{4}}^\ast} \bigl\| \tilP_{\frac{s}{4}} \bigl( \chi_{\{r > R\}} u \bigr) \bigr\|_{LE_{\frac{s}{4}}} \, \ds \\ 
  &\lesssim \biggl( \int_0^{\frac{1}{2}} s^{\frac{1}{2}} \bigl\| \tilP_s B  \bigl( \chi_{\{r > R\}} u\bigr) \bigr\|_{LE_s^\ast}^2 \, \ds \biggr)^{\frac{1}{2}} \biggl( \int_0^{\frac{1}{2}} s^{-\frac{1}{2}} \bigl\| \tilP_s  \bigl( \chi_{\{r > R\}} u \bigr) \bigr\|_{LE_s}^2 \, \ds \biggr)^{\frac{1}{2}} \\
  &\lesssim \bigl\| B \bigl( \chi_{\{r > R\}} u \bigr) \bigr\|_{LE^\ast} \| \chi_{\{r > R\}} u \|_{LE}.
 \end{align*}
 Now we note that since $B$ is a local operator and since $\chi_{\{ r > R \}} u$ is supported in $\{ r \geq \frac{R}{2} \}$, we may replace $B$ by 
 \[
  \frac{1}{i} \bigl( \bsb_{> \frac{R}{2}} \cdot \nabla + \nabla \cdot \bsb_{>\frac{R}{2}} \bigr), \quad \bsb_{> \frac{R}{2}} := \chi_{\{r > \frac{R}{2}\}} \bsb.
 \]
 Hence, by Proposition~\ref{p:Hlotbound} we have that 
 \[
  \bigl\| B \bigl( \chi_{\{r > R\}} u \bigr) \bigr\|_{LE^\ast} \lesssim C \bigl( \bsb_{> \frac{R}{2}} \bigr) \| \chi_{\{r > R\}} u \|_{LE},
 \]
 where the constant $C \bigl( \bsb_{> \frac{R}{2}} \bigr)$ depends on weighted $L^\infty_{t,x}$-norms of the vector field $\bsb$. In particular, due to our decay assumptions about $\bsb$, this constant can be made arbitrarily small by choosing $R \geq 1$ sufficiently large. Given any $\varepsilon > 0$ we may therefore choose $R \equiv R(\varepsilon, \bsb, V) \geq 1$ sufficiently large and invoke Lemma~\ref{lem:boundedness_LE_spatial_cutoff} to conclude that
 \begin{align}
  \eqref{equ:commutator_BQ_far_far} \lesssim \varepsilon \| \chi_{\{r > R\}} u \|_{LE}^2 \lesssim \varepsilon \|u\|_{LE}^2.
 \end{align}
 The estimate of~\eqref{equ:commutator_BQ_far_near} is analogous to the one of~\eqref{equ:commutator_BQ_far_far}.
 
 Now for~\eqref{equ:commutator_BQ_near_near} we exploit the self-adjointness of $B$, $Q_s^{(\alpha)}$, and $\tilP_s$ to obtain the following commutator structure 
 \begin{align*}
  &\Re \langle i \tilP_s B \bigl( \chi_{\{r \leq R\}} u\bigr), Q_s^{(\alpha)} \tilP_s \bigl( \chi_{\{r \leq R\}} u \bigr) \rangle_{t,x} \\
  &\qquad \qquad = -\frac{1}{2} \langle i [B, \tilP_s Q_s^{(\alpha)} \tilP_s] \bigl( \chi_{\{r \leq R\}} u\bigr), \bigl( \chi_{\{r \leq R\}} u \bigr) \rangle_{t,x}.
 \end{align*}
 We then expand into
 \begin{equation} \label{equ:commutator_BPQP}
  [B, \tilP_s Q_s^{(\alpha)} \tilP_s] = [B, \tilP_s] Q_s^{(\alpha)} \tilP_s + \tilP_s [B, Q_s^{(\alpha)}] \tilP_s + \tilP_s Q_s^{(\alpha)} [B, \tilP_s].
 \end{equation}
 The contributions of the first and third terms on the right-hand side of~\eqref{equ:commutator_BPQP} can be estimated in the same manner using Lemma~\ref{lem:commutator_B_Ps}. We only provide the details for the first term. By Lemma~\ref{lem:commutator_B_Ps} and Corollary~\ref{cor:QLEs}, we obtain
 \begin{align*}
  &\int_0^2 \sup_{\{ \alpha_\ell \} \in \calA} \, \bigl| \langle [B, \tilP_s] Q_s^{(\alpha)} \tilP_s \bigl( \chi_{\{r \leq R\}} u\bigr), \chi_{\{r \leq R\}} u \rangle_{t,x} \bigr| \, \ds \\
  &\lesssim \int_0^2 \sup_{\{ \alpha_\ell \} \in \calA} \, \bigl\| Q_s^{(\alpha)} \tilP_s \bigl( \chi_{\{r \leq R\}} u \bigr) \bigr\|_{L^2_{t,x}} \bigl\| [B, \tilP_s] \bigl( \chi_{\{r \leq R\}} u \bigr) \bigr\|_{L^2_{t,x}} \, \ds \\
  &\lesssim \biggl( \int_0^2 \bigl\| \tilP_{\frac{s}{2}} \bigl( \chi_{\{ r \leq R\}} u \bigr) \bigr\|_{L^2_{t,x}}^2 \, \ds \biggr)^{\frac{1}{2}} \biggl( \int_0^2 \bigl\| [B, \tilP_s] \bigl( \chi_{\{ r \leq R \}} u \bigr) \bigr\|_{L^2_{t,x}}^2 \, \ds \biggr)^{\frac{1}{2}} \\
  &\lesssim \| u \|_{L^2(\bbR \times \{ r \leq R\})}^2.
 \end{align*}
 We are thus left to estimate the contribution of the second and main term on the right-hand side of~\eqref{equ:commutator_BPQP}. Given any $\varepsilon > 0$ we obtain by Cauchy-Schwarz that 
 \begin{align*}
  &\int_0^2 \sup_{\{ \alpha_\ell \} \in \calA} \, \bigl| \langle \tilP_s [B, Q_s^{(\alpha)}] \tilP_s \bigl( \chi_{\{r \leq R\}} u\bigr), \chi_{\{r \leq R\}} u \rangle_{t,x} \bigr| \, \ds \\
  &\lesssim \biggl( \int_0^2 \bigl\| \tilP_s \bigl( \chi_{\{ r \leq R\}} u \bigr) \bigr\|_{L^2_{t,x}}^2 \, \ds \biggr)^{\frac{1}{2}} \biggl( \int_0^2 \sup_{\{\alpha_\ell\} \in \calA} \, \bigl\| [B, Q_s^{(\alpha)}] \tilP_s \bigl( \chi_{\{ r \leq R \}} u \bigr) \bigr\|_{L^2_{t,x}}^2 \, \ds \biggr)^{\frac{1}{2}} \\
  &\leq C_\varepsilon \|u\|_{L^2(\bbR \times \{r \leq R\})}^2 + \varepsilon \int_0^2 \sup_{\{\alpha_\ell\} \in \calA} \, \bigl\| [B, Q_s^{(\alpha)}] \tilP_s \bigl( \chi_{\{ r \leq R \}} u \bigr) \bigr\|_{L^2_{t,x}}^2 \, \ds.
 \end{align*}
 By direct computation we find that the commutator $[B, Q_s^{(\alpha)}]$ is given by
 \begin{equation}
  \begin{split}
   [B, Q_s^{(\alpha)}] &= - 4 \bsb^\mu (\nabla_\mu \bsbeta_s^{(\alpha),\nu} ) \nabla_\nu - 2 \bsb^\mu (\nabla_\mu \nabla_\nu \bsbeta_s^{(\alpha), \nu}) \\
   &\quad \, + 4 \bsbeta_s^{(\alpha), \nu} (\nabla_\nu \bsb^\mu) \nabla_\mu + 2 \bsbeta_s^{(\alpha), \nu} (\nabla_\nu \nabla_\mu \bsb^\mu).
  \end{split}
 \end{equation}
 In order to complete the estimate of~\eqref{equ:commutator_BQ_near_near} it thus suffices to prove that each of the following integrals
 \begin{align}
  &\int_0^2 \sup_{\{\alpha_\ell\} \in \calA} \, \bigl\| \bsb^\mu (\nabla_\mu \bsbeta_s^{(\alpha),\nu} ) \nabla_\nu \tilP_s \bigl( \chi_{\{r \leq R\}} u \bigr) \bigr\|_{L^2_{t,x}}^2 \, \ds \label{equ:commutator_BQ_term1_int} \\
  &\int_0^2 \sup_{\{\alpha_\ell\} \in \calA} \, \bigl\| \bsb^\mu (\nabla_\mu \nabla_\nu \bsbeta_s^{(\alpha), \nu}) \tilP_s \bigl( \chi_{\{r \leq R\}} u \bigr) \bigr\|_{L^2_{t,x}}^2 \, \ds \label{equ:commutator_BQ_term2_int} \\
  &\int_0^2 \sup_{\{\alpha_\ell\} \in \calA} \, \bigl\| \bsbeta_s^{(\alpha), \nu} (\nabla_\nu \bsb^\mu) \nabla_\mu \tilP_s \bigl( \chi_{\{r \leq R\}} u \bigr) \bigr\|_{L^2_{t,x}}^2 \, \ds \label{equ:commutator_BQ_term3_int} \\
  &\int_0^2 \sup_{\{\alpha_\ell\} \in \calA} \, \bigl\| \bsbeta_s^{(\alpha), \nu} (\nabla_\nu \nabla_\mu \bsb^\mu) \tilP_s \bigl( \chi_{\{r \leq R\}} u \bigr) \bigr\|_{L^2_{t,x}}^2 \, \ds \label{equ:commutator_BQ_term4_int}
 \end{align}
 is bounded by 
 \[
  \|u\|_{L^2(\bbR\times\{r\leq R\})}^2 + \|u\|_{LE}^2.
 \]
 In view of the fact that $|\beta_s^{(\alpha)}(r)| \lesssim s^{\frac{1}{2}}$ uniformly for all $\{ \alpha_\ell \} \in \calA$,  it is obvious by Lemma~\ref{l:preg} that \eqref{equ:commutator_BQ_term3_int} and \eqref{equ:commutator_BQ_term4_int} are bounded by $\|u\|_{L^2(\bbR\times\{r\leq R\})}^2$. Correspondingly, only the treatment of the first two integrals \eqref{equ:commutator_BQ_term1_int}--\eqref{equ:commutator_BQ_term2_int} requires more explanations. We begin with the estimate of the integral~\eqref{equ:commutator_BQ_term1_int}, which we split into
 \begin{equation} \label{equ:commutator_BQ_term1_int_splitting}
  \begin{aligned}
   &\int_0^2 \sup_{\{\alpha_\ell\} \in \calA} \, \bigl\| \bsb^\mu (\nabla_\mu \bsbeta_s^{(\alpha),\nu} ) \nabla_\nu \tilP_s \bigl( \chi_{\{r \leq R\}} u \bigr) \bigr\|_{L^2_{t,x}}^2 \, \ds \\
   &\lesssim \int_0^2 \sup_{\{\alpha_\ell\} \in \calA} \, \bigl\| \bsb^\mu (\nabla_\mu \bsbeta_s^{(\alpha),\nu} ) \nabla_\nu \tilP_s \bigl( \chi_{\{r \leq R\}} u \bigr) \bigr\|_{L^2(\bbR \times A_{\leq -k_s})}^2 \, \ds \\
   &\quad + \int_0^2 \sup_{\{\alpha_\ell\} \in \calA} \, \biggl( \sum_{\ell \geq -k_s} \bigl\| \phi_\ell \bsb^\mu (\nabla_\mu \bsbeta_s^{(\alpha),\nu} ) \nabla_\nu \tilP_s \bigl( \chi_{\{r \leq R\}} u \bigr) \bigr\|_{L^2_{t,x}} \biggr)^2 \, \ds.
  \end{aligned}
 \end{equation}
 Note that due to the radial symmetry of the vector field $\bsbeta_s^{(\alpha)}$, the only non-zero components of $\nabla_\mu \bsbeta_s^{(\alpha), \nu}$ are given by
 \begin{align*}
  \nabla_r \bsbeta_s^{(\alpha), r} = \partial_r \beta_s^{(\alpha)} \quad \text{and} \quad \nabla_{\theta_a} \bsbeta_s^{(\alpha), \theta_a} = \coth(r) \beta_s^{(\alpha)} \text{ for } a = 1, \ldots, d-1.
 \end{align*}
 We will use the following pointwise bounds for the non-zero components of $\nabla_\mu \bsbeta_s^{(\alpha), \nu}$, which hold uniformly for all $\{ \alpha_\ell \} \in \calA$,
 \begin{equation} \label{equ:commutator_BQ_term1_pointwise_bound1}
  \partial_r \beta_s^{(\alpha)}(r) = \delta \frac{\alpha(\delta s^{-\frac{1}{2}} r)}{\langle \delta s^{-\frac{1}{2}} r \rangle} \lesssim \frac{s^{\frac{1}{2}}}{(s+r^2)^{\frac{1}{2}}} \lesssim \begin{cases} 1 & \text{for } r \leq s^{\frac{1}{2}}, \\ s^{\frac{1}{2}} r^{-1} & \text{for } r \geq s^{\frac{1}{2}}, \end{cases} 
 \end{equation}
 and 
 \begin{equation} \label{equ:commutator_BQ_term1_pointwise_bound2}
  \coth(r) \beta_s^{(\alpha)}(r) \lesssim \begin{cases} 1 & \text{for } r \leq s^{\frac{1}{2}}, \\ s^{\frac{1}{2}} r^{-1} & \text{for } s^{\frac{1}{2}} \leq r \lesssim 1, \\ s^{\frac{1}{2}} & \text{for } r \gtrsim 1. \end{cases}
 \end{equation}
 Thus, using the localized parabolic regularity estimates from Proposition~\ref{p:lpregcore}, we obtain for the first term on the right-hand side of~\eqref{equ:commutator_BQ_term1_int_splitting} that 
 \begin{align*}
  &\int_0^2 \sup_{\{\alpha_\ell\} \in \calA} \, \bigl\| \bsb^\mu (\nabla_\mu \bsbeta_s^{(\alpha),\nu} ) \nabla_\nu \tilP_s \bigl( \chi_{\{r \leq R\}} u \bigr) \bigr\|_{L^2(\bbR \times A_{\leq -k_s})}^2 \, \ds \\
  &\lesssim_{\bsb} \int_0^2 \bigl\| \nabla \tilP_s \bigl( \chi_{\{ r \leq R\}} u \bigr) \bigr\|_{L^2(\bbR \times A_{<-k_s})}^2 \, \ds \\
  &\lesssim \int_0^2 s^{-\frac{1}{2}} \biggl( s^{-\frac{1}{4}} \bigl\| \chi_{\{ r \leq s^{\frac{1}{2}}\}} s^{\frac{1}{2}} \nabla e^{\frac{3}{4} s (\Delta + \rho^2)} \chi_{\{ r \leq 2^{10} s^{\frac{1}{2}}\}} \tilP_{\frac{s}{4}} \bigl( \chi_{\{ r \leq R\}} u \bigr) \bigr\|_{L^2_{t,x}} \biggr)^2 \, \ds \\
  &\quad + \int_0^2 s^{-\frac{1}{2}} \biggl( \sum_{m \geq -k_s+10} s^{-\frac{1}{4}} \bigl\| \chi_{\{ r \leq s^{\frac{1}{4}}\}} s^{\frac{1}{2}} \nabla e^{\frac{3}{4} s(\Delta + \rho^2)} \phi_m \tilP_{\frac{s}{4}} \bigl( \chi_{\{ r \leq R\}} u \bigr) \bigr\|_{L^2_{t,x}} \biggr)^2 \, \ds \\
  &\lesssim \int_0^2 s^{-\frac{1}{2}} \biggl( s^{-\frac{1}{4}} \bigl\| \chi_{\{ r \leq 2^{10}s^{\frac{1}{2}}\}} \tilP_{\frac{s}{4}} \bigl( \chi_{\{r \leq R\}} u \bigr) \bigr\|_{L^2_{t,x}} \biggr)^2 \, \ds \\
  &\quad + \int_0^2 s^{-\frac{1}{2}} \biggl( \sum_{m \geq -k_s+10} s^{\frac{1}{4}} 2^{-m} \bigl\| \phi_m \tilP_{\frac{s}{4}} \bigl( \chi_{\{r \leq R\}} u \bigr) \bigr\|_{L^2_{t,x}} \biggr)^2 \, \ds \\
  &\lesssim \int_0^2 s^{-\frac{1}{2}} \bigl\| \tilP_{\frac{s}{4}} \bigl( \chi_{\{ r \leq R\}} u \bigr) \bigr\|_{LE_{\frac{s}{4}}}^2 \, \ds \lesssim \|u\|_{LE}^2.
 \end{align*}
 For the second term on the right-hand side of~\eqref{equ:commutator_BQ_term1_int_splitting} we use the decay of $\bsb$ and the pointwise bound $s^{\frac{1}{2}} r^{-1}$ from \eqref{equ:commutator_BQ_term1_pointwise_bound1}--\eqref{equ:commutator_BQ_term1_pointwise_bound2} for the region $\{ r \geq s^{\frac{1}{2}} \}$ to obtain \begin{equation}
  \begin{aligned}
   &\int_0^2 \sup_{\{\alpha_\ell\} \in \calA} \, \biggl( \sum_{\ell \geq -k_s} \bigl\| \phi_\ell \bsb^\mu (\nabla_\mu \bsbeta_s^{(\alpha),\nu} ) \nabla_\nu \tilP_s \bigl( \chi_{\{r \leq R\}} u \bigr) \bigr\|_{L^2_{t,x}} \biggr)^2 \, \ds \\
   &\lesssim_{\bsb} \int_0^2 \biggl( \sum_{\ell \geq -k_s} 2^{-\ell} \bigl\| \phi_\ell s^{\frac{1}{2}} \nabla e^{\frac{3}{4} s(\Delta + \rho^2)} \phi_{\leq -k_s} \tilP_{\frac{s}{4}} \bigl( \chi_{\{ r \leq R\}} u \bigr) \bigr\|_{L^2_{t,x}} \biggr)^2 \, \ds \\
   &\quad + \int_0^2 \biggl( \sum_{\ell \geq -k_s} \sum_{-k_s \leq m \leq \ell+10} 2^{-\ell} \bigl\| \phi_\ell s^{\frac{1}{2}} \nabla e^{\frac{3}{4} s(\Delta + \rho^2)} \phi_m \tilP_{\frac{s}{4}} \bigl( \chi_{\{ r \leq R\}} u \bigr) \bigr\|_{L^2_{t,x}} \biggr)^2 \, \ds \\
   &\quad + \int_0^2 \biggl( \sum_{\ell \geq -k_s} \sum_{m > \ell+10} 2^{-\ell} \bigl\| \phi_\ell s^{\frac{1}{2}} \nabla e^{\frac{3}{4} s(\Delta + \rho^2)} \phi_m \tilP_{\frac{s}{4}} \bigl( \chi_{\{ r \leq R\}} u \bigr) \bigr\|_{L^2_{t,x}} \biggr)^2 \, \ds.
  \end{aligned}
 \end{equation}
 Then invoking the parabolic regularity estimates from Lemma~\ref{l:preg} for the first two terms on the right-hand side and the localized parabolic regularity estimates from Proposition~\ref{p:lpregcore} for the third term, we obtain easily that each of the three integrals on the right-hand side above is bounded by $\| \chi_{\{r \leq R\}} u \|_{LE}^2 \lesssim \|u\|_{LE}^2$, as desired. 
  
 Finally, we turn to the estimate of the integral~\eqref{equ:commutator_BQ_term2_int}. We first note that due to the radial symmetry of the vector field $\bsbeta_s^{(\alpha)}$, the only non-zero component of $\nabla_\mu \nabla_\nu \bsbeta_s^{(\alpha), \nu}$ is
 \begin{align*}
  \nabla_r \nabla_\nu \bsbeta_s^{(\alpha), \nu} = \partial_r^2 \beta_s^{(\alpha)} + 2 \rho \bigl( \coth(r) \partial_r \beta_s^{(\alpha)} - \sinh^{-2}(r) \beta_s^{(\alpha)} \bigr).
 \end{align*}
 Using the following pointwise bounds, which hold uniformly for all $\{ \alpha_\ell \} \in \calA$,
 \begin{equation}
  | \partial_r^2 \beta_s^{(\alpha)}(r) | \lesssim \delta^2 s^{-\frac{1}{2}} \frac{\alpha(\delta s^{-\frac{1}{2}} r)}{\langle \delta s^{-\frac{1}{2}} r \rangle^2} \lesssim \frac{s^{-\frac{1}{2}}}{1 + s^{-1} r^2} \lesssim \begin{cases} s^{-\frac{1}{2}} & \text{for } r \leq s^{\frac{1}{2}}, \\ s^{\frac{1}{2}} r^{-2} & \text{for } r \geq s^{\frac{1}{2}},  \end{cases} 
 \end{equation}
 and 
 \begin{equation}
  \bigl| \coth(r) (\partial_r \beta_s^{(\alpha)})(r) - \sinh^{-2}(r) \beta_s^{(\alpha)}(r) \bigr| \lesssim \begin{cases} s^{-\frac{1}{2}} & \text{for } r \leq s^{\frac{1}{2}}, \\ s^{\frac{1}{2}} r^{-2} & \text{for } s^{\frac{1}{2}} \leq r \lesssim 1, \\ 1 & \text{for } r \gtrsim 1, \end{cases}
 \end{equation}
 we may conclude in a similar manner as before that the integral~\eqref{equ:commutator_BQ_term2_int} is bounded by $\| \chi_{\{r \leq R\}} u \|_{L^2_{t,x}}^2 + \| u \|_{LE}^2$, as desired.
 
 In order to finish the proof of Proposition~\ref{p:commutator_Hlot_Q_high} it now only remains to estimate~\eqref{equ:commutator_BQ_near_far}. To this end we decompose~\eqref{equ:commutator_BQ_near_far} into
 \begin{align*}
  &\int_0^2 \sup_{\{\alpha_\ell\} \in \calA} \, \bigl| \Re \langle i \tilP_s B \bigl( \chi_{\{r \leq R\}} u\bigr), Q_s^{(\alpha)} \tilP_s \bigl( \chi_{\{r > R\}} u \bigr) \rangle_{t,x} \bigr| \, \ds \\
  &\lesssim \int_0^2 \sup_{\{\alpha_\ell\} \in \calA} \, \bigl| \Re \langle i \tilP_s \bigl( \chi_{\{ r > 2^{-20} R\}} B \chi_{\{r \leq R\}} u\bigr), Q_s^{(\alpha)} \tilP_s \bigl( \chi_{\{r > R\}} u \bigr) \rangle_{t,x} \bigr| \, \ds \\
  &\quad + \int_0^2 \sup_{\{\alpha_\ell\} \in \calA} \, \bigl| \Re \langle i \tilP_s \bigl( \chi_{\{ r \leq 2^{-20} R\}} B \chi_{\{r \leq R\}} u\bigr), Q_s^{(\alpha)} \tilP_s \bigl( \chi_{\{r > R\}} u \bigr) \rangle_{t,x} \bigr| \, \ds.
 \end{align*}
 The coefficients of the differential operator $B$ in the spatial region $\{ r \geq 2^{-20} R \}$ can be made arbitrarily small by choosing $R \gg 1$ sufficiently large, correspondingly the first term on the right-hand side can be estimated analogously to the term~\eqref{equ:commutator_BQ_far_far}. We further split the second term on the right-hand side into
 \begin{align*}
  &\int_0^2 \sup_{\{\alpha_\ell\} \in \calA} \, \bigl| \Re \langle i \tilP_s \bigl( \chi_{\{ r \leq 2^{-20} R\}} B \chi_{\{r \leq R\}} u\bigr), Q_s^{(\alpha)} \tilP_s \bigl( \chi_{\{r > R\}} u \bigr) \rangle_{t,x} \bigr| \, \ds \\
  &\lesssim \int_0^2 \sup_{\{\alpha_\ell\} \in \calA} \, \bigl| \Re \langle i \chi_{\{ r > 2^{-10} R\}} \tilP_s \bigl( \chi_{\{ r \leq 2^{-20} R\}} B \chi_{\{r \leq R\}} u\bigr), Q_s^{(\alpha)} \tilP_s \bigl( \chi_{\{r > R\}} u \bigr) \rangle_{t,x} \bigr| \, \ds \\
  &\quad + \int_0^2 \sup_{\{\alpha_\ell\} \in \calA} \, \bigl| \Re \langle i \tilP_s \bigl( \chi_{\{ r \leq 2^{-20} R\}} B \chi_{\{r \leq R\}} u\bigr), \chi_{\{ r \leq 2^{-10} R\}} Q_s^{(\alpha)} \tilP_s \bigl( \chi_{\{r > R\}} u \bigr) \rangle_{t,x} \bigr| \, \ds \\ 
  &=: I + II.
 \end{align*}
 Now by Cauchy-Schwarz, Corollary~\ref{cor:QLEs}, Lemma~\ref{lem:PsLEs1}, and Lemma~\ref{lem:boundedness_LE_spatial_cutoff}, we have for the term $I$ that
 \begin{align*}
  I &\lesssim \biggl( \int_0^2 s^{\frac{1}{2}} \bigl\| \chi_{\{ r > 2^{-10} R\}} \tilP_s \bigl( \chi_{\{ r \leq 2^{-20} R\}} B \chi_{\{r \leq R\}} u \bigr) \bigr\|_{LE_s^*}^2 \, \ds \biggr)^{\frac{1}{2}} \times \\
  &\qquad \qquad \qquad \qquad \qquad \times \biggl( \int_0^2 s^{-\frac{1}{2}} \sup_{\{ \alpha_\ell \in \calA \}} \, \bigl\| Q_s^{(\alpha)} \tilP_s \bigl( \chi_{\{ r > R \}} u \bigr) \bigr\|_{LE_s}^2 \, \ds \biggr)^{\frac{1}{2}} \\
  &\lesssim \biggl( \int_0^2 s^{\frac{1}{2}} \bigl\| \chi_{\{ r > 2^{-10} R\}} \tilP_s \bigl( \chi_{\{ r \leq 2^{-20} R\}} B \chi_{\{r \leq R\}} u \bigr) \bigr\|_{LE_s^*}^2 \, \ds \biggr)^{\frac{1}{2}} \|u\|_{LE}.
 \end{align*}
 Here we have to establish the following mismatch estimate: given any $\varepsilon > 0$, choosing $R \gg 1$ sufficiently large it holds that
 \begin{equation} \label{equ:commutator_BQ_near_far_mismatch1}
  \biggl( \int_0^2 s^{\frac{1}{2}} \bigl\| \chi_{\{ r > 2^{-10} R\}} \tilP_s \bigl( \chi_{\{ r \leq 2^{-20} R\}} F \bigr) \bigr\|_{LE_s^*}^2 \, \ds \biggr)^{\frac{1}{2}} \leq \varepsilon \|F\|_{LE^\ast}.
 \end{equation}
 Then with $F := B \chi_{\{ r \leq R \}} u$ and using that $\bigl\| B \chi_{\{ r \leq R \}} u \bigr\|_{LE^\ast} \lesssim_{\bsb} \|u\|_{LE}$ by Proposition~\ref{p:Hlotbound}, given any $\varepsilon > 0$ we may choose $R \gg 1$ sufficiently large to conclude that
 \[
  I \lesssim \varepsilon \|u\|_{LE}^2.
 \]
 For the term II we obtain by Cauchy-Schwarz, Lemma~\ref{lem:PsLEs1}, and Lemma~\ref{lem:boundedness_LE_spatial_cutoff} that
 \begin{align*}
  II &\lesssim \biggl( \int_0^2 s^{\frac{1}{2}} \bigl\| \tilP_s \bigl( \chi_{\{ r \leq 2^{-20} R\}} B \chi_{\{r \leq R\}} u \bigr) \bigr\|_{LE_s^\ast}^2 \, \ds \biggr)^{\frac{1}{2}} \times \\
  &\qquad \qquad \qquad  \times \biggl( \int_0^2 s^{-\frac{1}{2}} \sup_{\{\alpha_\ell\} \in \calA} \, \bigl\| \chi_{\{ r \leq 2^{-10} R\}} Q_s^{(\alpha)} \tilP_s \bigl( \chi_{\{ r > R\}} u \bigr) \bigr\|_{LE_s}^2 \, ds \biggr)^{\frac{1}{2}} \\
  &\lesssim_{\bsb} \|u\|_{LE} \biggl( \int_0^2 s^{-\frac{1}{2}} \sup_{\{\alpha_\ell\} \in \calA} \, \bigl\| \chi_{\{ r \leq 2^{-10} R\}} Q_s^{(\alpha)} \tilP_s \bigl( \chi_{\{ r > R\}} u \bigr) \bigr\|_{LE_s}^2 \, ds \biggr)^{\frac{1}{2}}.
 \end{align*}
 Then upon establishing the mismatch estimate 
 \begin{equation} \label{equ:commutator_BQ_near_far_mismatch2}
  \biggl( \int_0^2 s^{-\frac{1}{2}} \sup_{\{\alpha_\ell\} \in \calA} \, \bigl\| \chi_{\{ r \leq 2^{-10} R\}} Q_s^{(\alpha)} \tilP_s \bigl( \chi_{\{ r > R\}} u \bigr) \bigr\|_{LE_s}^2 \, ds \biggr)^{\frac{1}{2}} \leq \varepsilon \|u\|_{LE}
 \end{equation}
 for any given $\varepsilon > 0$ with $R \equiv R(\varepsilon) \gg 1$ chosen sufficiently large, we may also conclude the desired bound 
 \[
  II \lesssim \varepsilon \|u\|_{LE}^2.
 \]
 We are hence left to prove the two mismatch estimates \eqref{equ:commutator_BQ_near_far_mismatch1}--\eqref{equ:commutator_BQ_near_far_mismatch2}. Their proofs are similar and we begin with~\eqref{equ:commutator_BQ_near_far_mismatch1}. Here we have to show that given any $\varepsilon > 0$, we may choose $R \gg 1$ sufficiently large such that
 \begin{equation} \label{equ:commutator_BQ_near_far_mismatch1_todo}
  \int_0^2 s^{\frac{1}{2}} \biggl( \sum_{\ell \geq \log_2(R)-10} 2^{\frac{1}{2} \ell} \bigl\| \phi_\ell \tilP_s \bigl( \chi_{\{r \leq 2^{-20} R\}} F \bigr) \bigr\|_{L^2_{t,x}} \biggr)^2 \, \ds \leq \varepsilon^2 \|F\|_{LE^\ast}^2.
 \end{equation}
 In order to deal with a certain singularity arising at zero heat time in this estimate, we use the projections $\calP_s := s^2 (\Delta + \rho^2)^2 e^{s(\Delta + \rho^2)}$ to resolve $F$ for small heat times. More precisely, for a given heat time $0 < s \leq 2$, we decompose $F$ for any $2 \leq s_0 \leq 4$ into 
 \begin{equation} \label{equ:commutator_BQ_near_far_mismatch1_freq_decomp}
  F = \int_0^s \calP_{s'} F \, \frac{\ud s'}{s'} + \int_s^{s_0} \calP_{s'} F \, \frac{\ud s'}{s'} + \tilP_{s_0} F + \tilP_{\geq s_0} F.
 \end{equation}
 Then the contribution to~\eqref{equ:commutator_BQ_near_far_mismatch1_todo} of the first high-frequency term on the right-hand side is given by
 \begin{equation} \label{equ:commutator_BQ_near_far_mismatch1_high_freq1}
  \int_0^2 \biggl( \int_0^s \Bigl( \frac{s}{s'} \Bigr)^{\frac{1}{4}} (s')^{\frac{1}{4}} \sum_{\ell \geq \log_2(R)-10} 2^{\frac{1}{2} \ell} \bigl\| \phi_\ell \tilP_s \bigl( \chi_{\{r \leq 2^{-20} R\}} \calP_{s'} F \bigr) \bigr\|_{L^2_{t,x}} \, \frac{\ud s'}{s'} \biggr)^2 \, \ds. 
 \end{equation}
 Here we observe that the kernel $( \frac{s}{s'} )^{\frac{1}{4}}  \chi_{\{ s' \leq s\}}$ is not a Schur kernel due to a singularity at $s'=0$, and it is because of this singularity that we use the modified resolution~\eqref{equ:commutator_BQ_near_far_mismatch1_freq_decomp} of $F$. It allows us to now write 
 \begin{align*}
  \tilP_s \bigl( \chi_{\{r \leq 2^{-20} R\}} \calP_{s'} F \bigr) &= - \tilP_s \bigl( \chi_{\{ r \leq 2^{-20} R\}} s' (\Delta + \rho^2) \tilP_{s'} F \bigr) \\
  &= - s' (\Delta + \rho^2) \tilP_s \bigl( \chi_{\{ r \leq 2^{-20} R\}} \tilP_{s'} F \bigr) \\
  &\quad \, - s' \tilP_s \bigl( \Delta \chi_{\{ r \leq 2^{-20} R\}} \tilP_{s'} F \bigr) \\
  &\quad \, + 2 s' \tilP_s \nabla_\mu \bigl( \nabla^\mu \chi_{\{ r \leq 2^{-20} R\}} \tilP_{s'} F \bigr). 
 \end{align*}
 These three terms can be handled in a similar manner, noting that the supports of $\Delta \chi_{\{ r \leq 2^{-20} R\}}$ and $\nabla \chi_{\{ r \leq 2^{-20} R\}}$ are contained in $\{ r \leq 2^{-20} R\}$ and they can be thought of as cut-offs to the same region. We therefore concentrate on the contribution of the first term, that is, 
 \begin{equation}
  \int_0^2 \biggl( \int_0^s \Bigl( \frac{s'}{s} \Bigr)^{\frac{3}{4}} (s')^{\frac{1}{4}} \sum_{\ell \geq \log_2(R)-10} 2^{\frac{1}{2} \ell} \bigl\| \phi_\ell s (\Delta + \rho^2) \tilP_s \bigl( \chi_{\{ r \leq 2^{-20} R\}} \tilP_{s'} F \bigr) \bigr\|_{L^2_{t,x}} \, \frac{\ud s'}{s'} \biggr)^2 \, \ds. 
 \end{equation}
 Using the localized parabolic regularity estimates from Proposition~\ref{p:lpregcore}, the sum inside the integrals is bounded by 
 \begin{align*}
  &\sum_{\ell \geq \log_s(R)-10} 2^{-(N-\frac{1}{2})\ell} s^{\frac{N}{2}} \biggl( \bigl\| \phi_{\leq -k_{s'}} \tilP_{s'} F \bigr\|_{L^2_{t,x}} + \sum_{-k_{s'} \leq m \leq \log_2(R)-10} \bigl\| \phi_m \tilP_{s'} F \bigr\|_{L^2_{t,x}} \biggr) \\
  &\lesssim_N R^{-(N-\frac{1}{2})} s^{\frac{N}{2}-\frac{1}{4}} \Bigl( \frac{s}{s'} \Bigr)^{\frac{1}{4}} \| \tilP_{s'} F \|_{LE_{s'}^\ast}
 \end{align*}
 for any integer $N \geq 1$. Plugging back into the estimate above, we see that the contribution is bounded by 
 \begin{align*}
  R^{-2(N-\frac{1}{2})} \int_0^2 \biggl( \int_0^s \Bigl( \frac{s'}{s} \Bigr)^{\frac{1}{2}} (s')^{\frac{1}{4}} \| \tilP_{s'} F \|_{LE_{s'}^\ast} \, \frac{\ud s'}{s'} \biggr)^2 \, \ds \leq \varepsilon^2 \|F\|_{LE^\ast}^2,
 \end{align*}
 where in the last step we used the fact that $\chi_{\{ s' \leq s\}} (\frac{s'}{s})^{\frac{1}{2}}$ is a Schur kernel, and that $R^{-2(N-\frac{1}{2})}$ can be made arbitrarily small by choosing $R \gg 1$ sufficiently large. 
 
 Next, we consider the contribution to~\eqref{equ:commutator_BQ_near_far_mismatch1_todo} of the second high-frequency term $\int_s^{s_0} \calP_{s'} F \, \frac{\ud s'}{s'}$ in the decomposition~\eqref{equ:commutator_BQ_near_far_mismatch1_freq_decomp} of $F$. Using the localized parabolic regularity estimates from Proposition~\ref{p:lpregcore}, Schur's test and Lemma~\ref{lem:PsLEs1}, we obtain uniformly for all heat times $2 \leq s_0 \leq 4$ that
 \begin{align*}
  &\int_0^2 \biggl( \int_s^{s_0} \Bigl( \frac{s}{s'} \Bigr)^{\frac{1}{4}} (s')^{\frac{1}{4}} \sum_{\ell \geq \log_2(R)-10} 2^{\frac{1}{2} \ell} \bigl\| \phi_\ell \tilP_s \bigl( \chi_{\{r \leq 2^{-20} R\}} \calP_{s'} F \bigr) \bigr\|_{L^2_{t,x}} \, \frac{\ud s'}{s'} \biggr)^2 \, \ds \\
  &\lesssim R^{-(N-\frac{1}{2})} \int_0^4 (s')^{\frac{1}{2}} \| \calP_{s'} F \|_{LE_{s'}^\ast}^2 \, \frac{\ud s'}{s'} \\
  &\lesssim R^{-(N-\frac{1}{2})} \int_0^4 (s')^{\frac{1}{2}} \| \tilP_{\frac{s'}{8}} F \|_{LE_{\frac{s'}{8}}^\ast}^2 \, \frac{\ud s'}{s'} \\
  &\leq \varepsilon^2 \|F\|_{LE^\ast}^2,
 \end{align*}
 where in the last step we chose $R \gg 1 $ sufficiently large. Finally, using the localized parabolic regularity estimates from Proposition~\ref{p:lpregcore}, it is straightforward to bound the contributions to~\eqref{equ:commutator_BQ_near_far_mismatch1_todo} of the two low-frequency terms $\tilP_{s_0} F$ and $\tilP_{\geq s_0} F$ uniformly for all $2 \leq s_0 \leq 4$ by
 \begin{align*}
  \varepsilon^2 s_0^{\frac{1}{2}} \| \tilP_{\frac{s_0}{8}} F \|_{LE_{\frac{s_0}{8}}^\ast}^2 + \varepsilon^2 \| \tilP_{\geq s_0} F \|_{LE_\low^\ast}^2
 \end{align*}
 for sufficiently large $R \gg 1$. Then integrating in $\frac{\ud s_0}{s_0}$ over the heat time interval $2 \leq s_0 \leq 4$ yields that their contributions are bounded by $\varepsilon^2 \|F\|_{LE^\ast}^2$, as desired. This finishes the proof of the first mismatch estimate~\eqref{equ:commutator_BQ_near_far_mismatch1}. The proof of the second mismatch estimate~\eqref{equ:commutator_BQ_near_far_mismatch2} proceeds analogously, gaining smallness via the localized parabolic regularity estimates from Proposition~\ref{p:lpregcore}. Here it suffices to resolve $u$ with the usual frequency projections into 
 \[
  u = \int_0^s \tilP_{s'} u \, \frac{\ud s'}{s'} + \int_s^{s_0} \tilP_{s'} u \, \frac{\ud s'}{s'} + \tilP_{\geq s_0} u
 \]
 for any given heat time $0 < s \leq 2$ and any $2 \leq s_0 \leq 4$. Moreover, one uses the pointwise estimates for the coefficients of the multiplier $Q_s^{(\alpha)}$ given by
 \[
  |\beta_s^{(\alpha)}| \lesssim s^{\frac{1}{2}} \quad \text{and} \quad |\partial_r \beta_s^{(\alpha)}| + |\coth(r) \beta_s^{(\alpha)}| \lesssim 1
 \]
 uniformly for all $0 < s \leq 2$ and all $\{ \alpha_\ell \} \in \calA$. The details are left to the reader. This concludes the proof of Proposition~\ref{p:commutator_Hlot_Q_high}.
\end{proof}

\subsection{Estimates to commute $\tilP_{s}$, $\tilP_{\ge s_0}$ with $H_{\prin}$} \label{ss:commute_with_H_prin}

In this subsection we collect estimates that will be needed in Sections~\ref{s:low}--\ref{s:trans} to control the commutator of our frequency projections with the principal part $H_\prin$ of the operator $H$. We use the notation
\begin{align*}
\begin{split}
 \ringa:=\boldsymbol{a}-\boldsymbol{h}^{-1}, 
\end{split}
\end{align*}
so that the tensor $\ringa$ is globally small and decays at spatial infinity. Correspondingly, we may then write
\begin{align*}
\begin{split}
H_{\prin}=-\Delta-\nabla_{\mu}\ringa^{\mu\nu}\nabla_\nu.
\end{split}
\end{align*} 

For the high-frequency regime we need the following estimate.

\begin{prop} \label{p:commP_sH_prin}
For any $\delta \in (0, 1]$, $s > 0$, and $\{\alpha_\ell\}\in\calA$, let $Q^{(\alpha)}_s$ be as in Definition~\ref{d:beta_high}. Then we have 
\begin{align}\label{eq:princompmaintemp1}
\begin{split}
 \int_{0}^{2} \sup_{\{\alpha_\ell \}\in\calA} \, \bigl| \angles{[\tilP_s,H_\prin]u}{Q_s^{(\alpha)}\tilP_su}_{t,x} \bigr| \, \ds \lesssim C(\ringa) \|u\|_{LE}^2,
\end{split}
\end{align}
where
\begin{align}\label{eq:Cringadef1}
 C(\ringa) := \sum_{j=1}^4 \, \Bigl( \|\nabla^{(j)}\ringa\|_{L^\infty_{t,x}} + \sum_{\ell\geq0} \|r^2 \nabla^{(j)} \ringa \|_{L^\infty(\bbR \times A_\ell)} \Big).
\end{align}
\end{prop}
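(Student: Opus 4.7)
The plan is to reduce Proposition~\ref{p:commP_sH_prin} to a dual estimate on the commutator with $B_{\ringa} u := \nabla_\mu(\ringa^{\mu\nu}\nabla_\nu u)$. Since $\tilP_s$ commutes with $\Delta$, we have $[\tilP_s, H_\prin] = -[\tilP_s, B_\ringa]$. By the pairing inequality $|\langle v, G\rangle_{t,x}| \leq \|v\|_{LE_s^\ast} \|G\|_{LE_s}$, by the $LE_s$-boundedness of $Q_s^{(\alpha)}\tilP_s$ furnished by Corollary~\ref{cor:QLEs}, and by Cauchy--Schwarz in $s$ with weights $s^{\pm 1/4}$, it suffices to establish
$$\int_0^2 s^{\frac{1}{2}} \bigl\|[\tilP_s, B_\ringa]u\bigr\|_{LE_s^\ast}^2 \, ds \lesssim C(\ringa)^2 \|u\|_{LE}^2,$$
since the complementary factor $\int_0^2 s^{-1/2} \|\tilP_{s/2}u\|_{LE_s}^2 \, ds$ is controlled by $\|u\|_{LE}^2$ through Lemma~\ref{lem:LE_comparison} and a change of variables.

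Setting $L := \Delta + \rho^2$ and using $\tilP_s = -sLe^{sL}$ together with $[L, B_\ringa] = [\Delta, B_\ringa]$, a direct calculation yields the Duhamel identity
$$[\tilP_s, B_\ringa] u = -s[\Delta, B_\ringa] e^{sL} u - sL \int_0^s e^{(s-s')L}[\Delta, B_\ringa] e^{s'L} u \, ds'.$$
Expanding $[\Delta, B_\ringa]$ via the Leibniz rule and the curvature identities \eqref{eq:R}--\eqref{eq:Rich}, we find that it is a differential operator of order at most three in which every term carries at least one covariant derivative falling on $\ringa$. We arrange this expansion so that each summand has the schematic divergence form $\nabla^{(j)} \cdot (\bsxi_j \cdot \nabla^{(3-j)} u)$ with $0 \leq j \leq 2$, where $\bsxi_j$ is a tensor with components of size $|\nabla^{(j')} \ringa|$ for some $1 \leq j' \leq 3-j$; the decay assumptions on $\ringa$ translate into the weighted $L^\infty$ bounds on $\bsxi_j$ encoded by $C(\ringa)$.

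The boundary piece $-s[\Delta, B_\ringa]e^{sL}u$ is handled term by term: Lemma~\ref{lem:PsLEs1} transfers heat-regularity gains through the frequency projection, Lemma~\ref{lem:multLEs1} converts the decaying coefficients $\bsxi_j$ into a weighted bound on $u$, and the standard resolution $u = \tilP_{\leq s}u + \tilP_{\geq s}u$ followed by Schur's test in $s$ closes the estimate in $LE$-norms, in direct analogy with the proof of Proposition~\ref{p:Hlotbound}. For the Duhamel integral, we split $\int_0^s = \int_0^{s/2} + \int_{s/2}^s$: on $[s/2, s]$ we distribute the outer $sL$ onto the coefficients and onto $u$ via Lemma~\ref{l:preg} and proceed as for the boundary piece, exploiting that $(s-s') \lesssim s$; on $[0, s/2]$ we absorb the top-order divergence of $[\Delta, B_\ringa]$ into $e^{(s-s')L}$ via Corollary~\ref{c:pregdiv}, gaining a factor $(s-s')^{-1/2} \simeq s^{-1/2}$ that is recovered by the outer $s$ and the Schur kernel $\chi_{\{s'\le s/2\}}(s'/s)^{1/2}$. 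The main obstacle is the bookkeeping of the spatial weights in $LE_s^\ast$ after these manipulations: each summand must be matched against the appropriate spatial annulus, with mismatched pieces treated by the localized parabolic regularity estimates of Proposition~\ref{p:lpregcore}, which deliver the extra spatial decay needed to sum the resulting dyadic series against the weights encoded in $C(\ringa)$.
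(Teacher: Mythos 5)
Your overall reduction is correct and matches the paper's: pairing with $Q_s^{(\alpha)}\tilP_s u$, invoking the $LE_s$-boundedness of $Q_s^{(\alpha)}\tilP_s$ from Corollary~\ref{cor:QLEs}, and using Cauchy--Schwarz in $s$ to reduce to a weighted $LE_s^\ast$-bound on the commutator. However, your choice of commutator decomposition introduces a genuine gap. Factoring $\tilP_s = -sLe^{sL}$ with $L := \Delta+\rho^2$, you expand
\[
[\tilP_s, B_\ringa] = -s[\Delta, B_\ringa]e^{sL} - sL[e^{sL}, B_\ringa],
\]
which places the operator $L$ on the \emph{outside} of the Duhamel integral. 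The paper instead uses the equally valid but differently ordered factorization
\[
[\tilP_s, H_\prin]u = se^{sL}[\Delta, B_\ringa]u + s[e^{sL}, B_\ringa]Lu,
\]
placing $L$ adjacent to $u$; in the Duhamel source one then encounters $e^{s'L}Lu = -\tfrac{1}{s'}\tilP_{s'}u$, which is precisely a band-pass piece with norm $\|\tilP_{s'}u\|_{LE_{s'}}$ controlled by $\|u\|_{LE}$.

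This ordering matters. In your Duhamel piece
\[
-sL\int_0^s e^{(s-s')L}[\Delta, B_\ringa]e^{s'L}u\,\ud s',
\]
on the range $s'\in[s/2,s]$ the increment $s-s'$ can be arbitrarily small, so the outer $L$ cannot be absorbed by $e^{(s-s')L}$. Commuting $L$ through $e^{(s-s')L}$ and distributing $\Delta$ by Leibniz onto $[\Delta,B_\ringa]e^{s'L}u$ produces up to two extra covariant derivatives on $\ringa$. Since $[\Delta,B_\ringa]$ already has coefficients of the form $\nabla^{(j)}\ringa$ with $j\le 3$ (and $j\le 4$ once rewritten in divergence form so that Corollary~\ref{c:pregdiv} applies), you end up requiring $\nabla^{(5)}\ringa$ or $\nabla^{(6)}\ringa$, which exceeds the range $j\le 4$ appearing in $C(\ringa)$. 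Commuting $L$ past $[\Delta,B_\ringa]$ instead, $L[\Delta,B_\ringa]=[\Delta,B_\ringa]L+[\Delta,[\Delta,B_\ringa]]$, cures the first term (giving $\tilP_{s'}u$ again) but the double commutator $[\Delta,[\Delta,B_\ringa]]$ still carries coefficients up to $\nabla^{(5)}\ringa$, and moreover is applied to $e^{s'L}u=\tilP_{\ge s'}u$, a low-pass piece whose $LE$-type norms are not directly controlled by $\|u\|_{LE}$ for small $s'$. Your $[0,s/2]$ accounting is also underpowered: you cite a gain of $(s-s')^{-1/2}$ from absorbing one derivative into $e^{(s-s')L}$, but the outer $L$ together with the third-order $[\Delta,B_\ringa]$ requires a gain of order $(s-s')^{-5/2}$, which changes the Schur kernel. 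The simplest fix is to switch to the paper's ordering of the factorization.
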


Similarly, in the low-frequency regime we will need the following estimate.

\begin{prop} \label{p:commP_leqsH_prin}
Given $\{\alpha_\ell\}\in\calA$, let $Q^{(\alpha)}$ be as in Definition~\ref{d:beta_low}. Then it holds that
\begin{align}\label{eq:princomplowmaintemp1}
\begin{split}
 \int_{\frac{1}{8}}^{4} \sup_{\{\alpha_\ell\}\in\calA} \, \bigl| \angles{[\tilP_{\geq s},H_\prin]u}{Q^{(\alpha)}\tilP_{\geq s}u}_{t,x} \bigr| \,\ds \lesssim C(\ringa) \|u\|_{LE}^2,
\end{split}
\end{align}
where
\begin{align}\label{eq:Cringadef2}
 \begin{split}
  C(\ringa) := \sum_{j=0}^1 \, \Bigl( \|\nabla^{(j)}\ringa\|_{L^\infty_{t,x}} + \sum_{\ell \geq 0} \|r^3 \nabla^{(j)} \ringa\|_{L^\infty(\bbR \times A_\ell)} \Bigr).
 \end{split}
\end{align}
\end{prop}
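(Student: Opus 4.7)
The plan is to mirror the proof of Proposition~\ref{p:commP_sH_prin}, adapted to the low-frequency setting: $s$ runs over the compact interval $[\frac{1}{8}, 4]$ bounded away from zero; the multiplier is $Q^{(\alpha)}$ rather than $Q_{s}^{(\alpha)}$; and the target duality pairs $LE_{\low}^{\ast}$ with $LE_{\low}$, whose spatial weight gap is $r^{3}$ rather than $r^{2}$. This matches the decay hypothesis \eqref{eq:Cringadef2} on $\ringa$, which requires $r^{3}$ decay but only $j = 0, 1$ derivatives. To begin, observe that $\tilP_{\geq s}$ commutes with $\Delta$, so $[\tilP_{\geq s}, H_{\prin}] = -[\tilP_{\geq s}, B]$ with $B := \nabla_{\mu}\ringa^{\mu\nu}\nabla_{\nu}$. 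By Cauchy--Schwarz in the $LE_{\low}^{\ast}$--$LE_{\low}$ pairing (cf.\ Remark~\ref{rem:LEduality}), Corollary~\ref{cor:QLElow}, and the endpoint argument used in the proof of Proposition~\ref{p:FQlowu} to absorb the $s/2$ shift, the claim reduces to
\begin{equation*}
\int_{\frac{1}{8}}^{4}\|[\tilP_{\geq s},B]u\|_{LE_{\low}^{\ast}}^{2}\,\ds \lesssim C(\ringa)^{2}\,\|u\|_{LE}^{2}.
\end{equation*}

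Next, setting $w(s) := [\tilP_{\geq s}, B]u$, a direct computation gives $w(0) = 0$ and $(\partial_{s} - (\Delta+\rho^{2}))w = [\Delta, B]\tilP_{\geq s}u$, so Duhamel yields
\begin{equation*}
w(s) = \int_{0}^{s} e^{(s-s')(\Delta+\rho^{2})}\,[\Delta, B]\tilP_{\geq s'}u\,\ud s'.
\end{equation*}
Expanding $[\Delta, B]$ via the product rule and the curvature identities \eqref{eq:R}--\eqref{eq:Rich}, and integrating by parts to push any excess derivatives off $\ringa$ onto either the outer heat semigroup (absorbed through Corollary~\ref{c:pregdiv}) or the inner factor $\tilP_{\geq s'}u$ (absorbed through Lemma~\ref{lem:PsLEs1}), one writes $[\Delta, B]\tilP_{\geq s'}u$ as a linear combination of terms of the schematic form $\nabla^{(j)}\!\cdot(\ringb^{(i)}\, \nabla^{(k)}\tilP_{\geq s'}u)$ with $j + k + i \leq 3$, $i \in \{0, 1\}$, and $\ringb^{(i)}$ a component of $\nabla^{(i)}\ringa$.

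The $s'$-integration is then split at $s' = s/2$. For $s' \in [s/2, s]$, the factorization $\tilP_{\geq s'} = e^{(s'/2)(\Delta+\rho^{2})}\tilP_{\geq s'/2}$ lets us absorb $\nabla^{(k)}$ into the inner heat semigroup via Lemma~\ref{lem:PsLEs1}; then Lemma~\ref{lem:multLEs1}, combined with the $r^{3}$-weighted $L^{\infty}$ bounds on $\ringa$ and $\nabla\ringa$, controls the product in $LE_{\low}^{\ast}$ by $C(\ringa)\|\tilP_{\geq s'/2}u\|_{LE_{\low}}$; finally, using the resolution $u = \int_{0}^{s_{0}}\tilP_{s''}u\,\frac{\ud s''}{s''} + \tilP_{\geq s_{0}}u$ for an appropriate $s_{0} \simeq 1$ together with Schur's test on the resulting double integral reaches $\|u\|_{LE}$. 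For $s' \in [0, s/2]$, one has $s - s' \simeq 1$, so the outer heat semigroup absorbs the outer $\nabla^{(j)}$ via Corollary~\ref{c:pregdiv}, and the same interior estimates apply, with Schur's test again providing integrability down to $s' = 0$.

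The main obstacle will be the careful bookkeeping required to rearrange $[\Delta, B]$ so that only $\ringa$ and $\nabla\ringa$ appear explicitly in the final constants: every occurrence of $\nabla^{(2)}\ringa$ or $\nabla^{(3)}\ringa$ produced by the naive product rule must be removed through integration by parts, with the resulting rearrangements controlled via the $r^{3}$ decay hypothesis. A secondary technical point is the endpoint $s' \to 0^{+}$ of the Duhamel integral, which reaches arbitrarily high frequencies of $u$; its contribution is handled by exploiting the smoothing of $e^{(s-s')(\Delta+\rho^{2})}$ with $s - s' \gtrsim 1$ together with Schur's test, in parallel with the analogous step in the proof of Proposition~\ref{p:commP_sH_prin}.
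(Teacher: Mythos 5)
Your opening reduction—Cauchy--Schwarz in the $LE_{\low}^{\ast}$--$LE_{\low}$ pairing and Corollary~\ref{cor:QLElow}—matches the paper, but from that point on you follow a genuinely different route: you set up a Duhamel representation of $[\tilP_{\geq s},B]u$ and attempt to mimic the \emph{high-frequency} argument of Proposition~\ref{p:commP_sH_prin}. This route runs into a real obstruction that the paper's proof deliberately sidesteps.

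The difficulty lies at the $s'\to 0^+$ endpoint of the Duhamel integral. Expanding $[\Delta,B]$ produces terms of schematic type $(\nabla^{(m)}\ringa)\nabla^{(n)}$ with $m+n\leq 4$ and $m$ up to $3$ or $4$. You correctly note that $C(\ringa)$ in \eqref{eq:Cringadef2} allows only $\ringa$ and $\nabla\ringa$, so you propose to integrate by parts to lower $m$. But the Leibniz identity $(\nabla^{(m)}\ringa)\nabla^{(n)}v = \nabla\bigl((\nabla^{(m-1)}\ringa)\nabla^{(n)}v\bigr) - (\nabla^{(m-1)}\ringa)\nabla^{(n+1)}v$ shows that every step lowering $m$ either raises the outer $j$ (harmless, since $s-s'\gtrsim 1$) or raises the inner $n$ acting on $\tilP_{\geq s'}u$. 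Tracking all terms one inevitably reaches $(\nabla\ringa)\nabla^{(3)}\tilP_{\geq s'}u$ and $\ringa\,\nabla^{(3)}\tilP_{\geq s'}u$, and inner derivatives can only be absorbed at a cost $(s')^{-k/2}$ via Lemma~\ref{lem:PsLEs1} applied with heat time $s'$. Then $\int_0^{s/2}(s')^{-k/2}\,\ud s'$ diverges for $k\geq 2$, so the $s'$-integral cannot close. The high-frequency Proposition~\ref{p:commP_sH_prin} escapes this precisely because it can afford up to $\nabla^{(4)}\ringa$ in its constant (pushing all inner derivatives outside when $s'$ is small), while your low-frequency target constant forbids that.

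The paper avoids the issue entirely by not using Duhamel here. Because $s\in[\tfrac18,4]$ is bounded away from zero, $\tilP_{\geq s}=e^{s(\Delta+\rho^2)}$ already smooths uniformly. One simply writes
\[
[\tilP_{\geq s},H_{\prin}]u=\tilP_{\geq s}\ringH_{\prin}u-\ringH_{\prin}\tilP_{\geq s}u,
\qquad \ringH_{\prin}:=-\nabla_\mu\ringa^{\mu\nu}\nabla_\nu,
\]
and treats the two summands separately: for the second, $\ringH_{\prin}v=-\ringa^{\mu\nu}\nabla_\mu\nabla_\nu v-(\nabla_\mu\ringa^{\mu\nu})\nabla_\nu v$ with the derivatives absorbed into $\tilP_{\geq s/2}$ at heat time $\geq\tfrac{1}{16}$; for the first, one rewrites $\ringH_{\prin}$ in full-divergence form $\ringH_{\prin}v=-\nabla_\mu\nabla_\nu(\ringa^{\mu\nu}v)+\nabla_\mu((\nabla_\nu\ringa^{\mu\nu})v)$ so the outer $\nabla^{(1)},\nabla^{(2)}$ are absorbed by $\tilP_{\geq s}$ and only $\ringa,\nabla\ringa$ appear, then splits $u=\tilP_{\leq s}u+\tilP_{\geq s}u$ and invokes Lemmas~\ref{lem:PsLEs1} and \ref{lem:multLEs1}. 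This automatically keeps derivatives on $\ringa$ to $j\leq 1$ with no $s'$-integration to control. If you want your Duhamel approach to go through, you would need to prove a stronger version of the constant (allowing up to $\nabla^{(4)}\ringa$) or find an additional cancellation at $s'=0$ that your sketch does not supply; the cleanest fix is to adopt the additive split.
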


\begin{rem} \label{rem:commP_leqsH_prin}
We will also need the following analogues of estimates \eqref{eq:princompmaintemp1} and \eqref{eq:princomplowmaintemp1}:
\begin{align}\label{eq:princompmaintemp1noQ}
\begin{split}
 \int_{0}^{2} \bigl| \angles{[\tilP_s,H_\prin]u}{\tilP_su}_{t,x} \bigr| \, \ds \lesssim  C(\ringa) \|u\|_{LE}^2,
\end{split}
\end{align}
where $C(\ringa)$ is as in \eqref{eq:Cringadef1}, and
\begin{align}\label{eq:princomplowmaintemp1noQ}
\begin{split}
 \int_{\frac{1}{8}}^{4}  \bigl| \angles{[\tilP_{\geq s},H_\prin]u}{\tilP_{\geq s}u}_{t,x} \bigr| \, \ds &\lesssim C(\ringa) \|u\|_{LE}^2,\\
 \int_{\frac{1}{8}}^{4} \sup_{\{\alpha_\ell\}\in\calA} \, \bigl| \angles{[\tilP_{\geq s},H_\prin]u}{Q_{s_3}^{(\alpha)}\tilP_{\geq s}u}_{t,x} \bigr| \, \ds &\lesssim C(\ringa) \|u\|_{LE}^2,
\end{split}
\end{align}
where $C(\ringa)$ is as in \eqref{eq:Cringadef2} and $Q_{s_3}^{(\alpha)}$ is as in Definition~\ref{d:beta_high} for some fixed $s_3 \simeq 1$. Our argument for the proofs of Propositions~\ref{p:commP_sH_prin} and~\ref{p:commP_leqsH_prin} can also  be used to prove \eqref{eq:princompmaintemp1noQ} and \eqref{eq:princomplowmaintemp1noQ}.
\end{rem}

\begin{proof}[Proof of Proposition~\ref{p:commP_sH_prin}]
To simplify notation we write $Q$ instead of $Q^{(\alpha)}_s$ in this proof. We also use the notation $C_j(\bsz)$ introduced in \eqref{eq:Cjdef}. Note that
\begin{align*}
\begin{split}
[\tilP_s,H_\prin]&=H_{\prin}s(\Delta+\rho^2)e^{s(\Delta+\rho^2)}-s(\Delta+\rho^2)e^{s(\Delta+\rho^2)}H_\prin\\
&=se^{(s\Delta+\rho^2)}(H_{\prin}(\Delta+\rho^2)-(\Delta+\rho^2)H_\prin)\\
&\quad+s(H_\prin e^{s(\Delta+\rho^2)}-e^{s(\Delta+\rho^2)}H_\prin)(\Delta+\rho^2).
\end{split}
\end{align*}
Therefore, since $[e^{s\Delta},\Delta]=[\rho^2,H_\prin]=0$, we find that
\begin{align}\label{eq:Ps_Hp_decomp}
 [\tilP_s,H_\prin]u &=se^{s(\Delta+\rho^2)}([\Delta,\nabla_\mu\ringa^{\mu\nu}\nabla_\nu] u)+s[e^{s(\Delta+\rho^2)},\nabla_\mu\ringa^{\mu\nu}\nabla_\nu](\Delta+\rho^2) u \nonumber \\
 &=:I+II.
\end{align}
We start with the second term $II$. Let 
\begin{align*}
\begin{split}
  w(s) := [e^{s(\Delta+\rho^2)},\nabla_\mu \ringa^{\mu \nu} \nabla_\nu](\Delta+\rho^2) u.
\end{split}
\end{align*}
Then $w$ satisfies the shifted heat equation
\begin{align*}
\begin{split}
 (\partial_s-\Delta-\rho^2) w(s)=[\Delta,\nabla_\mu\ringa^{\mu\nu}\nabla_\nu]e^{s(\Delta+\rho^2)}(\Delta+\rho^2) u,\qquad w(0)=0.
\end{split}
\end{align*}
Writing 
\begin{align*}
\begin{split}
\nabla_\mu \ringa^{\mu\nu}\nabla_\nu=(\nabla_\mu \ringa^{\mu\nu})\nabla_\nu+\ringa^{\mu\nu}\nabla_\mu\nabla_\nu
\end{split}
\end{align*} 
and using the curvature formulas \eqref{eq:Rich}, we see that $[\Delta,\nabla_\mu\ringa^{\mu\nu}\nabla_\nu]$ is a linear combination of operators of the form
\begin{align}\label{eq:Delta_a_temp11}
\begin{split}
(\nabla^{(j)}\cdot\ringa)\cdot\nabla^{(k)},\qquad k+j\leq4,\qquad k,j\leq 3,
\end{split}
\end{align}
where the dots between $\nabla^{(j)}$ and $\ringa$ as well as between $(\nabla^{(j)}\cdot\ringa)$ and $\nabla^{(4-k)}$ denote metric contractions. The contributions of these terms can be estimated similarly and here we present the details only for the top order term $(\nabla \ringa)\cdot\nabla^{(3)}$. Then in view of Duhamel's formula for $w$ we let
\begin{align*}
\begin{split}
  \tilw(s) := -\int_0^se^{(s-s')(\Delta+\rho^2)}(\nabla\ringa)\cdot \nabla^{(3)} \tilP_{s'}u \dsp.
\end{split}
\end{align*}
Now
\begin{align*}
\begin{split}
 &\int_{0}^{2}  \sup_{\{\alpha\}\in\calA}| \angles{s\tilw(s)}{Q\tilP_su}_{t,x}|\,\ds \\
 &\leq\Big(\int_0^{2}s^{\frac{1}{2}}\|s\tilw(s)\|_{LE_s^\ast}^2\,\ds\Big)^{\frac{1}{2}}\Big(\int_0^{2}s^{-\frac{1}{2}} \sup_{\{\alpha\}\in\calA}\|Q\tilP_su\|_{LE_s}^2\,\ds\Big)^{\frac{1}{2}}.
\end{split}
\end{align*}
By Proposition~\ref{p:QsX} and Lemmas~\ref{l:Xal},~\ref{lem:LE_comparison}, and~\ref{lem:PsLEs1}
\begin{align*}
\begin{split}
\|Q\tilP_su\|_{LE_s}&\lesssim \sup_{\{\gamma_\ell\}\in\calA} \, \|Q\tilP_su\|_{X_{\gamma,s}}\lesssim \sup_{\{\gamma_\ell\}\in\calA} \, \|\tilP_{\frac{s}{2}}u\|_{X_{\gamma,s}} \lesssim \|\tilP_{\frac{s}{4}}u\|_{LE_{\frac{s}{4}}},
\end{split}
\end{align*}
uniformly in $\{\alpha_\ell\}\in\calA$, so using the change of variables $\frac{s}{4}\mapsto s$ we get 
\begin{align*}
\begin{split}
\Big(\int_0^{2}s^{-\frac{1}{2}} \sup_{\{\alpha\}\in\calA}\|Q\tilP_su\|_{LE_s}^2\,\ds\Big)^{\frac{1}{2}}&\lesssim \Big(\int_0^{\frac{1}{2}}s^{-\frac{1}{2}}\|\tilP_su\|_{LE_s}^2\,\ds\Big)^{\frac{1}{2}}\\
&\lesssim \|u\|_{LE}.
\end{split}
\end{align*}
Therefore, our desired estimate follows if we can show
\begin{align}\label{eq:highestcommtemp1}
\begin{split}
 \int_0^{2}s^{\frac{1}{2}}\|s\tilw(s)\|_{LE_s^\ast}^2\,\ds\leq C(\ringa)^2 \|u\|_{LE}^2.
\end{split}
\end{align}
Let us write
\begin{align}\label{eq:02_012temp1}
\begin{split}
 \int_0^{2}s^{\frac{1}{2}}\|s\tilw(s)\|_{LE_s^\ast}^2\,\ds= \int_0^{\frac{1}{2}}s^{\frac{1}{2}}\|s\tilw(s)\|_{LE_s^\ast}^2\,\ds+ \int_{\frac{1}{2}}^{2}s^{\frac{1}{2}}\|s\tilw(s)\|_{LE_s^\ast}^2\,\ds.
\end{split}
\end{align}
The first term on the right-hand side above is bounded by
\begin{align}\label{eq:para_temp1}
\begin{split}
 \int_0^{\frac{1}{2}}\Big(\int_0^s s^{\frac{5}{4}}\|e^{(s-s')(\Delta+\rho^2)}(\nabla\ringa)\cdot\nabla^{(3)}\tilP_{s'}u\|_{LE_s^\ast}\,\dsp\Big)^2\,\ds .
\end{split}
\end{align}
Since $s-s'\leq s$, by Lemmas~\ref{lem:LE_comparison},~\ref{lem:PsLEs1}, and~\ref{lem:multLEs1}, we have 
\begin{align*}
\begin{split}
\|e^{(s-s')(\Delta+\rho^2)}(\nabla\ringa)\cdot\nabla^{(3)}\tilP_{s'}u\|_{LE_s^\ast} &\lesssim \|(\nabla\ringa)\cdot\nabla^{(3)}\tilP_{s'}u\|_{LE_s^\ast}\\
&\lesssim C_0(\nabla\ringa) \|\nabla^{(3)}\tilP_{s'}u\|_{LE_{s'}}\\
&\lesssim C_0(\nabla\ringa) (s')^{-\frac{3}{2}}\|\tilP_{\frac{s'}{2}}u\|_{LE_{\frac{s'}{2}}}.
\end{split}
\end{align*}
Therefore the contribution of $s'\in[\frac{s}{2},s]$ to \eqref{eq:para_temp1} is bounded by
\begin{align*}
\begin{split}
C_0(\nabla\ringa)^2 \int_0^{\frac{1}{2}} \Big(\int_{\frac{s}{2}}^s(\frac{s}{s'})^{\frac{5}{4}}(s')^{-\frac{1}{4}}\|P_{\frac{s'}{2}}u\|_{LE_{\frac{s'}{2}}}\,\dsp\Big)^2\,\ds \lesssim C_0(\nabla\ringa)^2 \|u\|_{LE}^2,
\end{split}
\end{align*}
by Schur's test. By the same argument the contributions to this integral of the other terms in \eqref{eq:Delta_a_temp11} are bounded by
\begin{align*}
\begin{split}
\sum_{j=0}^3 C_0(\nabla^{(j)}\ringa)^2 \|u\|_{LE}^2.
\end{split}
\end{align*}
Next we consider the integral over $s'\in[0,\frac{s}{2}]$ in \eqref{eq:para_temp1}. Here by repeated applications of the product rule  and in view of \eqref{eq:Delta_a_temp11} we can write $[\Delta,\nabla_\mu\ringa^{\mu\nu}\nabla_\nu]\tilP_{s'}u$ as a linear combination of terms of the form
\begin{align}\label{eq:aderivativesfourtemp1}
\begin{split}
 \nabla^{(k)}\cdot((\nabla^{(j)}\cdot\ringa)\tilP_{s'}u ),\quad k\leq 3, \quad k+j\leq 4.
\end{split}
\end{align}
Again we treat only the case $k=3$, $j=1$ in detail. For this by Lemmas~\ref{lem:PsLEs1} and~\ref{lem:multLEs1}
\begin{align*}
\begin{split}
\|e^{(s-s')(\Delta+\rho^2)}\nabla^{(3)}\cdot((\nabla\ringa) \tilP_{s'}u)\|_{LE_{s}^\ast} &\lesssim (s-s')^{-\frac{3}{2}}\|(\nabla \ringa) \tilP_{s'}u\|_{LE_s^\ast} \\
&\lesssim C_0(\nabla\ringa)(s-s')^{-\frac{3}{2}}\|\tilP_{s'}u\|_{LE_{s'}}.
\end{split}
\end{align*}
It follows that the contribution of $s'\in[0,\frac{s}{2}]$ to \eqref{eq:para_temp1} is bounded by
\begin{align*}
\begin{split}
C_0(\nabla\ringa)^2 \int_0^{\frac{1}{2}} \Big(\int_0^{\frac{s}{2}}\frac{s^{\frac{5}{4}}(s')^{\frac{1}{4}}}{(s-s')^{\frac{3}{2}}}(s')^{-\frac{1}{4}}\|\tilP_{s'}u\|_{LE_{s'}}\,\dsp\Big)^2\,\ds \lesssim C_0(\nabla\ringa)^2 \|u\|_{LE}^2,
\end{split}
\end{align*}
by Schur's test. Similarly the contributions of the other terms in \eqref{eq:aderivativesfourtemp1} are bounded by
\begin{align*}
\begin{split}
\sum_{j=0}^4 C_0(\nabla^{(j)} \ringa)^2 \|u\|_{LE}^2.
\end{split}
\end{align*}
This completes the treatment of the first integral on the right-hand side of \eqref{eq:02_012temp1}. For the second integral we again decompose the $s'$ integration in the Duhamel representation of $w$ into integrals over $[0,\frac{s}{2}]$ and $[\frac{s}{2},s]$. For the latter, using similar arguments as above, and writing $\tilP_{s'}=-s'(\Delta+\rho^2)e^{\frac{s'}{2}(\Delta+\rho^2)}\tilP_{\geq\frac{s'}{2}}$, we have
\begin{align*}
\begin{split}
&\int_{\frac{1}{2}}^2s^{\frac{1}{2}}\Big(\int_{\frac{s}{2}}^s\|e^{(s-s')(\Delta+\rho^2)}(\nabla\ringa)\cdot\nabla^{(3)}\tilP_{s'}u\|_{LE^\ast_s}\,\dsp\Big)^2\,\ds\\
&\lesssim C_1(\nabla\ringa)^2 \int_{\frac{1}{2}}^2\Big(\int_{\frac{s}{2}}^s\|\tilP_{\geq\frac{s'}{2}}u\|_{LE_{\low}}\,\dsp\Big)^2\,\ds\\
&\lesssim C_1(\nabla\ringa)^2 \int_{\frac{1}{8}}^1\|\tilP_{\geq s'}u\|_{LE_\low}^2\,\dsp \lesssim C_1(\nabla\ringa)^2 \|u\|_{LE}^2,
\end{split}
\end{align*}
where to pass to the last line we used Cauchy-Schwarz and the change of variables $\frac{s'}{2}\mapsto s'$. Similarly the contributions of the other terms in \eqref{eq:Delta_a_temp11} are bounded by
\begin{align*}
\begin{split}
\sum_{j=0}^3 C_1(\nabla\ringa)^2 \|u\|_{LE}^2.
\end{split}
\end{align*}
For the region $s'\in[0,\frac{s}{2}]$ again arguing as above, and writing $\tilP_{s'}=2e^{\frac{s'}{2}(\Delta+\rho^2)}\tilP_{\frac{s'}{2}}$,
\begin{align*}
\begin{split}
&\int_{\frac{1}{2}}^2s^{\frac{1}{2}}\Big(\int_0^{\frac{s}{2}}\|e^{(s-s')(\Delta+\rho^2)}\nabla^{(3)}((\nabla\ringa)\tilP_{s'}u)\|_{LE^\ast_s}\,\dsp\Big)^2\,\ds\\
&\lesssim C_0(\nabla \ringa)^2 \int_{\frac{1}{2}}^2\Big(\int_0^{\frac{s}{2}}(s')^{\frac{1}{4}}(s')^{-\frac{1}{4}}\|\tilP_{\frac{s'}{2}}u\|_{LE_{\frac{s'}{2}}}\,\dsp\Big)^2\,\ds\\
&\lesssim C_0(\nabla\ringa)^2 \int_0^{\frac{1}{2}}(s')^{-\frac{1}{2}}\|\tilP_{s'}u\|_{LE_{s'}}^2\,\dsp \lesssim C_0(\nabla\ringa)^2 \|u\|_{LE}^2,
\end{split}
\end{align*}
where to pass to the last line we again used Cauchy-Schwarz and the change of variables $\frac{s'}{2}\mapsto s'$. Similarly the contributions of the other terms in \eqref{eq:aderivativesfourtemp1} are bounded by
\begin{align*}
\begin{split}
\sum_{j=0}^4 C_0(\nabla^{(j)}\ringa)^2 \|u\|_{LE}^2.
\end{split}
\end{align*}
This completes the treatment of the term $II$ on the right-hand side of \eqref{eq:Ps_Hp_decomp}.

Let us now turn to the first term $I$. We first consider the $s$ integration over $[0,\frac{1}{2}]$. As usual we decompose $u$ as
\begin{align*}
\begin{split}
u=\tilP_{\leq s}u+\tilP_{s\leq\cdot\leq s_0}u+\tilP_{\geq s_0}u,
\end{split}
\end{align*}
with $s_0\in[\frac{3}{4},1]$. Using the argument which led to \eqref{eq:highestcommtemp1}, it then suffices to  prove the estimates
\begin{align}
&\int_0^{\frac{1}{2}}s^{\frac{1}{2}}\|se^{s(\Delta+\rho^2)}[\Delta,\nabla_\mu\ringa^{\mu\nu}\nabla_\nu]\tilP_{\leq s}u\|_{LE_s^\ast}^2\,\ds\leq\ C(\ringa)^2 \|u\|_{LE}^2, \label{eq:Ps_Jp_I_temp1}\\
&\int_{\frac{3}{4}}^1\int_0^{\frac{1}{2}}s^{\frac{1}{2}}\|se^{s(\Delta+\rho^2)}[\Delta,\nabla_\mu\ringa^{\mu\nu}\nabla_\nu]\tilP_{s\leq \cdot\leq s_0}u\|_{LE_s^\ast}^2\,\ds\,\frac{\ud s_0}{s_0}\leq C(\ringa)^2 \|u\|_{LE}^2, \quad \quad \label{eq:Ps_Jp_I_temp2}\\
&\int_{\frac{3}{4}}^1\int_0^{\frac{1}{2}}s^{\frac{1}{2}}\|se^{s(\Delta+\rho^2)}[\Delta,\nabla_\mu\ringa^{\mu\nu}\nabla_\nu]\tilP_{\geq s_0}u\|_{LE_s^\ast}^2\,\ds\,\frac{\ud s_0}{s_0}\leq C(\ringa)^2 \|u\|_{LE}^2.\label{eq:Ps_Jp_I_temp3}
\end{align}
Starting with \eqref{eq:Ps_Jp_I_temp1} note that the commutator $[\Delta,\nabla_\mu \ringa^{\mu \nu} \nabla_\nu] \tilP_{\leq s}u$ is a linear combination of terms of the form
\begin{align}\label{eq:Delta_Hp_temp1}
\begin{split}
 \nabla^{(k)}\cdot((\nabla^{(j)}\cdot\ringa)\tilP_{\leq s'}u ),\quad k\leq3,\quad k+j\leq 4.
\end{split}
\end{align}
We only treat the case $j=1$, $k=3$ in detail. Estimate  \eqref{eq:Ps_Jp_I_temp1} then reduces to proving
\begin{align} \label{eq:Ps_Jp_I_temp4}
\begin{split}
\int_0^{\frac{1}{2}}s^{-\frac{1}{2}}\|s^{\frac{3}{2}}e^{s(\Delta+\rho^2)}\nabla^{(3)}\cdot((\nabla\ringa)\tilP_{\leq s}u)\|_{LE_s^\ast}^2\,\ds\leq C(\ringa)^2 \|u\|_{LE}^2.
\end{split}
\end{align}
Now by Lemmas~\ref{lem:PsLEs1} and~\ref{lem:multLEs1}
\begin{align*}
\begin{split}
\|s^{\frac{3}{2}}e^{s(\Delta+\rho^2)}\nabla^{(3)}\cdot((\nabla\ringa)\tilP_{\leq s}u)\|_{LE_s^\ast} &\lesssim \|(\nabla\ringa)\tilP_{\leq s}u\|_{LE_s^\ast}\\
&\lesssim \int_{0}^s \|(\nabla\ringa)\tilP_{s'}u\|_{LE_s^\ast}\,\dsp\\
&\leq C_0(\nabla\ringa) \int_{0}^s\|\tilP_{s'}u\|_{LE_{s'}}\,\dsp.
\end{split}
\end{align*}
It follows that the left-hand side of \eqref{eq:Ps_Jp_I_temp4} is bounded by
\begin{align*}
\begin{split}
C_0(\nabla\ringa)^2 \int_0^{\frac{1}{2}}\Big(\int_0^s(\frac{s'}{s})^{\frac{1}{4}}(s')^{-\frac{1}{4}}\|P_{s'}u\|_{LE_{s'}}\,\dsp\Big)^2\,\ds \lesssim C_0(\nabla\ringa)^2 \|u\|_{LE}^2,
\end{split}
\end{align*}
by Schur's test. Similarly the contributions of the other terms in \eqref{eq:Delta_Hp_temp1} are bounded by
\begin{align*}
\begin{split}
\sum_{j=0}^4 C_0(\nabla^{(j)}\ringa)^2 \|u\|_{LE}^2.
\end{split}
\end{align*}
For \eqref{eq:Ps_Jp_I_temp2} we go back to \eqref{eq:Delta_a_temp11} and only treat the most difficult case $j=1$, $k=3$ in detail. Then by Lemma~\ref{lem:PsLEs1} it suffices to prove
\begin{align} \label{eq:Ps_Jp_I_temp5}
\begin{split}
\int_{\frac{3}{4}}^1\int_0^{\frac{1}{2}}\Big(\int_s^{s_0}\|s^{\frac{5}{4}}(\nabla\ringa)\cdot\nabla^{(3)}\tilP_{s'}u\|_{LE_s^\ast}\,\dsp\Big)^2\,\ds\,\frac{\ud s_0}{s_0}\leq C(\ringa)^2 \|u\|_{LE}^2.
\end{split}
\end{align}
Now arguing as above the inner integral with respect to $s$ on the left-hand side of \eqref{eq:Ps_Jp_I_temp5} is bounded by
\begin{align*}
\begin{split}
C_0(\nabla\ringa)^2 \int_0^{\frac{1}{2}}\Big(\int_s^{s_0}\|s^{\frac{5}{4}}\nabla^{(3)}\tilP_{s'}u\|_{LE_{s'}}\,\dsp\Big)^2\,\ds,
\end{split}
\end{align*}
which, using Lemmas~\ref{lem:LE_comparison} and~\ref{lem:PsLEs1} and Schur's test, is in turn bounded by
\begin{align*}
\begin{split}
&C_0(\nabla\ringa)^2  \int_0^{\frac{1}{2}}\Big(\int_s^{s_0}(\frac{s}{s'})^{\frac{5}{4}}(s')^{-\frac{1}{4}}\|\tilP_{\frac{s'}{2}}u\|_{LE_{s'}}\,\dsp\Big)^2\,\ds\\
&\lesssim C_0(\nabla\ringa)^2 \int_0^{\frac{1}{2}}\Big(\int_s^{s_0}(\frac{s}{s'})^{\frac{5}{4}}(s')^{-\frac{1}{4}}\|\tilP_{\frac{s'}{2}}u\|_{LE_{\frac{s'}{2}}}\,\dsp\Big)^2\,\ds\\
&\lesssim C_0(\nabla\ringa)^2 \int_0^{\frac{1}{2}}\Big(\int_{\frac{s}{2}}^{\frac{s_0}{2}}(\frac{s}{s'})^{\frac{5}{4}}(s')^{-\frac{1}{4}}\|\tilP_{s'}u\|_{LE_{s'}}\,\dsp\Big)^2\,\ds\\
&\lesssim C_0(\nabla\ringa)^2 \|u\|_{LE}^2.
\end{split}
\end{align*}
Since the implicit constants are bounded uniformly in $s_0\in[\frac{3}{2},1]$, integrating both sides of this estimate in $s_0$ gives us the desired estimate. Similarly the contributions of the other terms in \eqref{eq:Delta_a_temp11} are bounded by
\begin{align*}
\begin{split}
\sum_{j=0}^3C_0(\nabla^{(j)}\ringa)^2 \|u\|_{LE}^2.
\end{split}
\end{align*}
For \eqref{eq:Ps_Jp_I_temp3}, arguing as above we will show that
\begin{align}\label{eq:Ps_Jp_I_temp6}
\begin{split}
\int_{\frac{3}{4}}^1\int_0^{\frac{1}{2}}\|s^{\frac{5}{4}}(\nabla\ringa)\cdot\nabla^{(3)}\tilP_{\geq s_0}u\|_{LE_s^\ast}^2\,\ds\,\frac{\ud s_0}{s_0} \lesssim C(\ringa)^2 \|u\|_{LE}^2.
\end{split}
\end{align}
Now by Lemma~\ref{lem:multLEs1} we have 
\begin{align*}
\begin{split}
\|(\nabla\ringa)\cdot\nabla^{(3)}\tilP_{\geq s_0}u\|_{LE_s^\ast}\lesssim C_1(\nabla\ringa)^2 \|\nabla^{(3)}\tilP_{\geq s_0}u\|_{LE_\low}.
\end{split}
\end{align*}
It follows that the left-hand side of \eqref{eq:Ps_Jp_I_temp6} is bounded by
\begin{align*}
\begin{split}
C_1(\nabla\ringa)^2 \int_{\frac{3}{4}}^{1}\|\nabla^{(3)}\tilP_{\geq s_0}u\|_{LE_\low}^2\int_0^{\frac{1}{2}}s^{\frac{5}{2}}\,\ds\frac{\ud s_0}{s_0}&\lesssim C_1(\nabla\ringa)^2 \int_{\frac{3}{8}}^\frac{1}{2}\|\tilP_{\geq s_0}u\|_{LE_\low}^2\frac{\ud s_0}{s_0} \\
&\lesssim C_1(\nabla\ringa)^2 \|u\|_{LE}^2.
\end{split}
\end{align*}
Similarly the contributions of the other terms in \eqref{eq:Delta_a_temp11} are bounded by
\begin{align*}
\begin{split}
\sum_{j=0}^3 C_1(\nabla^{(j)}\ringa)^2 \|u\|_{LE}^2.
\end{split}
\end{align*} 
It remains to consider the contributions of the $s$ integration over $[\frac{1}{2},2]$ to the term~$I$ on the right-hand side of \eqref{eq:Ps_Hp_decomp}. To this end we decompose $u$ as
\begin{align*}
\begin{split}
u=\tilP_{\leq \frac{s}{4}}u+\tilP_{\geq \frac{s}{4}}u.
\end{split}
\end{align*}
Then instead of \eqref{eq:Ps_Jp_I_temp1}--\eqref{eq:Ps_Jp_I_temp3}, it suffices to show
\begin{align}
\int_{\frac{1}{2}}^2s^{\frac{1}{2}}\|se^{s(\Delta+\rho^2)}[\Delta,\nabla_\mu\ringa^{\mu\nu}\nabla_\nu]\tilP_{\leq \frac{s}{4}}u\|_{LE_s^\ast}^2\,\ds &\lesssim C(\ringa)^2 \|u\|_{LE}^2,\label{eq:Ps_Jp_I_temp54}\\
\int_{\frac{1}{2}}^2s^{\frac{1}{2}}\|se^{s(\Delta+\rho^2)}[\Delta,\nabla_\mu\ringa^{\mu\nu}\nabla_\nu]\tilP_{\geq \frac{s}{4}}u\|_{LE_s^\ast}^2\,\ds &\lesssim C(\ringa)^2 \|u\|_{LE}^2.\label{eq:Ps_Jp_I_temp55}
\end{align}
For \eqref{eq:Ps_Jp_I_temp55} arguing as above, and using \eqref{eq:Delta_Hp_temp1} with $\tilP_{\leq s'}$ replaced by $\tilP_{\geq \frac{s}{4}}$ we have
\begin{align*}
\begin{split}
\int_{\frac{1}{2}}^{2}s^{\frac{1}{2}}\|se^{s(\Delta+\rho^2)}\nabla^{(3)}\cdot((\nabla\ringa)\tilP_{\geq\frac{s}{4}}u)\|_{LE_s^\ast}^2\,\ds &\lesssim\int_\frac{1}{2}^2\|(\nabla\ringa)\tilP_{\geq\frac{s}{4}}u\|_{LE_{s}^\ast}^2\,\ds \\
&\lesssim C_1(\nabla\ringa)^2 \int_{\frac{1}{2}}^2 \|\tilP_{\geq\frac{s}{4}}u\|_{LE_{\low}}^2 \, \ds \\
&\lesssim C_1(\nabla\ringa)^2 \|u\|_{LE}^2.
\end{split}
\end{align*}
Similarly the contributions of the other terms in \eqref{eq:Delta_Hp_temp1} with $\tilP_{\leq s'}$ replaced by $\tilP_{\geq \frac{s}{4}}$ are bounded by
\begin{align*}
\begin{split}
\sum_{j=0}^4 C_1(\nabla^{(j)}\ringa)^2 \|u\|_{LE}^2.
\end{split}
\end{align*}
For \eqref{eq:Ps_Jp_I_temp54} we have that
\begin{align*}
\begin{split}
&\int_{\frac{1}{2}}^{2}s^{\frac{1}{2}}\|se^{s(\Delta+\rho^2)}\nabla^{(3)}\cdot((\nabla\ringa)\tilP_{\leq\frac{s}{4}}u)\|_{LE_s^\ast}^2\,\ds\\
&\lesssim \int_{\frac{1}{2}}^2\|(\nabla\ringa)\tilP_{\leq\frac{s}{4}}u\|_{LE_s^\ast}^2\,\ds \\
&\lesssim C_0(\nabla\ringa)^2 \int_{\frac{1}{2}}^2\Big(\int_0^{\frac{s}{4}}(s')^{\frac{1}{4}}(s')^{-\frac{1}{4}}\|\tilP_{s'}u\|_{LE_{s'}}\,\dsp\Big)^2\,\ds\\
&\lesssim C_0(\nabla\ringa)^2 \int_0^{\frac{1}{2}}(s')^{-\frac{1}{2}}\|\tilP_{s'}u\|_{LE_{s'}}^2\,\dsp \lesssim C_0(\nabla\ringa)^2 \|u\|_{LE}^2.
\end{split}
\end{align*}
Similarly, the contributions of the other terms in \eqref{eq:Delta_Hp_temp1} with $\tilP_{\leq s'}$ replaced by $\tilP_{\leq \frac{s}{4}}$ are bounded by
\begin{align*}
\begin{split}
\sum_{j=0}^4 C_0(\nabla^{(j)}\ringa)^2 \|u\|_{LE}^2.
\end{split}
\end{align*}
This completes the proof of the proposition.
\end{proof}

\begin{proof}[Proof of Proposition~\ref{p:commP_leqsH_prin}]
To simplify notation we write $Q$ for $Q^{(\alpha)}$ and use the notation $C_j(\bsz)$ introduced in \eqref{eq:Cjdef}. Since we are in the low-frequency regime we do not need to exploit the commutator structure as carefully as in the previous proof to gain regularity. Let
\begin{align*}
\begin{split}
 v(s) := [\tilP_{\geq s},H_{\prin}]u.
\end{split}
\end{align*}
Then
\begin{align*}
\begin{split}
\int_\frac{1}{8}^{4} \sup_{\{\alpha\}\in\calA}|\angles{v}{Q\tilP_{\geq s}u}_{t,x}|\,\ds
&\lesssim \Big(\int_{\frac{1}{8}}^4 \|v\|_{LE_\low^\ast}^2\,\ds\Big)^{\frac{1}{2}}\Big(\int_{\frac{1}{8}}^4\sup_{\{\alpha\}\in\calA}\|Q\tilP_{\geq s}u\|_{LE_\low}^2\,\ds\Big)^{\frac{1}{2}}\\
&\lesssim \Big(\int_{\frac{1}{8}}^4 \|v\|_{LE_\low^\ast}^2\,\ds\Big)^{\frac{1}{2}}\Big(\int_{\frac{1}{8}}^4\|\tilP_{\geq \frac{s}{2}}u\|_{LE_\low}^2\,\ds\Big)^{\frac{1}{2}}.
\end{split}
\end{align*}
Now using the fundamental theorem of calculus to write
\begin{align*}
\begin{split}
\tilP_{\geq\frac{s}{2}}u=\tilP_{\geq s}u+\int_{\frac{s}{2}}^s\tilP_{s'}u\,\dsp,
\end{split}
\end{align*}
and by Lemmas~\ref{lem:LE_comparison},~\ref{lem:LEs_LElow_comparision}, and~\ref{lem:PsLEs1}, we can estimate
\begin{align*}
\begin{split}
\int_{\frac{1}{8}}^4\|\tilP_{\geq \frac{s}{2}}u\|_{LE_\low}^2\,\ds &\lesssim \|u\|_{LE}^2+\int_{\frac{1}{8}}^4\Big(\int_{\frac{s}{2}}^s\|\tilP_{s'}u\|_{LE_\low}\,\dsp\Big)^2\,\ds\\
&\lesssim \|u\|_{LE}^2+\int_{\frac{1}{16}}^{4}\|e^{\frac{7s'}{8}(\Delta+\rho^2)}\tilP_{\frac{s'}{8}}u\|_{LE_{s'}}^2\,\dsp\\
&\lesssim \|u\|_{LE}^2+\int_{0}^{\frac{1}{2}}\|\tilP_{s'}u\|_{LE_{s'}}^2\,\dsp\lesssim \|u\|_{LE}^2.
\end{split}
\end{align*}
Therefore, it suffices to show that
\begin{align}\label{eq:lowprintemp1}
\begin{split}
\int_{\frac{1}{8}}^4 \|[\tilP_{\geq s},H_{\prin}]u\|_{LE_\low^\ast}^2\,\ds \leq C(\ringa)^2 \|u\|_{LE}^2.
\end{split}
\end{align}
For this we simply write the commutator $[\tilP_{\geq s},H_{\prin}]u$ as $\tilP_{\geq s}\ringH_\prin u-\ringH_\prin \tilP_{\geq s}u$ where $\ringH_\prin$ is defined the same way as $H_\prin$ but with $\bsa$ replaced by $\ringa$. This is possible because $[\tilP_{\geq s},\Delta]=0$. Now we estimate the two terms $\tilP_{\geq s}\ringH_\prin u$ and $\ringH_\prin \tilP_{\geq s}u$ separately. For the latter, writing
\begin{align*}
\begin{split}
\ringH_\prin=-\ringa^{\mu\nu}\nabla_{\mu}\nabla_\nu+(\nabla_\mu\ringa^{\mu\nu})\nabla_\nu,
\end{split}
\end{align*}
and by applications of the product rule and using Lemmas~\ref{lem:PsLEs1} and ~\ref{lem:multLEs1}
\begin{align*}
\begin{split}
\|\ringH_\prin \tilP_{\geq s}u\|_{LE_\low^\ast} \lesssim C(\ringa)\|\tilP_{\geq \frac{s}{2}}u\|_{LE_\low}.
\end{split}
\end{align*}
Then arguing as above we get
\begin{align*}
\begin{split}
\int_{\frac{1}{8}}^4 \|\ringH_\prin \tilP_{\geq s}u\|_{LE_\low^\ast}^2\,\ds \lesssim C(\ringa)^2 \|u\|_{LE}^2.
\end{split}
\end{align*}
To estimate the contribution of $\tilP_{\geq s}\ringH_\prin u$ we decompose $u$ as
\begin{align*}
\begin{split}
u=\tilP_{\leq s}u+\tilP_{\geq s}u.
\end{split}
\end{align*}
For $\tilP_{\geq s}u$ we write
\begin{align*}
\begin{split}
\ringH_\prin=-\nabla_\mu\nabla_\nu\ringa^{\mu\nu}+\nabla_\mu(\nabla_\nu\ringa^{\mu\nu}),
\end{split}
\end{align*}
and by applications of the product rule and using Lemmas~\ref{lem:PsLEs1} and ~\ref{lem:multLEs1}, we obtain
\begin{align*}
\begin{split}
\|\tilP_{\geq s}\ringH_\prin \tilP_{\geq s}u\|_{LE_\low^\ast} \lesssim C(\ringa)\|\tilP_{\geq s}u\|_{LE_\low},
\end{split}
\end{align*}
and thus
\begin{align*}
\begin{split}
\int_{\frac{1}{8}}^4 \|\tilP_{\geq s}\ringH_\prin \tilP_{\geq s}u\|_{LE_\low^\ast}^2\,\ds \lesssim C(\ringa)^2 \|u\|_{LE}^2.
\end{split}
\end{align*}
Similarly for $\tilP_{\leq s}u$ writing $\ringH_\prin=-\nabla_\mu\nabla_\nu\ringa^{\mu\nu}+\nabla_\mu(\nabla_\nu\ringa^{\mu\nu})$ we have
\begin{align*}
\begin{split}
 &\|\tilP_{\geq s}\ringH_\prin \tilP_{\leq s}u\|_{LE_\low^\ast} \\
 &\quad \lesssim \|\ringa \tilP_{\leq s}u\|_{LE_\low^\ast}+\|(\nabla\ringa)\tilP_{\leq s}u\|_{LE_\low^\ast}\\
 &\quad \lesssim C(\ringa) \int_0^s\|\tilP_{s'}u\|_{LE_{s'}}\,\dsp\lesssim C(\ringa)\int_0^s\|e^{\frac{7s'}{8}(\Delta+\rho^2)}\tilP_{\frac{s'}{8}}u\|_{LE_{s'}}\,\dsp\\
 &\quad \lesssim C(\ringa) \int_{0}^{\frac{s}{8}}\|\tilP_{s'}u\|_{LE_{s'}}\,\dsp\lesssim C(\ringa)\Big(\int_{0}^{\frac{s}{8}}(s')^{-\frac{1}{2}}\|\tilP_{s'}u\|_{LE_{s'}}^2\,\dsp\Big)^{\frac{1}{2}}.
\end{split}
\end{align*}
It follows that
\begin{align*}
\begin{split}
\int_{\frac{1}{8}}^4\|\tilP_{\geq s}\ringH_\prin \tilP_{\leq s}u\|_{LE_\low^\ast}^2\,\ds&\lesssim C(\ringa)^2 \int_{\frac{1}{8}}^4\int_{0}^{\frac{s}{8}}(s')^{-\frac{1}{2}}\|\tilP_{s'}u\|_{LE_{s'}}^2\,\dsp\\
&\lesssim C(\ringa)^2 \int_{0}^{\frac{1}{2}}(s')^{-\frac{1}{2}}\|\tilP_{s'}u\|_{LE_{s'}}^2\,\dsp \lesssim C(\ringa)^2 \|u\|_{LE}^2,
\end{split}
\end{align*}
completing the proof of the proposition.
\end{proof}

\subsection{Estimates needed to handle $H_\prin+\Delta$} \label{ss:DeltaHdifference}

In the positive commutator estimate of Sections~\ref{s:low}--\ref{s:trans} the coercivity comes from the commutator of our low and high-frequency multipliers with $-\Delta$, \emph{not with $H_\prin$}. The estimates in this section are needed to handle the errors generated from the difference between $H_\prin$ and $-\Delta$. For low frequencies we will need the following result.

\begin{prop} \label{p:DeltaHpQlow}
For any $\{\alpha_\ell\}\in\calA$, let $Q^{(\alpha)}$ be as in Definition~\ref{d:beta_low}. Then we have 
\begin{equation}
 \int_{\frac{1}{8}}^4 \sup_{\{ \alpha_\ell \} \in\calA} \, \bigl| \Re \langle i (H_\prin+\Delta) \tilP_{\geq s} u, Q^{(\alpha)} \tilP_{\geq s} u \rangle_{t,x} \bigr| \, \ds \leq C \varepsilon_0 \|u\|_{LE}^2,
\end{equation}
where $\varepsilon_0>0$ is as in \eqref{eq:decay_assumptions-prin}.
\end{prop}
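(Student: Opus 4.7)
The approach I would take is to avoid computing the commutator $[Q^{(\alpha)}, H_{\prin}+\Delta]$ explicitly and instead pair the expression in the $\langle\cdot,\cdot\rangle_{LE^{\ast}_{\low},\,LE_{\low}}$-duality, exploiting the fact that the smallness we seek is already built into $\ringa := \bsa-\bsh^{-1}$ via the assumption \eqref{eq:decay_assumptions-prin}. Writing $H_{\prin}+\Delta = -\nabla_{\mu}\ringa^{\mu\nu}\nabla_{\nu}$ and applying Cauchy--Schwarz,
\begin{equation*}
\sup_{\{\alpha_\ell\}\in\calA}\bigl|\Re\langle i(H_{\prin}+\Delta)\tilP_{\geq s}u,\,Q^{(\alpha)}\tilP_{\geq s}u\rangle_{t,x}\bigr|
\leq \bigl\|(H_{\prin}+\Delta)\tilP_{\geq s}u\bigr\|_{LE^{\ast}_{\low}}\,\sup_{\{\alpha_\ell\}\in\calA}\bigl\|Q^{(\alpha)}\tilP_{\geq s}u\bigr\|_{LE_{\low}}.
\end{equation*}
The second factor is controlled, uniformly in $\{\alpha_\ell\}\in\calA$ and $s\in[1/8,4]$, by Corollary~\ref{cor:QLElow}, which gives $\sup_{\alpha}\|Q^{(\alpha)}\tilP_{\geq s}u\|_{LE_{\low}}\lesssim \|\tilP_{\geq s/2}u\|_{LE_{\low}}$.

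For the first factor, I would expand
\begin{equation*}
(H_{\prin}+\Delta)\tilP_{\geq s}u = -\ringa^{\mu\nu}\nabla_{\mu}\nabla_{\nu}\tilP_{\geq s}u - (\nabla_{\mu}\ringa^{\mu\nu})\nabla_{\nu}\tilP_{\geq s}u
\end{equation*}
and apply estimate \eqref{eq:z_LEstarlow_LElow} of Lemma~\ref{lem:multLEs1} separately to each piece, with $\bsz=\ringa$ and $\bsz=\nabla\ringa$. The constants this produces are $\|\ringa\|_{L^{\infty}_{t,x}}+\sum_{\ell\geq 0}\|r^{3}\ringa\|_{L^{\infty}(\bbR\times A_{\ell})}$ and $\|\nabla\ringa\|_{L^{\infty}_{t,x}}+\sum_{\ell\geq 0}\|r^{3}\nabla\ringa\|_{L^{\infty}(\bbR\times A_{\ell})}$, both of which are bounded by $C\varepsilon_{0}$ by the $k=0,1$ components of \eqref{eq:decay_assumptions-prin}. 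The derivatives on $\tilP_{\geq s}u$ are removed using Lemma~\ref{lem:PsLEs1} (semi-group form, with $s$ replaced by $s/2$), which for $s\in[1/8,4]$ gives $\|\nabla^{(k)}\tilP_{\geq s}u\|_{LE_{\low}}\lesssim \|\tilP_{\geq s/2}u\|_{LE_{\low}}$ for $k=1,2$, the $s^{-k/2}$ factor being harmless since $s\simeq 1$. Combining these bounds yields the pointwise-in-$s$ estimate
\begin{equation*}
\sup_{\{\alpha_\ell\}\in\calA}\bigl|\Re\langle i(H_{\prin}+\Delta)\tilP_{\geq s}u,\,Q^{(\alpha)}\tilP_{\geq s}u\rangle_{t,x}\bigr|\lesssim \varepsilon_{0}\,\|\tilP_{\geq s/2}u\|_{LE_{\low}}^{2}.
\end{equation*}

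It then suffices to show $\int_{1/8}^{4}\|\tilP_{\geq s/2}u\|_{LE_{\low}}^{2}\,\ds\lesssim \|u\|_{LE}^{2}$. After the change of variables $s/2\mapsto s$ the integration range becomes $[1/16,2]$; the piece over $[1/8,2]$ is already contained in $\|u\|_{LE}^{2}$, while the short interval $[1/16,1/8]$ is dealt with exactly as in the proof of Proposition~\ref{p:FQlowu}, namely by writing $\tilP_{\geq s}u = \tilP_{\geq 1/8}u + \int_{s}^{1/8}\tilP_{s'}u\,\ud s'/s'$, applying Cauchy--Schwarz in $s'$, and invoking Lemma~\ref{lem:LEs_LElow_comparision} to pass from $\|\tilP_{s'}u\|_{LE_{\low}}$ to $\|\tilP_{s'}u\|_{LE_{s'}}$ on the resulting range $s'\simeq 1$. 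This concludes the proposition.

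I do not expect a substantive obstacle. The proof is short precisely because at low frequencies derivatives of $\tilP_{\geq s}u$ are essentially free and the smallness of $\ringa$ feeds directly into the constant through the weighted $L^{\infty}$ bound in Lemma~\ref{lem:multLEs1}; no delicate commutator computation, of the kind needed for Propositions~\ref{p:commP_sH_prin} and \ref{p:commP_leqsH_prin}, is necessary. The only minor care required is bookkeeping the frequency shift $\tilP_{\geq s}\to\tilP_{\geq s/2}$ during the integration in $s$.
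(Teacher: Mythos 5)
Your proof is correct, and it does take a genuinely cleaner route than a verbatim low-frequency analogue of the paper's proof of Proposition~\ref{p:DeltaHpQhigh}. The paper's high-frequency argument has to integrate by parts so that only one derivative falls on $\tilP_{s}u$, and it has to subtract off the $\tilF_{s}^{(\alp)}$-type angular term: both moves are forced by the fact that the high-frequency multiplier $Q_{s}^{(\alp)}$ carries weights of size $s^{\frac{1}{2}}$ while the second derivatives produce a factor $s^{-1}$, a net loss that must be recovered through cancellation and the extra positive term in \eqref{eq:Q1}. Your observation that at $s\simeq 1$ the derivative losses are harmless is exactly the reason the low-frequency proposition has no correction term in its statement, and it lets you replace the whole integration-by-parts machinery with a single $LE_{\low}$--$LE_{\low}^{\ast}$ duality pairing followed by Lemma~\ref{lem:multLEs1}, Lemma~\ref{lem:PsLEs1} (at $s\simeq 1$), and Corollary~\ref{cor:QLElow}. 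The constants you extract are exactly the $k=0,1$ pieces of \eqref{eq:decay_assumptions-prin}, so the $C\veps_{0}$ bound is accounted for.

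One small bookkeeping remark on the last step. Having reduced matters to $\int_{1/16}^{2}\|\tilP_{\geq s}u\|_{LE_{\low}}^{2}\,\ds\lesssim\|u\|_{LE}^{2}$, the interval $[1/8,2]$ is directly controlled by $\|u\|_{LE}^{2}$. For $[1/16,1/8]$, the decomposition you wrote, $\tilP_{\geq s}u=\tilP_{\geq 1/8}u+\int_{s}^{1/8}\tilP_{s'}u\,\dsp$, leaves a pointwise-in-$s$ quantity $\|\tilP_{\geq 1/8}u\|_{LE_{\low}}^{2}$ which is not literally one of the integrands in $\|u\|_{LE}^{2}$ and so still needs a further argument (e.g.\ another averaging in $s_{0}\in[1/8,1/4]$ of the identity $\tilP_{\geq 1/8}u=\tilP_{\geq s_{0}}u+\int_{1/8}^{s_{0}}\tilP_{s'}u\,\dsp$). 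The cleaner choice, and the one the paper actually uses in the proof of Proposition~\ref{p:FQlowu}, is $\tilP_{\geq s}u=\tilP_{\geq 2s}u+\int_{s}^{2s}\tilP_{s'}u\,\dsp$, since then a change of variables sends $\int_{1/16}^{1/8}\|\tilP_{\geq 2s}u\|_{LE_{\low}}^{2}\,\ds$ to $\int_{1/8}^{1/4}\|\tilP_{\geq s}u\|_{LE_{\low}}^{2}\,\ds$, which sits inside $\|u\|_{LE}^{2}$ verbatim. This is only a cosmetic change, but since you explicitly cite the proof of Proposition~\ref{p:FQlowu}, you should match its splitting.
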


\begin{rem} \label{rem:DeltaHpQlow}
 The argument for the proof of Proposition~\ref{p:DeltaHpQlow} can also be used to establish the following closely related estimate: For any $\delta \in (0,1]$, $s_3 \simeq 1$, and $\{ \alpha_\ell \} \in \calA$, let $Q_{s_3}^{(\alpha)}$ be as in Definition~\ref{d:beta_high}. Then we have 
 \begin{equation}
  \int_{\frac{1}{8}}^4 \sup_{\{ \alpha_\ell \} \in \calA} \, \bigl| \Re \langle i (H_\prin+\Delta) \tilP_{\geq s} u, Q^{(\alpha)}_{s_3} \tilP_{\geq s} u \rangle_{t,x} \bigr| \, \ds \leq C \varepsilon_0 \|u\|_{LE}^2,   
 \end{equation}
 where $\varepsilon_0>0$ is as in \eqref{eq:decay_assumptions-prin}.
\end{rem}

The following is the analogous result for high frequencies.
\begin{prop} \label{p:DeltaHpQhigh}
For any $\delta \in (0,1]$, $s>0$, and $\{ \alpha_\ell \} \in \calA$, let $Q_s^{(\alpha)}$ and $\beta_s^{(\alpha)}$ be as in Definition~\ref{d:beta_high}. Then it holds that 
\begin{equation}
 \begin{aligned}
  &\int_{0}^2 \sup_{\{ \alpha_\ell \} \in\calA} \, \bigl| \Re \langle i (H_\prin+\Delta) \tilP_{s} u, Q_s^{(\alpha)} \tilP_{s} u \rangle_{t,x} \\
  &\qquad \qquad \qquad + 2 \langle \ringa^{\theta_a\theta_b}(\coth(r) \beta_s^{(\alpha)} - \partial_r \beta_s^{(\alpha)}) \partial_{\theta_a} \tilP_s u, \partial_{\theta_b} \tilP_s u \rangle_{t,x} \bigr| \, \ds \leq C \varepsilon_0 \|u\|_{LE}^2, 
 \end{aligned}
\end{equation}
where $\varepsilon_0>0$ is as in \eqref{eq:decay_assumptions-prin}.
\end{prop}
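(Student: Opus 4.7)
\smallskip

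The plan is to exploit the self-adjointness of both $H_{\prin}+\Delta=-\nabla_\mu\ringa^{\mu\nu}\nabla_\nu$ (which is symmetric because $\ringa$ is real-valued and symmetric, by \eqref{eq:decay_assumptions-prin}) and $Q_s^{(\alpha)}$, in order to convert the pairing into a commutator. Setting $v:=\tilP_s u$, this yields
\[
\Re\angles{i(H_{\prin}+\Delta)v}{Q_s^{(\alpha)}v}_{t,x} = \tfrac{1}{2}\angles{i[Q_s^{(\alpha)},\,H_{\prin}+\Delta]\,v}{v}_{t,x}.
\]
I would then derive an integration-by-parts identity exactly analogous to Lemma~\ref{lem:Q}, but with the formally self-adjoint operator $-\Delta$ replaced by $-\nabla_\mu\ringa^{\mu\nu}\nabla_\nu$. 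Writing $Q_s^{(\alpha)}=\frac{1}{i}(\beta_s^{(\alpha)}\partial_r-\partial_r^\ast\beta_s^{(\alpha)})$ in polar coordinates and integrating by parts as in the proof of Lemma~\ref{lem:Q}, the commutator decomposes into (i) \emph{top-order quadratic forms} in $\nabla v$, with coefficients built from components of $\ringa$ times $\beta_s^{(\alpha)}$ or $\partial_r\beta_s^{(\alpha)}$, and (ii) \emph{lower-order terms} carrying at least one derivative of $\ringa$ or two derivatives of $\beta_s^{(\alpha)}$, paired against $v$ or $\nabla v$.

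The top-order piece splits according to the index structure of $\ringa$. The radial-radial block contributes $2\angles{\ringa^{rr}(\partial_r\beta_s^{(\alpha)})\partial_rv}{\partial_rv}_{t,x}$, whose $s$-integral is bounded by $\varepsilon_0\|u\|_{LE}^2$ using the global smallness $\|\ringa\|_{L^\infty_{t,x}}\lesssim\varepsilon_0$ from \eqref{eq:decay_assumptions-prin} together with the same $s$-integrated coercivity mechanism (Lemma~\ref{lem:integrated_bernstein_substitute}) that drives the high-frequency estimate in Section~\ref{s:high}. The angular-angular block produces exactly the expression
\[
-2\angles{\ringa^{\theta_a\theta_b}(\coth r\,\beta_s^{(\alpha)}-\partial_r\beta_s^{(\alpha)})\partial_{\theta_a}v}{\partial_{\theta_b}v}_{t,x},
\]
which is why it is subtracted off on the left-hand side of the Proposition; pairing it with the positive angular term $\tilde F_s^{(\alpha)}$ discussed in Remark~\ref{rem:suboptimal-w} is what allows the proof in Section~\ref{s:high} to close, but here it is simply cancelled and removed from the estimate.

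The main obstacle lies in the mixed $r\theta$-block, whose top-order contribution is of the schematic form $\angles{\ringa^{r\theta_a}(\cdots)\partial_rv}{\partial_{\theta_a}v}_{t,x}$, together with its integration-by-parts cousins. In the region $\{r\gtrsim s^{\frac{1}{2}}\}$ the weighted decay $\sum_{\ell\geq 0}\|r^2\nabla^{(k)}\ringa\|_{L^\infty(\bbR\times A_\ell)}\leq\varepsilon_0$ from \eqref{eq:decay_assumptions-prin} makes this bound routine, in direct analogy with the $\ringa^{rr}$-block. The difficulty is the inner region $\{r\ll s^{\frac{1}{2}}\}$, where $|\partial_r^{k}\beta_s^{(\alpha)}|$ can be as large as $s^{-(k-1)/2}$, so that a naive estimate would lose a factor of $s^{-\frac{1}{2}}$ relative to the $LE_s$-weight. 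This is precisely where the vanishing hypothesis \eqref{eq:vanishing_assumption-prin} enters: since $\bsh(\ud r,\ud\theta^a)\equiv 0$, condition \eqref{eq:vanishing_assumption-prin} forces $\ringa^{r\theta_a}$ to vanish linearly at $r=0$, producing an extra factor of $r\lesssim s^{\frac{1}{2}}$ that exactly compensates the lost $s^{-\frac{1}{2}}$; combined with the localized parabolic regularity of Proposition~\ref{p:lpregcore} and Corollary~\ref{c:lpregcov}, this restores the $\varepsilon_0\|u\|_{LE}^2$ bound.

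Finally, the lower-order terms — all of which carry a derivative of $\ringa$ or an extra derivative of $\beta_s^{(\alpha)}$ — can be controlled by the argument used to prove Proposition~\ref{p:commP_sH_prin} in Section~\ref{ss:commute_with_H_prin}, namely by dualizing through $Q_s^{(\alpha)}\tilP_s$ using Propositions~\ref{p:QsL2} and \ref{p:QsX}, applying Lemmas~\ref{lem:PsLEs1} and \ref{lem:multLEs1} together with the mismatch estimates of Section~\ref{ss:Q-bdd}, and invoking the decay assumptions \eqref{eq:decay_assumptions-prin} to extract a factor of $\varepsilon_0$. Taking absolute values, applying Cauchy-Schwarz in the heat time variable and the standard change of variables $s/c\mapsto s$ to reduce to $\|u\|_{LE}^2$, and noting that every bound is uniform in $\{\alpha_\ell\}\in\calA$, then yields the stated estimate after taking the supremum over $\calA$ and integrating in $s$.
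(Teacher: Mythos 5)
Your high-level strategy — use self-adjointness to pass to a commutator, integrate by parts, split by index structure, cancel the angular-angular block against the term subtracted in the statement, and invoke the vanishing condition for the mixed $r\theta$ block — matches the paper's (though the paper computes $\Re\langle i(H_\prin+\Delta)\tilP_s u, Q_s^{(\alpha)}\tilP_s u\rangle$ directly by integration by parts rather than writing the commutator $\tfrac12\langle i[Q,\cdot]v,v\rangle$ explicitly, the two are the same). However, several of the specific tools you cite are the wrong ones for the job, and the description of where \eqref{eq:vanishing_assumption-prin} enters is off.

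First, for the top-order block with coefficient $\partial_r\beta_s^{(\alpha)}$, you invoke Lemma~\ref{lem:integrated_bernstein_substitute}, but that lemma is the \emph{reverse-Bernstein} coercivity mechanism (recovering $\tilP_s u$-control from $\nabla\tilP_s u$-control after integration in $s$), which is what is needed for a lower bound in Section~\ref{s:high}. Here you want an \emph{upper} bound, and the direction you need is the easy one: $\|s^{1/2}\nabla\tilP_s u\|_{LE_s}\lesssim\|\tilP_{s/4}u\|_{LE_{s/4}}$ from Lemma~\ref{lem:PsLEs1}, combined with the identification $\sup_\alpha s^{-1}\langle(\partial_r\beta_s^{(\alpha)})|s^{1/2}\nabla\tilP_s u|,|s^{1/2}\nabla\tilP_s u|\rangle_{t,x}\simeq s^{-1/2}\|s^{1/2}\nabla\tilP_s u\|_{LE_s}^2$ from Lemma~\ref{lem:betaLErelation}. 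Moreover, Lemma~\ref{lem:integrated_bernstein_substitute} cannot be applied directly because the $\sup_{\alpha}$ sits \emph{inside} the $\ds$-integral, so the weight $w$ would change with $s$; circumventing this would require the elaborate $s$-averaging apparatus of Section~\ref{s:high}, which is not needed here.

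Second, your account of the vanishing condition is inaccurate. The dangerous term is the mixed $r\theta_a$ piece carrying the coefficient $\ringa^{r\theta_a}\big(\coth r\,\beta_s^{(\alpha)}-\partial_r\beta_s^{(\alpha)}\big)$, more specifically the $\ringa^{r\theta_a}\coth r\,\beta_s^{(\alpha)}$ part. Since $|\beta_s^{(\alpha)}|\lesssim s^{1/2}$ uniformly, the obstruction is not the size of $\partial_r^k\beta_s^{(\alpha)}$ for $r\ll s^{1/2}$ (indeed $\coth r\,\beta_s^{(\alpha)}$ is uniformly bounded in $s$ and $r$); it is the $\coth r\sim r^{-1}$ singularity at $r=0$, which is what $\ringa^{r\theta_a}$ vanishing at the origin compensates, yielding finiteness of $\|\,|\ringa^{r\cdot}|_{\slashed\bsh}\coth r\|_{L^\infty(\bbR\times A_{\leq 0})}$ by Taylor expansion. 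No localized parabolic regularity (Proposition~\ref{p:lpregcore}, Corollary~\ref{c:lpregcov}) is needed here — those estimates address spatial mismatch under the heat semigroup, and the quadratic form is already frequency-localized.

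Third, for the lower-order terms $(\nabla\ringa)\,\bsbeta_s^{(\alpha)}\,\nabla\tilP_s u\cdot\nabla\tilP_s u$ and $\ringa\,(\nabla^{(2)}\bsbeta_s^{(\alpha)})\,\nabla\tilP_s u\cdot\tilP_s u$, importing the argument of Proposition~\ref{p:commP_sH_prin} with the mismatch estimates of Section~\ref{ss:Q-bdd} is an over-reach: that proposition handles the genuinely nonlocal commutator $[\tilP_s, H_\prin]u$ via Duhamel, whereas here the expressions are local differential expressions in $\tilP_s u$. They can be dispatched by pointwise bounds on $\beta_s^{(\alpha)}$ and its derivatives (e.g.\ $|\partial_r^2\beta_s^{(\alpha)}|\lesssim s^{-1/2}\partial_r\beta_s^{(\alpha)}$), Lemma~\ref{lem:betaLErelation}, and Lemma~\ref{lem:PsLEs1}, yielding $\lesssim C(\ringa)\,s^{-1/2}\|\tilP_{s/4}u\|_{LE_{s/4}}^2$ pointwise in $s$.

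In short, the decomposition is right and the role of the vanishing condition is identified, but the cited lemmas (Lemma~\ref{lem:integrated_bernstein_substitute}, Proposition~\ref{p:lpregcore}, Corollary~\ref{c:lpregcov}, mismatch estimates) are not the ones that make the argument go through; the actual proof is simpler and relies only on the uniform pointwise bounds for $\beta_s^{(\alpha)}$ and its derivatives together with Lemmas~\ref{lem:betaLErelation} and \ref{lem:PsLEs1}, integrated in $s$ via a change of variables.
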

We only provide the proof of Proposition~\ref{p:DeltaHpQhigh} and leave the easier treatment of the low-frequency case in Proposition~\ref{p:DeltaHpQlow} to the reader.
\begin{proof}[Proof of Proposition~\ref{p:DeltaHpQhigh}]
 We begin by recalling that 
 \[
  H_{\prin} + \Delta = - \nabla_\mu \ringa^{\mu \nu} \nabla_\nu,
 \]
 where we use the notation $\ringa = \bsa - \bsh^{-1}$. Then we first integrate by parts in the term $\Re \langle i (H_\prin+\Delta) \tilP_{s} u, Q_s^{(\alpha)} \tilP_{s} u \rangle_{t,x}$ so that only one derivative falls on $\tilP_s u$,
 \begin{align*}
  &\Re \langle i (H_\prin+\Delta) \tilP_{s} u, Q_s^{(\alpha)} \tilP_{s} u \rangle_{t,x} \\ 
  &\quad = \Re \langle \nabla_\mu \ringa^{\mu \nu} \nabla_\nu \tilP_s u, 2 \bsbeta_s^{(\alpha), \lambda} \nabla_\lambda \tilP_s u + (\nabla_\lambda \bsbeta_s^{(\alpha), \lambda}) \tilP_s u \rangle_{t,x} \\
  &\quad = -2 \Re \langle \ringa^{\mu \nu} \nabla_\nu \tilP_s u, \bsbeta_s^{(\alpha), \lambda} \nabla_\mu \nabla_\lambda \tilP_s u \rangle_{t,x} \\
  &\quad \quad \, -2 \Re \langle \ringa^{\mu \nu} \nabla_\nu \tilP_s u, (\nabla_\mu \bsbeta_s^{(\alpha), \lambda}) \nabla_\lambda \tilP_s u \rangle_{t,x} \\
  &\quad \quad \, - \Re \langle \ringa^{\mu \nu} \nabla_\nu \tilP_s u, (\nabla_\mu \nabla_\lambda \bsbeta_s^{(\alpha), \lambda}) \tilP_s u \rangle_{t,x} \\
  &\quad \quad \, - \Re \langle \ringa^{\mu \nu} \nabla_\nu \tilP_s u, (\nabla_\lambda \bsbeta_s^{(\alpha), \lambda}) \nabla_\mu \tilP_s u \rangle_{t,x}.
 \end{align*}
 Integrating by parts once more in the first term on the right-hand side and using that $\ringa$ is symmetric and real-valued, we find 
 \begin{equation} \label{equ:Hprin_plus_Delta_rewritten1}
  \begin{aligned}
   &\Re \langle i (H_\prin+\Delta) \tilP_{s} u, Q_s^{(\alpha)} \tilP_{s} u \rangle_{t,x} \\
   &\quad = \Re \langle (\nabla_\lambda \ringa^{\mu \nu}) \nabla_\nu \tilP_s u, \bsbeta_s^{(\alpha), \lambda} \nabla_\mu \tilP_s u \rangle_{t,x} \\
   &\quad \quad \, -2 \Re \bigl\langle \ringa^{\mu \nu} \nabla_\nu \tilP_s u, (\nabla_\mu \bsbeta_s^{(\alpha), \lambda}) \nabla_\lambda \tilP_s u \bigr\rangle_{t,x} \\
   &\quad \quad \, - \Re \bigl\langle \ringa^{\mu \nu} \nabla_\nu \tilP_s u, (\nabla_\mu \nabla_\lambda \bsbeta_s^{(\alpha), \lambda}) \tilP_s u \bigr\rangle_{t,x}.
  \end{aligned}
 \end{equation}
 Here we note that due to the radial symmetry of the vector field $\bsbeta_s^{(\alpha)}$, the only non-zero components of $\nabla_\mu \bsbeta_s^{(\alpha), \lambda}$ are given by
 \begin{align*}
  \nabla_r \bsbeta_s^{(\alpha), r} = \partial_r \beta_s^{(\alpha)} \quad \text{and} \quad \nabla_{\theta_a} \bsbeta_s^{(\alpha), \theta_a} = \coth(r) \beta_s^{(\alpha)} \text{ for } a = 1, \ldots, d-1.
 \end{align*}
 It follows that the second term on the right-hand side of~\eqref{equ:Hprin_plus_Delta_rewritten1} is given by 
 \begin{equation} \label{eq:metricIIdivision}
  \begin{aligned}
   &-2 \Re \bigl\langle \ringa^{\mu \nu} \nabla_\nu \tilP_s u, (\nabla_\mu \bsbeta_s^{(\alpha), \lambda}) \nabla_\lambda \tilP_s u \bigr\rangle_{t,x} \\
   &\quad = -2 \Re \angles{\ringa^{r\nu}\nabla_\nu\tilP_su}{ (\partial_r\beta_s^{(\alpha)}) \nabla_r \tilP_su}_{t,x} \\
   &\quad \quad - 2 \Re\angles{\ringa^{\theta_a \nu} \nabla_\nu \tilP_su}{\coth(r) \beta_s^{(\alpha)} \nabla_{\theta_a} \tilP_s u}_{t,x} \\
   &\quad = - 2 \Re \angles{\ringa^{\mu\nu} (\partial_r \beta_s^{(\alpha)}) \nabla_\mu \tilP_su}{\nabla_\nu\tilP_su}_{t,x}\\
   &\quad \quad-2\Re\angles{\ringa^{\theta_a \theta_b}(\coth(r) \beta_s^{(\alpha)}-\partial_r\beta_s^{(\alpha)})\partial_{\theta_a}\tilP_su}{\partial_{\theta_b}\tilP_su}_{t,x}\\
   &\quad \quad-2\Re\angles{\ringa^{\theta_a r}(\coth(r) \beta_s^{(\alpha)}-\partial_r\beta_s^{(\alpha)})\partial_{\theta_a}\tilP_su}{\partial_r\tilP_su}_{t,x}.
  \end{aligned}
 \end{equation}
 Combining the previous identities, we obtain that
 \begin{equation}
  \begin{aligned}
   &\Re \langle i (H_\prin+\Delta) \tilP_{s} u, Q_s^{(\alpha)} \tilP_{s} u \rangle_{t,x} + 2 \langle \ringa^{\theta_a\theta_b} (\coth(r) \beta_s^{(\alpha)} - \partial_r \beta_s^{(\alpha)}) \partial_{\theta_a} \tilP_s u, \partial_{\theta_b} \tilP_s u \rangle_{t,x} \\
   &= \Re \langle (\nabla_\lambda \ringa^{\mu \nu}) \nabla_\mu \tilP_s u, \bsbeta_s^{(\alpha), \lambda} \nabla_\nu \tilP_s u \rangle_{t,x} \\
   &\quad - 2 \langle \ringa^{\mu \nu} (\partial_r \beta_s^{(\alpha)}) \nabla_\mu \tilP_s u, \nabla_\nu \tilP_s u \rangle_{t,x} \\  
   &\quad - 2 \Re \angles{\ringa^{\theta_a r}(\coth(r) \beta_s^{(\alpha)}-\partial_r\beta_s^{(\alpha)})\partial_{\theta_a}\tilP_su}{\partial_r\tilP_su}_{t,x} \\
   &\quad - \Re \langle \ringa^{\mu \nu} \nabla_\mu \tilP_s u, (\nabla_\nu \nabla_\lambda \bsbeta_s^{(\alpha), \lambda}) \tilP_s u \rangle_{t,x} \\
   &=: I + II + III + IV.
  \end{aligned}
 \end{equation}
 In order to finish the proof of Proposition~\ref{p:DeltaHpQhigh} we now show  
 \begin{equation} \label{equ:left_todo_prop_DeltaHpQhigh}
  \begin{aligned}
   &\int_0^2 \sup_{\{ \alpha_\ell \} \in \calA} |I + \ldots + IV| \, \ds \\ 
   &\quad \lesssim \biggl(  \bigl\| |\ringa^{r\cdot}|_{\slashed{\bsh}} \coth(r) \bigr\|_{L^\infty(\bbR \times A_{\leq 0})}  + \sum_{k=0}^1 \, \Bigl( \| \nabla^{(k)} \ringa \|_{L^\infty_{t,x}} + \sum_{\ell \geq 0} \| r \nabla^{(k)} \ringa \|_{L^\infty(\bbR \times A_\ell)} \Bigr) \biggr) \|u\|_{LE}^2,
  \end{aligned}
 \end{equation}
 where we recall the notation
 \begin{equation}
   |\ringa^{r\cdot}|_{\slashed{\bsh}}^2 := \bsh_{\theta_a\theta_b} \ringa^{r\theta_a} \ringa^{r\theta_b}. 
 \end{equation}
Note that the term involving $|\ringa^{r\cdot}|_{\slashed{\bsh}} \coth(r)$ is controlled by the vanishing condition \eqref{eq:vanishing_assumption-prin} and Taylor expansion at $r = 0$.

 We start off with the estimate for the term $I$. Using the uniform pointwise bound $\beta_s^{(\alpha)}(r) \lesssim s^{\frac{1}{2}}$ and Lemma~\ref{lem:PsLEs1}, it is straightforward to conclude that
 \begin{equation}
  \begin{aligned}
   \sup_{\{ \alpha_\ell \} \in \calA} \, |I| &\lesssim \Bigl( \| \nabla \ringa \|_{L^\infty_{t,x}} + \sum_{\ell \geq 0} \| r \nabla \ringa \|_{L^\infty(\bbR \times A_\ell)} \Bigr) s^{-\frac{1}{2}} \| s^{\frac{1}{2}} \nabla \tilP_s u \|_{LE_s}^2 \\
   &\lesssim \Bigl( \| \nabla \ringa \|_{L^\infty_{t,x}} + \sum_{\ell \geq 0} \| r \nabla \ringa \|_{L^\infty(\bbR \times A_\ell)} \Bigr) s^{-\frac{1}{2}} \| \tilP_{\frac{s}{4}} u \|_{LE_{\frac{s}{4}}}^2. 
  \end{aligned}
 \end{equation}
 Integrating over this inequality in $\ds$ from $0$ to $2$ establishes~\eqref{equ:left_todo_prop_DeltaHpQhigh} for the contribution of the term $I$. For the term $II$ we have by Lemma~\ref{lem:betaLErelation} and Lemma~\ref{lem:PsLEs1} that 
 \begin{equation}
  \begin{aligned}
   \sup_{\{ \alpha_\ell \} \in \calA} \, |II| &\lesssim \|\ringa\|_{L^\infty_{t,x}} \sup_{\{ \alpha_\ell \} \in \calA} \, s^{-1} \langle (\partial_r \beta_s^{(\alpha)}) |s^{\frac{1}{2}} \nabla \tilP_s u|, | s^{\frac{1}{2}} \nabla \tilP_s u| \rangle_{t,x} \\
   &\simeq \|\ringa\|_{L^\infty_{t,x}}  s^{-\frac{1}{2}} \bigl\| s^{\frac{1}{2}} \nabla \tilP_s u \bigr\|_{LE_s}^2 \\
   &\lesssim \|\ringa\|_{L^\infty_{t,x}}  s^{-\frac{1}{2}} \| \tilP_{\frac{s}{4}} u \bigr\|_{LE_{\frac{s}{4}}}^2. 
  \end{aligned}
 \end{equation}
 Then integrating in $\ds$ also furnishes~\eqref{equ:left_todo_prop_DeltaHpQhigh} for the contribution of the term~$II$. 
 Next, we turn to the term $III$ and first decompose it into
 \begin{align*}
  III &= - 2 \Re \angles{\ringa^{\theta_a r}(\coth(r) \beta_s^{(\alpha)}-\partial_r\beta_s^{(\alpha)})\partial_{\theta_a}\tilP_su}{\partial_r\tilP_su}_{t,x} \\
  &= - 2 \Re \angles{\ringa^{\theta_a r} \coth(r) \beta_s^{(\alpha)} \partial_{\theta_a}\tilP_su}{\partial_r\tilP_su}_{t,x} \\
  &\quad \, + 2 \Re \angles{\ringa^{\theta_a r} ( \partial_r\beta_s^{(\alpha)} ) \partial_{\theta_a}\tilP_su}{\partial_r\tilP_su}_{t,x} \\
  &=: III_1 + III_2.
 \end{align*}
 Here the contribution of the term $III_2$ to~\eqref{equ:left_todo_prop_DeltaHpQhigh} can be estimated in exactly the same manner as that of the term $II$. Moreover, using the uniform pointwise bounds $\beta_s^{(\alpha)}(r) \lesssim s^{\frac{1}{2}}$ for any $r > 0$ and $\coth(r) \lesssim 1$ for $r \gtrsim 1$ as well as Lemma~\ref{lem:PsLEs1}, one readily verifies that
 \begin{align*}
  \sup_{\{ \alpha_\ell \} \in \calA} \, |III_1| &\lesssim \Bigl( \bigl\| |\ringa^{r\cdot}|_{\slashed{\bsh}} \coth(r) \bigr\|_{L^\infty(\bbR \times A_{\leq 0})} + \sum_{\ell \geq 0} \| r \ringa \|_{L^\infty(\bbR \times A_\ell)} \Bigr) s^{-\frac{1}{2}} \| \tilP_{\frac{s}{4}} u \|_{LE_{\frac{s}{4}}}^2.
 \end{align*}
 Then integrating in $\ds$ also yields~\eqref{equ:left_todo_prop_DeltaHpQhigh} for the contribution of the term~$III_1$. 
 
 Finally, in order to estimate the contribution of the term $IV$, we first observe that due to the radial symmetry of the vector field $\bsbeta_s^{(\alpha)}$, the only non-zero component of $\nabla_\nu \nabla_\lambda \bsbeta_s^{(\alpha), \lambda}$ is
 \begin{align*}
  \nabla_r \nabla_\lambda \bsbeta_s^{(\alpha), \lambda} = \partial_r^2 \beta_s^{(\alpha)} + 2 \rho \bigl( \coth(r) \partial_r \beta_s^{(\alpha)} - \sinh^{-2}(r) \beta_s^{(\alpha)} \bigr).
 \end{align*}
 Correspondingly, we write 
 \begin{align*}
  IV &= - \Re \langle \ringa^{\mu r} \nabla_\mu \tilP_s u, (\partial_r^2 \beta_s^{(\alpha)}) \tilP_s u \rangle_{t,x} \\
  &\qquad - 2 \rho \Re \langle \ringa^{\mu r}\nabla_\mu \tilP_s u, ( \coth(r) \partial_r \beta_s^{(\alpha)} - \sinh^{-2}(r) \beta_s^{(\alpha)} ) \tilP_s u \rangle_{t,x} \\
  &=:IV_1 + IV_2.
 \end{align*}
 Using the following uniform pointwise bound
 \begin{equation}
  | (\partial_r^2 \beta_s^{(\alpha)})(r) | \lesssim \delta^2 s^{-\frac{1}{2}} \frac{\alpha(\delta s^{-\frac{1}{2}} r)}{\langle \delta s^{-\frac{1}{2}} r \rangle^2} \lesssim s^{-\frac{1}{2}} (\partial_r \beta_s^{(\alpha)})(r),
 \end{equation}
 we obtain for the term $IV_1$ by Lemma~\ref{lem:betaLErelation} and Lemma~\ref{lem:PsLEs1} that 
 \begin{align*}
  \sup_{\{\alpha_\ell\} \in \calA} \, |IV_1| &\lesssim \| \ringa \|_{L^\infty_{t,x}} \sup_{\{ \alpha_\ell \} \in \calA} \, \langle |\nabla \tilP_s u|, s^{-\frac{1}{2}} (\partial_r \beta_s^{(\alpha)}) |\tilP_s u| \rangle_{t,x} \\
  &\lesssim \| \ringa \|_{L^\infty_{t,x}} \sup_{\{ \alpha_\ell \} \in \calA} \, s^{-1} \Bigl( \bigl\| (\partial_r \beta_s^{(\alpha)})^{\frac{1}{2}} s^{\frac{1}{2}} \nabla \tilP_s u \bigr\|_{L^2_{t,x}}^2 + \bigl\| (\partial_r \beta_s^{(\alpha)})^{\frac{1}{2}} \tilP_s u \bigr\|_{L^2_{t,x}}^2 \Bigr) \\
  &\lesssim \| \ringa \|_{L^\infty_{t,x}} s^{-\frac{1}{2}} \bigl( \| s^{\frac{1}{2}} \nabla \tilP_s u \|_{LE_s}^2 + \| \tilP_s u \|_{LE_s}^2 \bigr) \\
  &\lesssim \| \ringa \|_{L^\infty_{t,x}} s^{-\frac{1}{2}} \| \tilP_{\frac{s}{4}} u \|_{LE_{\frac{s}{4}}}^2,
 \end{align*}
 which upon integrating in $\ds$ proves~\eqref{equ:left_todo_prop_DeltaHpQhigh} for the contribution of $IV_1$. Moreover, by invoking the uniform pointwise bounds
 \begin{equation}
  \bigl| \coth(r) (\partial_r \beta_s^{(\alpha)})(r) - \sinh^{-2}(r) \beta_s^{(\alpha)}(r) \bigr| \lesssim \begin{cases} s^{-\frac{1}{2}} & \text{for } r \leq s^{\frac{1}{2}}, \\ s^{\frac{1}{2}} r^{-2} & \text{for } s^{\frac{1}{2}} \leq r \lesssim 1, \\ 1 & \text{for } r \gtrsim 1, \end{cases}
 \end{equation}
 it follows easily that
 \begin{align*}
  \sup_{\{\alpha_\ell\} \in \calA} \, |IV_2| &\lesssim \biggl( \| \ringa \|_{L^\infty_{t,x}} + \sum_{\ell \geq 0} \| r \ringa \|_{L^\infty(\bbR \times A_\ell)} \biggr) s^{-\frac{1}{2}} \bigl( \| s^{\frac{1}{2}} \nabla \tilP_s u \|_{LE_s}^2 + \| \tilP_s u \|_{LE_s}^2 \bigr) \\
  &\lesssim \biggl( \| \ringa \|_{L^\infty_{t,x}} + \sum_{\ell \geq 0} \| r \ringa \|_{L^\infty(\bbR \times A_\ell)} \biggr) s^{-\frac{1}{2}} \| \tilP_{\frac{s}{4}} u \|_{LE_{\frac{s}{4}}}^2,
 \end{align*}
 which after integrating in $\ds$ also yields~\eqref{equ:left_todo_prop_DeltaHpQhigh} for the contribution of $IV_2$. This finishes the proof of Proposition~\ref{p:DeltaHpQhigh}.
 \end{proof}

\subsection{An elliptic regularity estimate}\label{ss:elliptic_reg}
Our goal in this section is to prove the following elliptic regularity estimate which will be used in Section~\ref{s:error}.

\begin{lem}[Elliptic regularity in $\tilLE_0$] \label{l:ell-LE}
Suppose $z\in \bbC$ satisfies $|z|\leq M$ for some fixed $M$. Assume also that $H$ as in~\eqref{eq:H_def} is symmetric and stationary with $H_\prin = -\Delta$ and that the potentials $\bsb, V$ satisfy~\eqref{equ:LE-H_thm_additional_assumption}. Then for sufficiently large $R>0$, and with $\chi_{\geq R}$ denoting a smooth cutoff to the region $\{r\geq R\}\subseteq \bbH^d$,
\begin{align*}
\begin{split}
&\|\chi_{\geq R}u\|_{\tilLE_0^1}\lesssim_M \|u\|_{\tilLE_0}+\|(H-z)v\|_{\tilLE_0^\ast}.
\end{split}
\end{align*}
\end{lem}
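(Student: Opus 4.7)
The plan is to reduce to estimating $\|\chi_{\geq R}u\|_{\tilLE_{0,\ext}^1}$, which, by the observation following Definition~\ref{def:LEext}, is comparable to $\|\chi_{\geq R}u\|_{\tilLE_0^1}$ for $R$ sufficiently large. Since $\tilLE_{0,1}\simeq\tilLE_{0,\low}$ as weighted $L^2$ norms (both reducing to $\|\langle r\rangle^{-3/2-\nsigma}v\|_{L^2}$), the low-frequency integral appearing in $\|\chi_{\geq R}u\|_{\tilLE_{0,\ext}^1}^2$ coincides with the low-frequency part of $\|\chi_{\geq R}u\|_{\tilLE_0}^2$, and is therefore controlled by $\|u\|_{\tilLE_0}^2$ through the analog of Lemma~\ref{lem:boundedness_LE_spatial_cutoff}. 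It thus remains to bound the high-frequency integral $\int_0^{1/2}s^{-3/2}\|\tilP_s(\chi_{\geq R}u)\|_{\tilLE_{0,1}}^2\,ds$: the extra $s^{-1}$ relative to the $\tilLE_0$ weight $s^{-1/2}$ is precisely the gain in regularity that must be extracted from the equation.

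The key identity, exploiting that $\tilP_{\geq s}=e^{s(\Delta+\rho^2)}$ commutes with $\Delta$, is $\tilP_s w=\tilP_{\geq s}\bigl(-s(\Delta+\rho^2)w\bigr)$. Applied to $w=\chi_{\geq R}u$ and combined with the equation $-\Delta u=(H-z)u-H_\lot u+zu$, this produces the decomposition
\begin{equation*}
\tilP_s(\chi_{\geq R}u)=s\tilP_{\geq s}\bigl[\chi_{\geq R}(H-z)u\bigr]-s\tilP_{\geq s}\bigl[\chi_{\geq R}(H_\lot u-(z-\rho^2)u)\bigr]-s\tilP_{\geq s}\bigl[2\nabla\chi_{\geq R}\cdot\nabla u+(\Delta\chi_{\geq R})u\bigr].
\end{equation*}
The explicit factor $s$ on each term converts the singular weight $s^{-3/2}\,ds$ into the integrable $s^{1/2}\,ds$, thereby implementing the two-derivative gain of elliptic regularity.

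Each of the three groups of terms will be estimated separately. For the main term $s\tilP_{\geq s}\chi_{\geq R}(H-z)u$, I would decompose $(H-z)u$ into its own heat-flow frequency components $\tilP_{s'}(H-z)u$ and $\tilP_{\geq s_0}(H-z)u$, apply Schur's test in $(s,s')$, and use the mismatch estimates of Proposition~\ref{p:lpregcore} (see Remark~\ref{r:lpregcutoff}) to handle the non-commutation of $\tilP_{\geq s}$ with $\chi_{\geq R}$; the output is an upper bound by $\|(H-z)u\|_{\tilLE_0^\ast}^2$. The $H_\lot u-(z-\rho^2)u$ contribution is dominated, using Propositions~\ref{p:Hlotbound_for_V}--\ref{p:Hlotbound}, the decay hypothesis~\eqref{equ:LE-H_thm_additional_assumption}, and $|z|\leq M$, by a constant depending on $M$ times $\|u\|_{\tilLE_0}^2$. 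Finally the two commutator terms are spatially supported in the compact annulus $\{R/2\leq r\leq R\}$; for the first order piece I would integrate by parts, $\nabla\chi_{\geq R}\cdot\nabla u=\nabla\cdot(u\nabla\chi_{\geq R})-(\Delta\chi_{\geq R})u$, and apply Corollary~\ref{c:pregdiv} to absorb the extra derivative into $\tilP_{\geq s}$ at the cost of an $s^{-1/2}$ loss, which still yields an integrable power of $s$; the remaining $L^2$ norm of $u$ over the annulus is controlled by $\|u\|_{\tilLE_0}$ through Lemma~\ref{l:wL2LE}.

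The principal obstacle will be the bookkeeping for the main term $s\tilP_{\geq s}\chi_{\geq R}(H-z)u$: because $\chi_{\geq R}$ does not commute with $\tilP_{\geq s}$, one must split into regions according to whether $\tilP_{\geq s}$ localizes near or far from $\supp\chi_{\geq R}$, invoke the mismatch estimates with sufficiently large exponent $N$ to beat the frequency weights appearing in $\|(H-z)u\|_{\tilLE_0^\ast}$, and arrange the resulting Schur estimates so that the final output is exactly $\|(H-z)u\|_{\tilLE_0^\ast}^2$ rather than a weaker quantity. This is the same style of argument that underlies the proof of Proposition~\ref{p:Hlotbound}, and I expect the implicit constants to depend polynomially on $M$ through the contribution of the $(z-\rho^2)u$ term.
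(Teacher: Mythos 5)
Your overall strategy is sound in outline---reduce to $\tilLE_{0,\ext}^1$, treat the low-frequency piece by Lemma~\ref{lem:boundedness_LE_spatial_cutoff}, and gain two derivatives from the equation in the high-frequency integral---and this matches the paper's reduction. Where you diverge is the mechanism for the high-frequency piece, and here there is a gap that is worth flagging.

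The paper does not convert $\tilP_s$ into $s\tilP_{\geq s}$. Instead it keeps $\tilP_s$ and exploits a monotonicity identity: since $\partial_s \tilP_s v = \tfrac{1}{s}\tilP_s v + (\Delta+\rho^2)\tilP_s v$, one has
\begin{align*}
\frac{\ud}{\ud s}\Bigl(s^{-\frac{1}{2}}\bigl\|\langle r\rangle^{-\frac{3}{2}-\delta}\tilP_s(\chi_{\geq R}u)\bigr\|_{L^2}^2\Bigr)
= \tfrac{3}{2}s^{-\frac{3}{2}}\bigl\|\cdot\bigr\|^2 + 2s^{-\frac{1}{2}}\Re\bigl\langle\langle r\rangle^{-3-2\delta}(\Delta+\rho^2)\tilP_s\cdot,\tilP_s\cdot\bigr\rangle.
\end{align*}
Integrating in $s$ produces the target weight $s^{-3/2}$ as a boundary term, and one then substitutes $\Delta = -(H-z)+H_\lot-z$ \emph{inside the pairing}. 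Crucially, every pairing is then $\langle \tilP_s G, \tilP_s(\chi_{\geq R}u)\rangle$ at the \emph{same} heat time $s$: Cauchy--Schwarz with the split $s^{-1/2} = s^{1/4}\cdot s^{-3/4}$ places $G$ in $\tilLE_{0,s}^\ast$ and $\chi_{\geq R}u$ in $\tilLE_{0,s}$, and the $\tilLE_0^1$ piece of $u$ is absorbed with a small $\varepsilon$. No cross-frequency bookkeeping is needed.

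Your version, by contrast, converts $\tilP_s(\chi_{\geq R}u)$ into $s\tilP_{\geq s}[\chi_{\geq R}(H-z)u] + \cdots$ and then needs to show
\begin{align*}
\int_0^{1/2}s^{\frac{1}{2}}\bigl\|\tilP_{\geq s}[\chi_{\geq R}(H-z)u]\bigr\|_{\tilLE_{0,1}}^2\,\frac{\ud s}{s}\lesssim\|(H-z)u\|_{\tilLE_0^\ast}^2.
\end{align*}
This is the step you sketch but do not verify, and it is genuinely delicate. The crude route---bound $\tilP_{\geq s}$ by its boundedness on $\tilLE_{0,\low}$ and then try to pass from $\|(H-z)u\|_{\tilLE_{0,\low}^\ast}$ to $\|(H-z)u\|_{\tilLE_0^\ast}$---fails, because a frequency resolution of $(H-z)u$ paired with the $(s')^{1/4}$ weight in $\tilLE_{0,s'}^\ast$ leads to an $s$-integral with a non-integrable $(s')^{-1/2}$ singularity. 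To make your route work one must extract a genuine decay factor $\min\{1,(s'/s)^\alpha\}$ from the interaction $\tilP_{\geq s}\chi_{\geq R}\tilP_{s'}$ and verify that it survives the commutator $[\tilP_{\geq s},\chi_{\geq R}]$; only then does Schur's test close. That analysis is doable in the spirit of Proposition~\ref{p:Hlotbound}, but it is considerably heavier than the paper's two-line monotonicity argument and you have not carried it out. If you want to retain your decomposition, the missing piece is precisely that Schur-kernel verification; the simpler fix is to switch to the paper's $\frac{\ud}{\ud s}$ identity and dualize at each fixed $s$.
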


It turns out that in the proof it is more convenient to work with the modified space $\tilLE_{0,\ext}^1$ introduced in Section~\ref{s:prelim}, so we start with the following lemma.

\begin{lem}\label{lem:LE_LEext2}
If $R>1$ is sufficiently large, then for any function $v$
\begin{align*}
\begin{split}
&\|\chi_{\geq R}v\|_{\tilLE_0^1}\simeq\|\chi_{\geq R}v\|_{\tilLE_{0,\ext}^1}.
\end{split}
\end{align*}
\end{lem}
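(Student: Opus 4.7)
The estimate $\|\chi_{\geq R}v\|_{\tilLE_{0,\ext}^1} \lesssim \|\chi_{\geq R}v\|_{\tilLE_0^1}$ is contained in Lemma~\ref{lem:LE_LEext1}, so the task is to establish the reverse inequality. Setting $u := \chi_{\geq R}v$, which is spatially supported in $\{r \geq R\}$, observe that the low-frequency pieces $\int_{1/8}^4\|\tilP_{\geq s}u\|_{\tilLE_{0,\low}}^2\,\ds$ are identical in the two definitions, and the $\|r^{-3/2-\nsigma}\cdot\|_{L^2(A_{\geq 0})}$-contributions to $\|\tilP_s u\|_{\tilLE_{0,s}}^2$ and $\|\tilP_s u\|_{\tilLE_{0,1}}^2$ also coincide. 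Therefore, comparing the two norms reduces to controlling the integral
\begin{equation*}
\int_0^{1/2} s^{-\frac{3}{2}}\Big( s^{-\frac{1}{2}}\|\tilP_s u\|_{L^2(A_{\leq-k_s})}^2 + \sup_{-k_s\leq\ell<0}\|r^{-\frac{1}{2}}\tilP_s u\|_{L^2(A_\ell)}^2 \Big) \,\ds.
\end{equation*}

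The idea is that since $u$ is supported in $\{r\geq R\}$ and the small-$r$ annuli appearing above lie in $\{r<1\}$, these regions are well-separated and the localized parabolic regularity estimates (Proposition~\ref{p:lpregcore} combined with Remark~\ref{r:lpregcutoff}) provide arbitrarily large negative powers of $R$. Fixing a dyadic partition of unity $1=\sum_m\phi_m$ away from the origin and writing $u=\sum_{m\geq\log_2(R/4)}\phi_m u$, for $R\geq 2^{20}$, any $\ell\leq-1$, and any integer $N$, the mismatch estimate gives
\begin{equation*}
\|\phi_\ell\tilP_s u\|_{L^2} \lesssim_N \sum_{m\geq\log_2(R/4)}\bigl(s^{1/2}2^{-m}\bigr)^N \|\phi_m u\|_{L^2} \lesssim_N s^{N/2}R^{-N+\frac{3}{2}+\nsigma}\|r^{-\frac{3}{2}-\nsigma}u\|_{L^2(A_{\geq 0})},
\end{equation*}
where the last step uses the pointwise bound $\|\phi_m u\|_{L^2}\lesssim 2^{m(3/2+\nsigma)}\|r^{-3/2-\nsigma}\phi_m u\|_{L^2}$ and a geometric series (convergent once $N>3/2+\nsigma$). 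Together with $2^{-\ell/2}\lesssim s^{-1/4}$ on $-k_s\leq\ell\leq -1$, the parenthesized quantity inside the above integral is thus bounded by $s^{N-1/2}R^{-2N+3+2\nsigma}\|r^{-3/2-\nsigma}u\|_{L^2(A_{\geq 0})}^2$, and for $N$ sufficiently large the resulting $s$-integral converges and still carries an arbitrarily large negative power of $R$. This produces
\begin{equation*}
\|u\|_{\tilLE_0^1}^2 - \|u\|_{\tilLE_{0,\ext}^1}^2 \lesssim R^{-c}\|r^{-\frac{3}{2}-\nsigma}u\|_{L^2(A_{\geq 0})}^2
\end{equation*}
for some $c>0$.

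To close the argument, it remains to bound $\|r^{-3/2-\nsigma}u\|_{L^2(A_{\geq 0})}$ by $\|u\|_{\tilLE_{0,\ext}^1}$. For this I would resolve $u=\int_0^{1/2}\tilP_{s'}u\,\frac{\ud s'}{s'}+\tilP_{\geq 1/2}u$, apply Minkowski's inequality, and use Cauchy--Schwarz with the weight $(s')^{-3/4}$ to trade the $\ud s'/s'$ integral for the squared high-frequency piece $\int_0^{1/2}(s')^{-3/2}\|\tilP_{s'}u\|_{\tilLE_{0,1}}^2\,\ds'$ appearing in $\|u\|_{\tilLE_{0,\ext}^1}^2$; the low-frequency tail $\tilP_{\geq 1/2}u$ is handled by averaging $\|\tilP_{\geq s_0}u\|_{\tilLE_{0,\low}}$ over $s_0\in[1/8,4]$ via Lemma~\ref{lem:PsLEs1}. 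Combining yields $\|u\|_{\tilLE_0^1}^2 \lesssim (1+R^{-c})\|u\|_{\tilLE_{0,\ext}^1}^2$, from which the reverse inequality follows for any sufficiently large $R$.

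The main technical point is the careful balance between the smallness $R^{-N}$ produced by the mismatch estimate, the loss $R^{3/2+\nsigma}$ incurred when trading $\|\phi_m u\|_{L^2}$ for $\|r^{-3/2-\nsigma}\phi_m u\|_{L^2}$ in the dyadic sum, and the $s$-integrability constraint imposed by the singular weight $s^{-5/2}$ (in the $\ud s$ measure) appearing in the $\tilLE_0^1$-norm; once $N$ is chosen large enough to accommodate all three, the remainder is a routine application of the localized parabolic regularity theory of Section~\ref{ss:lpreg}.
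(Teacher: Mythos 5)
Your proposal is correct, and it reaches the conclusion by a mildly different route than the paper. Both proofs rest on the same mechanism: since $\chi_{\geq R}v$ is supported in $\{r\geq R\}$, the localized parabolic regularity estimates (Proposition~\ref{p:lpregcore}/Remark~\ref{r:lpregcutoff}) make the small-$r$ pieces of $\tilP_s(\chi_{\geq R}v)$ carry arbitrarily large negative powers of $R$, and the low-frequency and $A_{\geq 0}$ portions of the two norms are identical so only the $A_{\leq 0}$ high-frequency piece needs attention. Where you diverge from the paper is the organization: the paper first further decomposes $\chi_{\geq R}v=\tilP_{\geq s_0}(\chi_{\geq R}v)+\tilP_{\leq s_0}(\chi_{\geq R}v)$ and inserts a fattened cutoff $\chi_{\geq R/2}$ before applying the mismatch estimate, so that each piece lands directly in $\|\tilP_{\geq s_0}\chi_{\geq R}v\|_{\tilLE_{0,\low}}$ (then averaged in $s_0$) or in $\|\chi_{\geq R}v\|_{\tilLE_{0,\ext}^1}$ via Cauchy--Schwarz in $s'$. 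You instead apply the mismatch estimate to $\tilP_s(\chi_{\geq R}v)$ directly (which is legitimate, since the input is supported far from the output annuli), land on the intermediate quantity $\|r^{-3/2-\nsigma}\chi_{\geq R}v\|_{L^2(A_{\geq 0})}$, and then have to carry out a separate step bounding this weighted $L^2$ norm by $\|\chi_{\geq R}v\|_{\tilLE_{0,\ext}^1}$ via the resolution $u=\int_0^{1/2}\tilP_{s'}u\,\frac{\ud s'}{s'}+\tilP_{\geq 1/2}u$ and Cauchy--Schwarz. The two approaches are essentially equivalent in effort: yours avoids the nested frequency decomposition and the $\chi_{\geq R/2}$ insertion, at the cost of the extra embedding step (which is itself a Cauchy--Schwarz/averaging argument of the same flavor as what the paper does inline). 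One small cosmetic remark: the phrasing $\|u\|_{\tilLE_0^1}^2-\|u\|_{\tilLE_{0,\ext}^1}^2\lesssim R^{-c}\|r^{-3/2-\nsigma}u\|_{L^2(A_{\geq 0})}^2$ is not literally what your computation gives, since the remaining $A_{\leq 0}$ pieces are not subtracted as a difference of squares; what you actually prove, and what suffices, is $\|u\|_{\tilLE_0^1}^2\lesssim\|u\|_{\tilLE_{0,\ext}^1}^2+R^{-c}\|r^{-3/2-\nsigma}u\|_{L^2(A_{\geq 0})}^2$.
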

\begin{proof}
That $\|\chi_{\geq R}v\|_{\tilLE_0^1}\gtrsim\|\chi_{\geq R}v\|_{\tilLE_{0,\ext}^1}$ follows from Lemma~\ref{lem:LE_LEext1}. For the other direction, since the low-frequency components of $\tilLE_{0,\ext}^1$ and $\tilLE_{0}^1$ are the same, it suffices to prove
\begin{align}\label{eq:LEextgoal1}
\begin{split}
\int_0^{\frac{1}{2}}s^{-\frac{3}{2}}\|\tilP_s\chi_{\geq R}u\|_{\tilLE_{0,s}}^2\,\ds\lesssim \|\chi_{\geq R}u\|_{\tilLE_{0,\ext}^1}^2.
\end{split}
\end{align}
The left-hand side above is bounded by
\begin{align*}
\begin{split}
\int_0^{\frac{1}{2}}s^{-\frac{3}{2}-\frac{1}{2}}\|\tilP_s\chi_{\geq R}u\|_{L^2(A_{\leq-k_s})}^2 \, \ds &+\int_0^{\frac{1}{2}}s^{-\frac{3}{2}} \Bigl( \sup_{-k_s\leq\ell\leq0}2^{-\frac{\ell}{2}}\|\tilP_s\chi_{\geq R}u\|_{L^2(A_\ell)} \Big)^2\,\ds\\
&+\int_0^{\frac{1}{2}}s^{-\frac{3}{2}}\|r^{-\frac{3}{2}-\nsigma}\tilP_s\chi_{\geq R}u\|_{L^2(A_\geq0)}^2\,\ds\\
&=:I+II+III.
\end{split}
\end{align*}
By definition, $III\lesssim \|u\|_{\tilLE_{0,\ext}^1}^2$, so it remains to show that $I,II\lesssim \|u\|_{\tilLE_{0,\ext}^1}^2$. For $II$ note that for any $\ell$ with $-k_s\leq \ell\leq0$, and any $s_0\in[\frac{5}{16},\frac{7}{16}]$
\begin{align*}
\begin{split}
2^{-\frac{\ell}{2}}\|\tilP_s\chi_{\geq R}u\|_{L^2(A_\ell)}&=2^{-\frac{\ell}{2}}\|\tilP_s\chi_{\geq\frac{R}{2}}\chi_{\geq R}u\|_{L^2(A_\ell)}\\
&\leq 2^{-\frac{\ell}{2}}\|\tilP_s\chi_{\geq\frac{R}{2}}\tilP_{\geq s_0}\chi_{\geq R}u\|_{L^2(A_\ell)}\\
&\quad+2^{-\frac{\ell}{2}}\|\tilP_s\chi_{\geq\frac{R}{2}}\tilP_{\leq s_0}\chi_{\geq R}u\|_{L^2(A_\ell)}=:II_1+II_2.
\end{split}
\end{align*}
Therefore,
\begin{align*}
\begin{split}
\Big(\sup_{-k_s\leq\ell\leq0}2^{-\frac{\ell}{2}}\|\tilP_s\chi_{\geq R}u\|_{L^2(A_\ell)}\Big)^2\lesssim \int_{\frac{5}{16}}^{\frac{7}{16}}\Big(\big(\sup_{-k_s\leq\ell\leq0}II_1\big)^2+\big(\sup_{-k_s\leq\ell\leq0}II_2\big)^2\Big)\,\dsnot.
\end{split}
\end{align*}
Now by localized parabolic regularity, if $R$ is sufficiently large, and with $\chi_\ell$ a cutoff adapted to $A_\ell$,
\begin{align*}
\begin{split}
II_1&\lesssim 2^{-\frac{\ell}{2}}\|\chi_\ell\tilP_s\chi_{\geq \frac{R}{2}}\tilP_{\geq s_0}\chi_{\geq R}u\|_{L^2}\lesssim \sum_{2^m\geq \frac{R}{2}}2^{-\frac{\ell}{2}}\|\chi_\ell\tilP_s\chi_m\tilP_{\geq s_0}\chi_{\geq R}u\|_{L^2}\\
&\lesssim_N \sum_{2^m\geq \frac{R}{2}}2^{-\frac{\ell}{2}}2^{-\frac{m}{2}(N-3-2\nsigma)}s^{N}\|r^{-\frac{3}{2}-\nsigma}\chi_m\tilP_{\geq s_0} \chi_{\geq R} u\|_{L^2}\\
&\lesssim_N \|r^{-\frac{3}{2}-\nsigma}\tilP_{\geq s_0}u\|_{L^2(A_{\geq 0})} s^{N-\frac{1}{4}}\sup_{2^m\geq\frac{R}{2}}2^{-(\frac{N-3-2\nsigma}{2})m}\\
&\lesssim_N s^{N-\frac{1}{4}}R^{-(\frac{N-3-2\nsigma}{2})}\|\tilP_{\geq s_0}\chi_{\geq R}u\|_{\tilLE_{0,\low}}.
\end{split}
\end{align*}
It follows that
\begin{align*}
\begin{split}
\int_0^{\frac{1}{2}}\int_{\frac{5}{16}}^{\frac{7}{16}}s^{-\frac{3}{2}} \Big( \sup_{-k_s\leq\ell\leq0}II_1 \Big)^2 \,\dsnot\,\ds&\lesssim \int_{\frac{5}{16}}^{\frac{7}{16}}\int_0^{\frac{1}{2}}s^{2N-2}\|\tilP_{\geq s_0}\chi_{\geq R}u\|_{\tilLE_{0,\low}}^2\,\ds\,\dsnot\\
&\lesssim \int_{\frac{5}{16}}^{\frac{7}{16}}\|\tilP_{\geq s_0}\chi_{\geq R}u\|_{\tilLE_{0,\low}}^2\,\dsnot\\
&\lesssim \|\chi_{\geq R}u\|_{\tilLE_{0,\ext}^1}^2.
\end{split}
\end{align*}
Similarly, for $R$ sufficiently large
\begin{align*}
\begin{split}
II_2&\lesssim s^{-\frac{1}{4}}\int_0^{s_0}\|\chi_\ell\tilP_s\chi_{\geq\frac{R}{2}}\tilP_{s'}\chi_{\geq R}u\|_{L^2}\,\dsp\\
&\lesssim_N s^{N-\frac{1}{4}}\int_0^{s_0}\sum_{2^m\geq\frac{R}{2}}2^{-\frac{m}{2}(N-3-2\nsigma)}\|r^{-\frac{3}{2}-\nsigma}\chi_m\tilP_{s'}\chi_{\geq R}u\|_{L^2}\,\dsp\\
&\lesssim_N s^{N-\frac{1}{4}}R^{-(\frac{N-3-2\nsigma}{2})}\int_0^{s_0}\|\tilP_{s'}\chi_{\geq R}u\|_{\tilLE_{0,1}}\,\dsp\\
&\lesssim_N s^{N-\frac{1}{4}}R^{-(\frac{N-3-2\nsigma}{2})}s_0^{\frac{3}{4}}\|\chi_{\geq R}u\|_{\tilLE_{0,\ext}^1},
\end{split}
\end{align*}
so
\begin{align*}
\begin{split}
\int_0^{\frac{1}{2}}\int_{\frac{5}{16}}^{\frac{7}{16}}s^{-\frac{3}{2}}\big(\sup_{-k_s\leq\ell\leq0}II_2\big)^2\,\dsnot\,\ds&\lesssim\int_{\frac{5}{16}}^{\frac{7}{16}}\int_0^{\frac{1}{2}}s^{2N-2}\|\chi_{\geq R}u\|_{\tilLE_{0,\ext}^1}^2\,\ds\,\dsnot\\
&\lesssim \|\chi_{\geq R}u\|_{\tilLE_{0,\ext}^1}^2.
\end{split}
\end{align*}
This shows that $II\lesssim \|\chi_{\geq R}u\|_{\tilLE_{0,\ext}^1}^2$. The proof for $I$ is similar. In fact note that in estimating $II$ we simply bounded $2^{-\frac{\ell}{2}}$ by $s^{-\frac{1}{4}}$ so the same exact proof as above shows that $I\lesssim \|\chi_{\geq R}u\|_{\tilLE_{0,\ext}^1}^2$, completing the proof of \eqref{eq:LEextgoal1}.
\end{proof}

We can now prove Lemma~\ref{l:ell-LE}.

\begin{proof}[Proof of Lemma~\ref{l:ell-LE}]
Since $\| \chi_{\geq R} u \|_{\tilLE_0^1} \simeq \|\chi_{\geq R} u \|_{\tilLE_{0,\ext}^1}$ by Lemma~\ref{lem:LE_LEext2}, it suffices to prove 
 \begin{equation}
  \| \chi_{\geq R} u\|_{\tilLE_{0,\ext}^1}^2 \lesssim_M \|u\|_{\tilLE_0}^2 + \|(H-z) u\|_{\tilLE_0^\ast}^2.
 \end{equation}
 Moreover, since the low-frequency components of $\tilLE_{0,\ext}^1$ and $\tilLE_0$ coincide and since $\chi_{\geq R}$ is bounded in $\tilLE_0$ according to Lemma~\ref{lem:boundedness_LE_spatial_cutoff}, it suffices to consider the high-frequency component of $\tilLE_{0,\ext}^1$ and to prove that 
 \begin{equation}
  \int_0^{\frac{1}{2}} s^{-\frac{3}{2}} \| \tilP_s (\chi_{\geq R} u) \|_{\tilLE_{0,1}}^2 \, \ds \lesssim \| u \|_{\tilLE_0}^2 + \| (H-z) u \|_{\tilLE_0^\ast}^2.
 \end{equation}
 We use that 
 \begin{align*}
  \int_0^{\frac{1}{2}} s^{-\frac{3}{2}} \| \tilP_s ( \chi_{\geq R} u) \|_{\tilLE_{0,1}}^2 \, \ds \simeq \int_0^{\frac{1}{2}} s^{-\frac{3}{2}} \bigl\| \langle r \rangle^{-\frac{3}{2}-\delta} \tilP_s (\chi_{\geq R} u ) \bigr\|_{L^2}^2 \, \ds.
 \end{align*}
 Then we have 
 \begin{align*}
  &\frac{\ud}{\ud s} \Bigl( s^{-\frac{1}{2}} \bigl\| \langle r \rangle^{-\frac{3}{2}-\delta} \tilP_s ( \chi_{\geq R} u ) \bigr\|_{L^2}^2 \Bigr) \\
  &= \frac{3}{2} s^{-\frac{3}{2}} \bigl\| \langle r \rangle^{-\frac{3}{2}-\delta} \tilP_s ( \chi_{\geq R} u ) \bigr\|_{L^2}^2 \\
  &\quad + s^{-\frac{1}{2}} 2 \Re \bigl\langle \langle r \rangle^{-\frac{3}{2}-\delta} (\Delta + \rho^2) \tilP_s (\chi_{\geq R} u), \langle r \rangle^{-\frac{3}{2}-\delta} \tilP_s (\chi_{\geq R} u) \bigr\rangle,
 \end{align*}
 and thus
 \begin{align*}
  &\frac{3}{2} s^{-\frac{3}{2}} \bigl\| \langle r \rangle^{-\frac{3}{2}-\delta} \tilP_s ( \chi_{\geq R} u ) \bigr\|_{L^2}^2 + 2 \rho^2 s^{-\frac{1}{2}} \bigl\| \langle r \rangle^{-\frac{3}{2}-\delta} \tilP_s (\chi_{\geq R} u) \bigr\|_{L^2}^2 \\
  &= \frac{\ud}{\ud s} \Bigl( s^{-\frac{1}{2}} \bigl\| \langle r \rangle^{-\frac{3}{2}-\delta} \tilP_s (\chi_{\geq R} u) \bigr\|_{L^2}^2 \Bigr)\\
  &\quad - 2 s^{-\frac{1}{2}} \Re \bigl\langle \langle r \rangle^{-\frac{3}{2}-\delta} \Delta \tilP_s ( \chi_{\geq R} u ), \langle r \rangle^{-\frac{3}{2}-\delta} \tilP_s (\chi_{\geq R} u) \bigr\rangle.
 \end{align*}
 Integrating in $\ds$ (and dropping the term with $\rho^2$ which has a favorable sign), we obtain for any $s_0 \in [\frac{1}{2}, 1]$ that
 \begin{align*}
  &\frac{3}{2} \int_0^{s_0} s^{-\frac{3}{2}} \bigl\| \langle r \rangle^{-\frac{3}{2}-\delta} \tilP_s ( \chi_{\geq R} u ) \bigr\|_{L^2}^2 \, \ds \\
  &\leq s_0^{-\frac{1}{2}} \bigl\| \langle r \rangle^{-\frac{3}{2}-\delta} \tilP_{s_0} (\chi_{\geq R} u) \bigr\|_{L^2}^2 \\
  &\quad + 2\int_0^{s_0} s^{-\frac{1}{2}} \bigl| \bigl\langle \langle r \rangle^{-\frac{3}{2}-\delta} \Delta \tilP_s (\chi_{\geq R} u), \langle r \rangle^{-\frac{3}{2}-\delta} \tilP_s (\chi_{\geq R} u) \bigr\rangle \bigr| \, \ds .
 \end{align*}
 Averaging over $s_0 \in [\frac{1}{2}, 1]$ in $\frac{\ud s_0}{s_0}$ gives
 \begin{align*}
  &\int_0^{\frac{1}{2}} s^{-\frac{3}{2}}  \bigl\| \langle r \rangle^{-\frac{3}{2}-\delta} \tilP_s ( \chi_{\geq R} u ) \bigr\|_{L^2}^2 \, \ds \\
  &\lesssim \int_{\frac{1}{2}}^1 s_0^{-\frac{1}{2}} \bigl\| \langle r \rangle^{-\frac{3}{2}-\delta} \tilP_{s_0} ( \chi_{\geq R} u ) \bigr\|_{L^2}^2 \, \frac{\ud s_0}{s_0} \\
  &\quad + \int_0^1 s^{-\frac{1}{2}} \bigl| \bigl\langle \langle r \rangle^{-\frac{3}{2}-\delta} \Delta \tilP_s (\chi_{\geq R} u), \langle r \rangle^{-\frac{3}{2}-\delta} \tilP_s (\chi_{\geq R} u) \bigr\rangle \bigr| \, \ds \\
  &=: I + II.
 \end{align*}
 For term $I$ we use the boundedness of $\frac{s_0}{2}(\Delta+\rho^2) e^{\frac{s_0}{2}(\Delta+\rho^2)}$ in $\tilLE_{0,\low}$ to conclude
 \begin{align*}
  I &\lesssim \int_{\frac{1}{4}}^{\frac{1}{2}} \bigl\| \langle r \rangle^{-\frac{3}{2}-\delta} \tilP_{\geq s_0} ( \chi_{\geq R} u ) \bigr\|_{L^2}^2 \, \frac{\ud s_0}{s_0}\\
  & \lesssim \int_{\frac{1}{8}}^4 \| \tilP_{\geq s_0} (\chi_{\geq R} u) \bigr\|_{\tilLE_{0,\low}}^2 \, \frac{\ud s_0}{s_0} \lesssim \|u\|_{\tilLE_0}^2.
 \end{align*}
 For term $II$ we insert
 \[
  \Delta = -(H-z) + H_\lot - z,
 \]
 and correspondingly obtain
 \begin{align*}
  II &\lesssim \int_0^1 s^{-\frac{1}{2}} \bigl| \bigl\langle \langle r \rangle^{-\frac{3}{2}-\delta} \tilP_s \bigl( (H-z) \chi_{\geq R} u \bigr), \langle r \rangle^{-\frac{3}{2}-\delta} \tilP_s \bigl( \chi_{\geq R} u \bigr) \bigr\rangle \bigr| \, \ds \\
  &\quad + \int_0^1 s^{-\frac{1}{2}} \bigl| \bigl\langle \langle r \rangle^{-\frac{3}{2}-\delta} \tilP_s \bigl( H_\lot \chi_{\geq R} u \bigr), \langle r \rangle^{-\frac{3}{2}-\delta} \tilP_s \bigl( \chi_{\geq R} u \bigr) \bigr\rangle \bigr| \, \ds \\
  &\quad + |z| \int_0^1 s^{-\frac{1}{2}} \bigl| \bigl\langle \langle r \rangle^{-\frac{3}{2}-\delta} \tilP_s \bigl( \chi_{\geq R} u \bigr), \langle r \rangle^{-\frac{3}{2}-\delta} \tilP_s \bigl( \chi_{\geq R} u \bigr) \bigr\rangle \bigr| \, \ds \\
  &=: II_1 + II_2 + II_3.
 \end{align*}
 In order to estimate term $II_1$, we take the absolute values inside, drop the weights and then bound the inner product by pairing $\tilLE_{0,s}^\ast$ and $\tilLE_{0,s}$ to find that
 \begin{align*}
  II_1 &\lesssim \int_0^1 s^{-\frac{1}{2}} \bigl\langle  |\tilP_s \bigl( (H-z) \chi_{\geq R} u \bigr) |, | \tilP_s \bigl( \chi_{\geq R} u \bigr) | \bigr\rangle \, \ds \\
  &\lesssim \int_0^1 s^{\frac{1}{4}} \bigl\| \tilP_s \bigl( (H-z) \chi_{\geq R} u \bigr) \bigr\|_{\tilLE_{0,s}^\ast} s^{-\frac{3}{4}} \bigl\| \tilP_s \bigl( \chi_{\geq R} u \bigr) \bigr\|_{\tilLE_{0,s}} \, \ds \\
  &\lesssim C_\varepsilon \int_0^1 s^{\frac{1}{2}} \bigl\| \tilP_s \bigl( (H-z) \chi_{\geq R} u \bigr) \bigr\|_{\tilLE_{0,s}^\ast}^2 \, \ds + \varepsilon \int_0^1 s^{-\frac{3}{2}} \bigl\| \tilP_s \bigl( \chi_{\geq R} u \bigr) \bigr\|_{\tilLE_{0,s}}^2 \, \ds \\
  &\lesssim C_\varepsilon \int_0^1 s^{\frac{1}{2}} \bigl\| \tilP_{\frac{s}{2}} \bigl( (H-z) \chi_{\geq R} u \bigr) \bigr\|_{\tilLE_{0,\frac{s}{2}}^\ast}^2 \, \ds + \varepsilon \int_0^1 s^{-\frac{3}{2}} \bigl\| \tilP_{\frac{s}{2}} \bigl( \chi_{\geq R} u \bigr) \bigr\|_{\tilLE_{0,\frac{s}{2}}}^2 \, \ds \\
  &\lesssim C_\varepsilon \int_0^{\frac{1}{2}} s^{\frac{1}{2}} \bigl\| \tilP_s \bigl( (H-z) \chi_{\geq R} u \bigr) \bigr\|_{\tilLE_{0,s}^\ast}^2 \, \ds + \varepsilon \int_0^{\frac{1}{2}} s^{-\frac{3}{2}} \bigl\| \tilP_s \bigl( \chi_{\geq R} u \bigr) \bigr\|_{\tilLE_{0,s}}^2 \, \ds \\
  &\lesssim C_\varepsilon \| (H-z) (\chi_{\geq R} u) \|_{\tilLE_0^\ast}^2  + \varepsilon \| \chi_{\geq R} u \|_{\tilLE_0^1}^2.
 \end{align*}
 Then to reabsorb the second term on the right-hand side we again use that 
 \[
  \| \chi_{\geq R} u \|_{\tilLE_0^1}^2 \simeq \| \chi_{\geq R} u \|_{\tilLE_{0, \ext}^1}^2.
 \]
 Finally, we further estimate the first term on the right-hand side by
 \begin{align*}
  \| (H-z) (\chi_{\geq R} u) \|_{\tilLE_0^\ast} &\lesssim \| \chi_{\geq R} (H-z) u \|_{\tilLE_0^\ast} + \| [H, \chi_{\geq R}] u \|_{\tilLE_0^\ast} \\
  &\lesssim \| (H-z) u \|_{\tilLE_0^\ast} + C(\bsb, R) \|u\|_{\tilLE_0}.
 \end{align*}
 We estimate the term $II_2$ analogously to the term $II_1$ and obtain that
 \begin{align*}
  II_2 &\lesssim C_\varepsilon \| H_\lot (\chi_{\geq R} u) \|_{\tilLE_0^\ast}^2 + \varepsilon \| \chi_{\geq R} u \|_{\tilLE_0^1}^2 \lesssim \|u\|_{\tilLE_0}^2+\varepsilon \| \chi_{\geq R} u \|_{\tilLE_0^1}^2.
 \end{align*}
  Finally, we bound $II_3$ as 
 \begin{align*}
  II_3 &= |z| \int_0^1 s^{-\frac{1}{2}} \bigl\| \langle r \rangle^{-\frac{3}{2}-\delta} \tilP_s \bigl( \chi_{\geq R} u \bigr) \bigr\|_{L^2}^2 \, \ds \\
  &\lesssim_M \int_0^1 s^{-\frac{1}{2}} \bigl\| \tilP_s \bigl( \chi_{\geq R} u \bigr) \bigr\|_{\tilLE_{0,1}}^2 \, \ds \\
  &\lesssim \int_0^1 s^{-\frac{1}{2}} \bigl\| \tilP_s \bigl( \chi_{\geq R} u \bigr) \bigr\|_{\tilLE_{0,s}}^2 \, \ds \\
  &\lesssim \int_0^1 s^{-\frac{1}{2}} \bigl\| \tilP_{\frac{s}{2}} \bigl( \chi_{\geq R} u \bigr) \bigr\|_{\tilLE_{0,\frac{s}{2}}}^2 \, \ds \\
  &\lesssim \int_0^{\frac{1}{2}} s^{-\frac{1}{2}} \bigl\| \tilP_s \bigl( \chi_{\geq R} u \bigr) \bigr\|_{\tilLE_{0,s}}^2 \, \ds \\
  &\lesssim \|\chi_{\geq R} u\|_{\tilLE_0}^2 \lesssim \|u\|_{\tilLE_0}^2.
 \end{align*}
\end{proof}

In Section~\ref{s:error} we will need to estimate $b\partial_ru$ in $\tilLE_0$ for certain bounded radial functions~$b$. The following lemma allows us to use the elliptic estimate above for this purpose. Here it is crucial that we work with the spaces $\tilLE_0$ and $\tilLE_0^1$ instead of $LE_0$ and $LE_0^1$.

\begin{lem}\label{l:drLE}
Suppose $b$ is a bounded radial function in $\bbH^d$ supported in $\{r\geq R\}$ with bounded first and second order derivatives, where $R>1$ is sufficiently large. Then 
\begin{align*}
\begin{split}
\|b\partial_r u\|_{\tilLE_0}\lesssim \|u\|_{\tilLE_0^1}.
\end{split}
\end{align*}
\end{lem}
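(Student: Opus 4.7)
The plan is a duality argument. By Remark~\ref{rem:completedualitytilLE}, $\tilLE_0$ and $\tilLE_0^\ast$ are in duality, so it suffices to show
\[
 |\langle b\partial_r u, F\rangle| \lesssim \|u\|_{\tilLE_0^1}\|F\|_{\tilLE_0^\ast}
\]
for $F$ in a dense subspace of smooth test functions. Since $b$ is supported in $\{r \geq R\}$ (so there is no contribution at $r = 0$) and $u$ has sufficient decay at infinity by Lemma~\ref{l:wL2LE}, integration by parts via $\partial_r^\ast = -\partial_r - (d-1)\coth r$ yields
\[
 \langle b\partial_r u, F\rangle = -\langle u, (\partial_r \overline{b})F\rangle - \langle u, \overline{b}\,\partial_r F\rangle - (d-1)\langle u, \coth(r)\,\overline{b}F\rangle.
\]

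For the first and third terms on the right, the coefficients $\partial_r\overline{b}$ and $(d-1)\coth(r)\overline{b}$ are bounded functions on $\bbH^d$ (using $\coth r \leq \coth R \lesssim 1$ on $\supp b$), hence the $\tilLE_0^\ast$ analogue of Lemma~\ref{lem:multLEs1} gives
\[
 |\langle u, (\partial_r\overline{b})F\rangle| + |\langle u, \coth(r)\overline{b}F\rangle| \lesssim \|u\|_{\tilLE_0}\|F\|_{\tilLE_0^\ast} \lesssim \|u\|_{\tilLE_0^1}\|F\|_{\tilLE_0^\ast},
\]
where we used the trivial embedding $\tilLE_0^1 \hookrightarrow \tilLE_0$ (valid because $s^{-1/2} \leq s^{-3/2}$ for $s \in (0,1/2]$). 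The main term is paired against the stronger norm
\[
 |\langle u, \overline{b}\,\partial_r F\rangle| \leq \|u\|_{\tilLE_0^1}\|\overline{b}\,\partial_r F\|_{(\tilLE_0^1)^\ast},
\]
which reduces the proof to the derivative gain
\[
 \|\overline{b}\,\partial_r F\|_{(\tilLE_0^1)^\ast} \lesssim \|F\|_{\tilLE_0^\ast}.
\]
Morally this says that passing from $\tilLE_0^\ast$ to $(\tilLE_0^1)^\ast$ absorbs exactly one derivative: the low-frequency components coincide while the high-frequency weights differ by a factor of $s$, matching one derivative in the heat-flow Littlewood--Paley calculus.

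To establish the derivative gain, rewrite $\overline{b}\,\partial_r F = \partial_r(\overline{b}F) - (\partial_r\overline{b})F$ and apply the frequency projections $\tilP_s$ and $\tilP_{\geq s}$. The main pieces $\partial_r\tilP_s(\overline{b}F)$ and $\partial_r\tilP_{\geq s}(\overline{b}F)$ are controlled in the appropriate dual norms via the parabolic regularity of Lemma~\ref{lem:PsLEs1}, which yields
\[
 \|\partial_r\tilP_s v\|_{\tilLE_{0,s}^\ast} \lesssim s^{-\frac{1}{2}}\|\tilP_{s/2}v\|_{\tilLE_{0,s/2}^\ast},\qquad \|\partial_r\tilP_{\geq s}v\|_{\tilLE_{0,\low}^\ast} \lesssim s^{-\frac{1}{2}}\|\tilP_{\geq s/2}v\|_{\tilLE_{0,\low}^\ast},
\]
while bounded multiplication by $\overline{b}$ is absorbed via the $\tilLE_{0,s}^\ast$-version of Lemma~\ref{lem:multLEs1}. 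Substituting these into the high-frequency piece of $\|\cdot\|_{(\tilLE_0^1)^\ast}$, the weight $s^{3/2}$ combines with the $s^{-1}$ loss to produce $s^{1/2}$, which after the change of variables $s/2 \mapsto s$ integrates to $\|F\|_{\tilLE_0^\ast}^2$; for the low-frequency range $s \in [1/8,4]$ the $s^{-1/2}$ loss is uniformly harmless. The main obstacle is the careful bookkeeping of the commutators $[\tilP_s, \overline{b}]$ and those produced when commuting $\partial_r$ past $e^{s(\Delta+\rho^2)}$ (nonzero in polar coordinates because $[\partial_r, \Delta]$ involves $\coth r$ and curvature contributions). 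Since $\overline{b}, \partial_r\overline{b}, \partial_r^2\overline{b}$ are uniformly bounded on $\bbH^d$ and $\coth r$ is bounded on $\supp b$, each such commutator gains a full power of $s^{1/2}$ from the heat semigroup --- in the spirit of Lemma~\ref{lem:commutator_B_Ps} and the mismatch estimates of Proposition~\ref{p:lpregcore} --- which is comfortably subcritical to close the derivative gain above.
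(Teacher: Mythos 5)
Your reformulation by duality is a genuinely different route from the paper's, which proves the estimate directly by frequency-decomposing $u$ and exploiting the support of $b$. Be aware, though, that the reduction to $\|\overline{b}\,\partial_r F\|_{(\tilLE_0^1)^\ast}\lesssim\|F\|_{\tilLE_0^\ast}$ is the \emph{formal dual} of the statement you are trying to prove and is of the same difficulty: writing $\|\overline{b}\,\partial_r F\|_{(\tilLE_0^1)^\ast}=\sup_{\|w\|_{\tilLE_0^1}=1}|\langle F,\partial_r^\ast(bw)\rangle|$ and expanding $\partial_r^\ast(bw)$ brings back exactly the quantity $\|b\partial_r w\|_{\tilLE_0}$ you started with, up to bounded zeroth-order terms. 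So the duality step buys you no simplification; the ``derivative gain'' estimate still has to carry the full weight of the proof, and your sketch of it is where the real issues lie.

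There are two genuine gaps. First, your appeal to Lemma~\ref{lem:multLEs1} for ``bounded multiplication by $\overline{b}$'' does not apply: the constant in that lemma is $\|z\|_{L^\infty_{t,x}}+\sum_\ell\|rz\|_{L^\infty(A_\ell)}$, which is \emph{infinite} for a merely bounded $z$ such as $b$; the lemma's purpose is to trade polynomial decay of $z$ for a change of weight from $LE_{s'}$ to $LE_s^\ast$, not to handle non-decaying multipliers. What actually makes the argument close---and which is absent from your proposal---is the observation that, since $b$ is supported in $\{r\geq R\}$, the norms $\tilLE_{0,s}$ and $\tilLE_{0,s}^\ast$ applied to $b\partial_r(\cdot)$ are \emph{independent of $s$} (this is the content of Definition~\ref{def:LEext} and Lemma~\ref{lem:LE_LEext2}, and it appears explicitly in each line of the paper's estimate of terms $I$ and $III$: e.g.\ $\|b\partial_r\tilP_{s'}u\|_{\tilLE_{0,s}}=\|b\partial_r\tilP_{s'}u\|_{\tilLE_{0,s'}}$). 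Without this, there is no way to handle the nondecaying coefficient $b$. Second, you repeatedly write $\partial_r\tilP_s(\overline{b}F)$ where the quantity that actually occurs is $\tilP_s(\partial_r(\overline{b}F))$; you acknowledge the commutators $[\tilP_s,\partial_r]$ and $[\tilP_s,\overline{b}]$ but give no argument that they can be absorbed in $\tilLE_{0,s}^\ast$---Lemma~\ref{lem:commutator_B_Ps}, the only commutator estimate the paper provides, is an unweighted $L^2$ bound and does not transfer automatically. (A smaller point: the duality inequality $|\langle u,G\rangle|\lesssim\|u\|_{\tilLE_0^1}\|G\|_{(\tilLE_0^1)^\ast}$ is stated in the paper only for $\kappa=0$ (Remark~\ref{rem:completedualitytilLE}); it should extend to $\kappa=1$ via the argument of Corollary~\ref{cor:LEduality}, but you should say so.) To actually close the argument one must decompose $F$ into $\tilP_{\leq s}F+\tilP_{s\leq\cdot\leq s_0}F+\tilP_{\geq s_0}F$, freeze the $s$-dependence of the weight via the support observation \emph{before} applying the heat-flow boundedness of Lemma~\ref{lem:PsLEs1}, and run Schur's test---at which point you would simply be re-deriving the paper's direct proof on the dual side.
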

\begin{proof}
 The proof is similar to that of Proposition~\ref{p:Hlotbound}, so we omit some of the details. We need to prove the following two estimates
 \begin{align*}
  \int_0^{\frac{1}{2}}s^{-\frac{1}{2}}\|\tilP_s b\partial_ru\|_{\tilLE_{0,s}}^2\,\ds \lesssim \|u\|_{\tilLE_0^1}^2,\\
  \int_{\frac{1}{8}}^4\|\tilP_{\geq s}b\partial_r u\|_{\tilLE_{0,\low}}^2\,\ds \lesssim \|u\|_{\tilLE_0^1}^2.
 \end{align*}
 We provide the details only for the first estimate. Using an averaging argument as in the proof of Proposition~4.16, for any $s_0 \in [\frac{3}{4}, 1]$ we decompose into
 \begin{align*}
  \int_0^{\frac{1}{2}}s^{-\frac{1}{2}}\|\tilP_s b\partial_ru\|_{\tilLE_{0,s}}^2\,\ds&\lesssim\int_0^{\frac{1}{2}}s^{-\frac{1}{2}}\|\tilP_s b\partial_r\tilP_{\leq s}u\|_{\tilLE_{0,s}}^2\,\ds\\
  &\quad +\int_{\frac{3}{4}}^1\int_0^{\frac{1}{2}}s^{-\frac{1}{2}}\|\tilP_s b\partial_r\tilP_{s\leq\cdot\leq s_0}u\|_{\tilLE_{0,s}}^2\,\ds\,\dsnot\\
  &\quad +\int_{\frac{3}{4}}^1\int_0^{\frac{1}{2}}s^{-\frac{1}{2}}\|\tilP_s b\partial_r\tilP_{\geq s_0}u\|_{\tilLE_{0,s}}^2\,\ds\,\dsnot\\
  &=:I+II+III.
 \end{align*}
 We provide the details only for estimating $I$ and $III$. For term $III$ note that since $b$ is supported in $\{r\geq R\}$, we have 
 \begin{align*}
  \|(\Delta+\rho^2) b\partial_r\tilP_{\geq s_0}u\|_{\tilLE_{0,s}}=\|(\Delta+\rho^2) b\partial_r\tilP_{\geq s_0}u\|_{\tilLE_{0,\low}},
 \end{align*} 
 and therefore 
 \begin{align*}
   s^{-\frac{1}{2}}\|\tilP_sb\partial_r\tilP_{\geq s_0}u\|_{\tilLE_{0,s}}^2 &= s^{\frac{1}{2}}\|\tilP_{\geq s} (\Delta + \rho^2) b \partial_r \tilP_{\geq s_0}u\|_{\tilLE_{0,s}}^2 \\
   &\lesssim s^{\frac{1}{2}}\| (\Delta + \rho^2) b\partial_r\tilP_{\geq s_0}u\|_{\tilLE_{0,s}}^2\\
   &\simeq s^{\frac{1}{2}} \| (\Delta+\rho^2) b \partial_r \tilP_{\geq s_0} u \|_{\tilLE_{0,\low}}^2 \\
   &\lesssim_{\bsb, s_0} s^{\frac{1}{2}}\|\tilP_{\geq \frac{s_0}{2}}u\|_{\tilLE_{0,\low}}^2.
 \end{align*}
 It follows that
 \begin{align*}
  III \lesssim \int_{\frac{3}{8}}^{\frac{1}{2}}\|\tilP_{\geq s_0}u\|_{\tilLE_{0,\low}}^2\,\dsnot\leq \|u\|_{\tilLE_0^1}^2.
 \end{align*}
 In order to treat term $I$, again in view of the spatial support of $b$, for any $0 < s' \leq s \leq \frac{1}{2}$ we have
 \begin{align*}
  \|b\partial_r\tilP_{s'}u\|_{\tilLE_{0,s}}=\|b\partial_r\tilP_{s'}u\|_{\tilLE_{0,s'}},
 \end{align*}
 and thus,
 \begin{align*}
  I &\lesssim \int_0^{\frac{1}{2}} \Big(\int_0^s \bigl( \frac{s'}{s}\bigr)^{\frac{1}{4}} (s')^{-\frac{1}{4}} \|b\partial_r\tilP_{s'}u\|_{\tilLE_{0,s}}\,\ds\Big)^2\,\ds \\
  &\lesssim \int_0^{\frac{1}{2}} \Big( \int_0^s \bigl( \frac{s'}{s} \bigr)^{\frac{1}{4}} (s')^{-\frac{3}{4}} \| (s')^{\frac{1}{2}}\nabla\tilP_{s'}u\|_{\tilLE_{0,s'}}\,\ds\Big)^2\,\ds \\
  &\lesssim \int_0^{\frac{1}{2}} \Big( \int_0^s \bigl( \frac{s'}{s} \bigr)^{\frac{1}{4}} (s')^{-\frac{3}{4}} \|\tilP_{\frac{s'}{2}}u\|_{\tilLE_{0,\frac{s'}{2}}}\,\ds\Big)^2\,\ds \\
  &\lesssim \|u\|_{\tilLE_0^1}^2,
 \end{align*}
where in the last step we used Schur's test. 
\end{proof}

\subsection{Estimates needed to handle time-dependent perturbations}\label{ss:pert-reg}

Here we collect a number of estimates that are needed to treat time-dependent perturbations of a stationary operator. More precisely, assume that $H$ is of the form
\begin{equation*}
	H = H_{\stat} + H_{\pert}
\end{equation*}
where $H_{\stat}$ is a symmetric, stationary Hamiltonian 
\begin{equation*}
	H_{\stat} u = - \Delta + \frac{1}{i} (\bsb^{\mu} \nb_{\mu} u + \nb_{\mu} (\bsb^{\mu} u)) + Vu,
\end{equation*}
and $H_{\pert}$ is a perturbation of the form
\begin{equation*}
	H_{\pert} u = - \nb_{\mu} (\bsa_{\pert}^{\mu \nu} \nb_{\nu} u) + \frac{1}{i} (\bsb_{\pert}^{\mu} \nb_{\mu} u + \nb_{\mu} (\bsb_{\pert}^{\mu} u)) + V_{\pert} u
\end{equation*}
where $\bsa_{\pert}, \bsb_{\pert}, V_{\pert}$ are possibly complex-valued, time-dependent coefficients satisfying 
\EQ{ \label{eq:pert-decay-sp}
 &\sum_{k=0}^4 \|\jap{r}^{3+2\nsigma}\nabla^{(k)}\bsa_{\pert}\|_{L^\infty_{t,x}}  \leq \kpp, \\
 &\sum_{k=0}^4 \|\jap{r}^{3+2\nsigma}\nabla^{(k)}\bsb_{\pert} \|_{L^\infty_{t,x}} \leq \kpp, \\
 &\sum_{k=0}^4 \|\jap{r}^{3+2\nsigma}\nabla^{(k)}V_{\pert}\|_{L^\infty_{t,x}} \leq \kpp,
}
with size $\kpp > 0$.

\begin{lem}

Suppose $H_{\stat}$ and $H_{\pert}$ are as above and that the bounds \eqref{eq:pert-decay-sp} hold. Then for sufficiently small $0 < s_0 < \frac{1}{2}$
\begin{align}
\nrm{[\tilP_{\geq s_{0}}, H_{\stat}] u}_{\tilLE^{\ast}} &\aleq s_{0}^{\frac{1}{2}} \nrm{u}_{\tilLE}. \label{eq:comm-stat}\\
\nrm{\tilP_{\geq s_{0}} H_{\pert} u}_{\tilLE^{\ast}} &\aleq \kpp (1+s_{0}^{-1}) \nrm{u}_{\tilLE}. \label{eq:smooth-pert}\\
\nrm{\tilP_{\geq s_{0}} F}_{\tilLE^{\ast}} &\aleq \nrm{F}_{\tilLE^{\ast}}.\label{eq:LEs-heat}
\end{align}
\end{lem}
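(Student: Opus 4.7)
\medskip

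\noindent\textit{Proof plan.} I will dispose of the three inequalities in order of increasing difficulty, reducing each to Littlewood--Paley machinery developed earlier in Section~\ref{s:pr}. Throughout, I freely use the duality $\tilLE^\ast \simeq (\tilLE)^\ast$ from Remark~\ref{rem:completedualitytilLE}.

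For \eqref{eq:LEs-heat}, unfold the definition of $\|\cdot\|_{\tilLE^\ast}$. Using $\tilP_{\geq s}\tilP_{\geq s_0} = e^{s_0(\Delta+\rho^2)} \tilP_{\geq s}$ and $\tilP_s\tilP_{\geq s_0} = e^{s_0(\Delta+\rho^2)}\tilP_s$, the claim reduces to the boundedness of $e^{s_0(\Delta+\rho^2)}$ on $\tilLE_{\low}^\ast$ and on $\tilLE_{s}^\ast$ uniformly in $0<s_0<1/2$, which is exactly the $k=0$ case of Lemma~\ref{lem:PsLEs1}. This is essentially routine, so I would only sketch it.

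For \eqref{eq:smooth-pert}, I would test against $v\in\tilLE$ with $\|v\|_{\tilLE}\le 1$ and move all derivatives of $H_{\pert}$ off of $u$ and onto $\tilP_{\geq s_0} v$ via integration by parts and the self-adjointness of $\tilP_{\geq s_0}$. For the second-order piece this gives
\[
  \bigl|\langle -\nabla_\mu(\bsa_{\pert}^{\mu\nu}\nabla_\nu u),\,\tilP_{\geq s_0}v\rangle_{t,x}\bigr|
  = \bigl|\langle u,\,\nabla_\nu(\bsa_{\pert}^{\mu\nu}\nabla_\mu \tilP_{\geq s_0}v)\rangle_{t,x}\bigr|
  \le \|u\|_{\tilLE}\,\|\nabla_\nu(\bsa_{\pert}^{\mu\nu}\nabla_\mu \tilP_{\geq s_0}v)\|_{\tilLE^\ast},
\]
and after expanding the product rule I bound each resulting term by Lemma~\ref{lem:multLEs1} (the $\tilLE^\ast$ multiplier estimate, using \eqref{eq:pert-decay-sp} to control $\|\langle r\rangle^{3+2\nsigma}\nabla^{(k)}\bsa_\pert\|_{L^\infty}\le\kpp$) combined with the heat-flow regularity $\|\nabla^{(k)}\tilP_{\geq s_0}v\|_{\tilLE}\lesssim s_0^{-k/2}\|v\|_{\tilLE}$ from Lemma~\ref{lem:PsLEs1}. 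The worst contribution is $\kpp\, s_0^{-1}\|v\|_{\tilLE}$ from the top order term $\bsa_{\pert}\cdot\nabla^{(2)}\tilP_{\geq s_0}v$. The first-order part of $H_{\pert}$ is handled identically, yielding $\kpp\,s_0^{-1/2}$, and the $V_{\pert}$ part gives $\kpp$. Summing these three contributions produces the bound $\kpp(1+s_0^{-1})\|u\|_{\tilLE}$.

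For \eqref{eq:comm-stat}, the key observation is that $\tilP_{\geq s_0}$ commutes with both $-\Delta$ and with $\rho^2$, so $[\tilP_{\geq s_0},H_{\stat}] = [\tilP_{\geq s_0},H_{\stat,\lot}]$, i.e.\ only the first- and zeroth-order parts contribute. Setting $w(s_0):=[e^{s_0(\Delta+\rho^2)}, H_{\stat,\lot}]u$, one has $w(0)=0$ and
\[
  (\partial_{s_0}-\Delta-\rho^2)w = [\Delta,H_{\stat,\lot}]\,e^{s_0(\Delta+\rho^2)}u,
\]
so by Duhamel,
\[
  [\tilP_{\geq s_0},H_{\stat,\lot}]u
  = \int_0^{s_0} e^{(s_0-s')(\Delta+\rho^2)}\,[\Delta,H_{\stat,\lot}]\,e^{s'(\Delta+\rho^2)}u \, \mathrm{d} s'.
\]
The short integration window in $s_0$ is what produces the gain $s_0^{1/2}$. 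Now $[\Delta,H_{\stat,\lot}]$ is a differential operator of order $\le 2$ in $u$ whose coefficients are derivatives of $\bsb,V$ (bounded by the hypotheses \eqref{equ:LE-H_thm_additional_assumption} inherited through $H_{\stat}$). To bound the Duhamel integral in $\tilLE^\ast$, I pair with a test function $v\in\tilLE$ and distribute the two heat semigroups onto both sides of the commutator structure, following the template of the proof of Lemma~\ref{lem:commutator_B_Ps} but replacing the $L^2$ bookkeeping there by Lemma~\ref{lem:multLEs1} and Lemma~\ref{lem:PsLEs1} in the $\tilLE/\tilLE^\ast$ scale. At most two powers of $(s')^{-1/2}$ or $(s_0-s')^{-1/2}$ arise, both integrable against $\mathrm{d} s'$ on $[0,s_0]$, and the net power of $s_0$ on the outside is $+1/2$ as claimed.

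The main obstacle is the bookkeeping in \eqref{eq:comm-stat}: one must keep track of which of the two heat semigroups absorbs which derivative so that (i) every intermediate norm is finite (no uncancelled $s_0^{-1}$ survives), and (ii) the Schur-type integration in $s'$ produces exactly the $s_0^{1/2}$ prefactor. The other two inequalities are comparatively mechanical once duality, Lemma~\ref{lem:PsLEs1}, and Lemma~\ref{lem:multLEs1} are invoked.
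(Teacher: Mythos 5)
Your Duhamel setup for \eqref{eq:comm-stat} is the same as the paper's, but there is a genuine gap in your reduction of \eqref{eq:LEs-heat} that then propagates into \eqref{eq:smooth-pert}. You claim \eqref{eq:LEs-heat} is ``exactly the $k=0$ case of Lemma~\ref{lem:PsLEs1}.'' That lemma, however, is stated (and proved) only under the restriction $0<s'\leq s$, and the restriction is used essentially: the proof applies the localized parabolic regularity estimates of Proposition~\ref{p:lpregcore} at spatial scales $\gtrsim s^{1/2}$, which requires the heat time $s'$ to be $\leq s$. In the $\ds$-integral defining $\|\tilP_{\geq s_0}F\|_{\tilLE^\ast}$, the range $0<s<s_0$ puts you precisely in the uncovered regime $s'=s_0>s$: there $e^{s_0(\Delta+\rho^2)}$ smears mass to scale $s_0^{1/2}$, above the smallest scale $s^{1/2}$ that $\tilLE_s^\ast$ resolves, and Lemma~\ref{lem:PsLEs1} gives you nothing. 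The paper does \emph{not} attempt to prove uniform boundedness of $e^{s_0(\Delta+\rho^2)}$ on $\tilLE_s^\ast$ for $s<s_0$. Instead it uses the algebraic identity $\tilP_{\geq s_0}\tilP_s=\frac{s}{s+s_0}\tilP_{s+s_0}$, the comparison $\|G\|_{\tilLE_s^\ast}\lesssim\|G\|_{\tilLE_{s+s_0}^\ast}$, and a change of variables $s\mapsto s+s_0$; the decaying prefactor $\frac{s}{s+s_0}$, which is genuinely small for $s\ll s_0$, is what makes the small-$s$ part of the integral close. Your sketch omits this cancellation entirely, and without it the reduction is not justified.

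The same unresolved point sits inside your claimed bound $\|\nabla^{(k)}\tilP_{\geq s_0}v\|_{\tilLE}\lesssim s_0^{-k/2}\|v\|_{\tilLE}$ for \eqref{eq:smooth-pert}: for heat times $s<s_0$ one again needs $\tilP_s\tilP_{\geq s_0}=\frac{s}{s+s_0}\tilP_{s+s_0}$, not Lemma~\ref{lem:PsLEs1}. The paper's route here avoids duality altogether: it computes $s^{1/2}\|\tilP_s\tilP_{\geq s_0}\ringH_\prin u\|_{\tilLE_s^\ast}^2=\left(\frac{s}{s+s_0}\right)^{5/2}(s+s_0)^{1/2}\|\tilP_{s+s_0}\ringH_\prin u\|_{\tilLE_s^\ast}^2$, commutes $\tilP_{(s+s_0)/2}$ past $\ringH_\prin$, and uses $(s+s_0)^{-3/2}\leq s_0^{-1}(s+s_0)^{-1/2}$ to produce the factor $\kappa s_0^{-1}$; the same algebraic prefactor is doing the work. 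Finally, for \eqref{eq:comm-stat} you have correctly identified the short $s'$-window as the source of the $s_0^{1/2}$ gain, but the assertion that one just meets ``two powers of $(s')^{-1/2}$ or $(s_0-s')^{-1/2}$, both integrable'' underestimates what remains: $\tilP_{\geq s'}u$ is a low-frequency projection and does not gain $(s')^{-1/2}$ per derivative. The paper resolves it further as $\tilP_{\geq s'+\frac14}u+\int_{s'}^{s'+\frac14}\tilP_{s''}u\,\dspp$ and then verifies nontrivial Schur-kernel bounds in the variables $(s,s'')$ carrying explicit $s_0$-dependence; that computation, not local integrability of $(s')^{-1/2}$, is where the $s_0^{1/2}$ actually materializes, and your sketch defers precisely this.
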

\begin{proof}
We begin with \eqref{eq:LEs-heat}. Starting from the definition of $\tilLE^\ast$, we have 
 \begin{align*}
  \| \tilP_{\geq s_0} F \|_{\tilLE^\ast}^2 &= \int_0^{\frac{1}{2}} s^{\frac{1}{2}} \| \tilP_{\geq s_0} \tilP_{s} F \|_{\tilLE^\ast_s}^2 \, \ds + \int_{\frac{1}{8}}^4 \| \tilP_{\geq s_0} \tilP_{\geq s} F \|_{\tilLE_\low^\ast}^2 \, \ds \\
  &= \int_0^{s_0} s^{\frac{1}{2}} \| \tilP_{\geq s_0} \tilP_{s} F \|_{\tilLE^\ast_s}^2 \, \ds + \int_{s_0}^{\frac{1}{2}} s^{\frac{1}{2}} \| \tilP_{\geq s_0} \tilP_{s} F \|_{\tilLE^\ast_s}^2 \, \ds \\
  &\quad+ \int_{\frac{1}{8}}^4 \| \tilP_{\geq s_0} \tilP_{\geq s} F \|_{\tilLE_\low^\ast}^2 \, \ds \\
  &\lesssim \int_0^{s_0} s^{\frac{1}{2}} \| \tilP_{\geq s_0} \tilP_{s} F \|_{\tilLE^\ast_s}^2 \, \ds + \int_{s_0}^{\frac{1}{2}} s^{\frac{1}{2}} \| \tilP_{s} F \|_{\tilLE^\ast_s}^2 \, \ds\\
  &\quad + \int_{\frac{1}{8}}^4 \| \tilP_{\geq s} F \|_{\tilLE_\low^\ast}^2 \, \ds,
 \end{align*}
 where in the last step in the last two integrals we just used the boundedness of $\tilP_{\geq s_0}$ on $\tilLE^\ast_s$ (for $s \geq s_0$) and on $\tilLE^\ast_\low$ as already proved in~Lemma~\ref{lem:PsLEs1}. For the first integral on the right-hand side, note that
 \[
  \tilP_{\geq s_0} \tilP_{s} = \frac{s}{s+s_0} \tilP_{s+s_0}
 \]
 and that by the definition of $\tilLE_s^\ast$, 
 \begin{align}\label{eq:LEs-LEss0comp}
  \| G \|_{\tilLE_s^\ast} \lesssim \| G \|_{\tilLE_{s+s_0}^\ast}.
 \end{align}
 Then we obtain by a simple change of variables ($s' := s + s_0$) that
 \begin{align*}
  \int_0^{s_0} s^{\frac{1}{2}} \| \tilP_{\geq s_0} \tilP_{s} F \|_{\tilLE^\ast_s}^2 \, \ds &= \int_0^{s_0} s^{\frac{1}{2}} \Bigl( \frac{s}{s+s_0} \Bigr)^2 \| \tilP_{s+s_0} F \|_{\tilLE_s^\ast}^2 \, \ds \\
  &\lesssim \int_0^{s_0} s^{\frac{1}{2}} \Bigl( \frac{s}{s+s_0} \Bigr)^2 \| \tilP_{s+s_0} F \|_{\tilLE_{s+s_0}^\ast}^2 \, \ds \\
  &\simeq \int_{s_0}^{2s_0} (s'-s_0)^{\frac{1}{2}}  \Bigl( \frac{s'-s_0}{s'} \Bigr)^2 \| \tilP_{s'} F \|_{\tilLE_{s'}^\ast}^2 \, \frac{\ud s'}{s'-s_0} \\
  &\simeq \int_{s_0}^{2s_0} \Bigl( \frac{s'-s_0}{s'} \Bigr)^{\frac{3}{2}} (s')^{\frac{1}{2}} \| \tilP_{s'} F \|_{\tilLE_{s'}^\ast}^2 \, \frac{\ud s'}{s'} \\
  &\lesssim \int_{s_0}^{2s_0} (s')^{\frac{1}{2}} \| \tilP_{s'} F \|_{\tilLE_{s'}^\ast}^2 \, \frac{\ud s'}{s'},
 \end{align*}
 which suffices and finishes the proof.
 
We turn to \eqref{eq:smooth-pert}. First we observe that by the boundedness of $\tilP_{\geq s_0}$ on $\tilLE^\ast$ according to \eqref{eq:LEs-heat}, for the lower order terms we can just invoke Proposition~\ref{p:Hlotbound} and Proposition~\ref{p:Hlotbound_for_V} to obtain that (here $B_{\pert}$ denotes the first order part of $H_{\pert}$ and $C$ is as in Proposition~\ref{p:Hlotbound} and Proposition~\ref{p:Hlotbound_for_V})
 \begin{align*}
  \| \tilP_{\geq s_0} ( B_{\pert} u + V_{\pert} u ) \|_{\tilLE^\ast} &\lesssim \| B_{\pert} u + V_{\pert} u \|_{\tilLE^\ast} \\
  &\lesssim \bigl( C(B_{\pert}) + C(V_{\pert}) \bigr) \|u\|_{\tilLE} \\
  &\lesssim \kappa \|u\|_{\tilLE}.
 \end{align*}
 It therefore remains to deal with the principal part $- \nabla_\mu ( \bsa_{\pert}^{\mu \nu} \nabla_\nu u )$. To simplify notation we let $\ringH_{\prin}$ denote this principal part. Then it suffices to prove the estimates
\begin{align*}
\begin{split}
\int_{\frac{1}{8}}^4 \|\tilP_{\geq s+s_0}\ringH_{\prin}u\|_{\tilLE_\low^*}^2\,\ds &\lesssim \kappa \|u\|_{\tilLE}^2,\\
\int_{0}^{\frac{1}{2}} s^{\frac{1}{2}}\|\tilP_{s}\tilP_{\geq s_0}\ringH_\prin u\|_{\tilLE_{s}^\ast}^2\,\ds &\lesssim \kappa (1+s_0^{-1})\|u\|_{\tilLE}^2.
\end{split}
\end{align*}
Now note that $\|\tilP_{\geq s+s_0}\ringH_{\prin}u\|_{\tilLE_\low^\ast}\lesssim \|P_{\geq s}\ringH_{\prin}u\|_{\tilLE_\low^\ast}$, so the first estimate follows similarly to the proof of \eqref{eq:lowprintemp1} (note that the commutator structure was not used in the proof of \eqref{eq:lowprintemp1}). For the second estimate first note that using \eqref{eq:LEs-LEss0comp}
\begin{align*}
\begin{split}
s^{\frac{1}{2}}\|\tilP_{s}\tilP_{\geq s_0}\ringH_\prin u\|_{\tilLE_{s}^\ast}^2&= (\frac{s}{s+s_0})^{\frac{5}{2}}(s+s_0)^{\frac{1}{2}}\|\tilP_{s+s_0}\ringH_{\prin}u\|_{\tilLE_s^\ast}^2\\
&\lesssim (s+s_0)^{\frac{1}{2}}\|\tilP_{s+s_0}\ringH_{\prin}u\|_{\tilLE_{s+s_0}^\ast}^2\\
&\lesssim (s+s_0)^{\frac{1}{2}}\|\tilP_{\frac{s+s_0}{2}}\ringH_{\prin}u\|_{\tilLE_{\frac{s+s_0}{2}}^\ast}^2\\
&\lesssim (s+s_0)^{\frac{1}{2}}\|\ringH_{\prin}\tilP_{\frac{s+s_0}{2}}u\|_{\tilLE_{\frac{s+s_0}{2}}^\ast}^2\\
&\quad +(s+s_0)^{\frac{1}{2}}\|[\tilP_{\frac{s+s_0}{2}},\ringH_{\prin}]u\|_{\tilLE_{\frac{s+s_0}{2}}^\ast}^2\\
&\lesssim \kappa(s+s_0)^{-\frac{3}{2}}\|\tilP_{\frac{s+s_0}{4}}u\|_{\tilLE_{\frac{s+s_0}{4}}}^2\\
&\quad +(s+s_0)^{\frac{1}{2}}\|[\tilP_{\frac{s+s_0}{2}},\ringH_{\prin}]u\|_{\tilLE_{\frac{s+s_0}{2}}^\ast}^2.
\end{split}
\end{align*}
Noting that $(s+s_0)^{-\frac{3}{2}}\leq s_0^{-1}(s+s_0)^{-\frac{1}{2}}$ it follows that
\begin{align*}
\begin{split}
\int_{0}^{\frac{1}{2}} s^{\frac{1}{2}}\|\tilP_{s}\tilP_{\geq s_0}\ringH_\prin u\|_{\tilLE_{s}^\ast}^2\,\ds&\lesssim \kappa s_0^{-1}\int_{s_0}^{\frac{1}{2}}s^{-\frac{1}{2}}\|\tilP_su\|_{\tilLE_s}^2\,\ds\\
&\quad +\int_{s_0}^{\frac{1}{2}}s^{\frac{1}{2}}\|[\tilP_s,\ringH_{\prin}]u\|_{\tilLE_s^\ast}^2\,\ds.
\end{split}
\end{align*}
The first term on the right is bounded by $\kappa s_0^{-1}\|u\|_{\tilLE}^2$.  Similarly as  in the proof of Proposition~\ref{p:commP_sH_prin}, the second term is bounded $\kappa \|u\|_{\tilLE}^2$. This completes the proof of \eqref{eq:smooth-pert}.
 
Finally we prove \eqref{eq:comm-stat}.  Let
\begin{align*}
\begin{split}
w(s):=[\tilP_{\geq s},H_\lot]u,
\end{split}
\end{align*}
where $H_\lot$ denotes the lower order part of $H_\stat$, that is $H_\stat =-\Delta+H_\lot$. Since $\tilP_{\geq s_0}$ commutes with $\Delta$, the estimate we want to prove is
\begin{align*}
\begin{split}
\|w(s_0)\|_{\tilLE^\ast}\lesssim s_0^{\frac{1}{2}}\|u\|_{\tilLE}.
\end{split}
\end{align*}
A direct computation shows that $w$ satisfies the shifted heat equation
\begin{align*}
\begin{split}
\partial_s w(s)-(\Delta+\rho^2)w(s)= [\Delta,H_{\lot}]\tilP_{\geq s}u.
\end{split}
\end{align*}
Let $\tilB = [\Delta,H_{\lot}]$ and note that $\tilB$ is a differential operator of order two with coefficients that satisfy similar bounds to \eqref{eq:pert-decay-sp}. By Duhamel's principle
\begin{align*}
\begin{split}
w(s_0)=\int_0^{s_0}e^{(s_0-s')(\Delta+\rho^2)}s'\tilB\tilP_{\geq s'}u\,\dsp,
\end{split}
\end{align*}
and
\begin{align*}
\begin{split}
\|w(s_0)\|_{\tilLE^\ast}^2&=\int_{0}^{\frac{1}{2}}s^{\frac{1}{2}}\Big\|\int_{0}^{s_0}\tilP_s\tilP_{\geq s_0-s'}s'\tilB\tilP_{\geq s'}u\,\dsp\Big\|_{\tilLE_s^\ast}^2\,\ds\\
&\quad +\int_{\frac{1}{8}}^{4}\Big\|\int_{0}^{s_0}\tilP_{\geq s}\tilP_{\geq s_0-s'}s'\tilB\tilP_{\geq s'}u\,\dsp\Big\|_{\tilLE_\low^\ast}^2\,\ds=:I+II.
\end{split}
\end{align*}
Below we will repeatedly use the fact that by successive applications of the product rule we may write $\tilB$ in either of the generic forms
\begin{align*}
\begin{split}
\tilB=\tilde{\bsb}_1^{\mu\nu}\nabla_\mu\nabla_\nu+\tilde{\bsb}_2^\mu\nabla_\mu+\tilV_1
\end{split}
\end{align*}
or
\begin{align*}
\begin{split}
\tilB=\nabla_\mu\nabla_\nu\tilde{\bsb}_3^{\mu\nu}+\nabla_\mu\tilde{\bsb}_4^\mu+\tilV_2.
\end{split}
\end{align*}
To treat $I$ we use the fundamental theorem of calculus to write
\begin{align}\label{eq:9.1temp1}
\begin{split}
\tilP_{\geq s'}u=\tilP_{\geq s'+\frac{1}{4}}u+\int_{s'}^{s'+\frac{1}{4}}\tilP_{s''}u\dspp.
\end{split}
\end{align}
In view of \eqref{eq:LEs-LEss0comp}, the contribution of the integral on the right-hand side above to $I$ is bounded by
\begin{align*}
\begin{split}
&\int_0^{\frac{1}{2}}s^{\frac{1}{2}}\Big(\int_0^{s_0}\int_{s'}^{s'+\frac{1}{4}}\|\tilP_s\tilP_{\geq s_0-s'}s'\tilB \tilP_{s''}u\|_{\tilLE_{s}^\ast}\,\dspp\,\dsp\Big)^2\,\ds\\
&\lesssim \int_0^{\frac{1}{2}}s^{\frac{1}{2}}\Big(\int_0^{s_0}\int_{s'}^{s'+\frac{1}{4}}\frac{ss'}{s+s_0-s'}\|\tilP_{s +s_0-s'}\tilB \tilP_{s''}u\|_{\tilLE_{s+s_0-s'}^\ast}\,\dspp\,\dsp\Big)^2\,\ds\\
&\lesssim\int_0^{\frac{1}{2}}\Big(\int_0^{s_0}\int_{\frac{s'}{2}}^{\frac{s'}{2}+\frac{1}{8}}\frac{s^{\frac{5}{4}}s'\min\{(s+s_0-s')^{-1},(s'')^{-1}\}}{s+s_0-s'}\| \tilP_{s''}u\|_{\tilLE_{s''}}\,\dspp\,\dsp\Big)^2\,\ds\\
&=s_0\int_0^{\frac{1}{2}}\Big(\int_0^{\frac{s_0}{2}+\frac{1}{8}}k(s,s'')(s'')^{-\frac{1}{4}}\|\tilP_{s''}u\|_{\tilLE_{s''}}\,\dspp\Big)^2\,\ds,
\end{split}
\end{align*}
where
\begin{align*}
\begin{split}
k(s,s'')=\int_0^{\min\{2s'',s_0\}}\frac{s^{ \frac{5}{4} }s' s''^{ \frac{1}{4} } }{ s_0^{\frac{1}{2}}(s+s_0-s')}\min\{(s+s_0-s')^{-1},(s'')^{-1}\}\,\dsp.
\end{split}
\end{align*}
By Schur's test the contribution of this term is under control if we can show that
\begin{align*}
\begin{split}
\int_0^{\frac{1}{2}}k(s,s'')\,\ds+\int_0^{\frac{s_0}{2}+\frac{1}{8}}k(s,s'')\,\dspp\lesssim 1
\end{split}
\end{align*}
independently of $s_0$. We start with the $\dspp$ integral. Noting that $$\min\{a,b\}\leq a^{\alpha}b^{1-\alpha}$$ for any $\alpha\in[0,1]$ we bound
\begin{align*}
\begin{split}
\int_{0}^{\frac{s_0}{2}}k(s,s'')\,\dspp&\leq\int_0^{\frac{s_0}{2}}\int_0^{2s''}\frac{s^{\frac{5}{4}}\,\ud s'\,\ud s''}{s_0^{\frac{1}{2}}(s'')^{\frac{3}{4}}(s+s_0-s')^2}\\
&\lesssim \int_0^{\frac{s_0}{2}}\frac{s^{\frac{5}{4}}(s'')^{\frac{1}{4}}\,\ud s''}{s_0^{\frac{1}{2}}(s+s_0)(s+s_0-2s'')}\\
&\lesssim s_0^{-\frac{5}{4}}\int_0^{\frac{s_0}{2}}(s'')^{\frac{1}{4}}\,\ud s''\lesssim1.
\end{split}
\end{align*}
Next, 
\begin{align*}
\begin{split}
\int_{\frac{s_0}{2}}^{s+s_0}k(s,s'')\,\dspp&\leq \int_{\frac{s_0}{2}}^{s+s_0}\int_0^{s_0}\frac{s^{\frac{5}{4}}\,\ud s'\,\ud s''}{s_0^{\frac{1}{2}}(s'')^{\frac{3}{4}}(s+s_0-s')^2}\\
&\lesssim \int_{\frac{s_0}{2}}^{s+s_0}\frac{s^{\frac{1}{4}}s_0^{\frac{1}{2}}\ud s''}{(s+s_0)(s'')^{\frac{3}{4}}}\lesssim \frac{s^{\frac{1}{4}}s_0^{\frac{1}{2}}}{(s+s_0)^{\frac{3}{4}}}\lesssim 1.
\end{split}
\end{align*}
Finally, if $s+s_0\leq \frac{s_0}{2}+\frac{1}{8}$ then
\begin{align*}
\begin{split}
\int_{s+s_0}^{\frac{s_0}{2}+\frac{1}{8}}k(s,s'')\,\dspp&\leq \int_{s+s_0}^{\frac{s_0}{2}+\frac{1}{8}}\int_0^{s_0}\frac{s^{\frac{5}{4}}\,\ud s'\,\ud s''}{s_0^{\frac{1}{2}}(s+s_0-s')^{\frac{5}{4}}(s'')^{\frac{3}{2}}}\\
&\leq \int_{s+s_0}^{\frac{s_0}{2}+\frac{1}{8}}\int_0^{s_0}\frac{\ud s'\,\ud s''}{s_0^{\frac{1}{2}}(s'')^{\frac{3}{2}}}\lesssim\int_{s+s_0}^{\frac{s_0}{2}+\frac{1}{8}}\frac{s_0^{\frac{1}{2}}\,\ud s''}{(s'')^{\frac{3}{2}}}\lesssim 1.
\end{split}
\end{align*}
The argument for the $\ds$ integration of $k$ is similar. First
\begin{align*}
\begin{split}
\int_{s_0}^{\frac{1}{2}}\int_0^{2s''}\frac{\chi_{\{2s''\leq s_0\}}s^{\frac{1}{4}}(s'')^{\frac{1}{4}}\,\ud s' \,\ud s}{s_0^{\frac{1}{2}}(s+s_0-s')^2}&\lesssim \int_{s_0}^{\frac{1}{2}}\frac{\chi_{\{2s''\leq s_0\}}s^{\frac{1}{4}}s''\,\ud s}{s_0^{\frac{1}{4}}(s+s_0)(s+s_0-2s'')}\\
&\lesssim \int_{s_0}^{\frac{1}{2}}s^{-\frac{3}{4}}s_0^{\frac{3}{4}}\,\ds\lesssim 1.
\end{split}
\end{align*}
Next
\begin{align*}
\begin{split}
\int_{s_0-2s''}^{s_0}\int_0^{2s''}\frac{\chi_{\{2s''\leq s_0\}}s^{\frac{1}{4}}(s'')^{\frac{1}{4}}\,\ud s'\,\ud s}{s_0^{\frac{1}{2}}(s+s_0-s')^2}&\lesssim \int_{s_0-2s''}^{s_0}\frac{\chi_{\{2s''\leq s_0\}}s^{\frac{1}{4}}s''\,\ud s}{s_0^{\frac{1}{4}}(s+s_0)(s+s_0-2s'')}\\
&\lesssim \int_{s_0-2s''}^{s_0}s^{-\frac{3}{4}}s_0^{-\frac{1}{4}} \, \ud s \lesssim 1,
\end{split}
\end{align*}
\begin{align*}
\begin{split}
\int_0^{s_0-2s''}\int_0^{2s''}\frac{\chi_{\{2s''\leq s_0\}}s^{\frac{1}{4}}(s'')^{\frac{1}{4}}\,\ud s'\,\ud s}{s_0^{\frac{1}{2}}(s+s_0-s')^2}&\lesssim \int_0^{s_0-2s''}\frac{\chi_{\{2s''\leq s_0\}}s^{\frac{1}{4}}s''\,\ud s}{s_0^{\frac{1}{4}}(s+s_0)(s+s_0-2s'')}\\
&\lesssim\int_0^{s_0-2s''}\frac{\ud s}{s_0-2s''}\lesssim 1.
\end{split}
\end{align*}
In the region $s_0\leq 2s''$ first we estimate
\begin{align*}
\begin{split}
\int_{s''}^{\frac{1}{2}}\int_0^{s_0}\frac{\chi_{\{s_0\leq 2s''\}}s^{\frac{1}{4}}(s'')^{\frac{1}{4}}\,\ud s'\,\ud s}{s_0^{\frac{1}{2}}(s+s_0-s')^2}\lesssim \int_{s''}^{\frac{1}{2}}\frac{s_0^{\frac{1}{2}}(s'')^{\frac{1}{4}}\,\ud s}{s^{\frac{3}{4}}(s+s_0)}\lesssim \int_{s''}^{\frac{1}{2}}(s'')^{\frac{1}{4}}s^{-\frac{5}{4}}\,\ud s\lesssim1.
\end{split}
\end{align*}
Moreover, we have 
\begin{align*}
\begin{split}
\int_0^{s''}\int_0^{s_0}\frac{\chi_{\{s_0\leq 2s''\}}s^{\frac{1}{4}}\,\ud s' \,\ud s}{s_0^{\frac{1}{2}}(s'')^{\frac{3}{4}}(s+s_0-s')}&\lesssim\int_0^{s''}\int_0^{s_0}\frac{\chi_{\{s_0\leq 2s''\}}\,\ud s' \,\ud s}{s_0^{\frac{1}{2}}(s'')^{\frac{3}{4}}s^{\frac{3}{4}}}\\
&\lesssim \chi_{\{s_0\leq 2s''\}}s_0^{\frac{1}{2}}(s'')^{-\frac{1}{2}}\lesssim 1.
\end{split}
\end{align*}
This completes the proof of the fact that $k$ is a Schur kernel. 

Next we consider the contribution of $\tilP_{\geq s'+\frac{1}{4}}u$ to $I$ above. This is bounded by
\begin{align*}
\begin{split}
&\int_0^{\frac{1}{2}}\Big(\int_0^{s_0}s^{\frac{1}{4}}\|\tilP_s\tilP_{\geq s_0-s'}s'\tilB\tilP_{\geq s'+\frac{1}{4}}u\|_{\tilLE_s^\ast}\,\dsp\Big)^2\,\ds\\
&\lesssim \int_0^{\frac{1}{2}}\Big(\int_0^{s_0}\frac{s^{\frac{5}{4}}s'}{s+s_0-s'}\|\tilP_{s+s_0-s'}\tilB\tilP_{\geq s'+\frac{1}{4}}u\|_{\tilLE_{s+s_0-s'}^\ast}\,\dsp\Big)^2\,\ds\\
&\lesssim \int_0^{\frac{1}{2}}s^{\frac{1}{2}}\Big(\int_0^{s_0}\|\tilP_{\geq\frac{s'}{2}+\frac{1}{8}}u\|_{\tilLE_\low}\,\ud s'\Big)^2\,\ds\\
&\lesssim \int_0^{\frac{1}{2}}s^{\frac{1}{2}}\Big(\int_{\frac{1}{8}}^{\frac{s_0}{2}+\frac{1}{8}}\|\tilP_{\geq s'}u\|_{\tilLE_\low}^2\,\dsp\Big)s_0\,\ds\lesssim s_0\|u\|_{\tilLE}^2.
\end{split}
\end{align*}
The estimate for $II$ above is similar. Again we decompose $\tilP_{\geq s'}u$ as in \eqref{eq:9.1temp1}. The contribution of the integral is bounded by
\begin{align*}
\begin{split}
&\int_{\frac{1}{8}}^{4}\Big(\int_0^{s_0}\int_{s'}^{s'+\frac{1}{4}}\|\tilP_{\geq s}\tilP_{\geq s_0-s'}\tilB\tilP_{s''}u\|_{\tilLE_\low^\ast}\,\dspp\,\ud s'\Big)^2\,\ds\\
&\lesssim \int_{\frac{1}{8}}^{4}\Big(s_0\int_0^{\frac{s_0}{2}+\frac{1}{4}}\|\tilP_{s''}u\|_{\tilLE_{s''}}\,\dspp\Big)^2\,\ds\\
&\lesssim s_0^2\int_0^{\frac{s_0}{2}+\frac{1}{4}}(s'')^{-\frac{1}{2}}\|\tilP_{s''}u\|_{\tilLE_{s''}}^2\,\dspp\lesssim s_0^2\|u\|_{\tilLE}^2.
\end{split}
\end{align*}
Finally the contribution of $\tilP_{\geq s'+\frac{1}{4}}u$ in \eqref{eq:9.1temp1} to $II$ is bounded by
\begin{align*}
\begin{split}
&\int_{\frac{1}{8}}^4\Big(\int_0^{s_0}\|\tilP_{\geq s}\tilP_{\geq s_0-s'}\tilB\tilP_{\geq s'+\frac{1}{4}}u\|_{\tilLE_{\low}^\ast}\,\ud s'\Big)^2\,\ds\\
&\lesssim\int_{\frac{1}{8}}^{4}\Big(\int_0^{s_0}\|\tilP_{\geq s'+\frac{1}{4}}u\|_{\tilLE_\low}\,\ud s'\Big)^2\,\ds\\
&\lesssim s_0\int_{\frac{1}{4}}^{s_0+\frac{1}{4}}\|\tilP_{\geq s'}u\|_{\tilLE_\low}^2\,\dsp \lesssim s_0\|u\|_{\tilLE}^2.
\end{split}
\end{align*}
\end{proof}

\section{Smoothing for Low Frequencies} \label{s:low} 

In this section we start the proof of Theorem~\ref{t:LE1} by considering the low-frequency regime. Our goal is to implement the strategy laid out in Section~\ref{s:mult}, using the main multiplier identity from Lemma~\ref{lem:Q}. Recall that for any $s > 0$ the low-frequency projection $\tilP_{\geq s} u$ of a solution $u(t)$ to the linear Schr\"odinger equation~\eqref{eq:S} satisfies the equation 
 \begin{equation} \label{eq:P>s_H}
  (\partial_t - i \Delta) \tilP_{\geq s} u = i \tilP_{\geq s} F - i \tilP_{\geq s} H_\lot u - i [ \tilP_{\geq s}, H_\prin ] u - i (H_\prin + \Delta) \tilP_{\geq s} u
 \end{equation}
with initial value $\tilP_{\geq s} u(0) = \tilP_{\geq s} u_0$.
Given a slowly varying function $\alpha$ associated with a slowly varying sequence $\{ \alpha_\ell \} \in \calA$ as in Definition~\ref{d:alpha}, we recall the definition of the radial function
\begin{equation} \label{equ:definition_beta_low_freq}
 \beta^{(\alpha)} := \beta^{(\alpha)}(r) = \int_0^r \frac{\alpha(y)}{\langle y \rangle} \, \ud y
\end{equation}
and the associated self-adjoint operator
\begin{equation} \label{equ:definition_Q_low_freq}
 Q^{(\alpha)} := \frac{1}{i} \bigl( \beta^{(\alpha)} \partial_r - \partial_r^\ast \beta^{(\alpha)} \bigr)
\end{equation}
from Definition~\ref{d:beta_low}. The following proposition is our precise goal in this section.
\begin{prop} \label{prop:low_frequency_local_smoothing}
 Let $d \geq 2$. There exists an absolute constant $R \geq 1$ such that for all solutions $u(t)$ to~\eqref{eq:S} and for all $s > 0$ it holds that
 \begin{equation} \label{equ:low_frequency_local_smoothing_estimate}
  \begin{aligned}
   \| \tilP_{\geq s} u \|_{LE_\low}^2 &\lesssim \sup_{ \{ \alpha_\ell \} \in \calA } \, \sup_{t \in \bbR} \, \bigl| \langle \tilP_{\geq s} u(t), Q^{(\alpha)} \tilP_{\geq s} u (t) \rangle \bigr| \\
   &\qquad + \sup_{\{ \alpha_\ell \} \in \calA } \, \bigl| \Re \langle (\partial_t - i\Delta) \tilP_{\geq s} u, Q^{(\alpha)} \tilP_{\geq s} u \rangle_{t,x} \bigr| \\
   &\qquad + \| \tilP_{\geq s} u\|_{L^2(\bbR\times\{r \leq R\})}^2.
  \end{aligned}
 \end{equation}
\end{prop}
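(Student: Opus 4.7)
The plan is to apply the main multiplier identity from Lemma~\ref{lem:Q} to $v := \tilP_{\geq s}u$ with the multiplier $Q^{(\alpha)}$ defined in \eqref{equ:definition_Q_low_freq}, for each fixed slowly varying sequence $\{\alpha_\ell\}\in\calA$. Since $u(t)\in L^2$ for all $t$, the heat smoothing gives $v(t)=e^{s(\Delta+\rho^2)}u(t)\in H^k$ for every $k\geq 0$, so the hypotheses of Lemma~\ref{lem:Q} and the decay condition \eqref{eq:multdecayassumption} are satisfied. Self-adjointness and time-independence of $Q^{(\alpha)}$ give
\[
\tfrac{1}{2}\partial_t\langle v,Q^{(\alpha)}v\rangle = \Re\langle(\partial_t-i\Delta)v,Q^{(\alpha)}v\rangle+\Re\langle i\Delta v,Q^{(\alpha)}v\rangle,
\]
so integrating in $t$ yields the energy identity
\[
\int_{\bbR}\Re\langle i\Delta v,Q^{(\alpha)}v\rangle\,\ud t \leq \sup_{t\in\bbR}\bigl|\langle v(t),Q^{(\alpha)}v(t)\rangle\bigr|+\bigl|\Re\langle (\partial_t-i\Delta)v,Q^{(\alpha)}v\rangle_{t,x}\bigr|.
\]
The conclusion \eqref{equ:low_frequency_local_smoothing_estimate} therefore reduces to establishing the coercivity
\[
\int_{\bbR}\Re\langle i\Delta v,Q^{(\alpha)}v\rangle\,\ud t \gtrsim \|v\|_{X_{\low,\alpha}}^2-C\|v\|_{L^2(\bbR\times\{r\leq R\})}^2
\]
with constants independent of $\{\alpha_\ell\}\in\calA$, followed by taking the supremum over $\calA$ and invoking Lemma~\ref{l:Xal}, which provides the equivalence $\|v\|_{LE_\low}\simeq\sup_{\{\alpha_\ell\}\in\calA}\|v\|_{X_{\low,\alpha}}$.

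Next, I would unfold $\Re\langle i\Delta v,Q^{(\alpha)}v\rangle$ via Lemma~\ref{lem:Q} with $\beta=\beta^{(\alpha)}$ from \eqref{equ:definition_beta_low_freq}. The principal terms are
\[
2\langle(\partial_r\beta^{(\alpha)})\partial_r v,\partial_r v\rangle+2\langle\beta^{(\alpha)}\coth r\,\sinh^{-2}r\,\snab v,\snab v\rangle-2\rho^2\langle(\partial_r\beta^{(\alpha)})v,v\rangle,
\]
the first two being non-negative since $\partial_r\beta^{(\alpha)}\geq 0$ and $\beta^{(\alpha)}\geq 0$. The only obstruction to direct positivity is the last, sign-indefinite term; the remaining zeroth order contributions in \eqref{eq:Q1} involve higher derivatives of $\beta^{(\alpha)}$ or carry factors decaying exponentially via $\sinh^{-2}r$, and, controlled by the uniform pointwise bounds in \eqref{eq:betabounds1}, can be absorbed into the good terms plus a bounded-region error. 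As indicated in Section~\ref{ss:LE1-outline}, the essential step is a \emph{weighted Hardy--Poincar\'e inequality} (to be proved as a separate lemma) giving
\[
2\langle(\partial_r\beta^{(\alpha)})\partial_r v,\partial_r v\rangle+2\langle\beta^{(\alpha)}\coth r\,\sinh^{-2}r\,\snab v,\snab v\rangle-2\rho^2\langle(\partial_r\beta^{(\alpha)})v,v\rangle\gtrsim \langle r^{-2}(\partial_r\beta^{(\alpha)})v,v\rangle
\]
modulo a bounded-region error. Combined with the equivalence
\[
\langle\jap{r}^{-2}(\partial_r\beta^{(\alpha)})v,v\rangle\simeq\|v\|_{X_{\low,\alpha}}^2
\]
from Lemma~\ref{lem:betaLErelation}, this yields the desired coercivity.

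The main obstacle is this Hardy--Poincar\'e coercivity. The difficulty is twofold: the negative contribution $-2\rho^2\langle(\partial_r\beta^{(\alpha)})v,v\rangle$ appears at the same scale as the positive radial-derivative term, so the gain must be extracted by exploiting the spectral gap $\rho^2$ of $-\Delta$ on $\bbH^d$ in concert with the angular term; and the inequality must hold uniformly in $\{\alpha_\ell\}\in\calA$, forcing the argument to rely only on the monotonicity and scaling of $\beta^{(\alpha)}$, namely $\partial_r\beta^{(\alpha)}=\alpha(r)/\jap{r}$ and the fact that $r\mapsto\beta^{(\alpha)}(r)/\jap{r}$ is essentially non-increasing. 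In the interior region where the weight $r^{-2}$ degenerates, the bounded-region error $\|v\|_{L^2(\bbR\times\{r\leq R\})}^2$ must be introduced; the radius $R$ is then chosen large enough, as an absolute constant, so that the exponentially decaying tails from $\sinh^{-2}r$ and the contributions from higher derivatives of $\beta^{(\alpha)}$ are negligible on $\{r>R\}$. Once the coercivity is in hand, taking the supremum over $\{\alpha_\ell\}\in\calA$ and applying Lemma~\ref{l:Xal} concludes the proof.
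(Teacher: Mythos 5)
The overall strategy matches the paper's: you correctly reduce the proposition to a coercivity estimate for $\Re\langle i\Delta v,Q^{(\alpha)}v\rangle$ uniform in $\{\alpha_\ell\}\in\calA$, apply the multiplier identity \eqref{eq:Q1}, invoke a weighted Hardy--Poincar\'e inequality, and close via Lemmas~\ref{l:Xal} and~\ref{lem:betaLErelation}. However, your characterization of the zeroth-order terms in \eqref{eq:Q1} contains a genuine gap.

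When one expands $-\rho\langle(\Delta(\beta^{(\alpha)}\coth r)-2\rho\,\partial_r\beta^{(\alpha)})v,v\rangle$, the dominant contribution at large $r$ is a cross term $-\rho\langle(\partial_r^2\beta^{(\alpha)})\coth r\,v,v\rangle$; the remaining pieces carry $\sinh^{-2}r$ factors and are exponentially small, but this one is not. You claim it can be absorbed into the good terms plus a bounded-region error, but since $|\partial_r^2\beta^{(\alpha)}|\sim \alpha/\jap{r}^2$ while the target gain $\jap{r}^{-2}\partial_r\beta^{(\alpha)}\sim \alpha/\jap{r}^3$, the cross term is one power of $r$ \emph{worse} than the gain as $r\to\infty$. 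A bounded-region error cannot help there, and absorption into the good terms fails. Consequently the Hardy--Poincar\'e inequality you state — with only $(\partial_r\beta^{(\alpha)})(|\nabla v|^2-\rho^2|v|^2)$ plus the angular piece on the left — is false: testing against the threshold profile $v=e^{-\rho r}\phi(r)$ (radial $\phi$ supported at large $r$, the natural near-minimizer since $(\partial_r+\rho\coth r)v$ vanishes at leading order) and integrating by parts reduces the left side to $C\int\partial_r\beta^{(\alpha)}|\phi'|^2\,\ud r+C\rho\int\partial_r^2\beta^{(\alpha)}|\phi|^2\,\ud r$; the second term is $\sim-\rho\int(\alpha/r^2)|\phi|^2$ and overwhelms the claimed gain $\sim\int(\alpha/r^3)|\phi|^2$.

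The paper's resolution (Lemma~\ref{lem:HP}, applied in Lemma~\ref{l:weighted_HP_applied_to_main_multiplier_identity}) is to put the cross term $-\rho\langle(\partial_r w)\coth r\,u,u\rangle$ on the \emph{left}-hand side of the Hardy--Poincar\'e estimate, where it combines algebraically with $w(|\nabla u|^2-\rho^2|u|^2)$ when one completes the square using the operator $\partial_r+\rho\coth r$; on the left it precisely cancels the dangerous $\rho(\partial_r w)$ contribution up to exponentially decaying remainders, whereas treating it as an error to be absorbed destroys positivity. This is even flagged in the outline you cite: display \eqref{eq:low-freq-key} in Section~\ref{ss:LE1-outline} keeps the $\coth r\,\partial_r^2\beta$ cross term among the main terms rather than in the omitted ``$\cdots$''. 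To repair your plan, restate the Hardy--Poincar\'e lemma with this cross term bundled on the left; as written, the coercivity step does not close.
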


The first difficulty is already evident by inspecting \eqref{eq:Q1} for $H=-\Delta$ in the radial case. Indeed, to obtain the desired coercivity for $\Re \langle  i H \tilP_{\geq s} u, Q \tilP_{\geq s} u \rangle$ we hope to make the right hand side of \eqref{eq:Q1} positive. However, the terms
\begin{align*}
\begin{split}
 \langle (\partial_r \beta) \tilP_{\geq s} u, \tilP_{\geq s} u \rangle \qquad \mathrm{and}\qquad \langle (\partial_r \beta) \partial_r \tilP_{\geq s} u, \partial_r \tilP_{\geq s} u \rangle
\end{split}
\end{align*}
on the right-hand side of \eqref{eq:Q1} appear with the opposite signs, and while the former can be placed in $L^2( \bbR \times \{r \leq R\})$ for $r \leq R$, the error for $r\geq R$ needs to be treated differently. Moreover, in view of the fact that $\mathrm{Spec} (-\Delta)=[\rho^2,\infty)$, the balance between these terms with opposing signs is especially delicate for low frequencies, i.e., near the bottom edge of the spectrum of $-\Delta$. This motivates the weighted Hardy-Poincar\'e estimate below\footnote{For a related estimate without weights see \cite{ManSan1, BGG1}.}, which gives us a positive lower bound for the difference $\langle w\nabla v,\nabla v\rangle-\rho^2\langle w v,v\rangle$ for arbitrary $v$ and appropriate weight functions~$w$. This estimate is the reason for the different weights in the definition of our local smoothing norms for low and high frequencies. Note that the regime where $r\gg 1$ and $u$ is at low frequencies is precisely where we expect to see the effects of the geometry of the hyperbolic space most prominently. 

Since in Section~\ref{s:error} we will need a slight variant of the weighted Hardy-Poincar\'e estimate needed in the current section, below we state a more general version which contains both formulations as special cases.

\begin{lem}[Weighted Hardy-Poincar\'e Estimate] \label{lem:HP}
 Let $d \geq 2$ and let $w \colon \bbH^d \to \bbR$ be a non-negative, smooth, and bounded radial weight function with bounded first derivative, and such that $r^{-1}\partial_rw$ is bounded at the origin. Then for any $u\in H^1(\bbH^d)$ and any constant $m > 0$ we have
\begin{align}\label{equ:weighted_hardy_poincare}
\begin{split}
&\angles{w\partial_ru}{\partial_ru}-\rho^2\angles{wu}{u}-\rho\angles{(\partial_rw)\coth ru}{u}\\
&=\angles{w(\partial_r+\rho\coth r)u}{(\partial_r+\rho\coth r)u}+\rho(\rho-1)\angles{w\sinh^{-2}ru}{u}\\
&=\angles{w(\partial_r+\rho\coth r+mr^{-1})u}{(\partial_r+\rho\coth r+mr^{-1})u}+m(1-m)\angles{2r^{-2}u}{u}\\
&\quad+\rho(\rho-1)\angles{w\sinh^{-2}ru}{u}-m\angles{(\partial_rw)r^{-1}u}{u}.
\end{split}
\end{align}
In particular with $m=\frac{1}{2}$ and
 \begin{align}\label{eq:HP_U_def}
  U:= &\begin{cases} \frac{1}{4}\frac{1}{r^2} + \frac{(d-1)(d-3)}{4} \frac{1}{\sinh^2r}, &d\geq3 \\ \\ \frac{1}{4}\left(\frac{1}{r^2}-\frac{1}{\sinh^2r}\right), &d=2 \end{cases},
 \end{align}
 we have
 \begin{equation} \label{equ:weighted_hardy_poincare_whole}
  \begin{aligned}
   &\int_{\bbH^d} w \bigl( |\nabla u|^2 - \rho^2 |u|^2 \bigr) \, \dh - \int_{\bbH^d} \rho \coth(r) (\partial_r w) |u|^2 \, \dh \\
   &\quad \geq \int_{\bbH^d} U w |u|^2 \, \dh - \int_{\bbH^d} \frac{\partial_r w}{2r} |u|^2 \, \dh.
  \end{aligned}
 \end{equation} 
\end{lem}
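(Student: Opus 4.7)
The plan is to prove the two algebraic identities in \eqref{equ:weighted_hardy_poincare} by a completion-of-squares argument centered on the first-order operator $D := \partial_{r} + \rho \coth r$, and then to deduce \eqref{equ:weighted_hardy_poincare_whole} by adding the non-negative angular contribution and choosing $m = \tfrac{1}{2}$. The starting observation is that $D$ is \emph{anti-self-adjoint} on $L^{2}(\bbH^{d})$: since in polar coordinates $\dh = \sinh^{2\rho} r\, dr\, d\theta$, one has $\partial_{r}^{\ast} = -\partial_{r} - 2\rho \coth r$ and hence $D^{\ast} = -D$. Equivalently, $D u = \sinh^{-\rho} r\, \partial_{r}(\sinh^{\rho} r \cdot u)$, which is the standard radial gauge change that diagonalizes the radial part of $-\Delta$ on $\bbH^{d}$.

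For the first identity I would expand $\angles{w D u}{D u}$ and use an integration by parts in $r$ (against $\sinh^{2\rho} r$) for the cross term, together with $\partial_{r}(w \coth r) = (\partial_{r} w) \coth r - w \sinh^{-2} r$, to obtain
\begin{equation*}
2 \Re \angles{w \coth r \cdot u}{\partial_{r} u} = -\angles{(\partial_{r} w) \coth r \cdot u}{u} + \angles{w \sinh^{-2} r \cdot u}{u} - 2 \rho \angles{w \coth^{2} r \cdot u}{u}.
\end{equation*}
Substituting and using the trigonometric identity $\coth^{2} r - 1 = \sinh^{-2} r$ to collapse $\rho^{2}(\coth^{2} r - 1) - \rho \sinh^{-2} r$ into $\rho(\rho - 1) \sinh^{-2} r$ yields the first identity. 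For the second identity, I would set $\widetilde{D} := D + m r^{-1}$ and expand $\angles{w \widetilde{D} u}{\widetilde{D} u}$; the cross term is evaluated using $D^{\ast} = -D$ together with the Leibniz rule $D(w r^{-1} u) = (\partial_{r}(w r^{-1})) u + w r^{-1} D u$, which yields $2 \Re \angles{w r^{-1} u}{D u} = -\angles{(\partial_{r} w) r^{-1} u}{u} + \angles{w r^{-2} u}{u}$. Substituting back into the first identity produces the second (up to a possible sign in the definition of $\widetilde{D}$, which can be adjusted to match the statement).

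To pass to \eqref{equ:weighted_hardy_poincare_whole}, I would write $|\nabla u|^{2} = |\partial_{r} u|^{2} + \sinh^{-2} r |\slashed{\nabla} u|^{2}$, so that the left-hand side of the inequality exceeds the left-hand side of \eqref{equ:weighted_hardy_poincare} by the non-negative contribution $\angles{w \sinh^{-2} r\, \slashed{\nabla} u}{\slashed{\nabla} u}$. Applying the second identity with $m = \tfrac{1}{2}$ and discarding the non-negative squares $\angles{w \widetilde{D} u}{\widetilde{D} u} \geq 0$ and the angular term leaves a Hardy contribution $\tfrac{1}{4} w r^{-2}$ together with the curvature term $\rho(\rho - 1) w \sinh^{-2} r$, and these combine to yield precisely $U w$ in both cases: for $d \geq 3$ both pieces are non-negative and sum to $\tfrac{1}{4 r^{2}} + \tfrac{(d-1)(d-3)}{4 \sinh^{2} r}$, while for $d = 2$ one has $\rho(\rho-1) = -\tfrac{1}{4}$ and the two combine into $\tfrac{1}{4}(r^{-2} - \sinh^{-2} r)$, which is still non-negative since $\sinh r \geq r$. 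The remaining $-\tfrac{1}{2} \angles{(\partial_{r} w) r^{-1} u}{u}$ contribution is moved to the right-hand side as $-\int \tfrac{\partial_{r} w}{2 r} |u|^{2}\, \dh$.

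The only technical subtlety is the justification of the integrations by parts, which is handled by density of $C^{\infty}_{c}(\bbH^{d})$ in $H^{1}(\bbH^{d})$ together with the hypothesis that $r^{-1} \partial_{r} w$ is bounded near the origin, ensuring that all weighted coefficients arising in the cross terms are pointwise bounded and that the boundary contributions at $r = 0$ vanish in the limit. I expect no genuine obstacle: the entire argument is a careful but routine accounting of the first- and second-order terms produced by conjugating $\partial_{r}$ with powers of $\sinh r$ and $r$.
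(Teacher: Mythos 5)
Your proposal takes essentially the same route as the paper: complete the square in $D := \partial_r + \rho\coth r$ and integrate by parts for the first identity, add and subtract a Hardy term $m r^{-1}u$ for the second, then obtain \eqref{equ:weighted_hardy_poincare_whole} by adjoining the non-negative angular contribution and dropping the remaining non-negative square. Packaging the integration by parts through the observation $D^{\ast}=-D$ (equivalently $D = \sinh^{-\rho}r\,\partial_r\,\sinh^{\rho}r$) is a clean reorganization but amounts to the same computation as the paper's direct expansion.

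One point you should pin down rather than hedge: the sign of $m r^{-1}$ is not cosmetic. Taking $\widetilde{D} = D + m r^{-1}$ as you propose, your own cross-term formula
\begin{equation*}
2\,\Re\,\angles{w r^{-1} u}{D u} \;=\; -\angles{(\partial_r w) r^{-1} u}{u} + \angles{w r^{-2} u}{u}
\end{equation*}
gives
\begin{equation*}
\int_{\bbH^d} w\,|Du|^2\,\dh \;=\; \int_{\bbH^d} w\,|\widetilde{D}u|^2\,\dh \;+\; m\int_{\bbH^d}\frac{\partial_r w}{r}\,|u|^2\,\dh \;-\; m(1+m)\int_{\bbH^d}\frac{w}{r^2}\,|u|^2\,\dh,
\end{equation*}
and the coefficient $-m(1+m)<0$ is the wrong sign: discarding $\int w|\widetilde{D}u|^2 \geq 0$ would then produce a \emph{negative} multiple of $\int w r^{-2}|u|^2$ and the desired lower bound \eqref{equ:weighted_hardy_poincare_whole} would not follow. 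The choice that works is $\widetilde{D} = D - m r^{-1}$, which yields the coefficient $+m(1-m)$ (positive for $0<m<1$) and the sign $-m\int (\partial_r w)r^{-1}|u|^2$ matching the statement; this is also what appears in Lemma~\ref{l:thr-hardy}. (The displayed lemma has a typo here — it prints $\partial_r+\rho\coth r+mr^{-1}$ and omits the weight $w$ in the $m(1-m)$ term — so be guided by the coefficient $m(1-m)$, not by the sign as printed.) With $\widetilde{D}=D-mr^{-1}$ fixed, the remainder of your argument is correct: the first identity follows from the integration by parts you record, and specializing to $m=\tfrac12$, discarding $\int w|\widetilde{D}u|^2$, and combining $\tfrac14 r^{-2}$ with $\rho(\rho-1)\sinh^{-2}r$ produces $U$ exactly as you describe in both cases $d\geq 3$ and $d=2$.
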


\begin{proof}
Estimate \eqref{equ:weighted_hardy_poincare_whole} is a direct consequence of \eqref{equ:weighted_hardy_poincare} so we concentrate on the latter. First
\begin{align*}
\begin{split}
&\int_{\bbH^d}w(|\partial_ru|^2-\rho^2\coth^2r|u|^2)\,\dh\\
&=\int_{\bbH^d}w|\partial_ru-\rho\coth r u|^2\,\dh-2\rho^2\int_{\bbH^d}w\coth^2r|u|^2\,\dh\\
&\quad-\rho\int_{\bbH^d}w\coth r\partial_r|u|^2\,\dh\\
&=\int_{\bbH^d}w|\partial_ru-\rho\coth r u|^2\,\dh+\rho\int_{\bbH^d}\partial_r(w\coth r)|u|^2\,\dh\\
&=\int_{\bbH^d}w|\partial_ru-\rho\coth r u|^2\,\dh-\rho\int_{\bbH^d}w\sinh^{-2}r|u|^2\,\dh\\
&\quad+\rho\int_{\bbH^d}(\partial_rw)\coth r|u|^2\,\dh.
\end{split}
\end{align*}
Noting that $\rho^2\coth^2r=\rho^2+\rho^2\sinh^{-2}r$ and rearranging we get the first identity in \eqref{equ:weighted_hardy_poincare}. For the second identity it suffices to observe that using integration by parts
\begin{align*}
\begin{split}
&\int_{\bbH^d}w|\partial_ru-\rho\coth r u|^2\,\dh=\int_{\bbH^d}w|\partial_ru-\rho\coth r u-mr^{-1}u+mr^{-1}u|^2\,\dh\\
&=\int_{\bbH^d}w|\partial_ru-\rho\coth r u-mr^{-1}u|^2\,\dh+m(1-m)\int_{\bbH^d}\frac{w}{r^2}|u|^2\,\dh\\
&\quad-m\int_{\bbH^d}\frac{\partial_rw}{r}|u|^2\,\dh.
\end{split}
\end{align*}
\end{proof}

 Next we apply the weighted Hardy-Poincar\'e estimate~\eqref{equ:weighted_hardy_poincare_whole} to our main multiplier identity~\eqref{eq:Q1}. The resulting positive commutator type estimate is at the heart of the proof of our low-frequency local smoothing estimate~\eqref{equ:low_frequency_local_smoothing_estimate}.
 
 \begin{lem} \label{l:weighted_HP_applied_to_main_multiplier_identity}
  Let $d \geq 2$. There exist constants $C \geq 1$ and $R \geq 1$ with the following property: For any slowly varying sequence $\{ \alpha_\ell \} \in \calA$ with associated slowly varying function~$\alpha$ as in Definition~\ref{d:alpha}, let $\beta^{(\alpha)}$ and $Q^{(\alpha)}$ be defined as in~\eqref{equ:definition_beta_low_freq} and~\eqref{equ:definition_Q_low_freq}, respectively. Then  for all $u \in H^2(\bbH^d)$ 
  \begin{equation} \label{equ:positive_commutator_lower_bound_from_HP}
   \Re \langle i \Delta u, Q^{(\alpha)} u \rangle \geq \langle U (\partial_r \beta^{(\alpha)}) u, u \rangle - C \| u \|_{L^2(\{ r \leq R\})}^2,
  \end{equation}
  where $U$ is defined in~\eqref{eq:HP_U_def}. Moreover, 
  \begin{equation} \label{equ:additional_bounds_positive_commutator_from_HP}
   \big\langle \frac{\partial_r \beta^{(\alpha)}}{\langle r \rangle^2} u, u \big\rangle + \big| \Re\langle (\partial_r \beta^{(\alpha)}) (\Delta + \rho^2) u, u \rangle \bigr| \lesssim \Re \langle i \Delta u, Q^{(\alpha)} u \rangle + \|u\|_{L^2(\{r \leq R\})}^2.
  \end{equation}
 \end{lem}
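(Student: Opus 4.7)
Set $w := \partial_r\beta^{(\alpha)} = \alpha(r)/\langle r\rangle$. The plan is to apply the main multiplier identity \eqref{eq:Q1} to express $\Re\langle i\Delta u, Q^{(\alpha)}u\rangle$ as six terms, then add and subtract $2\langle w\sinh^{-2}r|\snabla u|^2\rangle$ so that the ``top order'' part assembles into $2[\langle w|\nabla u|^2\rangle - \rho^2\langle wu,u\rangle]$, to which Lemma~\ref{lem:HP} applies. The remaining angular piece $2\langle(\beta\coth r - w)\sinh^{-2}r|\snabla u|^2\rangle$ is non-negative since $\beta\coth r - w \geq 0$ (as is easily verified from the explicit form of $\beta^{(\alpha)}$), so it may be dropped.

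The key algebraic step is the computation
\begin{equation*}
\Delta(\beta\coth r) - 2\rho w = (\partial_r w)\coth r + 2(\rho-1)\sinh^{-2}r\,(w - \beta\coth r),
\end{equation*}
which is a direct calculation using $\partial_r(\beta\coth r) = w\coth r - \beta\sinh^{-2}r$ and $\coth^2 r = 1 + \sinh^{-2}r$. Substituting this together with the HP bound (multiplied by $2$) and using $\Delta w = \partial_r^2 w + 2\rho\coth r\,\partial_r w$, one finds a miraculous cancellation: the coefficient of $\coth r\,\partial_r w$ becomes $2\rho - \rho - \rho = 0$. The leftover zeroth-order contribution simplifies to
\begin{equation*}
\Re\langle i\Delta u, Q^{(\alpha)}u\rangle \geq 2\langle Uw u, u\rangle - \tfrac{1}{2}\langle(\partial_r^2 w)u,u\rangle - \langle (\partial_r w)r^{-1} u, u\rangle + 2\rho(\rho-1)\langle\sinh^{-2}r(\beta\coth r - w)u,u\rangle,
\end{equation*}
modulo the dropped non-negative angular term.

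The main analytic step is then to show the pointwise inequality
\begin{equation*}
U w - \tfrac{1}{2}\partial_r^2 w - r^{-1}\partial_r w \geq c\,Uw, \qquad r \geq R,
\end{equation*}
for some absolute $c > 0$, once $R$ is fixed large enough. This follows from a direct asymptotic expansion: for $r \geq 1$ one computes $\partial_r w \approx \alpha'/r - \alpha/r^2$ and $\partial_r^2 w \approx \alpha''/r - 2\alpha'/r^2 + 2\alpha/r^3$, so
\begin{equation*}
Uw - \tfrac{1}{2}\partial_r^2 w - r^{-1}\partial_r w = \tfrac{\alpha}{4r^3} - \tfrac{\alpha''}{2r} + O\bigl(\alpha r^{-4}\bigr),
\end{equation*}
and the slowly varying bound $|\alpha''| \leq C\eta\alpha/r^2$ (Proposition/Definition~\ref{d:alpha}) allows one to absorb $|\alpha''/(2r)|$ into a small fraction of $\alpha/(4r^3)$, provided $\eta$ is chosen as a small absolute constant and $R$ is taken correspondingly large. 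The contributions on $\{r \leq R\}$ of all the zeroth-order terms, together with the exponentially small $\sinh^{-2}r$ term on $\{r\geq R\}$ (which is delicate only when $d=2$, i.e.\ $\rho(\rho-1)<0$, but then $\sinh^{-2}r \lesssim e^{-2r}$ beats any polynomial weight), are dominated by $C\|u\|^2_{L^2(\{r\leq R\})}$, yielding \eqref{equ:positive_commutator_lower_bound_from_HP}. The main obstacle is keeping track of these competing decays and verifying that the cancellation genuinely gives a positive leading coefficient; this is where the slowly varying hypothesis on $\alpha$ is essential and dictates the choice of $\eta$.

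For the auxiliary estimate \eqref{equ:additional_bounds_positive_commutator_from_HP}, the bound on $\langle w\langle r\rangle^{-2} u, u\rangle$ is immediate from \eqref{equ:positive_commutator_lower_bound_from_HP} together with the pointwise comparison $w\langle r\rangle^{-2} \lesssim Uw$ on $\{r\geq R\}$ and $w\langle r\rangle^{-2} \lesssim 1_{\{r \leq R\}}$ on $\{r\leq R\}$. For the second piece, one integrates by parts to write $\Re\langle w(\Delta+\rho^2)u,u\rangle = -\langle w|\nabla u|^2\rangle + \rho^2\langle w|u|^2\rangle - \tfrac{1}{2}\langle(\Delta w)u, u\rangle$, and then uses the main multiplier identity rearranged as
\begin{equation*}
2\bigl[\langle w|\nabla u|^2\rangle - \rho^2\langle wu,u\rangle\bigr] = \Re\langle i\Delta u, Q^{(\alpha)} u\rangle - 2\langle(\beta\coth r - w)\sinh^{-2}r|\snabla u|^2\rangle + (\text{zeroth order})
\end{equation*}
to upper bound $\langle w|\nabla u|^2\rangle - \rho^2\langle wu, u\rangle$ by $\Re\langle i\Delta u, Q^{(\alpha)} u\rangle + \|u\|^2_{L^2(\{r\leq R\})}$ (using \eqref{equ:positive_commutator_lower_bound_from_HP} to control the dropped non-negative angular piece, and bounded derivatives of $w$ to handle the zeroth-order terms).
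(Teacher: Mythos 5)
Your proof of \eqref{equ:positive_commutator_lower_bound_from_HP} follows essentially the same route as the paper: add and subtract the radial weight in the angular term to assemble $|\nabla u|^2$, apply the weighted Hardy--Poincar\'e inequality (Lemma~\ref{lem:HP} with $w = \partial_r\beta^{(\alpha)}$), verify that the $\coth r\,\partial_r w$ terms cancel (your $2\rho - \rho - \rho = 0$ is correct; in the paper the cancellation is built into the grouping \eqref{equ:main_multiplier_identity_rewritten} so the $-\rho(\partial_r^2\beta)\coth r$ piece exactly matches the left side of \eqref{equ:weighted_hardy_poincare_whole}), drop the non-negative $(\beta\coth r-w)\sinh^{-2}r|\snabla u|^2$ term, and check positivity of the remaining zeroth-order coefficient by the asymptotic expansion using the symbol bounds on $\alpha$, handling the $d=2$ sign of $\rho(\rho-1)$ with the exponential decay of $\sinh^{-2}r$. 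The calculations you display are correct, and this is the content of \eqref{equ:main_multiplier_identity_rewritten} together with Lemma~\ref{l:technical_lemma_low_freq} in the paper.

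The treatment of \eqref{equ:additional_bounds_positive_commutator_from_HP} has a genuine gap in the bound on $\bigl|\Re\langle(\partial_r\beta^{(\alpha)})(\Delta+\rho^2)u,u\rangle\bigr|$. First, a sign: integration by parts gives $\Re\langle w(\Delta+\rho^2)u,u\rangle = -\langle w|\nabla u|^2\rangle + \rho^2\langle wu,u\rangle \textbf{$+$}\,\tfrac{1}{2}\langle(\Delta w)u,u\rangle$, not $-\tfrac{1}{2}$. More seriously, you attempt to bound the quantity $A := \langle w|\nabla u|^2\rangle - \rho^2\langle wu,u\rangle$ directly, but $A$ alone is not controllable in absolute value by $\Re\langle i\Delta u,Q^{(\alpha)}u\rangle + \|u\|^2_{L^2(\{r\leq R\})}$: by Hardy--Poincar\'e the \emph{only} non-negative quadratic form available is
\begin{equation*}
 \calB(u) := \int_{\bbH^d} w\bigl(|\nabla u|^2 - \rho^2|u|^2\bigr)\,\dh - \rho\int_{\bbH^d}\coth r\,(\partial_r w)|u|^2\,\dh,
\end{equation*}
and $A = \calB(u) + \rho\langle\coth r(\partial_r w)u,u\rangle$ can be as negative as the latter term, which is of size $\langle\frac{w}{r}|u|^2\rangle$ at large $r$ (since $\coth r\,\partial_r w \sim -w/r$, using $|\partial_r w|\lesssim \alpha/r^2$). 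That is a full order of $r$ worse than $\langle\frac{w}{\langle r\rangle^2}|u|^2\rangle$, so it cannot be absorbed into the first part of \eqref{equ:additional_bounds_positive_commutator_from_HP}. For the same reason your appeal to ``bounded derivatives of $w$ to handle the zeroth-order terms'' fails: the $\rho\coth r\,\partial_r w$ piece of $\tfrac{1}{2}\Delta w$ has only $w/r$ decay. The fix is precisely the paper's: write $\Re\langle w(\Delta+\rho^2)u,u\rangle = -\calB(u) + \tfrac{1}{2}\langle(\partial_r^3\beta^{(\alpha)})u,u\rangle$ so that the dangerous $\coth r\,\partial_r w$ terms recombine into the non-negative form $\calB(u)$, which is both $\geq 0$ (giving the upper bound on $\Re\langle w(\Delta+\rho^2)u,u\rangle$) and bounded above by the right-hand side via the multiplier identity and \eqref{equ:positive_commutator_lower_bound_from_HP}; the leftover $\partial_r^3\beta^{(\alpha)}$ piece is then handled by \eqref{equ:third_derivative_beta_bound}.
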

  
 \begin{rem} \label{rem:weighted_HP_applied_to_main_multiplier_identity_s3version}
  In combining the low and high-frequency estimates in Section~\ref{s:trans} we will need an analogue of  Lemma~\ref{l:weighted_HP_applied_to_main_multiplier_identity} when the multiplier is defined as in Section~\ref{s:high} for high frequencies. Therefore, for future reference we remark that, by inspection, our proof also yields the following result:  Let $d \geq 2$ and let $s_3 \simeq 1$. There exist constants $C \geq 1$ and ${R \geq 1}$ with the following property: For any slowly varying sequence $\{ \alpha_\ell \} \in \calA$ with associated slowly varying function~$\alpha$ as in Definition~\ref{d:alpha}, let $\beta_{s_3}^{(\alpha)}$ and $Q_{s_3}^{(\alpha)}$ be defined as in Defintion~\ref{d:beta_high}. Then for all $u \in H^2(\bbH^d)$
  \begin{equation} \label{equ:positive_commutator_lower_bound_from_HP_s3version}
   \Re \langle i \Delta u, Q_{s_3}^{(\alpha)} u \rangle \geq \langle U (\partial_r \beta_{s_3}^{(\alpha)}) u, u \rangle - C \| u \|_{L^2(\{ r \leq R\})}^2,
  \end{equation}
  where $U$ is defined in~\eqref{eq:HP_U_def}. Moreover, 
  \begin{equation} \label{equ:additional_bounds_positive_commutator_from_HP_s3version}
   \big\langle \frac{\partial_r \beta_{s_3}^{(\alpha)}}{\langle r \rangle^2} u, u \big\rangle + \big|\Re \langle (\partial_r \beta_{s_3}^{(\alpha)}) (\Delta + \rho^2) u, u \rangle \bigr| \lesssim \Re \langle i \Delta u, Q_{s_3}^{(\alpha)} u \rangle + \|u\|_{L^2(\{r \leq R\})}^2.
  \end{equation}
 \end{rem}
 
 Before we can turn to the proof of Lemma~\ref{l:weighted_HP_applied_to_main_multiplier_identity} we need the following technical lemma.
 \begin{lem} \label{l:technical_lemma_low_freq}
  There exist constants $C \geq 1$ and $R \geq 1$ with the following property: For any slowly varying sequence $\{ \alpha_\ell \} \in \calA$ with associated slowly varying function~$\alpha$ as in Definition~\ref{d:alpha}, let $\beta^{(\alpha)}$ be defined as in~\eqref{equ:definition_beta_low_freq}. Then we have 
  \begin{align}
   \partial_r^2 \beta^{(\alpha)} &\leq 0, \label{equ:second_derivative_beta_nonnegative} \\
   \beta^{(\alpha)} \coth(r) - \partial_r \beta^{(\alpha)} &\geq 0, \label{equ:beta_coth_minus_partial_beta_nonnegative} \\
   |\partial_r^3 \beta^{(\alpha)}| &\leq C \frac{\partial_r \beta^{(\alpha)}}{\langle r \rangle^2}, \label{equ:third_derivative_beta_bound} 
  \end{align}
  and the following lower bounds hold
  \begin{align}
   \frac{1}{2} U \partial_r \beta^{(\alpha)} - \frac{\partial_r^2 \beta^{(\alpha)}}{2 r} - \frac{\partial_r^3 \beta^{(\alpha)}}{4} &\geq 0, \label{equ:Ubeta_sum_3d_lower_bound} \\ 
   \frac{1}{2} U \partial_r \beta^{(\alpha)} - \frac{\partial_r^2 \beta^{(\alpha)}}{2 r} - \frac{\partial_r^3 \beta^{(\alpha)}}{4} - \frac{ \beta^{(\alpha)} \coth(r) - \partial_r \beta^{(\alpha)} }{4 \sinh^2(r)} &\geq - C \chi_{\{ r \leq R\}}, \label{equ:Ubeta_sum_2d_lower_bound} \\
   \biggl| \frac{\beta^{(\alpha)} \coth(r) - \partial_r \beta^{(\alpha)}}{\sinh^2(r)} \biggr| &\lesssim \frac{\partial_r \beta^{(\alpha)}}{\langle r \rangle^2} + \chi_{\{ r \leq R\}}, \quad \quad \label{eq:extrabetacothrbetarbound}
  \end{align}
  where $\chi_{\{ r \leq R\}}$ denotes a cut-off function to the region $\{ r \leq R \}$.
 \end{lem}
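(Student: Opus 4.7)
The starting point is the explicit computation of the derivatives of $\beta^{(\alpha)}$. From $\partial_r \beta^{(\alpha)} = \alpha(r)/\langle r \rangle$ one reads off
\[
\partial_r^2 \beta^{(\alpha)} = \frac{\alpha'(r)}{\langle r \rangle} - \frac{\alpha(r)\, r}{\langle r \rangle^3}, \qquad \partial_r^3 \beta^{(\alpha)} = \frac{\alpha''(r)}{\langle r \rangle} - \frac{2 \alpha'(r)\, r}{\langle r \rangle^3} - \frac{\alpha(r)}{\langle r \rangle^3} + \frac{3 \alpha(r)\, r^2}{\langle r \rangle^5}.
\]
Then \eqref{equ:second_derivative_beta_nonnegative} is immediate from the monotonicity property in Definition~\ref{d:alpha} that $y \mapsto \alpha(y)/\langle y \rangle$ is non-increasing. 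For \eqref{equ:beta_coth_minus_partial_beta_nonnegative} the same monotonicity yields $\beta^{(\alpha)}(r) \geq r\, \partial_r \beta^{(\alpha)}(r)$, and combining this with the elementary bound $r \coth r \geq 1$ gives the desired non-negativity. The bound \eqref{equ:third_derivative_beta_bound} is a direct pointwise estimate on each of the four terms in the formula for $\partial_r^3 \beta^{(\alpha)}$ above, using $\alpha' = \alpha'' = 0$ for $r \leq 1$ and the bounds $|\alpha^{(j)}(y)| \lesssim \alpha(y)/\langle y\rangle^j$ for $j = 1, 2$ on the region $r \geq 1$.

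The main work lies in verifying the differential inequalities \eqref{equ:Ubeta_sum_3d_lower_bound} and \eqref{equ:Ubeta_sum_2d_lower_bound}. Substituting the expressions above, a direct algebraic manipulation yields the key identity
\[
-\frac{\partial_r^2 \beta^{(\alpha)}}{2r} - \frac{\partial_r^3 \beta^{(\alpha)}}{4} = \frac{3\, \alpha(r)}{4 \langle r \rangle^5} - \frac{\alpha'(r)}{2 r \langle r \rangle^3} - \frac{\alpha''(r)}{4 \langle r \rangle},
\]
where the cancellation between the $\alpha/\langle r\rangle^3$ and $\alpha r^2/\langle r\rangle^5$ terms is the source of the favorable sign. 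On the region $r \leq 1$ the last two terms on the right vanish, so the inequality $\tfrac{1}{2} U \partial_r \beta^{(\alpha)} - \tfrac{\partial_r^2 \beta^{(\alpha)}}{2r} - \tfrac{\partial_r^3 \beta^{(\alpha)}}{4} \geq 0$ follows immediately from $U \geq 0$. On the region $r \geq 1$, the bounds in Definition~\ref{d:alpha} give $|\alpha'(r)/(r \langle r\rangle^3)| + |\alpha''(r)/\langle r\rangle| \lesssim \eta\, \alpha(r)/(r^2 \langle r\rangle)$, while for $d \geq 3$ the weight $U$ satisfies $U \geq 1/(4 r^2)$ and therefore $\tfrac{1}{2} U \partial_r \beta^{(\alpha)} \geq \alpha(r)/(8 r^2 \langle r\rangle)$. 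Choosing the absolute constant $\eta$ in Definition~\ref{d:A} sufficiently small absorbs the error and yields \eqref{equ:Ubeta_sum_3d_lower_bound}.

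The inequality \eqref{equ:Ubeta_sum_2d_lower_bound} for $d = 2$ is the main obstacle, since $U = (r^{-2} - \sinh^{-2} r)/4$ degenerates to a bounded constant as $r \to 0$ so we can no longer rely on a Hardy singularity at the origin. Here the plan is to split into a bounded region $\{r \leq R\}$, where smoothness and boundedness of all quantities make the expression bounded from below by some $-C$ (absorbed into the $-C \chi_{\{r\leq R\}}$ error), and the exterior region $\{r \geq R\}$. In the exterior, $U \sim 1/(4r^2)$ recovers the Hardy-type behavior of the $d \geq 3$ case, while the additional negative term $-(\beta^{(\alpha)} \coth r - \partial_r \beta^{(\alpha)})/(4 \sinh^2 r)$ is controlled using the uniform bound $\beta^{(\alpha)} \coth r \lesssim 1$ (coming from $\sum \alpha_\ell \aleq 1$ for $\{\alpha_\ell\} \in \calA$) against the exponential decay of $1/\sinh^2 r$. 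Choosing $R$ sufficiently large makes the exponential factor dominate and gives the result. Finally, \eqref{eq:extrabetacothrbetarbound} is established by the same split: the bounded region is controlled by the cutoff $\chi_{\{r\leq R\}}$, and for $r \geq R$ one combines the exponential decay $1/\sinh^2 r \aleq e^{-2r}$ with the lower bound $\alpha(r) \geq c \langle r\rangle^{-\eta}$ from Definition~\ref{d:alpha}, which gives $\partial_r \beta^{(\alpha)}/\langle r\rangle^2 \gtrsim \langle r\rangle^{-3-\eta}$, easily dominating the exponentially decaying left-hand side.
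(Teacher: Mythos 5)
Your proof follows the same overall approach as the paper --- the same key cancellation identity, the same $r\leq 1$ versus $r>1$ split, and the same exterior-region argument for the $d=2$ case --- so it is correct in outline. There are two small points worth flagging.

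First, for \eqref{equ:Ubeta_sum_2d_lower_bound} you dispose of the region $\{r\leq R\}$ by asserting that ``smoothness and boundedness of all quantities make the expression bounded from below.'' This glosses over the fact that the term $-(\beta^{(\alpha)}\coth r - \partial_r\beta^{(\alpha)})/(4\sinh^2 r)$ is not obviously bounded near $r=0$, since $\sinh^{-2}r \sim r^{-2}$ there. What makes it bounded is that the numerator vanishes to second order: since $\alpha$ is constant on $[0,1]$ one has $\beta^{(\alpha)}(r) = \alpha(0)\,r + O(r^3)$ and $\partial_r\beta^{(\alpha)}(r) = \alpha(0) + O(r^2)$, giving $\beta^{(\alpha)}\coth r - \partial_r\beta^{(\alpha)} = O(r^2)$. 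The paper makes this explicit by computing the finite limit as $r \searrow 0$ (and noting it is uniform in $\{\alpha_\ell\}\in\calA$ because $\alpha(0)\simeq 1$). Your argument needs this Taylor expansion --- without it the claim of boundedness on $\{r\leq R\}$ is not justified.

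Second, your proof of \eqref{equ:Ubeta_sum_3d_lower_bound} is explicitly carried out only for $d\geq 3$, where $U\geq 1/(4r^2)$ and the $U$-term alone absorbs the $\alpha',\alpha''$ errors. The lemma as stated makes no dimensional restriction, and in $d=2$ the weight $U = \frac{1}{4}(r^{-2}-\sinh^{-2}r)$ is too small near $r=1$ for the $U$-term alone to dominate. The paper's proof is uniform in $d\geq 2$ because it also retains the positive $\frac{3\alpha}{4\langle r\rangle^5}$ term from the identity you derived, and this term compensates near $r=1$. If you want your argument to cover $d=2$ as well, you should keep that term rather than dropping it. (That said, in the downstream application --- the proof of the weighted Hardy--Poincar\'e lower bound in Lemma~\ref{l:weighted_HP_applied_to_main_multiplier_identity} --- \eqref{equ:Ubeta_sum_3d_lower_bound} is only invoked when $d\geq 3$ and \eqref{equ:Ubeta_sum_2d_lower_bound} when $d=2$, so your version still suffices for the way the lemma is actually used.)
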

 \begin{proof}
  In what follows we drop the superscript from $\beta^{(\alpha)}$. In view of the definition~\eqref{equ:definition_beta_low_freq} of $\beta(r)$ and the fact that the function $y \mapsto \frac{\alpha(y)}{\langle y \rangle}$ is non-increasing by Definition~\ref{d:alpha}, the property~\eqref{equ:second_derivative_beta_nonnegative} is immediate and the inequality~\eqref{equ:beta_coth_minus_partial_beta_nonnegative} follows readily from the identity
  \begin{align*}
   &\beta(r) \coth(r) - (\partial_r \beta)(r) \\
   &\quad = \biggl( \int_0^r \Bigl( \frac{\alpha(y)}{\langle y \rangle} - \frac{\alpha(r)}{\langle r \rangle} \Bigr) \, dy \biggr) \coth(r) + (r \coth(r) - 1) (\partial_r \beta)(r) \geq 0,
  \end{align*}
  since $r \coth(r) - 1 \geq 0$ for all $r > 0$. In order to prove~\eqref{equ:third_derivative_beta_bound}--\eqref{equ:Ubeta_sum_2d_lower_bound} we first compute 
  \begin{align*}
   \partial_r \beta = \frac{\alpha}{\langle r \rangle}, \quad \partial_r^2 \beta = \frac{\alpha'}{\langle r \rangle} - \frac{r \alpha}{\langle r \rangle^3}, \quad \partial_r^3 \beta = \frac{\alpha''}{\langle r\rangle} - \frac{2 r \alpha'}{\langle r \rangle^3} + \frac{(2 r^2 - 1) \alpha}{\langle r \rangle^5}.
  \end{align*}
  Then the bound~\eqref{equ:third_derivative_beta_bound} is a consequence of the symbol-type bounds $|\alpha^{(j)}(r)| \lesssim \frac{\alpha(y)}{\langle y \rangle^j}$ for the slowly varying function $\alpha$ as in Definition~\ref{d:alpha}. Moreover, we see that
  \begin{equation} \label{equ:Ubeta_sum_small_terms}
   - \frac{\partial_r^2 \beta}{2r} - \frac{\partial_r^3 \beta}{4} = \frac{3 \alpha}{4 \langle r \rangle^5} - \frac{\alpha'}{2 r \langle r \rangle^3} - \frac{\alpha''}{4 \langle r \rangle}.
  \end{equation}
  Now recall from Definition~\ref{d:alpha} that the slowly varying function $\alpha$ satisfies $\alpha'(r) = 0$ for all $r \leq 1$. In view of~\eqref{equ:Ubeta_sum_small_terms} we therefore only have to verify~\eqref{equ:Ubeta_sum_3d_lower_bound} for $r > 1$. Since by Definition~\ref{d:alpha} we also have for $r > 1$ that
  \begin{align*}
   |\alpha'(r)| \leq C \eta \frac{\alpha(r)}{r}, \quad |\alpha''(r)| \leq C \eta \frac{\alpha(r)}{r^2}
  \end{align*}
  for some absolute constants $C \geq 1$ and $0 < \eta \ll 1$, we conclude that for $r > 1$
  \begin{equation} \label{equ:Ubeta_sum_2d_lower_bound_prelim}
   \begin{aligned}
    \frac{1}{2} U \partial_r \beta - \frac{\partial_r^2 \beta}{2r} - \frac{\partial_r^3 \beta}{4} &= \frac{1}{2} U \frac{\alpha}{\langle r \rangle} + \frac{3 \alpha}{4 \langle r \rangle^5} - \frac{\alpha'}{2 r \langle r \rangle^3} - \frac{\alpha''}{4 \langle r \rangle} \\
    &\geq \frac{1}{16} \frac{\alpha}{r^3} - C \eta \frac{\alpha}{r^5} - C \eta \frac{\alpha}{r^3} \\
    &\geq \Bigl( \frac{1}{16} - 2 C \eta \Bigr) \frac{\alpha}{r^3}.
   \end{aligned}
  \end{equation}
  The right-hand side is non-negative for sufficiently small $0 < \eta \ll 1$, which completes the proof of~\eqref{equ:Ubeta_sum_3d_lower_bound}. Finally, we prove~\eqref{equ:Ubeta_sum_2d_lower_bound}. To this end we first note that Definition~\ref{d:alpha} of a slowly varying function and the definition~\eqref{equ:definition_beta_low_freq} of $\beta$ imply that  
  \begin{equation} \label{equ:growth_bounds_beta_and_alpha}
   \beta(r) \lesssim 1 \quad \text{ and } \quad \alpha(r) \gtrsim \frac{1}{\langle r \rangle^\eta} \quad \forall \, r \geq 0
  \end{equation}
  uniformly for all slowly varying functions $\alpha$ associated with a sequence $\{ \alpha_\ell \} \in \calA$. Then using $\beta''(0)=0$ one easily verifies that 
  \[
   \lim_{r \searrow 0} - \frac{\beta \coth(r) - \partial_r \beta}{4 \sinh^2(r)} = - \frac{\alpha(0)}{12}.
  \]
  Hence, given any $R \geq 1$, by continuity there exists $C \equiv C(R)$ such that 
  \begin{equation} \label{equ:Ubeta_sum_2d_lower_bound_extra_term_near_origin}
   - \frac{\beta \coth(r) - \partial_r \beta}{4 \sinh^2(r)} \geq - C(R) \quad \forall \, 0 < r \leq R
  \end{equation}
  uniformly for all slowly varying functions $\alpha$ associated with a sequence $\{ \alpha_\ell \} \in \calA$. Moreover, we infer from~\eqref{equ:Ubeta_sum_2d_lower_bound_prelim} and~\eqref{equ:growth_bounds_beta_and_alpha} for any $r > 1$ that
  \begin{equation}
   \begin{aligned}
    \frac{1}{2} U \partial_r \beta - \frac{\partial_r^2 \beta}{2r} - \frac{\partial_r^3 \beta}{4} - \frac{\beta \coth(r) - \partial_r \beta}{4 \sinh^2(r)} &\geq \Bigl( \frac{1}{16} - 2 C \eta \Bigr) \frac{\alpha}{r^3} - \frac{\beta \coth(r)}{4 \sinh^2(r)} \\
    &\geq \Bigl( \frac{1}{16} - 2 C \eta \Bigr) \frac{\alpha}{r^3} - C \frac{ \langle r \rangle^\eta \, \alpha}{\sinh^2(r)},
   \end{aligned}
  \end{equation}
  which is non-negative for all $r \geq R$ with $R > 1$ sufficiently large by the exponential decay of $\sinh^{-2}(r)$. Combining this last observation with~\eqref{equ:Ubeta_sum_2d_lower_bound_extra_term_near_origin} finishes the proof of~\eqref{equ:Ubeta_sum_2d_lower_bound} as well as~\eqref{eq:extrabetacothrbetarbound}.
 \end{proof}

 \begin{rem}
  We expect that it should be possible to show non-negativity in~\eqref{equ:Ubeta_sum_2d_lower_bound} by arguing more carefully.
 \end{rem}

 We are now prepared to prove Lemma~\ref{l:weighted_HP_applied_to_main_multiplier_identity}.
 
 \begin{proof}[Proof of Lemma~\ref{l:weighted_HP_applied_to_main_multiplier_identity}]
  We first rewrite our main multiplier identity \eqref{eq:Q1} in a form which is more amenable to application of the weighted Hardy-Poincar\'e estimate~\eqref{equ:weighted_hardy_poincare_whole}:
  \begin{equation} \label{equ:main_multiplier_identity_rewritten}
   \begin{aligned}
    \frac{1}{2} \Re &\langle i \Delta u, Q^{(\alpha)} u \rangle \\
    &= \int_{\bbH^d} \bigl( (\partial_r \beta^{(\alpha)}) ( |\nabla u|^2 - \rho^2 |u|^2 ) - \rho (\partial_r^2 \beta^{(\alpha)}) \coth(r) |u|^2 \bigr) \, \dh \\
    &\quad + \int_{\bbH^d} \bigl( \beta^{(\alpha)} \coth(r) - \partial_r \beta^{(\alpha)} \bigr) \frac{1}{\sinh^2r}|\slashed{\nabla} u|^2 \, \dh \\
    &\quad + \int_{\bbH^d} \Bigl( -\frac{ \partial_r^3 \beta^{(\alpha)}}{4} + \frac{\rho (\rho - 1)}{\sinh^2(r)} \bigl( \beta^{(\alpha)} \coth(r) - \partial_r \beta^{(\alpha)} \bigr) \Bigr) |u|^2 \, \dh. 
   \end{aligned}
  \end{equation}
  Noting that the weight function $w := \partial_r \beta^{(\alpha)}$ satisfies the requirements of Lemma~\ref{lem:HP}, we then apply the weighted Hardy-Poincar\'e estimate~\eqref{equ:weighted_hardy_poincare_whole} to the first term on the right-hand side of the above equation to obtain that
  \begin{equation}
   \begin{aligned}
    \frac{1}{2} \Re &\langle i \Delta u, Q^{(\alpha)} u \rangle \\
    &\geq \int_{\bbH^d} U (\partial_r \beta^{(\alpha)}) |u|^2 \, \dh - \int_{\bbH^d} \frac{\partial_r^2 \beta^{(\alpha)}}{2 r} |u|^2 \, \dh \\
    &\quad + \int_{\bbH^d} \bigl( \beta^{(\alpha)} \coth(r) - \partial_r \beta^{(\alpha)} \bigr) \frac{1}{\sinh^2r}|\slashed{\nabla} u|^2 \, \dh \\
    &\quad + \int_{\bbH^d} \Bigl( -\frac{ \partial_r^3 \beta^{(\alpha)}}{4} + \frac{\rho (\rho - 1)}{\sinh^2(r)} \bigl( \beta^{(\alpha)} \coth(r) - \partial_r \beta^{(\alpha)} \bigr) \Bigr) |u|^2 \, \dh \\
    &= \int_{\bbH^d} \Bigl( U \partial_r \beta^{(\alpha)} - \frac{\partial_r^2 \beta^{(\alpha)}}{2 r} - \frac{\partial_r^3 \beta^{(\alpha)}}{4} \Bigr) |u|^2 \, \dh \\
    &\quad + \int_{\bbH^d} \bigl( \beta^{(\alpha)} \coth(r) - \partial_r \beta^{(\alpha)} \bigr) \frac{1}{\sinh^2r}|\slashed{\nabla} u|^2 \, \dh \\
    &\quad + \int_{\bbH^d} \frac{\rho (\rho - 1)}{\sinh^2(r)} \bigl( \beta^{(\alpha)} \coth(r) - \partial_r \beta^{(\alpha)} \bigr) |u|^2 \, \dh.
   \end{aligned}
  \end{equation}
  By Lemma~\ref{l:technical_lemma_low_freq} it then follows that
  \begin{equation}
   \frac{1}{2} \Re \langle i \Delta u, Q^{(\alpha)} u \rangle \geq \frac{1}{2} \int_{\bbH^d} U (\partial_r \beta^{(\alpha)}) |u|^2 \, \dh - C \| u \|_{L^2(\{r \leq R\})}^2,
  \end{equation}
  which completes the proof of~\eqref{equ:positive_commutator_lower_bound_from_HP}. Next we observe that for all space dimensions $d \geq 2$, the function $U(r)$ defined in~\eqref{eq:HP_U_def} satisfies 
  \begin{equation}
   U(r) \gtrsim \frac{1}{\langle r \rangle^2} \quad \forall \, r > 0.
  \end{equation}
  Combining this estimate with~\eqref{equ:positive_commutator_lower_bound_from_HP} yields the desired upper bound on the first term on the left-hand side of~\eqref{equ:additional_bounds_positive_commutator_from_HP}, namely
  \begin{equation} \label{equ:partial_beta_over_r_squared_upper_bound}
   \big\langle \frac{\partial_r \beta^{(\alpha)}}{\langle r \rangle^2} u, u \big\rangle \lesssim \Re \langle i \Delta u, Q^{(\alpha)} u \rangle + \|u\|_{L^2(\{r \leq R\})}^2.
  \end{equation}
  In order to also establish this upper bound for the second term on the left-hand side of~\eqref{equ:additional_bounds_positive_commutator_from_HP}, we introduce as an auxiliary tool the quadratic form 
  \begin{equation}
   \calB(u) := \int_{\bbH^d} (\partial_r \beta^{(\alpha)}) ( |\nabla u|^2 - \rho^2 |u|^2) \, \dh - \int_{\bbH^d} \rho \coth(r) (\partial_r^2 \beta^{(\alpha)}) |u|^2 \, \dh.
  \end{equation}
  An application of the weighted Hardy-Poincar\'e estimate~\eqref{equ:weighted_hardy_poincare_whole} with $w := \partial_r \beta^{(\alpha)}$ shows that $\calB(u)$ is non-negative:
  \begin{equation}
   \calB(u) \geq \langle U (\partial_r \beta^{(\alpha)}) u, u \rangle - \bigl\langle \frac{\partial_r^2 \beta^{(\alpha)}}{2r} u, u \bigr\rangle \geq 0
  \end{equation}
  since $-\partial_r^2 \beta^{(\alpha)} \geq 0$ by Lemma~\ref{l:technical_lemma_low_freq}. Then we observe that the first term on the right-hand side of the identity~\eqref{equ:main_multiplier_identity_rewritten} is exactly $\calB(u)$. Using also~\eqref{equ:beta_coth_minus_partial_beta_nonnegative}, we may therefore conclude that
  \begin{align*}
   \calB(u) \lesssim  \Re \langle i \Delta u, Q^{(\alpha)} u \rangle  + \bigl| \langle (\partial_r^3 \beta^{(\alpha)}) u, u \rangle \bigr| + \biggl| \bigl\langle \frac{ \beta^{(\alpha)} \coth(r) - \partial_r \beta^{(\alpha)} }{ \sinh^{2}(r) } u, u \bigr\rangle \biggr|. 
  \end{align*}
Now, by \eqref{eq:extrabetacothrbetarbound} there exists a sufficiently large $R \geq 1$ such that 
  \begin{equation}
   \biggl| \frac{\beta^{(\alpha)} \coth(r) - \partial_r \beta^{(\alpha)}}{\sinh^2(r)} \biggr| \lesssim \frac{\partial_r \beta^{(\alpha)}}{\langle r \rangle^2} + \chi_{\{ r \leq R\}} \quad \forall \, r > 0.
  \end{equation}
  Thus, using also~\eqref{equ:third_derivative_beta_bound} and~\eqref{equ:partial_beta_over_r_squared_upper_bound}, we infer that
  \begin{equation} \label{equ:quadratic_form_B_upper_bound}
   \calB(u) \lesssim \Re \langle i \Delta u, Q^{(\alpha)} u \rangle + \|u\|_{L^2(\{r \leq R\})}^2.
  \end{equation}
  Finally, the desired upper bound on $\bigl| \Re \langle (\partial_r \beta) (\Delta + \rho^2) u, u \rangle \bigr|$ follows from the estimates~\eqref{equ:third_derivative_beta_bound}, \eqref{equ:partial_beta_over_r_squared_upper_bound}, \eqref{equ:quadratic_form_B_upper_bound} and the identity
  \begin{equation}
   \Re \langle (\partial_r \beta^{(\alpha)}) (\Delta + \rho^2) u, u \rangle = - \calB(u) + \frac{1}{2} \langle (\partial_r^3 \beta^{(\alpha)}) u, u \rangle.
  \end{equation}
  \end{proof}
Finally, we are in the position to provide the proof of the low-frequency local smoothing estimate of Proposition~\ref{prop:low_frequency_local_smoothing}.

\begin{proof}[Proof of Proposition~\ref{prop:low_frequency_local_smoothing}]
 Let $s > 0$. We first recall from Lemma~\ref{l:Xal} that 
 \begin{equation} \label{equ:proof_low_freq_est1}
  \| \tilP_{\geq s} u \|_{LE_\low} \simeq \sup_{\{ \alpha_\ell \} \in \calA} \| \tilP_{\geq s} u \|_{X_{\low, \alpha}}
 \end{equation}
 and from Lemma~\ref{lem:betaLErelation} that 
 \begin{equation} \label{equ:proof_low_freq_est2}
  \| \tilP_{\geq s} u \|_{X_{\low, \alpha}}^2 \simeq \bigl\langle \frac{\alpha}{\langle r \rangle^3} \tilP_{\geq s} u, \tilP_{\geq s} u \bigr\rangle_{t,x} \lesssim \langle U (\partial_r \beta^{(\alpha)}) \tilP_{\geq s} u, \tilP_{\geq s} u \rangle_{t,x}.
 \end{equation}
 Moreover, we recall the identity (see \eqref{eq:dtQu})
 \begin{equation} \label{equ:proof_low_freq_est3}
  \Re \langle i \Delta \tilP_{\geq s} u, Q^{(\alpha)} \tilP_{\geq s} u \rangle = \frac{1}{2} \frac{\ud}{\ud t} \langle \tilP_{\geq s} u, Q^{(\alpha)} \tilP_{\geq s} u \rangle - \Re \langle (\partial_t - i \Delta) \tilP_{\geq s} u, Q^{(\alpha)} \tilP_{\geq s} u \rangle
 \end{equation}
 and note that by Lemma~\ref{l:weighted_HP_applied_to_main_multiplier_identity} we have 
 \begin{equation} \label{equ:low_frequency_smoothing_derivation1}
  \langle U (\partial_r \beta^{(\alpha)}) \tilP_{\geq s} u, \tilP_{\geq s} u \rangle \leq \Re \langle i \Delta \tilP_{\geq s} u, Q^{(\alpha)} \tilP_{\geq s} u \rangle + C \| \tilP_{\geq s} u \|_{L^2(\{ r \leq R\})}^2.
 \end{equation}
 Thus, integrating in time over~\eqref{equ:low_frequency_smoothing_derivation1} and taking the supremum over all slowly varying sequences $\{ \alpha_\ell \} \in \calA$ of the resulting inequality, we get the asserted estimate~\eqref{equ:low_frequency_local_smoothing_estimate} by combining with \eqref{equ:proof_low_freq_est1}, \eqref{equ:proof_low_freq_est2},  and \eqref{equ:proof_low_freq_est3}.
\end{proof}


\section{Smoothing for High Frequencies} \label{s:high} 


In this section we establish the high-frequency part of the local smoothing estimate in Theorem~\ref{t:LE1}. We note that for any $s > 0$ the frequency projection $\tilP_s u$ of a solution $u(t)$ to the linear Schr\"odinger equation~\eqref{eq:S} satisfies the equation
\begin{equation} \label{eq:Ps_H}
 (\partial_t - i \Delta) \tilP_s u = i \tilP_s F - i \tilP_s H_\lot u - i [ \tilP_s, H_\prin ] u - i (H_\prin + \Delta) \tilP_s u
\end{equation}
with initial value $\tilP_s u(0) = \tilP_s u_0$.
Given a heat time $s > 0$, a small positive number $0 < \delta \leq 1$, and a slowly varying sequence $\{ \alpha_\ell \} \in \calA$ with associated slowly varying function $\alpha$ as in Definition~\ref{d:alpha}, we recall the definition of the radial function 
\begin{equation} \label{equ:definition_beta_high_freq}
 \beta_s^{(\alpha)} := \beta_s^{(\alpha)}(r) := s^{\frac{1}{2}} \int_0^{\delta s^{-\frac{1}{2}} r} \frac{\alpha(y)}{\langle y \rangle} \, \ud y 
\end{equation}
and the self-adjoint operator
\begin{equation} \label{equ:definition_Q_high_freq}
 Q_s^{(\alpha)} := \frac{1}{i} \bigl( \beta_s^{(\alpha)} \partial_r - \partial_r^\ast \beta_s^{(\alpha)} \bigr)
\end{equation}
 from Definition~\ref{d:beta_high}. Before stating the main result of this section we decompose the right-hand side of \eqref{eq:Ps_H} a bit more. Since for high frequencies our local smoothing spaces distinguish between annuli of very small radii, we will not be able to treat the contribution of $(H_\prin + \Delta) \tilP_s u$ as an error term as crudely as in the low-frequency regime. In particular to absorb the contribution of 
\begin{align}\label{eq:tilFstemp1}
\begin{split}
\nabla_{\theta_a}(\bsh^{-1}-\bsa)^{\theta_a\theta_b}\nabla_{\theta_b}\tilP_su,
\end{split}
\end{align}
we will need to make use of the positive term
\begin{align}\label{eq:tilFstemp2}
\begin{split}
\angles{(\beta_s^{(\alpha)}\coth(r) - \partial_r\beta_s^{(\alpha)})\sinh^{-2}(r) \snabla\tilP_su}{\snabla \tilP_su}
\end{split}
\end{align}
in our main multiplier identity (see \eqref{eq:Q1}) which we simply discarded in Section~\ref{s:low}. For this reason we introduce the notation
\begin{align}\label{eq:tilFsdef}
\begin{split}
\tilF_s^{(\alpha)}&:=-\Re \langle (\partial_t - i \Delta) \tilP_s u, Q_s^{(\alpha)} \tilP_s u \rangle_{t,x}\\
&\quad \quad + 2 \angles{\ringa^{\theta_a\theta_b}(\coth(r) \beta_s^{(\alpha)} - \partial_r \beta_s^{(\alpha)})\partial_{\theta_a}\tilP_su}{\partial_{\theta_b}\tilP_su}_{{t,x}},
\end{split}
\end{align}
where we recall that
\begin{align*}
\begin{split}
\ringa:=\bsa-\bsh^{-1}.
\end{split}
\end{align*}
Note that in \eqref{eq:tilFsdef} we  have subtracted precisely the contribution of \eqref{eq:tilFstemp1} which needs to be absorbed by \eqref{eq:tilFstemp2}, and that this is consistent with the estimates derived in Section~\ref{ss:DeltaHdifference}. The following is the analogue of Proposition~\ref{prop:low_frequency_local_smoothing} in the high-frequency regime.
\begin{prop} \label{prop:high_frequency_local_smoothing}
 Let $d \geq 2$. Fix a dyadic heat time $s_2 \in 2^{\bbZ}$ and let $\tilF_s^{(\alpha)}$ be as in \eqref{eq:tilFsdef}. Then there exists a constant $R \equiv R(s_2) \geq 1$ such that for all solutions $u(t)$ to~\eqref{eq:S} it holds that
 \begin{equation} \label{equ:high_frequency_local_smoothing}
  \begin{aligned}
   \int_0^{\frac{s_2}{4}} s^{-\frac{1}{2}} \| \tilP_s u \|_{LE_s}^2 \, \ds &\lesssim_{s_2} \sup_{\{ \alpha_\ell \} \in \calA } \, \int_{\frac{s_2}{8}}^{s_2} \frac{1}{s} \langle (\partial_r \beta_s^{(\alpha)}) \tilP_s u, \tilP_s u \rangle_{t,x} \, \ds \\
   &\qquad \quad + \int_0^{s_2} \sup_{\{ \alpha_\ell \} \in \calA } \, \sup_{t \in \bbR} \, \bigl| \langle \tilP_s u(t), Q_s^{(\alpha)} \tilP_s u(t) \rangle \bigr| \, \ds \\
    &\qquad \quad + \int_0^{s_2} \sup_{\{ \alpha_\ell \} \in \calA } \, | \tilF_s^{(\alpha)} | \, \ds \\
   &\qquad \quad + \int_0^{s_2} \| \tilP_s u \|_{L^2(\bbR \times \{ r \leq R \})}^2 \, \ds.
  \end{aligned}
 \end{equation}
\end{prop}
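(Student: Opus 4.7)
The proof proceeds by a weighted multiplier argument, integrated in both $t$ and $s$, coupled with a heat-time integration by parts that plays the role of Bernstein's inequality. Fix $\{\alpha_\ell\}\in\calA$ with associated slowly varying function $\alpha$, and apply the main multiplier identity of Lemma~\ref{lem:Q} with $v = \tilP_s u$ and $\beta = \beta_s^{(\alpha)}$. Using the equation~\eqref{eq:Ps_H} together with the identity
\begin{equation*}
\Re\langle i\Delta \tilP_s u, Q_s^{(\alpha)}\tilP_s u\rangle = \tfrac{1}{2}\partial_t\langle \tilP_s u, Q_s^{(\alpha)}\tilP_s u\rangle - \Re\langle(\partial_t - i\Delta)\tilP_s u, Q_s^{(\alpha)}\tilP_s u\rangle,
\end{equation*}
integrating in $t$ over $\bbR$, and recognizing the angular second-order contribution that has been absorbed into $\tilF_s^{(\alpha)}$ (it was placed there precisely to dominate the $\ringa$-type error appearing in Proposition~\ref{p:DeltaHpQhigh}), I will obtain a pointwise-in-$s$ inequality of the schematic form
\begin{equation*}
2\langle (\partial_r\beta_s^{(\alpha)})\nabla\tilP_s u,\nabla\tilP_s u\rangle_{t,x} + 2\langle \beta_s^{(\alpha)} \coth r\,\sinh^{-2} r\, \snabla \tilP_s u,\snabla\tilP_s u\rangle_{t,x} \leq \mathrm{b.t.}(t) + \tilF_s^{(\alpha)} + \mathcal{E}_s^{(\alpha)},
\end{equation*}
where $\mathrm{b.t.}(t)$ stands for the temporal boundary terms bounded by $\sup_t|\langle\tilP_s u, Q_s^{(\alpha)}\tilP_s u\rangle|$ and $\mathcal{E}_s^{(\alpha)}$ collects the lower-order contributions $-2\rho^2\langle(\partial_r\beta_s^{(\alpha)})\tilP_s u,\tilP_s u\rangle$ and the $\partial_r^2,\partial_r^3$-weight terms appearing on the right-hand side of \eqref{eq:Q1}.

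Next I will multiply by $s^{-1/2}$ and integrate in $s \in (0, s_2/4)$, using a transitioning identity that replaces the absent sharp Bernstein inequality. From $\tilP_s u = -s(\Delta+\rho^2)e^{s(\Delta+\rho^2)}u$ one verifies the pointwise identity $(\Delta+\rho^2)\tilP_s u = \partial_s\tilP_s u - s^{-1}\tilP_s u$, and combining it with a spatial integration by parts yields
\begin{equation*}
\langle(\partial_r\beta_s^{(\alpha)})\nabla\tilP_s u, \nabla\tilP_s u\rangle = s^{-1}\langle(\partial_r\beta_s^{(\alpha)})\tilP_s u,\tilP_s u\rangle + \rho^2\langle(\partial_r\beta_s^{(\alpha)})\tilP_s u,\tilP_s u\rangle - \tfrac{1}{2}\partial_s\langle(\partial_r\beta_s^{(\alpha)})\tilP_s u,\tilP_s u\rangle + \mathcal{R}_s^{(\alpha)},
\end{equation*}
where $\mathcal{R}_s^{(\alpha)}$ contains gradient cross-terms involving $\nabla\partial_r\beta_s^{(\alpha)}$ and the $\partial_s(\partial_r\beta_s^{(\alpha)})$ correction from differentiating the weight. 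An integration by parts in $s$ against a smooth cutoff supported in $[s_2/8, s_2/4]$, rather than a sharp endpoint at $s_2/4$, turns the total $s$-derivative into the low-frequency boundary term $\sup_\alpha \int_{s_2/8}^{s_2} s^{-1}\langle(\partial_r\beta_s^{(\alpha)})\tilP_s u,\tilP_s u\rangle_{t,x}\, ds$ on the right-hand side of \eqref{equ:high_frequency_local_smoothing}, while the $s = 0$ boundary contribution vanishes thanks to the heat-semigroup regularity estimates of Lemmas~\ref{l:preg}--\ref{l:frac_preg}. The lower-order errors from $\mathcal{E}_s^{(\alpha)}$ and $\mathcal{R}_s^{(\alpha)}$ are controlled using the pointwise estimates \eqref{eq:betabounds2} on $\beta_s^{(\alpha)}$ and its derivatives, which show they are small relative to the coercive main term after multiplication by $s^{-1/2}$ and integration over $s \leq s_2 \lesssim 1$; contributions localized to $\{r \lesssim 1\}$ from the zeroth-order weight terms are absorbed into the bounded-region error $\|\tilP_s u\|_{L^2(\bbR\times\{r\leq R\})}$, consistent with Lemma~\ref{l:L2R}.

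The main obstacle is the interchange of $\sup_{\{\alpha_\ell\}\in\calA}$ with the $s$-integral on the left-hand side of \eqref{equ:high_frequency_local_smoothing}. A direct application of the preceding argument with a fixed $\alpha \in \calA$ followed by taking the supremum yields the weaker $\sup_\alpha\int$ quantity, whereas Lemmas~\ref{l:Xal} and~\ref{lem:betaLErelation} identify the desired LHS as
\begin{equation*}
\int_0^{s_2/4} s^{-1/2}\|\tilP_s u\|_{LE_s}^2\, ds \simeq \int_0^{s_2/4} \sup_{\{\alpha_\ell\}\in\calA} s^{-1}\langle(\partial_r\beta_s^{(\alpha)})\tilP_s u,\tilP_s u\rangle_{t,x}\, ds,
\end{equation*}
with the supremum inside the integral. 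To upgrade to this stronger form I will partition $(0, s_2/4]$ into dyadic heat-time intervals $I_\lambda = [\lambda/2,\lambda]$ and, on each $I_\lambda$, choose a slowly varying sequence $\alpha_\lambda \in \calA$ which nearly saturates the dyadic-averaged sup $\sup_\alpha \lambda^{-1}\int_{I_\lambda} s^{-1}\langle (\partial_r\beta_s^{(\alpha)})\tilP_s u,\tilP_s u\rangle\, ds$. Running the multiplier argument with the piecewise-in-$\lambda$ constant family $\{\alpha_\lambda\}$ and then summing over $\lambda$, one uses the uniform-in-$s$ equivalences of Lemmas~\ref{lem:LE_comparison} and \ref{lem:betaLErelation} on each dyadic scale, together with the uniform-in-$\alpha$ bounds on the multiplier $Q_s^{(\alpha)}$ supplied by Propositions~\ref{p:QsL2} and \ref{p:QsX}, to effectively exchange $\sup_\alpha$ and $\int$ up to an absolute constant. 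The bounded-region remainders generated at the scale boundaries and from the error terms are absorbed into $\int_0^{s_2}\|\tilP_s u\|_{L^2(\bbR\times\{r\leq R\})}^2 ds$ via Lemma~\ref{l:L2R}, completing the estimate.
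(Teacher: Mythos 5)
Your overall strategy (multiplier identity, heat-time integration by parts replacing Bernstein, control of lower-order errors) is the same as the paper's, and your transitioning identity for $\langle(\partial_r\beta_s^{(\alpha)})\nabla\tilP_s u,\nabla\tilP_s u\rangle$ is essentially the non-integrated version of Lemma~\ref{lem:integrated_bernstein_substitute}. However, your handling of the interchange between $\sup_{\{\alpha_\ell\}\in\calA}$ and the $s$-integral contains a genuine gap.

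The partition-and-optimize scheme you propose—choosing a near-optimal $\alpha_\lambda$ on each dyadic interval $I_\lambda$ and running the multiplier argument with the resulting piecewise-constant-in-$s$ family—does not go through, for two reasons. First, the transitioning identity involves a total $\partial_s$ derivative of $\langle(\partial_r\beta_s^{(\alpha)})\tilP_s u,\tilP_s u\rangle$; with a piecewise-constant $\alpha(s)$ this derivative produces jump contributions of the form $\lambda^{-1}\langle(\partial_r\beta_\lambda^{(\alpha_\lambda)}-\partial_r\beta_\lambda^{(\alpha_{\lambda/2})})\tilP_\lambda u,\tilP_\lambda u\rangle$ at every dyadic boundary. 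These are not bounded-region errors absorbed by $\|\tilP_s u\|_{L^2(\bbR\times\{r\leq R\})}$, and since the $\alpha_\lambda$ are chosen by near-saturation at each scale, there is no quantitative control on the difference $\alpha_\lambda-\alpha_{\lambda/2}$. Second, even if the jump terms could somehow be tamed, the estimate you would obtain has the form $\sum_\lambda\sup_{\{\alpha_\ell\}}\int_{I_\lambda}$, which is bounded \emph{above} by the target $\int_0^{s_2/4}\sup_{\{\alpha_\ell\}} s^{-1}\langle(\partial_r\beta_s^{(\alpha)})\tilP_s u,\tilP_s u\rangle\,\ds$ rather than controlling it from below; your argument would give a weaker conclusion, and you have not identified the missing bootstrap.

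The paper's resolution is different and worth understanding. It keeps $\alpha$ fixed throughout the $s$-integration by parts, derives the dyadic inequality \eqref{eq:dyadic-s} relating scale $s$, the integral over $[s,2s]$, and scale $2s$, and then multiplies by the averaging weight $\chi_\sigma(s)=(s/\sigma)^N\mathbf{1}_{\{s\leq\sigma\}}$ and averages over $\sigma'\in[\sigma,2\sigma]$. The point of the weight is that the two terms on the left side of \eqref{eq:dyadic-s} become, after averaging, a fixed multiple $(1+\mu_N)$ of a \emph{single} $\alpha$-dependent quantity $\int\chi_\sigma(s)\,s^{-1}\langle(\partial_r\beta_s^{(\alpha)})\tilP_s u,\tilP_s u\rangle\,\ds$; since the left side is no longer a sum of two different functionals of $\alpha$, the supremum over $\alpha$ can be taken on both sides without the sub-additivity loss. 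The resulting recursion $(1+\mu_N)h(\sigma)\leq h(2\sigma)+e(\sigma)$ telescopes in $\sigma$. Finally, the paper upgrades the resulting dyadic-averaged bound $\sum_\ell\sup_\alpha\int_{I_\ell}$ to the pointwise $\int\sup_\alpha$ by a mean-value selection of a representative $\tilde s\in I_\ell$ together with one more application of \eqref{equ:high_freq_multiplier_identity_integrated}; this last step is precisely the ``upgrade'' your argument would still require.

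A secondary but real issue: you say the error terms are controlled by the pointwise bounds \eqref{eq:betabounds2} on $\beta_s^{(\alpha)}$. Those bounds ($|\beta_s|\lesssim s^{1/2}$, $|\partial_r\beta_s|\lesssim 1$) are not sufficient. You need the sharper comparison $|\partial_r^3\beta_s^{(\alpha)}|\lesssim\delta^2 s^{-1}\partial_r\beta_s^{(\alpha)}$ and the sign of $\partial_s\partial_r\beta_s^{(\alpha)}$, both from Lemma~\ref{lem:technical_lemma_high_freq}, to absorb $\mathcal{R}_s^{(\alpha)}$ and $\mathcal{E}_s^{(\alpha)}$ into the coercive $s^{-1}\langle(\partial_r\beta_s^{(\alpha)})\tilP_s u,\tilP_s u\rangle$ term after choosing $\delta$ small, and the dimension-dependent estimate \eqref{equ:upper_bound_betas_coth_difference_2D} to handle the $\rho(\rho-1)\sinh^{-2}r$ contribution when $d=2$.
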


\begin{rem} \label{rem:no-bernstein}
Note that unlike the low-frequency estimate in Proposition~\ref{prop:low_frequency_local_smoothing}, the estimate above is not at a fixed frequency, but rather it is integrated in $s$. As we will see (cf. the proof of Lemma~\ref{lem:high_freq_multiplier_identity_integrated} below) a main technical difficulty in the high-frequency regime is the lack of a Bernstein type inequality 
\begin{align*}
\begin{split}
 s^{-\frac{1}{2}} \| \tilP_{s}u\|_{L^2} \lesssim \|\nabla \tilP_s u\|_{L^2}.
\end{split}
\end{align*}
This should be compared with the available estimate $\|\nabla \tilP_s u\|_{L^2}\lesssim s^{-\frac{1}{2}}\|\tilP_{\frac{s}{2}}u\|_{L^2}$, which is the analogue of the other direction of the standard Bernstein inequalities. This issue did not arise in the low-frequency regime because there we used the weighted Hardy-Poincar\'e estimate to bound the difference
\begin{align*}
\begin{split}
\angles{(\partial_r\beta^{(\alpha)})\nabla \tilP_{\geq s}u}{\nabla\tilP_{\geq s}u}-\rho^2\angles{(\partial_r\beta^{(\alpha)})\tilP_{\geq s}u}{\tilP_{\geq s}u}
\end{split}
\end{align*}
from below. In the high-frequency regime in this section, where $\tilP_{\geq s}u$ is replaced by $\tilP_{s}u$, it is crucial for our smoothing estimate to gain regularity, so we cannot resort to the weighted Hardy-Poincar\'e estimate anymore. On the other hand, heuristically we expect that at high frequencies we should be able to absorb the contribution of $\tilP_su$ by that of $\nabla\tilP_su$. The integration in $s$ is introduced precisely to accomplish this by compensating for the aforementioned missing direction of Bernstein's inequality (cf. Lemma~\ref{lem:integrated_bernstein_substitute}). 
\end{rem}

We start by recording an integration identity in the heat variable $s$ which, as discussed above, compensates for the failure of Bernstein's estimates.

\begin{lem} \label{lem:integrated_bernstein_substitute}
 Let $w \colon (0,\infty) \times \bbH^d \to \bbR$ be a radially symmetric weight function that also depends on the heat time $s$. For any $0 < \sigma_1 < \sigma_2 < \infty$ it holds that
 \begin{equation} \label{equ:integrated_bernstein_substitute}
  \begin{aligned}
   &2 \int_{\sigma_1}^{\sigma_2} \bigl( \langle w \nabla \tilP_s u, \nabla \tilP_s u \rangle -\rho^2\angles{w\tilP_su}{\tilP_su} - \rho \langle \coth(r) (\partial_r w) \tilP_s u, \tilP_s u \rangle \bigr) \, \ds \\
   &= \frac{1}{\sigma_1} \langle w (\sigma_1) \tilP_{\sigma_1} u, \tilP_{\sigma_1} u \rangle - \frac{1}{\sigma_2} \langle w(\sigma_2) \tilP_{\sigma_2} u, \tilP_{\sigma_2} u \rangle \\
   &\quad + \int_{\sigma_1}^{\sigma_2} \frac{1}{s} \langle w \tilP_s u, \tilP_s u \rangle \, \ds + \int_{\sigma_1}^{\sigma_2} \langle (\partial_s w) \tilP_s u, \tilP_s u \rangle \, \ds \\
   &\quad + \int_{\sigma_1}^{\sigma_2} \langle (\partial_r^2 w) \tilP_s u, \tilP_s u \rangle \, \ds.
  \end{aligned}
 \end{equation}
\end{lem}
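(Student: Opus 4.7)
The plan is to integrate in $s$ a single pointwise-in-$s$ identity that arises from combining two ingredients: an $s$-differentiation rule for $\tilP_s u$, and a spatial integration by parts exploiting the radial symmetry of $w$. The key observation for the first ingredient is
\begin{equation*}
\partial_s \tilP_s u \;=\; \tfrac{1}{s}\tilP_s u + (\Delta + \rho^2)\tilP_s u,
\end{equation*}
which is immediate from $\tilP_s u = -s(\Delta+\rho^2)e^{s(\Delta+\rho^2)}u$ upon applying $\partial_s$ and using $\partial_s e^{s(\Delta+\rho^2)} = (\Delta+\rho^2)e^{s(\Delta+\rho^2)}$. Together with the Leibniz rule this yields
\begin{equation*}
\partial_s \langle w\tilP_s u, \tilP_s u\rangle \;=\; \langle (\partial_s w)\tilP_s u, \tilP_s u\rangle + \tfrac{2}{s}\langle w\tilP_s u, \tilP_s u\rangle + 2\Re \langle w(\Delta+\rho^2)\tilP_s u, \tilP_s u\rangle.
\end{equation*}

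For the second ingredient I would use that $w$ is radial and that the formal adjoint of $\partial_r$ on $\bbH^d$ is $\partial_r^{\ast} = -\partial_r - (d-1)\coth r = -\partial_r - 2\rho\coth r$. Two successive integrations by parts, the first moving one $\nabla$ off the Laplacian and the second rewriting $2\Re\langle (\partial_r w)\partial_r v, v\rangle = \langle \partial_r w, \partial_r|v|^2\rangle$ via $\partial_r^{\ast}\partial_r w = -\partial_r^2 w - 2\rho\coth(r)\partial_r w$, produce
\begin{equation*}
\Re\langle w\Delta v, v\rangle \;=\; -\langle w\nabla v, \nabla v\rangle + \tfrac{1}{2}\langle (\partial_r^2 w)v, v\rangle + \rho\langle\coth(r)(\partial_r w)v, v\rangle.
\end{equation*}
Substituting this with $v = \tilP_s u$ back into the Leibniz computation and rearranging isolates the target quadratic form $\langle w\nabla\tilP_s u, \nabla\tilP_s u\rangle - \rho^2\langle w\tilP_s u, \tilP_s u\rangle - \rho\langle\coth(r)(\partial_r w)\tilP_s u, \tilP_s u\rangle$ on one side, expressed (up to the factor of $2$) as a total $s$-derivative of a suitable multiple of $\langle w\tilP_s u, \tilP_s u\rangle$ plus the explicit $s^{-1}\langle w\tilP_s u, \tilP_s u\rangle$, $\langle(\partial_s w)\tilP_s u, \tilP_s u\rangle$, and $\langle(\partial_r^2 w)\tilP_s u, \tilP_s u\rangle$ correction terms.

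Integrating this pointwise-in-$s$ identity from $\sigma_1$ to $\sigma_2$ and applying the fundamental theorem of calculus to the total derivative then produces the boundary contributions at $s=\sigma_1,\sigma_2$ with the stated $\sigma_i$-weights, which yields the identity. No density or approximation argument is required, because $\tilP_s u$ is smooth and rapidly decaying for each $s > 0$ by the parabolic regularity estimates of Section~\ref{ss:lpreg}, and the assumed regularity of $w$ (boundedness of $w$ and of $\partial_s w$, $\partial_r w$, $\partial_r^2 w$) is exactly what makes every integral in sight convergent. The main technical point I expect is the careful bookkeeping of the powers of $s$ in order to track the correct numerical coefficients on the right-hand side; a useful sanity check is to specialize to $w\equiv 1$, where the identity reduces via the spectral calculus of $-\Delta-\rho^2\geq 0$ to the elementary differentiation $\partial_s(s^2\mu^2 e^{-2s\mu}) = 2s\mu^2 e^{-2s\mu} - 2s^2\mu^3 e^{-2s\mu}$ for each spectral parameter $\mu\geq 0$.
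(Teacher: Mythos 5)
Your proposal is correct and is essentially the same argument as the paper's: both rely on the relation $\partial_s \tilP_s u = \tfrac{1}{s}\tilP_s u + (\Delta+\rho^2)\tilP_s u$, a spatial integration by parts converting $\Re\langle w\Delta v, v\rangle$ into $-\langle w\nabla v, \nabla v\rangle + \tfrac12\langle\Delta w\, v, v\rangle$ together with $\Delta w = \partial_r^2 w + 2\rho\coth(r)\partial_r w$ for radial $w$, and then the fundamental theorem of calculus in $s$ against the measure $\tfrac{\ud s}{s}$. You derive a pointwise-in-$s$ identity first and then integrate, whereas the paper works with integral identities throughout and inserts the $s$-derivative relation in the middle; this reordering is cosmetic, and your spectral sanity check at $w\equiv 1$ confirms the coefficients.
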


\begin{proof}
 Integrating by parts in space we obtain that
 \begin{align*}
  \int_{\sigma_1}^{\sigma_2} \langle w \nabla \tilP_s u, \nabla \tilP_s u \rangle \, \ds &= \frac{1}{2} \int_{\sigma_1}^{\sigma_2} \langle \Delta w \tilP_s u, \tilP_s u \rangle \, \ds - \Re \int_{\sigma_1}^{\sigma_2} \langle w \Delta \tilP_s u, \tilP_s u \rangle \, \ds \\
  &= \frac{1}{2} \int_{\sigma_1}^{\sigma_2} \langle \Delta w \tilP_s u, \tilP_s u \rangle \, \ds + \rho^2 \int_{\sigma_1}^{\sigma_2} \langle w \tilP_s u, \tilP_s u \rangle \, \ds \\
  &\quad - \Re \int_{\sigma_1}^{\sigma_2} \langle w (\Delta + \rho^2) \tilP_s u, \tilP_s u \rangle \, \ds.
 \end{align*}
 In the last term on the right-hand side of the previous equation we insert the identity
 \[
  (\Delta + \rho^2) \tilP_s u = \frac{\ud}{\ud s} \tilP_s u - \frac{1}{s} \tilP_s u,
 \]
 which follows from the heat flow definition \eqref{eq:tilPdef} of the frequeny projection $\tilP_s$. Integrating by parts with respect to the heat time $s$, we then find that
 \begin{equation}
  \begin{aligned}
   \int_{\sigma_1}^{\sigma_2} \langle w \nabla \tilP_s u, \nabla \tilP_s u \rangle \, \ds &= \frac{1}{2} \int_{\sigma_1}^{\sigma_2} \langle \Delta w \tilP_s u, \tilP_s u \rangle \, \ds + \rho^2 \int_{\sigma_1}^{\sigma_2} \langle w \tilP_s u, \tilP_s u \rangle \, \ds \\
   &\quad + \frac{1}{2} \int_{\sigma_1}^{\sigma_2} \frac{1}{s} \langle w \tilP_s u, \tilP_s u \rangle \, \ds + \frac{1}{2} \int_{\sigma_1}^{\sigma_2} \langle (\partial_s w) \tilP_s u, \tilP_s u \rangle \, \ds \\
   &\quad + \frac{1}{2 \sigma_1} \langle w(\sigma_1) \tilP_{\sigma_1} u, \tilP_{\sigma_1} u \rangle - \frac{1}{2 \sigma_2} \langle w(\sigma_2) \tilP_{\sigma_2} u, \tilP_{\sigma_2} \rangle.
  \end{aligned}
 \end{equation}
 Since the weight function $w$ is assumed to be radially symmetric, we have 
 \[
  \Delta w = \partial_r^2 w + 2 \rho \coth(r) \partial_r w.
 \]
 Inserting this expression in the first term on the right-hand side of the previous equation, we arrive at the desired identity~\eqref{equ:integrated_bernstein_substitute} upon rearranging.
\end{proof}

For high frequencies the following technical lemma plays the role of Lemma~\ref{l:technical_lemma_low_freq}.

\begin{lem} \label{lem:technical_lemma_high_freq}
 Let $0 < \delta \leq 1$ and let $\{ \alpha_\ell \} \in \calA$ be a slowly varying sequence with associated slowly varying function $\alpha$ as in Definition~\ref{d:alpha}. For any $s > 0$, let $\beta_s^{(\alpha)}$ be defined as in~\eqref{equ:definition_beta_high_freq}. Then we have uniformly for all $s > 0$ and for all $\{ \alpha_\ell \} \in \calA$ that
 \begin{align}
  \beta_s^{(\alpha)} \coth(r) - \partial_r \beta_s^{(\alpha)} &\geq 0, \label{equ:betas_coth_minus_partial_betas_nonnegative} \\
  \partial_s \partial_r \beta_s^{(\alpha)} &\geq 0, \label{equ:mixed_second_derivative_beta_s_nonnegative} \\
  | \partial_r^3 \beta_s^{(\alpha)} | &\lesssim \delta^2 s^{-1} \partial_r \beta_s^{(\alpha)}. \label{equ:third_derivative_betas_bound}
 \end{align}
 Moreover, let $0 < \delta \ll 1$ be fixed sufficiently small (independently of $s_2$) and let $s_2 > 0$ be given. Then there exist constants $C \equiv C(\delta, s_2) \geq 1$ and $R \equiv R(\delta, s_2) \geq 1$ such that uniformly for all $0 < s \leq s_2$ and for all $\{ \alpha_\ell \} \in \calA$ it holds that
 \begin{equation} \label{equ:upper_bound_betas_coth_difference_2D}
  0 \leq \frac{\beta_s^{(\alpha)} \coth(r) - \partial_r \beta_s^{(\alpha)}}{\sinh^2(r)} \leq \delta s^{-1} \partial_r \beta_s^{(\alpha)} + C \chi_{\{ r \leq R\}},
 \end{equation}
 where $\chi_{\{r \leq R\}}$ is a cut-off function adapted to the region $\{ r \leq R\}$.
\end{lem}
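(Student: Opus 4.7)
All four claims are direct consequences of the explicit formula $\partial_{r} \beta_{s}^{(\alpha)}(r) = \delta\, g(y)$ where $y := \delta s^{-\tfrac{1}{2}} r$ and $g(y) := \alpha(y)/\langle y \rangle$, together with the basic properties of the slowly varying function $\alpha$ recorded in Proposition/Definition~\ref{d:alpha} and the identity $\beta_{s}^{(\alpha)}(r) = s^{\tfrac{1}{2}} \int_0^{y} g(z)\,\mathrm{d}z$. Throughout, the key structural input will be that $g$ is non-increasing, that $|\alpha^{(j)}(y)| \lesssim \alpha(y)/\langle y \rangle^{j}$, and the two-sided bound $c\langle y\rangle^{-\eta} \leq \alpha(y) \leq C$.

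For \eqref{equ:betas_coth_minus_partial_betas_nonnegative} I would mimic the low-frequency proof of \eqref{equ:beta_coth_minus_partial_beta_nonnegative} in Lemma~\ref{l:technical_lemma_low_freq}: the identity
\[
\beta_{s}^{(\alpha)}(r)\coth(r) - \partial_{r}\beta_{s}^{(\alpha)}(r) = \delta\,\Bigl(\tfrac{1}{y}\int_{0}^{y}\!(g(z)-g(y))\,\mathrm{d}z\Bigr) + \bigl(r\coth r - 1\bigr)\partial_{r}\beta_{s}^{(\alpha)}(r)
\]
exhibits the LHS as a sum of two non-negative terms, using $g\downarrow$ and $r\coth r \geq 1$. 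For \eqref{equ:mixed_second_derivative_beta_s_nonnegative} I would simply compute $\partial_{s}\partial_{r}\beta_{s}^{(\alpha)} = -\tfrac{\delta y}{2s}\,g'(y)$ and invoke $g'\leq 0$. For \eqref{equ:third_derivative_betas_bound}, a direct computation gives $\partial_{r}^{3}\beta_{s}^{(\alpha)} = \delta^{3} s^{-1} g''(y)$, and the symbol bounds on $\alpha$ yield $|g''(y)| \lesssim \alpha(y)/\langle y\rangle \simeq g(y)$, from which the claim follows.

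The main work is thus the refined bound \eqref{equ:upper_bound_betas_coth_difference_2D}. The lower bound is \eqref{equ:betas_coth_minus_partial_betas_nonnegative} together with $\sinh^{-2}r > 0$. For the upper bound I would split into two regions. On $\{r\leq R\}$, the quotient is bounded uniformly in $s$ and $\{\alpha_{\ell}\}$: the apparent singularity of $\sinh^{-2}r$ at $r=0$ is removable because $\beta_{s}^{(\alpha)}(0)=0$ and $\partial_{r}^{2}\beta_{s}^{(\alpha)}(0)=0$ (recall $\alpha'\equiv 0$ on $[0,1]$), so a Taylor expansion gives $\beta_{s}^{(\alpha)}\coth r - \partial_{r}\beta_{s}^{(\alpha)} = O(r^{2})$ as $r\to 0$ uniformly in the relevant parameters; continuity and boundedness of $\beta_{s}^{(\alpha)}$ then yield an absolute constant $C = C(\delta,s_{2},R)$ allowing the entire contribution to be absorbed into $C\chi_{\{r\leq R\}}$. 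On $\{r\geq R\}$ with $R$ sufficiently large, one uses the coarse bound $|\beta_{s}^{(\alpha)}\coth r - \partial_{r}\beta_{s}^{(\alpha)}|\lesssim s^{\tfrac{1}{2}}$ (from $|\beta_{s}^{(\alpha)}|\lesssim s^{\tfrac{1}{2}}$ and $|\partial_{r}\beta_{s}^{(\alpha)}|\lesssim 1$), together with the lower bound $\partial_{r}\beta_{s}^{(\alpha)} \gtrsim \delta\langle y\rangle^{-1-\eta}$ coming from $\alpha(y) \geq c\langle y\rangle^{-\eta}$. The desired inequality reduces to
\[
s^{\tfrac{1}{2}} e^{-2r} \lesssim \delta s^{-1}\,\partial_{r}\beta_{s}^{(\alpha)},
\]
which, after inserting the lower bound on $\partial_{r}\beta_{s}^{(\alpha)}$ and using $0 < s \leq s_{2}$, reduces to a polynomial-vs-exponential comparison in $r$ that holds for all $r\geq R$ once $R = R(\delta,s_{2})$ is chosen large enough.

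The only mildly delicate point is the small-$r$ regime for \eqref{equ:upper_bound_betas_coth_difference_2D}, where one must verify that the ratio stays bounded uniformly in $s\in(0,s_{2}]$ and $\{\alpha_{\ell}\}\in\calA$; the Taylor-expansion argument above, based on the vanishing of $\beta_{s}^{(\alpha)}$ and $\partial_{r}^{2}\beta_{s}^{(\alpha)}$ at $r=0$ and on the uniform bounds from Definition~\ref{d:alpha}, makes this routine. All other steps are direct computations with the explicit formula for $\beta_{s}^{(\alpha)}$.
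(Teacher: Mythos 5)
Your treatments of \eqref{equ:betas_coth_minus_partial_betas_nonnegative}, \eqref{equ:mixed_second_derivative_beta_s_nonnegative}, and \eqref{equ:third_derivative_betas_bound} are essentially the paper's arguments and are correct. (Minor slip in \eqref{equ:betas_coth_minus_partial_betas_nonnegative}: the correct rewriting carries an extra factor of $r\coth r$, namely
\begin{equation*}
\beta_s^{(\alpha)}\coth r - \partial_r\beta_s^{(\alpha)} = \frac{\delta\, r\coth r}{y}\int_0^y \bigl(g(z)-g(y)\bigr)\,\mathrm{d}z + \bigl(r\coth r - 1\bigr)\partial_r\beta_s^{(\alpha)},\quad y=\delta s^{-\frac{1}{2}}r,
\end{equation*}
but both pieces are still nonnegative, so the conclusion stands.)

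The proof of \eqref{equ:upper_bound_betas_coth_difference_2D}, however, has a genuine gap in the small-$r$ regime. You claim that on $\{r\leq R\}$ the quotient is ``bounded uniformly in $s$ and $\{\alpha_\ell\}$'' because $\beta_s^{(\alpha)}\coth r - \partial_r\beta_s^{(\alpha)} = O(r^2)$ ``uniformly in the relevant parameters.'' This is false. Carrying the Taylor expansion one step further (using $g'(0)=0$ and $g''(0)=-\alpha(0)$), one finds for $r\to 0$ and $y = \delta s^{-1/2}r \leq 1$ that
\begin{equation*}
\beta_s^{(\alpha)}\coth r - \partial_r\beta_s^{(\alpha)} = \frac{\delta\alpha(0)}{3}\bigl(r^2 + y^2\bigr) + \text{higher order},
\end{equation*}
and since $y^2 = \delta^2 s^{-1} r^2$ the $O(r^2)$ constant is of size $\delta(1+\delta^2 s^{-1})$, which is \emph{not} uniform in $s$. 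The resulting quotient near $r=0$ is of size $\delta + \delta^3 s^{-1}$, and the $\delta^3 s^{-1}$ piece blows up as $s\to 0$; it cannot be absorbed into a fixed cutoff constant $C\chi_{\{r\leq R\}}$. It must instead be dominated by the other term $\delta s^{-1}\partial_r\beta_s^{(\alpha)}$ on the right-hand side of \eqref{equ:upper_bound_betas_coth_difference_2D}, which in this region is $\simeq \delta^2 s^{-1}$; so one needs $\delta^3 s^{-1}\lesssim \delta^2 s^{-1}$, i.e.\ one must spend a power of $\delta$ -- this is precisely where the hypothesis ``$\delta$ sufficiently small'' enters. Your plan never uses the $\delta s^{-1}\partial_r\beta_s^{(\alpha)}$ term on $\{r\leq R\}$, so the absorption cannot close. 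The paper avoids this by performing a four-way case analysis according to whether $r\leq 1$ or $r>1$ and $\delta s^{-1/2}r\leq 1$ or $>1$, using the first term on the right of \eqref{equ:upper_bound_betas_coth_difference_2D} in each case and spending a power of $\delta$ as needed; a simple split at $r=R$ does not work.
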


\begin{proof}
 In what follows we drop the superscript from $\beta_s^{(\alpha)}$ and recall that $\partial_r \beta_s = \delta \frac{\alpha(\delta s^{-\frac{1}{2}} r)}{\langle \delta s^{-\frac{1}{2}} r \rangle}$. The lower bounds~\eqref{equ:betas_coth_minus_partial_betas_nonnegative} and \eqref{equ:mixed_second_derivative_beta_s_nonnegative} are a consequence of the fact that the function $y \mapsto \frac{\alpha(y)}{\langle y \rangle}$ is non-increasing. Specifically, \eqref{equ:betas_coth_minus_partial_betas_nonnegative} follows exactly as in the proof of the lower bound~\eqref{equ:beta_coth_minus_partial_beta_nonnegative} in Lemma~\ref{l:technical_lemma_low_freq}, while for \eqref{equ:mixed_second_derivative_beta_s_nonnegative} we compute that
 \[
  \partial_s \partial_r \beta_s = - \frac{1}{2} s^{-\frac{3}{2}} \delta^2 r \frac{\ud}{\ud y} \Bigl( \frac{\alpha(y)}{\langle y \rangle} \Bigr) \bigg|_{y = \delta s^{-\frac{1}{2}} r} \geq 0.
 \]
 Moreover, using the symbol-type bounds $|\alpha^{(j)}(y)| \lesssim \frac{\alpha(y)}{\langle y \rangle^j}$, we find that
 \[
  |\partial_r^3 \beta_s| \lesssim \delta^3 s^{-1} \frac{\alpha(\delta s^{-\frac{1}{2}} r)}{\langle \delta s^{-\frac{1}{2}} r \rangle^3} \lesssim \delta^2 s^{-1} \partial_r \beta_s,
 \]
 which proves~\eqref{equ:third_derivative_betas_bound}. We now turn to the proof of the estimate~\eqref{equ:upper_bound_betas_coth_difference_2D}. The lower bound already follows from~\eqref{equ:betas_coth_minus_partial_betas_nonnegative} and so we focus on proving the delicate upper bound, for which we distinguish several cases.
 
 \medskip 
 
 \noindent \underline{{\it Case 1: $r \leq 1$ and $\delta s^{-\frac{1}{2}} r \leq 1$.}} Since $r \leq 1$ we have by Taylor expansion that
 \[
  \coth(r) = r^{-1} + O(r).
 \]
 In addition, recalling that $\alpha(0) \simeq 1$ and $\alpha'(0) = 0$ for any slowly varying function~$\alpha$, since $\delta s^{-\frac{1}{2}} r \leq 1$ we also obtain by Taylor expansion that
 \begin{align*}
  \beta_s = s^{\frac{1}{2}} \int_0^{\delta s^{-\frac{1}{2}} r} \frac{\alpha(y)}{\langle y \rangle} \, \ud y = s^{\frac{1}{2}} \Bigl( \alpha(0) (\delta s^{-\frac{1}{2}} r) + O\bigl( (\delta s^{-\frac{1}{2}} r)^3 \bigr) \Bigr) 
 \end{align*}
 and 
 \begin{align*}
  \partial_r \beta_s = \delta \frac{\alpha( \delta s^{-\frac{1}{2}} r )}{\langle \delta s^{-\frac{1}{2}} r \rangle} = \delta \Bigl( \alpha(0) + O\bigl( (\delta s^{-\frac{1}{2}} r)^2 \Bigr).
 \end{align*}
 Hence, in this case we have 
 \begin{align*}
  \beta_s \coth(r) - \partial_r \beta_s = \delta \alpha(0) O(r^2) + \delta O\bigl( (\delta s^{-\frac{1}{2}} r)^2 \bigr) + \delta r O\bigl( (\delta s^{-\frac{1}{2}} r)^2 \bigr)
 \end{align*}
 and it follows that 
 \begin{equation} \label{equ:technical_lemma_high_freq_case1_derivation1}
  \frac{\beta_s \coth(r) - \partial_r \beta_s}{\sinh^2(r)} \leq C_1 \delta + C_1 \delta^3 s^{-1}.
 \end{equation}
 for some absolute constant $C_1 \geq 1$. On the other hand, since $\delta s^{-\frac{1}{2}} r \leq 1$, we have
 \begin{equation*}
  \delta s^{-1} \partial_r \beta_s = \delta^2 s^{-1} \frac{\alpha(\delta s^{-\frac{1}{2}} r)}{\langle \delta s^{-\frac{1}{2}} r \rangle} \simeq \delta^2 s^{-1}.
 \end{equation*}
 Thus, by spending one power of $\delta$ for sufficiently small $0 < \delta \ll 1$, we may bound the second term on the right-hand side of~\eqref{equ:technical_lemma_high_freq_case1_derivation1} by $\delta s^{-1} \partial_r \beta_s$ and conclude for this case that
 \begin{equation}
  \frac{\beta_s \coth(r) - \partial_r \beta_s}{\sinh^2(r)} \leq \delta s^{-1} \partial_r \beta_s + C \chi_{\{ r \leq 1\}}.
 \end{equation}
 
 \medskip 
 
 \noindent \underline{{\it Case 2: $r \leq 1$ and $\delta s^{-\frac{1}{2}} r > 1$.}} We first observe that by the uniform lower bound 
 \[
  \alpha(y) \gtrsim \frac{1}{\langle y \rangle^\eta} \quad \forall \, y \geq 0
 \]
 for any slowly varying function as in Definition~\ref{d:alpha}, we obtain in this case that 
 \begin{equation} \label{equ:technical_lemma_high_freq_case2_derivation1}
  \delta s^{-1} \partial_r \beta_s = \delta^2 s^{-1} \frac{\alpha(\delta s^{-\frac{1}{2}} r)}{\langle \delta s^{-\frac{1}{2}} r \rangle} \gtrsim \delta^2 s^{-1} (\delta s^{-\frac{1}{2}} r)^{-(1+\eta)} \simeq \delta^{1-\eta} s^{-\frac{1}{2} + \frac{1}{2} \eta} r^{-(1+\eta)}.
 \end{equation}
 On the other hand, dropping the negative contribution of $-\partial_r \beta_s$ and using that in this case $\beta_s \lesssim s^{\frac{1}{2}}$, $\coth(r) \lesssim r^{-1}$, $\sinh^{-2}(r) \lesssim r^{-2}$, and $r^{-1} \leq \delta s^{-\frac{1}{2}}$ we find that
 \begin{align*}
  \frac{\beta_s \coth(r) - \partial_r \beta_s}{\sinh^2(r)} \leq \frac{\beta_s \coth(r)}{\sinh^2(r)} \lesssim s^{\frac{1}{2}} r^{-3} &\lesssim s^{\frac{1}{2}} (\delta s^{-\frac{1}{2}})^{2-\eta} r^{-(1+\eta)} \\
  &\simeq \delta \bigl( \delta^{1-\eta} s^{-\frac{1}{2} + \frac{1}{2} \eta} r^{-(1+\eta)} \bigr) \\
  &\lesssim \delta \bigl( \delta s^{-1} \partial_r \beta_s \bigr),
 \end{align*}
 where in the last line we inserted the bound~\eqref{equ:technical_lemma_high_freq_case2_derivation1}. Hence, by spending one factor of $\delta$ for sufficiently small $0 < \delta \ll 1$, we obtain for this case the estimate
 \begin{equation}
  \frac{\beta_s \coth(r) - \partial_r \beta_s}{\sinh^2(r)} \leq \delta s^{-1} \partial_r \beta_s.
 \end{equation}

 \medskip 
 
 \noindent \underline{{\it Case 3: $r > 1$ and $\delta s^{-\frac{1}{2}} r \leq 1$.}} Since $\delta s^{-\frac{1}{2}} r \leq 1$ and $\alpha(y) \simeq 1$ for $0 \leq y \leq 1$ for any slowly varying function $\alpha$, we have in this case that
 \[
  \beta_s = s^{\frac{1}{2}} \int_0^{\delta s^{-\frac{1}{2}} r} \frac{\alpha(y)}{\langle y \rangle} \, dy \lesssim s^{\frac{1}{2}} (\delta s^{-\frac{1}{2}} r) \lesssim \delta r.
 \]
 Dropping the negative contribution of $-\partial_r \beta_s$ we therefore obtain that
 \begin{equation}
  \frac{\beta_s \coth(r) - \partial_r \beta_s}{\sinh^2(r)} \lesssim \delta \frac{r}{\sinh^2(r)},
 \end{equation}
 which is uniformly bounded by $\lesssim 1$ for all $r > 1$. On the other hand, since $\delta s^{-\frac{1}{2}} r \leq 1$ and since $s^{-1} \geq s_2^{-1}$ for any $0 < s \leq s_2$, we have that 
 \begin{equation}
  \delta s^{-1} \partial_r \beta_s = \delta^2 s^{-1} \frac{\alpha(\delta s^{-\frac{1}{2}} r)}{\langle \delta s^{-\frac{1}{2}} r \rangle} \gtrsim \delta^2 s_2^{-1}.
 \end{equation}
 Combining the previous two estimates we infer by the exponential decay of $\sinh^{-2}(r)$ that there exists a constant $R \equiv R(\delta, s_2) \gg 1$ such that 
 \begin{equation}
  \frac{\beta_s \coth(r) - \partial_r \beta_s}{\sinh^2(r)} \leq \delta s^{-1} \partial_r \beta_s \quad \forall \, r \geq R.
 \end{equation}
 In conclusion, we have established for this case the desired estimate
 \begin{equation}
  \frac{\beta_s \coth(r) - \partial_r \beta_s}{\sinh^2(r)} \leq \delta s^{-1} \partial_r \beta_s + C \chi_{\{r \leq R\}}.
 \end{equation}

 \medskip 
 
 \noindent \underline{{\it Case 4: $r > 1$ and $\delta s^{-\frac{1}{2}} r > 1$.}} Using that $\coth(r) \lesssim 1$ for $r > 1$ and dropping the negative contribution of $-\partial_r \beta_s$, we find in this case that
 \begin{equation}
  \frac{\beta_s \coth(r) - \partial_r \beta_s}{\sinh^2(r)} \lesssim \frac{s^{\frac{1}{2}}}{\sinh^2(r)} \lesssim \frac{s_2^{\frac{1}{2}}}{\sinh^2(r)},
 \end{equation}
 which is trivially bounded by $\lesssim s_2^{\frac{1}{2}}$ for all $r > 1$. On the other hand, we obtain by proceeding as in~\eqref{equ:technical_lemma_high_freq_case2_derivation1} of Case 2 the lower bound 
 \begin{equation}
  \begin{aligned}
   \delta s^{-1} \partial_r \beta_s = \delta^2 s^{-1} \frac{\alpha(\delta s^{-\frac{1}{2}} r)}{\langle \delta s^{-\frac{1}{2}} r \rangle} \gtrsim \delta^2 s^{-1} ( \delta s^{-\frac{1}{2}} r )^{-(1+\eta)} \gtrsim \delta^{1-\eta} s_2^{-\frac{1}{2} + \frac{1}{2} \eta} r^{-(1+\eta)}.    
  \end{aligned}
 \end{equation}
 Combining the previous two estimates we again conclude by the exponential decay of $\sinh^{-2}(r)$ that there exists a constant $R \equiv R(\delta, s_2) \gg 1$ such that 
 \begin{equation}
  \frac{\beta_s \coth(r) - \partial_r \beta_s}{\sinh^2(r)} \leq \delta s^{-1} \partial_r \beta_s \quad \forall \, r \geq R,
 \end{equation}
 which yields the desired estimate~\eqref{equ:upper_bound_betas_coth_difference_2D} also for this case and finishes the proof of the lemma.
\end{proof}

\begin{lem} \label{lem:high_freq_multiplier_identity_integrated}
 Let $s_2 > 0$ and let $\{ \alpha_\ell \} \in \calA$ be a slowly varying sequence with associated slowly varying function $\alpha$ as in Definition~\ref{d:alpha}. For $0 < s \leq s_2$ let $\beta_s^{(\alpha)}$, $Q_s^{(\alpha)}$, and $\tilF_s^{(\alpha)}$ be defined as in~\eqref{equ:definition_beta_high_freq},~\eqref{equ:definition_Q_high_freq}, and~\eqref{eq:tilFsdef} respectively. There exist constants $C \equiv C(s_2) \geq 1$ and $R \equiv R(s_2) \geq 1$ such that for all solutions $u(t)$ to~\eqref{eq:S} and for any $0 < \sigma_1 < \sigma_2 \leq s_2$ it holds that 
  \begin{equation} \label{equ:high_freq_multiplier_identity_integrated}
  \begin{aligned}
   &\frac{1}{\sigma_1} \langle (\partial_r \beta_{\sigma_1}^{(\alpha)}) \tilP_{\sigma_1} u, \tilP_{\sigma_1} u \rangle_{t,x} + \frac{1}{2} \int_{\sigma_1}^{\sigma_2} \frac{1}{s} \langle (\partial_r \beta_s^{(\alpha)}) \tilP_s u, \tilP_s u \rangle_{t,x} \, \ds \\
   &\leq \frac{1}{\sigma_2} \langle (\partial_r \beta_{\sigma_2}^{(\alpha)}) \tilP_{\sigma_2} u, \tilP_{\sigma_2} u \rangle_{t,x} + C \int_{\sigma_1}^{\sigma_2} \sup_{t \in \bbR} \, \bigl| \langle \tilP_s u(t), Q_s^{(\alpha)} \tilP_s u(t) \rangle \bigr| \, \ds \\
   &\quad + \int_{\sigma_1}^{\sigma_2} | \tilF_s^{(\alpha)}| \, \ds + C \int_{\sigma_1}^{\sigma_2} \| \tilP_s u \|_{L^2(\bbR \times \{ r \leq R \})}^2 \, \ds.
  \end{aligned}
 \end{equation}
 \end{lem}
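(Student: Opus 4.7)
\medskip

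\noindent \emph{Proof proposal for Lemma~\ref{lem:high_freq_multiplier_identity_integrated}.}

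The plan is to combine the main multiplier identity from Lemma~\ref{lem:Q} (with $\beta = \beta_s^{(\alpha)}$ and $Q = Q_s^{(\alpha)}$) applied to $\tilP_s u$, the Schr\"odinger equation, and the integrated Bernstein substitute of Lemma~\ref{lem:integrated_bernstein_substitute}. First, from \eqref{eq:dtQu} applied to the frequency-localized equation \eqref{eq:Ps_H} and integration in time we record the key time-integrated identity
\begin{equation*}
\int_{t} \Re \langle i \lap \tilP_s u, Q_s^{(\alpha)} \tilP_s u \rangle \, dt = \Big[ \tfrac{1}{2} \langle \tilP_s u, Q_s^{(\alpha)} \tilP_s u \rangle \Big]_{t=-\infty}^{\infty} - \int_{t} \Re \langle (\rd_t - i \lap) \tilP_s u, Q_s^{(\alpha)} \tilP_s u \rangle \, dt,
\end{equation*}
where the time boundary contribution is bounded by $\sup_{t \in \bbR} |\langle \tilP_s u(t), Q_s^{(\alpha)} \tilP_s u(t) \rangle|$ and, by the very definition of $\tilF_s^{(\alpha)}$ in \eqref{eq:tilFsdef}, the second term equals $\tilF_s^{(\alpha)} - 2 \langle \ringa^{\theta_a \theta_b} (\coth r\, \beta_s^{(\alpha)} - \rd_r \beta_s^{(\alpha)}) \rd_{\theta_a} \tilP_s u, \rd_{\theta_b} \tilP_s u \rangle_{t,x}$.

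Next, I would expand $\Re \langle i \lap \tilP_s u, Q_s^{(\alpha)} \tilP_s u \rangle$ using Lemma~\ref{lem:Q} and rewrite the kinetic term as $\langle (\rd_r \beta_s^{(\alpha)}) |\nabla \tilP_s u|^2 \rangle + \langle (\beta_s^{(\alpha)} \coth r - \rd_r \beta_s^{(\alpha)}) \sinh^{-2} r |\slashed{\nabla} \tilP_s u|^2 \rangle$. Moving the $\ringa$-contribution to the left-hand side of the time-integrated identity combines with the angular term $\sinh^{-2} r\, \bsh^{\theta_a \theta_b}$ to produce the positive expression
\begin{equation*}
2 \langle \bsa^{\theta_a \theta_b} (\beta_s^{(\alpha)} \coth r - \rd_r \beta_s^{(\alpha)}) \rd_{\theta_a} \tilP_s u, \rd_{\theta_b} \tilP_s u \rangle_{t,x} \geq 0,
\end{equation*}
which is nonnegative by the positive definiteness of $\bsa$ (guaranteed by \eqref{eq:decay_assumptions-prin} for small $\veps_0$) and by the sign of $\beta_s^{(\alpha)} \coth r - \rd_r \beta_s^{(\alpha)}$ from Lemma~\ref{lem:technical_lemma_high_freq}. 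Discarding this term yields, after time integration,
\begin{equation*}
2 \int_t \big[ \langle (\rd_r \beta_s^{(\alpha)}) |\nabla \tilP_s u|^2 \rangle - \rho^2 \langle (\rd_r \beta_s^{(\alpha)}) \tilP_s u, \tilP_s u \rangle \big] \, dt \le \sup_{t}|\langle \tilP_s u, Q_s^{(\alpha)} \tilP_s u \rangle| + |\tilF_s^{(\alpha)}| - \int_t E_1\, dt,
\end{equation*}
where $E_1 = -\tfrac{1}{2} (\lap \rd_r \beta_s^{(\alpha)}) |\tilP_s u|^2 - \rho (\lap(\beta_s^{(\alpha)} \coth r) - 2 \rho \rd_r \beta_s^{(\alpha)}) |\tilP_s u|^2$ collects the lower order terms from Lemma~\ref{lem:Q}.

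Now I would integrate in $s$ over $[\sigma_1, \sigma_2]$ and apply Lemma~\ref{lem:integrated_bernstein_substitute} with $w = \rd_r \beta_s^{(\alpha)}$. This identity converts the $\nabla$-quadratic form on the left into precisely the main term $\int_{\sigma_1}^{\sigma_2} \tfrac{1}{s} \langle (\rd_r \beta_s^{(\alpha)}) \tilP_s u, \tilP_s u \rangle_{t,x} \, ds$ we want, plus the two boundary contributions at $s = \sigma_1, \sigma_2$, plus error terms involving $\rd_s w = \rd_s \rd_r \beta_s^{(\alpha)}$, $\rd_r^2 w = \rd_r^3 \beta_s^{(\alpha)}$, and $\coth r (\rd_r w) = \coth r\, \rd_r^2 \beta_s^{(\alpha)}$. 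Rearranging, these error terms must be combined with $-\int E_1\, dt$. Expanding $\lap f = \rd_r^2 f + 2 \rho \coth r\, \rd_r f$ and simplifying, the net residual coefficient of $|\tilP_s u|^2$ reduces to
\begin{equation*}
-\tfrac{1}{2} \rd_r^3 \beta_s^{(\alpha)} - 2\rho(\rho-1) (\beta_s^{(\alpha)} \coth r - \rd_r \beta_s^{(\alpha)}) \sinh^{-2} r - \rd_s \rd_r \beta_s^{(\alpha)},
\end{equation*}
where the $\rd_s \rd_r \beta_s^{(\alpha)} \geq 0$ term has the favorable sign and can be dropped.

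The main technical obstacle, and the key use of the subsidiary Lemma~\ref{lem:technical_lemma_high_freq}, is the absorption of the first two residuals. By \eqref{equ:third_derivative_betas_bound}, $|\rd_r^3 \beta_s^{(\alpha)}| \lesssim \delta^2 s^{-1} \rd_r \beta_s^{(\alpha)}$, which is absorbed into the main $\tfrac{1}{s} \langle (\rd_r \beta_s^{(\alpha)}) \tilP_s u, \tilP_s u \rangle$ term on the left-hand side by choosing $0 < \delta \ll 1$ small enough. The $\sinh^{-2} r$ term is trivially nonpositive (hence dropped) for $d \geq 3$ since $\rho(\rho-1) \geq 0$, while for $d = 2$ it is nontrivial and requires the delicate bound \eqref{equ:upper_bound_betas_coth_difference_2D}, which is exactly the statement that $\sinh^{-2} r (\beta_s^{(\alpha)} \coth r - \rd_r \beta_s^{(\alpha)}) \leq \delta s^{-1} \rd_r \beta_s^{(\alpha)} + C \chi_{\{r \leq R\}}$: the first part is again absorbed into the main term, and the second part produces precisely the $\|\tilP_s u\|_{L^2(\bbR \times \{r \leq R\})}^2$ error on the right-hand side. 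Together with the elementary identification of the $s = \sigma_1, \sigma_2$ boundary contributions with the first terms on each side of \eqref{equ:high_freq_multiplier_identity_integrated}, this completes the estimate.
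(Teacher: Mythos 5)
Your proposal is correct and follows essentially the same approach as the paper's proof: apply Lemma~\ref{lem:Q} in the rewritten form, integrate in heat time and substitute via Lemma~\ref{lem:integrated_bernstein_substitute} with $w = \partial_r\beta_s^{(\alpha)}$, combine the $\sinh^{-2} r\,\bsh^{\theta_a\theta_b}$ angular block with the $\ringa^{\theta_a\theta_b}$ term from $\tilF_s^{(\alpha)}$ to obtain a nonnegative $\bsa$-angular term, and control the residual $-\tfrac{1}{2}\partial_r^3\beta_s^{(\alpha)} - 2\rho(\rho-1)\sinh^{-2}r(\beta_s^{(\alpha)}\coth r - \partial_r\beta_s^{(\alpha)}) - \partial_s\partial_r\beta_s^{(\alpha)}$ via Lemma~\ref{lem:technical_lemma_high_freq}, the $d=2$ case generating the local $L^2$ error through~\eqref{equ:upper_bound_betas_coth_difference_2D}. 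The only organizational difference is that you integrate in time first and in heat time second, while the paper does the opposite; since the weight $w$ is $t$-independent the two orders are interchangeable.
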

\begin{proof}
 To simplify notation we write
\begin{align*}
\begin{split}
&|\snabla\tilP_su|_{\slashed{\ringa}}^2:=\ringa^{\theta_a\theta_b}\partial_{\theta_a}\tilP_su\partial_{\theta_b}\tilP_su,\qquad |\snabla\tilP_su|_{\slashed{\bsa}}^2:=\bsa^{\theta_a\theta_b}\partial_{\theta_a}\tilP_su\partial_{\theta_b}\tilP_su.
\end{split}
\end{align*}
 We begin by writing our main multiplier identity~\eqref{eq:Q1} applied with $Q_s^{(\alpha)}$ defined in~\eqref{equ:definition_Q_high_freq} as 
 \begin{equation}
  \begin{aligned}
   &\Re \langle i \Delta \tilP_s u, Q_s^{(\alpha)} \tilP_s u \rangle \\
   &\quad \quad = 2 \int_{\bbH^d} \bigl( (\partial_r \beta_s^{(\alpha)}) \bigl( |\nabla \tilP_s u|^2 - \rho^2 |\tilP_s u|^2 \bigr) - \rho (\partial_r^2 \beta_s^{(\alpha)}) \coth(r) |\tilP_s u|^2 \bigr) \, \dh \\
   &\quad \quad \quad + 2 \int_{\bbH^d} \bigl( \beta_s^{(\alpha)} \coth(r) - \partial_r \beta_s^{(\alpha)} \bigr) \frac{|\slashed{\nabla} \tilP_s u|^2}{\sinh^2r} \, \dh \\
   &\quad \quad \quad + 2 \int_{\bbH^d} \Bigl( -\frac{\partial_r^3 \beta_s^{(\alpha)}}{4} + \frac{\rho (\rho - 1)}{\sinh^2(r)} \bigl( \beta_s^{(\alpha)} \coth(r) - \partial_r \beta_s^{(\alpha)} \bigr) \Bigr) |\tilP_s u|^2 \, \dh. 
  \end{aligned}
 \end{equation}
 Integrating this equation with respect to the heat time $\ds$ from $\sigma_1$ to $\sigma_2$ and applying Lemma~\ref{lem:integrated_bernstein_substitute} with the weight $w := \partial_r \beta_s^{(\alpha)}$ to the first integral on the right-hand side of the resulting integrated equation, we obtain that
 \begin{equation} \label{equ:key_identity_applied_to_Ps_integrated}
  \begin{aligned}
   &\int_{\sigma_1}^{\sigma_2} \Re \langle i \Delta \tilP_s u, Q_s^{(\alpha)} \tilP_s u \rangle \, \ds + 2 \int_{\sigma_1}^{\sigma_2} \int_{\bbH^d} \bigl( \beta_s^{(\alpha)} \coth(r) - \partial_r \beta_s^{(\alpha)} \bigr) |\slashed{\nabla} \tilP_s u|_{\slashed{\ringa}}^2 \, \dh \, \ds\\
   &\quad \quad = \frac{1}{\sigma_1} \langle (\partial_r \beta_{\sigma_1}^{(\alpha)}) \tilP_{\sigma_1} u, \tilP_{\sigma_1} u \rangle - \frac{1}{\sigma_2} \langle (\partial_r \beta_{\sigma_2}^{(\alpha)}) \tilP_{\sigma_2} u, \tilP_{\sigma_2} u \rangle \\
   &\quad \quad \quad + \int_{\sigma_1}^{\sigma_2} \frac{1}{s} \langle (\partial_r \beta_s^{(\alpha)}) \tilP_s u, \tilP_s u \rangle \, \ds + \int_{\sigma_1}^{\sigma_2} \langle (\partial_s \partial_r \beta_{s}^{(\alpha)}) \tilP_s u, \tilP_s u \rangle \, \ds \\
   &\quad \quad \quad + \frac{1}{2} \int_{\sigma_1}^{\sigma_2} \langle (\partial_r^3 \beta_s^{(\alpha)}) \tilP_s u, \tilP_s u \rangle \, \ds \\
   &\quad \quad \quad + 2 \int_{\sigma_1}^{\sigma_2} \int_{\bbH^d} \bigl( \beta_s^{(\alpha)} \coth(r) - \partial_r \beta_s^{(\alpha)} \bigr) |\slashed{\nabla} \tilP_s u|_{\slashed{\bsa}}^2 \, \dh \, \ds \\
   &\quad \quad \quad + 2 \int_{\sigma_1}^{\sigma_2} \int_{\bbH^d} \frac{\rho (\rho - 1)}{\sinh^2(r)} \bigl( \beta_s^{(\alpha)} \coth(r) - \partial_r \beta_s^{(\alpha)} \bigr) |\tilP_s u|^2 \, \dh \, \ds.
  \end{aligned}
 \end{equation}
 Now we note that $\rho (\rho - 1) \geq 0$ for dimensions $d \geq 3$, while $\rho (\rho -1) = -\frac{1}{4}$ for $d=2$. Correspondingly, using Lemma~\ref{lem:technical_lemma_high_freq}, for dimensions $d \geq 3$ we find that
 \begin{equation} 
  \begin{aligned}
   &\int_{\sigma_1}^{\sigma_2} \Re \langle i \Delta \tilP_s u, Q_s^{(\alpha)} \tilP_s u \rangle \, \ds+ 2 \int_{\sigma_1}^{\sigma_2} \int_{\bbH^d} \bigl( \beta_s^{(\alpha)} \coth(r) - \partial_r \beta_s^{(\alpha)} \bigr) |\slashed{\nabla} \tilP_s u|_{\slashed{\ringa}}^2 \, \dh \, \ds \\
   &\quad \quad \geq \frac{1}{\sigma_1} \langle (\partial_r \beta_{\sigma_1}^{(\alpha)}) \tilP_{\sigma_1} u, \tilP_{\sigma_1} u \rangle - \frac{1}{\sigma_2} \langle (\partial_r \beta_{\sigma_2}^{(\alpha)}) \tilP_{\sigma_2} u, \tilP_{\sigma_2} u \rangle \\
   &\quad \quad \quad + \int_{\sigma_1}^{\sigma_2} \frac{1}{s} \langle (\partial_r \beta_s^{(\alpha)}) \tilP_s u, \tilP_s u \rangle \, \ds + \frac{1}{2} \int_{\sigma}^{s_0} \langle (\partial_r^3 \beta_s^{(\alpha)}) \tilP_s u, \tilP_s u \rangle \, \ds \\
   &\quad \quad \geq \frac{1}{\sigma_1} \langle (\partial_r \beta_{\sigma_1}^{(\alpha)}) \tilP_{\sigma_1} u, \tilP_{\sigma_1} u \rangle - \frac{1}{\sigma_2} \langle (\partial_r \beta_{\sigma_2}^{(\alpha)}) \tilP_{\sigma_2} u, \tilP_{\sigma_2} u \rangle \\
   &\quad \quad \quad + (1 - C \delta^2) \int_{\sigma_1}^{\sigma_2} \frac{1}{s} \langle (\partial_r \beta_s^{(\alpha)}) \tilP_s u, \tilP_s u \rangle \, \ds,
  \end{aligned}
 \end{equation}
 while for dimension $d = 2$ we obtain 
 \begin{equation} 
  \begin{aligned}
   &\int_{\sigma_1}^{\sigma_2} \Re \langle i \Delta \tilP_s u, Q_s^{(\alpha)} \tilP_s u \rangle \, \ds + 2 \int_{\sigma_1}^{\sigma_2} \int_{\bbH^d} \bigl( \beta_s^{(\alpha)} \coth(r) - \partial_r \beta_s^{(\alpha)} \bigr) |\slashed{\nabla} \tilP_s u|_{\slashed{\ringa}}^2 \, \dh \, \ds\\
   &\quad \quad \geq \frac{1}{\sigma_1} \langle (\partial_r \beta_{\sigma_1}^{(\alpha)}) \tilP_{\sigma_1} u, \tilP_{\sigma_1} u \rangle - \frac{1}{\sigma_2} \langle (\partial_r \beta_{\sigma_2}^{(\alpha)}) \tilP_{\sigma_2} u, \tilP_{\sigma_2} u \rangle \\
   &\quad \quad \quad + (1 - C \delta^2) \int_{\sigma_1}^{\sigma_2} \frac{1}{s} \langle (\partial_r \beta_s^{(\alpha)}) \tilP_s u, \tilP_s u \rangle \, \ds \\
   &\quad \quad \quad -\frac{1}{2} \int_{\sigma_1}^{\sigma_2} \int_{\bbH^d} \frac{ \beta_s^{(\alpha)} \coth(r) - \partial_r \beta_s^{(\alpha)}}{\sinh^2(r)} |\tilP_s u|^2 \, \dh \, \ds \\
   &\quad \quad \geq \frac{1}{\sigma_1} \langle (\partial_r \beta_{\sigma_1}^{(\alpha)}) \tilP_{\sigma_1} u, \tilP_{\sigma_1} u \rangle - \frac{1}{\sigma_2} \langle (\partial_r \beta_{\sigma_2}^{(\alpha)}) \tilP_{\sigma_2} u, \tilP_{\sigma_2} u \rangle \\
   &\quad \quad \quad + (1 - C \delta^2 - {\textstyle \frac{1}{2}} \delta) \int_{\sigma_1}^{\sigma_2} \frac{1}{s} \langle (\partial_r \beta_s^{(\alpha)}) \tilP_s u, \tilP_s u \rangle \, \ds \\
   &\quad \quad \quad - C \int_{\sigma_1}^{\sigma_2} \| \tilP_s u \|_{L^2(\bbR \times \{ r \leq R \})}^2 \, \ds.
  \end{aligned}
 \end{equation}
 In either case, choosing $0 < \delta \ll 1$ sufficiently small, we conclude that
 \begin{equation}
  \begin{aligned}
   &\frac{1}{\sigma_1} \langle (\partial_r \beta_{\sigma_1}^{(\alpha)} \tilP_{\sigma_1} u, \tilP_{\sigma_1} u \rangle + \frac{1}{2} \int_{\sigma_1}^{\sigma_2} \frac{1}{s} \langle (\partial_r \beta_s^{(\alpha)}) \tilP_s u, \tilP_s u \rangle \, \ds \\
   &\quad \quad \leq \frac{1}{\sigma_2} \langle (\partial_r \beta_{\sigma_2}^{(\alpha)}) \tilP_{\sigma_2} u, \tilP_{\sigma_2} u \rangle  + \int_{\sigma_1}^{\sigma_2} \Re \langle i \Delta \tilP_s u, Q_s^{(\alpha)} \tilP_s u \rangle \, \ds \\
   &\quad \quad \quad  + 2 \int_{\sigma_1}^{\sigma_2} \int_{\bbH^d} \bigl( \beta_s^{(\alpha)} \coth(r) - \partial_r \beta_s^{(\alpha)} \bigr) |\slashed{\nabla} \tilP_s u|_{\slashed{\ringa}}^2 \, \dh \, \ds  \\
   &\quad \quad \quad + C \int_{\sigma_1}^{\sigma_2} \|\tilP_s u\|_{L^2(\bbR\times\{r\leq R\})}^2 \, \ds.
  \end{aligned}
 \end{equation}
 The lemma now follows from the previous inequality upon integrating the following identity (see \eqref{eq:dtQu}) in time $\ud t$ as well as in heat time $\ds$ from $\sigma_1$ to $\sigma_2$ and rearranging
 \begin{equation}
  \Re \langle i \Delta \tilP_s u, Q_s^{(\alpha)} \tilP_s u \rangle = \frac{1}{2} \frac{\ud}{\ud t} \langle \tilP_s u, Q_s^{(\alpha)} \tilP_s u \rangle - \Re \langle (\partial_t - i \Delta) \tilP_s u, Q_s^{(\alpha)} \tilP_s u \rangle. 
 \end{equation}
\end{proof}

We are now prepared to prove Proposition~\ref{prop:high_frequency_local_smoothing}

\begin{proof}[Proof of Proposition~\ref{prop:high_frequency_local_smoothing}]
Let $0 < s \leq \frac{s_2}{2}$ be arbitrary. We apply the estimate \eqref{equ:high_freq_multiplier_identity_integrated} from Lemma~\ref{lem:high_freq_multiplier_identity_integrated} with $\sgm_1$ and $\sigma_2$ replaced by $s$ and $2s$, respectively, to get
\begin{equation} \label{eq:dyadic-s} 
 \begin{aligned}
  &\frac{1}{s} \langle (\partial_r \beta_s^{(\alpha)}) \tilP_{s} u, \tilP_{s} u \rangle_{t,x} + \frac{1}{2} \int_{s}^{2 s} \frac{1}{s'} \langle (\partial_r \beta_{s'}^{(\alpha)})  \tilP_{s'} u, \tilP_{s'} u \rangle_{t,x} \, \frac{\ud s'}{s'} \\
  &\quad \leq \frac{1}{2 s} \langle (\partial_r \beta_{2s}^{(\alpha)}) \tilP_{2 s} u, \tilP_{2 s} u \rangle_{t,x} + C \int_{s}^{2s} \sup_{t \in \bbR} \, \bigl| \langle \tilP_{s'} u(t), Q_{s'}^{(\alpha)} \tilP_{s'} u(t) \rangle \bigr| \, \frac{\ud s'}{s'} \\
  &\quad \quad + \int_{s}^{2s}  |\tilF_{s'}^{(\alpha)}|  \, \frac{\ud s'}{s'}  + C \int_{s}^{2s} \| \tilP_{s'} u \|_{L^2(\bbR\times\{r\leq R\})}^2 \frac{\ud s'}{s'}.
 \end{aligned}
\end{equation}
Naively, we would like to take the supremum in $\alp$ and then sum up in $s \in 2^{-\bbN}$. An immediate problem is dealing with the first term on the right-hand side. The hope is that we can perform a ``telescopic sum'', i.e., that the first term on the right-hand side of \eqref{eq:dyadic-s} at $s$ is controlled by \eqref{eq:dyadic-s} at $2 s$, and so on. However, there is still the issue that the supremum of the whole left-hand side may not control the supremum of the first term on the left-hand side. A simple way out is to average \eqref{eq:dyadic-s} in $s$, which makes the two terms on the left-hand side of \eqref{eq:dyadic-s} effectively comparable. 

Let $N \geq 1 $ be a constant to be fixed sufficiently large later in the proof. For $\sigma > 0$, consider the weight function
\begin{equation*}
 \chi_{\sgm}(s) = \left\{
  \begin{array}{ll}
   ( \frac{s}{\sigma} )^N & \hbox{ when } s \leq \sgm, \\
   0 & \hbox{ when } s > \sgm.
  \end{array} \right.
\end{equation*}
We now multiply \eqref{eq:dyadic-s} by $\chi_{\sgm}(s)$ and integrate in $\frac{\ud s}{s}$. To simplify the notation we define
\begin{equation*}
 g(s) =  \frac{1}{s} \langle (\partial_r \beta_s^{(\alpha)}) \tilP_s u, \tilP_s u \rangle_{t,x}.
\end{equation*}
Thus, the left-hand side of \eqref{eq:dyadic-s} equals $g(s) + \frac{1}{2} \int_{s}^{2s} g(s') \, \frac{\ud s'}{s'}$, and the first term on the right-hand side of \eqref{eq:dyadic-s} equals $g(2s)$. Making a change of variables and using the simple relation $\chi_{a \sgm} (s)= \chi_{\sgm}(a^{-1} s)$ for any $a > 0$, we arrive at the estimate
\begin{equation} \label{eq:dyadic-avg-sgm} 
 \begin{aligned}
  &\int_{0}^{\infty} \chi_{\sgm}(s) g(s) \, \frac{\ud s}{s} + \frac{1}{2} \int_{\sgm}^{2 \sgm} \int_{0}^{\infty} \chi_{\sgm'}(s) g(s) \, \frac{\ud s}{s} \, \frac{\ud \sgm'}{\sgm'} \\
  &\leq \int_{0}^{\infty} \chi_{2 \sgm}(s) g(s) \, \frac{\ud s}{s} + C \int_{\sgm}^{2 \sgm} \int_{0}^{\infty} \chi_{\sgm'}(s) \sup_{t \in \bbR} \, \bigl| \langle \tilP_{s} u(t), Q_s^{(\alpha)} \tilP_{s} u(t) \rangle \bigr| \, \frac{\ud s}{s} \frac{\ud\sgm'}{\sgm'} \\
  &\quad + \int_{\sgm}^{2 \sgm} \int_{0}^{\infty} \chi_{\sgm'}(s) |\tilF_s^{(\alpha)}| \, \frac{\ud s}{s} \, \frac{\ud \sgm'}{\sgm'} \\
  &\quad + C \int_{\sgm}^{2 \sgm} \int_{0}^{\infty}\chi_{\sgm'}(s)\|\tilP_s u\|_{L^2(\bbR\times\{r\leq R\})}^2 \, \ds \, \frac{\ud\sgm'}{\sgm'},
 \end{aligned}
\end{equation}
which is valid for any $0 < \sigma \leq \frac{s_2}{2}$. Now observe that 
\begin{equation*}
 \chi_{\sgm_{1}} \leq 2^N \chi_{\sgm_{2}} \quad \hbox{ for } \sgm \leq \sgm_{1} \leq \sgm_{2} \leq 2 \sgm.
\end{equation*}
One consequence is that the left-hand side of \eqref{eq:dyadic-avg-sgm} is bounded from below by 
\begin{equation*}
 (1 + \mu_N) \int_{0}^{\infty} \chi_{\sgm}(s) g(s) \, \frac{\ud s}{s}, \quad \mu_N = 2^{-N-1} \log 2 > 0.
\end{equation*}
On the other hand, the last three terms on the right-hand side of \eqref{eq:dyadic-avg-sgm} are bounded, respectively, by 
\begin{gather*}
 C_{N} \int_{0}^{\infty} \chi_{2\sgm}(s) \sup_{t \in \bbR} \, \bigl| \langle \tilP_{s} u(t), Q_s^{(\alpha)} \tilP_{s} u(t) \rangle \bigr| \, \frac{\ud s}{s}, \\
 C_{N} \int_{0}^{\infty} \chi_{2 \sgm}(s)|\tilF_s^{(\alpha)}|  \, \frac{\ud s}{s}, \\
 C_{N} \int_{0}^{\infty} \chi_{2\sgm}(s) \| \tilP_s u\|_{L^2(\bbR\times\{r\leq R\})}^2 \, \frac{\ud s}{s},
\end{gather*}
where $C_N = 2^NC\log 2$. It follows that
\begin{equation} \label{eq:dyadic-avg-sgm-0} 
 \begin{aligned}
  &(1+\mu_N) \int_{0}^{\infty} \chi_{\sgm}(s) g(s) \, \frac{\ud s}{s} \\
  &\qquad \qquad \leq \int_{0}^{\infty} \chi_{2 \sgm}(s) g(s) \, \frac{\ud s}{s} \\
  &\qquad \qquad \quad + C_N \int_{0}^{\infty} \chi_{2 \sgm}(s) \sup_{t \in \bbR} \, \bigl| \langle \tilP_{s} u(t), Q_s^{(\alpha)} \tilP_{s} u(t) \rangle \bigr| \, \frac{\ud s}{s} \\
  &\qquad \qquad \quad + C_N \int_{0}^{\infty} \chi_{2 \sgm}(s) |\tilF_s^{(\alpha)}|\, \frac{\ud s}{s} \\
  &\qquad \qquad \quad + C_N \int_{0}^{\infty} \chi_{2 \sgm}(s) \|\tilP_s u\|_{L^2(\bbR \times \{r\leq R\})}^2 \ds.
 \end{aligned}
\end{equation}
Next we take the supremum over all $\{ \alpha_\ell \} \in \calA$ of~\eqref{eq:dyadic-avg-sgm-0} applied for $\sigma = 2^\ell$ for every integer $\ell \leq -2 k_{s_2}-1$. Summing up the resulting inequalities over all integers $\ell \leq - 2 k_{s_2} - 1$ and exploiting the cancellation between part of the term on the left-hand side of~\eqref{eq:dyadic-avg-sgm-0} and the first term on the right-hand side of~\eqref{eq:dyadic-avg-sgm-0} for consecutive values of $\ell$ we find that
\begin{equation*} 
 \begin{aligned}
  &\mu_N \sum_{\ell=-\infty}^{-2k_{s_{2}} -1} \sup_{ \{ \alpha_j \} \in \calA} \int_{0}^{\infty} \chi_{2^{\ell}}(s) g(s) \, \ds \\
  &\qquad \leq \sup_{ \{ \alpha_j \} \in \calA} \int_{0}^{\infty} \chi_{2^{-2k_{s_2}}}(s) g(s) \, \ds \\
  &\qquad \qquad + C_N \sum_{\ell=-\infty}^{-2k_{s_{2}} } \sup_{ \{ \alpha_j \} \in \calA} \int_{0}^{\infty} \chi_{2^{\ell}}(s) \sup_{t \in \bbR} \, \bigl| \langle \tilP_{s} u(t), Q_s^{(\alpha)} \tilP_{s} u(t) \rangle \bigr| \, \ds \\
  &\qquad \qquad + C_N \sum_{\ell=-\infty}^{-2k_{s_{2}} } \sup_{ \{ \alpha_j \} \in \calA} \int_{0}^{\infty} \chi_{2^{\ell}}(s) |\tilF_s^{(\alpha)}| \, \ds \\
  &\qquad \qquad + C_N \sum_{\ell=-\infty}^{-2k_{s_{2}}} \sup_{\{ \alpha_j \} \in \calA} \int_{0}^{\infty} \chi_{2^{\ell}}(s) \|\tilP_s u\|_{L^2(\bbR \times \{r \leq R\})}^2 \, \ds.
 \end{aligned}
\end{equation*}
Let us now remove the weight $\chi_{2^{\ell}}(s)$. For the left-hand side, we note that
\begin{equation*}
 \chi_{2^{\ell}}(s) \geq 2^{-N} \hbox{ for } 2^{\ell-1} \leq s \leq 2^{\ell}.
\end{equation*}
For the first term on the right-hand side we use, recalling that $2^{-2 k_{s_2}} = s_2$,
\begin{equation*}
	\chi_{ 2^{-2 k_{s_2}} }(s) 
	\left\{
	\begin{array}{ll}
	\leq 2^{-3 N} & \hbox{for } s \leq \frac{s_2}{8}, \\
	\leq 1 & \hbox{for } \frac{s_2}{8} < s \leq s_2, \\
	=0 & \hbox{for } s > s_{2}.
	\end{array}
	\right.	
\end{equation*}
On the other hand, for the last three terms on the right-hand side, we simply take the supremum over $\{ \alpha_j\} \in \calA$ and the summation over $\ell$ inside the $s$-integral, and observe that
\begin{equation*}
	\sum_{\ell=-\infty}^{-2k_{s_{2}} } \chi_{2^{\ell}}(s) 
	\left\{
	\begin{array}{ll}
	\aleq_N 1 & \hbox{for } s \leq s_{2} , \\
	= 0 & \hbox{for } s >  s_{2}.
	\end{array}
	\right.
\end{equation*}
Thus, we obtain the simplified estimate 
\begin{equation} \label{eq:dyadic-avg-sgm-1}
\begin{aligned}
 &\mu_N 2^{-N} \sum_{\ell=-\infty}^{-2k_{s_{2}} -1} \sup_{ \{ \alpha_j \} \in \calA} \int_{2^{\ell-1}}^{2^{\ell}} g(s) \, \frac{\ud s}{s} \\
 &\qquad \leq 2^{- 3N} \int_{0}^{\frac{s_2}{8}} \sup_{ \{ \alpha_j \} \in \calA} g(s) \, \frac{\ud s}{s} + \sup_{ \{ \alpha_\ell \} \in \calA} \int_{\frac{s_2}{8}}^{s_2} g(s) \, \frac{\ud s}{s} \\
 &\qquad \quad + C_N \int_{0}^{s_{2}} \sup_{ \{ \alpha_j \} \in \calA} \sup_{t \in \bbR} \, \bigl| \langle \tilP_{s} u(t), Q_s^{(\alpha)} \tilP_{s} u(t) \rangle \bigr| \, \frac{\ud s}{s} \\
 &\qquad \quad + C_N \int_{0}^{s_{2}} \sup_{ \{ \alpha_j \} \in \calA} |\tilF_s^{(\alpha)}|  \, \frac{\ud s}{s}  + C_N \int_{0}^{s_{2}} \| \tilP_s u \|_{L^2(\bbR \times \{ r \leq R\} )}^2 \, \frac{\ud s}{s},
\end{aligned}
\end{equation}
where $C_N$ are now different from before. We want to simplify the left-hand side further. To this end we observe that by the mean-value theorem, for each integer $\ell \leq -2k_{s_2}-1$ there exists $\tils \in [2^{\ell-1}, 2^{\ell}]$ such that $g(\tils) = \frac{1}{\log(2)} \int_{2^{\ell-1}}^{2^\ell}g(s)\ds$. Now for each $\ell \leq - 2 k_{s_2} -1$ we once more invoke the estimate~\eqref{equ:high_freq_multiplier_identity_integrated} from Lemma~\ref{lem:high_freq_multiplier_identity_integrated} with $\sigma_1 \in [2^{\ell-2}, 2^{\ell-1}]$ and $\sigma_2 = \tils$ to obtain after taking the supremum over all $\{ \alpha_j \} \in \calA$ that
\begin{align*}
 \sup_{ \{ \alpha_j \} \in \calA } \, \sup_{\sigma_1 \in [2^{\ell-2}, 2^{\ell-1}]} g(\sigma_1) &\leq C \sup_{ \{ \alpha_j \} \in \calA } \int_{2^{\ell-1}}^{2^{\ell}} g(s) \, \frac{\ud s}{s} \\
  &\quad + C \int_{2^{\ell-2}}^{2^{\ell}} \sup_{ \{ \alpha_j \} \in \calA} \, \sup_{t \in \bbR} \, \bigl| \langle \tilP_s u(t), Q_s^{(\alpha)} \tilP_s u(t) \rangle \bigr| \, \frac{\ud s}{s} \\
  &\quad + C \int_{2^{\ell-2}}^{2^{\ell}} \sup_{ \{ \alpha_j\} \in \calA} \, |\tilF_s^{(\alpha)}|  \, \frac{\ud s}{s} \\
  &\quad + C \int_{2^{\ell-2}}^{2^{\ell}} \| \tilP_s u \|_{L^2(\bbR \times \{ r\leq R \})}^2 \, \frac{\ud s}{s}.
\end{align*}
Of course, the left-hand side is further bounded from below by 
\begin{equation*}
 \frac{1}{\log(2)} \int_{2^{\ell-2}}^{2^{\ell-1}} \sup_{ \{ \alpha_j \} \in \calA} g(s) \, \frac{\ud s}{s}.
\end{equation*}
Combined with \eqref{eq:dyadic-avg-sgm-1}, this gives (where again $C_N$ can be different from above)
\begin{equation} \label{eq:dyadic-avg-sgm-2}
 \begin{aligned}
  &\mu_N 2^{-N} \int_{0}^{\frac{s_{2}}{4}} \sup_{ \{ \alpha_j \} \in \calA} g(s) \, \frac{\ud s}{s} \\
  &\qquad \leq C 2^{-3 N} \int_{0}^{\frac{s_2}{8}} \sup_{ \{ \alpha_j \} \in \calA} g(s) \, \frac{\ud s}{s} + C \sup_{ \{ \alpha_j \} \in \calA} \int_{\frac{s_2}{8}}^{s_2} g(s) \, \frac{\ud s}{s} \\
  &\qquad \quad + C_N \int_{0}^{s_{2}} \sup_{ \{ \alpha_j \} \in \calA} \sup_{t \in \bbR} \, \bigl| \langle \tilP_{s} u(t), Q_s^{(\alpha)} \tilP_{s} u(t) \rangle \bigr| \, \frac{\ud s}{s} \\
  &\qquad \quad + C_N \int_{0}^{s_{2}} \sup_{ \{ \alpha_j \} \in \calA} \, |\tilF_s^{(\alpha)}|  \, \frac{\ud s}{s}+ C_N \int_{0}^{s_{2}} \|\tilP_s u \|_{L^2(\bbR \times \{ r \leq R\} )}^2 \, \frac{\ud s}{s}.
 \end{aligned}
\end{equation}
Recalling that $\mu_N = 2^{-N-1} \log(2)$, we may absorb the first term on the right-hand side into the left-hand side by choosing $N$ large enough. Also recalling the definition of $g(s)$ and surpassing the $N$ dependency of constants, we arrive at the estimate 
\begin{equation} \label{eq:dyadic-final s0-tils0}
\begin{aligned}
&\int_{0}^{\frac{s_2}{4}} \sup_{ \{ \alpha_\ell \} \in \calA} \frac{1}{s} \langle (\partial_r \beta_s^{(\alpha)}) \tilP_{s} u, \tilP_{s} u \rangle_{t,x} \, \frac{\ud s}{s} \\
&\qquad \leq C \sup_{ \{ \alpha_\ell \} \in \calA} \int_{\frac{s_2}{8}}^{s_2} \frac{1}{s} \langle (\partial_r \beta_s^{(\alpha)}) \tilP_{s} u, \tilP_{s} u \rangle_{t,x} \, \frac{\ud s}{s} \\
&\qquad \quad + C \int_{0}^{s_2} \sup_{ \{ \alpha_\ell \} \in \calA} \sup_{t \in \bbR} \, \bigl| \langle \tilP_{s} u(t), Q_s^{(\alpha)} \tilP_{s} u(t) \rangle \bigr| \, \frac{\ud s}{s} \\
&\qquad \quad + C \int_{0}^{s_2} \sup_{\{ \alpha_\ell \} \in \calA} \, |\tilF_s^{(\alpha)}| \, \frac{\ud s}{s}+ C \int_{0}^{s_2} \|\tilP_s u\|_{L^2(\bbR \times \{ r \leq R\} )}^2 \, \frac{\ud s}{s}.
\end{aligned}
\end{equation}
Finally, we insert the relation (see Lemma~\ref{lem:betaLErelation})
\[
 \sup_{ \{ \alpha_\ell \} \in \calA} \frac{1}{s} \langle (\partial_r \beta_s^{(\alpha)}) \tilP_{s} u, \tilP_{s} u \rangle_{t,x} \simeq \sup_{ \{ \alpha_\ell \} \in \calA } s^{-\frac{1}{2}} \| \tilP_s u \|_{X_{s,\alpha}}^2 \simeq s^{-\frac{1}{2}} \| \tilP_s u \|_{LE_s}^2
\]
for the integrand on the left-hand side of the previous estimate, which completes the proof of~\eqref{equ:high_frequency_local_smoothing}.
\end{proof}


\section{Transitioning Estimate and Proof of Theorems~\ref{t:LE1-sym} and \ref{t:LE1}} \label{s:trans} 


In this section we complete the proof of Theorem~\ref{t:LE1} (and therefore its immediate corollary, Theorem~\ref{t:LE1-sym}) by combining the low-frequency and the high-frequency estimates of the previous two sections. The main additional ingredient for the proof is the following \emph{transitioning estimate} which enables us to transition between the different spatial weights appearing in the definitions of our low-frequency and our high-frequency local smoothing spaces $LE_\low$ and $LE_s$. In its proof we will make crucial use of Lemma~\ref{l:weighted_HP_applied_to_main_multiplier_identity} and Remark~\ref{rem:weighted_HP_applied_to_main_multiplier_identity_s3version} from Section~\ref{s:low}.

\begin{lem} \label{lem:transitioning_estimate}
 Let $d \geq 2$. Let $I = [s_3, s_4]$ be a heat time interval with $s_3, s_4 \simeq 1$. Let $\beta_s^{(\alpha)}$ for $s \in I$ be defined as in~\eqref{equ:definition_beta_high_freq} and let $Q_{s_3}^{(\alpha)}$ be defined as in~\eqref{equ:definition_Q_high_freq}. Then there exists an absolute constant $R \geq 1$ such that for all solutions $u(t)$ to~\eqref{eq:S}
 \begin{equation} \label{equ:transitioning_estimate}
  \begin{aligned}
   &\sup_{ \{ \alpha_\ell \} \in \calA} \, \int_I \langle (\partial_r \beta_s^{(\alpha)}) \tilP_s u, \tilP_s u \rangle_{t,x} \, \ds \\
   &\qquad \lesssim \int_{2I} \sup_{ \{ \alpha_\ell \} \in \calA } \, \sup_{t \in \bbR} \, \bigl| \langle \tilP_{\geq s} u(t), Q_{s_3}^{(\alpha)} \tilP_{\geq s} u(t) \rangle \bigr| \, \ds \\
   &\qquad \quad + \int_{2I} \sup_{ \{ \alpha_\ell \} \in \calA } \, \bigl| \Re \langle (\partial_t - i \Delta) \tilP_{\geq s} u, Q_{s_3}^{(\alpha)} \tilP_{\geq s} u \rangle_{t,x} \bigr| \, \ds \\
   &\qquad \quad + \int_{2I} \| \tilP_{\geq s} u \|_{L^2(\bbR \times \{ r \leq R \})}^2 \, \ds,
  \end{aligned}
 \end{equation}
 where we use the notation $2I := [ \frac{s_3}{2}, 2 s_4 ]$. 
\end{lem}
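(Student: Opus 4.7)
The plan is to use the identity $\tilP_s u = -s(\Delta+\rho^2)\tilP_{\geq s}u = -s\,\partial_s\tilP_{\geq s}u$ to recast the LHS of \eqref{equ:transitioning_estimate} as a quadratic form in $\tilP_{\geq s}u$ whose principal part is $\langle(\partial_r\beta_{s_3}^{(\alpha)})(\Delta+\rho^2)\tilP_{\geq s}u, \tilP_{\geq s}u\rangle$, which is precisely the quantity bounded by the fixed-frequency commutator estimate Remark~\ref{rem:weighted_HP_applied_to_main_multiplier_identity_s3version} in terms of $\Re\langle i\Delta\tilP_{\geq s}u, Q_{s_3}^{(\alpha)}\tilP_{\geq s}u\rangle$ plus a localized $L^2$ error. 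The identity \eqref{eq:dtQu} and integration in $t$ will then produce the three terms on the RHS of \eqref{equ:transitioning_estimate}. As a preliminary reduction I observe that for $s\in I$ one has $s\simeq s_3 \simeq 1$; combined with the explicit formula for $\partial_r\beta_s^{(\alpha)}$ and the slow variation of $\alpha$ from Definition~\ref{d:alpha}, this shows that $\partial_r\beta_s^{(\alpha)}$ and $\partial_r\beta_{s_3}^{(\alpha)}$ are pointwise comparable uniformly in $s\in I$, so up to a harmless reparametrization of $\alpha\in\calA$ the weight on the LHS may be frozen at $s=s_3$ throughout.

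Working at fixed $\alpha$, I substitute $\tilP_s u=-s\,\partial_s\tilP_{\geq s}u$ into the right factor of the LHS integrand and integrate by parts in $s$; using $\partial_s\tilP_s u = s^{-1}\tilP_s u + (\Delta+\rho^2)\tilP_s u$ (a direct consequence of the definition of $\tilP_s$) and self-adjointness of $\Delta+\rho^2$, a cancellation of quadratic terms in $\tilP_s u$ yields the master identity (with $\ud s$ the standard measure, equivalent to $\ds$ up to a uniform constant on $I$)
\begin{align*}
2\int_I \langle(\partial_r\beta_{s_3}^{(\alpha)})\tilP_s u, \tilP_s u\rangle_{t,x}\,\ds
= &-\bigl[\langle(\partial_r\beta_{s_3}^{(\alpha)})\tilP_s u, \tilP_{\geq s}u\rangle_{t,x}\bigr]_{s_3}^{s_4} \\
&\,-\int_I \langle(\partial_r\beta_{s_3}^{(\alpha)})(\Delta+\rho^2)\tilP_{\geq s}u, \tilP_{\geq s}u\rangle_{t,x}\,\ud s \\
&\,+\int_I \langle\tilP_s u, (\Delta\partial_r\beta_{s_3}^{(\alpha)})\tilP_{\geq s}u\rangle_{t,x}\,\ud s \\
&\,+2\int_I \langle\tilP_s u, (\partial_r^2\beta_{s_3}^{(\alpha)})\partial_r\tilP_{\geq s}u\rangle_{t,x}\,\ud s.
\end{align*}
The second line on the right is the main term: invoking Remark~\ref{rem:weighted_HP_applied_to_main_multiplier_identity_s3version} pointwise in $t$, followed by \eqref{eq:dtQu} and integration in $t$, converts it into exactly the three terms on the RHS of \eqref{equ:transitioning_estimate} with $I$ in place of $2I$.

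It remains to dispose of the $s$-boundary term and the two error integrals. The boundary contributions at $s=s_3, s_4$ are converted into integrals over $2I$ by an averaging argument: one replaces the fixed endpoints by averages over $\sigma_1\in[s_3/2, s_3]$ and $\sigma_2\in[s_4, 2s_4]$ via a mean-value-type argument, so that each boundary term becomes bounded by an integral of the same type over a sub-interval of $2I$; this is precisely the source of the enlargement $I\to 2I$ on the RHS. For the two error integrals I exploit the pointwise bounds $|\Delta\partial_r\beta_{s_3}^{(\alpha)}|^2/\partial_r\beta_{s_3}^{(\alpha)}\lesssim \partial_r\beta_{s_3}^{(\alpha)}/\langle r\rangle^2$ and $|\partial_r^2\beta_{s_3}^{(\alpha)}|^2/\partial_r\beta_{s_3}^{(\alpha)}\lesssim \partial_r\beta_{s_3}^{(\alpha)}/\langle r\rangle^2$, which follow from Lemma~\ref{lem:technical_lemma_high_freq} at $s_3\simeq 1$ together with the explicit form of $\beta_{s_3}^{(\alpha)}$, and apply weighted Cauchy--Schwarz with weight $(\partial_r\beta_{s_3}^{(\alpha)})^{1/2}$ on the $\tilP_s u$ factor; this splits each error into an $\varepsilon$-fraction $\langle(\partial_r\beta_{s_3}^{(\alpha)})\tilP_s u, \tilP_s u\rangle$ which is absorbed into the LHS, and a companion piece $\langle(\partial_r\beta_{s_3}^{(\alpha)})/\langle r\rangle^2\,\tilP_{\geq s}u, \tilP_{\geq s}u\rangle$ (for Error~1) or $\int(\partial_r\beta_{s_3}^{(\alpha)})/\langle r\rangle^2|\partial_r\tilP_{\geq s}u|^2$ (for Error~2). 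The former is dominated by the RHS directly via Remark~\ref{rem:weighted_HP_applied_to_main_multiplier_identity_s3version}; the latter requires one further spatial integration by parts to trade $\partial_r\tilP_{\geq s}u$ for Error-1-type quantities, together with the extraction of the coercive quadratic form $\calB$ from the proof of Lemma~\ref{l:weighted_HP_applied_to_main_multiplier_identity}. Taking the supremum over $\{\alpha_\ell\}\in\calA$ then concludes the proof. The main obstacle will be precisely this handling of Error~2: the radial derivative on $\tilP_{\geq s}u$ prevents a direct pointwise application of the weighted bound, and the additional IBP together with the extraction of $\calB$ (and careful tracking of the lower-order corrections $\rho^2\int(\partial_r\beta)|v|^2$ and $\rho\int\coth(r)(\partial_r^2\beta)|v|^2$ arising from the decomposition $\int(\partial_r\beta)|\nabla v|^2 = \calB(v)+\rho^2\int(\partial_r\beta)|v|^2 + \rho\int\coth(r)(\partial_r^2\beta)|v|^2$) must be carried out with attention to the distinct behaviors in the regimes $r\lesssim 1$ and $r\gtrsim 1$.
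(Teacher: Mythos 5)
Your overall architecture matches the paper's: freeze $\beta_s^{(\alpha)}$ at $s = s_3$ using $s \simeq s_3 \simeq 1$ and slow variation; perform dual integration by parts (one in heat time, one in space) on $\int\langle(\partial_r\beta_{s_3}^{(\alpha)})\partial_s h, \partial_s h\rangle$ with $h = \tilP_{\geq s}u$; use the cancellation to expose the main term $\langle(\partial_r\beta_{s_3}^{(\alpha)})(\Delta+\rho^2)h, h\rangle$ and feed it into Remark~\ref{rem:weighted_HP_applied_to_main_multiplier_identity_s3version}; dispatch the $\Delta(\partial_r\beta)$ and $\partial_r^2\beta$ errors by weighted Cauchy--Schwarz. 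The paper handles the $s$-boundary by a smooth cutoff $\chi$ supported on $2I$ (producing a $\partial_s\chi$ term) rather than by endpoint-averaging, but that distinction is cosmetic. Your ``master identity'' is correct in structure. The issue is in the last step.

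After weighted Cauchy--Schwarz and absorption, your Error~2 companion piece is $\int\chi\,\frac{\partial_r\beta_{s_3}^{(\alpha)}}{\langle r\rangle^2}|\nabla h|^2$. You propose to bound it by one further spatial IBP together with ``the extraction of $\calB$'' from the decomposition $\int(\partial_r\beta)|\nabla v|^2 = \calB(v) + \rho^2\int(\partial_r\beta)|v|^2 + \rho\int\coth r(\partial_r^2\beta)|v|^2$. But that decomposition has weight $\partial_r\beta$, not $\frac{\partial_r\beta}{\langle r\rangle^2}$. Invoking it forces you to drop the factor $\langle r\rangle^{-2}$ first, and the resulting term $\rho^2\int(\partial_r\beta_{s_3}^{(\alpha)})|h|^2 \simeq \int\frac{\alpha(r)}{\langle r\rangle}|h|^2$ has no spatial decay beyond $r^{-1}$; it is two full powers of $r$ worse than the quantity $\int\frac{\alpha(r)}{\langle r\rangle^3}|h|^2$ that Remark~\ref{rem:weighted_HP_applied_to_main_multiplier_identity_s3version} controls. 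This is precisely the Hardy--Poincar\'e loss that dictates the $r^{-3/2}$ weight in $LE_{\low}$, so it is not a fixable technicality; you cannot recover from throwing away the $\langle r\rangle^{-2}$. Moreover $\tilde w := \frac{\partial_r\beta_{s_3}^{(\alpha)}}{\langle r\rangle^2}$ cannot be written as $\partial_r\tilde\beta$ for $\tilde\beta$ arising from any $\tilde\alpha \in \calA$ (the required $\tilde\alpha = \alpha/\langle r\rangle^2$ is not slowly varying in the sense of Definition~\ref{d:A}), so $\calB$ with the decayed weight is also unavailable.

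The fix, which is the one the paper takes (estimate~\eqref{equ:transitioning_local_parabolic_regularity}), is to keep the $\langle r\rangle^{-2}$ factor throughout: multiply the heat equation $(\partial_s - \Delta - \rho^2)h = 0$ by $\chi\,\frac{\partial_r\beta_{s_3}^{(\alpha)}}{\langle r\rangle^2}\bar h$ and integrate by parts. The $\rho^2$ term then appears with the correct decaying weight, and after absorbing the cross terms one obtains $\int\chi\,\frac{\partial_r\beta_{s_3}^{(\alpha)}}{\langle r\rangle^2}|\nabla h|^2 \lesssim \int\frac{\partial_r\beta_{s_3}^{(\alpha)}}{\langle r\rangle^2}|h|^2$, which Remark~\ref{rem:weighted_HP_applied_to_main_multiplier_identity_s3version} controls directly. (An equivalent, purely spatial, route is to test $\Delta h = -\rho^2 h - s^{-1}\tilP_s u$ against $\frac{\partial_r\beta_{s_3}^{(\alpha)}}{\langle r\rangle^2}\bar h$; again the crucial $\rho^2$ term keeps the good weight, and the $\tilP_s u$ contribution gets absorbed into the LHS by Cauchy--Schwarz.) No extraction of $\calB$ is needed at all.
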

\begin{proof}
 In order to simplify the notation in what follows, we define 
 \[
  h := \tilP_{\geq s} u = e^{s(\Delta + \rho^2)} u
 \]
 and note that 
 \[
  \frac{1}{s} \tilP_s u = - (\Delta + \rho^2) e^{s(\Delta + \rho^2)} u = - \partial_s h.              
 \]
 Moreover, we have uniformly for all heat times $s \in I = [s_3, s_4]$ and for all slowly varying functions $\alpha$ associated with a slowly varying sequence $\{ \alpha_\ell \} \in \calA$ that
 \begin{equation}
  \partial_r \beta_s^{(\alpha)} = \delta \frac{\alpha(\delta s^{-\frac{1}{2}} r)}{\langle \delta s^{-\frac{1}{2}} r \rangle} \simeq \delta \frac{\alpha(\delta s_{3}^{-\frac{1}{2}} r)}{\langle \delta s_{3}^{-\frac{1}{2}} r \rangle} = \partial_r \beta_{s_3}^{(\alpha)},
 \end{equation}
 since $s_3, s_4 \simeq 1$ by assumption and since $\alpha$ is slowly varying according to Definition~\ref{d:alpha}. Hence, we have
 \begin{equation}
  \begin{aligned}
   \int_I \langle (\partial_r \beta_s^{(\alpha)}) \tilP_s u, \tilP_s u \rangle_{t,x} \, \ds &\lesssim \int_I \langle (\partial_r \beta_{s_3}^{(\alpha)}) \tilP_s u, \tilP_s u \rangle_{t,x} \, \ds \\
   &\lesssim \int_I \langle (\partial_r \beta_{s_3}^{(\alpha)}) \partial_s h, \partial_s h \rangle_{t,x} \, \ud s,
  \end{aligned}  
 \end{equation}
 where in the last estimate we again used that $s_3, s_4 \simeq 1$. We let $\chi = \chi(s)$ be a smooth bump function with support in $2I$ and such that $\chi(s) = 1$ for $s \in I$. Trivially, it holds that
 \begin{equation} \label{equ:proof_transitioning_estimate_trivial_L}
  \int_I \langle (\partial_r \beta_{s_3}^{(\alpha)}) \partial_s h, \partial_s h \rangle_{t,x} \, \ud s \leq \int_{2I} \chi \langle (\partial_r \beta_{s_3}^{(\alpha)}) \partial_s h, \partial_s h \rangle_{t,x} \, \ud s =: L.
 \end{equation}
 Now on the one hand, integration by parts in space yields that
 \begin{align*}
  L &= \Re \int_{2I} \chi \langle (\partial_r \beta_{s_3}^{(\alpha)}) \partial_s h, \partial_s h \rangle_{t,x} \, \ud s \\
  &= \Re \int_{2I} \chi \langle (\partial_r \beta_{s_3}^{(\alpha)}) \partial_s h, (\Delta + \rho^2) h \rangle_{t,x} \, \ud s \\
  &= \Re \int_{2I} \chi \langle \Delta (\partial_r \beta_{s_3}^{(\alpha)}) \partial_s h,  h \rangle_{t,x} \, \ud s + 2 \, \Re \int_{2I} \chi \langle \nabla (\partial_r \beta_{s_3}^{(\alpha)}) \cdot \nabla \partial_s h, h \rangle_{t,x} \, \ud s \\ 
  &\quad + \Re \int_{2I} \chi \langle (\partial_r \beta_{s_3}^{(\alpha)}) (\Delta + \rho^2) \partial_s h, h \rangle_{t,x} \, \ud s \\
  &= - \Re \int_{2I} \chi \langle \Delta (\partial_r \beta_{s_3}^{(\alpha)}) \partial_s h, h \rangle_{t,x} \, \ud s - 2 \, \Re \int_{2I} \chi \langle (\partial_r^2 \beta_{s_3}^{(\alpha)}) \partial_s h, \partial_r h \rangle_{t,x} \, \ud s \\ 
  &\quad + \Re \int_{2I} \chi \langle (\partial_r \beta_{s_3}^{(\alpha)}) (\Delta + \rho^2) \partial_s h, h \rangle_{t,x} \, \ud s. 
 \end{align*}
 On the other hand, integrating by parts in the heat time $s$ we find that
 \begin{align*}
  L &=  \Re \int_{2I} \chi \langle (\partial_r \beta_{s_3}^{(\alpha)}) \partial_s h, (\Delta + \rho^2) h \rangle_{t,x} \, \ud s \\
  &= - \Re \int_{2I} \chi \langle (\partial_r \beta_{s_3}^{(\alpha)}) h, (\Delta + \rho^2) \partial_s h \rangle_{t,x} \, \ud s - \Re \int_{2I} (\partial_s \chi) \langle (\partial_r \beta_{s_3}^{(\alpha)}) h, (\Delta + \rho^2) h \rangle_{t,x} \, \ud s. 
 \end{align*}
 Adding the last two identities we obtain that
 \begin{equation} \label{equ:proof_transitioning_estimate_2L}
  \begin{aligned}
   2 L &= -\Re \int_{2I} \chi \langle \Delta (\partial_r \beta_{s_3}^{(\alpha)}) \partial_s h, h \rangle_{t,x} \, \ud s - 2 \, \Re \int_{2I} \chi \langle (\partial_r^2 \beta_{s_3}^{(\alpha)}) \partial_s h, \partial_r h \rangle_{t,x} \, \ud s \\ 
   &\quad - \Re \int_{2I} (\partial_s\chi) \langle (\partial_r \beta_{s_3}^{(\alpha)}) h, (\Delta + \rho^2) h \rangle_{t,x} \, \ud s \\
   &=: I + II + III.
  \end{aligned}
 \end{equation}
 We now estimate the terms $I$, $II$, and $III$ separately. For the first term $I$ we use Cauchy-Schwarz and the fact that 
 \[
  \biggl| \frac{\Delta (\partial_r \beta_{s_3}^{(\alpha)})}{ \partial_r \beta_{s_3}^{(\alpha)} } \biggr| \lesssim \frac{1}{\langle r \rangle}
 \]
 uniformly for all $\{ \alpha_\ell \} \in \calA$, to obtain for any $\varepsilon > 0$ that
 \begin{align*}
  |I| \leq \varepsilon L + C_{\varepsilon} \int_{2I} \chi \bigl\langle \langle r \rangle^{-2} (\partial_r \beta_{s_3}^{(\alpha)}) h, h \bigr\rangle_{t,x} \, \ud s. 
 \end{align*}
 Then for the second term on the right-hand side we invoke the estimate~\eqref{equ:additional_bounds_positive_commutator_from_HP_s3version} from Remark~\ref{rem:weighted_HP_applied_to_main_multiplier_identity_s3version}  to conclude that
 \begin{equation} \label{equ:proof_transitioning_estimate_bound_on_I}
  |I| \leq \varepsilon L +  C_{\varepsilon} \int_{2I} \bigl| \Re \langle i \Delta h, Q_{s_3}^{(\alpha)} h \rangle_{t,x} \bigr| \, \ud s + C_\varepsilon \int_{2I} \| h \|_{L^2(\bbR \times \{ r \leq R \})}^2 \, \ud s.
 \end{equation}
 For the term $III$ we again use the estimate~\eqref{equ:additional_bounds_positive_commutator_from_HP_s3version} from Remark~\ref{rem:weighted_HP_applied_to_main_multiplier_identity_s3version} to find that
 \begin{equation} \label{equ:proof_transitioning_estimate_bound_on_III}
  \begin{aligned}
   |III| &\leq \int_{2I} |\partial_s \chi| \, \bigl| \Re\langle (\partial_r \beta_{s_3}^{(\alpha)}) h, (\Delta + \rho^2) h \rangle_{t,x} \bigr| \, \ud s \\
   &\lesssim \int_{2I} \bigl| \Re \langle i \Delta h, Q_{s_3}^{(\alpha)} h \rangle_{t,x} \bigr| \, \ud s + \int_{2I} \| h \|_{L^2(\bbR\times\{r\leq R\})}^2 \, \ud s.
  \end{aligned}
 \end{equation}
 Finally for the term $II$ we argue as we did for the term $I$, using that 
 \[
  \biggl| \frac{\partial_r^2 \beta_{s_3}^{(\alpha)}}{\partial_r \beta_{s_3}^{(\alpha)}} \biggr| \lesssim \frac{1}{\langle r \rangle}
 \]
 uniformly for all $\{ \alpha_\ell \} \in \calA$, to estimate 
 \begin{equation} 
  |II| \leq \varepsilon L + C_{\varepsilon} \int_{2I} \chi \bigl\langle \langle r \rangle^{-2} (\partial_r \beta_{s_3}^{(\alpha)}) \nabla h, \nabla h \rangle_{t,x} \, \ud s.
 \end{equation}
 In view of how we estimated the term $I$ using the estimate~\eqref{equ:additional_bounds_positive_commutator_from_HP_s3version} from Remark~\ref{rem:weighted_HP_applied_to_main_multiplier_identity_s3version}, it suffices to prove 
 \begin{equation} \label{equ:transitioning_local_parabolic_regularity}
  \int_{2I} \chi \bigl\langle \langle r \rangle^{-2} (\partial_r \beta_{s_3}^{(\alpha)}) \nabla h, \nabla h \rangle_{t,x} \, \ud s \lesssim \int_{2I} \bigl\langle \langle r \rangle^{-2} (\partial_r \beta_{s_3}^{(\alpha)}) h, h \bigr\rangle_{t,x} \, \ud s
 \end{equation}
 to then infer that
 \begin{equation} \label{equ:proof_transitioning_estimate_bound_on_II}
  |II| \leq \varepsilon L +  C_{\varepsilon} \int_{2I} \bigl| \Re \langle i \Delta h, Q_{s_3}^{(\alpha)} h \rangle_{t,x} \bigr| \, \ud s + C_\varepsilon \int_{2I} \| h \|_{L^2(\bbR \times \{ r \leq R \})}^2 \, \ud s.
 \end{equation}
 For the proof of~\eqref{equ:transitioning_local_parabolic_regularity} recall that by the definition of $h = e^{s(\Delta + \rho^2)} u$ we have 
 \[
  (\partial_s - \Delta -\rho^2) h = 0.
 \]
 It follows that
 \begin{align*}
 0 &= \Re \int_{2I} \chi \bigl\langle \langle r \rangle^{-2} (\partial_r \beta_{s_3}^{(\alpha)}) h, (\partial_s - \Delta - \rho^2) h \bigr\rangle_{t,x} \, \ud s \\
 &= \int_{2I} \chi \bigl\langle \langle r \rangle^{-2} (\partial_r \beta_{s_3}^{(\alpha)}) \nabla h, \nabla h \bigr\rangle_{t,x} \, \ud s - \rho^2 \int_{2I} \chi \bigl\langle \langle r \rangle^{-2} (\partial_r \beta_{s_3}^{(\alpha)}) h, h \bigr\rangle_{t,x} \, \ud s \\
 &\quad \quad - \frac{1}{2} \int_{2I} (\partial_s \chi) \bigl\langle \langle r \rangle^{-2} (\partial_r \beta_{s_3}^{(\alpha)}) h, h \bigr\rangle \, \ud s + \int_{2I} \chi \bigl\langle \partial_r \bigl( \langle r \rangle^{-2} (\partial_r \beta_{s_3}^{(\alpha)}) \bigr) h, \partial_r h \bigr\rangle_{t,x} \, \ud s.
\end{align*}
Upon rearranging and using the estimate 
\begin{align*}
 \Bigl| \partial_r \bigl( \langle r \rangle^{-2} (\partial_r \beta_{s_3}^{(\alpha)}) \bigr) \Bigr| \lesssim \frac{\partial_r \beta_{s_3}^{(\alpha)}}{\langle r \rangle^3} \lesssim \frac{\partial_r \beta_{s_3}^{(\alpha)}}{\langle r \rangle^2},
\end{align*}
we get
\begin{align*}
 \int_{2I} \chi \bigl\langle \langle r \rangle^{-2} (\partial_r \beta_{s_3}^{(\alpha)}) \nabla h, \nabla h \bigr\rangle_{t,x} \, \ud s &\leq C \int_{2I} \bigl\langle \langle r \rangle^{-2} (\partial_r \beta_{s_3}^{(\alpha)}) h, h \bigr\rangle_{t,x} \, \ud s \\
 &\quad + C \int_{2I} \chi \bigl\langle \langle r \rangle^{-2} (\partial_r \beta_{s_3}^{(\alpha)})  |\nabla h|, |h| \bigr\rangle_{t,x} \, \ud s \\
 &\leq C_{\epsilon_1} \int_{2I} \bigl\langle \langle r \rangle^{-2} (\partial_r \beta_{s_3}^{(\alpha)}) h, h \bigr\rangle_{t,x} \, \ud s \\
 &\quad + \epsilon_1 \int_{2I} \chi \bigl\langle \langle r \rangle^{-2} (\partial_r \beta_{s_3}^{(\alpha)}) \nabla h, \nabla h \bigr\rangle_{t,x} \, \ud s,
\end{align*}
where in the last estimate we used Cauchy-Schwarz. Taking $\epsilon_1>0$ sufficiently small to absorb the last term on the right-hand side into the left-hand side, we obtain the desired estimate~\eqref{equ:transitioning_local_parabolic_regularity}.

In conclusion, by combining the identity~\eqref{equ:proof_transitioning_estimate_2L} with the estimates~\eqref{equ:proof_transitioning_estimate_trivial_L}, \eqref{equ:proof_transitioning_estimate_bound_on_I}, \eqref{equ:proof_transitioning_estimate_bound_on_III}, and \eqref{equ:proof_transitioning_estimate_bound_on_II}, we obtain upon choosing $\varepsilon > 0$ sufficiently small that 
\begin{equation}
 \int_{I} \bigl\langle (\partial_r \beta_{s_3}^{(\alpha)}) \partial_s h, \partial_s h \bigr\rangle_{t,x} \, \ud s \lesssim \int_{2I} \bigl| \Re \langle i \Delta h, Q_{s_3}^{(\alpha)} h \rangle_{t,x} \bigr| \, \ds + \int_{2I} \| h \|_{L^2(\bbR\times\{ r \leq R \})}^2 \, \ud s. 
\end{equation}
Finally, recalling that $h = \tilP_{\geq s} u$, $\partial_s h = - \frac{1}{s} \tilP_s u$ and that $s \simeq 1$ for $s \in 2I$, the asserted transitioning estimate~\eqref{equ:transitioning_estimate} follows from the previous estimate upon integrating in time over the identity (see \eqref{eq:dtQu})
\begin{align*}
 \Re \langle i \Delta \tilP_{\geq s} u, Q_{s_3}^{(\alpha)} \tilP_{\geq s} u \rangle = \frac{1}{2} \frac{\ud}{\ud t} \langle \tilP_{\geq s} u, Q_{s_3}^{(\alpha)} \tilP_{\geq s} u \rangle - \Re \langle (\partial_t - i \Delta) \tilP_{\geq s} u, Q_{s_3}^{(\alpha)} \tilP_{\geq s} u \rangle
\end{align*}
and taking the supremum over all slowly varying sequences $\{ \alpha_\ell \} \in \calA$.
\end{proof}

We are finally in the position to complete the proof of Theorem~\ref{t:LE1}.

\begin{proof}[Proof of Theorem~\ref{t:LE1}]
In what follows we will apply the high-frequency local smoothing estimate~\eqref{equ:high_frequency_local_smoothing} from Section~\ref{s:high} with $s_2 = 2$, the low-frequency local smoothing estimate~\eqref{equ:low_frequency_local_smoothing_estimate} from Section~\ref{s:low}, and the transitioning estimate from Lemma~\ref{lem:transitioning_estimate} with $s_4=s_2$ and $s_3=\frac{s_2}{8}$. For the sake of readability we keep the notation $s_2$ instead of making these heat times explicit, and allow the constants to depend on $s_2$. 
Moreover, we use the low-frequency $\beta^{(\alpha)}$, $Q^{(\alpha)}$ as defined in \eqref{equ:definition_beta_low_freq} and \eqref{equ:definition_Q_low_freq}, while $\beta_s^{(\alpha)}$, $Q_s^{(\alpha)}$ denote the high-frequency analogues as defined in  \eqref{equ:definition_beta_high_freq} and \eqref{equ:definition_Q_high_freq}. Finally, $\tilF_s^{(\alpha)}$ is as in \eqref{eq:tilFsdef}. 

Starting with the high-frequency local smoothing estimate~\eqref{equ:high_frequency_local_smoothing}, we have 
\begin{equation} \label{equ:proof_combined_smoothing_est1}
 \begin{aligned}
  &\int_0^{\frac{s_2}{4}} s^{-\frac{1}{2}} \| \tilP_s u \|_{LE_s}^2 \, \ds \\
  &\qquad \quad \lesssim \sup_{\{ \alpha_\ell \} \in \calA } \, \int_{\frac{s_2}{8}}^{s_2} \frac{1}{s} \langle (\partial_r \beta_s^{(\alpha)}) \tilP_s u, \tilP_s u \rangle_{t,x} \, \ds \\
  &\qquad \qquad \quad + \int_0^{s_2} \sup_{\{ \alpha_\ell \} \in \calA } \, \sup_{t \in \bbR} \, \bigl| \langle \tilP_s u(t), Q_s^{(\alpha)} \tilP_s u(t) \rangle \bigr| \, \ds \\
  &\qquad \qquad \quad + \int_0^{s_2} \sup_{\{ \alpha_\ell \} \in \calA } \, |\tilF_s^{(\alpha)}| \, \ds + \int_0^{s_2} \| \tilP_s u \|_{L^2(\bbR \times \{ r \leq R \})}^2 \, \ds.  
 \end{aligned}
\end{equation}
For the first term on the right-hand side of~\eqref{equ:proof_combined_smoothing_est1} we note that $\frac{1}{s} \simeq 1$ for $s \in [\frac{s_2}{8}, s_2]$ and insert the transitioning estimate~\eqref{equ:transitioning_estimate} from Lemma~\ref{lem:transitioning_estimate} with $s_3 = \frac{s_2}{8}$ and $s_4 = s_2$ to obtain that
\begin{equation} \label{equ:proof_combined_smoothing_est2}
 \begin{aligned}
  &\int_0^{\frac{s_2}{4}} s^{-\frac{1}{2}} \| \tilP_s u \|_{LE_s}^2 \, \ds \\
  &\qquad \quad \lesssim \int_{\frac{s_2}{16}}^{2 s_2} \sup_{ \{ \alpha_\ell \} \in \calA } \, \sup_{t \in \bbR} \, \bigl| \langle \tilP_{\geq s} u(t), Q_{s_3}^{(\alpha)} \tilP_{\geq s} u(t) \rangle \bigr| \, \ds \\
  &\qquad \quad \quad + \int_{\frac{s_2}{16}}^{2 s_2} \sup_{ \{ \alpha_\ell \} \in \calA } \, \bigl| \Re \langle (\partial_t - i \Delta) \tilP_{\geq s} u, Q_{s_3}^{(\alpha)} \tilP_{\geq s} u \rangle_{t,x} \bigr| \, \ds \\
  &\qquad \quad \quad + \int_{\frac{s_2}{16}}^{2 s_2} \| \tilP_{\geq s} u \|_{L^2(\bbR \times \{ r \leq R \})}^2 \, \ds \\
  &\qquad \quad \quad + \int_0^{s_2} \sup_{\{ \alpha_\ell \} \in \calA } \, \sup_{t \in \bbR} \, \bigl| \langle \tilP_s u(t), Q_s^{(\alpha)} \tilP_s u(t) \rangle \bigr| \, \ds \\
  &\qquad \quad \quad + \int_0^{s_2} \sup_{\{ \alpha_\ell \} \in \calA } \, |\tilF_s^{(\alpha)}| \, \ds  + \int_0^{s_2} \| \tilP_s u \|_{L^2(\bbR \times \{ r \leq R \})}^2 \, \ds.  
 \end{aligned}
\end{equation}
Integrating over the low-frequency local smoothing estimate~\eqref{equ:low_frequency_local_smoothing_estimate} in $\ds$ from $\frac{s_2}{16}$ to $2 s_2$ yields 
\begin{equation} \label{equ:proof_combined_smoothing_est3}
 \begin{aligned}
  &\int_{\frac{s_2}{16}}^{2 s_2} \| \tilP_{\geq s} u \|_{LE_\low}^2 \, \ds \\
  &\qquad \quad \lesssim \int_{\frac{s_2}{16}}^{2 s_2} \sup_{ \{ \alpha_\ell \} \in \calA } \, \sup_{t \in \bbR} \, \bigl| \langle \tilP_{\geq s} u(t), Q^{(\alpha)} \tilP_{\geq s} u(t) \rangle \bigr| \, \ds \\
  &\qquad \quad \quad + \int_{\frac{s_2}{16}}^{2 s_2} \sup_{ \{ \alpha_\ell \} \in \calA } \, \bigl| \Re \langle (\partial_t - i \Delta) \tilP_{\geq s} u, Q^{(\alpha)} \tilP_{\geq s} u \rangle_{t,x} \bigr| \, \ds \\
  &\qquad \quad \quad + \int_{\frac{s_2}{16}}^{2 s_2} \| \tilP_{\geq s} u \|_{L^2(\bbR \times \{ r \leq R \})}^2 \, \ds.
 \end{aligned}
\end{equation}
Adding~\eqref{equ:proof_combined_smoothing_est2} and \eqref{equ:proof_combined_smoothing_est3} we obtain that
\begin{equation} \label{equ:proof_combined_smoothing_est4}
 \begin{aligned}
  &\int_{\frac{s_2}{16}}^{2 s_2} \| \tilP_{\geq s} u \|_{LE_\low}^2 \, \ds + \int_0^{\frac{s_2}{4}} s^{-\frac{1}{2}} \| \tilP_s u \|_{LE_s}^2 \, \ds  \\
  &\qquad \quad \lesssim \int_{\frac{s_2}{16}}^{2 s_2} \sup_{\{ \alpha_\ell \} \in \calA } \, \sup_{t \in \bbR} \, \bigl| \langle \tilP_{\geq s} u(t), Q^{(\alpha)} \tilP_{\geq s} u(t) \rangle \bigr| \, \ds \\
  &\qquad \qquad \quad + \int_{\frac{s_2}{16}}^{2 s_2} \sup_{\{ \alpha_\ell \} \in \calA } \, \sup_{t \in \bbR} \, \bigl| \langle \tilP_{\geq s} u(t), Q_{s_3}^{(\alpha)} \tilP_{\geq s} u(t) \rangle \bigr| \, \ds \\
  &\qquad \qquad \quad + \int_0^{s_2} \sup_{\{ \alpha_\ell \} \in \calA } \, \sup_{t \in \bbR} \, \bigl| \langle \tilP_s u(t), Q_s^{(\alpha)} \tilP_s u(t) \rangle \bigr| \, \ds \\
  &\qquad \qquad \quad + \int_{\frac{s_2}{16}}^{2 s_2} \sup_{\{ \alpha_\ell \} \in \calA } \, \bigl| \Re \langle (\partial_t - i\Delta) \tilP_{\geq s} u, Q^{(\alpha)} \tilP_{\geq s} u \rangle_{t,x} \bigr| \, \ds \\
  &\qquad \qquad \quad + \int_{\frac{s_2}{16}}^{2 s_2} \sup_{\{ \alpha_\ell \} \in \calA } \, \bigl| \Re \langle (\partial_t - i\Delta) \tilP_{\geq s} u, Q_{s_3}^{(\alpha)} \tilP_{\geq s} u \rangle_{t,x} \bigr| \, \ds \\
  &\qquad \qquad \quad + \int_0^{s_2} \sup_{\{ \alpha_\ell \} \in \calA } \,|\tilF_s^{(\alpha)}| \, \ds \\
  &\qquad \qquad \quad + \int_{\frac{s_2}{16}}^{2 s_2} \| \tilP_{\geq s} u\|_{L^2(\bbR\times\{r \leq R\})}^2 \, \ds + \int_0^{s_2} \| \tilP_s u \|_{L^2(\bbR \times \{ r \leq R \})}^2 \, \ds \\
  &\qquad \quad =: I + II + III + IV + V + VI + VII + VIII.
 \end{aligned}
\end{equation}
Here we first observe that since we chose $s_2=2$, the left-hand side of~\eqref{equ:proof_combined_smoothing_est4} coincides exactly with the local smoothing norm $\|u\|_{LE}^2$ of the solution~$u(t)$ to the Schr\"odinger equation~\eqref{eq:S}. 
Next, given any $\varepsilon > 0$, by sufficiently enlarging $R \gg 1$ we obtain from Lemma~\ref{l:wL2LE} and Lemma~\ref{l:L2R} that
\begin{equation}
 VII + VIII \leq \varepsilon \| u \|_{LE}^2 + C_{\varepsilon} \| u \|_{L^2(\bbR\times\{ r \leq R\})}^2.
\end{equation}
Recall the definition of $\tilF_s^{(\alpha)}$ from \eqref{eq:tilFsdef}, and that the low and high-frequency projections of the solution~$u(t)$ to the linear Schr\"odinger equation~\eqref{eq:S} satisfy the equations
\begin{align*}
 (\partial_t - i \Delta) \tilP_{\geq s} u &= i \tilP_{\geq s} F - i \tilP_{\geq s} H^{\antisym}_\lot u - i \tilP_{\geq s} H^{\sym}_\lot u \\
 &\phantom{=} - i [ \tilP_{\geq s}, H_\prin ] u - i (H_\prin + \Delta) \tilP_{\geq s} u, \\
 (\partial_t - i \Delta) \tilP_s u &= i \tilP_s F - i \tilP_s H^{\antisym}_\lot u - i \tilP_s H^{\sym}_\lot u \\
  &\phantom{=} - i [ \tilP_s, H_\prin ] u - i (H_\prin + \Delta) \tilP_s u.
\end{align*}
where we recall the decomposition $H_{\lot} = H_{\lot}^{\antisym} + H_{\lot}^{\sym}$ with
\begin{align*}
	H^{\antisym}_{\lot} u &= \Im \bsb^{\mu} \nb_{\mu} u + \nb_{\mu} (\Im \bsb^{\mu} u) + i \Im V u, \\
	H^{\sym}_{\lot} u &= \frac{1}{i} (\Re \bsb^{\mu} \nb_{\mu} u + \nb_{\mu} (\Re \bsb^{\mu} u)) + \Re V u.
\end{align*}
We also recall that the smallness assumptions on $\bsa-\bsh^{-1}$, $\Im \bsb$ and $\Im V$ in \eqref{eq:decay_assumptions-prin} and \eqref{eq:decay_assumptions-lot} are formulated in terms of the small constant $\varepsilon_0>0$. It follows from Propositions~\ref{p:FQlowu}, \ref{p:Hlot_antisym_Q}, \ref{p:commutator_Hlot_Q_low}, \ref{p:commP_leqsH_prin}, \ref{p:DeltaHpQlow}, and Remarks~\ref{rem:FQlowu}, \ref{rem:commutator_Hlot_Q_low}, \ref{rem:commP_leqsH_prin}, \ref{rem:DeltaHpQlow} that for sufficiently large $R \gg 1$ 
\begin{equation}
 IV + V \lesssim \varepsilon_0 \| u \|_{LE}^2 + C_{\varepsilon_0} \| F \|_{LE^\ast}^2 + C_{\varepsilon_0} \| u \|_{L^2(\bbR\times\{ r \leq R\})}^2.
\end{equation}
Similarly, it follows from Propositions~\ref{p:FQsu}, \ref{p:Hlot_antisym_Q}, \ref{p:commutator_Hlot_Q_high}, \ref{p:commP_sH_prin}, \ref{p:DeltaHpQhigh} that for sufficiently large $R \gg 1$
\begin{equation}
VI \lesssim \varepsilon_0 \| u \|_{LE}^2 + C_{\varepsilon_0} \| F \|_{LE^\ast}^2 + C_{\varepsilon_0} \| u \|_{L^2(\bbR\times\{ r \leq R\})}^2.
\end{equation}

It therefore remains to estimate the terms $I$, $II$ and $III$ for which we use the approximate mass conservation \eqref{equ:approx_mass_conservation} of solutions to \eqref{eq:S}. We begin with the terms $I$ and $II$ which can be treated in the same manner. By Cauchy-Schwarz as well as the $L^2$-boundedness of $Q^{(\alpha)} \tilP_{\geq s}$ and $Q^{(\alpha)}_{s_3} \tilP_{\geq s}$ according to Proposition~\ref{p:QlowL2} 
\begin{equation}
 \begin{aligned}
  I + II &= \int_{\frac{s_2}{16}}^{2 s_2} \sup_{ \{ \alpha_\ell \} \in \calA } \, \sup_{t \in \bbR} \, \bigl| \langle \tilP_{\geq s} u(t), Q^{(\alpha)} \tilP_{\geq s} u(t) \rangle \bigr| \, \ds \\
  &\qquad + \int_{\frac{s_2}{16}}^{2 s_2} \sup_{ \{ \alpha_\ell \} \in \calA } \, \sup_{t \in \bbR} \, \bigl| \langle \tilP_{\geq s} u(t), Q_{s_3}^{(\alpha)} \tilP_{\geq s} u(t) \rangle \bigr| \, \ds \\ 
  &\lesssim \int_{\frac{s_2}{16}}^{2 s_2} \sup_{ \{ \alpha_\ell \} \in \calA } \, \sup_{t \in \bbR} \, \bigl( \| \tilP_{\geq s} u(t) \|_{L^2}^2 + \| Q^{(\alpha)} \tilP_{\geq s} u(t) \|_{L^2}^2 + \| Q_{s_3}^{(\alpha)} \tilP_{\geq s} u(t) \|_{L^2}^2\bigr) \, \ds \\
  &\lesssim \int_{\frac{s_2}{16}}^{2 s_2} \sup_{t \in \bbR} \, \bigl( \| \tilP_{\geq s} u(t) \|_{L^2}^2 + \| \tilP_{\geq \frac{s}{2}} u(t) \|_{L^2}^2\bigr) \, \ds.
 \end{aligned}
\end{equation}
Disposing of the low-frequency projections $\tilP_{\geq s}$ and $\tilP_{\geq \frac{s}{2}}$ using Lemma~\ref{l:preg}, we obtain that
\begin{equation}
 I + II \lesssim \sup_{t \in \bbR} \, \| u(t) \|_{L^2}^2.
\end{equation}
From the approximate mass conservation \eqref{equ:approx_mass_conservation}, that is,
\begin{equation}
 \frac{\ud}{\ud t} \, \| u(t) \|_{L^2}^2  = 2 \, \Re \langle i F, u \rangle,
\end{equation}
we conclude that
\begin{equation}
 \sup_{t \in \bbR} \, \| u(t) \|_{L^2}^2 \lesssim \|u_0\|_{L^2}^2 + \bigl| \langle F, u \rangle_{t,x} \bigr| \lesssim \|u_0\|_{L^2}^2 + \|F\|_{LE^\ast} \|u\|_{LE}.
\end{equation}
Hence, we obtain for any $\varepsilon>0$ that
\begin{equation}
 I + II \lesssim \|u_0\|_{L^2}^2 + C_\varepsilon \|F\|_{LE^\ast}^2 + \varepsilon \|u\|_{LE}^2.
\end{equation}
Next we deal with the term $III$. By Cauchy-Schwarz and the $L^2$-boundedness of $Q_s^{(\alpha)} \tilP_s$ thanks to Proposition~\ref{p:QsL2}, we have 
\begin{align*}
 III &= \int_0^{s_2} \sup_{\{ \alpha_\ell \} \in \calA } \, \sup_{t \in \bbR} \, \bigl| \langle \tilP_s u(t), Q_s^{(\alpha)} \tilP_s u(t) \rangle \bigr| \, \ds \\
 &\lesssim \int_0^{s_2} \sup_{\{ \alpha_\ell \} \in \calA } \, \sup_{t \in \bbR} \, \bigl(  \| \tilP_s u(t) \|_{L^2}^2 + \| \tilP_{\frac{s}{2}} u(t) \|_{L^2}^2 \bigr) \, \ds \\
 &\lesssim \int_0^{s_2} \sup_{t \in \bbR} \, \| \tilP_s u(t) \|_{L^2}^2 \, \ds,
\end{align*}
where for the last estimate we used the change of variables $\frac{s}{2} \mapsto s$. By approximate mass conservation and the fact that $H_\prin$ is self-adjoint, we infer from
\begin{equation}
 \frac{\ud}{\ud t} \| \tilP_s u(t) \|_{L^2}^2 = 2 \, \Re \langle \partial_t \tilP_s u, \tilP_s u \rangle = 2 \, \Re \langle (\partial_t + i H_{\prin}) \tilP_s u, \tilP_s u \rangle
\end{equation}
that 
\begin{equation}
 \sup_{t \in \bbR} \, \| \tilP_s u(t) \|_{L^2}^2 \leq \| \tilP_s u_0 \|_{L^2}^2 + C \bigl| \Re \langle (\partial_t + i H_{\prin}) \tilP_s u, \tilP_s u \rangle_{t,x} \bigr|.
\end{equation}
Hence, we obtain from \eqref{eq:extraPsHlotuPsuestimate} and \eqref{eq:princompmaintemp1noQ} that for any $\varepsilon>0$
\begin{equation} \label{equ:proof_combined_smoothing_est5}
 \begin{aligned}
  III &\lesssim \int_0^{s_2} \sup_{t \in \bbR} \, \| \tilP_s u(t) \|_{L^2}^2 \, \ds \\
  &\lesssim \int_0^{s_2} \| \tilP_s u_0 \|_{L^2}^2 \, \ds + \int_0^{s_2} \bigl| \Re \langle (\partial_t + i H_{\prin}) \tilP_s u, \tilP_s u \rangle_{t,x} \bigr| \, \ds \\
  &\lesssim \| u_0 \|_{L^2}^2 + \varepsilon \| u \|_{LE}^2 + C_{\varepsilon} \| F \|_{LE^\ast}^2 + C_{\varepsilon} \| u \|_{L^2(\bbR\times\{ r \leq R \})}^2.
 \end{aligned}
\end{equation}

Finally, combining the estimate~\eqref{equ:proof_combined_smoothing_est4} with all of the above bounds on the terms $I$--$VIII$, we arrive at the estimate
\[
 \|u\|_{LE}^2 \lesssim \|u_0\|_{L^2}^2 + \varepsilon_0 \|u\|_{LE}^2 + C_{\varepsilon_0} \| F \|_{LE^\ast}^2 + C_{\varepsilon_0} \|u\|_{L^2(\bbR \times \{ r \leq R \})}^2,
\]
which upon choosing $\varepsilon_0 > 0$ in \eqref{eq:decay_assumptions-prin} sufficiently small finishes the proof of Theorem~\ref{t:LE1}.
\end{proof}

\section{Local smoothing estimate in the stationary, symmetric case and its perturbations} \label{s:error} 

The bulk of this section is devoted to the proof of Theorem~\ref{t:LE-H}, where we assume that $H$ contains no metric perturbations and is \emph{symmetric} and \emph{stationary}, i.e., $\bsb, V$ are real-valued and independent of $t$.

The idea of the proof is to make use of the well-known equivalence of the local smoothing estimate with the limiting absorption principle for the resolvents $(\tau - H)^{-1}$ in the time-independent self-adjoint (i.e., symmetric and stationary) case. Our main multiplier estimate (Theorem~\ref{t:LE1-sym} or \ref{t:LE1}) implies that the desired resolvent bound holds for all large $\Re \tau$. Moreover, by a compactness argument, such bounds hold unless $H$ has a resonance in $[\rho^{2}, \infty)$; see Proposition~\ref{p:LE-res}. The argument is concluded by showing the absence of resonances embedded in $(\rho^{2}, \infty)$ (which was essentially known before in the case $\bsb = 0$); see Proposition~\ref{p:no-emb-res}. Our execution of this scheme is modeled on the one in \cite[Section 4]{MMT}. Below we will use the notation
\begin{align*}
\begin{split}
(f)_{\pm} = \max\set{\pm f, 0}.
\end{split}
\end{align*}

This section is organized as follows. In Section~\ref{ss:LE-compute}, we record the key multiplier identities required in the present section. In Section~\ref{ss:LE-res}, we show that if \eqref{eq:le-H} fails, then $H$ has a resonance at some $\tau \in [\rho^{2}, \infty)$ (see Definition~\ref{def:emb-res}). The proof of Theorem~\ref{t:LE-H} is completed in Section~\ref{ss:no-emb-res}, where we rule out the existence of any resonances at $\tau \in (\rho^{2}, \infty)$. In Section~\ref{ss:th-res} we establish Proposition~\ref{p:no-th-res} and Corollary~\ref{c:no-th-res}. Finally, in Section~\ref{ss:LE2-pf} we prove Theorem~\ref{t:LE2}, which extends the validity of the global-in-time local smoothing estimate to small but possibly non-symmetric and non-stationary perturbations of a symmetric, stationary Hamiltonian $H_{\stat}$ considered in Theorem~\ref{t:LE-H}.

\subsection{Key multiplier identities} \label{ss:LE-compute}
Here we collect basic integration by parts identities which are useful for analyzing the equation $(\rho^{2} + \kpp^{2} + \lap) v = 0$, as well as its perturbation $(\rho^{2} + \kpp^{2} - H) v = 0$. 

We start with basic multiplier identities for $\Re \brk{(\rho^{2} + \kpp^{2} + \lap) v , \gmm v}$.
\begin{lem}[Multiplier identity] \label{l:mult}
For any $v \in C^{\infty}_{0}(\bbH^{d})$, a real-valued smooth radial function $\gmm$ and $\kpp \in \bbR$, we have
\begin{align} 
&- \Re \brk{(\rho^{2} + \kpp^{2} + \lap) v , \gmm v} \nonumber \\
&= \int_{\bbH^d} \gmm \left(\abs{(\rd_{r} + \rho \coth r - i \kpp) v}^{2} + \frac{1}{\sinh^{2} r} \abs{\slashed{\nb} v}^{2} + \rho (\rho-1) \sinh^{-2} r \abs{v}^{2} \right)\,\dh \nonumber \\
&\phantom{=} + 2 \kpp \Re \brk{i v  , \gmm (\rd_{r} + \rho \coth r - i \kpp) v} - \int_{\bbH^d} \frac{1}{2} \gmm'' \abs{v}^{2}\,\dh. \label{eq:mult}
\end{align}
Alternatively, we have
\EQ{\label{eq:mult-far}
&- \Re \brk{(\rho^{2} + \kpp^{2} + \lap) v , \gmm v} \\
&= \int_{\bbH^d} \gmm \left(\abs{(\rd_{r} + \rho - i \kpp) v}^{2} + \frac{1}{\sinh^{2} r} \abs{\slashed{\nb} v}^{2} + 2 \rho^{2} (\coth r - 1) \abs{v}^{2} \right) \,\dh \\
&\phantom{=} + 2 \kpp \Re \brk{i v  ,, \gmm (\rd_{r} + \rho - i \kpp) v} - \int_{\bbH^d} \left( \frac{1}{2} \gmm'' + \rho (\coth r - 1) \gmm' \right) \abs{v}^{2}\,\dh.
}
\end{lem}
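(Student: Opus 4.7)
The plan is to establish both \eqref{eq:mult} and \eqref{eq:mult-far} by direct computation: integration by parts in space followed by completing the square in the radial direction. The two identities differ only in how one factors the radial part, so the arguments are parallel and I would present \eqref{eq:mult} in detail and then indicate the modification for \eqref{eq:mult-far}.

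First, I would integrate by parts to rewrite $-\Re\brk{\lap v, \gmm v}$. Using the polar form $\lap = \rd_r^{2} + (d-1)\coth r\,\rd_r + \sinh^{-2} r\,\slashed{\lap}$ together with the fact that $\gmm$ is radial, a standard computation gives
\[
-\Re\brk{\lap v, \gmm v} = \int_{\bbH^d}\gmm |\rd_r v|^{2}\,\dh + \int_{\bbH^d}\gmm \sinh^{-2}r|\slashed{\nb} v|^{2}\,\dh + \tfrac{1}{2}\int_{\bbH^d}\gmm' \rd_r |v|^{2}\,\dh.
\]
The last term is then integrated by parts in $r$ using $\rd_r^{*} = -\rd_r - (d-1)\coth r$, producing $-\tfrac{1}{2}\int(\gmm'' + (d-1)\coth r\,\gmm')|v|^{2}\,\dh$. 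Subtracting $(\rho^{2}+\kpp^{2})\int\gmm|v|^{2}\,\dh$ then gives a first expression for the LHS.

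Next I would complete the square in the radial variable. A direct expansion yields
\[
|(\rd_r + \rho\coth r - i\kpp)v|^{2} = |\rd_r v|^{2} + (\rho^{2}\coth^{2} r + \kpp^{2})|v|^{2} + 2\rho\coth r\,\Re(\rd_r v\,\bar v) - 2\kpp\,\Im(\rd_r v\,\bar v),
\]
so that $\int\gmm|\rd_r v|^{2}\,\dh$ is the expression on the LHS above minus the other four pieces. The two real-part pieces involving $\rho\coth r$ are integrated by parts as $2\Re(\rd_r v\,\bar v) = \rd_r|v|^{2}$, producing terms with $\gmm'\coth r$, $\gmm\sinh^{-2}r$, and $\gmm\coth^{2} r$. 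Using $2\rho = d-1$, the $\gmm'\coth r$ contribution cancels exactly against the $\tfrac{1}{2}(d-1)\coth r\,\gmm'$ term from the earlier integration by parts, and using $\coth^{2} r = 1 + \sinh^{-2} r$ the $\gmm\coth^{2} r$ terms combine with the remaining $\rho^{2}$, $\kpp^{2}$, and $\rho\sinh^{-2}r$ contributions to produce exactly $\rho(\rho-1)\sinh^{-2}r - 2\kpp^{2}$ as the coefficient of $\gmm|v|^{2}$. What remains is a $2\kpp\int\gmm\,\Im(\rd_r v\,\bar v)\,\dh - 2\kpp^{2}\int\gmm |v|^{2}\,\dh$ correction, and by a short direct computation
\[
2\kpp\,\Re\brk{iv,\gmm(\rd_r + \rho\coth r - i\kpp)v} = 2\kpp\!\int\!\gmm\,\Im(\rd_r v\,\bar v)\,\dh - 2\kpp^{2}\!\int\!\gmm |v|^{2}\,\dh,
\]
matching precisely the remainder. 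This yields \eqref{eq:mult}.

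For \eqref{eq:mult-far} I would repeat the same scheme but complete the square with $\rd_r + \rho - i\kpp$ in place of $\rd_r + \rho\coth r - i\kpp$. Then the integration-by-parts step produces $\rho\gmm' + 2\rho^{2}\coth r\,\gmm$ instead of the analogous $\coth r$-weighted terms, which combine with the $-\tfrac{1}{2}(d-1)\coth r\,\gmm'$ contribution to give $-\rho(\coth r - 1)\gmm'$ and the coefficient $2\rho^{2}(\coth r - 1)$ on $\gmm|v|^{2}$, while the $\kpp$ cross terms are absorbed identically as before. The main (only) obstacle is bookkeeping: tracking the $\coth^{2} r$, $\sinh^{-2} r$, and $(d-1)\coth r$ terms through the completion of the square and the integration by parts, and using $(d-1) = 2\rho$ to verify the exact cancellations that produce the clean final coefficients. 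There are no analytic subtleties since $v \in C^{\infty}_{0}$ kills all boundary terms.
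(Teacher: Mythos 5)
Your proposal is correct and follows essentially the same route as the paper's proof: integrate by parts to reduce $-\Re\brk{\lap v, \gmm v}$ to a sum of weighted gradient and $\gmm'$ terms, complete the square in the radial derivative, and clean up the leftover $\coth r$ and $\gmm'$ terms via a second integration by parts using $2\rho = d-1$. The only cosmetic difference is the order of operations — the paper isolates the intermediate identity \eqref{eq:mult-sq} before completing the square and defers the final integration by parts, while you front-load the $\gmm''$ integration by parts and then perform the cancellation directly — but the algebra and the key observations (the vanishing of $\Re(iv\bar v)$, the identity $\coth^2 r = 1 + \sinh^{-2} r$) are the same.
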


\begin{proof}
We only prove \eqref{eq:mult}, leaving the similar proof of \eqref{eq:mult-far} to the reader. By a simple integration by parts, the left-hand side of \eqref{eq:mult} equals
\begin{align*}
\int_{\bbH^d} \gmm \left(\abs{\rd_{r} v}^{2} + \frac{1}{\sinh^{2} r} \abs{\slashed{\nb} v}^{2} - \rho^{2} \abs{v}^{2} - \kpp^{2} \abs{v}^{2} \right)  \,\dh
 + \int_{\bbH^d} \gmm' \Re(v \overline{\rd_{r} v}) \,\dh.
\end{align*}
Hence, it suffices to show that
\EQ{ \label{eq:mult-sq}
& \int_{\bbH^d} \gmm \left(\abs{\rd_{r} v}^{2} - \rho^{2} \abs{v}^{2} - \kpp^{2} \abs{v}^{2}\right) \,\dh+ \int_{\bbH^d} \gmm' \Re(v \overline{\rd_{r} v}) \,\dh \\
& =  \int_{\bbH^d} \gmm \abs{(\rd_{r} + \rho \coth r - i \kpp) v}^{2}  \,\dh
 + 2 \kpp \Re \brk{i v  , \gmm (\rd_{r} + \rho \coth r - i \kpp) v}  \\
& \phantom{=} + \int_{\bbH^d} \left(- \frac{1}{2} \gmm'' + \rho (\rho-1) \sinh^{-2} r \gmm \right)  \abs{v}^{2}\,\dh.
}
Indeed, after completing the square, the left-hand side equals
\begin{align*}
& \int_{\bbH^d} \gmm \abs{(\rd_{r} + \rho \coth r - i \kpp) v}^{2} \,\dh \\
& + 2 \int_{\bbH^d} \Re(i \kpp v \overline{\gmm (\rd_{r} + \rho \coth r) v}) \,\dh- 2 \int_{\bbH^d} \kpp^{2} \gmm \abs{v}^{2}\,\dh\\
& + \int_{\bbH^d} (\gmm' - 2 \rho \coth r \gmm) \Re(\rd_{r} v \bar{v})\,\dh - \int_{\bbH^d} \rho^{2} \coth^{2} r \gmm \abs{v}^{2}\,\dh \\
&- \int_{\bbH^d} \rho^{2} \gmm \abs{v}^{2}\,\dh.
\end{align*}
For the second line, we easily have
\begin{align*}
2 \int_{\bbH^d}( \Re(i \kpp v \overline{\gmm (\rd_{r} + \rho \coth r) v}) - \kpp^{2} \abs{v}^{2})\,\dh
= 2 \kpp \Re \brk{i v  , \gmm (\rd_{r} + \rho \coth r - i \kpp) v} .
\end{align*}
For the last two lines, we rewrite $2 \Re(\rd_{r} v \bar{v}) = \rd_{r} \abs{v}^{2}$, integrate by parts and simplify to obtain:
\begin{align*}
&\int_{\bbH^d} (\gmm' - 2 \rho \coth r \gmm) \Re(\rd_{r} v \bar{v}) \,\dh- \int_{\bbH^d} \rho^{2} \coth^{2} r \gmm \abs{v}^{2}\,\dh - \int_{\bbH^d} \rho^{2} \gmm \abs{v}^{2} \,\dh\\
&= \int_{\bbH^d} \left( - \frac{1}{2} \gmm'' + \rho (\rho -1) \sinh^{-2} r \gmm\right) \abs{v}^{2}\,\dh.
\end{align*}
Putting all computations together, the desired identify follows. \qedhere
\end{proof}

Next, we consider the counterpart for $\Re \brk{i (\rho^{2} + \kpp^{2} + \lap) v , \gmm v}$, which is useful when $\kpp \neq 0$. We omit the obvious proof.
\begin{lem}[Charge identity] \label{l:charge}
For any $v \in C^{\infty}_{0}(\bbH^{d})$, a real-valued smooth radial function $\gmm$ and $\kpp \in \bbR$, we have
\EQ{ \label{eq:charge}
\Re \brk{i (\rho^{2} + \kpp^{2} + \lap) v ,  \gmm v}
=& \Re \brk{i \lap v , \gmm v}
= \Re \brk{i \rd_{r} v , \gmm' v} \\
=& - \kpp \brk{\gmm' v , v} + \Re \brk{i (\rd_{r} + \rho \coth r - i \kpp) v , \gmm' v} .
}
\end{lem}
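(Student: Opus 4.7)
\medskip
\noindent \textbf{Proof plan for Lemma~\ref{l:charge}.}
The proof is a short chain of three identities, each obtained by a one-line algebraic or integration-by-parts manipulation. My approach will be to verify them in order, exploiting in each step that taking the real part annihilates any inner product whose integrand is manifestly real (since multiplication by such a quantity gives a purely imaginary result once hit with the $i$).

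\emph{Step 1: $\Re \brk{i(\rho^{2}+\kpp^{2}+\lap) v, \gmm v} = \Re \brk{i\lap v, \gmm v}$.}
Since $\rho^{2}+\kpp^{2}$ is a real scalar and $\gmm$ is real-valued, $\brk{v, \gmm v} = \int_{\bbH^{d}} \gmm \abs{v}^{2} \, \dh$ is real, hence $i(\rho^{2}+\kpp^{2}) \brk{v, \gmm v}$ is purely imaginary and contributes nothing to the real part. This is immediate.

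\emph{Step 2: $\Re \brk{i\lap v, \gmm v} = \Re \brk{i\rd_{r} v, \gmm' v}$ (up to sign).}
Here I will exploit the skew-symmetry $\brk{iAv, w} = -\brk{v, iAw}$ for the self-adjoint operator $A = \lap$, writing
\[
 \brk{i\lap v, \gmm v} = -\brk{v, i\lap(\gmm v)}.
\]
Using the commutator $\lap(\gmm v) - \gmm \lap v = (\lap \gmm) v + 2\nb \gmm \cdot \nb v = (\lap \gmm) v + 2\gmm' \rd_{r} v$ (for $\gmm$ radial), together with the self-adjointness of multiplication by $\gmm$, I can recombine the $\gmm \lap v$ piece with the left-hand side. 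The contribution of $\brk{v, i(\lap \gmm)v}$ is purely imaginary (again because $\lap \gmm$ is real), so drops out of the real part, and what remains is an inner product involving $i\gmm' \rd_{r} v$, which by a trivial move of $\gmm'$ from the first to the second slot equals $\brk{i\rd_{r} v, \gmm' v}$. The only subtle point is keeping careful track of the sign that comes from the skew-symmetry step.

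\emph{Step 3: $\Re \brk{i\rd_{r} v, \gmm' v} = -\kpp \brk{\gmm' v, v} + \Re \brk{i(\rd_{r}+\rho \coth r - i\kpp) v, \gmm' v}$.}
This is a purely algebraic expansion:
\[
 \brk{i(\rd_{r}+\rho\coth r - i\kpp) v, \gmm' v} = \brk{i\rd_{r} v, \gmm' v} + \brk{i\rho \coth r\, v, \gmm' v} + \brk{\kpp v, \gmm' v}.
\]
The middle term is purely imaginary (its integrand $i\rho \coth r\, \gmm' \abs{v}^{2}$ has purely imaginary value), so drops on taking $\Re$; the last term equals $\kpp \int \gmm' \abs{v}^{2} \, \dh = \kpp \brk{\gmm' v, v}$, which is real. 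Rearranging yields the claim.

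I expect no real obstacle: the only place requiring care is the bookkeeping of signs in Step~2 when invoking skew-symmetry of $i\lap$ and moving radial factors between slots of the inner product. Each step requires only a single integration by parts and the basic observations that (i) real-valued integrands contribute zero to $\Re \brk{i \cdot, \cdot}$ and (ii) real scalar multipliers are self-adjoint.
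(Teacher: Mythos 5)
Your plan is sound and, in spirit, exactly the short integration-by-parts argument the paper has in mind (the paper omits the proof). Step~1 and Step~3 are correct as you wrote them. The important thing is that your hedged remark ``(up to sign)'' in Step~2 is not a minor bookkeeping worry --- it is pointing at a genuine inconsistency in the printed statement.

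If you actually carry out Step~2 in the scheme you propose, you get: $\brk{i\lap v,\gmm v} = -\brk{v, i\lap(\gmm v)} = -\brk{v,i\gmm\lap v} - \brk{v,i(\lap\gmm)v} - 2\brk{v,i\gmm'\rd_r v}$, and recombining $-\brk{v,i\gmm\lap v}=-\overline{\brk{i\lap v,\gmm v}}$ with the left side gives $2\Re\brk{i\lap v,\gmm v}=-\Re\brk{v,i(\lap\gmm)v}-2\Re\brk{v,i\gmm'\rd_r v}$. The $(\lap\gmm)$ term is purely imaginary, and $\Re\brk{v,i\gmm'\rd_r v}=\Re\brk{i\rd_r v,\gmm' v}$ after the slot swap, so the conclusion is
\[
\Re\brk{i\lap v,\gmm v}= -\,\Re\brk{i\rd_r v,\gmm' v},
\]
with a \emph{minus} sign, not the plus sign printed in the lemma. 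You can cross-check this independently by writing $v=a+ib$ and computing both sides as integrals of $\gmm(b\lap a - a\lap b)$ and $\gmm'(b\,a_r - a\,b_r)$ respectively: a single integration by parts shows the first equals the negative of the second. Consequently the last member of the chain must also flip sign, becoming $\kpp\brk{\gmm' v,v}-\Re\brk{i(\rd_r+\rho\coth r - i\kpp)v,\gmm'v}$. This sign is moreover the one actually used later: in the proof of Lemma~\ref{l:pos-comm} the identity is invoked to write $-2\kpp\Re\brk{i\lap v,\bt v}=-2\kpp^2\brk{\rd_r\bt\,v,v}+2\kpp\Re\brk{i(\rd_r+\rho\coth r-i\kpp)v,\rd_r\bt\,v}$, which after dividing by $-2\kpp$ matches the corrected version, not the printed one. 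So your method is right; you should complete the Step~2 computation, keep the minus sign, and note that the second and third members of the displayed chain in the lemma carry a sign typo.
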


As in Lemma~\ref{lem:Q}, given a real-valued smooth radial function $\bt$, define
$Q = \bt D_{r} + D_{r}^{\ast} \bt = \frac{1}{i} (\bt \rd_{r} + \rd_{r}^{\ast} \bt)$. Combining the key \emph{positive commutator} identity (Lemma~\ref{lem:Q}) with the preceding identities, we obtain the following identity.
\begin{lem} [Positive commutator] \label{l:pos-comm}
For any $v \in C^{\infty}_{0}(\bbH^{d})$, a real-valued smooth radial function $\bt$ and $\kpp \in \bbR$, we have
\EQ{ \label{eq:pos-comm}
&\Re \brk{i (\rho^{2} + \kpp^{2} + \lap) v , Q v - 2 \bt \kpp v}\\
& = \int_{\bbH^d} \rd_{r} \bt \abs{(\rd_{r} + \rho \coth r - i \kpp) v}^{2}\,\dh   \\
& \phantom{=} + \int_{\bbH^d} (2 \bt \coth r - \rd_{r} \bt ) \left( \sinh^{-2} r \abs{\slashed{\nb} v}^{2} + \rho (\rho - 1) \sinh^{-2} r \abs{v}^{2} \right)\,\dh\\
& \phantom{=} - \Re \brk{(\rho^{2} + \kpp^{2} + \lap) v , \rd_{r} \bt v}.
}
\end{lem}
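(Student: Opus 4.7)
The identity is a $\kpp$-dependent enhancement of the main multiplier identity in Lemma~\ref{lem:Q}, adapted to the shifted operator $\rho^{2}+\kpp^{2}+\lap$. My strategy is to exploit a clean algebraic rewriting of the multiplier and then reduce the problem to a direct integration by parts calculation.

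The starting observation is that
\[
Qv - 2\bt\kpp v = -2i\bt\,Av - i\bt'\,v, \qquad A := \rd_{r}+\rho\coth r - i\kpp,
\]
which follows from the explicit formula $Qv = \tfrac{1}{i}(2\bt\rd_{r}v + \bt'v + (d-1)\coth r\,\bt v)$ together with the hyperbolic identity $2\rho = d-1$. The same identity implies that $A$ is anti-self-adjoint, $A^{*} = -A$, a fact I will use repeatedly. Inserting this rewriting into the left-hand side of \eqref{eq:pos-comm} and extracting two factors of $-i$ gives
\[
\Re\brk{i(\rho^{2}+\kpp^{2}+\lap)v,\, Qv - 2\bt\kpp v} = -2\Re\brk{(\rho^{2}+\kpp^{2}+\lap)v,\, \bt A v} - \Re\brk{(\rho^{2}+\kpp^{2}+\lap)v,\, \bt' v}.
\]
The second term matches the final term on the right-hand side of \eqref{eq:pos-comm} with the correct sign, so the proof reduces to showing
\[
-2\Re\brk{(\rho^{2}+\kpp^{2}+\lap)v,\, \bt A v} = \int\bt'|Av|^{2}\,\dh + \int(2\bt\coth r - \bt')\Bigl(\sinh^{-2}r\,|\snab v|^{2} + \rho(\rho-1)\sinh^{-2}r\,|v|^{2}\Bigr)\dh.
\]

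To establish this, I would decompose the shifted Laplacian as
\[
(\rho^{2}+\kpp^{2}+\lap)v = A^{2}v + 2i\kpp\,Av - \rho(\rho-1)\sinh^{-2}r\,v + \sinh^{-2}r\,\lap_{\bbS^{d-1}}v,
\]
which follows from a direct computation after observing that $B := A+i\kpp$ satisfies $B^{2} = \rd_{r}^{2} + 2\rho\coth r\,\rd_{r} + \rho^{2} + \rho(\rho-1)\sinh^{-2}r$. Substituting this decomposition produces four pieces. The $A^{2}v$ piece yields $\int\bt'|Av|^{2}\,\dh$ via integration by parts with $A^{*}=-A$, which after moving one factor of $A$ and using that $\Re\brk{A^{2}v, \bt Av} = \Re\brk{Av, \bt A^{2}v}$, gives $2\Re\brk{A^{2}v, \bt Av} = -\int\bt'|Av|^{2}\,\dh$. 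The $i\kpp Av$ piece vanishes, since $\brk{iAv, \bt Av} = i\int\bt|Av|^{2}\,\dh$ is purely imaginary. The $\sinh^{-2}r\,v$ piece is computed using $\Re(v\overline{Av}) = \tfrac{1}{2}\rd_{r}|v|^{2} + \rho\coth r\,|v|^{2}$ followed by an $r$-integration by parts, producing the $\rho(\rho-1)$ term on the right-hand side. Finally, the angular Laplacian piece is treated by integration by parts first in the angular directions (to produce $|\snab v|^{2}$) and then in $r$, yielding the $|\snab v|^{2}$ term with the correct coefficient.

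The main technical work will be the careful bookkeeping in the two radial integrations by parts. The hyperbolic volume element $\sinh^{d-1}r\,dr\,d\omg$ contributes a correction $\int f\rd_{r}g\,\dh = -\int(\rd_{r}f)g\,\dh - (d-1)\int f\coth r\,g\,\dh$, and these correction terms must combine precisely with the explicit $\rho\coth r$ and $\sinh^{-2}r$ weights coming from $A$ to produce the combination $2\bt\coth r - \bt'$ on the right-hand side. This delicate cancellation is once again a consequence of $2\rho = d-1$ and the anti-self-adjointness of $A$; it is the only place where the hyperbolic geometry enters the identity nontrivially, and is the principal (though routine) obstacle in the calculation.
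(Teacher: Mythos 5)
Your proposal is correct, and it takes a genuinely different route from the paper's proof. The paper proves this lemma by stacking three previously established identities: the charge identity (Lemma~\ref{l:charge}) to evaluate $-2\kpp\Re\brk{i\lap v,\bt v}$, the main multiplier identity (Lemma~\ref{lem:Q}) for $\Re\brk{i\lap v,Qv}$, and then the identities \eqref{eq:mult-sq} and \eqref{eq:mult} from Lemma~\ref{l:mult} to recombine the resulting terms. Your approach instead starts from the clean algebraic rewriting $Qv-2\bt\kpp v=-2i\bt Av-i\bt'v$ with $A=\rd_r+\rho\coth r-i\kpp$, observes $A^*=-A$, and decomposes the shifted Laplacian as $\rho^2+\kpp^2+\lap=A^2+2i\kpp A-\rho(\rho-1)\sinh^{-2}r+\sinh^{-2}r\lap_{\bbS^{d-1}}$; each of the four pieces then falls to a direct integration by parts. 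I verified all four: the $A^2$ piece gives $\int\bt'|Av|^2$ via the anti-self-adjointness of $A$; the $2i\kpp A$ piece vanishes since $\int\bt|Av|^2$ is real; and the $\sinh^{-2}r$ and angular pieces each produce exactly the $(2\bt\coth r-\bt')$ coefficient after the radial integration by parts, using $2\rho=d-1$. The paper's argument is modular — it reuses lemmas already established for Proposition~\ref{p:LE-res} — while yours is more self-contained and, by isolating the operator $A$ and its squaring, makes the algebraic structure of the identity transparent (in particular, the automatic vanishing of the $i\kpp A$ cross term, which the paper handles more indirectly through the charge identity). Your sketch does not write out the final integrations by parts in detail, but the steps you describe are correct and routine to fill in.
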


\begin{proof}
By Lemma~\ref{l:charge}, we have
\begin{align*}
- 2 \kpp \Re \brk{i \lap v , \bt v}
= - 2 \kpp^{2} \brk{\rd_{r} \bt v , v} + 2 \kpp \Re \brk{i (\rd_{r} + \rho \coth r - i \kpp) v , \rd_{r} \bt v}.
\end{align*}
Hence, combined with the key positive commutator identity (Lemma~\ref{lem:Q}) for $\Re \brk{i \lap v , Q v}$, we obtain
\begin{align*}
	\Re \brk{i \lap v , Q v - 2 \bt \kpp v}
	= & 2 \int_{\bbH^d} \rd_{r} \bt \left( \abs{\rd_{r} v}^{2} - (\rho^{2} + \kpp^{2}) \abs{v}^{2} \right)\,\dh\\
	& - \int_{\bbH^d} (\lap \rd_{r} \bt) \abs{v} \,\dh + 2 \int_{\bbH^d} \bt \coth r  \frac{\abs{\slashed{\nb} v}^{2}}{\sinh^2r}\,\dh\\
	&+ 2 \kpp \Re \brk{i (\rd_{r} + \rho \coth r - i \kpp) v , \rd_{r} \bt v} \\
	& + \frac{1}{2} \int_{\bbH^d} (\lap \rd_{r} \bt) \abs{v}\,\dh\\
	& - \rho \int_{\bbH^d} (\lap (\bt \coth r) - 2 \rho \rd_{r} \bt) \abs{v}^{2}\,\dh.
\end{align*}
Note that $\int (\lap \rd_{r} \bt) \abs{v} = - \int \rd_{r}^{2} \bt \rd_{r} \abs{v}^{2} =  - 2 \int \rd_{r}^{2} \bt \Re(\rd_{r} v \bar{v})$.
Thus, applying \eqref{eq:mult-sq} to the first two terms on the right-hand side, we see that $\Re \brk{i \lap v , Q v - 2 \bt \kpp v}$ equals to
\begin{align*}
& 2 \int_{\bbH^d} \rd_{r} \bt \left(\abs{(\rd_{r} + \rho \coth r - i \kpp) v}^{2} + \rho (\rho - 1) \sinh^{-2} r \abs{v}^{2}\right)\,\dh \\
&- \int_{\bbH^d} \rd_{r}^{3} \bt \abs{v}^{2}   \,\dh + 2 \int_{\bbH^d} \bt \coth r \sinh^{-2} r \abs{\slashed{\nb} v}^{2}\,\dh\\
& + 2 \kpp \Re \brk{i v  , \rd_{r} \bt (\rd_{r} + \rho \coth r - i \kpp) v}  \\
&+ \frac{1}{2} \int_{\bbH^d} (\lap \rd_{r} \bt) \abs{v}^{2} \,\dh
 - \rho \int_{\bbH^d} (\lap (\bt \coth r) - 2 \rho \rd_{r} \bt) \abs{v}^{2}\,\dh.
\end{align*}
Finally, applying \eqref{eq:mult} to the third term, we can write $\Re \brk{i \lap v , Q v - 2 \bt \kpp v}$ as
\begin{align*}
 & \int_{\bbH^d} \rd_{r} \bt \abs{(\rd_{r} + \rho \coth r - i \kpp) v}^{2} \,\dh \\
 & + \int_{\bbH^d} (2 \bt \coth r - \rd_{r} \bt ) \sinh^{-2} r \abs{\slashed{\nb} v}^{2} \,\dh - \Re \brk{(\rho^{2} + \kpp^{2} + \lap) v , \rd_{r} \bt v} \\
&+ \rho (\rho-1)  \int_{\bbH^d} \rd_{r} \bt \sinh^{-2} r  \abs{v}^{2} \,\dh- \frac{1}{2} \int_{\bbH^d} \rd_{r}^{3} \bt \abs{v}^{2}\,\dh \\
&+ \frac{1}{2} \int_{\bbH^d} (\lap \rd_{r} \bt) \abs{v}^{2} \,\dh - \rho \int_{\bbH^d} (\lap (\bt \coth r) - 2 \rho \rd_{r} \bt) \abs{v}^{2}\,\dh.
\end{align*}
Simplifying the last two lines, we arrive at the desired identity.
\end{proof}

The following computation is useful for proving weighted Hardy--Poincar\'e-type estimates:
\begin{lem} [Hardy--Poincar\'e computation] \label{l:thr-hardy}
For any $v \in C^{\infty}_{0}(\bbH^{d})$ and a real-valued smooth radial function $\gmm$, we have
\EQ{ \label{eq:thr-hardy}
\int_{\bbH^d} \gmm \abs{(\rd_{r} + \rho \coth r) v}^{2}\,\dh
= & \int_{\bbH^d} \gmm \abs{(\rd_{r} + \rho \coth r - m r^{-1}) v}^{2}\,\dh \\
& + m \int_{\bbH^d} \left( 1 - m - r (\log \gmm)' \right) \gmm r^{-2}  \abs{v}^{2}\,\dh.
}
\end{lem}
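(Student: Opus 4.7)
\medskip

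My plan is to prove Lemma~\ref{l:thr-hardy} by a direct computation: expand the square on the right-hand side, use the product rule in $r$, and integrate by parts once. The key algebraic observation that makes the identity work is that the $\rho \coth r$ contributions from the integration by parts cancel exactly against those coming from the expansion.

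Concretely, I would set $L := \rd_r + \rho \coth r$ and expand
\begin{equation*}
\int_{\bbH^d} \gmm \abs{Lv - m r^{-1} v}^{2}\,\dh
= \int_{\bbH^d} \gmm \abs{Lv}^{2}\,\dh - 2m \int_{\bbH^d} \gmm r^{-1} \Re(\bar v \, Lv)\,\dh + m^{2} \int_{\bbH^d} \gmm r^{-2} \abs{v}^{2}\,\dh.
\end{equation*}
The middle term is the only one that needs work. Since $2\Re(\bar v\, Lv) = \rd_r \abs{v}^{2} + 2\rho \coth r \abs{v}^{2}$, it splits into a piece with $\rd_r \abs{v}^{2}$ and a piece with a $\coth r$ weight.

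Next I would integrate by parts in $r$ on the piece containing $\rd_r \abs{v}^{2}$, using the formal adjoint $\rd_r^{\ast} = -\rd_r - (d-1) \coth r = -\rd_r - 2\rho \coth r$ recorded in Section~\ref{ss:geometry}:
\begin{equation*}
\int_{\bbH^d} \gmm r^{-1} \rd_r \abs{v}^{2}\,\dh = -\int_{\bbH^d} \bigl( \gmm' r^{-1} - \gmm r^{-2} + 2\rho \coth r \cdot \gmm r^{-1} \bigr) \abs{v}^{2}\,\dh.
\end{equation*}
The critical point is that the $-2\rho \coth r \cdot \gmm r^{-1}$ term produced here cancels with the $+2\rho \coth r \cdot \gmm r^{-1}$ term coming directly from the $\rho \coth r |v|^{2}$ piece of $2\Re(\bar v\, Lv)$. (Here $v \in C^{\infty}_{0}(\bbH^{d})$ ensures that the boundary terms in $r$ vanish; the fact that $\gmm r^{-1}$ need not extend smoothly through $r=0$ poses no issue since $|v|^2$ is compactly supported and smooth, with $|v|^{2}(0,\theta)$ independent of $\theta$.)

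After this cancellation I am left with
\begin{equation*}
2m \int_{\bbH^d} \gmm r^{-1} \Re(\bar v\, Lv)\,\dh = -m \int_{\bbH^d} \gmm' r^{-1} \abs{v}^{2}\,\dh + m \int_{\bbH^d} \gmm r^{-2} \abs{v}^{2}\,\dh,
\end{equation*}
and substituting back and rearranging gives
\begin{equation*}
\int_{\bbH^d} \gmm \abs{Lv}^{2}\,\dh = \int_{\bbH^d} \gmm \abs{Lv - m r^{-1} v}^{2}\,\dh + m \int_{\bbH^d} \bigl( 1 - m - r(\log \gmm)' \bigr) \gmm r^{-2} \abs{v}^{2}\,\dh,
\end{equation*}
which is \eqref{eq:thr-hardy}. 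The only subtlety worth flagging is the $\coth r$ cancellation — without it the formula would contain a stray $\gmm r^{-1} \coth r$ weight, so the proof has to track the sign in $\rd_r^{\ast}$ carefully — but this is purely algebraic and presents no genuine obstacle.
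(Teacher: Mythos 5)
Your computation is the paper's proof exactly, merely run in the opposite direction: the paper completes the square on the left-hand side, you expand the square on the right, but both reduce to the same use of $2\Re(\bar v\, Lv) = \rd_r|v|^2 + 2\rho\coth r\,|v|^2$, a single integration by parts against $\rd_r^\ast = -\rd_r - 2\rho\coth r$, and the $\coth r$ cancellation you highlight. Your parenthetical about the $r=0$ boundary is the one place that is slightly off, though the paper is equally silent on the point: the boundary contribution is $-\lim_{\eps\to 0^+} m\,\gmm(\eps)\,\eps^{-1}\sinh^{d-1}(\eps)\int_{\bbS^{d-1}}|v(\eps,\omega)|^2\,d\omega$, which vanishes for $d\ge 3$ but tends to $-m\,\gmm(0)\,|\bbS^{1}|\,|v(0)|^2$ when $d=2$, so the identity is clean only for $d\ge 3$ or when $\gmm(0)\,v(0)=0$.
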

\begin{proof}
We complete the square and integrate by parts as follows:
\begin{align*}
\int_{\bbH^d} \gmm \abs{(\rd_{r} + \rho \coth r) v}^{2}\,\dh
= & \int_{\bbH^d} \gmm \abs{(\rd_{r} + \rho \coth r - m r^{-1}) v}^{2}\,\dh\\
&- \int_{\bbH^d} m^{2} r^{-2} \gmm \abs{v}^{2} \,\dh
 + \int_{\bbH^d} m r^{-1} \gmm \rd_{r} \abs{v}^{2}\,\dh\\
&+ \int_{\bbH^d} 2 \rho m r^{-1} \coth r  \gmm \abs{v}^{2} \,\dh\\
= & \int_{\bbH^d} \gmm \abs{(\rd_{r} + \rho \coth r - m r^{-1}) v}^{2} \,\dh\\
& + \int_{\bbH^d} \left(m (1-m) r^{-2} \gmm - m r^{-1} \gmm' \right) \abs{v}^{2}\,\dh. 
\end{align*}
After rearranging terms, we obtain \eqref{eq:thr-hardy}.
\end{proof}

\begin{rem} \label{rem:bdry}
Although all identities in this subsection were derived under the assumption that $v \in C_{0}^{\infty}(\bbH^{d})$, they remain valid for more general $v$, provided that it has sufficient regularity and decay. For instance, in the applications in Section~\ref{ss:LE-res}, $v$ will satisfy $v, \nb v \in L^{2}$, which is enough for bounded $\gmm, \bt$. 
\end{rem}

\subsection{Extraction of a resonance} \label{ss:LE-res}
Here, our aim is to show that the local smoothing estimate \eqref{eq:le-H} holds unless $H$ has a resonance at $\tau \in [\rho^{2}, \infty)$. The definition of a resonance at the threshold $\tau = \rho^{2}$ was given in Definition~\ref{def:th-res}. In the case $\tau \in (\rho^{2}, \infty)$, the precise definition is as follows.
\begin{defn} [Embedded resonance] \label{def:emb-res}
For $\kpp > 0$, we say that $w$ is a \emph{outgoing (resp. incoming) embedded resonance} at $\tau = \rho^{2} + \kpp^{2}$ if it is a nontrivial solution to the problem
\begin{equation*}
	(\rho^{2} + \kpp^{2} - H) w = 0, \quad w \in \tilLE_{0},
\end{equation*}
satisfying the outgoing (resp. incoming) radiation condition
\begin{equation} \label{eq:orc}
	\nrm{r^{-\frac{1}{2}} (\rd_{r} + \rho - i \kpp) w}_{L^{2} (A_{j})} \to 0 \quad
	(\hbox{resp. } \nrm{r^{-\frac{1}{2}} (\rd_{r} + \rho + i \kpp) w}_{L^{2} (A_{j})} \to 0).
\end{equation}
\end{defn}
The remainder of this subsection is devoted to the proof of the following result:
\begin{prop}  \label{p:LE-res} 
Suppose that $H$ has no threshold resonance (as in Definition~\ref{def:th-res}), nor an embedded resonance at $\tau \in (\rho^{2}, \infty)$ (as in Definition~\ref{def:emb-res}).
Then for any $u_{0} \in L^{2}(\bbH^d)$ and $F \in \tilLE^{\ast}$, the solution $u(t)$ to the linear Schr\"odinger equation
\EQ{ \label{eq:sch-H}
 (-i \p_t + H) u = F,
}
with the initial value $u(0) = u_{0}$, obeys the local smoothing estimate \eqref{eq:le-H}.
\end{prop}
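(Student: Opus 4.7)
\medskip

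\noindent\textbf{Proof proposal for Proposition~\ref{p:LE-res}.}

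The plan is to exploit the self-adjointness and stationarity of $H$ to reduce the time-dependent estimate \eqref{eq:le-H} to a limiting absorption principle for the resolvent, and then to run a compactness/contradiction argument that produces a resonance whenever the resolvent estimate fails. First, via Plancherel in time and the spectral calculus, \eqref{eq:le-H} is equivalent to the uniform bound
\begin{equation} \label{eq:LAP-goal}
  \|P_c v\|_{\tilLE_0} \lesssim \|(\tau \pm i\eps - H) P_c v\|_{\tilLE_0^{\ast}}, \qquad \tau \in \bbR, \ \eps > 0,
\end{equation}
with the implicit constant independent of $\tau, \eps$. The dual and duality properties of $\tilLE$ and $\tilLE_0$ (Remark~\ref{rem:completedualitytilLE} and Lemma~\ref{l:albeLE}) are what make this equivalence rigorous. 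In the same way, Theorem~\ref{t:LE1-sym} (reformulated in its spatial/resolvent version) yields the weaker, conditional bound
\begin{equation} \label{eq:LAP-weak}
  \|v\|_{\tilLE_0} \lesssim \|(\tau \pm i\eps - H) v\|_{\tilLE_0^{\ast}} + \|v\|_{L^2(\{r \leq R\})},
\end{equation}
uniformly in $\tau, \eps$, for a fixed large $R$.

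Suppose that \eqref{eq:LAP-goal} fails. Then there exist sequences $v_n \in \tilLE_0$ with $P_c v_n = v_n$, $\tau_n \in \bbR$, $\eps_n > 0$, and signs $\pm$ (say $+$, handled identically to $-$) with $\|v_n\|_{\tilLE_0} = 1$ and $\|(\tau_n + i\eps_n - H)v_n\|_{\tilLE_0^{\ast}} \to 0$. Invoking \eqref{eq:LAP-weak} forces $\|v_n\|_{L^2(\{r \leq R\})} \gtrsim 1$, so the sequence cannot concentrate at infinity. Moreover, pairing $(\tau_n + i\eps_n - H) v_n$ with $v_n$, using $P_c v_n = v_n$ and $\sigma_{ac}(H) = [\rho^2,\infty)$ together with discreteness of $\sigma_{disc}(H)$, shows that $\tau_n$ stays in a bounded subset of $[\rho^2, \infty)$ and $\eps_n \to 0$ (otherwise an elementary spectral estimate would contradict $\|v_n\|_{\tilLE_0} = 1$). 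The elliptic regularity estimate of Lemma~\ref{l:ell-LE} upgrades $v_n$ to $\chi_{\geq R} v_n \in \tilLE_0^1$ with uniform bounds, so that combined with Lemma~\ref{l:H12loc} (the local $H^{1/2}$ embedding of $\tilLE_0$) and a diagonal extraction, we obtain $v_n \rightharpoonup v_\infty$ weakly in $\tilLE_0$, strongly in $L^2_{\loc}$, with $\|v_\infty\|_{L^2(\{r \leq R\})} \gtrsim 1$ and $\tau_n \to \tau_\infty \in [\rho^2,\infty)$. Passing to the limit in the equation gives $(\tau_\infty - H) v_\infty = 0$ distributionally, hence classically by elliptic regularity; also $P_c v_\infty = v_\infty$ by weak lower semicontinuity.

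The remaining, and most delicate, step is to show that $v_\infty$ is a resonance in the sense of Definition~\ref{def:th-res} (if $\tau_\infty = \rho^2$) or Definition~\ref{def:emb-res} (if $\tau_\infty > \rho^2$), thereby contradicting the hypothesis. For $\tau_\infty = \rho^2 + \kpp^2$ with $\kpp > 0$, I will apply the positive commutator identity \eqref{eq:pos-comm} from Lemma~\ref{l:pos-comm} to $v_n$, with a choice of $\bt$ that is bounded, radial, nondecreasing, with $\rd_r \bt \geq 0$ and with $\rd_r \bt \to c > 0$ as $r \to \infty$ (for example, a smooth approximation of $\arctan r$). Taking the real part isolates the positive annular term $\int \rd_r \bt \, |(\rd_r + \rho \coth r - i\kpp) v_n|^2 \,\dh$, while the right-hand side is controlled by $\eps_n \|v_n\|_{L^2}^2$, by $\|(\tau_n + i\eps_n - H)v_n\|_{\tilLE_0^{\ast}} \to 0$ paired against $(Q - 2\bt\kpp - \rd_r \bt)v_n$, and by the $H_{\lot}$ and $(\tau_n - \tau_\infty)$ contributions (handled via Lemma~\ref{l:drLE} and Lemma~\ref{l:HminusLEstar}). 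Passing $n \to \infty$ forces $(\rd_r + \rho\coth r - i\kpp) v_\infty \in L^2$ with a quantitative integral bound, from which an annular averaging and the exponential decay of $\coth r - 1$ yields the outgoing radiation condition \eqref{eq:orc}; the sign $+$ of $i\eps_n$ is precisely what selects the outgoing root. In the threshold case $\kpp = 0$, I will additionally apply the Hardy--Poincar\'e identity of Lemma~\ref{l:thr-hardy} with the weight $\gmm = \rd_r \bt$ to reveal a positive $r^{-2}$ coercivity that absorbs the loss at infinity, yielding \eqref{eq:th-res} together with $v_\infty \in \tilLE_0$. In either case $v_\infty$ is a bona fide resonance, contradicting the hypothesis.

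The main obstacle I anticipate is the step extracting the radiation condition for the weak limit: one must choose the multiplier $\bt$ carefully so that the coercivity on the left-hand side of \eqref{eq:pos-comm} survives weak convergence while the right-hand side vanishes in the limit, and must track precisely how the sign of $\pm i\eps_n$ selects the outgoing vs.\ incoming condition. The threshold subcase $\tau_\infty = \rho^2$ is especially subtle because the coercivity from $\kpp^2$ is absent and one must rely entirely on the Hardy--Poincar\'e improvement; moreover the lower order potentials $\bsb, V$ must be treated perturbatively using the strengthened decay \eqref{equ:LE-H_thm_additional_assumption} via Propositions~\ref{p:Hlotbound} and \ref{p:Hlotbound_for_V}, which is precisely the point where the weight $\brk{r}^{3+2\nsigma}$ becomes essential.
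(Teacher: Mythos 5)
Your overall strategy aligns with the paper's: reduce \eqref{eq:le-H} to a fixed-$\tau$ limiting absorption principle, run a compactness argument to extract a nontrivial $v_\infty$, and use positive commutator estimates to show $v_\infty$ is a resonance. However, there are several genuine gaps.

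First, the Plancherel reduction is not as routine as you present it. For a general $L^2$ initial datum with non-compactly-supported $F$, the temporal Fourier transform of $u$ does not converge, so one cannot directly pass to the resolvent bound. The paper handles this by (i) reducing via the $T^\ast T$ principle and Duhamel to \emph{forward} solutions (i.e.\ $u = F = 0$ for $t \ll 0$), and then (ii) introducing an exponential damping $e^{-\eps t} u$ and proving a damped, uniform-in-$\eps$ version of Theorem~\ref{t:LE1-sym} (Lemma~\ref{lem:LEdamp}); only then is the Fourier transform justified. Your proposal skips this entirely, which would leave the very first step unjustified.

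Second, your multiplier description contains a logical inconsistency: a radial $\bt$ with $\rd_r\bt \to c > 0$ at infinity cannot be bounded, and your example $\arctan r$ has $\rd_r\bt \to 0$. More seriously, the multiplier must be \emph{adapted to the dyadic annulus $A_j$ where you want to test the radiation condition}: the paper takes $\bt = \dlt^{-1}\chi_{> R}^2 \alp_{(>j)}$, with $\alp_{(>j)}$ a slowly varying function normalized to produce the coercive term $2^{-j}\|(\rd_r + \rho\coth r - i\kpp)v_n\|_{L^2(A_j)}^2$ on the left of \eqref{eq:pos-comm}, while the right-hand side is made summable. A single $j$-independent $\bt$ with $\rd_r\bt \sim r^{-2}$ does not furnish the annular lower bound with the correct weight.

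Third, you do not address the delicate interaction between $\eps_n$ and $\kpp_n^2 = \tau_n - \rho^2$ near the threshold. The error terms in the commutator identity come with a factor $\eps_n / \kpp_n$, which need not be $o(1)$ when $\kpp_n \to 0$. The paper therefore splits the threshold extraction into the cases $\eps_n \lesssim \tau_n - \rho^2$ (where the embedded-resonance argument carries over) and $(\tau_n - \rho^2)_+ \lesssim \eps_n$ (where one instead perturbs off the $\rho^2 + \lap$ identity using both the real and imaginary parts of $\brk{(\tau_n + i\eps_n - H)v_n, v_n}$). Without this case split, the argument breaks precisely at the threshold.

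Finally, the claim that "pairing $(\tau_n + i\eps_n - H)v_n$ with $v_n$" bounds $\tau_n$ is not available as stated: $v_n \in \tilLE_0$ does not imply $v_n \in L^2(\bbH^d)$, so the global pairing need not be defined, and in any event the pairing only controls the sign of $\tau_n - \rho^2$, not its magnitude. The paper instead proves the interpolation estimate $\abs{\tau}^{1/4}\|v\|_{L^2(r\leq 2R)} \lesssim \|v\|_{\tilLE_0} + \|(\tau + i\eps - H)v\|_{\tilLE_0^\ast}$ via elliptic regularity (Lemmas~\ref{l:H12loc} and \ref{l:HminusLEstar}), which is the actual mechanism that rules out $\abs{\tau_n} \to \infty$.
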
 
\begin{proof}

\pfstep{Step~1: Reduction to forward solutions}
We begin by showing that it suffices to establish \eqref{eq:le-H} just for \emph{forward solutions}, i.e., solutions $u$ to \eqref{eq:sch-H} satisfying
\begin{equation} \label{eq:fsol}
	u = F = 0 \quad \hbox{ for sufficiently negative } t.
\end{equation}
By time reversal symmetry, having \eqref{eq:le-H} for forward solutions is clearly equivalent to having it for \emph{backward solutions}, i.e., solutions to \eqref{eq:sch-H} obeying $u = F = 0$ for sufficiently positive $t$. 

To see why it suffices to prove \eqref{eq:le-H} for only forward (or equivalently, backward) solutions, let $F$ be smooth and compactly supported in $t, x$. Then a solution to the initial value problem differs from the forward solution only by a homogeneous solution; thus it suffices to prove \eqref{eq:le-H} in the homogeneous case $F = 0$. By the $T^{\ast} T$ principle applied to $T : L^{2} \to 
\tilLE, \, u_{0} \mapsto e^{- i t H} u_{0}$, the desired estimate is equivalent to
\begin{equation*}
	\nrm{\int_{-\infty}^{\infty} e^{-i (t - s) H} F(s) \, \ud s}_{\tilLE} \aleq \nrm{F}_{\tilLE^{\ast}}.
\end{equation*}
But the expression on the left-hand side is 
\begin{equation*}
\int_{-\infty}^{\infty} e^{-i (t - s) H} F(s) \, \ud s = \int_{-\infty}^{t} e^{-i (t - s) H} F(s) \, \ud s - \int_{t}^{\infty} e^{-i (t - s) H} F(s) \, \ud s,
\end{equation*}
which is, by Duhamel's principle, precisely the sum of the forward and backward solutions.

\pfstep{Step~2: Reformulation as resolvent bounds}
The next step is to reformulate both our starting point (Theorem~\ref{t:LE1-sym}) and our goal \eqref{eq:le-H} as estimates for the resolvents of $H$ (i.e., coercivity bounds for $\tau - H$). The rough idea is to take the Fourier transform in $t$ and apply Plancherel's theorem. However, we cannot proceed directly for two reasons: First, $u$ is a-priori not square integrable in $t$, and second, $\tilLE$ and $\tilLE^{\ast}$ are not Hilbert spaces. We address these (standard) difficulties in Steps~2.a and 2.b, respectively, and achieve the desired reformulation in Step~2.c.

\pfstep{Step~2.a: Damping forward in time}
In order to apply the Fourier transform, we consider a forward-in-time damping $e^{-\eps t} u$. Since we only consider forward solutions $u$, $e^{-\eps t} u$ exhibits good decay as $\abs{t} \to \infty$ (this point is the key reason for restricting to forwards solutions!). Moreover, as the next lemma shows, Theorem~\ref{t:LE1-sym} transfers nicely to $e^{-\eps t} u$.

\begin{lem} \label{lem:LEdamp}
Let $R > 0$ be defined as in \eqref{eq:LE1} of Theorem~\ref{t:LE1-sym}. Then, for any $\eps > 0$, any forward solution $u$ to \eqref{eq:sch-H} satisfies
\begin{equation} \label{eq:LEdamp}
	\nrm{e^{-\eps t} u}_{\tilLE} \aleq \nrm{e^{-\eps t} F}_{\tilLE^{\ast}} + \nrm{e^{- \eps t} u}_{L^{2}(\bbR \times \set{r \leq R})}.
\end{equation}
Conversely, if \eqref{eq:LEdamp} holds for every $\eps > 0$ and forward solutions, then \eqref{eq:LE1} holds with $LE$ and $LE^\ast$ replaced by $\tilLE$ and $\tilLE^\ast$, respectively.
\end{lem}
\begin{proof}
To prove \eqref{eq:LEdamp} from \eqref{eq:LE1}, we partition the time axis $\bbR$ into intervals of length $\eps^{-1}$, i.e., $\bbR = \cup \set{I_{j} = [j \eps^{-1}, (j+1) \eps^{-1})}_{j \in \bbZ}$. For each $j$, we introduce the notation $t_{j} = j \eps^{-1}$, so that $I_{j} = [t_{j}, t_{j+1})$. Note that a weaker version of Theorem~\ref{t:LE1-sym} resulting from replacing $LE$ and $LE^\ast$ by $\tilLE$ and $\tilLE^\ast$, respectively, (and more precisely, its time localized version on $(-\infty, t_{j+1})$, which is immediate from the proof) for the forward solution $u$ yields 
\EQ{ \label{eq:LEdamp-piece}
\nrm{e^{-\eps t} u}^{2}_{\tilLE(I_{j})}
\aleq & e^{- 2 \eps t_{j+1}} \left( \nrm{F}_{\tilLE^{\ast}((-\infty, t_{j+1}))}^{2} + \nrm{u}_{L^{2}((-\infty, t_{j+1}) \times \set{r \leq R})}^{2} \right) \\
\aleq & \sum_{k \leq j} e^{- 2 (j - k)} \left( \nrm{e^{-\eps t} F}_{\tilLE^{\ast}(I_{k})}^{2} + \nrm{e^{-\eps t} u}_{L^{2}(I_{k} \times \set{r \leq R})}^{2}\right).
}
We emphasize that the implicit constants are \emph{independent} of $\eps$.
The exponential factors arise from the simple fact that $e^{\pm 2 \eps t} \simeq e^{\pm 2\eps t_{\ell}} \simeq e^{\pm 2 \ell}$ on $I_{\ell}$; note that the length $\eps^{-1}$ has been chosen to optimize this bound. 

The desired estimate \eqref{eq:LEdamp} now follows by summing up \eqref{eq:LEdamp-piece} in $j$ using Schur's test, where we use square summability of the expression inside the parentheses on the right-hand side.

Finally, \eqref{eq:LEdamp} implies \eqref{eq:LE1} (with $LE$ and $LE^\ast$ replaced by $\tilLE$ and $\tilLE^\ast$, respectively) by a simple application of Fatou's lemma; we leave the details to the reader.\qedhere
\end{proof}

As a simple corollary of the proof, we see that our goal \eqref{eq:le-H} is also equivalent to appropriate uniform estimates for the damped solution.
\begin{cor} \label{cor:LEeps}
Estimate \eqref{eq:le-H} holds if and only if, for any $\eps > 0$, any forward solution $u$ to \eqref{eq:sch-H} satisfies
\begin{equation} \label{eq:LEeps}
	\nrm{e^{-\eps t} P_{c} u}_{\tilLE} \aleq \nrm{e^{-\eps t} P_{c} F}_{\tilLE^{\ast}}.
\end{equation} 
\end{cor}

\pfstep{Step~2.b: $\calX^{0}_{\alp}$ spaces}
The Fourier transform in time is well-defined for the damped solution $e^{-\eps t} u$. In order to apply Plancherel, however, we face a minor nuisance that our spaces $\tilLE$ and $\tilLE^{\ast}$ are not of the form $L^{2}_{t} X_{0}$ for some Hilbert space $X_{0}$. To fix this issue, we use the auxiliary Hilbert spaces $\calX_\alpha$ and $\calX_\alpha^0$ introduced in Definition~\ref{def:X0spaces}. Then according to Lemma~\ref{l:albeLE} we have the equivalences
\begin{align*}
\|u\|_{\tilLE}\simeq\sup_{\alpha}\|u\|_{\calX_\alpha}, \quad \| F \|_{\tilLE^\ast} \simeq \inf_{\al} \|F\|_{  \calX_{\alpha}^\ast  }
\end{align*}
where the supremum and infimum are taken over all functions $\alpha \colon (0,4] \to\calA$ as in Definition~\ref{def:X0spaces} and Lemma~\ref{l:albeLE}.
The main advantage here is that for each fixed $\al \colon (0,4] \to \calA$,  $\calX_{\al}$, resp., $\calX_{\al}^\ast$, is a Hilbert space, and as in Definition~\ref{def:X0spaces}, we can write 
\EQ{
\| u \|_{\calX_{\al}}=: \| u \|_{L^2_t \calX_{\al}^0}, \quad   \| F \|_{\calX_{\al}^\ast} =: \| F \|_{L^2_t (\calX_{\al}^0)^\ast} 
}
where the norms $\calX_{\al}^0$, resp., $(\calX_{\al}^0)^\ast$ are the spatial components of $\calX_{\al}$, resp., $\calX_{\al}^\ast$. 

\pfstep{Step~2.c: Application of Plancherel's theorem}
At this point we exploit the Hilbert space structure of the spaces  $\calX_\al^0$, $(\calX_{\be}^0)^\ast$ and $L^2(\set{r \leq R})$. By applying Plancherel's theorem in time, the
 estimate established in Lemma~\ref{lem:LEdamp} is equivalent to the estimate 
\EQ{\label{eq:L2tau1}
\| \ha {e^{-\eps t} u} \|_{L_\tau^2 \calX_{\al}^0} \lesssim  \| \ha {e^{-\eps t} F} \|_{L^2_\tau (\calX_{\be}^0)^\ast} + \| \ha {e^{-\eps t} u} \|_{L^2_{\tau} L^2_x(r \le R)} 
}
Similarly, the desired estimate \eqref{eq:LEeps} (see Corollary~\ref{cor:LEeps}) is equivalent to showing that 
\EQ{ \label{eq:L2tau2} 
\| \ha {e^{-\eps t}P_c u} \|_{L_\tau^2 \calX_{\al}^0} \lesssim  \| \ha {e^{-\eps t} P_cF} \|_{L^2_\tau (\calX_{\be}^0)^\ast}
}
uniformly in $\eps>0$, and $\al, \be \colon (0, 4] \to \calA$. We claim that the previous two estimates are equivalent to the following fixed $\tau$ bounds holding uniformly in $\tau \in \R$. 
\begin{lem}
The estimate~\eqref{eq:L2tau1} is equivalent to the estimate 
\EQ{\label{eq:fixedtau1} 
 \| v(\tau) \|_{ \tilLE_0} \lesssim  \| (\tau + i\eps - H) v (\tau) \|_{ \tilLE_0^\ast} + \| v(\tau) \|_{L^2(r \le R)} ,
}
holding uniformly in $\tau \in \R$, and $\eps>0$. Similarly, the estimate~\eqref{eq:L2tau2} is equivalent to the estimate 
\EQ{\label{eq:fixedtau2} 
\| P_{c} v(\tau) \|_{ \tilLE_0} \lesssim    \| (\tau + i\eps - H) P_{c} v (\tau) \|_{ \tilLE_0^\ast} ,
}
holding uniformly in $\tau \in \bbR$ and $\eps>0$. 
\end{lem}  
\begin{proof} 
We only provide a detailed proof of the equivalence between \eqref{eq:L2tau1} and \eqref{eq:fixedtau1}; the case of \eqref{eq:L2tau2} and \eqref{eq:fixedtau2} is similar, and is left to the reader.

We consider a forward solution $u_{\eps}$ to the damped equation
\begin{equation} \label{eq:sch-H-damp}
- i \rd_{t} u_{\eps} - i \eps u_{\eps} + H u_{\eps} = G.
\end{equation}
If $u$ is a forward solution to \eqref{eq:sch-H}, then $u_{\eps} = e^{-\eps t} u$ obeys \eqref{eq:sch-H-damp} with $G = e^{-\eps t} F$.
Hence, the uniform in $\tau$ estimates~\eqref{eq:fixedtau1} and~\eqref{eq:fixedtau2} directly imply the estimates~\eqref{eq:L2tau1} and~\eqref{eq:L2tau2} by taking $G = e^{-\eps t} F$, square integrating in $\tau$, and noting that\footnote{Here, we use a nonstandard definition of the Fourier transform
\begin{equation*}
	\hat{f}(\tau) = \int f(t) e^{i t \tau} \, \ud t
\end{equation*}
to match our choice of signs in the Schr\"odinger operator $- i \rd_{t} + H$.} 
\EQ{
\ha {e^{-\eps t} F} =  \ha{ -i \p_t u_\eps} - i \eps \ha{u_{\eps}} + \ha{ H u_\eps} = - (\tau + i\eps - H) \ha u_{\eps}.
}

For the opposite direction, by modulation symmetry, it suffices to prove the implication for $\tau =0$. Let $\ha \phi \in C^\infty_0(\R)$ be such that $\ha \phi(\tau) = 1$ for $\tau \in [-1, 1]$ and $\supp \ha \phi \in B(0, 2)$ and $\int_{\R} \ha \phi^2(\tau) \, \ud \tau  = 1$. Then set $\ha \phi_n(\tau) = \sqrt{n} \ha \phi (n \tau)$.  For any continuous function $\ha f$, we have 
\EQ{ \label{eq:phin} 
\int_\R \ha\phi_n^2(\tau) \ha f^2(\tau) \, d \tau \to f^2(0) \mas n \to \infty.
}
For any forward solution $u_\eps$ to \eqref{eq:sch-H-damp}, the time-convolution $u_\eps  \ast \phi_n$ solves 
\EQ{
(-i \p_t - i\eps + H) u_\eps  \ast \phi_n = G \ast \phi_n.
}
Applying the estimate~\eqref{eq:L2tau1} to the above yields 
\EQ{
\| \ha u_\eps  \ha \phi_n \|_{L_\tau^2 \calX_{\al}^0} \lesssim  \| \ha G \ha \phi_n \|_{L^2_\tau (\calX_{\be}^0)^\ast} + \| \ha u_\eps \ha \phi_n \|_{L^2_{\tau} L^2_x(r \le R)} 
}
Letting $n \to \infty$ and using~\eqref{eq:phin} yields~\eqref{eq:fixedtau1}. \qedhere
\end{proof} 

\pfstep{Step~3: Compactness argument}
Our goal now has been reduced to proving that~\eqref{eq:fixedtau2} holds, assuming~\eqref{eq:fixedtau1}. At this stage we begin a contradiction argument, which is a minor variant of the proof of the Fredholm alternative theorem. Its aim is to show that failure of \eqref{eq:fixedtau2} implies existence of a resonance at some $\tau \in [\rho^{2}, \infty)$; for a precise definition, see Definition~\ref{def:th-res} in the case $\tau = \rho^{2}$, and Definition~\ref{def:emb-res} in the case $\tau \in (\rho^{2}, \infty)$.

\pfstep{Step~3.a: Beginning of the argument}
Assume the estimate~\eqref{eq:fixedtau2} fails to hold uniformly in $\eps>0$, and $\tau \in \R$. In view of \eqref{eq:fixedtau1}, we can find sequences~$\{\tau_n\} \subset \R$, $\eps_n > 0$ and $v_n \in \tilLE_0$ such that 
\EQ{ \label{eq:ch}
 \| P_{c} v_n \|_{L^2(r \le R)}  =1, \mand \|(\tau_n + i \eps_n - H) P_{c} v_n \|_{\tilLE_0^\ast} \to 0 \mas n \to \infty.
}
Redefining $P_{c} v_{n}$ to be $v_{n}$, we may remove the projection $P_{c}$ in the previous statement, and add the property that $v_{n} = P_{c} v_{n}$.
By the obvious resolvent bound
\begin{equation*}
	\nrm{v}_{L^{2}}^{2} \aleq \eps^{-1} \nrm{(\tau + i \eps - H) v}_{L^{2}}^{2},
\end{equation*}
which follows by applying Cauchy--Schwarz to $\Re \brk{(\tau + i \eps - H) v, i v}$, we know that for each fixed $\eps>0$, ~\eqref{eq:fixedtau2} holds uniformly in~$\tau \in \R$. Thus, passing to subsequences we can assume that $\eps_{n} \to 0$ and $\tau_n \to \tau \in [- \infty, \infty]$. In the case that $\tau \in \R$ we can ensure, passing to a further subsequence,  that 
\EQ{
v_n \rightharpoonup v_\infty \quad \textrm{weakly* in} \, \, \tilLE_0.
}
By Lemma~\ref{l:H12loc}, we have $\tilLE_0 \subset H^{\frac{1}{2}}_{\loc}$ and thus we have the strong $L^2_{\loc}$ convergence, 
\EQ{
v_n \to v_\infty \quad \textrm{in} \, \, L^2_{\loc}.
}
Hence in the case $\tau_n \to  \tau \in \R$, we can produce a function $v_\infty$ with the properties 
\EQ{ \label{eq:vinfty} 
v_\infty \in \tilLE_0, \quad H v_\infty = \tau v_\infty, \quad  \| v_\infty \|_{L^2( r \le R)} = 1, \quad P_{c} v_{\infty} = v_{\infty}.
}
In the remainder of this step, we make the initial reduction to the case $\tau \in (\rho^{2}, \infty)$.

\pfstep{Step~3.b: Ruling out the case of large $\abs{\tau}$}
Here, we reduce to the case that $\tau_n \to \tau  \in [-M, M]$ as $n \to \infty$ for $M$ sufficiently large; the idea is to prove a uniform bound for large enough $\tau$. To do this we establish the fixed-$\tau$ inequality
\EQ{ \label{eq:taubig} 
\abs{\tau}^{\frac{1}{4}}  \| v \|_{L^2( r \le 2R)} \lesssim  \|v \|_{\tilLE_0} + \| (\tau  + i \eps - H) v \|_{\tilLE^\ast_0}.
}
Assuming for the moment that~\eqref{eq:taubig} is true, we can insert the above into~\eqref{eq:fixedtau1} yielding 
\EQ{
 \| v \|_{ \tilLE_0} \le   C \| (\tau + i\eps - H) v \|_{ \tilLE_0^\ast} +  C\abs{\tau}^{-\frac{1}{4}}\|v \|_{\tilLE_0},
}
and we can find $M$ large enough so that for $\abs{\tau}>M$ the last term above can be absorbed into the left-hand side, yielding the desired estimate 
\EQ{
 \| v \|_{ \tilLE_0} \lesssim \| (\tau + i\eps - H) v \|_{ \tilLE_0^\ast}.
}
The above allows us to  assume $\tau_n \to  \tau \in [-M, M]$   in the context of our contradiction argument. 

We now prove~\eqref{eq:taubig}. Let $v_R(x) := v(x) \chi_{R}(x)$ where $\chi_{R}(x) = 1$ if $\abs{x} \le R$ and is smooth and compactly supported in the ball of radius $2R$. From Lemmas~\ref{l:H12loc} and~\ref{l:HminusLEstar}, such a function $v_{R}$ obeys
\EQ{
 \| v_R \|_{H^{\frac{1}{2}}} &\le C_R\| v \|_{\tilLE_0}, \\ 
 \| (\tau + i \eps - H) v_R \|_{H^{-\frac{3}{2}}} &\le C_R \| (\tau + i \eps - H) v \|_{\tilLE^\ast_0}. 
}
These estimates imply that~\eqref{eq:taubig} follows from the estimate 
\EQ{ \label{eq:bt12} 
\abs{\tau}^{\frac{1}{4}}  \| v \|_{L^2( r \le 2R)} \lesssim  \| v_R \|_{H^{\frac{1}{2}}} +  \| (\tau + i \eps - H) v_R \|_{H^{-\frac{3}{2}}}.
}
To prove~\eqref{eq:bt12} we interpolate between the trivial estimate 
\EQ{
\| v_R \|_{H^{\frac{1}{2}}} \le \| v_R \|_{H^{\frac{1}{2}}} +  \| (\tau + i \eps - H) v_R \|_{H^{-\frac{3}{2}}},
}
and the estimate 
\EQ{
\abs{\tau} \| v_R \|_{H^{-\frac{3}{2}}} \le \| v_R \|_{H^{\frac{1}{2}}} +  \| (\tau + i \eps - H) v_R \|_{H^{-\frac{3}{2}}}.
}
To prove the latter, write 
\EQ{
-\tau v_R = (H - \tau - i \eps) v_R - H v_{R} + i \eps v_R .
}
Then choosing $\tau$ sufficiently large relative to $\eps \in (0, 1]$ we have
\EQ{
\abs{\tau} \| v_R \|_{H^{-\frac{3}{2}}}  &\lesssim     \| (\tau + i \eps - H) v_R \|_{H^{-\frac{3}{2}}} + \| H v_{R} \|_{H^{-\frac{3}{2}}}  \\
&\lesssim   \| (\tau + i \eps - H) v_R \|_{H^{-\frac{3}{2}}} +  \| v_R \|_{H^{\frac{1}{2}}} .
}

\pfstep{Step~3.c: Ruling out the case $\tau < \rho^{2}$}
Using the fact that $v_{n} = P_{c} v_{n}$, we have
\EQ{
0 = \lim_{n \to \infty} \Re \ang{(H - \tau_n-i\eps_n) v_{n} , v_{n} }  \ge \lim_{n \to \infty} ( \rho^2 - \tau_n) \| v_{n} \|_{L^2}^2.
}
Since $\tau< \rho^2$, this implies that $\lim_{n \to \infty} \nrm{v_n}_{L^{2}} = 0$, which contradicts \eqref{eq:vinfty} and the strong $L^{2}_{loc}$ convergence $v_{n} \to v_{\infty}$. 

\pfstep{Step~4: Extraction of an embedded resonance in the case $\tau > \rho^{2}$}
The goal here is to show that when $\tau > \rho^{2}$, $v_{\infty}$ is an \emph{embedded resonance} in the sense that $v_{\infty} \in \tilLE_{0}$, $(\tau - H) v_{\infty} = 0$ and $v_{\infty}$ satisfies the \emph{outgoing radiation condition}: 
\begin{equation} \label{eq:orc-vinfty}
	\lim_{j \to \infty} \nrm{r^{-\frac{1}{2}} (\rd_{r} + \rho - i \sqrt{\tau - \rho^{2}}) v_{\infty}}_{L^{2}(A_{j})} = 0.
\end{equation}

To begin, we fix $\tau> \rho^2$, and write $\tau = \rho^{2} + \kpp^{2}$ for $\kpp > 0$. 

\begin{proof}[Proof of \eqref{eq:orc-vinfty}]
We begin by splitting $\rho^{2} + \kpp^{2} + i \eps - H = (\rho^{2} + \kpp^{2} + \lap) + i \eps - H_{\lot}$ and applying Lemma~\ref{l:pos-comm}:
\begin{align*}
&\Re \brk{i (\rho^{2} + \kpp^{2} + i \eps - H) v , Q v - 2 \bt \kpp v} + \Re \brk{(\rho^{2} + \kpp^{2} + i \eps - H) v , \rd_{r} \bt v}.\\
& = - \eps \Re \brk{v , Q v - 2 \bt \kpp v} + \int_{\bbH^d} \rd_{r} \bt \abs{(\rd_{r} + \rho \coth r - i \kpp) v}^{2}  \,\dh \\
& \phantom{=} + \int_{\bbH^d} (2 \bt \coth r - \rd_{r} \bt ) \left( \sinh^{-2} r \abs{\slashed{\nb} v}^{2} + \rho (\rho - 1) \sinh^{-2} r \abs{v}^{2} \right) \,\dh\\
& \phantom{=} - \Re \brk{i H_{\lot} v , Q v - 2 \bt \kpp v} - \Re \brk{H_{\lot} v , \rd_{r} \bt v}.
\end{align*}
To treat the first term on the right-hand side, we use Lemma~\ref{l:mult} and proceed as follows:
\begin{align*}
& \kpp \Re \brk{v , Q v - 2 \bt \kpp v} \\
& =  2\kpp \Re \brk{i v , \bt(\rd_{r} + \rho \coth r - i\kpp) v}  \\
&=  - \int_{\bbH^d} \bt \left(\abs{(\rd_{r} + \rho \coth r - i \kpp) v}^{2} + \frac{1}{\sinh^{2} r} \abs{\slashed{\nb} v}^{2} + \rho (\rho-1) \frac{\abs{v}^{2}}{\sinh^{2} r}  \right)\,\dh \\
&\phantom{=} - \Re \brk{(\rho^{2} + \kpp^{2} + i \eps + \lap) v , \bt v}+ \int_{\bbH^d} \frac{1}{2} \rd_{r}^{2} \bt \abs{v}^{2}\,\dh \\
& \leq  \int_{\bbH^d} \bt (\rho(\rho-1))_{-} \sinh^{-2} r \abs{v}^{2} \,\dh+ \int_{\bbH^d} \frac{1}{2} \rd_{r}^{2} \bt \abs{v}^{2}\,\dh \\
& \phantom{\leq} - \Re \brk{(\rho^{2} + \kpp^{2} + i \eps - H) v , \bt v} + \Re \brk{H_{\lot} v , \bt v}.
\end{align*}
Combining these two expressions, using the observation
\begin{align*}
\begin{split}
\Re\angles{(\rho^2+\kappa^2-H)v}{\partial_r\beta v}=\Re\angles{(\rho^2+\kappa^2+i\eps-H)v}{\partial_r\beta v},
\end{split}
\end{align*}
and rearranging terms, we arrive at the following key inequality:
\EQ{ \label{eq:orc-key}
&\int_{\bbH^d} \rd_{r} \bt \abs{(\rd_{r} + \rho \coth r - i \kpp) v}^{2}\,\dh \\
&\leq \Re \brk{(\rho^{2} + \kpp^{2} + i \eps - H) v , (-i) Q v + (2 i \bt \kpp + \rd_{r} \bt - \frac{\eps}{\kpp} \bt) v} \\
& \phantom{\leq} - \Re \brk{H_{\lot} v , iQ v - (2 i \bt \kpp + \rd_{r} \bt +\frac{\eps}{\kpp} \bt) v } \\
& \phantom{\leq} - \int_{\bbH^d} (2 \bt \coth r - \rd_{r} \bt ) \sinh^{-2} r \abs{\slashed{\nb} v}^{2} \,\dh\\
&\phantom{\leq} + \frac{\eps}{\kpp}(\rho(\rho-1))_{-}\int_{\bbH^d} \bt  \sinh^{-2} r \abs{v}^{2} \,\dh\\
& \phantom{\leq} - \rho (\rho - 1) \int_{\bbH^d} (2 \bt \coth r - \rd_{r} \bt )  \sinh^{-2} r \abs{v}^{2}\,\dh
+ \frac{\eps}{2 \kpp} \int_{\bbH^d} \rd_{r}^{2} \bt \abs{v}^{2} \,\dh.
}

We now apply \eqref{eq:orc-key} with $(v, \kpp, \eps) = (v_{n}, \kpp_{n}, \eps_{n})$ where $\kappa_n:=\tau_n-\rho^2$. Our aim is to choose $\bt$ adequately in order to produce the estimate
\EQ{ \label{eq:orc-goal}
& 2^{-j} \nrm{(\rd_{r} + \rho \coth r - i \kpp_{n}) v_{n}}_{L^{2}(A_{j})}^{2} \\
& \aleq \nrm{(\rho^{2} + \kpp_{n}^{2} + i \eps_{n} - H) v_{n}}_{\tilLE_{0}^{\ast}} \left( \nrm{\chi_{>R} \nb v_{n}}_{\tilLE_{0}} + \left( 1+\frac{\eps_{n}}{\kpp_{n}} \right) \nrm{v_{n}}_{\tilLE_{0}} \right) \\
&\phantom{\aleq} + \sum_{k \geq 0} 2^{-\dlt (j-k)_{+}} (2^{-k} + \nrm{ r^{3+2\sigma}(\bsb, \nb \bsb, V)}_{L^{\infty}(A_{k})}) \nrm{ \brk{r}^{-\frac{3+2\sigma}{2}} (\nb v_{n}, v_{n})}_{L^{2}(A_{k})}^{2} \\
&\phantom{\aleq} + \frac{\eps_{n}}{\kpp_{n}} 2^{j} \nrm{\frac{1}{\brk{r}^{3}} v_{n}}_{L^{2}(\set{\frac{1}{2} R < r < C 2^{j}}}^{2}.
}
As we will discuss below, under the assumption that
\begin{equation} \label{eq:orc-eps-kpp}
	\frac{\eps_{n}}{\kpp_{n}} = o(1),
\end{equation}
which is obvious when $\kpp > 0$, we have $\lim_{j \to \infty} \limsup_{n \to\infty} \hbox{(right-hand side)} = 0$, thanks to the decay assumptions on $\bsb, V$ as well as the uniform $\tilLE_{0}$ and $\tilLE_0^1$ bounds for $v_n$, and $\chi_{>R} v_{n}$ respectively (see Lemma~\ref{l:ell-LE}). Note that the $\liminf_{n \to \infty}$ of the left-hand side is bounded below by $2^{-j} \nrm{(\rd_{r} + \rho \coth r - i \kpp) v_{\infty}}_{L^{2}(A_{j})}^{2}$, by Fatou's lemma and strong $L^{2}_{\loc}$ convergence of $v_{n}$. Thus, 
\begin{equation*}
\lim_{j \to \infty} 2^{-j} \nrm{(\rd_{r} + \rho \coth r - i \kpp) v_{\infty}}_{L^{2}(A_{j})}^{2} = 0,
\end{equation*}
which implies \eqref{eq:orc-vinfty} since the difference $\rho \coth r - \rho$ decays exponentially.

It remains to prove \eqref{eq:orc-goal}. For a sufficiently small (universal) number $\dlt > 0$, and any $j > 0$, let $\alp_{(>j)}(r)$ be a slowly varying, nondecreasing radial function satisfying  
\EQ{ \label{alp->j}
	\alp_{(>j)}(r) \simeq \min \set{(r 2^{-j})^{\dlt}, 1}, \quad
	\alp_{(>j)}(r) = 1 \quad \hbox{ for } r > 2^{j+10}, \\
	\alp'_{(>j)}(r) \ageq \dlt 2^{-j} \quad \hbox{ for } r \in A_{j}, \quad
	\abs{\alp_{(>j)}^{(k)}(r)} \aleq_{k} \frac{\dlt}{r^{k}} \alp_{(>j)}(r) .
}
In particular, $r \abs{(\log \alp_{(>j)}(r))'} \aleq \dlt$. We make the choice
\begin{equation*}
	\bt(r) = \dlt^{-1} \chi_{>R}^{2}(r) \alp_{(>j)}(r),
\end{equation*}
where $R > 1$ is fixed so that Lemma~\ref{l:ell-LE} holds, and so that
\EQ{ \label{eq:orc-ang-coeff}
0 \leq 2 \bt \coth r - \rd_{r} \bt \leq C \dlt^{-1} \chi_{(>j)} \quad \hbox{ for } r > 10 R, \\
\abs{2 \bt \coth r - \rd_{r} \bt} \aleq_{R} \dlt^{-1} 2^{-\dlt j} \quad \hbox{ for } r \leq 10R.
}

Note that $\bt$ is nondecreasing and $\rd_{r} \bt \ageq 2^{-j}$ on $A_{j}$, so that the left-hand side of \eqref{eq:orc-key} is bounded below by that of \eqref{eq:orc-goal}, i.e.,
\begin{equation*}
	2^{-j} \nrm{(\rd_{r} + \rho \coth r - i \kpp_{n}) v_{n}}_{L^{2}(A_{j})}^{2}
	\aleq \int_{\bbH^d} \rd_{r} \bt \abs{(\rd_{r} + \rho \coth r - i \kpp_n) v_n}^{2}\,\dh.
\end{equation*}

We now treat the right-hand side of \eqref{eq:orc-key}. By passing to a subsequence, we may assume that $\kpp_{n} \leq 2M$. In what follows, we suppress the dependence of constants on $R$, $\dlt$ and $M$.

For the first term on the right-hand side of \eqref{eq:orc-key}, we estimate
\begin{align*}
& \abs{\Re \brk{(\rho^{2} + \kpp_{n}^{2} + i \eps_{n} - H) v_{n} , (-i) Q v_{n} + (2 i \bt \kpp_{n} + \rd_{r} \bt - \frac{\eps_{n}}{\kpp_{n}} \bt) v_{n}}} \\
& \aleq \nrm{(\rho^{2} + \kpp_{n}^{2} + i \eps_{n} - H) v_{n}}_{\tilLE_{0}^{\ast}} \left( \nrm{\beta\partial_r v_{n}}_{\tilLE_{0}} + \left( 1+\frac{\eps_{n}}{\kpp_{n}} \right) \nrm{v_{n}}_{\tilLE_{0}} \right).
\end{align*}
Now in view of Lemmas~\ref{l:ell-LE} and~\ref{l:drLE}, the right-hand side vanishes as $n \to \infty$, provided $\frac{\eps_{n}}{\kpp_{n}} = O(1)$.

Next, for the second term, we estimate  
\begin{align*}
& \abs{\Re \brk{H_{\lot} v_n , iQ v_n - (2 i \bt \kpp_n + \rd_{r} \bt + \frac{\eps_n}{\kpp_n} \bt) v_n } }\\
& \aleq \sum_{k \geq 0} 2^{-\dlt (j-k)_{+}} \nrm{ r^{3+2\sigma} (\bsb, \nb \bsb, V)}_{L^{\infty}(A_{k})} \nrm{ \brk{r}^{-\frac{3+2\sigma}{2}} \chi_{> R}(\nb v_{n}, v_{n})}_{L^{2}(A_{k})}^{2},
\end{align*}
which decays as $j \to \infty$ uniformly in $n$ thanks to $\nrm{ r^{3+2\sigma}(\bsb, \nb \bsb, V)}_{L^{\infty}(A_{k})}$, the uniform $\tilLE_{0}$ and $\tilLE_0^1$ bounds for $ v_{n}$ and $\chi_{r> R}v_{n}$ respectively, as well as Lemma~\ref{l:wL2LE}.

For the third term, note that the integrand has the favorable sign for $r > 10 R$ thanks to \eqref{eq:orc-ang-coeff}. Hence,
\begin{align*}
 &- \int_{\bbH^d} (2 \bt \coth r - \rd_{r} \bt ) \sinh^{-2} r \abs{\slashed{\nb} v_n}^{2} \,\dh\\
& \leq C 2^{-\dlt j} \int_{\set{r \leq 10R}}  \sinh^{-2} r \abs{\slashed{\nb} v_n}^{2}\,\dh,
\end{align*}
which is acceptable. 

Finally, we treat the last three terms on the right-hand side of \eqref{eq:orc-key}. For the first two of these terms, we use \eqref{eq:orc-ang-coeff} and the obvious inequality $\sinh^{-2} r \aleq \brk{r}^{-4}$ (any power greater than $3$ would do) to bound 
\begin{align*}
	&\abs{\int_{\bbH^d} (2 \bt \coth r - \rd_{r} \bt) \sinh^{-2} r \abs{v_{n}}^{2} \,\dh} +\abs{\int_{\bbH^d} \bt  \sinh^{-2} r \abs{v_n}^{2}}\,\dh \\
	&\aleq  \int_{\set{r > 10 R}} \frac{\alp_{(>j)}(r)}{\brk{r}} \frac{1}{\brk{r}^{3}} \abs{v_{n}}^{2} \,\dh+ 2^{-\dlt j} \nrm{v_{n}}_{L^{2}(\set{r < 10 R})} \\
	&\aleq  \sum_{k \geq 0} 2^{-\dlt(j-k)_{+}} 2^{-k} \nrm{\brk{r}^{-\frac{3}{2}} v_{n}}_{L^{2}(A_{k})}^{2}.
\end{align*}
On the other hand,
\begin{equation*}
\frac{\eps_{n}}{2 \kpp_{n}} \abs{\int \rd_{r}^{2} \bt \abs{v_{n}}^{2}}
\aleq \frac{\eps_{n}}{\kpp_{n}} \nrm{\frac{2^{j}}{\brk{r}^{3}} v_{n}}_{L^{2}(\set{\frac{1}{2} R < r < C 2^{j}}}^{2}
\end{equation*}
which vanishes as $n \to \infty$ for each fixed $j$, provided that $\frac{\eps_{n}}{\kpp_{n}} = o(1)$. \qedhere
\end{proof}

\pfstep{Step~5: Extraction of a threshold resonance in the case $\tau = \rho^{2}$}
We need to prove \eqref{eq:th-res}:
\begin{equation*} 
	\nrm{r^{-\frac{1}{2}} (\rd_{r} + \rho) v_{\infty}}_{L^{2}(A_{j})} \to 0.
\end{equation*}
Note that $v_{n}$ exhibits different behaviors depending on how $\tau_{n} + i \eps_{n}$ approaches $\rho^{2}$; our proof will reflect this phenomenon. By passing to a subsequence, we may assume that one of the two scenarios hold:
\begin{enumerate}
\item $\eps_{n} \aleq \tau_{n} - \rho^{2}$.
\item $(\tau_{n} - \rho^{2})_{+} \aleq \eps_{n}$
\end{enumerate}
We argue differently in each case.

\pfstep{Step~5.a: The case $\eps_{n} \aleq \tau_{n} - \rho^{2}$}
Here, we simply note that the same proof as \eqref{eq:orc-vinfty} works (see, in particular, \eqref{eq:orc-goal}), since $\frac{\eps_{n}}{\kpp_{n}} \aleq \frac{\tau_{n} - \rho^{2}}{\kpp_{n}} = \kpp_{n} \to 0$ as $n \to \infty$.

\pfstep{Step~5.b: The case $(\tau_{n} - \rho^{2})_{+} \aleq \eps_{n}$}
Here, the idea is to perturb off the $\lap + \rho^{2}$ estimates. We write
\begin{equation*}
	\Re \brk{(\tau_{n} + i \eps_{n} - H) v_{n} , v_{n}}
	= \brk{(\rho^{2} + \lap) v_{n} , v_{n}} -  \Re \brk{H_{\lot} v_{n} , v_{n}} + (\tau_{n} - \rho^{2}) \brk{v_{n} , v_{n}},
\end{equation*}
The last term is good if $\tau_{n} - \rho^{2} \leq 0$, but problematic otherwise. However, note that
\begin{equation*}
	\Im \brk{(\tau_{n} + i \eps_{n} - H) v_{n} , v_{n}}
	= \eps_{n} \brk{v_{n} , v_{n}},
\end{equation*}
so that, after rearranging terms,
\begin{align*}
	\Re \brk{- (\rho^{2} + \lap) v_{n} , v_{n}} &+ (\tau_{n} - \rho^{2})_{-} \brk{v_{n} , v_{n}}
	\\=& - \Re \brk{(\tau_{n} + i \eps_{n} - H) v_{n} , v_{n}} - \Re \brk{H_{\lot} v_{n} , v_{n}} \\
	& + \frac{(\tau_{n} - \rho^{2})_{+}}{\eps_{n}} \Im \brk{(\tau_{n} + i \eps_{n} - H) v_{n} , v_{n}}.
\end{align*}
By Lemma~\ref{l:mult} with $\gmm = 1$ and $\kpp=0$, it follows that
\begin{align*}
	\nrm{(\rd_{r} + \rho) v_{n}}_{L^{2}}^{2}
	\leq & \abs{\brk{(\tau_{n} + i \eps_{n} - H) v_{n} , v_{n}}} + \abs{\brk{H_{\lot} v_{n} , v_{n}}} \\
	& + \frac{(\tau_{n} - \rho^{2})_{+}}{\eps_{n}} \abs{\Im \brk{(\tau_{n} + i \eps_{n} - H) v_{n} , v_{n}}}.
\end{align*}
This estimate implies that $(\rd_{r} + \rho) v_{\infty} \in L^{2}$, which is better than \eqref{eq:th-res}. 
%
%
\end{proof}

\subsection{Absence of embedded resonances} \label{ss:no-emb-res}
In this subsection, we establish the absence of embedded resonances of $H$:
\begin{prop} \label{p:no-emb-res}
Let $\kpp > 0$. Let $w$ be a solution to $(\rho^{2} + \kpp^{2} - H) w = 0$ satisfying $w \in \tilLE_{0}$ and the outgoing radiation condition \eqref{eq:orc}.
Then $w = 0$.
\end{prop}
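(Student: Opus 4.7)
The plan is to follow the three-step strategy outlined in Section~\ref{ss:LE-H-outline}: upgrade the outgoing radiation condition to rapid polynomial decay, then to compact support, and finally invoke elliptic unique continuation. Throughout I will assume $w \neq 0$ and work towards a contradiction.

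Step~1 (Rapid polynomial decay). The first goal is to show that $w$ decays faster than any polynomial in $r$ as $r\to\infty$. The plan is to apply the positive commutator identity \eqref{eq:pos-comm} in Lemma~\ref{l:pos-comm} with a weight $\beta(r)$ that grows polynomially on $\{r \geq 1\}$, e.g.\ $\beta \sim r^{N}$ for a parameter $N \geq 1$ to be iterated. Since $w$ solves $(\rho^{2}+\kappa^{2}-H)w = 0$, the terms involving $(\rho^{2}+\kappa^{2}+\Delta)w$ in \eqref{eq:pos-comm} are replaced by $H_{\lot}w$, and the positive left-hand side becomes $\int \partial_r\beta\, |(\partial_r + \rho\coth r - i\kappa)w|^{2}\,\dh$ (the angular term has a favorable sign for $d\geq 2$ thanks to \eqref{equ:betas_coth_minus_partial_betas_nonnegative}-style monotonicity of $\beta$). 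Boundary contributions at infinity from the integration by parts are controlled via the outgoing radiation condition~\eqref{eq:orc} together with $w \in \tilLE_{0}$: cutting off $\beta$ smoothly to $\{r \leq R_{j}\}$ for $R_{j}\to\infty$ and letting $j\to\infty$, the boundary terms vanish along a subsequence of dyadic radii (see the discussion after Lemma~\ref{lem:Q} and Remark~\ref{rem:bdry}). The contribution of $H_{\lot}w$ paired against $Qw$ and against $(\partial_r\beta) w$ is absorbable by the main positive term provided $r^{3+2\sigma}(\bsb,\nabla\bsb,V) \in L^\infty$ as in \eqref{equ:LE-H_thm_additional_assumption}, because these terms gain at least $r^{-3-2\sigma}$ relative to $r^{N}|(\partial_r + \rho\coth r - i\kappa)w|^{2}$. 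The outcome is an estimate of the form $\int_{r\geq 1} r^{N-1}|(\partial_r + \rho\coth r - i\kappa)w|^{2}\,\dh \lesssim \int_{r \leq R}|w|^{2}\,\dh$ for every $N$, and a parallel bound for $r^{N}|w|^{2}$ via a Hardy--Poincar\'e type application of Lemma~\ref{l:thr-hardy} (with $m$ chosen so the zeroth-order term has a definite sign). A bootstrap in $N$ then yields the desired super-polynomial decay of $w$ and $\nabla w$.

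Step~2 (Compact support). With polynomial decay of arbitrary order established, the next step is to show that $w$ must vanish outside some compact set, extending the arguments of Donnelly~\cite{Donnelly1} and Borthwick--Marzuola~\cite{BorMar1} to the first-order potential $\bsb$. Here the plan is to run a second multiplier argument of Carleman type, with an exponentially growing weight $\gamma_{\lambda}(r) = e^{\lambda \phi(r)}$ for a well-chosen convex $\phi$ and $\lambda \to \infty$. Applying Lemma~\ref{l:mult} (or a polar-coordinate variant of it) with such a weight, the term $-\frac{1}{2}\gamma_{\lambda}''|v|^{2}$ produces the dominant contribution $\sim \lambda^{2}\int \gamma_{\lambda}(\phi')^{2}|w|^{2}$, which, combined with the bound $(\rho^{2}+\kappa^{2}-H)w=0$ and the decay assumptions on $\bsb,V$, can be made to dominate all other terms for $\lambda$ large. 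The super-polynomial decay from Step~1 guarantees integrability of $\gamma_{\lambda}|w|^{2}$ over $\{r \geq R_{0}\}$ for every $\lambda$, so letting $\lambda \to \infty$ forces $w \equiv 0$ on $\{r \geq R_{0}\}$ for some $R_{0}$.

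Step~3 (Unique continuation). Once $w$ is known to vanish on $\{r \geq R_{0}\}$, the equation $(\rho^{2}+\kappa^{2}-H)w=0$ is a second-order elliptic PDE with smooth real coefficients and a bounded first-order term. Standard Aronszajn--Carleman unique continuation then implies $w \equiv 0$ on all of $\bbH^{d}$, completing the proof.

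The main obstacle I expect is Step~2. The presence of the magnetic (first-order) potential $\bsb\cdot\nabla$ is precisely the source of the substantial additional difficulty that separates \cite[Thm.~17.2.8]{Hor3} from \cite[Thm.~14.7.2]{Hor2} in the Euclidean setting. The hope, as suggested in the paper, is that the exponential volume growth of geodesic spheres in $\bbH^{d}$ (visible through $\rho\coth r$ and the $\rho^{2}$ spectral shift) allows a simpler, more robust choice of $\phi$ whose convexity dominates the first-order perturbation without the delicate adjustments needed in $\bbR^{d}$. Getting the bookkeeping right so that the $\bsb$-error is strictly subleading relative to the $\lambda^{2}$ main term, uniformly as $\lambda\to\infty$, is the central technical point.
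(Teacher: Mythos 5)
Your three-step structure (rapid polynomial decay, then compact support, then unique continuation) matches the paper's, and Step~3 is identical. However, both Step~1 and Step~2 deviate from the paper in ways that matter, and Step~1 as written has a genuine gap.

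In Step~1, you propose to bound $\int \rd_r\bt\,\abs{(\rd_r + \rho\coth r - i\kpp)w}^2$ via Lemma~\ref{l:pos-comm} and then convert to a weighted $L^2$ bound on $w$ via Lemma~\ref{l:thr-hardy}. The obstruction is the sign of the Hardy--Poincar\'e coefficient: with $\gmm = \rd_r\bt \sim r^{2N}$, the factor $m(1 - m - r(\log\gmm)')$ in \eqref{eq:thr-hardy} is $m(1-m-2N)$, which is \emph{negative} for every $m>0$ once $N\geq 1$. (Taking $m<0$ only reverses the direction: you get $\int\gmm r^{-2}\abs{v}^2 \lesssim N^{-2}\int\gmm\abs{(\rd_r+\rho\coth r)v}^2$, which does not feed the bootstrap, and moreover the conversion between $(\rd_r+\rho\coth r)w$ and $(\rd_r+\rho\coth r - i\kpp)w$ produces a $-\kpp^2\abs{w}^2$ error of the wrong sign, making the argument circular.) The paper avoids this entirely: by pairing $(\tau - H)w$ against the combined multiplier $(-i)Qw + 2\rd_r\bt\,w$, the $\int\rd_r\bt\,\abs{\nb w}^2$ terms cancel, directly isolating $(\tau - \rho^2)\int\rd_r\bt\abs{w}^2 = \kpp^2\int\rd_r\bt\abs{w}^2$ as the coercive term (see \eqref{eq:poly-decay-mult} and Lemma~\ref{l:poly-decay-key}). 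This is precisely where the hypothesis $\kpp>0$ does its work, with a constant independent of the weight's growth rate; the Hardy--Poincar\'e route cannot reproduce this.

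In Step~2, you propose a Carleman inequality with multiplier $e^{\lambda\phi(r)}$ and $\lambda\to\infty$, and correctly identify the first-order term $\bsb\cdot\nb$ as the main risk. The paper instead uses a Donnelly-style monotonicity argument: setting $w_m = e^{\rho r} r^m v$ and $L_m(r) = \nrm{\rd_r w_m}^2_{L^2(\bbS^{d-1})} - \sinh^{-2}r\,\nrm{\slashed\nb w_m}^2_{L^2(\bbS^{d-1})} + (\kpp^2 + m(m+1)r^{-2})\nrm{w_m}^2_{L^2(\bbS^{d-1})}$, one shows $(r^4 L_m)' \geq 0$ for $m,r$ large, while super-polynomial decay forces $r^4 L_m\to 0$ at infinity. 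The choice of $r^4$ weight (rather than the $r^2$ used in the Euclidean/zeroth-order case) is exactly what lets the $m(m+1)r^{-2}$ term absorb the errors from $\bsb\cdot\nb$; and the angular term enters with an exponentially decaying coefficient $\sinh^{-2}r$ (favorable sign), which is the hyperbolic feature that simplifies the argument relative to $\bbR^d$. Your Carleman approach might succeed with effort, but the paper's $L_m$ computation is the more robust route, and you should not expect the $\lambda\to\infty$ bookkeeping to be simpler than it is in \cite[Thm.~17.2.8]{Hor3}.
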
  
Combined with Proposition~\ref{p:LE-res}, this result implies Theorem~\ref{t:LE-H}.

\begin{proof}
The proof proceeds in three steps: First, we show that \eqref{eq:orc} implies arbitrary polynomial decay; second, we show that arbitrary polynomial decay implies vanishing outside a large ball; finally, appealing to standard unique continuation results, we conclude that $w =0$.
\pfstep{Step~1: Outgoing radiation condition implies faster than polynomial decay}
The aim of this step is to prove that, for any $N > 0$,
\begin{equation} \label{eq:poly-decay}
	r^{N} w \in L^{2}.
\end{equation}
The main point is that the outgoing radiation condition is used to justify various integration by parts in multiplier arguments for $w$. Indeed, \eqref{eq:orc} allows us to deduce  the following corollary. 
\begin{lem} \label{l:res-decay} Let $w$ be as above. Then 
\EQ{ \label{eq:res-decay}
\|r^{-\frac{1}{2}} w \|_{L^2( r \simeq 2^j)} + \| r^{-\frac{1}{2}} \na w \|_{L^2(r \simeq 2^j)} \to 0 \mas j \to \infty.
}
\end{lem}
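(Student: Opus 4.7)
The plan is to deduce the annular decay \eqref{eq:res-decay} by combining the outgoing radiation condition \eqref{eq:orc}, the hypothesis $w\in \tilLE_0$, and the equation $(\rho^{2}+\kpp^{2}-H)w=0$, via elliptic regularity together with the multiplier identity of Lemma~\ref{l:mult-far} applied at dyadic scales.

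First I would use Lemma~\ref{l:ell-LE} with $z = \rho^{2} + \kpp^{2}$ to upgrade the regularity of $w$: since $(H-z)w = 0$ and $w\in\tilLE_{0}$, the lemma applied to the spatial cutoff $\chi_{\geq R} w$ gives $\chi_{\geq R} w \in \tilLE_{0}^{1}$, and hence by Lemma~\ref{l:wL2LE} we obtain $\langle r\rangle^{-3/2-\sigma}\nabla w \in L^{2}(\bbH^{d})$. In particular $\nabla w$ has the same global integrability profile as $w$, enough to legitimise the integration by parts in all of the identities of Section~\ref{ss:LE-compute} (cf. Remark~\ref{rem:bdry}), and to force $\sum_{j\geq 0} 2^{-(3+2\sigma)j}(\|w\|_{L^{2}(A_{j})}^{2} + \|\nabla w\|_{L^{2}(A_{j})}^{2}) < \infty$.

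Next I would apply Lemma~\ref{l:mult-far} with $\gamma = \gamma_{j}(r) := 2^{-j}\chi(r/2^{j})$ for a fixed smooth bump $\chi$ supported in $[1/2, 4]$ and equal to $1$ on $[1,2]$. On $A_{j}$ this weight is comparable to $r^{-1}$, so the positive terms on the right of \eqref{eq:mult-far} reproduce precisely $\|r^{-1/2}(\partial_{r}+\rho-i\kpp)w\|_{L^{2}(A_{j})}^{2}$ together with the angular contribution $\|r^{-1/2}\sinh^{-1}r\,\slashed{\nabla}w\|_{L^{2}(A_{j})}^{2}$, while $|\gamma_{j}'|\lesssim 2^{-2j}$ and $|\gamma_{j}''|\lesssim 2^{-3j}$ produce decaying prefactors on the zeroth-order remainder. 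Rearranging and inserting the equation $(\rho^{2}+\kpp^{2}+\Delta)w = H_{\lot}w$, the identity yields
\begin{align*}
&\|r^{-1/2}(\partial_{r}+\rho-i\kpp)w\|_{L^{2}(A_{j})}^{2} + \|r^{-1/2}\sinh^{-1}r\,\slashed{\nabla}w\|_{L^{2}(A_{j})}^{2} \\
&\quad \lesssim \,|\langle H_{\lot}w, \gamma_{j}w\rangle|
+ \kpp\,|\langle i w, \gamma_{j}(\partial_{r}+\rho-i\kpp)w\rangle|
+ \text{(boundary terms from } \gamma_{j}',\gamma_{j}'' \text{)}.
\end{align*}
Each term on the right vanishes as $j \to \infty$: the first by the decay \eqref{equ:LE-H_thm_additional_assumption} of $\bsb, V$ together with the elliptic bound from Step~1; the second by Cauchy--Schwarz, pairing the outgoing factor $r^{-1/2}(\partial_{r}+\rho-i\kpp)w$ (which is $o(1)$ on $A_{j}$ by \eqref{eq:orc}) against $r^{-1/2}w$ (bounded uniformly in $j$ since $w\in\tilLE_{0}$); and the third by the gain $2^{-2j}$ or $2^{-3j}$ from $\gamma_{j}',\gamma_{j}''$ balanced against the polynomial growth $\|w\|_{L^{2}(A_{j})} \lesssim 2^{(3/2+\sigma)j}$.

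Finally, to pass from decay of the outgoing derivative and angular gradient to decay of $w$ and the full radial derivative, I would apply Lemma~\ref{l:thr-hardy} (with $m=\tfrac{1}{2}$ and a bump weight on $A_{j}$) to bound $\|r^{-1/2}w\|_{L^{2}(A_{j})}$ by $\|r^{-1/2}(\partial_{r}+\rho\coth r)w\|_{L^{2}(A_{j})}$ up to boundary errors that also vanish in $j$; the latter is controlled by the already established decay together with the exponentially small difference $\rho(\coth r - 1)w$. The decay of $\|r^{-1/2}\nabla w\|_{L^{2}(A_{j})}$ then follows from the identities $\partial_{r}w = (\partial_{r}+\rho-i\kpp)w - (\rho - i\kpp)w$ and $|\nabla w|^{2} = |\partial_{r}w|^{2} + \sinh^{-2}r\,|\slashed\nabla w|^{2}$. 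The main obstacle is the cross term in the multiplier identity: establishing its smallness requires delicately trading the $\tilLE_{0}$-integrability of $w$ against the $r^{-1/2}$-annular smallness provided by \eqref{eq:orc}, which is precisely the Rellich-type cancellation at the heart of this step.
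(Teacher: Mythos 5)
Your argument does not close; the problem is in the final step, and it stems from starting with the wrong multiplier identity.

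The paper's proof hinges on the charge identity, Lemma~\ref{l:charge}, which pairs the equation against the \emph{antisymmetric} multiplier $i\gmm w$. With a nondecreasing $\gmm$ whose derivative $\gmm'$ is a bump adapted to $A_j$, and using $(\rho^2+\kpp^2-H)w=0$ together with the self-adjointness of $H$, one arrives at the scalar relation
\begin{align*}
\kpp\brk{\gmm' w,w} = \Re\brk{i(\rd_r+\rho\coth r-i\kpp)w,\gmm' w} - \brk{(\bsb^\mu\nb_\mu\gmm)w,w}.
\end{align*}
The term $\kpp\brk{\gmm' w,w}\simeq\kpp\|r^{-1/2}w\|^2_{L^2(A_j)}$ is thereby \emph{isolated}, and each term on the right vanishes as $j\to\infty$ (by \eqref{eq:orc}, Cauchy--Schwarz, and the decay of $\bsb$). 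You instead apply the \emph{symmetric} multiplier identity \eqref{eq:mult-far} (note there is no separate lemma ``\ref{l:mult-far}''; this is the second equation of Lemma~\ref{l:mult}). Doing so puts the manifestly nonnegative quantity $\int\gmm|(\rd_r+\rho-i\kpp)w|^2+\int\gmm\sinh^{-2}r|\slashed\nb w|^2$ on the left of your intermediate bound. But that quantity is \emph{already} controlled by the outgoing radiation condition \eqref{eq:orc}; the step is vacuous and produces no new information about $\|r^{-1/2}w\|_{L^2(A_j)}$.

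The gap then surfaces explicitly in your concluding use of Lemma~\ref{l:thr-hardy}. You write that $\|r^{-1/2}(\rd_r+\rho\coth r)w\|_{L^2(A_j)}$ is ``controlled by the already established decay together with the exponentially small difference $\rho(\coth r-1)w$.'' This is false: the exact discrepancy is
\begin{align*}
(\rd_r+\rho\coth r)w = (\rd_r+\rho-i\kpp)w + \rho(\coth r-1)w + i\kpp\,w,
\end{align*}
and the extra term $i\kpp w$ is not exponentially small --- its annular norm $\kpp\|r^{-1/2}w\|_{L^2(A_j)}$ is precisely the quantity you are trying to bound. Feeding this into Lemma~\ref{l:thr-hardy} therefore produces a circular estimate of the form $\|r^{-1/2}w\|_{L^2(A_j)}\lesssim o(1)+\kpp\|r^{-1/2}w\|_{L^2(A_j)}+\cdots$ with no smallness on the $\kpp$ coefficient. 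Repairing this requires the cancellation that only the antisymmetric (charge) multiplier produces, which is the content of the paper's short argument.
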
 
\begin{proof} 
We first split $(\rho^{2} + \kpp^{2} - H) w = \lap w + (\rho^{2} + \kpp^{2} - H_{\lot}) w$, and take the inner product with $i \gmm w$. For the contribution of $\lap w$, we apply Lemma~\ref{l:charge}. For the rest, we see that
\begin{align*}
\Re\brk{(\rho^{2} + \kpp^{2} - H_{\lot}) w , i \gmm w}
=& -\Re\brk{\frac{1}{i} (\bsb^{\mu} \nb_{\mu} w + \nb_{\mu} (\bsb^{\mu} w)) , i \gmm w} \\
=& \brk{(\bsb^{\mu} \nb_{\mu} \gmm) w , w}.
\end{align*}
Hence, we arrive at
\begin{align*}
	-\Re\brk{(\rho^{2} + \kpp^{2} - H) w , i \gmm w}
	=& \kpp \brk{\gmm' w , w} - \Re\brk{i (\rd_{r} + \rho - i \kpp) w , \gmm' w} \\
	& - \brk{(\bsb^{\mu} \nb_{\mu} \gmm) w , w}.
\end{align*}
Choosing $\gmm'$ to be a radial bump function adapted to $\set{r \simeq 2^{j}}$ and using the outgoing radiation condition and the decay of $\bsb$, we obtain $\|r^{-\frac{1}{2}} w \|_{L^2( r \simeq 2^j)} \to 0$. Then $\|r^{-\frac{1}{2}} \nb w \|_{L^2( r \simeq 2^j)} \to 0$ follows by elliptic regularity.
\end{proof} 

As a consequence of \eqref{eq:res-decay}, we may justify integration by parts identities involving bounded multipliers.
In particular, we claim that:
\begin{lem} \label{l:poly-decay-key}
Let $w \in \tilLE_{0}$ satisfy $(\tau - H) w = 0$ for $\tau > \rho^2$, as well as the conclusions of \eqref{l:res-decay}. Then for any large enough $N$ and $R_{1}$, we have 
\begin{equation} \label{eq:poly-decay-key}
	(\tau - \rho^{2}) \nrm{\brk{r}^{N} \chi_{< R_{1}} w}_{L^{2}}^{2}
	\aleq_{N} \nrm{\brk{r}^{N-1} (\nb w, w)}_{L^{2}}^{2},
\end{equation}
where the constant is independent of $R_{1}$.
\end{lem}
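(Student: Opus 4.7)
The plan is to apply the multiplier identity \eqref{eq:mult} of Lemma~\ref{l:mult} to the resonance equation $(\rho^{2} + \kpp^{2} + \lap) w = H_{\lot} w$, with $\kpp^{2} = \tau - \rho^{2}$, using the compactly supported radial weight
\begin{equation*}
\gmm := \chi_{<R_{1}}^{2} \brk{r}^{2N},
\end{equation*}
where $\chi_{<R_{1}}$ is a smooth radial cutoff equal to $1$ on $\{r \leq R_{1}/2\}$ and vanishing for $r \geq R_{1}$. Since $\gmm$ is compactly supported and $w$ is smooth by elliptic regularity, the integrations by parts underlying \eqref{eq:mult} are fully justified without any boundary contributions at infinity.

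The crucial observation will be an exact cancellation of the $\kpp$-linear cross-terms in \eqref{eq:mult}. Expanding $|(\rd_{r} + \rho \coth r - i \kpp) w|^{2}$ produces a cross-term $- 2 \kpp \int \gmm \Im[\overline{w}(\rd_{r} + \rho \coth r) w] \, \dh$, while a direct computation gives
\begin{equation*}
2 \kpp \Re \brk{i w , \gmm (\rd_{r} + \rho \coth r - i \kpp) w} = + 2 \kpp \int_{\bbH^d} \gmm \Im[\overline{w}(\rd_{r} + \rho \coth r) w] \, \dh + 2 \kpp^{2} \int_{\bbH^d} \gmm |w|^{2} \, \dh.
\end{equation*}
Since $\gmm$ is real, the two imaginary cross-terms cancel exactly, and the $\kpp^{2}$ contributions combine as $\kpp^{2} + 2\kpp^{2} = 3 \kpp^{2}$. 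Substituting into \eqref{eq:mult} will then yield the clean identity
\begin{align*}
\int_{\bbH^d} \gmm |(\rd_{r} + \rho \coth r) w|^{2} \, \dh &+ 3 \kpp^{2} \int_{\bbH^d} \gmm |w|^{2} \, \dh + \int_{\bbH^d} \gmm \frac{|\slashed{\nb} w|^{2}}{\sinh^{2} r} \, \dh \\
&+ \rho(\rho - 1) \int_{\bbH^d} \gmm \frac{|w|^{2}}{\sinh^{2} r} \, \dh = - \Re \brk{H_{\lot} w , \gmm w} + \tfrac{1}{2} \int_{\bbH^d} \gmm'' |w|^{2} \, \dh.
\end{align*}
For $d \geq 3$ all four left-hand terms are nonnegative; for $d = 2$ the $\rho(\rho - 1)$ term is negative but, due to the exponential decay of $\sinh^{-2} r$, contributes only $O(\nrm{w}_{L^{2}}^{2})$ when transferred to the right-hand side. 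Dropping the nonnegative left terms will produce the upper bound
\begin{equation*}
3 \kpp^{2} \int_{\bbH^d} \gmm |w|^{2} \, \dh \leq |\Re \brk{H_{\lot} w , \gmm w}| + \tfrac{1}{2} \int_{\bbH^d} |\gmm''| |w|^{2} \, \dh + C \int_{\bbH^d} \gmm \sinh^{-2} r \, |w|^{2} \, \dh.
\end{equation*}

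It will remain to control the right-hand side by $\nrm{\brk{r}^{N-1} (\nb w, w)}_{L^{2}}^{2}$, uniformly in $R_{1}$. A direct computation will show $|\gmm''| \aleq_{N} \brk{r}^{2N-2}$ on the entire support of $\chi_{<R_{1}}$, including on the transition annulus $\{r \sim R_{1}\}$, where cutoff derivatives generate terms of size $R_{1}^{-2} \brk{r}^{2N} \sim \brk{r}^{2N-2}$; hence $\int |\gmm''| |w|^{2} \, \dh \aleq_{N} \nrm{\brk{r}^{N-1} w}_{L^{2}}^{2}$ with a constant independent of $R_{1}$. The $\sinh^{-2} r$ integral is controlled by $\nrm{w}_{L^{2}}^{2}$ thanks to exponential decay. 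Finally, the decay hypothesis \eqref{equ:LE-H_thm_additional_assumption} yields $|\bsb|, |\nb \bsb|, |V| \aleq \brk{r}^{-(3+2\sigma)}$ with $\sigma > 0$, so after integration by parts in the first-order part of $H_{\lot}$ and Cauchy--Schwarz, using the weight bound $\brk{r}^{-(3+2\sigma)} \brk{r}^{2N} \leq \brk{r}^{2N-2}$, one obtains $|\Re \brk{H_{\lot} w , \gmm w}| \aleq \nrm{\brk{r}^{N-1}(\nb w, w)}_{L^{2}}^{2}$. Combining these bounds and recalling $\tau - \rho^{2} = \kpp^{2}$ will close the proof.

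The main subtlety, rather than a serious obstacle, will be in verifying the exact cancellation of the $\kpp$-linear cross-terms described above. This cancellation is precisely what permits a clean upper bound on $\kpp^{2} \int \gmm |w|^{2} \, \dh$ without having to invoke Cauchy--Schwarz to absorb cross-terms, an absorption that would otherwise spoil the $\brk{r}^{2N-2}$ weight structure on the right-hand side. The requirement that $N$ be sufficiently large enters only to absorb lower-order weight mismatches such as $2N - (3+2\sigma) \leq 2N - 2$, while $R_{1}$ plays a purely technical role as a support parameter, with the final constant depending only on $N$.
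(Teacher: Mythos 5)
Your claimed identity is incorrect, and the sign error it contains is fatal to the strategy. The $\kpp$-linear cross terms do cancel as you say, but the $\kpp^{2}$ pieces do not combine to $+3\kpp^{2}$. A careful expansion of \eqref{eq:mult} gives, from the square, $+\kpp^{2}\int\gmm|w|^{2}$, and from the remaining term,
\begin{equation*}
2\kpp\,\Re\brk{i w, \gmm(-i\kpp)w}
= 2\kpp\,\Re\!\int i w\cdot i\kpp\gmm\overline{w}\,\dh
= -2\kpp^{2}\int\gmm|w|^{2}\,\dh,
\end{equation*}
so the net coefficient is $\kpp^{2}-2\kpp^{2}=-\kpp^{2}$, not $+3\kpp^{2}$. (This is confirmed by the paper's own proof of Lemma~\ref{l:mult}, where the line after completing the square reads ``$-2\int\kpp^{2}\gmm|v|^{2}$.'') With the correct sign, the identity you obtain after inserting $(\rho^{2}+\kpp^{2}+\lap)w = H_{\lot}w$ is
\begin{equation*}
\kpp^{2}\!\int\gmm|w|^{2}
= \int\gmm\abs{(\rd_{r}+\rho\coth r)w}^{2}
+ \int\gmm\frac{\abs{\slashed{\nb}w}^{2}}{\sinh^{2}r}
+ \rho(\rho-1)\!\int\gmm\frac{|w|^{2}}{\sinh^{2}r}
- \tfrac12\!\int\gmm''|w|^{2}
+ \Re\brk{H_{\lot}w,\gmm w},
\end{equation*}
so $\kpp^{2}\int\gmm|w|^{2}$ sits on the \emph{opposite} side from the one your write-up claims, and the large positive term $\int\gmm\abs{(\rd_{r}+\rho\coth r)w}^{2}\sim\int\brk{r}^{2N}\abs{\nb w}^{2}$ appears on the right. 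That term is $\brk{r}^{2}$ heavier than the target quantity $\nrm{\brk{r}^{N-1}(\nb w, w)}_{L^{2}}^{2}$; a single application of \eqref{eq:mult} simply cannot reach \eqref{eq:poly-decay-key}. (A quick sanity check confirms the failure of your version: with $H_{\lot}=0$, your displayed identity would force $3\kpp^{2}\int\gmm|w|^{2}\leq\tfrac12\int|\gmm''||w|^{2}$, which is impossible for large $R_{1}$ unless $w\equiv 0$, since you would be bounding an $R_{1}^{2N}$-sized quantity by an $R_{1}^{2N-2}$-sized one.)

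The missing idea, and what the paper actually does, is to pair the equation against the \emph{combination} $-iQ w + 2\rd_{r}\bt\, w$, where $Q = \frac{1}{i}(\bt\rd_{r}-\rd_{r}^{\ast}\bt)$ is the commutator multiplier from Lemma~\ref{lem:Q} and $\bt$ is chosen with $\rd_{r}\bt = \chi_{>r_{1}}\chi_{<R_{1}}r^{2N}$. The point is a second, more delicate cancellation: the positive-commutator identity contributes $+2\int\rd_{r}\bt(\abs{\nb w}^{2}-\rho^{2}|w|^{2})$, the direct pairing contributes $-\int\rd_{r}\bt(\abs{\nb w}^{2}-\rho^{2}|w|^{2})$, and in the combination the whole $\int\rd_{r}\bt\abs{\nb w}^{2}$-type term with the heavy weight $r^{2N}$ cancels exactly (see \eqref{eq:poly-decay-mult}). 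What remains on the left is precisely $2(\tau-\rho^{2})\int\rd_{r}\bt|w|^{2}$, isolated, while everything on the right either has the angular-derivative weight damped by $\sinh^{-2}r$ (using the exponential volume growth to keep $\bt\coth r - \rd_{r}\bt\geq 0$ for $r>2r_{1}$) or carries the decaying coefficients of $H_{\lot}$, which together produce exactly the $\brk{r}^{2N-2}$ weights you need. This use of two multipliers to cancel the first-order bulk terms is the essential mechanism, and there is no way to reproduce it with the single multiplier $\gmm w$ you propose. A secondary remark: the paper uses $\bt$ bounded but \emph{not} compactly supported, so the integration by parts in Lemma~\ref{lem:Q} requires the decay \eqref{eq:res-decay} from the radiation condition; this is another place where the hypotheses of the lemma enter that your compactly supported choice quietly sidesteps, which should itself be viewed as a warning sign.
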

Let us postpone the proof of Lemma~\ref{l:poly-decay-key}, and describe an iteration argument which leads to the desired conclusion \eqref{eq:poly-decay}.
First, by \eqref{eq:res-decay}, note that $\brk{r}^{-\frac{1}{2}-\dlt} v \in L^{2}$ and $\brk{r}^{-\frac{1}{2}-\dlt} \nb v \in L^{2}$. Applying Lemma~\ref{l:poly-decay-key} with $N = \frac{1}{2}-\dlt$ (for any $0 < \dlt < \frac{1}{2}$) and taking $R_{1} \to \infty$, it follows that $\brk{r}^{\frac{1}{2} - \dlt} w \in L^{2}$. By elliptic regularity, it follows that $\brk{r}^{\frac{1}{2} - \dlt} \nb w \in L^{2}$ as well (for this, we only need boundedness of $\bsb, V$). Clearly, we may repeat this process with $N = \frac{3}{2} -\dlt, \frac{5}{2} -\dlt, \ldots$, which establishes \eqref{eq:poly-decay}.

It remains to prove the preceding lemma.
\begin{proof}[Proof of Lemma~\ref{l:poly-decay-key}]
The multiplier argument uses the exponential volume growth of spheres, in that $\bt \coth r - \rd_{r} \bt$ remains positive for large $r$ for any polynomially growing $\bt$.

For now, we let $\bt$ be a smooth real-valued radial function, such that $\bt$ and its derivatives vanish at zero and are \emph{bounded} at infinity. Let $w \in H^{2}_{\loc}$ satisfy the decay condition \eqref{eq:res-decay}. Then Lemma~\ref{lem:Q} applies, and we have
\begin{align*}
\Re\brk{i(\tau - H) w , Q w}
= & \Re\brk{i \lap w , Q w} - \Re \brk{i H_{\lot} w , Q w} \\
= & 2 \int_{\bbH^d} \rd_{r} \bt \left( \abs{\nb w}^{2} - \rho^{2} \abs{w}^{2} \right) \,\dh\\
& + 2 \int_{\bbH^d} (\bt \coth r - \rd_{r} \bt) \sinh^{-2} r \abs{\slashed{\nb} w}^{2}\,\dh \\
& + \int_{\bbH^d} \left(-\frac{1}{2} \lap \rd_{r} \bt - \rho \lap (\bt \coth r) + 2 \rho^{2} \rd_{r} \bt \right) \abs{w}^{2} \,\dh \\
& - \Re \brk{i H_{\lot} w , Q w},
\end{align*}
where we have omitted the cumbersome cutoffs $\chi_{j}$. By the same splitting of $\tau - H$ and a simple integration by parts (see the proof of Lemma~\ref{l:mult}), which is also justified thanks to \eqref{eq:res-decay}, we also have
\begin{align*}
\Re \brk{(\tau - H) w , \rd_{r} \bt w}
= & \Re \brk{(\tau + \lap) w , \rd_{r} \bt w} - \Re \brk{H_{\lot} w , \rd_{r} \bt w} \\
= & - \int_{\bbH^d} \rd_{r} \bt \left( \abs{\nb w}^{2} - \rho^{2} \abs{w}^{2} \right)\,\dh\\
&+ (\tau - \rho^{2}) \int_{\bbH^d} \rd_{r} \bt \abs{w}^{2} \,\dh\\
&+ \frac{1}{2} \int_{\bbH^d} (\lap \rd_{r} \bt) \abs{w}^{2} \,\dh- \Re \brk{H_{\lot} w , \rd_{r} \bt w}.
\end{align*}
Combining the previous two identities, we obtain
\begin{align*}
&\Re \brk{(\tau - H) w , (-i) Q w + 2\rd_{r} \bt w}\\
&=  2 (\tau - \rho^{2}) \int_{\bbH^d} \rd_{r} \bt \abs{w}^{2}  \,\dh+ 2 \int_{\bbH^d} (\bt \coth r - \rd_{r} \bt) \sinh^{-2} r \abs{\slashed{\nb} w}^{2} \,\dh\\
& \quad+ \int_{\bbH^d} \left(\frac{1}{2} \lap \rd_{r} \bt - \rho \lap (\bt \coth r) + 2 \rho^{2} \rd_{r} \bt \right) \abs{w}^{2} \,\dh\\
& \quad- \Re \brk{H_{\lot} w , (-i)Q w + 2 \rd_{r} \bt w}.
\end{align*}
Simplifying the third line and rearranging terms, we arrive at the key identity
\EQ{ \label{eq:poly-decay-mult}
&2 (\tau - \rho)^{2} \int_{\bbH^d} \rd_{r} \bt \abs{w}^{2} \,\dh\\
 &= \Re \brk{(\tau - H) w , (-i) Q w + 2\rd_{r} \bt w}  
  - 2 \int_{\bbH^d} (\bt \coth r - \rd_{r} \bt) \sinh^{-2} r \abs{\slashed{\nb} w}^{2} \,\dh \\
& \quad - \int_{\bbH^d} \left(\frac{1}{2} \rd_{r}^{3} \bt + 2 \rho (\rho-1) (\bt \coth r - \rd_{r} \bt) \sinh^{-2} r \right) \abs{w}^{2} \,\dh\\
& \quad+ \Re \brk{H_{\lot} w , (-i)Q w + 2 \rd_{r} \bt w}.
}

We apply \eqref{eq:poly-decay-mult} to a solution $(\tau - H) w = 0$, so that the first term on the right-hand side vanishes. Our aim is to now choose $\bt$ adequately so that the right-hand side is bounded from above by the right-hand side of \eqref{eq:poly-decay-key}.
We choose
\begin{equation*}
	\rd_{r} \bt = \chi_{>r_{1}}(r) \chi_{<R_{1}}(r) r^{2N}, \quad \bt (r) = \int_{0}^{y} \rd_{r} \bt (y) \, \ud y.
\end{equation*}
Note that $\bt$ and its derivatives vanish near zero and are bounded. We fix $r_{1}$ sufficiently large (independent of $R_{1}$, but depending on $N$) so that
\begin{equation*}
	\bt \coth r - \rd_{r} \bt \geq 0 \quad \hbox{ on } \set{r > 2 r_{1}}.
\end{equation*}
Thus the second term on the right-hand side obeys
\begin{align*}
 -2 \int_{\bbH^d} (\bt \coth r - \rd_{r} \bt) \sinh^{-2} r \abs{\slashed{\nb} w}^{2}\,\dh
 \leq C_{N} \nrm{\slashed{\nb} w}_{L^{2}(r < 2 r_{1})}^{2}
\end{align*}
for some positive constant $C_{N} > 0$, which is okay. For the third term, we simply estimate
\begin{align*}
\abs{\int_{\bbH^d} \rd_{r}^{3} \bt \abs{w}^{2}\,\dh} \aleq_{N} & \nrm{\brk{r}^{N-1} w}_{L^{2}}^{2} \\
\abs{\int_{\bbH^d} (\bt \coth r - \rd_{r} \bt) \sinh^{-2} r \abs{w}^{2} \,\dh}\aleq_{N} & \nrm{\brk{r}^{N-1} w}_{L^{2}}^{2},
\end{align*}
where we used the exponential decay of $\sinh^{-2} r$ for the second estimate. Finally, for the last term on the right-hand side of \eqref{eq:poly-decay-mult}, from our decay assumptions on $\bsb, V$, we have 
\begin{align*}
\abs{\Re \brk{H_{\lot} w , (-i)Q w + 2 \rd_{r} \bt w}}
&\aleq \int_{\bbH^d} \brk{r}^{-3} \brk{r}^{2N+1} (\abs{\nb w}^{2} + \abs{w}^{2})\,\dh\\
&\aleq \nrm{\brk{r}^{N-1} (\nb w, w)}_{L^{2}}^{2},
\end{align*}
which is acceptable. \qedhere
\end{proof}

\pfstep{Step~2: Faster than polynomial decay implies compact support}
As mentioned in Section~\ref{ss:LE-H-outline}, in the presence of first order potentials this part of the proof is much more involved in the Euclidean case. Here the exponential decay of angular derivatives allows us to give a more uniform treatment of zero and first order potentials. More precisely, in the original proof for zero order potentials (see \cite{BorMar1, Donnelly1}), which is modeled after the Euclidean case,  one considers $(r^{2} L_{m})'$ instead of $(r^{4} L_{m})'$ (in the notation below) in the contradiction argument, and for first order potentials this yields problematic terms with weights of $m$ which cannot be treated. In the present setting we consider $(r^{4} L_{m})'$ which allows us to use the term $m(m+1) r^{-2}$ to overcome this difficulty. In the Euclidean case, this does \emph{not} work, as the angular derivatives always come with the unfavorable sign. Indeed, the argument for the Euclidean case is much more involved in the presence of a first order perturbation; compare \cite[Ch.~XIV, Thm.~14.7.2]{Hor2} with \cite[Ch.~XVII, Thm.~17.2.8]{Hor3}.

For $w_{m} = e^{\rho r} r^{m} v$, consider the quantity
\begin{equation*}
	L_{m}(r): = \nrm{\rd_{r} w_{m}}_{L^{2}(\bbS^{d-1})}^{2} - \frac{\nrm{\slashed{\nb} w_{m}}_{L^{2}(\bbS^{d-1})}^{2}}{\sinh^{2} r}  + \left(\kpp^{2} + \frac{m(m+1)}{r^{2}} \right) \nrm{w_{m}}_{L^{2}(\bbS^{d-1})}^{2}.
\end{equation*}
We claim that there exist large constants $m_{1}, R_{1}$ such that
\begin{equation} \label{eq:Lm-mono}
	(r^{4} L_{m})'(r) \geq 0 \quad \hbox{ provided that $m \geq m_{1}$ and $r \geq R_{1}$.} 
\end{equation}
Since $r^{m+2} v, r^{m+2} \nb v \in L^{2}$ by Step~1, it follows that $r^{4} L_{m} (r) \to 0$ as $r \to \infty$. Thus $L_{m}(r) \leq 0$ for sufficiently large $m \geq m_{1}$ and $r \geq R_{1}$. On the other hand,
\begin{equation*}
	L_{m}(r) \geq \frac{m(m+1)}{r^{2}} r^{2m} \nrm{e^{\rho r} v(r)}_{L^{2}(\bbS^{d-1})}^{2} - \frac{r^{2m}}{\sinh^{2} r} \nrm{e^{\rho r} \slashed{\nb} v(r)}_{L^{2}(\bbS^{d-1})}
\end{equation*}
so that if $\nrm{e^{\rho r} v(r)}_{L^{2}(\bbS^{d-1})}^{2} \neq 0$, then $L_{m}(r) > 0$ for a sufficiently large $m$. This contradiction shows that $v \equiv 0$ for $r \geq R_{1}$.

It remains to establish \eqref{eq:Lm-mono}. Note that
\begin{align*}
	(r^{4} L_{m})'
	= & 4r^{3} \nrm{\rd_{r} w_{m}}^{2}_{L^{2}(\bbS^{d-1})} - \left(\frac{r^{4}}{\sinh^{2} r} \right)' \nrm{\slashed{\nb} w_{m}}_{L^{2}(\bbS^{d-1})}^{2} \\
	&+ \left( 4 \kpp^{2} r^{3} + 2 m (m+1) r \right)  \nrm{w_{m}}_{L^{2}(\bbS^{d-1})}^{2} \\
	& + 2\Re \brk{\rd_{r}^{2} w_{m} + \sinh^{-2} r \slashed{\lap} w_{m} + (\kpp^{2} + \tfrac{m(m+1)}{r^{2}}) w_{m} , r^{4} \rd_{r} w_{m}}_{\bbS^{d-1}}
\end{align*}
A straightforward computation yields (recall that $\tau=\rho^2+\kpp^2)$
\begin{align*}
	\rd_{r}^{2} w_{m} + \sinh^{-2} r \slashed{\lap} w_{m} 
	=& e^{\rho r} r^{m} (\lap v + \tau v) + \left( \frac{2 m}{r} + 2 \rho (1 - \coth r) \right) \rd_{r} w_{m} \\ 
	& - \left(\kpp^{2} + \frac{m(m+1)}{r^{2}} + 2 \rho (1-\coth r) \left(\rho + \frac{m}{r} \right)\right) w_{m}.
\end{align*}
Thus,
\EQ{ \label{eq:Lm-mono-exp}
	&(r^{4} L_{m})'\\
	 &= (4m + 4) r^{3} \nrm{\rd_{r} w_{m}}_{L^{2}(\bbS^{d-1})}^{2}
	+ 2 \frac{r^{4}}{\sinh^{2} r} \left( \coth r - \frac{2}{r} \right) \nrm{\slashed{\nb} w_{m}}_{L^{2}(\bbS^{d-1})}^{2} \\
	&\quad+ \left( 4 \kpp^{2} r^{3} + 2m (m+1) r \right) \nrm{w_{m}}_{L^{2}(\bbS^{d-1})}^{2} \\
	&\quad + 4 \rho r^{4} (1-\coth r) \nrm{\rd_{r} w_{m}}_{L^{2}(\bbS^{d-1})}^{2}
	- 4 \rho \left( (1 - \coth r) (\rho r^{4} + m r^{3}) \right) \brk{w_{m} , \rd_{r} w_{m}}_{\bbS^{d-1}} \\
	&\quad + 2 r^{4} \brk{e^{\rho r} r^{m} H_{\lot} (e^{-\rho r} r^{-m} w_{m}) , \rd_{r} w_{m}}_{\bbS^{d-1}}.
}
Taking $r \geq R_{1}$ with $R_{1}$ sufficiently large, we may insure that the first three terms are bounded from below by
\begin{equation} \label{eq:Lm-mono-key}
	m r^{3} \nrm{\rd_{r} w_{m}}_{L^{2}(\bbS^{d-1})}^{2}
	+ \frac{r^{4}}{\sinh^{2} r} \nrm{\slashed{\nb} w_{m}}_{L^{2}(\bbS^{d-1})}^{2} 
	+ \left( \kpp^{2} r^{3} + m^{2} r \right) \nrm{w_{m}}_{L^{2}(\bbS^{d-1})}^{2}.
\end{equation}
Using the exponential decay of $1-\coth r$, it is not difficult to arrange (by taking $R_{1}$ large) so that the absolute value of the fourth line in \eqref{eq:Lm-mono-exp} is smaller than \eqref{eq:Lm-mono-key} for $r \geq R_{1}$. Finally, note that
\begin{align*}
& r^{4} \abs{\brk{e^{\rho r} r^{m} H_{\lot} (e^{-\rho r} r^{-m} w_{m}) , \rd_{r} w_{m}}_{\bbS^{d-1}}} \\
& \leq r^{4} \left(\abs{\bsb^{r}} \nrm{\rd_{r} w_{m}}_{L^{2}(\bbS^{d-1})} + (\abs{\bsb^{r}}(\rho + m r^{-1}) + \abs{V}) \nrm{w_{m}}_{L^{2}(\bbS^{d-1})} \right) \nrm{\rd_{r} w_{m}}_{L^{2}(\bbS^{d-1})} \\
& \phantom{\leq} + \frac{r^{4}}{\sinh^{2} r} \abs{\slashed{\bsb}} \nrm{\slashed{\nb} w_{m}}_{L^{2}(\bbS^{d-1})} \nrm{\rd_{r} w_{m}}_{L^{2}(\bbS^{d-1})}
\end{align*}
Applying Cauchy--Schwarz, and using the decay of $\bsb, V$, we may estimate the preceding line by \eqref{eq:Lm-mono-key} for sufficiently large $r \geq R_{1}$ and $m \geq m_{1}$, as desired. 

\pfstep{Step~3: Completion of proof}
At this point, we know that $v$ is a compactly supported solution to $(\tau - H) v = 0$, which is regular (say, $v \in H^{2}$).
Applying the standard unique continuation result for elliptic PDEs (see, for instance, the strong unique continuation theorem \cite[Thm.~17.2.6]{Hor3}), we conclude that $v = 0$ as desired.
\end{proof}

\subsection{Equivalent formulations of the threshold nonresonance condition} \label{ss:th-res}

We first establish \eqref{eq:no-th-res-H1} and \eqref{eq:no-th-res-LE} for $H = -\Delta$.
\begin{prop} \label{p:no-thr-res-lap}
Estimates \eqref{eq:no-th-res-H1} and \eqref{eq:no-th-res-LE} hold for $H = - \lap$.
\end{prop}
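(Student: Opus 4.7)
The plan is to apply the basic multiplier identities of Section~\ref{ss:LE-compute} (Lemma~\ref{l:mult}, identity~\eqref{eq:mult-far}, and Lemma~\ref{l:thr-hardy}) with $\kpp = 0$ and $\gmm \equiv 1$ to the free operator $\rho^{2} + \lap$ paired with $v$, and then convert the resulting identities into the resolvent bound via the duality inequality $|\brk{f, v}| \leq \nrm{f}_{H^{-1}_{thr}} \nrm{v}_{H^{1}_{thr}}$.

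The zeroth-order component $\nrm{\langle r\rangle^{-1} v}_{L^{2}}$ of $\nrm{v}_{H^{1}_{thr}}$ will come from Lemma~\ref{l:mult} combined with Lemma~\ref{l:thr-hardy}. Setting $\gmm \equiv 1$ and $\kpp = 0$ in \eqref{eq:mult} and rewriting the first term on the right via Lemma~\ref{l:thr-hardy} with $m = 1/2$, I obtain
\begin{align*}
 & \nrm{(\rd_{r} + \rho\coth r - \tfrac{1}{2r}) v}_{L^{2}}^{2} + \tfrac{1}{4}\int_{\bbH^{d}} r^{-2} |v|^{2} \, \dh \\
 & \quad + \nrm{\tfrac{1}{\sinh r}\slashed{\nabla} v}_{L^{2}}^{2} + \rho(\rho - 1) \int_{\bbH^{d}} \sinh^{-2} r \, |v|^{2} \, \dh = -\Re \brk{(\rho^{2} + \lap) v, v}.
\end{align*}
In dimensions $d \geq 3$ the coefficient $\rho(\rho-1) \geq 0$, so every term on the left is non-negative and $\tfrac{1}{4} \int r^{-2} |v|^{2} \, \dh \aleq |\brk{(\rho^{2} + \lap) v, v}|$ drops out directly. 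In dimension $d = 2$, $\rho(\rho - 1) = -\tfrac{1}{4}$, but the positive $\tfrac{1}{4} r^{-2}$ from the Hardy--Poincar\'e term exactly cancels against the negative $-\tfrac{1}{4}\sinh^{-2} r$ to reconstruct the weight $U = \tfrac{1}{4}(r^{-2} - \sinh^{-2} r)$ of \eqref{eq:HP_U_def}. The bound $U \gtrsim \langle r\rangle^{-2}$ follows from $\sinh r > r$ on $(0, \infty)$ together with the Taylor expansion $\sinh^{-2} r = r^{-2} - \tfrac{1}{3} + O(r^{2})$ near $r = 0$ and the exponential decay of $\sinh^{-2} r$ at infinity. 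Since $r^{-2} \geq \langle r\rangle^{-2}$ for all $r > 0$, in either dimension we arrive at $\nrm{\langle r\rangle^{-1} v}_{L^{2}}^{2} \aleq |\brk{(\rho^{2} + \lap) v, v}|$.

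For the remaining two components $\nrm{(\rd_{r} + \rho) v}_{L^{2}}$ and $\nrm{\tfrac{1}{\sinh r}\slashed{\nabla} v}_{L^{2}}$, I apply \eqref{eq:mult-far} with $\gmm \equiv 1$ and $\kpp = 0$, which yields
\begin{equation*}
 \nrm{(\rd_{r} + \rho) v}_{L^{2}}^{2} + \nrm{\tfrac{1}{\sinh r}\slashed{\nabla} v}_{L^{2}}^{2} + 2\rho^{2} \int_{\bbH^{d}} (\coth r - 1) |v|^{2} \, \dh = -\Re \brk{(\rho^{2} + \lap) v, v}.
\end{equation*}
Since $\coth r > 1$ on $(0, \infty)$, every term on the left is non-negative, so discarding the last term is harmless. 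Summing with the estimate of the previous paragraph and using the $H^{-1}_{thr}$--$H^{1}_{thr}$ duality gives $\nrm{v}_{H^{1}_{thr}}^{2} \aleq \nrm{(\rho^{2} + \lap) v}_{H^{-1}_{thr}} \nrm{v}_{H^{1}_{thr}}$, and dividing through yields \eqref{eq:no-th-res-H1}. The bound \eqref{eq:no-th-res-LE} then follows from the abstract equivalence $(2) \Leftrightarrow (3)$ of Proposition~\ref{p:no-th-res}, whose hypotheses are trivially met for $H = -\lap$ with $\bsb, V \equiv 0$; alternatively, one can repeat the above argument with a radial weight $\gmm(r)$ slowly varying on dyadic scales (for instance $\gmm = \beta^{(\alpha)}$ from Definition~\ref{d:beta_low} with $\alpha_{\ell} \equiv 1$), using Lemma~\ref{l:Xal} to match the weighted $L^{2}$ quantities with the $LE_{thr}$ and $LE^{\ast}_{thr}$ norms. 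The only genuine subtlety, in either route, is the sign management in dimension $d = 2$ that produces the nonnegative weight $U$; every other step is a direct application of the identities already established in Section~\ref{ss:LE-compute}.
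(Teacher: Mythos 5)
Your proof of the $H^{1}_{thr}$-type bound \eqref{eq:no-th-res-H1} is correct and essentially matches the paper; if anything, you make the step uniform across dimensions by using \eqref{eq:mult-far} with $\gmm = 1$ to produce the first-order and angular components directly, rather than converting $\rd_{r} + \rho \coth r$ into $\rd_{r} + \rho$ by hand as the paper does for $d \geq 3$. The sign bookkeeping in $d = 2$ is handled correctly and matches \eqref{eq:mult-hp}.

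The proof of the $LE$-type bound \eqref{eq:no-th-res-LE} has a genuine gap, in both of the routes you offer. The primary route is circular: the implication $(2) \Rightarrow (3)$ in Proposition~\ref{p:no-th-res} is proved in the paper by appealing to \eqref{eq:no-thr-res-lap-pert}, and the second inequality in \eqref{eq:no-thr-res-lap-pert} is derived \emph{from} Proposition~\ref{p:no-thr-res-lap}, i.e., from the very estimate \eqref{eq:no-th-res-LE} for $H = -\lap$ that you are trying to establish. So you cannot invoke that equivalence here. The alternative route, running the multiplier argument with a slowly varying weight $\bt^{(\alp)}$ and appealing to Lemma~\ref{l:Xal}, is the right idea but gives only \eqref{eq:no-th-res-LE-ext}, i.e., the estimate with a lower-order interior error $\nrm{v}_{L^{2}(\set{r \leq R})}$; this comes from the compact error term inherent in Lemma~\ref{l:weighted_HP_applied_to_main_multiplier_identity} (via \eqref{equ:Ubeta_sum_2d_lower_bound}). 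Removing this interior error is a separate, nontrivial step, and it does not follow from the $H^{1}_{thr}$-estimate you proved: for data $G$ supported in a far annulus $A_{j}$, $\nrm{G}_{H^{-1}_{thr}} \simeq 2^{j/2} \nrm{G}_{LE^{\ast}_{thr}}$, so the dual norms are not comparable and a direct substitution is lossy. The paper's Step~2.b closes this gap by representing $v = \boldsymbol{\Gmm} \ast G$ via the explicit fundamental solution of $\rho^{2} + \lap$, using its precise exponential decay $\Gmm(r) = c_{\Gmm} e^{-\rho r} + o(e^{-\rho r})$ (which encodes that $-\lap$ has no threshold resonance) to get a pointwise bound on $v$ inside $\set{r \leq R}$ that is summable against the $LE^{\ast}_{thr}$ norm. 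This kernel argument is the missing ingredient in your proposal.
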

\begin{proof}
By density, it suffices to establish both estimates for $v \in C^{\infty}_{0}(\bbH^{d})$. 

\pfstep{Step~1.a: Proof of \eqref{eq:no-th-res-H1} in $d \geq 3$} 
Here, note that $\rho (\rho-1) = \frac{1}{4} (d-1)(d-3) \geq 0$. Therefore, by Lemma~\ref{l:mult} with $\gmm = 1$ and $\kpp = 0$, we already obtain
\begin{equation*}
	\int_{\bbH^d} \left(\abs{(\rd_{r} + \rho \coth r) v}^{2} + \frac{1}{\sinh^{2} r} \abs{\slashed{\nb} v}^{2}\right) \,\dh 
	\leq - \Re \brk{(\rho^{2} + \lap) v , v}
\end{equation*}
On the other hand, by Lemma~\ref{l:thr-hardy} with $\gmm = 1$ and $m = \frac{1}{2}$, we have
\begin{equation*}
	\frac{1}{4} \int_{\bbH^d} r^{-2} \abs{v}^{2} \,\dh 
	\leq - \Re \brk{(\rho^{2} + \lap) v , v}.
\end{equation*}
Combining the previous two estimates, and controlling $\rho (\coth r - 1) v$ by the preceding inequality, we see that
\begin{equation*}
\nrm{v}_{H^{1}_{thr}}^{2} \aleq \abs{\Re \brk{(\rho^{2} + \lap) v , v}}.
\end{equation*}
The desired estimate now follows by duality and Cauchy--Schwarz.

\pfstep{Step~1.b: Proof of \eqref{eq:no-th-res-H1} in $d = 2$}
The previous proof breaks down as $\rho (\rho - 1)$ now changes sign; hence we need to be more careful. 
Combining Lemma~\ref{l:mult} (with $\gmm =1$, $\kpp = 0$) and Lemma~\ref{l:thr-hardy} (with $\gmm = 1$, $m = \rho = \frac{1}{2}$), we have
\EQ{\label{eq:mult-hp}
&- \Re \brk{(\rho^{2} + \lap) v , v} \\
&= \int_{\bbH^2} \left(\abs{(\rd_{r} + \rho (\coth r - r^{-1})) v}^{2} + \frac{1}{\sinh^{2} r} \abs{\slashed{\nb} v}^{2}\right)  \,\dh \\
&\quad
+ \int_{\bbH^2} \frac{1}{4} \left( r^{-2} - \sinh^{-2} r  \right)\abs{v}^{2}  \,\dh \\
& \geq c_{0} \int_{\bbH^2}  \brk{r}^{-2} \abs{v}^{2} \,\dh ,
}
for some universal constant $c_{0} > 0$. On the other hand, by a similar proof as in Lemma~\ref{l:mult}, we may also prove
\EQ{\label{eq:mult-2d}
&- \Re \brk{(\rho^{2} + \lap) v ,  v} \\
&= \int_{\bbH^2} \left(\abs{(\rd_{r} + \rho) v}^{2} + \frac{1}{\sinh^{2} r} \abs{\slashed{\nb} v}^{2} + 2 \rho^{2} (\coth r - 1) \abs{v}^{2} \right) \,\dh  \\
& \geq \int_{\bbH^2} \left(\abs{(\rd_{r} + \rho) v}^{2} + \frac{1}{\sinh^{2} r} \abs{\slashed{\nb} v}^{2}\right)  \,\dh .
}
The rest of the proof is as before.

\pfstep{Step~2.a: Proof of \eqref{eq:no-th-res-LE} up to an interior bound}
Using the identities in Section~\ref{ss:LE-compute}, we first prove \eqref{eq:no-th-res-LE} up to a lower order error on a compact region: For some $R > 0$, we have
\begin{equation} \label{eq:no-th-res-LE-ext}
	\nrm{v}_{LE_{thr}^{1}} \aleq \nrm{(\rho^{2} + \lap) v}_{LE_{thr}^{\ast}} + \nrm{v}_{L^{2}(\set{r \leq R})}.
\end{equation}
We borrow the previous argument in Section~\ref{s:low}. Let $\alpha$ be as in Definition~\ref{d:alpha} 
and define $\bt \equiv \bt^{(\alp)}$ and $Q\equiv Q^{(\alpha)}$ according to Definition~\ref{d:beta_low}. Keeping only the first term on the left-hand side of \eqref{equ:additional_bounds_positive_commutator_from_HP}, we have
\begin{align*}
	\int_{\bbH^d} \frac{\rd_{r} \bt}{\brk{r}^{2}} \abs{v}^{2}  \,\dh
	\aleq \Re \brk{i (\rho^{2} + \lap) v , Q v} + \nrm{v}_{L^{2} \set{r \leq R}}^{2}.
\end{align*}
Moreover, applying Lemma~\ref{l:mult} (with $\gmm = \rd_{r} \bt^{(\alp)}$ and $\kpp = 0$) to the second term on the left-hand side of \eqref{equ:additional_bounds_positive_commutator_from_HP}, and recalling Lemma~\ref{l:technical_lemma_low_freq}, we have
\begin{align*}
	& \int_{\bbH^d} \rd_{r} \bt \left( \abs{(\rd_{r} + \rho) v}^{2} + \frac{1}{\sinh^{2} r} \abs{\slashed{\nb} v}^{2} \right) \,\dh\\
	& \aleq \Re \brk{i (\rho^{2} + \lap) v , Q v} + \nrm{v}_{L^{2} \set{r \leq R}}^{2} + \int_{\bbH^d} \rd_{r} \bt \frac{1}{\brk{r}^{2}} \abs{v}^{2}\,\dh.
\end{align*}
Combining these two bounds, and taking the supremum over slowly varying sequences in $\calA$ in Definition~\ref{d:alpha} of $\alpha$, we obtain
\begin{equation*}
	\nrm{v}_{LE_{thr}^{1}}^{2} \aleq \sup_{\alpha} \abs{\Re \brk{i (\rho^{2} + \lap) v , Q^{(\alp)} v}} + \nrm{v}_{L^{2} \set{r \leq R}}^{2}.
\end{equation*}
By duality, we estimate
\begin{align*}
\abs{\Re \brk{i (\rho^{2} + \lap) v , Q^{(\alp)} v}}
&\aleq \nrm{(\rho^{2} + \lap) v}_{LE_{thr}^{\ast}} \nrm{Q^{(\alp)} v}_{LE_{thr}} \\
&\aleq \nrm{(\rho^{2} + \lap) v}_{LE_{thr}^{\ast}} \nrm{v}_{LE_{thr}^{1}},
\end{align*}
which concludes the proof.

\pfstep{Step~2.b: Proof of an interior bound}
Finally, we complete the proof of \eqref{eq:no-th-res-LE} by proving an interior bound: For any $R > 0$, we have
\begin{equation} \label{eq:no-th-res-LE-int}
	\nrm{v}_{L^{2}(\set{r \leq R})} \aleq_{R} \nrm{(\rho^{2} + \lap) v}_{LE_{thr}^{\ast}}.
\end{equation}
Let $\boldsymbol{\Gmm}$ be a fundamental solution to $\rho^{2} + \lap$. By translation and rotation symmetries, we may assume that
\begin{equation*}
	\boldsymbol{\Gmm}(x, y) = \Gmm(\bfd(x, y))
\end{equation*}
for some radial function $\Gmm$. Using the ODE satisfied by $\Gmm(r)$, we see\footnote{This property is equivalent to the threshold nonresonance condition for the radial Laplacian.} that $\Gmm(r) = c_{\Gmm} e^{-\rho r} + o(e^{-\rho r})$ as $r \to \infty$ for some $c_{\Gmm} \neq 0$. For $G \in C^{\infty}_{0}(\bbH^{d})$, consider
\begin{equation*}
	v (x)= \int_{\bbH^d} \boldsymbol{\Gmm}(x, y) G(y) \, \dh(y).
\end{equation*}
By the asymptotics of $\boldsymbol{\Gmm}$, it follows that $v$ is a solution to $(\rho^{2} + \lap) v = G$ with $v \in H_{thr}^{1}$. By Step~1 and the Lax--Milgram theorem, such a $v$ is unique, so it suffices to prove \eqref{eq:no-th-res-LE-int} for this $v$.

For any $G$ supported in $A_{j}$ for large $j$, so that $2^{j} \gg R$, and for $x$ in $\{r\leq R\}$, we have 
\begin{align*}
\abs{\int_{\bbH^d} \boldsymbol{\Gmm}(x, y) G(y) \, \dh(y)}
\aleq & \int_{\set{r \simeq 2^{j}}} e^{-\frac{d-1}{2} (r-C R)}  \nrm{G(r \omg)}_{L^{1}(\bbS^{d-1})} \sinh^{d-1} r \, \ud r \\
\aleq & e^{\frac{d-1}{2} C R} 2^{\frac{1}{2} j} \nrm{G(y)}_{L^{2}(A_{j})}.
\end{align*}
Integrating this pointwise bound on $\set{r \leq R}$ and combining with the $H_{thr}^{-1} \to H_{thr}^{1}$ bound \eqref{eq:no-th-res-H1} for $G$ supported in $A_{j}$ in the remaining range of $j$, and then adding up in $j$, it follows that
\begin{equation*}
	\nrm{\int_{\bbH_d} \boldsymbol{\Gmm}(x, y) G(y) \, \dh(y)}_{L^{2}(\set{r \leq R})} \aleq_{R} \nrm{G}_{LE_{thr}^{\ast}}. \qedhere
\end{equation*}
\end{proof}
\begin{rem} \label{rem:alt-lap}
In the case $d \geq 3$, \eqref{eq:no-th-res-LE} can be proved alternatively using Lemma~\ref{l:pos-comm} in the case $\kpp = 0$. We leave the details to the interested reader.
\end{rem}

By a minor modification of the preceding argument, Corollary~\ref{c:no-th-res} follows as well.
\begin{proof}[Proof of Corollary~\ref{c:no-th-res}]
The idea is to repeat Case~1 in the proof of Proposition~\ref{p:no-thr-res-lap} with covariant derivatives $\bfD_{\mu} = \rd_{\mu} + i \bsb_{\mu}$. We only sketch the tricky case $d = 2$,  where $\rho=\frac{1}{2}$, and leave the rest to the reader. 

Analogous to \eqref{eq:mult-2d}, we have
\EQ{
\int_{\bbH^d} \left(\abs{(\bfD_{r} + \rho) v}^{2} + \frac{1}{\sinh^{2} r} \abs{\slashed{\bfD} v}^{2}\right)\,\dh
&\leq - \Re \brk{(\rho^{2} + \lap_{\bsb}) v , v}\\
& \leq  - \Re \brk{(\rho^{2} + \lap_{\bsb} - V) v , v}.
}
On the other hand, using the diamagnetic inequality $\abs{\nb \abs{v}} \leq \abs{\bfD v}$ and repeating the proof of \eqref{eq:mult-hp}, we obtain
\begin{equation*}
	c_{0} \int_{\bbH^d} \frac{1}{\brk{r}^{2}} \abs{v}^{2}\,\dh \leq - \Re \brk{(\rho^{2} + \lap_{\bsb}) v ,  v} \leq -\Re \brk{(\rho^{2} + \lap_{\bsb} - V) v , v}.
\end{equation*}
Splitting $(\bfD_{r}, \slashed{\bfD}) = (\rd_{r}, \slashed{\nb}) + (\bsb_{r}, \slashed{\bsb})$ and using the preceding Hardy--Poincar\'e type bound to control the contribution of the last term, it follows that
\begin{equation*}
	\nrm{v}_{H^{1}_{thr}}^{2} \leq C_{0} \abs{\brk{(\rho^{2} + \lap_{\bsb} - V) v , v}},
\end{equation*}
 where $C_{0}$ depends only on $c_{0}$ and $\nrm{\brk{r} \bsb}_{L^{\infty}}$. \qedhere
\end{proof}

Finally, we establish Proposition~\ref{p:no-th-res}.
\begin{proof}[Proof of Proposition~\ref{p:no-th-res}]
Key to the proof are the following results concerning small perturbations of $\lap$. Let $\chi$ be a smooth nondecreasing function supported in $\set{r \geq 1}$, which equals $1$ on $\set{r \geq 2}$. Consider
\begin{equation*}
	H_{>R} = -\lap + \chi_{>R} H_{\lot}.
\end{equation*}
By our decay assumptions on $\bsb$ and $V$, we have
\begin{equation*}
\nrm{\chi_{>R} H_{\lot} v}_{H_{thr}^{-1}} \aleq o_{R \to \infty}(1) \nrm{v}_{H_{thr}^{1}}, \quad
\nrm{\chi_{>R} H_{\lot} v}_{LE_{thr}^{\ast}} \aleq o_{R \to \infty}(1) \nrm{v}_{LE_{thr}^{1}}.
\end{equation*}
Combined with Proposition~\ref{p:no-thr-res-lap}, we see that, for large enough $R$,
\begin{equation} \label{eq:no-thr-res-lap-pert}
	\nrm{v}_{H_{thr}^{1}} \aleq \nrm{(\rho^{2} - H_{>R}) v}_{H_{thr}^{-1}}, \quad 
	\nrm{v}_{LE_{thr}^{1}} \aleq \nrm{(\rho^{2} - H_{>R}) v}_{LE_{thr}^{\ast}}.
\end{equation}
Henceforth, we fix $R > 0$ and suppress the dependence of constants on $R$.

\pfstep{(1) $\imp$ (2)} 
This argument is a variant of the proof of the Fredholm alternative theorem, as well as that of Proposition~\ref{p:LE-res}. We begin by writing
\begin{equation*}
(\rho^{2} - H_{>R}) v = (1-\chi_{<R}) H_{\lot} v + (\rho^{2} - H) v.
\end{equation*}
Note that
\begin{equation*}
\nrm{(1-\chi_{<R}) H_{\lot} v}_{H_{thr}^{-1}} \aleq \nrm{v}_{L^{2}(\set{r \leq 10 R})}.
\end{equation*}
Therefore, combined with \eqref{eq:no-thr-res-lap-pert}, it follows that
\begin{equation} \label{eq:no-thr-res-H1-ext}
	\nrm{v}_{H_{thr}^{1}} \aleq \nrm{(\rho^{2} - H) v}_{H_{thr}^{-1}} + \nrm{v}_{L^{2}(\set{r \leq 10 R})}
\end{equation}

Now assume, for the purpose of contradiction, that \eqref{eq:no-th-res-H1} fails. Then there exists a sequence $v_{n} \in H_{thr}^{1}$ such that
\begin{equation*}
	\nrm{v_{n}}_{H_{thr}^{1}} = 1, \quad \nrm{(\rho^{2} - H) v_{n}}_{H_{thr}^{-1}} \to 0.
\end{equation*}
In view of \eqref{eq:no-thr-res-H1-ext}, we see that $\nrm{v_{n}}_{L^{2}(r \leq 10 R)} \ageq 1$. Passing to a subsequence, we obtain a weak limit $v_{\infty} \in H_{thr}^{1}$ such that $(\rho^{2} - H) v_{\infty} = 0$ and $\nrm{v_{\infty}}_{L^{2}(r \leq 10 R)} \ageq 1$; in particular, $v_{\infty}$ is nonzero. Since $v_{\infty} \in H^{1}_{thr}$ implies \eqref{eq:th-res} and $v_\infty\in\tilLE_0$, $v_{\infty}$ is a threshold resonance, which is a contradiction.

\pfstep{(2) $\imp$ (3)}
Using \eqref{eq:no-thr-res-lap-pert}, we may find $v'$ such that $(\rho^{2} - H_{>R})  v' = (\rho^{2} - H) v$ and 
\begin{equation*}
	\nrm{v'}_{LE_{thr}^{1}} \aleq \nrm{(\rho^{2} - H) v}_{LE_{thr}^{\ast}}.
\end{equation*}
Rearranging terms, we see that
\begin{equation*}
	(\rho^{2} - H) (v - v') = (1 - \chi_{>R}) H_{\lot} v'.
\end{equation*}
As the right-hand side is supported in $\set{r \aleq R}$, it follows from \eqref{eq:no-th-res-H1} that
\begin{equation*}
	\nrm{v - v'}_{H^{1}_{thr}} \aleq \nrm{(1 - \chi_{>R}) H_{\lot} v'}_{H^{-1}_{thr}} \aleq_{R} \nrm{v'}_{LE^{1}_{thr}} \aleq \nrm{(\rho^{2} - H) v}_{LE_{thr}^{\ast}}. 
\end{equation*}
As $H^{1}_{thr} \hookrightarrow LE_{thr}^{1}$, the desired estimate \eqref{eq:no-th-res-LE} now follows.

\pfstep{(3) $\imp$ (1)}
Let $w \in \tilLE_{0}$ satisfy $(\rho^{2} - H) w = 0$ and \eqref{eq:th-res}.
We claim that $w$ also satisfies
\EQ{ \label{eq:th-res-0}
\| r^{-\frac{3}{2}} w \|_{L^2(r \simeq 2^j)} \to 0 \mas j \to \infty.
}
Then, using \eqref{eq:th-res}, we may estimate
\begin{equation*}
	\nrm{\brk{r}^{\frac{1}{2}} [H-\rho^{2}, \chi_{\leq R}] w}_{L^{2}} \to 0 \quad \hbox{ as } R \to \infty,
\end{equation*}
where $\chi_{\leq R} = 1 - \chi_{> R}$. On the other hand, by \eqref{eq:no-th-res-LE}, we would have
\begin{equation*}
	\nrm{\chi_{\leq R} w}_{LE_{thr}^{1}} \aleq \nrm{(H-\rho^{2}) \chi_{\leq R} w}_{LE^{\ast}_{thr}} \to 0.
\end{equation*}
Thus, it would follow that $w = 0$. 

To establish \eqref{eq:th-res-0}, we apply Lemma~\ref{l:thr-hardy} with $m = 1$ and $\gmm = \chi_{>R} \frac{\alp_{(j)}(r)}{r}$, where $\alp_{(j)} = \alp_{(j)}(r)$ is a slowly varying function satisfying
\begin{equation} \label{alp-j}
	\alp_{(j)}(r) \simeq \min \set{r 2^{-j}, r^{-1} 2^{j}}^{\dlt}, \quad
	r \abs{(\log \alp_{(j)})'} \aleq \dlt
\end{equation}
for some small parameter $\dlt > 0$. Then
\begin{align*}
	\int_{\bbH^d} \chi_{>R} \frac{\alp_{(j)}(r)}{r^{3}} \abs{w}^{2}\,\dh&
	\aleq \int_{\bbH^d} \chi_{>R} \frac{\alp_{(j)}(r)}{r} \abs{(\rd_{r} + \rho \coth r) w}^{2} \,\dh\\
	&\quad+ \int_{\bbH^d} \chi_{>R}'(r) \frac{\alp_{(j)}(r)}{r^{2}} \abs{w}^{2}\,\dh,
\end{align*}
provided that $\dlt$ is sufficiently small. Note that $\alp_{(j)}$ localizes the integrals to $A_{j}$. Since $w \in \tilLE_{0}$ and \eqref{eq:th-res} hold, the desired decay \eqref{eq:th-res-0} as $j \to \infty$ follows. \qedhere
\end{proof}

\subsection{Perturbation of symmetric, stationary case} \label{ss:LE2-pf}

The aim of this subsection is to prove Theorem~\ref{t:LE2}, i.e., to extend the validity of the global-in-time local smoothing estimate (without any lower order errors on a compact set) to small but possibly non-symmetric (i.e., complex-valued) and non-stationary (i.e., time-dependent) perturbations of a symmetric, stationary Hamiltonian $H_{\stat}$ considered in Theorem~\ref{t:LE-H}. We consider only the simple case when $H_{\stat}$ has neither a threshold resonance nor any eigenvalues in $(-\infty, \rho^2]$, and leave the more intricate general case to future investigation. 

\begin{proof}[Proof of Theorem~\ref{t:LE2}]
For sufficiently nice $u$ and $F = (- i \rd_{t} + H) u$, our goal is to derive a bound of the form
\begin{equation} \label{eq:LE-H-pert-goal}
	\nrm{u}_{\tilLE} \aleq \nrm{u_{0}}_{L^{2}} + \nrm{F}_{\tilLE^{\ast}} + \dlt(\kpp) \nrm{u}_{\tilLE} 
\end{equation}
where $\dlt(\kpp) = o(\kpp)$, and the implicit constant is allowed to depend on $H_{\stat}$ (in particular, the bounds in \eqref{eq:decay_assumptions} and the constant in Theorem~\ref{t:LE-H}). The desired conclusion would then easily follow.

We introduce a small parameter $0 < s_{0} < \frac{1}{8}$ and commute the equation with $\tilP_{\geq s_{0}}$. We claim that
\begin{equation} \label{eq:LE-s0}
	\nrm{\tilP_{\geq s_{0}} u}_{\tilLE} \aleq \nrm{\tilP_{\geq s_{0}} u_{0}}_{L^{2}} + \nrm{\tilP_{\geq s_{0}} F}_{\tilLE^{\ast}} + (s_{0}^{\frac{1}{2}} + s_{0}^{-1} \kpp) \nrm{u}_{\tilLE}.
\end{equation}
To prove this claim, we write
\begin{equation*}
	(-i \rd_{t} + H_{\stat}) \tilP_{\geq s_{0}} u = \tilP_{\geq s_{0}} F - [\tilP_{\geq s_{0}}, H_{\stat}] u - \tilP_{\geq s_{0}} H_{\pert} u
\end{equation*}
and then apply Theorem~\ref{t:LE-H}. We use \eqref{eq:comm-stat} and \eqref{eq:smooth-pert} to estimate the $\tilLE^{\ast}$ norm of the second and third terms on the right-hand side, respectively. 

To conclude the proof, we take $\kpp$ sufficiently small and apply Theorem~\ref{t:LE1} to the operator $H = H_{\stat} + H_{\pert}$, which implies
\begin{equation}\label{eq:complexLEtemp1}
	\nrm{u}_{\tilLE} \aleq \nrm{u_{0}}_{L^{2}} + \nrm{F}_{\tilLE^{\ast}} + \nrm{u}_{L^{2}(\bbR\times\set{r \leq R})}.
\end{equation}

To treat the last term on the right-hand side of \eqref{eq:complexLEtemp1}, we split $u = \int_{0}^{s_{0}} \tilP_{s} u \, \ds + \tilP_{\geq s_{0}} u$. We estimate the first term simply as follows:
\begin{align*}
	\nrm{\int_{0}^{s_{0}} \tilP_{s} u \, \ds}_{L^{2}(\bbR\times\set{r \leq R})}
	& \aleq \int_{0}^{s_{0}} \nrm{\tilP_{s} u}_{L^{2}(\bbR\times\set{r \leq R})} \ds \\
	& \aleq_{R} \int_{0}^{s_{0}} s^{\frac{1}{4}} s^{-\frac{1}{4}} \nrm{\tilP_{s} u}_{\tilLE_{s}} \ds \\
	& \aleq s_{0}^{\frac{1}{4}} \nrm{u}_{\tilLE}.
\end{align*}
For the latter term, we use Lemma~\ref{l:wL2LE} to bound
\begin{equation*}
	\nrm{\tilP_{\geq s_{0}} u}_{L^{2}(\bbR\times\set{r \leq R})} \aleq_{R} \nrm{\tilP_{\geq s_{0}} u}_{\tilLE}.
\end{equation*}
Below, we will omit the dependence of constants on $R$, following the convention set at the beginning of the proof. Combining these estimates with \eqref{eq:LE-s0}, we arrive at
\begin{equation*}
	\nrm{u}_{\tilLE} \aleq \nrm{u_{0}}_{L^{2}} + \nrm{\tilP_{\geq s_{0}} u_{0}}_{L^{2}} + \nrm{F}_{\tilLE^{\ast}} + \nrm{\tilP_{\geq s_{0}} F}_{\tilLE^{\ast}} + (s_{0}^{\frac{1}{4}} + s_{0}^{\frac{1}{2}} + s_{0}^{-1} \kpp) \nrm{u}_{\tilLE}.
\end{equation*}
Since $\tilP_{\geq s_{0}}$ is uniformly bounded on $L^{2}$ and $\tilLE^{\ast}$ (see \eqref{eq:LEs-heat}), the proof of \eqref{eq:LE-H-pert-goal} is complete by choosing $s_0$ small and assuming that $\kappa$ is such that $s_0^{-1}\kappa$ is sufficiently small. \qedhere
\end{proof}

\section{Proof of Corollaries~\ref{c:LE1} and \ref{c:Strichartz}}\label{s:cors}

\subsection{Local smoothing for the projections $\Pea_s$ and $\Pea_{\geq s}$}

We prove Corollary~\ref{c:LE1}.

\begin{proof}[Proof of Corollary~\ref{c:LE1}]
 The corollary follows immediately from Theorem~\ref{t:LE-H} and Theorem~\ref{t:LE2}, upon showing that there exists an absolute constant $C \geq 1$ such that
 \begin{equation} \label{equ:relation_LE_widetilde_LE}
  \| u \|_{\LE} \leq C \| u \|_{\tilLE}
 \end{equation}
 and 
 \begin{equation} \label{equ:relation_LE_ast_widetilde_LE_ast}
  \| F \|_{\tilLE^\ast} \leq C \| F \|_{\LE^\ast}.
 \end{equation}
First note that the estimate~\eqref{equ:relation_LE_ast_widetilde_LE_ast} follows by duality from the estimate~\eqref{equ:relation_LE_widetilde_LE} because 
 \begin{align*}
  \| F \|_{\tilLE^\ast} &\simeq \sup_{0 \neq \|u\|_{\tilLE} < \infty} \langle F, \|u\|_{\tilLE}^{-1}\,u \rangle_{t,x} \lesssim \sup_{0 \neq \|u\|_{\tilLE} < \infty} \langle F, \|u\|_{\LE}^{-1} \,u \rangle_{t,x} \\
  &\lesssim \sup_{0 \neq \|u\|_{\LE} < \infty} \langle F, \|u\|_{\LE}^{-1}\,u \rangle_{t,x} \simeq C \|F\|_{\LE^\ast}.
 \end{align*}
 We therefore only have to prove \eqref{equ:relation_LE_widetilde_LE}. Since
 \begin{align*}
  \|u\|_{\LE}^2 &= \int_{\frac{1}{8}}^4 \| P_{\geq s} u \|_{\tilLE_\low}^2 \, \ds + \int_0^{\frac{1}{2}} s^{-\frac{1}{2}} \| P_s u \|_{\tilLE_s}^2 \, \ds \\ 
  &= \int_{\frac{1}{8}}^4 \bigl\| e^{-s \rho^2} \tilde{P}_{\geq s} u \bigr\|_{\tilLE_\low}^2 \, \ds + \int_0^{\frac{1}{2}} s^{-\frac{1}{2}} \bigl\| e^{-s\rho^2} \tilP_s u + s \rho^2 e^{-s\rho^2} \tilP_{\geq s} u \bigr\|_{\tilLE_s}^2 \, \ds \\
  &\lesssim \int_{\frac{1}{8}}^4 \| \tilP_{\geq s} u \|_{\tilLE_\low}^2 \, \ds + \int_0^{\frac{1}{2}} s^{-\frac{1}{2}} \bigl\| \tilP_s u \bigr\|_{\tilLE_s}^2 + \int_0^{\frac{1}{2}} s^{\frac{3}{2}} \bigl\| \tilP_{\geq s} u \|_{\tilLE_s}^2 \, \ds,
 \end{align*}
 it remains to suitably estimate the last integral on the right-hand side. We further decompose this last integral and use the fact that for any $s\leq 1$
\begin{align}\label{eq:tilLEstilLElowrelation}
\begin{split}
\|v\|_{\tilLE_s}\lesssim s^{-\frac{1}{4}}\|v\|_{\tilLE_\low},
\end{split}
\end{align}
 to find that
 \begin{equation} \label{equ:last_integral_to_massage}
  \begin{aligned}
   \int_0^{\frac{1}{2}} s^{\frac{3}{2}} \bigl\| \tilP_{\geq s} u \|_{\tilLE_s}^2 \, \ds &\leq \int_0^{\frac{1}{8}} s^{\frac{3}{2}} \bigl\| \tilP_{\geq s} u \|_{\tilLE_s}^2 \, \ds + \int_{\frac{1}{8}}^{\frac{1}{2}}s^{\frac{3}{2}} \bigl\| \tilP_{\geq s} u \|_{\tilLE_s}^2 \, \ds. \\
   &\lesssim \int_0^{\frac{1}{8}} s^{\frac{3}{2}} \bigl\| \tilP_{\geq s} u \|_{\tilLE_s}^2 \, \ds + \int_{\frac{1}{8}}^{\frac{1}{2}} s^{\frac{3}{2}} s^{-\frac{1}{2}} \bigl\| \tilP_{\geq s} u \|_{\tilLE_\low}^2 \, \ds \\
   &\lesssim \int_0^{\frac{1}{8}} s^{\frac{3}{2}} \bigl\| \tilP_{\geq s} u \|_{\tilLE_s}^2 \, \ds + \int_{\frac{1}{8}}^{4} \bigl\| \tilP_{\geq s} u \|_{\tilLE_\low}^2 \, \ds.
  \end{aligned}
 \end{equation}
 The second integral on the right-hand side is now already of the right form and it remains to estimate the first integral on the right-hand side. To this end we write for any $0 < s \leq \frac{1}{8}$ and  any $\frac{1}{8} \leq s_0 \leq \frac{1}{2}$,
 \begin{equation} \label{equ:decompose_tildePs}
  \tilP_{\geq s} u = \int_s^{s_0} \tilP_{s'} u \, \frac{\ud s'}{s'} + \tilP_{\geq s_0} u
 \end{equation}
 and then average over~\eqref{equ:decompose_tildePs} to obtain that
 \begin{equation} \label{equ:averaged_tildeP}
  \tilP_{\geq s} u 
  = \frac{1}{\log(4)} \biggl( \int_{\frac{1}{8}}^{\frac{1}{2}} \int_s^{s_0} \tilP_{s'} u \, \frac{\ud s'}{s'} \, \frac{\ud s_0}{s_0} + \int_{\frac{1}{8}}^{\frac{1}{2}}  \tilP_{\geq s_0} u \, \frac{\ud s_0}{s_0} \biggr).
 \end{equation}
 Inserting \eqref{equ:averaged_tildeP} into the first integral on the right-hand side of~\eqref{equ:last_integral_to_massage} we find that
 \begin{align*}
  \int_0^{\frac{1}{8}} s^{\frac{3}{2}} \bigl\| \tilP_{\geq s} u \|_{\tilLE_s}^2 \, \ds &\lesssim \int_0^{\frac{1}{8}} s^{\frac{3}{2}} \biggl( \int_{\frac{1}{8}}^{\frac{1}{2}} \int_s^{s_0} \bigl\| \tilP_{s'} u \bigr\|_{\tilLE_s} \, \frac{\ud s'}{s'} \, \frac{\ud s_0}{s_0} \biggr)^2 \, \ds \\
  &\quad+ \int_0^{\frac{1}{8}} s^{\frac{3}{2}} \biggl( \int_{\frac{1}{8}}^{\frac{1}{2}} \bigl\| \tilP_{\geq s_0} u \bigr\|_{\tilLE_s} \, \frac{\ud s_0}{s_0} \biggr)^2 \, \ds =: I + II.
 \end{align*}
 To bound the term $I$ we first trivially raise the upper integration endpoint $s_0$ to $\frac{1}{2}$ in the inner most integral in order to integrate out the $\frac{ds_0}{s_0}$ integral
 \begin{align*}
  I \leq \int_0^{\frac{1}{8}} s^{\frac{3}{2}} \biggl( \int_{\frac{1}{8}}^{\frac{1}{2}} \int_s^{\frac{1}{2}} \bigl\| \tilP_{s'} u \bigr\|_{\tilLE_s} \, \frac{\ud s'}{s'} \, \frac{\ud s_0}{s_0} \biggr)^2 \, \ds \lesssim \int_0^{\frac{1}{8}} s^{\frac{3}{2}} \biggl( \int_s^{\frac{1}{2}} \bigl\| \tilP_{s'} u \bigr\|_{\tilLE_s} \, \frac{\ud s'}{s'} \biggr)^2 \, \ds.
 \end{align*}
 Then using the relation
\begin{align*}
\begin{split}
\|v\|_{\tilLE_s}\lesssim \Bigl( \frac{s'}{s} \Bigr)^{\frac{1}{4}}\|v\|_{\tilLE_{s'}},\qquad s\leq s',
\end{split}
\end{align*}
 we obtain 
 \begin{align*}
  I&\lesssim \int_0^{\frac{1}{8}} s^{\frac{3}{2}} \biggl( \int_s^{\frac{1}{2}} \Bigl( \frac{s'}{s} \Bigr)^{\frac{1}{4}} \bigl\| \tilP_{s'} u \bigr\|_{\tilLE_{s'}} \, \frac{\ud s'}{s'} \biggr)^2 \, \ds \\
  &\simeq \int_0^{\frac{1}{8}} s \biggl( \int_s^{\frac{1}{2}} \Bigl( (s')^{\frac{1}{2}} (s')^{-\frac{1}{4}} \bigl\| \tilP_{s'} u \bigr\|_{\tilLE_{s'}} \, \frac{\ud s'}{s'} \biggr)^2 \, \ds \\
  &\lesssim \int_0^{\frac{1}{2}} (s')^{-\frac{1}{2}} \bigl\| \tilP_{s'} u \bigr\|_{\tilLE_{s'}}^2 \, \frac{\ud s'}{s'},
 \end{align*}
 which is of the desired form. Here the last estimate just follows by Cauchy-Schwarz and by trivially estimating the integrals. 
 To bound the term $II$ we use \eqref{eq:tilLEstilLElowrelation} and Cauchy-Schwarz to obtain that
 \begin{align*}
  II 
  &\lesssim \int_0^{\frac{1}{8}} s^{\frac{3}{2}} \biggl( \int_{\frac{1}{8}}^{\frac{1}{2}} s^{-\frac{1}{4}} \bigl\| \tilP_{\geq s_0} u \bigr\|_{\tilLE_\low} \, \frac{\ud s_0}{s_0} \biggr)^2 \, \ds \\
  &\simeq \int_0^{\frac{1}{8}} s \biggl( \int_{\frac{1}{8}}^{\frac{1}{2}} \bigl\| \tilP_{\geq s_0} u \bigr\|_{\tilLE_\low} \, \frac{\ud s_0}{s_0} \biggr)^2 \, \ds \\
  &\lesssim \int_{\frac{1}{8}}^{\frac{1}{2}} \bigl\| \tilP_{\geq s_0} u \bigr\|_{\tilLE_\low}^2 \, \frac{\ud s_0}{s_0},
 \end{align*}
 which is again of the desired form. Putting all of the above estimates together concludes the proof of the corollary.
 \end{proof}

\subsection{Strichartz estimates}

In this subsection we establish Corollary~\ref{c:Strichartz}. To this end we follow closely the argument in~\cite[Theorem 1.22]{MMT}, see also \cite{Tat08ajm, RodS}. In the proof we will invoke the existing Strichartz estimates for the unperturbed Schr\"odinger equation on hyperbolic space from \cite{AP09, B07, IS}. 

\begin{proof}[Proof of Corollary~\ref{c:Strichartz}]
Since $P_c$ commutes with the equation, redefining $u$ to be $P_cu$ and $F$ to be $P_cF$ we may drop the projections $P_c$ in the proof and assume that there are no eigenvalues in $(-\infty,\rho^2]$. Let $S$ denote the free Schr\"odinger operator 
\begin{align*}
\begin{split}
 S=-i\partial_t-\Delta 
\end{split}
\end{align*}
and let us write our operator as
\begin{align*}
\begin{split}
S_\pert=-i\partial_t+H=S+H_{\lot}, 
\end{split}
\end{align*}
where as usual $H_\lot$ denotes the lower order terms. We use the notation $S^{-1}$ to denote the solution to the free inhomogeneous equation with vanishing initial data, that is,
\begin{align}\label{eq: S-1 def}
\begin{split}
&(S^{-1}f)(t)=i\int_0^te^{i(t-s)\Delta}f(s)ds,\\
 &S (S^{-1} f)=f,\qquad (S^{-1}f)(0)=0. 
\end{split}
\end{align}
Suppose
\begin{align*}
\begin{split}
S_\pert u = F. 
\end{split}
\end{align*}
We want to show 
\begin{align}\label{eq: Strich}
\begin{split}
 \|u\|_{L_t^{p_1}L_x^{q_1}}\lesssim \|u(0)\|_{L^2_x}+\|F\|_{L_t^{p'_2}L_x^{q'_2}} 
\end{split}
\end{align}
for any pair of Strichartz exponents $(p_1,q_1)$ and $(p_2,q_2)$, excluding the endpoint cases. First we prove this estimate assuming the bounds
%
%
\begin{align}
 \|S^{-1}g\|_{\tilLE} &\lesssim \|g\|_{L_t^{p'}L_x^{q'}}\label{eq:Strtemp1}\\
 \|w\|_{L_t^pL_x^q} &\lesssim \|w\|_{L_t^\infty L_x^2}+\|Sw\|_{\tilLE^\ast}.\label{eq:Strtemp2}
\end{align}
%
Assuming \eqref{eq:Strtemp1} and \eqref{eq:Strtemp2}, write $u=v+S^{-1}F$, that is, define $v:=u-S^{-1}F$. Then we have  
\begin{align*}
\begin{split}
 S_\pert v= S_\pert u-S (S^{-1}F) -H_\lot(S^{-1}F)=-H_\lot(S^{-1}F),
\end{split}
\end{align*} 
and by Propositions~\ref{p:Hlotbound_for_V} and~\ref{p:Hlotbound} we obtain
\begin{align}\label{eq: temp 1}
\begin{split}
 \|S_\pert v\|_{\tilLE^\ast}\lesssim \|S^{-1}F\|_{\tilLE}. 
\end{split}
\end{align}
On the other hand by definition $v(0)=u(0)$, so from  \eqref{eq:Strtemp1} we get
\begin{align*}
\begin{split}
 \|v(0)\|_{L^2_x}+\|S_\pert v\|_{\tilLE^\ast} \lesssim \|u(0)\|_{L^2_x}+\|F\|_{L_t^{p_2'}L_x^{q_2'}}.
\end{split}
\end{align*}
Combining with equation \eqref{equ:approx_mass_conservation}, Theorem~\ref{t:LE-H} or~\ref{t:LE2}, and another application of \eqref{eq: temp 1} and \eqref{eq:Strtemp1}, this shows
\begin{align*}
\begin{split}
 \|v\|_{L_t^\infty L_x^2}+\|S_\pert v\|_{\tilLE^\ast}&\lesssim  \|u(0)\|_{L^2_x}+\|v\|_{\tilLE} +\|S_\pert v\|_{\tilLE^\ast}+\|F\|_{L_t^{p_2'}L_x^{q_2'}}\\
 &\lesssim \|u(0)\|_{L^2_x} +\|F\|_{L_t^{p_2'}L_x^{q_2'}}.
\end{split}
\end{align*}
It then follows from \eqref{eq:Strtemp2}, writing $S=S_\pert-H_{\lot}$, and further applications of our local smoothing estimates that 
\begin{align*}
\begin{split}
 \|v\|_{L_t^{p_1}L_x^{q_1}}&\lesssim \|u(0)\|_{L^2_x} +\|H_\lot v\|_{\tilLE^\ast} +\|F\|_{L_t^{p_2'}L_x^{q_2'}}\\
 &\lesssim \|u(0)\|_{L^2_x} +\|v\|_{\tilLE}+\|F\|_{L_t^{p_2'}L_x^{q_2'}}\\
 &\lesssim  \|u(0)\|_{L^2_x} + \|v\|_{L_t^\infty L_x^2}+\|S_\pert v\|_{\tilLE^\ast}+\|F\|_{L_t^{p_2'}L_x^{q_2'}}\\
 &\lesssim \|u(0)\|_{L^2_x} +\|F\|_{L_t^{p_2'}L_x^{q_2'}}.
\end{split}
\end{align*}
The desired estimate \eqref{eq: Strich} now follows from the triangle inequality applied to $u=v+S^{-1}f$ and the Strichartz estimates for $S$.

Let us turn to the proofs of \eqref{eq:Strtemp1} and \eqref{eq:Strtemp2}.
Estimate \eqref{eq:Strtemp2} is actually a consequence of \eqref{eq:Strtemp1} and duality. To see this let $h$ be an arbitrary function in~$L_t^{p'}L_x^{q'}$. Then recalling that $S=-i\partial_t- \Delta$ and integrating on $I\times \bbH^d$ with $I=[t_1,t_2]$ we have 
\begin{align*}
\begin{split}
| \angles{w}{h}_{t,x} |= |\angles{w}{S(S^{-1}h)}_{t,x}|\leq |\angles{Sw}{S^{-1}h}_{t,x}|+|\angles{w}{S^{-1}h}_x\vert_{t_1}^{t_2}|.
\end{split}
\end{align*}
But then from \eqref{eq:Strtemp1} and $\|S^{-1}h\|_{L_t^\infty L_x^2}\lesssim \|h\|_{L_t^{p'}L_x^{q'}}$ we get
\begin{align*}
\begin{split}
 |\angles{w}{h}_{t,x}|\lesssim \|h\|_{L_t^{p'}L_x^{q'}}(\|w\|_{L_t^\infty L_x^2}+\|Sw\|_{\tilLE^\ast}), 
\end{split}
\end{align*}
which by duality proves \eqref{eq:Strtemp2}.

Finally we prove \eqref{eq:Strtemp1}. For any fixed $T>0$, and any $\alpha:[0,4]\to\calA$, let $\calX_\alpha^0$ be as in Definition~\ref{def:X0spaces}. From the local smoothing estimate for $S$ we have 
\begin{align*}
\begin{split}
 \left\| i\int_0^T e^{i(t-s)\Delta}g(s)ds\right\|_{L_t^2 \calX_\alpha^0}&= \left\| ie^{it\Delta}\int_0^T e^{-is\Delta}g(s)ds\right\|_{L_t^2 \calX_\alpha^0}\\
 &\lesssim \left\|\int_0^Te^{-is\Delta}g(s)ds\right\|_{L_x^2} \lesssim \|g\|_{L_t^{p'}L_x^{q'}},
\end{split}
\end{align*}
where in the last step we have applied the usual Strichartz estimates for $S$. But then from Christ-Kiselev and the Duhamel representation of $S^{-1}$ in \eqref{eq: S-1 def} it follows that as long as $p'<2$
\begin{align*}
\begin{split}
 \| S^{-1}g\|_{L_t^2X_\alpha^0}\lesssim  \|g\|_{L_t^{p'}L_x^{q'}}.
\end{split}
\end{align*}
Estimate \eqref{eq:Strtemp1} follows by taking the supremum over all $\alpha \colon (0,4] \to \calA$ and Lemma~\ref{l:albeLE}.
\end{proof}

\subsection{Combined Functional Framework}  In this final section we prove Corollary~\ref{c:combined}, which establishes a combined functional framework for the Strichartz and local smoothing estimates. 

\begin{proof}[Proof of Corollary~\ref{c:combined}]
We prove the following estimate for any pair of Strichartz exponents $(p_1,q_1)$ and $(p_2,q_2)$:
\begin{align}\label{eq: combined}
\begin{split}
  \|u\|_{L_t^{p_1}L_x^{q_1}\cap \LE}\lesssim \|u(0)\|_{L_x^2}+\|Hu\|_{L_t^{p_2'}L_x^{q_2'}+\LE^\ast}.
\end{split}
\end{align}
where $H$ is as in the statement of Corollary~\ref{c:combined}, and we have omitted $P_c$ (without loss of generality) as in the previous section. 
Note that for this it suffices to prove
\begin{align}\label{eq: Y Z}
\begin{split}
 \|u\|_{Y}\lesssim \|u(0)\|_{L_x^2}+\|u\|_Z
 \end{split}
\end{align}
for any choice of $Y= L_t^{p_1}L_x^{q_1},~\LE$ and $Z=L_t^{p_2'}L_x^{q_2'},~\LE^\ast$.
Estimate \eqref{eq: Y Z} for the choices $(Y,Z)=(\LE,\LE^\ast)$ and $(Y,Z)=(L_t^{p_1}L_x^{q_1},L_t^{p_2'}L_x^{q_2'})$ were already proved in previous sections.  To prove the estimate for other choices of $Y$ and $Z$  we first introduce some notation. Let $e^{-itA}$ denote the solution operator for $H$, that is,
\begin{align*}
\begin{split}
  H(e^{-itA}u_0)=(\frac{1}{i}\partial_t+A)(e^{-itA}u_0)=0,\qquad (e^{-itA}f)\vert_{t=0}=u_0.
\end{split}
\end{align*}
It follows that the solution to the inhomogeneous problem $Hu=f$, $u(0)=u_0$ is given by
\begin{align*}
\begin{split}
 u(t)= u_h+u_p:=e^{-itA}u_0+i\int_0^te^{-i(t-s)A}f(s)ds.
\end{split}
\end{align*} 
Since $Hu_h=0$ estimate \eqref{eq: Y Z} with $u$ replaced by $u_h$ and for any choice of $Y$ and $Z$ follows from the local smoothing and Strichartz estimates of the previous sections, so it suffices to consider $u_p$. The estimate for $(Y,Z)=(\LE,L_t^{p_2'}L_x^{q_2'})$ is similar to the proof of \eqref{eq:Strtemp1} in the previous section. Indeed for any time $T>0$ and $\alpha:[s_0,\infty)\to \calA$, by the local smoothing and Strichartz estimates for $H$, 
\begin{align*}
\begin{split}
   \left\| i\int_0^T e^{-i(t-s)A}f(s)ds\right\|_{L_t^2 X_\alpha^0}&= \left\| ie^{-itA}\int_0^T e^{isA}f(s)ds\right\|_{L_t^2 X_\alpha^0}\lesssim \left\|\int_0^Te^{isA}f(s)ds\right\|_{L_x^2}\\
   &\lesssim \|f\|_{L_t^{p'}L_x^{q'}}.
\end{split}
\end{align*}
Applying Christ-Kiselev we see that the same estimate holds with the left hand side replaced by $\|u_p\|_{L_t^2X_\alpha^0}$. Taking the supremum over $\alpha:[s_0,\infty)\to\calA$ proves \eqref{eq: Y Z} for the choice $(Y,Z)=(\LE,L_t^{p_2'}L_x^{q_2'})$ . Finally, we consider the case $(Y,Z)=(L_t^{p_1}L_x^{q_1},\LE^\ast)$. For $T$ and $\alpha$ as above and $I:=[0,T]$ we recall the local smoothing estimate
\begin{align*}
\begin{split}
 \|e^{-itA}u_0\|_{L_t^2X_\alpha^0(I\times \bbH^d)}\lesssim \|u_0\|_{L_x^2}. 
\end{split}
\end{align*}
By duality, this implies
\begin{align*}
\begin{split}
\left\| \int_0^Te^{isA}g(s)ds\right\|_{L_x^2}\lesssim \|g\|_{L_t^2(X_\alpha^0)^\ast(I\times\bbH^d)}.
\end{split}
\end{align*}
Combining with the Strichartz estimates for $H$ we then get
\begin{align*}
\begin{split}
 \left\|\int_0^Te^{-i(t-s)A}f(s)ds\right\|_{L_t^{p_1}L_x^{q_1}(I\times\bbH^d)}=&  \left\|e^{-itA}\int_0^Te^{isA}f(s)ds\right\|_{L_t^{p_1}L_x^{q_1}(I\times\bbH^d)}\\
 \lesssim &\left\|\int_0^Te^{isA}f(s)ds|\right\|_{L_x^2}\lesssim \|f\|_{L_t^2(X_\alpha^0)^\ast(I\times\bbH^d)}.
\end{split}
\end{align*}
Applying the standard Christ-Kiselev Lemma~\cite{CK01} we conclude that for $p_1>2$ 
\begin{align*}
\begin{split}
 \|u_p\|_{L_t^{p_1}L_x^{q_1}(I\times\bbH^d)}\lesssim \|f\|_{L_t^2(X_\alpha^0)^\ast(I\times\bbH^d)}.
\end{split}
\end{align*}
Taking the infimum over $\alpha:[s_0,\infty)\to\calA$ completes the proof of \eqref{eq: Y Z} for $(Y,Z)=(L_t^{p_1}L_x^{q_1},LE^\ast)$, and hence of \eqref{eq: combined}.
\end{proof}

\bibliographystyle{plain}
\bibliography{researchbib}

\begin{thebibliography}{10}

\bibitem{AP09}
J.-P. Anker and V.~Pierfelice.
\newblock Nonlinear {S}chr{\"o}dinger equation on real hyperbolic spaces.
\newblock {\em Ann. Inst. H. Poincar{\'e} Anal. Non Lin{\'e}aire},
  26(5):1853--1869, 2009.

\bibitem{B07}
V.~Banica.
\newblock The nonlinear {S}chr{\"o}dinger equation on hyperbolic space.
\newblock {\em Comm. Partial Differential Equations}, 32(10-12):1643--1677,
  2007.

\bibitem{BGG1}
E.~Berchio, D.~Ganguly, and G.~Grillo.
\newblock Sharp {P}oincar\'e-{H}ardy and {P}oincar\'e-{R}ellich inequalities on
  the hyperbolic space.
\newblock {\em J. Funct. Anal.}, 272(4):1661--1703, 2017.

\bibitem{BorMar1}
D.~Borthwick and J.~L. Marzuola.
\newblock Dispersive estimates for scalar and matrix {S}chr\"odinger operators
  on {$\mathbb{H}^{n+1}$}.
\newblock {\em Math. Phys. Anal. Geom.}, 18(1):Art. 22, 26, 2015.

\bibitem{Bou13}
J.-M. Bouclet.
\newblock Absence of eigenvalue at the bottom of the continuous spectrum on
  asymptotically hyperbolic manifolds.
\newblock {\em Ann. Global Anal. Geom.}, 44(2):115--136, 2013.

\bibitem{Bray}
W.~O. Bray.
\newblock Aspects of harmonic analysis on real hyperbolic space.
\newblock In {\em Fourier analysis ({O}rono, {ME}, 1992)}, volume 157 of {\em
  Lecture Notes in Pure and Appl. Math.}, pages 77--102. Dekker, New York,
  1994.

\bibitem{Chen18}
X.~Chen.
\newblock Resolvent and spectral measure on non-trapping asymptotically
  hyperbolic manifolds {III}: {G}lobal-in-time {S}trichartz estimates without
  loss.
\newblock {\em Ann. Inst. H. Poincar{\'e} Anal. Non Lin{\'e}aire},
  35(3):803--829, 2018.

\bibitem{ChenHassell3}
X.~{Chen} and A.~{Hassell}.
\newblock {Resolvent and spectral measure on non-trapping asymptotically
  hyperbolic manifolds II: spectral measure, restriction theorem, spectral
  multiplier}.
\newblock {\em ArXiv e-prints}, December 2014.

\bibitem{ChenHassell1}
X.~Chen and A.~Hassell.
\newblock Resolvent and spectral measure on non-trapping asymptotically
  hyperbolic manifolds {I}: {R}esolvent construction at high energy.
\newblock {\em Comm. Partial Differential Equations}, 41(3):515--578, 2016.

\bibitem{CK01}
M.~Christ and A.~Kiselev.
\newblock Maximal functions associated to filtrations.
\newblock {\em J. Funct. Anal.}, 179(2):409--425, 2001.

\bibitem{ClSt}
J.~L. Clerc and E.~M. Stein.
\newblock {$L^{p}$}-multipliers for noncompact symmetric spaces.
\newblock {\em Proc. Nat. Acad. Sci. U.S.A.}, 71:3911--3912, 1974.

\bibitem{CoifmanWeiss1}
R.~R. Coifman and G.~Weiss.
\newblock {\em Analyse harmonique non-commutative sur certains espaces
  homog\`enes}.
\newblock Lecture Notes in Mathematics, Vol. 242. Springer-Verlag, Berlin-New
  York, 1971.
\newblock \'Etude de certaines int\'egrales singuli\`eres.

\bibitem{CoifmanWeiss2}
R.~R. Coifman and G.~Weiss.
\newblock Extensions of {H}ardy spaces and their use in analysis.
\newblock {\em Bull. Amer. Math. Soc.}, 83(4):569--645, 1977.

\bibitem{CoSa}
P.~Constantin and J.-C. Saut.
\newblock Local smoothing properties of dispersive equations.
\newblock {\em J. Amer. Math. Soc.}, 1(2):413--439, 1988.

\bibitem{DFVV}
P.~D'Ancona, L.~Fanelli, L.~Vega, and N.~Visciglia.
\newblock Endpoint {S}trichartz estimates for the magnetic {S}chr{\"o}dinger
  equation.
\newblock {\em J. Funct. Anal.}, 258(10):3227--3240, 2010.

\bibitem{Doi}
S.~Doi.
\newblock Remarks on the {C}auchy problem for {S}chr{\"o}dinger-type equations.
\newblock {\em Comm. Partial Differential Equations}, 21(1-2):163--178, 1996.

\bibitem{Donnelly1}
H.~Donnelly.
\newblock Eigenvalues embedded in the continuum for negatively curved
  manifolds.
\newblock {\em Michigan Math. J.}, 28(1):53--62, 1981.

\bibitem{EGS08}
M.~B. Erdo\u{g}an, M.~Goldberg, and W.~Schlag.
\newblock Strichartz and smoothing estimates for {S}chr{\"o}dinger operators
  with large magnetic potentials in {${\mathbb R}^3$}.
\newblock {\em J. Eur. Math. Soc. (JEMS)}, 10(2):507--531, 2008.

\bibitem{EGS09}
M.~B. Erdo\u{g}an, M.~Goldberg, and W.~Schlag.
\newblock Strichartz and smoothing estimates for {S}chr{\"o}dinger operators
  with almost critical magnetic potentials in three and higher dimensions.
\newblock {\em Forum Math.}, 21(4):687--722, 2009.

\bibitem{Heb}
E.~Hebey.
\newblock {\em Sobolev spaces on {R}iemannian manifolds}, volume 1635 of {\em
  Lecture Notes in Mathematics}.
\newblock Springer-Verlag, Berlin, 1996.

\bibitem{Hor2}
L.~H\"ormander.
\newblock {\em The analysis of linear partial differential operators. {II}}.
\newblock Classics in Mathematics. Springer-Verlag, Berlin, 2005.
\newblock Differential operators with constant coefficients, Reprint of the
  1983 original.

\bibitem{Hor3}
L.~H{\"o}rmander.
\newblock {\em The analysis of linear partial differential operators. {III}}.
\newblock Classics in Mathematics. Springer, Berlin, 2007.
\newblock Pseudo-differential operators, Reprint of the 1994 edition.

\bibitem{IPS}
A.~D. Ionescu, B.~Pausader, and G.~Staffilani.
\newblock On the global well-posedness of energy-critical {S}chr\"odinger
  equations in curved spaces.
\newblock {\em Analysis and PDE}, 5(4):705--746, 2012.

\bibitem{IS}
A.~D. Ionescu and G.~Staffilani.
\newblock Semilinear {S}chr{\"o}dinger flows on hyperbolic spaces: scattering
  {$H^1$}.
\newblock {\em Math. Ann.}, 345(1):133--158, 2009.

\bibitem{Kaizuka1}
K.~Kaizuka.
\newblock Resolvent estimates on symmetric spaces of noncompact type.
\newblock {\em J. Math. Soc. Japan}, 66(3):895--926, 2014.

\bibitem{Kat1}
T.~Kato.
\newblock On the {C}auchy problem for the (generalized) {K}orteweg-de {V}ries
  equation.
\newblock In {\em Studies in applied mathematics}, volume~8 of {\em Adv. Math.
  Suppl. Stud.}, pages 93--128. Academic Press, New York, 1983.

\bibitem{KPV04}
C.~E. Kenig, G.~Ponce, and L.~Vega.
\newblock The {C}auchy problem for quasi-linear {S}chr{\"o}dinger equations.
\newblock {\em Invent. Math.}, 158(2):343--388, 2004.

\bibitem{KR06}
S.~Klainerman and I.~Rodnianski.
\newblock A geometric approach to the {L}ittlewood-{P}aley theory.
\newblock {\em Geom. Funct. Anal.}, 16(1):126--163, 2006.

\bibitem{LLOS2}
A.~Lawrie, J.~L{\"u}hrmann, S.-J. Oh, and S.~Shahshahani.
\newblock Asymptotic stability of harmonic maps on the hyperbolic plane under
  the {S}chr{\"o}dinger maps evolution.
\newblock {\em arXiv E-Prints}, 2019.

\bibitem{LOS5}
A.~Lawrie, S.-J. Oh, and S.~Shahshahani.
\newblock The {C}auchy problem for wave maps on hyperbolic space in dimensions
  $d\geq 4$.
\newblock {\em Int. Math. Res. Not. IMRN}, (7):1954--2051.

\bibitem{LOS2}
A.~Lawrie, S.-J. Oh, and S.~Shahshahani.
\newblock Profile decompositions for wave equations on hyperbolic space with
  applications.
\newblock {\em Math. Ann.}, 365(1-2):707--803, 2016.

\bibitem{Li1}
Z.~{Li}.
\newblock {Asymptotic stability of large energy harmonic maps under the wave
  map from 2D hyperbolic spaces to 2D hyperbolic spaces}.
\newblock {\em ArXiv e-prints}, 2017.

\bibitem{Li2}
Z.~{Li}.
\newblock {Endpoint Strichartz estimates for magnetic wave equations on two
  dimensional hyperbolic spaces}.
\newblock {\em ArXiv e-prints}, 2017.

\bibitem{Li_Ma_Zhao}
Z.~{Li}, X.~{Ma}, and L.~{Zhao}.
\newblock {Asymptotic stability of harmonic maps between 2D hyperbolic spaces
  under the wave map equation. II. Small energy case}.
\newblock {\em ArXiv e-prints}, 2017.

\bibitem{ManSan1}
G.~Mancini and K.~Sandeep.
\newblock On a semilinear elliptic equation in {$\mathbb{H}^n$}.
\newblock {\em Ann. Sc. Norm. Super. Pisa Cl. Sci. (5)}, 7(4):635--671, 2008.

\bibitem{MMT}
J.~Marzuola, J.~Metcalfe, and D.~Tataru.
\newblock Strichartz estimates and local smoothing estimates for asymptotically
  flat {S}chr{\"o}dinger equations.
\newblock {\em J. Funct. Anal.}, 255(6):1497--1553, 2008.

\bibitem{Nahmod1}
A.~R. Nahmod.
\newblock {\em Geometry of operators and spectral analysis}.
\newblock ProQuest LLC, Ann Arbor, MI, 1991.
\newblock Thesis (Ph.D.)--Yale University.

\bibitem{Nahmod2}
A.~R. Nahmod.
\newblock Generalized uncertainty principles on spaces of homogeneous type.
\newblock {\em J. Funct. Anal.}, 119(1):171--209, 1994.

\bibitem{RodS}
I.~Rodnianski and W.~Schlag.
\newblock Time decay for solutions of {S}chr{\"o}dinger equations with rough
  and time-dependent potentials.
\newblock {\em Invent. Math.}, 155(3):451--513, 2004.

\bibitem{RoTa07}
I.~Rodnianski and T.~Tao.
\newblock Longtime decay estimates for the {S}chr{\"o}dinger equation on
  manifolds.
\newblock In {\em Mathematical aspects of nonlinear dispersive equations},
  volume 163 of {\em Ann. of Math. Stud.}, pages 223--253. Princeton Univ.
  Press, Princeton, NJ, 2007.

\bibitem{RoTa15}
I.~Rodnianski and T.~Tao.
\newblock Effective limiting absorption principles, and applications.
\newblock {\em Comm. Math. Phys.}, 333(1):1--95, 2015.

\bibitem{Schlag07_Survey}
W.~Schlag.
\newblock Dispersive estimates for {S}chr\"odinger operators: a survey.
\newblock In {\em Mathematical aspects of nonlinear dispersive equations},
  volume 163 of {\em Ann. of Math. Stud.}, pages 255--285. Princeton Univ.
  Press, Princeton, NJ, 2007.

\bibitem{Sjo}
P.~Sj{\"o}lin.
\newblock Regularity of solutions to the {S}chr{\"o}dinger equation.
\newblock {\em Duke Math. J.}, 55(3):699--715, 1987.

\bibitem{St70Book}
E.~M. Stein.
\newblock {\em Topics in harmonic analysis related to the {L}ittlewood-{P}aley
  theory.}
\newblock Annals of Mathematics Studies, No. 63. Princeton University Press,
  Princeton, N.J.; University of Tokyo Press, Tokyo, 1970.

\bibitem{Tat01hyp}
D.~Tataru.
\newblock Strichartz estimates in the hyperbolic space and global existence for
  the semilinear wave equation.
\newblock {\em Trans. Amer. Math. Soc.}, 353(2):795--807 (electronic), 2001.

\bibitem{Tat08ajm}
D.~Tataru.
\newblock Parametrices and dispersive estimates for {S}chr{\"o}dinger operators
  with variable coefficients.
\newblock {\em Amer. J. Math.}, 130(3):571--634, 2008.

\bibitem{Veg}
L.~Vega.
\newblock Schr{\"o}dinger equations: pointwise convergence to the initial data.
\newblock {\em Proc. Amer. Math. Soc.}, 102(4):874--878, 1988.

\end{thebibliography}

\bigskip 

\centerline{\scshape Andrew Lawrie}
\smallskip
{\footnotesize
 \centerline{Department of Mathematics, Massachusetts Institute of Technology}
\centerline{77 Massachusetts Ave, 2-267, Cambridge, MA 02139, U.S.A.}
\centerline{\email{ alawrie@mit.edu}}
} 

\medskip

\centerline{\scshape Jonas L\"uhrmann}
\smallskip
{\footnotesize
 \centerline{Department of Mathematics, Texas A\&M University}
 \centerline{Blocker 218B, College Station, TX 77843-3368, U.S.A.}
 \centerline{\email{luhrmann@math.tamu.edu}}
}

\medskip

\centerline{\scshape Sung-Jin Oh}
\smallskip
{\footnotesize
 \centerline{Department of Mathematics, UC Berkeley}
\centerline{Evans Hall 970, Berkeley, CA 94720-3840, U.S.A.}
\centerline{\email{sjoh@math.berkley.edu}}
} 

\medskip

\centerline{\scshape Sohrab Shahshahani}
\smallskip
{\footnotesize
\centerline{Department of Mathematics, University of Massachusetts, Amherst}
\centerline{710 N. Pleasant Street, Amherst, MA 01003-9305, U.S.A.}
\centerline{\email{sohrab@math.umass.edu}}
}

\end{document}